\theoremstyle{plain}
\newtheorem{thm}{Theorem}[section]
\newtheorem{cor}[thm]{Corollary}
\newtheorem{lem}[thm]{Lemma}
\newtheorem{prop}[thm]{Proposition}
\newtheorem{conj}[thm]{Conjecture}
\def\@rst #1 #2other{#1}
\newcommand\MR[1]{\relax\ifhmode\unskip\spacefactor3000 \space\fi
  \MRhref{\expandafter\@rst #1 other}{#1}}
\newcommand{\MRhref}[2]{\href{http://www.ams.org/mathscinet-getitem?mr=#1}{MR#2}}
\theoremstyle{definition}
\newtheorem{defn}[thm]{Definition}
\newtheorem{remark}[thm]{Remark}
\newtheorem{prob}[thm]{Problem}
\numberwithin{equation}{section}
\newcommand{\dsb}{\begin{adjustwidth}{2.5em}{0pt}
\begin{footnotesize}}
\newcommand{\dse}{\end{footnotesize}
\end{adjustwidth}}
\newcommand{\ssb}{\begin{adjustwidth}{2.5em}{0pt}}
\newcommand{\sse}{\end{adjustwidth}}
\newcommand{\aryb}{\begin{eqnarray*}}
\newcommand{\arye}{\end{eqnarray*}}
\def\alb#1\ale{\begin{align*}#1\end{align*}}
\def\allb#1\alle{\begin{align}#1\end{align}}
\newcommand{\eqb}{\begin{equation}}
\newcommand{\eqe}{\end{equation}}
\newcommand{\eqbn}{\begin{equation*}}
\newcommand{\eqen}{\end{equation*}}
\newcommand{\BB}{\mathbbm}
\newcommand{\ol}{\overline}
\newcommand{\ul}{\underline}
\newcommand{\op}{\operatorname}
\newcommand{\frk}{\mathfrak}
\newcommand{\eqD}{\overset{d}{=}}
\newcommand{\ep}{\varepsilon}
\newcommand{\rta}{\rightarrow}
\newcommand{\wt}{\widetilde}
\newcommand{\wh}{\widehat} 
\newcommand{\mcl}{\mathcal}
\newcommand{\bdy}{\partial}
\newcommand{\rng}{\mathring}
\newcommand{\dtm}{1} 
\newcommand{\dmetric}{{\mathfrak d}}
\let\originalleft\left
\let\originalright\right
\renewcommand{\left}{\mathopen{}\mathclose\bgroup\originalleft}
\renewcommand{\right}{\aftergroup\egroup\originalright}
\title{Existence and uniqueness of the\\ Liouville quantum gravity metric for $\gamma \in (0,2)$}
\date{ }
\author{Ewain Gwynne and Jason Miller \\ {\it University of Cambridge}}
\begin{document}

\maketitle 

\begin{abstract}
We show that for each $\gamma \in (0,2)$, there is a unique metric (i.e., distance function) associated with $\gamma$-Liouville quantum gravity (LQG).  More precisely, we show that for the whole-plane Gaussian free field (GFF) $h$, there is a unique random metric $D_h$ associated with the Riemannian metric tensor ``$e^{\gamma h} (dx^2 + dy^2)$" on $\mathbb C$ which is characterized by a certain list of axioms: it is locally determined by $h$ and it transforms appropriately when either adding a continuous function to $h$ or applying a conformal automorphism of $\BB C$ (i.e., a complex affine transformation). Metrics associated with other variants of the GFF can be constructed using local absolute continuity.

The $\gamma$-LQG metric can be constructed explicitly as the scaling limit of \emph{Liouville first passage percolation} (LFPP), the random metric obtained by exponentiating a mollified version of the GFF. Earlier work by Ding, Dub\'edat, Dunlap, and Falconet (2019) showed that LFPP admits non-trivial subsequential limits. This paper shows that the subsequential limit is unique and satisfies our list of axioms. 
In the case when $\gamma = \sqrt{8/3}$, our metric coincides with the $\sqrt{8/3}$-LQG metric constructed in previous work by Miller and Sheffield, which in turn is equivalent to the Brownian map for a certain variant of the GFF.
For general $\gamma \in (0,2)$, we conjecture that our metric is the Gromov-Hausdorff limit of appropriate weighted random planar map models, equipped with their graph distance.  We include a substantial list of open problems. 
\end{abstract}

\tableofcontents

\section{Introduction}
\label{sec-intro}

\subsection{Overview}
\label{sec-overview}

Fix $\gamma \in (0,2)$, let $U\subset\BB C$ be an open domain, and let $h$ be the Gaussian free field (GFF) on $U$, or some minor variant thereof.  The \emph{$\gamma$-Liouville quantum gravity (LQG)} surface described by $(U,h)$ is formally the random two-dimensional Riemannian manifold with metric tensor
\begin{equation}
\label{eqn:lqg_def}
e^{\gamma h}\, (dx^2 +dy^2),	
\end{equation}
where $dx^2+dy^2$ is the Euclidean Riemannian metric tensor. 

LQG surfaces were first introduced non-rigorously in the physics literature by Polyakov~\cite{polyakov-qg1,polyakov-qg2} as a canonical model of a random Riemannian metric on $U$.  Another motivation to study LQG surfaces is that they describe the scaling limit of random planar maps. The special case when $\gamma=\sqrt{8/3}$ (called ``pure gravity") corresponds to uniformly random planar maps, including uniform triangulations, quadrangulations, etc. Other values of $\gamma$ (sometimes referred to as ``gravity coupled to matter") correspond to random planar maps weighted by the partition function of an appropriate statistical mechanics model on the map, for example the uniform spanning tree for $\gamma=\sqrt 2$ or the Ising model for $\gamma=\sqrt 3$. 
 
The definition~\eqref{eqn:lqg_def} of LQG does not make literal sense since $h$ is only a distribution, not a function, so it does not have well-defined pointwise values and cannot be exponentiated.  Nevertheless, it is known that one can make sense of the associated volume form $\mu_h = e^{\gamma h(z)} \,dz$ (where $dz$ denotes Lebesgue measure) as a random measure on $U$ via various regularization procedures~\cite{kahane,shef-kpz,rhodes-vargas-review}. 
One such regularization procedure is as follows. 
For $s > 0$ and $z,w\in\BB C$, let $p_s (z,w) = \frac{1}{2\pi s} \exp\left( - \frac{|z-w|^2}{2s} \right)$ be the heat kernel, and note that $p_s (z,\cdot)$ approximates a point mass at $z$ when $s$ is small. For $\ep >0$, we define a mollified version of the GFF by
\eqb \label{eqn-gff-convolve}
h_\ep^*(z) := (h*p_{\ep^2/2})(z) = \int_{U} h(w) p_{\ep^2/2} (z,w) \, dw ,\quad \forall z\in U  ,
\eqe
where the integral is interpreted in the sense of distributional pairing. 
One can then define the $\gamma$-LQG measure $\mu_h$ as the a.s.\ weak limit~\cite{kahane,shef-kpz,rhodes-vargas-review,berestycki-gmt-elementary,shamov-gmc}
\eqb \label{eqn-measure-construct}
\lim_{\ep\rta 0} \ep^{\gamma^2/2} e^{\gamma h_\ep^*(z)} \,dz .
\eqe

By~\cite[Proposition 2.1]{shef-kpz}, the measure $\mu_h$ is conformally covariant: if $\phi  : \wt U \rta U$ is a conformal map and we set
\eqb \label{eqn-lqg-coord}
\wt h := h\circ\phi + Q\log|\phi'|, \quad\text{where} \quad Q = \frac{2}{\gamma} + \frac{\gamma}{2} ,
\eqe
then a.s.\ $\mu_h(\phi(A)) = \mu_{\wt h}(A)$ for each Borel set $A\subset\BB C$.  This leads one to define a \emph{$\gamma$-LQG surface} as an equivalence class of pairs $(U,h)$, with two such pairs $(U,h)$ and $(\wt U , \wt h)$ declared to be equivalent if there is a conformal map $\phi : \wt U \rta U$ for which $h$ and $\wt h$ are related as in~\eqref{eqn-lqg-coord}.
We think of two equivalent pairs as representing different parameterizations of the same random surface.
The conformal covariance property of $\mu_h$ says that this measure is intrinsic to the quantum surface --- it does not depend on the particular equivalence class representative.
 
In order for $\gamma$-LQG to be a reasonable model of a ``random two-dimensional Riemannian manifold", one also needs a random metric\footnote{Throughout this paper, the term ``metric" will be used to mean a distance function, rather than a metric tensor. We will not prove anything rigorous about metric tensors. The metric tensor~\eqref{eqn:lqg_def} is introduced only for context.} (distance function) $D_h$ on $U$ which is in some sense obtained by exponentiating $h$ and which satisfies a conformal covariance property analogous to that of the $\gamma$-LQG area measure.  Moreover, this metric should be the scaling limit of the graph distance on random planar maps with respect to the Gromov-Hausdorff topology.  Constructing a metric on $\gamma$-LQG is a much more difficult problem than constructing the measure $\mu_h$. Indeed, any natural regularization scheme for LQG distances involves minimizing over a large collection of paths, which results in a substantial degree of non-linearity. 

Prior to this work, a $\gamma$-LQG metric has only been constructed in the special case when $\gamma = \sqrt{8/3}$ in a series of works by Miller and Sheffield~\cite{lqg-tbm1,lqg-tbm2,lqg-tbm3}. 
In this case, for certain special choices of the pair $(U,h)$, the random metric space $(U,D_h)$ agrees in law with a \emph{Brownian surface}, such as the Brownian map~\cite{legall-uniqueness,miermont-brownian-map} or the Brownian disk~\cite{bet-mier-disk}. These Brownian surfaces are continuum random metric spaces which arise as the scaling limits of uniform random planar maps with respect to the Gromov-Hausdorff topology.
Miller and Sheffield's construction of the $\sqrt{8/3}$-LQG metric does not use a direct regularization of the field $h$.  Instead, they first construct a candidate for $\sqrt{8/3}$-LQG metric balls using a process called \emph{quantum Loewner evolution}, which is built out of the Schramm-Loewner evolution with parameter $\kappa=6$ ($\op{SLE}_6$), then show that there is a metric which corresponds to these balls.

In this paper, we will construct a $\gamma$-LQG metric for all $\gamma \in (0,2)$ via an explicit regularization procedure analogous to~\eqref{eqn-measure-construct}.  We will also show that this metric is uniquely characterized by a list of natural properties that any reasonable notion of a metric on $\gamma$-LQG should satisfy, so is in some sense the only ``correct" metric on $\gamma$-LQG. For simplicity, we will mostly restrict attention to the whole-plane case, but metrics associated with GFF's on other domains can be easily constructed via restriction and/or absolute continuity (see Remark~\ref{remark-other-domains}). 
In contrast to \cite{lqg-tbm1,lqg-tbm2,lqg-tbm3}, the present work will make no use of $\op{SLE}$. 
Furthermore, we do not a priori have an ambient metric space to compare to (such as the Brownian map in the case $\gamma=\sqrt{8/3}$) and we do not have any sort of exact solvability, i.e., we do not know the exact laws of any observables related to the metric.

We now describe how our metric is constructed. 
It is shown in~\cite{dg-lqg-dim}, building on~\cite{dzz-heat-kernel,ghs-map-dist}, that for each $\gamma \in (0,2)$, there is an exponent $d_\gamma > 2$ which describes distances in various discrete approximations of $\gamma$-LQG. \emph{A posteriori}, once the $\gamma$-LQG metric is constructed, one can show that $d_\gamma$ is its Hausdorff dimension~\cite{gp-kpz}. The value of $d_\gamma$ is not known explicitly except in the case when $ \gamma = \sqrt{8/3}$, in which case we know that $ d_{\sqrt{8/3}}=4$ (see Problem~\ref{prob-dimension}). We refer to~\cite{dg-lqg-dim,gp-lfpp-bounds} for bounds for $d_\gamma$ and some speculation about its possible value. 
For $\gamma \in (0,2)$, we define 
\eqb \label{eqn-xi}
\xi = \xi_\gamma := \frac{\gamma}{d_\gamma} .
\eqe

We say that a random distribution $h$ on $\BB C$ is a \emph{whole-plane GFF plus a continuous function}
if there exists a coupling of $h$ with a random continuous function $f : \BB C\rta \BB R$ such that the law of $h-f$ is that of a whole-plane GFF.
We similarly define a \emph{whole-plane GFF plus a bounded continuous function}, except we require that $f$ is bounded.\footnote{The reason why we sometimes restrict to bounded continuous functions is to ensure that the convolution with the whole-plane heat kernel is finite (so $D_h^\ep$ is defined) and that the results about subsequential limits of LFPP in~\cite{dddf-lfpp,lqg-metric-estimates} are applicable.}
Note that the whole-plane GFF is defined only modulo a global additive constant, but these definitions do not depend on the choice of additive constant.  By definition, a whole-plane GFF plus a continuous function is well-defined as a distribution, not just modulo additive constant. For example, a whole-plane GFF with a particular choice of additive constant can be viewed as a whole-plane GFF plus a continuous function.

If $h$ is a whole-plane GFF plus a bounded continuous function, we define $h^*_\ep(z)$ for $\ep > 0$ and $z\in\BB C$ as in~\eqref{eqn-gff-convolve} for our given choice of $h$.
For $z,w\in\BB C$ and $\ep> 0$, we define the \emph{$\ep$-LFPP metric} by 
\eqb \label{eqn-lfpp}
D_h^\ep(z,w) := \inf_{P : z\rta w} \int_0^1 e^{\xi h_\ep^*(P(t))} |P'(t)| \,dt 
\eqe
where the infimum is over all piecewise continuously differentiable paths from $z$ to $w$. One should think of LFPP as the metric analog of the approximations of the LQG measure in~\eqref{eqn-measure-construct}.\footnote{One can also consider other variants of LFPP, defined using different approximations of the GFF, but we consider $h_\ep^*$ here since this is the approximation for which tightness is proven in~\cite{dddf-lfpp}. If we knew tightness and some basic properties of the subsequential limiting metrics for LFPP defined using a different approximation of the GFF, then Theorem~\ref{thm-stronger-uniqueness} below would show that these variants of LFPP also converge to the $\gamma$-LQG metric.} The intuitive reason why we look at $e^{\xi h_\ep^*(z)}$ instead of $e^{\gamma h_\ep^*(z)}$ to define the metric is as follows. By~\eqref{eqn-measure-construct}, we can scale LQG areas by a factor of $C>0$ by adding $\gamma^{-1}\log C$ to the field. By~\eqref{eqn-lfpp}, this results in scaling distances by $C^{\xi/\gamma} = C^{1/d_\gamma}$, which is consistent with the fact that the ``dimension" should be the exponent relating the scaling of areas and distances.

Let $\frk a_\ep$ be the median of the $D_h^\ep$-distance between the left and right boundaries of the unit square in the case when $h$ is a whole-plane GFF normalized so that its circle average\footnote{See~\cite[Section 3.1]{shef-kpz} for the basic properties of the circle average process. Even though we define LFPP using truncation with the heat kernel, we will always fix the additive constant for the whole-plane GFF using the circle average.}
 over $\bdy\BB D$ is zero. 
We do not know the value of $\frk a_\ep$ explicitly, but see Corollary~\ref{cor-constant}. 
It was shown by Ding, Dub\'edat, Dunlap, and Falconet~\cite{dddf-lfpp} that the laws of the metrics $\frk a_\ep^{-1} D_h^\ep$ are tight w.r.t.\ the local uniform topology on $\BB C\times\BB C$, and every possible subsequential limit induces the Euclidean topology on $\BB C$  (see also the earlier tightness results for small $\gamma > 0$~\cite{ding-dunlap-lqg-fpp,df-lqg-metric} and for Liouville graph distance, a related model, for all $\gamma\in(0,2)$~\cite{ding-dunlap-lgd}).  Subsequently, it was shown by Dub\'edat, Falconet, Gwynne, Pfeffer, and Sun~\cite{lqg-metric-estimates}, using~\cite[Corollary 1.8]{local-metrics} (a general criterion for a local metric to be determined by the GFF), that every subsequential limit can be realized as a measurable function of $h$, so in fact the metrics $\frk a_\ep^{-1}  D_h^\ep$ admit subsequential limits in probability.
One of the main results of this paper gives the uniqueness of this subsequential limit. 

\begin{thm}[Convergence of LFPP] \label{thm-lfpp}
The random metrics $\frk a_\ep^{-1} D_h^\ep $ converge in probability w.r.t.\ the local uniform topology on $\BB C\times \BB C$ to a random metric on $\BB C$ which is a.s.\ determined by $h$.
\end{thm}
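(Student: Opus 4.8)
The plan is to deduce Theorem~\ref{thm-lfpp} from two ingredients, together with the tightness and measurability results quoted above. The first ingredient is that every subsequential limit in probability of $\frk a_\ep^{-1} D_h^\ep$ is a \emph{weak $\gamma$-LQG metric}: a random length metric on $\BB C$ that induces the Euclidean topology, is locally determined by $h$, satisfies the Weyl scaling relation $D_{h+f} = e^{\xi f}\cdot D_h$ for each continuous $f$ (meaning that $D_{h+f}$-lengths of paths are obtained from $D_h$-lengths by integrating the weight $e^{\xi f}$), transforms correctly under the complex affine maps of $\BB C$, and obeys the up-to-constant tail and H\"older estimates of~\cite{lqg-metric-estimates}. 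The second ingredient, which is the core of the argument, is that a weak $\gamma$-LQG metric is unique up to multiplication by a deterministic positive constant. Granting both, one pins this constant: for any subsequential limit $D_h$, the median of the $D_h$-distance between the left and right sides of the unit square (with $h$ normalized by its circle average over $\bdy\BB D$) equals $1$, since this is true for each $\frk a_\ep^{-1} D_h^\ep$ by definition of $\frk a_\ep$ and the left--right distance is a continuous functional of the metric whose limiting law has no atom at its median. Hence any two subsequential limits coincide a.s.; combined with the tightness of~\cite{dddf-lfpp} and the fact that subsequential limits exist \emph{in probability}~\cite{lqg-metric-estimates}, this yields convergence in probability of the whole family w.r.t.\ the local uniform topology, and the limit is a.s.\ determined by $h$ because each subsequential limit is.

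For the first ingredient, one passes the exact properties of the mollified metric $D_h^\ep$ to the limit. Weyl scaling and affine covariance hold exactly at the discrete level: $(h+f)_\ep^* = h_\ep^* + f*p_{\ep^2/2}$ with $f*p_{\ep^2/2}\to f$ locally uniformly for continuous $f$, and the behavior of~\eqref{eqn-lfpp} under $z\mapsto az+b$ is immediate. Locality and measurability with respect to $h$ are precisely the content of~\cite{lqg-metric-estimates} (via the general criterion of~\cite{local-metrics}), and the length-space property together with the fact that subsequential limits induce the Euclidean topology come from~\cite{dddf-lfpp}; the H\"older estimates of~\cite{lqg-metric-estimates} supply the a priori regularity used below.

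For the second ingredient, let $D$ and $\wt D$ be two weak $\gamma$-LQG metrics. The H\"older estimates imply that a.s.\ $D$ and $\wt D$ are bi-Lipschitz equivalent, so the optimal constant $C := \operatorname{ess\,sup}_{z\neq w} \wt D_h(z,w)/D_h(z,w)$ is a.s.\ finite; since weak LQG metrics are covariant under the scalings and translations of $\BB C$ and the whole-plane GFF has a trivial tail $\sigma$-algebra under these operations, $C$ is a.s.\ deterministic. Using the analogous constant for the reversed pair one reduces to proving $C\leq 1$, and one argues by contradiction: if $C>1$, the aim is to improve the global bound to $\wt D_h \leq (C-\delta) D_h$ for some deterministic $\delta>0$. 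This improvement is assembled from (a) a uniformly-positive-probability ``improvement event'', measurable with respect to $h$ restricted to a fixed Euclidean annulus, on which every $D_h$-geodesic crossing the annulus admits a $\wt D_h$-shortcut through it that is cheaper by a definite factor; (b) a near-independence statement for these events across a logarithmic number of disjoint concentric annuli, obtained from the Markov/white-noise decomposition of the GFF together with locality and Weyl scaling (the harmonic part of the field cancels in the ratio), so that along any $D_h$-geodesic a positive proportion of scales are improved; and (c) a quantitative estimate that a $D_h$-geodesic spends only a negligible fraction of its $D_h$-length outside the improved regions. Summing the local shortcuts then contradicts optimality; hence $C\leq 1$, and the symmetric statement gives $\wt D = C\, D$ with $C$ deterministic, which the median normalization forces to be $1$ in the LFPP setting.

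I expect the second ingredient --- and within it the construction of the improvement event and the control of geodesics --- to be the main obstacle: one must rule out that a $D_h$-geodesic concentrates its length in the exceptional regions, ensure that the shortcut captures a definite proportion of the geodesic's length, and organize the multi-scale conditioning so that the local events remain essentially independent while all being phrased in terms of the \emph{same} pair $(D,\wt D)$. Reconciling scale invariance with the Weyl-scaling transformation law, which relates fields differing by an additive constant to metrics differing by a power of that constant, is the delicate point that makes the exponent $\xi$ the right one for the whole scheme to close up.
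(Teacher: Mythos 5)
Your overall architecture is the same as the paper's: subsequential limits of $\frk a_\ep^{-1}D_h^\ep$ satisfy a list of axioms, such metrics are unique up to a deterministic constant, and the median left--right crossing distance of the unit square pins the constant to $1$. However, there is a genuine gap in your first ingredient. You assert that affine covariance of the limit is ``immediate'' from the discrete level. Translation invariance is exact for LFPP, but scaling is not: unwinding~\eqref{eqn-lfpp} shows that $D_h^\ep(a\cdot,a\cdot)$ is (up to a factor) an LFPP metric for the field $h(a\cdot)$ at mollification scale $\ep/a$, so after dividing by $\frk a_\ep$ the exact scaling relation for a subsequential limit would require knowing that $\frk a_{a\ep}/\frk a_\ep$ converges as $\ep\rta 0$, i.e.\ regular variation of $\frk a_\ep$ --- which is precisely Corollary~\ref{cor-constant}, a \emph{consequence} of Theorem~\ref{thm-lfpp}, not an available input. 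A priori, subsequential limits only satisfy tightness across scales (Axiom~\ref{item-metric-coord}), and different subsequences could have different scaling constants $\frk c_r$. The paper bridges exactly this gap with the weak/strong distinction: Lemma~\ref{lem-weak-to-strong} upgrades every weak metric to a strong one by applying the uniqueness theorem (Theorem~\ref{thm-weak-uniqueness}) to $D$ and its rescalings $D^{(b)}$, showing $b\mapsto \frk k_b$ is multiplicative and continuous, and identifying the exponent via~\eqref{eqn-scaling-constant-stronger}. In your plan, either the uniqueness statement is proved only for exactly scale-covariant metrics (and then it does not apply to the subsequential limits), or you must separately verify that all subsequential limits share the same $\frk c_r$ --- which is the missing bootstrap step.

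Two further points in your uniqueness sketch. First, the reduction to ``$C\leq 1$'' is not the right statement: two admissible metrics can genuinely differ by a deterministic constant (e.g.\ $2D$ satisfies the same axioms as $D$), so the supremal ratio need not be $\leq 1$; what must be shown is that the optimal upper and lower bi-Lipschitz constants coincide, $c_*=C_*$, and the shortcuts in your improvement event must come from pairs of points where the ratio is near $c_*$, compared against $C_*$ elsewhere --- the hypothesis ``$C>1$'' alone produces no shortcuts. Second, and more seriously, the near-independence along a $D_h$-geodesic cannot be obtained from the GFF Markov property, locality, and Weyl scaling alone: the geodesic is a highly non-local functional of $h$, and conditioning on an initial segment of it is not a local conditioning of the field. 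The paper's substitute is the confluence-of-geodesics input from~\cite{gm-confluence}, which provides ``stable'' times at which local modifications of the field near the geodesic do not alter its past, and this is what makes the multi-scale conditioning in your step (b) legitimate; without some such ingredient that step would fail.
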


It is natural to define the limiting metric from Theorem~\ref{thm-lfpp} to be the $\gamma$-LQG metric associated with $h$. 
However, this definition is not entirely satisfactory since it is a priori possible that there are other natural ways to construct a metric on $\gamma$-LQG which do not yield the same result as the one in Theorem~\ref{thm-lfpp}. For example, Theorem~\ref{thm-lfpp} does not yet tell us that the limit of LFPP coincides with the metric of~\cite{lqg-tbm1,lqg-tbm2,lqg-tbm3} in the case when $\gamma=\sqrt{8/3}$. 

We will therefore define a $\gamma$-LQG metric in terms of a list of axioms (see Section~\ref{sec-strong-uniqueness} just below). We will show that (a) the metric of Theorem~\ref{thm-lfpp} satisfies these axioms and (b) there is at most one metric satisfying these axioms for each $\gamma \in (0,2)$. Taken together, these statements tell us that the metric of Theorem~\ref{thm-lfpp} is the \emph{only} reasonable metric that one can put on $\gamma$-LQG.

An important feature of our proofs is that they can be read with essentially no knowledge of the (substantial) existing literature on LQG.
Aside from basic properties of the GFF (as discussed, e.g., in~\cite{shef-gff} and the introductory sections of~\cite{ss-contour,ig1,ig4}), the only prior works which this paper relies on are~\cite{dddf-lfpp,local-metrics,lqg-metric-estimates,gm-confluence}. All of the results which we need from these papers are reviewed in Section~\ref{sec-prelim}.  

Our results open up many important new research directions in the theory of LQG. 
We have included in Section~\ref{sec-open-problems} a substantial list of open problems related to the $\gamma$-LQG metric. 
\bigskip

\noindent\textbf{Acknowledgments.} We thank three anonymous referees for helpful comments on an earlier version of this paper. We thank Jian Ding, Julien Dub\'edat, Alex Dunlap, Hugo Falconet, Josh Pfeffer, Scott Sheffield, and Xin Sun for helpful discussions. EG was supported by a Herchel Smith fellowship and a Trinity College junior research fellowship. JM was supported by ERC Starting Grant 804166.

\subsection{Axiomatic characterization of the $\gamma$-LQG metric}
\label{sec-strong-uniqueness}

To state our list of axioms precisely, we will need some preliminary definitions concerning metric spaces. In what follows, we let $(X,\dmetric )$ be a metric space.  
\medskip

\noindent
For $A,B\subset X$, we define
\eqbn
\dmetric(A,B) := \inf_{x\in A , y\in B} \dmetric(x,y) .
\eqen
\medskip

\noindent
A \emph{curve} in $X$ is a continuous function $P : [a,b] \rta X$.  For a curve $P$, the \emph{$\dmetric$-length} of $P$ is defined by 
\eqbn
\op{len}\left( P ; \dmetric  \right) := \sup_{T} \sum_{i=1}^{\# T} \dmetric(P(t_i) , P(t_{i-1})) 
\eqen
where the supremum is over all partitions $T : a= t_0 < \dots < t_{\# T} = b$ of $[a,b]$. Note that the $\dmetric$-length of a curve may be infinite.
\medskip

\noindent
For $Y\subset X$, the \emph{internal metric of $\dmetric$ on $Y$} is defined by
\eqb \label{eqn-internal-def}
\dmetric(x,y ; Y)  := \inf_{P \subset Y} \op{len}\left(P ; \dmetric \right) ,\quad \forall x,y\in Y 
\eqe 
where the infimum is over all paths $P$ in $Y$ from $x$ to $y$. 
Then $\dmetric(\cdot,\cdot ; Y)$ is a metric on $Y$, except that it is allowed to take infinite values.  
\medskip
 
\noindent
We say that $(X,\dmetric)$ is a \emph{length space} if for each $x,y\in X$ and each $\ep > 0$, there exists a curve of $\dmetric$-length at most $\dmetric(x,y) + \ep$ from $x$ to $y$. 
\medskip

\noindent
A \emph{continuous metric} on an open domain $U\subset\BB C$ is a metric $\dmetric$ on $U$ which induces the Euclidean topology on $U$, i.e., the identity map $(U,|\cdot|) \rta (U,\dmetric)$ is a homeomorphism. 
We equip the space of continuous metrics on $U$ with the local uniform topology for functions from $U\times U$ to $[0,\infty)$ and the associated Borel $\sigma$-algebra.
We allow a continuous metric to satisfy $\dmetric(u,v) = \infty$ if $u$ and $v$ are in different connected components of $U$.
In this case, in order to have $\dmetric^n\rta \dmetric$ w.r.t.\ the local uniform topology we require that for large enough $n$, $\dmetric^n(u,v) = \infty$ if and only if $\dmetric(u,v)=\infty$.
\medskip

\noindent
Let $\mcl D'(\BB C)$ be the space of distributions (generalized functions) on $\BB C$, equipped with the usual weak topology.
For $\gamma \in (0,2)$, a \emph{(strong) $\gamma$-Liouville quantum gravity (LQG) metric} is a measurable function $h\mapsto D_h$ from $\mcl D'(\BB C)$ to the space of continuous metrics on $\BB C$ such that the following is true whenever $h$ is a whole-plane GFF plus a continuous function.
\begin{enumerate}[I.]
\item \textbf{Length space.} Almost surely, $(\BB C , D_h)$ is a length space, i.e., the $D_h$-distance between any two points of $\BB C$ is the infimum of the $D_h$-lengths of $D_h$-continuous paths (equivalently, Euclidean continuous paths) between the two points. \label{item-metric-length0}
\item \textbf{Locality.} Let $U\subset\BB C$ be a deterministic open set. 
The internal metric $D_h(\cdot,\cdot ; U)$ is a.s.\ determined by $h|_U$.  \label{item-metric-local0}
\item \textbf{Weyl scaling.} Let $\xi$ be as in~\eqref{eqn-xi} and for each continuous function $f : \BB C\rta \BB R$, define \label{item-metric-f0}  
\eqb  \label{eqn-metric-f}
(e^{\xi f} \cdot D_h) (z,w) := \inf_{P : z\rta w} \int_0^{\op{len}(P ; D_h)} e^{\xi f(P(t))} \,dt , \quad \forall z,w\in \BB C ,
\eqe 
where the infimum is over all continuous paths from $z$ to $w$ parameterized by $D_h$-length. Then a.s.\ $ e^{\xi f} \cdot D_h = D_{h+f}$ for every continuous function $f : \BB C\rta\BB R$.  
\item \textbf{Coordinate change for translation and scaling.} For each fixed deterministic $r > 0$ and $z\in\BB C$, a.s.\ \label{item-metric-coord0}
\eqb
 D_h \left( ru + z , r v + z \right) = D_{h(r\cdot + z)  +Q\log r}(u,v)  , \: \forall u,v\in\BB C \quad \text{where} \quad Q =\frac{2}{\gamma} + \frac{\gamma}{2} .
\eqe    
\end{enumerate}

Let us briefly discuss why the above axioms are natural. 
Recall that $\gamma$-LQG should be the random Riemannian metric with metric tensor $e^{\gamma h} (dx^2+dy^2)$. 
Axiom~\ref{item-metric-length0} is simply the LQG analog of the statement that for a true Riemannian metric, the distance between two points can be defined as the infima of the lengths of paths connecting them. 
In a similar vein, Axiom~\ref{item-metric-local0} corresponds to the fact that for a smooth Riemannian metric, the lengths of paths are determined locally by the Riemannian metric tensor. 
Axiom~\ref{item-metric-f0} is just expressing the fact that the metric is obtained by exponentiating $\xi h$, so adding a continuous function $f$ to $h$ results in re-scaling the metric length measure on paths by $e^{\xi f}$. 

Axiom~\ref{item-metric-coord0} is the metric analog of the conformal coordinate change formula~\eqref{eqn-lqg-coord} for the $\gamma$-LQG area measure, but restricted to translations and scalings. This axiom together with Corollary~\ref{cor-rotational-invariance} says that $D_h$ depends only on the LQG surface $(\BB C , h)$, not on the particular choice of parameterization. We will prove a conformal covariance property for the $\gamma$-LQG metric w.r.t.\ conformal automorphisms between arbitrary domains, directly analogous to the conformal covariance of the $\gamma$-LQG area measure, in~\cite{gm-coord-change}.

\begin{thm}[Existence and uniqueness of the LQG metric] \label{thm-strong-uniqueness} 
Fix $\gamma \in (0,2)$. There is a $\gamma$-LQG metric $D$ such that the limiting metric of Theorem~\ref{thm-lfpp} is a.s.\ equal to $D_h$ whenever $h$ is a whole-plane GFF plus a bounded continuous function. 
Furthermore, the $\gamma$-LQG metric is unique in the following sense. If $D$ and $\wt D$ are two $\gamma$-LQG metrics, then there is a deterministic constant $C>0$ such that if $h$ is a whole-plane GFF plus a continuous function, then a.s.\ $D_h = C \wt D_h$. 
\end{thm}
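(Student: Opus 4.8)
The plan is to establish existence and uniqueness separately, with uniqueness being by far the harder of the two.

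\medskip

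\noindent\textbf{Existence.} First I would construct a candidate metric. By the tightness and measurability results of \cite{dddf-lfpp,lqg-metric-estimates}, for a whole-plane GFF plus a bounded continuous function, the metrics $\frk a_\ep^{-1} D_h^\ep$ admit subsequential limits in probability, each of which is a measurable function of $h$. Granting Theorem~\ref{thm-lfpp} (the uniqueness of this subsequential limit), we get a well-defined limiting metric $D_h$ for such $h$. The task is then to (i) extend $h \mapsto D_h$ to a measurable function on all of $\mcl D'(\BB C)$ and (ii) verify Axioms~\ref{item-metric-length0}--\ref{item-metric-coord0}. For (i), one uses Axiom~\ref{item-metric-f0} as a \emph{definition}: given an arbitrary $h = \wt h + f$ with $\wt h$ a whole-plane GFF and $f$ continuous, write $f = f_{\mathrm{bdd}} + (f - f_{\mathrm{bdd}})$ where $f_{\mathrm{bdd}}$ is bounded, define $D_{\wt h + f_{\mathrm{bdd}}}$ via the LFPP limit, and then set $D_h := e^{\xi(f - f_{\mathrm{bdd}})} \cdot D_{\wt h + f_{\mathrm{bdd}}}$; one checks this is independent of the decomposition. (On the event that $h$ is not a whole-plane GFF plus a continuous function, one can set $D_h$ to be any fixed metric, since the axioms only constrain $D$ on that event.) For (ii), Axioms~\ref{item-metric-length0} and \ref{item-metric-f0} should be essentially inherited from the corresponding properties of the discrete LFPP metrics $D_h^\ep$ (each $D_h^\ep$ is a length metric obtained by exponentiating $\xi h_\ep^*$) passing to the limit, using that the limit is local uniform; Axiom~\ref{item-metric-local0} follows from the fact that $D_h^\ep(\cdot,\cdot;U)$ is determined by $h_\ep^*|_{\{z : \operatorname{dist}(z,U^c) > \ep\}}$ together with the measurability/locality input from \cite{local-metrics,lqg-metric-estimates}; and Axiom~\ref{item-metric-coord0} follows from the exact scaling relation $h_\ep^*(r\cdot + z) + Q\log r$ matching $h(r\cdot+z)$-type convolutions at scale $\ep/r$, combined with the fact that $\frk a_\ep$ is defined via the same median normalization — the constant $Q\log r$ appears because scaling the square by $r$ must be compensated by the $(r\cdot)$-rescaled heat-kernel convolution, which is exactly the LFPP approximation at parameter $\ep/r$, and the $\frk a$ normalizations cancel in the $\ep\to 0$ limit.

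\medskip

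\noindent\textbf{Uniqueness.} This is the main obstacle and almost certainly occupies the bulk of the paper. The strategy I would pursue: suppose $D$ and $\wt D$ are two $\gamma$-LQG metrics. The goal is to show $\wt D_h / D_h$ is a.s.\ equal to a deterministic constant. The first reduction is to use the axioms (especially Axiom~\ref{item-metric-coord0} and the scaling/translation invariance it encodes, together with locality and the ergodicity/tail-triviality of the GFF) to argue that the ``ratio'' of the two metrics — suitably interpreted, e.g. via the a.s.\ limit of $\wt D_h$-lengths versus $D_h$-lengths of paths, or via comparing the metrics restricted to small balls — is translation and scale invariant, hence by a zero-one law it is a.s.\ a constant, \emph{provided} one already knows the two metrics are bi-Lipschitz equivalent with deterministic (nonrandom) constants. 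So the real heart is: prove that there exist deterministic $0 < c \le C < \infty$ with $c\, D_h \le \wt D_h \le C\, D_h$ a.s. This is where one needs genuine metric estimates: one compares the two metrics at a reference scale using the normalization implicit in the axioms (the median of boundary-to-boundary distance across a square is positive and finite for both, by the continuous-metric assumption), then propagates this comparison to all scales using a multiscale/percolation-type argument — tiling space by squares, using independence-across-scales of the GFF (white-noise decomposition) and the locality axiom so that the comparison on each square is an i.i.d.-like event, and a subadditivity or concentration argument (à la \cite{gm-confluence}, whose confluence-of-geodesics input is cited as needed) to upgrade ``comparison holds at a positive-density set of scales/locations'' to ``comparison holds everywhere.'' A key technical tool will be the confluence of geodesics result of \cite{gm-confluence}, used to control how geodesics of one metric behave with respect to the other, and to prevent the optimal constants from degenerating.

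\medskip

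\noindent The hard part is unquestionably the bi-Lipschitz comparison with \emph{deterministic} constants — equivalently, ruling out that the Radon–Nikodym-type ratio of the two metrics could be a nonconstant random variable or could be $0$ or $\infty$ on a positive-probability event. Establishing finiteness of the sup and positivity of the inf of the ratio requires quantitative moment or tail bounds on $D_h$-distances across Euclidean regions (of the kind developed in \cite{lqg-metric-estimates}), and the multiscale argument that turns these into an everywhere-comparison is delicate because the GFF's scale decomposition is only approximately independent (there are long-range correlations handled via the Markov property and harmonic-function corrections). Once deterministic bi-Lipschitz equivalence is in hand, the zero-one law promoting the constant to be deterministic, and then matching it to the LFPP-limit metric to conclude $C$ is the same across all metrics, is comparatively routine.
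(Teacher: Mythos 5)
Your existence argument has a genuine gap at Axiom~\ref{item-metric-coord0}. The heat-kernel scaling identity does relate $D_h^\ep$ at spatial scale $r$ to an LFPP approximation at parameter $\ep/r$, but to pass this to the limit you need the normalizations to cancel, i.e., you need $\frk a_{C\ep}/\frk a_\ep \rta C^{1-\xi Q}$. A priori one only knows $\frk a_\ep = \ep^{1-\xi Q + o_\ep(1)}$, which is compatible with the ratio $\frk a_{C\ep}/\frk a_\ep$ oscillating as $\ep\rta 0$; the exact regular variation is Corollary~\ref{cor-constant}, which in the paper is a \emph{consequence} of the main theorems, not an input. This is precisely why the paper introduces weak $\gamma$-LQG metrics, whose Axiom~\ref{item-metric-coord} demands only tightness across scales: what is actually known about subsequential limits of LFPP (from~\cite{lqg-metric-estimates}) is that they are weak LQG metrics, and the exact coordinate-change axiom is recovered only afterwards via Lemma~\ref{lem-weak-to-strong}, which applies the uniqueness statement (Theorem~\ref{thm-weak-uniqueness}) to the rescaled metric $D^{(b)}_h := D_{h(\cdot/b)}(b\cdot,b\cdot)$ to produce constants $\frk k_b$, shows $\frk k_b$ is multiplicative and continuous in $b$, and pins down the exponent using $\frk c_{\delta r}/\frk c_r = \delta^{\xi Q + o_\delta(1)}$. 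So existence of a strong metric cannot be decoupled from uniqueness in the way your outline suggests; the paper proves Theorems~\ref{thm-lfpp}, \ref{thm-strong-uniqueness}, and~\ref{thm-stronger-uniqueness} simultaneously from Theorem~\ref{thm-weak-uniqueness}.

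Your uniqueness outline also misallocates the difficulty, and the step you call ``comparatively routine'' is in fact the heart of the paper. Bi-Lipschitz equivalence of $D_h$ and $\wt D_h$ is quoted from the general criterion of~\cite{local-metrics} (Proposition~\ref{prop-lqg-metric-bilip}), and making the optimal constants $c_* \leq C_*$ of~\eqref{eqn-max-min-def} deterministic is a short tail-triviality argument (Lemma~\ref{lem-max-min-const}). But no zero-one law or scale/translation-invariance argument forces $c_* = C_*$: the ratio $\wt D_h(u,v)/D_h(u,v)$ is a function of the pair $(u,v)$, and invariance only makes its supremum and infimum deterministic, not equal. Closing the gap between $c_*$ and $C_*$ occupies Sections~\ref{sec-attained}--\ref{sec-conclusion}: one shows that pairs of points nearly attaining $c_*$ (and $C_*$) exist at a dense set of scales (Propositions~\ref{prop-attained-max} and~\ref{prop-attained-min}); one proves an independence-along-geodesics theorem (Theorem~\ref{thm-geo-iterate}) resting on the confluence results of~\cite{gm-confluence}; one constructs, via a bump-function/absolute-continuity argument, events forcing a $D_h$-geodesic to pass near a ``shortcut'' pair with $\wt D_h \leq c' D_h$ (Proposition~\ref{prop-geo-event}); and one combines these to show $\wt D_h(\BB z,\BB w) \leq (C_* - (C_*-c')\delta) D_h(\BB z,\BB w)$ uniformly, contradicting the definition of $C_*$ if $c_* < C_*$. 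Your proposal gestures at multiscale arguments and confluence, but it does not contain the mechanism that actually produces the contradiction — that a geodesic for one metric must, with overwhelming probability, pass near points where the other metric is shorter by a definite factor, so that concatenation strictly improves on $C_* D_h(\BB z,\BB w)$ — and without that mechanism the argument does not close.
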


Theorem~\ref{thm-strong-uniqueness} justifies us in referring to \emph{the} $\gamma$-LQG metric. Technically speaking there is a one-parameter family of such metrics, which differ by a global deterministic multiplicative constant. But, one can fix the constant in various ways to get a single canonically defined metric. For example, we can require that the median distance between the left and right boundaries of the unit square is 1 for the metric associated with a whole-plane GFF normalized so that its circle average over $\bdy\BB D$ is zero (the limiting metric in Theorem~\ref{thm-lfpp} has this normalization). 

Theorem~\ref{thm-strong-uniqueness} is related to Shamov's axiomatic characterization of Gaussian multiplicative chaos (GMC) measures, such as the $\gamma$-LQG measure~\cite[Corollary 5]{shamov-gmc}. Shamov's result says that a subcritical 
GMC measure associated with a field $X$ is uniquely characterized by how it transforms when we add to $X$ a function in the Cameron-Martin space. Weyl scaling (Axiom~\ref{item-metric-f0}) is the metric analog of this property. 
Unlike in Shamov's characterization we need other properties besides just Weyl scaling to characterize the LQG metric, most notably some sort of uniform control of the metric at different Euclidean scales (in the above list of axioms this is provided by Axiom~\ref{item-metric-coord0}, but this axiom can be weakened, see Section~\ref{sec-weak-uniqueness}).
 
In Axiom~\ref{item-metric-coord0} in the definition of a strong $\gamma$-LQG metric, we did not require that the metric is invariant under rotations of $\BB C$. 
It turns out that rotational invariance is implied by the other axioms. See Remark~\ref{remark-rotational-invariance} below for an intuitive explanation of why this is the case.

\begin{cor}[Rotational invariance] \label{cor-rotational-invariance}
If $\gamma\in(0,2)$ and $D$ is a $\gamma$-LQG metric then $D$ is rotationally invariant, i.e., if $\omega \in \BB C$ with $|\omega | =1$ and $h$ is a whole-plane GFF plus a continuous function, then a.s.\ $D_h(u,v) = D_{h(\omega\cdot)}(\omega^{-1} u ,\omega^{-1} v)$ for all $u,v\in\BB C$. 
\end{cor}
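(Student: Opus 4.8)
The plan is to deduce this from the uniqueness statement in Theorem~\ref{thm-strong-uniqueness}. Fix $\omega\in\BB C$ with $|\omega|=1$ and, given a $\gamma$-LQG metric $D$, define a map $h\mapsto\wt D_h$ from $\mcl D'(\BB C)$ to the space of continuous metrics on $\BB C$ by
\[
\wt D_h(u,v) := D_{h(\omega\,\cdot)}\big(\omega^{-1}u,\omega^{-1}v\big), \qquad u,v\in\BB C .
\]
First I would check that $\wt D$ is a legitimate candidate: the map $h\mapsto h(\omega\,\cdot)$ is continuous on $\mcl D'(\BB C)$ for the weak topology (here we use $|\omega|=1$, so no Jacobian factor appears), the map $D\mapsto D(\omega^{-1}\cdot,\omega^{-1}\cdot)$ is continuous for the local uniform topology on continuous metrics since $z\mapsto\omega^{-1}z$ is a Euclidean isometry, and $h\mapsto D_h$ is measurable by hypothesis; moreover $\wt D_h$ is again a continuous metric on $\BB C$, being the pullback of one by the Euclidean homeomorphism $\omega^{-1}$. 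The goal is then to verify that $\wt D$ satisfies Axioms~\ref{item-metric-length0}--\ref{item-metric-coord0}, apply Theorem~\ref{thm-strong-uniqueness} to obtain a deterministic constant $C>0$ with $\wt D_h = C D_h$ a.s.\ whenever $h$ is a whole-plane GFF plus a continuous function, and finally show $C=1$.

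Verifying the axioms for $\wt D$ (when $h$ is a whole-plane GFF plus a continuous function, noting that $h(\omega\,\cdot)$ is then also of this form) is routine. Axiom~\ref{item-metric-length0} and Axiom~\ref{item-metric-local0} follow immediately from the corresponding properties of $D$ together with the fact that $z\mapsto\omega^{-1}z$ is a homeomorphism of $\BB C$ carrying a deterministic open set $U$ to the deterministic open set $\omega^{-1}U$, with $h(\omega\,\cdot)|_{\omega^{-1}U}$ a measurable function of $h|_U$. For Weyl scaling one changes variables in the path integral~\eqref{eqn-metric-f}: writing a path $P$ from $u$ to $v$ as $P=\omega\wt P$ with $\wt P$ a path from $\omega^{-1}u$ to $\omega^{-1}v$, one has $\op{len}(P;\wt D_h)=\op{len}(\wt P;D_{h(\omega\,\cdot)})$ and $f(P(t))=f(\omega\wt P(t))$, so $e^{\xi f}\cdot\wt D_h = \big(e^{\xi(f\circ\omega)}\cdot D_{h(\omega\,\cdot)}\big)(\omega^{-1}\cdot,\omega^{-1}\cdot)$, which by Axiom~\ref{item-metric-f0} for $D$ equals $D_{h(\omega\,\cdot)+f(\omega\,\cdot)}(\omega^{-1}\cdot,\omega^{-1}\cdot)=\wt D_{h+f}$. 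For the coordinate change axiom, fix $r>0$ and $z\in\BB C$ and apply Axiom~\ref{item-metric-coord0} for $D$ with field $h(\omega\,\cdot)$ and centre $\omega^{-1}z$:
\[
\wt D_h(ru+z,rv+z) = D_{h(\omega\,\cdot)}\big(r\omega^{-1}u+\omega^{-1}z,\, r\omega^{-1}v+\omega^{-1}z\big) = D_{h(\omega r\,\cdot\,+\,z)+Q\log r}\big(\omega^{-1}u,\omega^{-1}v\big),
\]
and since $\big(h(r\,\cdot\,+\,z)+Q\log r\big)(\omega\,\cdot) = h(\omega r\,\cdot\,+\,z)+Q\log r$, the right-hand side is $\wt D_{h(r\,\cdot\,+\,z)+Q\log r}(u,v)$, as required.

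It remains to show $C=1$. Take $h$ to be a whole-plane GFF normalized so that its circle average over $\bdy\BB D$ is zero. Since $\bdy\BB D$ is invariant under rotation, $h(\omega\,\cdot)$ also has circle average zero over $\bdy\BB D$, and by the rotational invariance in law of the whole-plane GFF (modulo additive constant) we have $h(\omega\,\cdot)\eqD h$. Using $\omega^{-1}0=0$ and $\omega^{-1}\bdy\BB D=\bdy\BB D$,
\[
C\, D_h(0,\bdy\BB D) = \wt D_h(0,\bdy\BB D) = D_{h(\omega\,\cdot)}(0,\bdy\BB D) \eqD D_h(0,\bdy\BB D),
\]
where $D_h(0,\bdy\BB D):=\inf_{w\in\bdy\BB D}D_h(0,w)$ is a.s.\ finite (since $D_h$ is a continuous metric on $\BB C$) and a.s.\ strictly positive (since $D_h$ induces the Euclidean topology and $0\notin\bdy\BB D$). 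An a.s.\ finite and positive random variable that equals $C$ times itself in distribution forces $C=1$, so $\wt D_h=D_h$ a.s.\ for $h$ a whole-plane GFF plus a continuous function, which is the asserted rotational invariance. (Alternatively, $C=1$ can be read off from the fact that the LFPP metric of Theorem~\ref{thm-lfpp} is already rotationally invariant, since the heat-kernel mollification $h^*_\ep$ is rotationally covariant and rotations preserve Euclidean path length, so that the constant relating $D$ to this metric cancels.)

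All the computations are elementary; the only points requiring genuine care are the coordinate-change bookkeeping for $\wt D$ — composing the rotation with the dilation--translation $r\,\cdot\,+\,z$ so as to land in exactly the form demanded by Axiom~\ref{item-metric-coord0} — and, in pinning down the constant, the choice of a rotationally symmetric observable such as $D_h(0,\bdy\BB D)$. See also Remark~\ref{remark-rotational-invariance}.
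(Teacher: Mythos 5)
Your proposal is correct and follows essentially the same route as the paper: define $D_h^{(\omega)}(u,v) := D_{h(\omega\cdot)}(\omega^{-1}u,\omega^{-1}v)$, check it is a strong $\gamma$-LQG metric, invoke the uniqueness in Theorem~\ref{thm-strong-uniqueness} to get $D^{(\omega)}_h = C D_h$, and then pin down $C=1$ using the rotational invariance in law of the whole-plane GFF normalized to have zero circle average over $\bdy\BB D$ together with the observable $D_h(0,\bdy\BB D)$. Your axiom verifications simply flesh out what the paper dismisses as "easily verified," so nothing further is needed.
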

\begin{proof}
Define $D_h^{(\omega)}(u,v) :=  D_{h(\omega\cdot)}(\omega^{-1} u ,\omega^{-1} v)$.
It is easily verified that $D^{(\omega)}$ is a strong LQG metric, so Theorem~\ref{thm-strong-uniqueness} implies that there is a deterministic constant $C >0$ such that a.s.\ $D^{(\omega)}_h = C D_h$ whenever $h$ is a whole-plane GFF plus a continuous function.
To check that $C = 1$, consider a whole-plane GFF $h$ normalized so that its circle average over $\bdy\BB D$ is $0$. Then the law of $h$ is rotationally invariant, so $\BB P[D_h(0,\bdy\BB D) > R] = \BB P[D_h^{(\omega)}(0,\bdy\BB D) > R]$ for every $R > 0$. 
Therefore $C  =1$. 
\end{proof}

It is easy to check that the metric constructed in~\cite{lqg-tbm1,lqg-tbm2,lqg-tbm3} satisfies the axioms for a $\sqrt{8/3}$-LQG metric;
see~\cite[Section 2.5]{gms-poisson-voronoi} for a careful explanation of why this is the case. 
Consequently, Theorem~\ref{thm-strong-uniqueness} implies the following. 

\begin{cor}[Equivalence with the construction of~\cite{lqg-tbm1,lqg-tbm2,lqg-tbm3}] \label{cor-tbm-equivalence}
The $\sqrt{8/3}$-LQG metric constructed in~\cite{lqg-tbm1,lqg-tbm2,lqg-tbm3} agrees with the limiting metric of Theorem~\ref{thm-lfpp} (equivalently, the metric of Theorem~\ref{thm-strong-uniqueness}) up to a deterministic global scaling factor.
\end{cor}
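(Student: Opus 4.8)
The plan is to obtain this as an immediate consequence of the uniqueness half of Theorem~\ref{thm-strong-uniqueness}, once we know that the Miller--Sheffield metric of~\cite{lqg-tbm1,lqg-tbm2,lqg-tbm3} and the limiting metric of Theorem~\ref{thm-lfpp} are \emph{both} strong $\sqrt{8/3}$-LQG metrics in the sense of Section~\ref{sec-strong-uniqueness}. Granting this, Theorem~\ref{thm-strong-uniqueness} furnishes a deterministic constant $C>0$ with $D_h = C\, \wt D_h$ a.s.\ whenever $h$ is a whole-plane GFF plus a continuous function, where $D$ denotes the limit of LFPP and $\wt D$ the Miller--Sheffield metric; specializing to the particular fields used in~\cite{lqg-tbm1,lqg-tbm2,lqg-tbm3} then gives the stated equality up to a deterministic global scaling factor.

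For the first metric there is nothing further to prove: Theorem~\ref{thm-strong-uniqueness} already asserts the existence of a strong $\gamma$-LQG metric $D$ whose restriction to the class of whole-plane GFFs plus a bounded continuous function coincides with the limiting metric of Theorem~\ref{thm-lfpp}, and we simply take $\gamma=\sqrt{8/3}$. For the second metric, I would invoke the verification in~\cite[Section~2.5]{gms-poisson-voronoi} that the metric $\wt D$ constructed in~\cite{lqg-tbm1,lqg-tbm2,lqg-tbm3} indeed fits our axiomatic framework: it is realized as a measurable function of the field $h$, it is a.s.\ a length space (Axiom~\ref{item-metric-length0}), its internal metrics on deterministic open subsets of $\BB C$ are determined by the corresponding restrictions of $h$ (Axiom~\ref{item-metric-local0}), it satisfies Weyl scaling with the exponent $\xi$ of~\eqref{eqn-xi} for $\gamma=\sqrt{8/3}$ (Axiom~\ref{item-metric-f0}), and it transforms correctly under translations and scalings with $Q=\tfrac{2}{\gamma}+\tfrac{\gamma}{2}$ (Axiom~\ref{item-metric-coord0}). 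Hence $\wt D$ is a strong $\sqrt{8/3}$-LQG metric, and the uniqueness clause of Theorem~\ref{thm-strong-uniqueness} applies.

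The one point that requires genuine care --- and the reason one defers to~\cite{gms-poisson-voronoi} rather than reproving it here --- is precisely this verification: the Miller--Sheffield metric was originally constructed via quantum Loewner evolution and $\op{SLE}_6$ only for certain special quantum surfaces (quantum cones, spheres, and disks), so one must check that it descends to a well-defined measurable map on $\mcl D'(\BB C)$ for the relevant class of fields and that the length-space, locality, Weyl-scaling, and coordinate-change properties established in~\cite{lqg-tbm1,lqg-tbm2,lqg-tbm3} are exactly Axioms~\ref{item-metric-length0}--\ref{item-metric-coord0}. Beyond this, the corollary is a formal consequence of Theorem~\ref{thm-strong-uniqueness}; in combination with the identification of certain $\sqrt{8/3}$-LQG surfaces with Brownian surfaces in~\cite{lqg-tbm1,lqg-tbm2,lqg-tbm3}, it additionally yields that suitably normalized LFPP at $\gamma=\sqrt{8/3}$ converges to the Brownian map, Brownian disk, etc.
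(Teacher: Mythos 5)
Your proposal is correct and matches the paper's argument: the paper likewise notes (citing~\cite[Section 2.5]{gms-poisson-voronoi}) that the Miller--Sheffield metric satisfies the axioms for a strong $\sqrt{8/3}$-LQG metric and then concludes via the uniqueness clause of Theorem~\ref{thm-strong-uniqueness}. Nothing essential is missing.
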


The present work does not use the results of~\cite{lqg-tbm1,lqg-tbm2,lqg-tbm3}, but also does not supersede these results. 
Indeed, without these works it is not at all clear how to link the $\sqrt{8/3}$-LQG metric constructed in the present article to Brownian surfaces, and thereby to uniform random planar maps. 

There are a number of properties of the $\gamma$-LQG metric which are already known. 
It is shown in~\cite[Section 3.1]{lqg-metric-estimates} that one has superpolynomial concentration for the $D_h$-distance between two disjoint compact, connected sets which are not singletons (e.g., the inner and outer boundaries of an annulus or two opposite sides of a rectangle). 
Building on this,~\cite{lqg-metric-estimates} computes the optimal H\"older exponents between $D_h$ and the Euclidean metric, in both directions, and establishes moment bounds for various distance quantities (see also Section~\ref{sec-a-priori-estimates}). 
Confluence properties for $D_h$-geodesics analogous to the ones known for the Brownian map~\cite{legall-geodesics} are proven in~\cite{gm-confluence} (see also Section~\ref{sec-confluence-prelim}). 
It is shown in~\cite{mq-geodesics} that $D_h$-geodesics are conformally removable and their laws are mutually singular with respect to Schramm-Loewner evolution curves.  After the appearance of this paper, the work~\cite{gp-kpz} proved that $D_h$ satisfies a version of the KPZ formula~\cite{kpz-scaling,shef-kpz} and the work~\cite{afs-metric-ball} proved a concentration result for the LQG mass of a $D_h$-metric ball.

\begin{remark}[Metrics associated with other fields] \label{remark-other-domains}
Theorem~\ref{thm-strong-uniqueness} gives us a canonical $\gamma$-LQG metric associated with a whole-plane GFF plus a continuous function. It is not hard to see that one can also define the metric if $h$ is equal to a whole-plane GFF plus a continuous function plus a finite number of logarithmic singularities of the form $-\alpha\log|\cdot - z|$ for $z\in\BB C$ and $\alpha  < Q$; see~\cite[Theorem 1.10 and Proposition 3.17]{lqg-metric-estimates}. 

We can also define metrics associated with GFF's on proper sub-domains of $\BB C$. To this end, let $U\subset\BB C$ be open and let  $h$ be a whole-plane GFF. Due to Axiom~\ref{item-metric-local0}, we can define for each open set $U\subset \BB C$ the metric $D_{h|_U} := D_h(\cdot,\cdot ;U)$ as a measurable function of $h|_U$. 
We can write $h|_U = \rng h^U + \frk h^U$, where $\rng h^U$ is a zero-boundary GFF on $U$ and $\frk h^U$ is a random harmonic function on $U$ independent from $\rng h^U$.
In the notation~\eqref{eqn-metric-f}, we define
\eqb
D_{\rng h^U} := e^{-\xi \frk h^U} \cdot D_{h|_U} .
\eqe
Note that this is well-defined even though $\frk h^U$ does not extend continuously to $\bdy U$, since the definition of $D_{h|_U}$ involves only paths contained in $U$.
It is easily seen from Axioms~\ref{item-metric-local0} (locality) and~\ref{item-metric-f0} (Weyl scaling) that $D_{\rng h^U}$ is a measurable function of $\rng h^U$: indeed, if we are given an open set $V\subset U$ with $\ol V\subset U$, choose a smooth compactly supported bump $f : U\rta [0,1]$ which is identically equal to 1 on $V$. Then Axiom~\ref{item-metric-local0} applied to the field $h - f \frk h^U$ implies that the internal metric of $D_{\rng h^U}$ on $V$, which equals $D_{h-f\frk h^U}(\cdot,\cdot ; V)$, is determined by $(h-f\frk h^U)|_V = \rng h^U|_V$. Letting $V$ increase to all of $U$ gives the desired measurability of $D_{\rng h^U}$ w.r.t.\ $\rng h^U$. 
This defines the $\gamma$-LQG metric for a zero-boundary GFF. 

By Axiom~\ref{item-metric-f0}, we can also define the metric $D_{\wt h}$ in the case when $\wt h = \rng h^U + f$ is a zero-boundary GFF plus a continuous function on $U$, namely $D_{\wt h} := e^{\xi f} D_{\rng h^U}$. 
It is shown in~\cite{gm-coord-change} that this metric satisfies a conformal coordinate change relation analogous to the one satisfied by the $\gamma$-LQG measure (as discussed just below~\eqref{eqn-lqg-coord}).

We expect that for a fixed proper subdomain $U\subset\BB C$ there is an analogous formulation and characterization of the LQG metric on $U$.  However, we will not formulate such a result here.  We emphasize that the LQG metric on $U$ is determined by the LQG metric on $\BB C$, and moreover the LQG metric on $U$ determines the LQG metric on $\BB C$ due to Axiom~\ref{item-metric-local0} (locality) and the local absolute continuity between GFF's on different domains.  It is not hard to show using the results of~\cite{dddf-lfpp} that for, say, a zero-boundary GFF $\rng h^U$ on $U$, the metric $D_{\rng h^U}$ is the limit in law of LFPP on $U$ w.r.t.\ the topology of uniform convergence on compact subsets of $U\times U$: see, e.g., the arguments of~\cite[Section 2.2]{lqg-metric-estimates}.
\end{remark}

\begin{remark}[Why rotational invariance is unnecessary] \label{remark-rotational-invariance}
At a first glance, it may seem surprising that one does not need rotational invariance to uniquely characterize the LQG metric in Theorem~\ref{thm-strong-uniqueness}.
Indeed, one can define variants of LFPP which are not rotationally invariant by working with a stretched version of the Euclidean metric. For example, for a given $A > 1$ one can replace~\eqref{eqn-lfpp} by
\eqb \label{eqn-lfpp-stretched}
D_{h,A}^\ep(z,w) := \inf_{P : z\rta w} \int_0^1 e^{\xi h_\ep^*(P(t))} \sqrt{P_1'(t)^2 + A P_2'(t)^2} \,dt 
\eqe
where the infimum is over all piecewise continuously differentiable paths $P = (P_1,P_2)$ from~$z$ to~$w$.  The arguments of this paper and its predecessors apply verbatim with $D_{h,A}^\ep$ in place of $D_h^\ep$.  In particular, $D_{h,A}^\ep$ converges in probability to (a deterministic constant times) the $\gamma$-LQG metric and hence satisfies the rotational invariance property of Corollary~\ref{cor-rotational-invariance}. This is despite the fact that the metrics~\eqref{eqn-lfpp-stretched} do \emph{not} satisfy this rotational invariance property.

Here is an intuitive explanation for this phenomenon.  First, we note that $D_{h,A}^\ep$ is bi-Lipschitz equivalent with respect to $D_{h,1}^\ep  = D_h^\ep$ for each $\ep > 0$, with a deterministic bi-Lipschitz constants.  Therefore in a subsequential limit as $\ep \to 0$, we obtain two metrics $D_{h,A}$ and $D_h = D_{h,1}$ which are bi-Lipschitz equivalent with deterministic bi-Lipschitz constants.  Suppose that $P$ is a $D_h$-geodesic connecting $z$ and $w$. Using the confluence of geodesics results from~\cite{gm-confluence}, one can show that (very roughly speaking) for distinct times $s,t\in [0,D_h(z,w)]$, the restrictions of $h$ to small neighborhoods of $P(s)$ and $P(t)$ are approximately independent; see the outline of Section~\ref{sec-geodesic-iterate} in Section~\ref{sec-outline} below for details.  Moreover, since $P$ is a fractal type curve, it has no local notion of direction, so one expects that the law of $h$ restricted to a small neighborhood of $P(t)$ does not depend very strongly on $t$ or on the endpoints $z,w$ of $P$.  If we fix $n \in \BB N$ and let $0 = t_0 < \cdots t_n = D_h(z,w)$ be equally spaced times, we can approximate the $D_{h,A}$-length of $P$ by
\[ \sum_{j=1}^n D_{h,A}(P(t_{j-1}),P(t_j) ).\]
The above considerations suggest that each of the random variables $D_{h,A}(P(t_{j-1}),P(t_j))$ has approximately the same distribution and is bounded above and below by deterministic constants times $t_j-t_{j-1}$.  From law of large numbers type considerations, it follows that the $D_{h,A}$-length of $P$ is a deterministic constant times the $D_h$-length of $P$, where the constant does not depend on the endpoints of $P$.

Knowing that the $D_{h,A}$-length of every $D_h$ geodesic is a constant times its $D_h$-length (and vice-versa) does not immediately imply that $D_h$ is equal to a constant times $D_{h,A}$.  This is because if $P_n$ is a sequence of paths which converge uniformly to $P$, then it is not necessarily true that $\op{len}(P_n;D_{h,A})$ converges to $\op{len}(P;D_{h,A})$.  For this and other reasons, we will argue in a somewhat different manner than we have indicated above, though our arguments will still be based on the bi-Lipschitz equivalence of metrics and approximate independence statements for the local behavior of a geodesic at different times.  We will explain the general strategy in Section~\ref{sec-outline} in more detail.
\end{remark}

\subsection{Conjectured random planar map connection}
\label{sec-rpm-connection}

As noted above, the $\gamma$-LQG metric should describe the large scale behavior of the graph metric for random planar maps.  Since our $\gamma$-LQG metric is in some sense canonical, it is natural to make the following conjecture. 

\begin{conj} \label{conj-rpm-limit}
For each $\gamma \in (0,2)$, random planar maps in the $\gamma$-LQG universality class, equipped with their graph distance, converge in the scaling limit  with respect to the Gromov-Hausdorff topology to $\gamma$-LQG surfaces equipped with the $\gamma$-LQG metric constructed in Theorem~\ref{thm-lfpp} (see also Remark~\ref{remark-other-domains}). 
\end{conj}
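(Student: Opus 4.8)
The plan is to deduce Conjecture~\ref{conj-rpm-limit} from the axiomatic characterization of Theorem~\ref{thm-strong-uniqueness}, which reduces the problem to two tasks: \textbf{(i)} proving tightness of the rescaled random planar maps, equipped with their graph distance and counting measure, in the Gromov--Hausdorff--Prokhorov--uniform (GHPU) topology; and \textbf{(ii)} showing that any subsequential limit, once suitably embedded into $\BB C$, is a measurable function of the limiting Gaussian free field and satisfies Axioms~I--IV. It is natural to first carry this out for a single distinguished model --- the mated-CRT map, a semi-discrete model built directly from a pair of correlated Brownian motions and therefore equipped with a canonical embedding into $\BB C$ via the mating-of-trees/LQG correspondence --- and then to transfer the conclusion to genuinely discrete models (bipartite maps with prescribed face-degree distribution, FK-weighted maps, $O(n)$-decorated maps, etc.) using the strong couplings between those models and the mated-CRT map.

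\textbf{Step 1: tightness.} Using quantitative control of the graph metric of the planar map --- upper and lower bounds on the volume of graph-distance balls, on point-to-point distances, and on their fluctuations, derived from the encoding random walks/Brownian motions --- one shows that the laws of the rescaled metric measure spaces are tight in the GHPU topology and that every subsequential limit is a.s.\ a geodesic length space inducing the Euclidean topology on $\BB C$. Passing to a subsequence, obtain a limit $(\BB C, D, \mu, \phi)$, where $\phi$ records the embedding.

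\textbf{Step 2: identifying the embedded surface.} The mating-of-trees convergence pins down the joint law of $(\mu,\phi)$: $\mu$ is the $\gamma$-LQG area measure of a GFF-type field $h$ (a quantum cone or sphere, depending on the topology of the map), and $\phi$ is the corresponding canonical LQG embedding. One must additionally show that the limiting metric $D$ is a.s.\ a measurable function of $h$; this should follow because the discrete metric is a function of the map, which is encoded by the pair of processes converging to $h$, together with a $0$--$1$ law of the type used to upgrade subsequential convergence in law to convergence in probability.

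\textbf{Step 3: verifying the axioms, and the main obstacle.} The length-space axiom is immediate, and the translation/scaling coordinate-change axiom follows from the scale and translation covariance of the encoding (equivalently, from re-rooting invariance of the random planar map together with the exponent $d_\gamma$ relating the discrete area and distance scalings). Locality requires a discrete "domain Markov property for distances'': the graph distance inside a submap cut out along a loop or path is determined by that submap, and passing to the limit yields Axiom~\ref{item-metric-local0}. The delicate point is Weyl scaling: adding a bump function $f$ to $h$ corresponds, through the KPZ relation, to locally reweighting the number of faces of the map by a factor of the form $e^{(\cdots)f}$, and one must show that this has the effect of rescaling limiting path lengths by $e^{\xi f}$ --- which appears to demand a uniform, quantitative comparison of the graph metric with a Liouville-first-passage-percolation-type functional of the embedded field. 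I expect Step~3, and within it the Weyl-scaling and locality axioms, to be the principal difficulty: for $\gamma=\sqrt{8/3}$ these were bypassed because the limit is externally identified with the Brownian map, but for general $\gamma\in(0,2)$ no such external description is available, so one is forced to relate the discrete metric directly to the continuum GFF, uniformly across scales. Even Step~1 is currently known only for a limited class of models. Once Steps~1--3 are in place, Theorem~\ref{thm-strong-uniqueness} gives $D = C D_h$ for a deterministic $C = C(\gamma)>0$, a normalization computation (e.g.\ matching a median crossing distance) yields the value of $C$, and since the limit is the same along every subsequence the full scaling limit exists; the coupling with the mated-CRT map then transfers the statement to the discrete models.
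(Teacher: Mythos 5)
You are attempting to prove Conjecture~\ref{conj-rpm-limit}, which the paper does not prove: it is stated as a conjecture and restated as Problem~\ref{prob-discrete} in the list of open problems, so there is no proof in the paper to compare against. Your outline is essentially the programme the authors themselves sketch there (handle the mated-CRT map first via its exact mating-of-trees/LQG embedding, then transfer to other models by strong coupling), and you correctly locate the hard points; but as written it is a strategy, not a proof, and each step contains a gap that is genuinely open. Step~1 (GHPU tightness with Euclidean topology in the limit) is not known for the weighted map models in question. In Step~2, the device used in the paper to show that a subsequential limit is a measurable function of $h$ is~\cite[Corollary 1.8]{local-metrics}, a criterion for \emph{local} metrics coupled with the GFF; a subsequential limit of graph metrics on an embedded random planar map is not known to be a local functional of the field in any sense that would let this (or the $0$--$1$ law you invoke) apply, and nothing in the mating-of-trees convergence pins down the conditional law of the limiting metric given $h$.

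Step~3 is where the proposal is furthest from a proof. Theorem~\ref{thm-strong-uniqueness} characterizes a measurable map $h\mapsto D_h$ defined whenever $h$ is a whole-plane GFF plus a continuous function and satisfying Axioms~\ref{item-metric-length0}--\ref{item-metric-coord0}; a subsequential limit coupled with a single quantum cone or quantum sphere does not by itself produce such a map, and one would still have to construct the whole-plane version (e.g.\ via local absolute continuity as in Remark~\ref{remark-other-domains}) \emph{after} knowing locality --- which is itself unproven, since the ``discrete domain Markov property for distances'' does not obviously survive the limit when the embedding is only known to converge in weak (peanosphere or measure) topologies. Weyl scaling has no discrete counterpart at all: reweighting faces by $e^{(\cdots)f}$ changes the law of the map rather than telling you how the fixed metric functional transforms under $h\mapsto h+f$, so verifying Axiom~\ref{item-metric-f0} for the limit would require exactly the quantitative comparison between graph distances and an LFPP-type functional of the field that is currently missing. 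Finally, the transfer from the mated-CRT map to genuinely discrete models via the known strong couplings only controls distances up to polylogarithmic multiplicative errors, which is not sufficient to transfer a Gromov--Hausdorff scaling limit. In short, your reduction-to-the-axioms plan is the expected route, but none of the three steps is established, so the conjecture remains open.
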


Examples of planar map models to which Conjecture~\ref{conj-rpm-limit} should apply include random planar maps weighted by the number of spanning trees ($\gamma = \sqrt 2$), the Ising model partition function ($\gamma=\sqrt 3$), the number of bipolar orientations ($\gamma=\sqrt{4/3}$; \cite{kmsw-bipolar}), or the Fortuin-Kasteleyn model partition function ($\gamma \in (\sqrt 2 , 2)$; \cite{shef-burger}). Another class of models is the so-called \emph{mated-CRT maps}, which are defined for all $\gamma\in (0,2)$; see~\cite{wedges,ghs-map-dist,gms-tutte}. 

For $\gamma =\sqrt{8/3}$, Conjecture~\ref{conj-rpm-limit} has already been proven for many different uniform-type random planar maps. 
The reason for this is that we know that our $\sqrt{8/3}$-LQG metric is equivalent to the metric of~\cite{lqg-tbm1,lqg-tbm2,lqg-tbm3} (Corollary~\ref{cor-tbm-equivalence}); which in turn is equivalent to a Brownian surface, such as the Brownian map, for certain special $\sqrt{8/3}$-LQG surfaces~\cite[Corollary 1.5]{lqg-tbm2}; which in turn is the scaling limit of uniform random planar maps of various types~\cite{legall-uniqueness,miermont-brownian-map}. 

Conjecture~\ref{conj-rpm-limit} has not been proven for any random planar map model for $\gamma\not=\sqrt{8/3}$.  However, we already have a relationship between the continuum LQG metric and graph distances in random planar maps at the level of exponents for all $\gamma \in (0,2)$.  Indeed, the quantity $d_\gamma$ appearing in~\eqref{eqn-xi} describes several exponents associated with random planar maps, such as the ball volume exponent~\cite{ghs-map-dist,dg-lqg-dim} and the displacement exponent for simple random walk on the map~\cite{gm-spec-dim,gh-displacement}.  It is proven in~\cite{gp-kpz} that~$d_\gamma$ is the Hausdorff dimension of~$D_h$. 

Conjecture~\ref{conj-rpm-limit} can be made somewhat more precise by specifying exactly what type of $\gamma$-LQG surface should arise in the scaling limit. For random planar maps with the topology of the sphere (resp.\ disk, plane, half-plane) this surface should be the quantum sphere (resp.\ quantum disk, $\gamma$-quantum cone, $\gamma$-quantum wedge). See~\cite{wedges} for precise definitions of these quantum surfaces. Equivalent definitions of the quantum sphere and quantum disk, respectively, can be found in~\cite{dkrv-lqg-sphere,hrv-disk} (see~\cite{ahs-sphere,cercle-quantum-disk} for a proof of the equivalence). Some planar map models have been proven to converge to these quantum surfaces, for general $\gamma  \in (0,2)$, with respect to topologies which do not encode the metric structure explicitly. Examples of such topologies include convergence in the so-called \emph{peanosphere sense}~\cite{shef-burger,wedges} and convergence of the counting measure on vertices to the $\gamma$-LQG measure when the planar map is embedded appropriately into the plane~\cite{gms-tutte}.

\subsection{Weak LQG metrics and a stronger uniqueness statement}
\label{sec-weak-uniqueness}

We will prove Theorem~\ref{thm-lfpp} and~\ref{thm-strong-uniqueness} simultaneously by establishing a uniqueness statement for metrics under a weaker list of axioms, which are satisfied for both the strong LQG metrics considered in Section~\ref{sec-strong-uniqueness} and for subsequential limits of LFPP (as is shown in~\cite{dddf-lfpp,lqg-metric-estimates}). 

Let $\mcl D'(\BB C)$ be the space of distributions as in Section~\ref{sec-strong-uniqueness}. 
A \emph{weak $\gamma$-LQG metric} is a measurable function $h\mapsto D_h$ from $\mcl D'(\BB C)$ to the space of continuous metrics on $\BB C$ such that the following is true whenever $h$ is a whole-plane GFF plus a continuous function.
\begin{enumerate}[I.]
\item \textbf{Length space.} Almost surely, $(\BB C , D_h)$ is a length space, i.e., the $D_h$-distance between any two points of $\BB C$ is the infimum of the $D_h$-lengths of $D_h$-continuous paths (equivalently, Euclidean continuous paths) between the two points. \label{item-metric-length}
\item \textbf{Locality.} Let $U\subset\BB C$ be a deterministic open set. 
The internal metric $D_h(\cdot,\cdot ; U)$ is a.s.\ determined by $h|_U$.  \label{item-metric-local}
\item \textbf{Weyl scaling.} If we define $e^{\xi f} \cdot D_h$ as in~\eqref{eqn-metric-f}, then a.s.\ $ e^{\xi f} \cdot D_h = D_{h+f}$ for every continuous function $f : \BB C\rta\BB R$.   \label{item-metric-f}     
\item \textbf{Translation invariance.} For each fixed deterministic $z \in \BB C$, a.s.\ $D_{h(\cdot + z)} = D_h(\cdot + z , \cdot+z)$.  \label{item-metric-translate}
\item \textbf{Tightness across scales.} Suppose $h$ is a whole-plane GFF and for $z\in\BB C$ and $r>0$ let $h_r(z)$ be the average of $h$ over the circle $\bdy B_r(z)$. For each $r > 0$, there is a deterministic constant $\frk c_r > 0$ such that the set of laws of the metrics $\frk c_r^{-1} e^{-\xi h_r(0)} D_h (r \cdot , r\cdot)$ for $r > 0$ is tight (w.r.t.\ the local uniform topology). Furthermore, the closure of this set of laws w.r.t.\ the Prokhorov topology is contained in the set of laws on continuous metrics on $\BB C$ (i.e., every subsequential limit of the laws of the metrics $\frk c_r^{-1} e^{-\xi h_r(0)} D_h (r \cdot  , r \cdot )$ is supported on metrics which induce the Euclidean topology on $\BB C$). Finally, there exists  \label{item-metric-coord} 
$\Lambda > 1$ such that for each $\delta \in (0,1)$, 
\eqb \label{eqn-scaling-constant}
\Lambda^{-1} \delta^\Lambda \leq \frac{\frk c_{\delta r}}{\frk c_r} \leq \Lambda \delta^{-\Lambda} ,\quad\forall r  > 0.
\eqe 
\end{enumerate}

Axioms~\ref{item-metric-length} through~\ref{item-metric-f} for a weak LQG metric are identical to the corresponding axioms for a strong LQG metric.
Axiom~\ref{item-metric-translate} for a weak LQG metric is equivalent to Axiom~\ref{item-metric-coord0} (coordinate change) for a strong LQG metric with $r=1$.
Axiom~\ref{item-metric-coord} for a weak $\gamma$-LQG metric is a substitute for the exact scale invariance property given by Axiom~\ref{item-metric-coord0} for a strong LQG metric. This axiom implies the tightness of various functionals of $D_h$. For example, if $U\subset\BB C$ is open and $K\subset U$ is compact, then the laws of 
\eqb
\left( \frk c_r^{-1} e^{-\xi h_r(0)} D_h (r K , r\bdy U) \right)^{-1} \quad \text{and} \quad \frk c_r^{-1} e^{-\xi h_r(0)} \sup_{u,v\in r K} D_h (  u ,  v ; r U )
\eqe
as $r$ varies are tight. It is shown in~\cite[Theorem 1.5]{lqg-metric-estimates} that for any weak $\gamma$-LQG metric, one in fact has the following stronger version of~\eqref{eqn-scaling-constant}: 
\eqb \label{eqn-scaling-constant-stronger}
\frac{\frk c_{\delta r}}{\frk c_r} = \delta^{\xi Q + o_\delta(1)}, \quad\text{ uniformly over all $r>0$}.
\eqe 

By the scale invariance of the law of the whole-plane GFF, modulo additive constant, Axiom~\ref{item-metric-coord0} for a strong LQG metric immediately implies Axiom~\ref{item-metric-coord} for a weak LQG metric with $\frk c_r = r^{\xi Q }$, for $Q$ as in~\eqref{eqn-lqg-coord}. Indeed, using Axiom~\ref{item-metric-coord0} and then Axiom~\ref{item-metric-f0} for a strong $\gamma$-LQG metric shows that
\eqb \label{eqn-strong-implies-weak}
r^{-\xi Q} e^{-\xi h_r(0)} D_h (r \cdot , r\cdot)
= r^{-\xi Q} e^{-\xi h_r(0)} D_{h(r\cdot )  +Q\log r} 
= D_{h(r\cdot) - h_r(0)} 
\eqD D_h .
\eqe
Hence every strong $\gamma$-LQG metric is a weak $\gamma$-LQG metric. 

It is shown in~\cite[Theorem 1.2]{lqg-metric-estimates} that every subsequential limit in probability of the LFPP metrics $D_h^\ep$ of~\eqref{eqn-lfpp} is of the form $D_h$ where $D$ is a weak $\gamma$-LQG metric. 
Consequently, the following theorem contains both Theorem~\ref{thm-lfpp} and Theorem~\ref{thm-strong-uniqueness}. 
 
\begin{thm}[Strong uniqueness of weak LQG metrics] \label{thm-stronger-uniqueness}
Let $\gamma \in (0,2)$. 
Every weak $\gamma$-LQG metric is a strong $\gamma$-LQG metric. 
In particular, by Theorem~\ref{thm-strong-uniqueness}, such a metric exists for each $\gamma \in (0,2)$ and if $D$ and $\wt D$ are two weak $\gamma$-LQG metrics, then there is a deterministic constant $C>0$ such that if $h$ is a whole-plane GFF plus a continuous function, then a.s.\ $D_h = C \wt D_h$. 
\end{thm}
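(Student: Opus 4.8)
The plan is to verify that an arbitrary weak $\gamma$-LQG metric $D$ satisfies the stronger list of axioms. Axioms~\ref{item-metric-length0},~\ref{item-metric-local0} and~\ref{item-metric-f0} for a strong metric are word for word the weak Axioms~\ref{item-metric-length},~\ref{item-metric-local} and~\ref{item-metric-f}, and the $r=1$ case of the strong coordinate‑change Axiom~\ref{item-metric-coord0} is exactly the weak translation invariance Axiom~\ref{item-metric-translate}. So the entire content of the theorem is to promote translation invariance to the \emph{exact} Weyl‑type scaling relation: given translation invariance, it suffices to show that for each fixed $r>0$, whenever $h$ is a whole-plane GFF plus a continuous function,
\eqb \label{eqn-exact-scaling-goal}
D_h(r u , r v) = D_{h(r\cdot) + Q\log r}(u,v) \qquad \forall\, u,v\in\BB C , \quad\text{a.s.}
\eqe
(the general affine case $z\mapsto rz+z_0$ then follows by composing with a translation).

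The only genuinely hard ingredient is the uniqueness statement for \emph{weak} $\gamma$-LQG metrics that is the heart of the paper: \emph{if $D$ and $\wt D$ are two weak $\gamma$-LQG metrics then there is a deterministic $C>0$ with $D_h = C\wt D_h$ a.s.\ whenever $h$ is a whole-plane GFF plus a continuous function}. This is established through the bi-Lipschitz equivalence across Euclidean scales supplied by Axiom~\ref{item-metric-coord}, the confluence-of-geodesics results of~\cite{gm-confluence}, and the ``best constant'' bootstrap outlined in Remark~\ref{remark-rotational-invariance} and Section~\ref{sec-outline}; this is where essentially all the difficulty lies, and I treat it as a black box. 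Granting it, fix $r>0$ and consider the functional
\eqb \label{eqn-rescaled-metric}
D^r_h(u,v) := D_{h(r\cdot) + Q\log r}\!\left( u/r , v/r \right) ,
\eqe
a measurable map from $\mcl D'(\BB C)$ to continuous metrics on $\BB C$. Applying the change of variables $z\mapsto z/r$ together with Axioms~\ref{item-metric-length}--\ref{item-metric-translate} for $D$, and using Weyl scaling (Axiom~\ref{item-metric-f}) to absorb the constant $Q\log r$, shows that $D^r$ again satisfies Axioms~\ref{item-metric-length}--\ref{item-metric-translate}. For Axiom~\ref{item-metric-coord}, write $h(r\cdot) - h_r(0) \eqD$ a standard whole-plane GFF and use Weyl scaling once more: one checks that $s\mapsto r^{\xi Q}\frk c_{s/r}$ serves as a family of scaling constants for $D^r$, and the polynomial bound~\eqref{eqn-scaling-constant} transfers. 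Hence $D^r$ is a weak $\gamma$-LQG metric, so by the uniqueness statement there is a deterministic $C(r)>0$ with $D^r_h = C(r)D_h$ a.s.; the relation~\eqref{eqn-exact-scaling-goal} is precisely the claim that $C(r)=1$.

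To pin down $C(r)$ I would use two observations. A direct computation from~\eqref{eqn-rescaled-metric} gives the cocycle identity $(D^r)^{r'} = D^{rr'}$; combined with $D^r=C(r)D$ this yields $C(rr')=C(r)C(r')$, and since $r\mapsto C(r) = D^r_h(u,v)/D_h(u,v)$ (for fixed $u\neq v$) is measurable, a measurable multiplicative homomorphism of $(0,\infty)$ is a power, so $C(r)=r^{\alpha}$ for some $\alpha\in\BB R$. On the other hand, applying Weyl scaling to~\eqref{eqn-rescaled-metric} with $h$ a standard whole-plane GFF gives
\eqb
\frk c_r^{-1} e^{-\xi h_r(0)} D_h(r\cdot , r\cdot) \eqD \frk c_r^{-1} C(r)^{-1} r^{\xi Q} D_h ,
\eqe
and since the left-hand side is tight as $r$ varies with subsequential limits that are continuous metrics inducing the Euclidean topology (Axiom~\ref{item-metric-coord}), while $D_h$ is a fixed continuous metric, the scalar $\frk c_r^{-1}C(r)^{-1}r^{\xi Q}$ must remain bounded above and bounded away from $0$; equivalently $\frk c_r \asymp r^{\xi Q - \alpha}$. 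Substituting this polynomial behavior into the refined scaling relation~\eqref{eqn-scaling-constant-stronger}, i.e.\ $\frk c_{\delta r}/\frk c_r = \delta^{\xi Q + o_\delta(1)}$ uniformly in $r$, forces the exponent $\xi Q-\alpha$ to equal $\xi Q$, hence $\alpha = 0$ and $C(r)=1$. This proves~\eqref{eqn-exact-scaling-goal}, so $D$ satisfies all the strong axioms; the existence assertion and the ``$D_h = C\wt D_h$'' assertion in the theorem statement then follow from Theorem~\ref{thm-strong-uniqueness}.

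The main obstacle, to reiterate, is the weak-metric uniqueness statement invoked in the second paragraph; once that is in hand, everything reduces to the cocycle property of the rescaling operation $D\mapsto D^r$ and the transfer of Axiom~\ref{item-metric-coord}. The one point in the bookkeeping that must genuinely be checked is that $D^r$ is a measurable function of $h$ and that the scaling constants and the Prokhorov-tightness clause of Axiom~\ref{item-metric-coord} transfer correctly under the change of variables $z\mapsto z/r$; this is routine but not automatic.
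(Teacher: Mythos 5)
Your route is essentially the paper's own. The paper proves this theorem via Lemma~\ref{lem-weak-to-strong}: it considers the rescaled metric $D^{(b)}_h := D_{h(\cdot/b)}(b\cdot,b\cdot)$, checks that it is a weak $\gamma$-LQG metric with the same scaling constants as $D$, applies the weak uniqueness statement to the pair $(D,D^{(b)})$ to get $D^{(b)}_h = \frk k_b D_h$, uses multiplicativity plus continuity (measurability suffices) in $b$ to get $\frk k_b = b^\alpha$, and identifies the exponent via \eqref{eqn-scaling-constant-stronger}. Your $D^r$, the cocycle identity, and your identification of $\alpha$ through tightness and \eqref{eqn-scaling-constant-stronger} are the same argument, up to the cosmetic inclusion of the $Q\log r$ shift so that the target constant is $1$ rather than $b^{-\xi Q}$.

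The one point you must repair is the statement of your black box. As written (``if $D$ and $\wt D$ are two weak $\gamma$-LQG metrics then $D_h = C\wt D_h$'') it is literally the ``in particular'' clause of the theorem you are proving, and in the paper's logical order that unrestricted statement is only deduced \emph{after} the weak-to-strong step; the result that is genuinely available as a black box is Theorem~\ref{thm-weak-uniqueness}, which carries the extra hypothesis that the two weak metrics have the \emph{same} scaling constants $\frk c_r$. Invoking the unrestricted version is circular. Your argument survives because the pair $(D,D^r)$ does satisfy the restricted hypothesis: for fixed $r$, the constants $s\mapsto r^{\xi Q}\frk c_{s/r}$ that you exhibit for $D^r$ are bounded above and below by constant multiples of $\frk c_s$ (apply \eqref{eqn-scaling-constant} with the fixed ratio $r$), and Axiom~\ref{item-metric-coord} is insensitive to such a uniformly bounded change of constants, so $D^r$ may be taken to have the same $\frk c_s$ as $D$ and Theorem~\ref{thm-weak-uniqueness} applies. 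With that correction — and a sentence justifying measurability of $r\mapsto C(r)$ (the paper instead proves continuity at $b=1$ by noting $D^{(b)}_h\to D_h$ in law and then uses the multiplicative relation) — your proof coincides with the paper's.
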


It turns out that all of our main results are easy consequences of the following statement, which superficially seems to be weaker that Theorem~\ref{thm-stronger-uniqueness}. 

\begin{thm}[Weak uniqueness of weak LQG metrics] \label{thm-weak-uniqueness}
Let $\gamma \in (0,2)$ and let $D$ and $\wt D$ be two weak $\gamma$-LQG metrics which have the \emph{same} values of $\frk c_r$ in Axiom~\ref{item-metric-coord}. 
There is a deterministic constant $C > 0$ such that if $h$ is a whole-plane GFF plus a continuous function, then a.s.\ $D_h = C \wt D_h$. 
\end{thm}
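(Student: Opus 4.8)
The plan is to reduce the statement to the claim that the two optimal constants for an a.s.\ bi-Lipschitz comparison of $D_h$ and $\wt D_h$ coincide. Define
\[
c_* := \sup\{\, c > 0 : D_h \geq c\, \wt D_h \text{ a.s.}\,\}, \qquad c^* := \inf\{\, C > 0 : D_h \leq C\, \wt D_h \text{ a.s.}\,\},
\]
where the inequalities are required to hold simultaneously for all pairs of points of $\BB C$; by a countable intersection these extrema are attained, so $D_h \geq c_* \wt D_h$ and $D_h \leq c^* \wt D_h$ a.s. First I would show $0 < c_* \leq c^* < \infty$, using the a priori estimates for weak LQG metrics reviewed in Section~\ref{sec-prelim} --- H\"older comparison with the Euclidean metric, moment bounds for distances across Euclidean annuli, and regularity of geodesics from \cite{lqg-metric-estimates} --- together with the hypothesis that $D$ and $\wt D$ have the \emph{same} constants $\frk c_r$, so that by Axiom~\ref{item-metric-coord} the two metrics rescaled to unit scale have tight and comparable laws. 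The theorem is then equivalent to the equality $c_* = c^*$, with $C = c_*$.

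Second, suppose toward a contradiction that $c_* < c^*$ and fix $\lambda \in (c_*, c^*)$. Since $\lambda > c_*$, the bound $D_h \geq \lambda\wt D_h$ fails with positive probability. From this I would extract a \emph{local improvement event}: using translation invariance (Axiom~\ref{item-metric-translate}), the scale invariance of the law of the whole-plane GFF modulo additive constant together with Axiom~\ref{item-metric-coord}, the quasi-independence of the GFF across separated regions, continuity of the metrics, and the a priori regularity of geodesics, produce a fixed annulus $\BB A := B_2(0) \setminus \ol{B_1(0)}$ and a deterministic $p_0 > 0$ such that with probability at least $p_0$ there is an event $G$, determined by $h|_{\BB A}$, on which both metrics satisfy the usual a priori bounds on $\BB A$ and, crucially, $D_h(a,b;\BB A) \leq \lambda\, \wt D_h(a,b;\BB A)$ for \emph{every} $a \in \bdy B_1(0)$ and $b \in \bdy B_2(0)$. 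Obtaining this last inequality uniformly over boundary pairs (rather than for a single pair produced by the positive-probability event above) is the first technical hurdle; one reduces to finitely many representative pairs via the a priori estimates and chains along the two circles.

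Third --- the heart of the argument --- I would concatenate along a geodesic. Fix $z, w \in \BB C$, let $P$ be a $\wt D_h$-geodesic from $z$ to $w$ (which exists because $(\BB C,\wt D_h)$ is a geodesic space, by the a priori estimates), and for small $\rho > 0$ consider the rescaled translates $\rho\BB A + \rho\zeta$, $\zeta \in \BB Z^2$. Lower bounds on crossing distances force $P$ to cross order $N_\rho \to \infty$ of these annuli as $\rho \to 0$, and scale invariance together with the matching $\frk c_r$ guarantees that for each of them the local good event occurs with the same probability $p_0$. Build a competitor $P'$ from $P$ by replacing, on each \emph{good} crossed annulus, the corresponding segment of $P$ (from its last exit point $a_j$ of $B_\rho(\rho\zeta_j)$ to its first subsequent hitting point $b_j$ of $\bdy B_{2\rho}(\rho\zeta_j)$) by a $D_h$-geodesic from $a_j$ to $b_j$. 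Since $P$ is a $\wt D_h$-geodesic and the replaced segment stays in $\ol{\rho\BB A + \rho\zeta_j}$, its $\wt D_h$-length equals $\wt D_h(a_j,b_j;\rho\BB A + \rho\zeta_j)$, while on $G$ the replacing geodesic has $D_h$-length at most $\lambda\, \wt D_h(a_j,b_j;\rho\BB A+\rho\zeta_j)$; on the remainder of $P$ we use $D_h \leq c^*\wt D_h$. Summing,
\[
D_h(z,w) \leq \op{len}(P';D_h) \leq \lambda L + c^*\big(\wt D_h(z,w) - L\big),
\]
where $L$ is the total $\wt D_h$-length of the good crossed segments of $P$.

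The remaining, and main, obstacle is a concentration statement: there should be a deterministic $\theta > 0$ with $L \geq \theta\, \wt D_h(z,w)$ with probability tending to $1$ as $\rho \to 0$. This is exactly where the confluence of geodesics from \cite{gm-confluence} enters --- it lets one argue that the local good events attached to the various annuli are approximately independent of one another and of the coarse-scale information in $h$ that determines which annuli $P$ visits, so that a second moment or martingale argument yields the required fraction. Granting this, $D_h(z,w) \leq \big(c^* - (c^*-\lambda)\theta\big)\wt D_h(z,w)$ with probability tending to $1$; as $z,w$ are fixed this holds a.s., and by translation and scale invariance it holds a.s.\ for each fixed pair, hence, by continuity, a.s.\ for all $z,w$ simultaneously. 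Since $c^* - (c^*-\lambda)\theta < c^*$ this contradicts the definition of $c^*$, so $c_* = c^*$ as desired. Two points merit emphasis: because uniform convergence of paths need not preserve $D_h$-length (cf.\ Remark~\ref{remark-rotational-invariance}) one must work throughout with the single fixed geodesic $P$ and the explicit surgery $P'$, never with limits of near-optimal paths; and the step from ``improvement with probability $\to 1$'' to ``a.s.'' uses that the per-annulus good event has a $\rho$-independent probability, which is precisely the role of the hypothesis that $D$ and $\wt D$ share the constants $\frk c_r$. I expect the approximate-independence input underlying the concentration step to be by far the most delicate part.
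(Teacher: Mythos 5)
Your skeleton matches the paper's strategy in outline (optimal bi-Lipschitz constants, contradiction, local ``improvement'' events, independence along a geodesic via confluence), but two of your intermediate steps do not hold as stated, and the step you defer is in fact the bulk of the proof. First, the local event you propose --- with probability at least a deterministic $p_0$, \emph{every} crossing pair $a\in\bdy B_1(0)$, $b\in\bdy B_2(0)$ satisfies $D_h(a,b;\BB A)\leq\lambda\,\wt D_h(a,b;\BB A)$ --- is not something that follows from ``$D_h\geq\lambda\wt D_h$ fails with positive probability''. That failure only produces a \emph{single} pair of points, at some location and scale, and chaining along the circles cannot upgrade it: the connecting segments are only controlled by the constant $c^*$, so the triangle inequality degrades the comparison back toward $c^*$ rather than propagating $\lambda$. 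The paper never establishes (and does not need) such a uniform-over-boundary-pairs event; Propositions~\ref{prop-attained-good} and~\ref{prop-attained-good'} only give, at suitably many scales, the \emph{existence} of one good pair $u,v$ whose geodesic is confined to an annulus, and Section~\ref{sec-geodesic-shortcut} then forces the geodesic $P$ near that particular pair by subtracting an explicit bump function supported in a tube containing the $\wt D_h$-geodesic from $u$ to $v$ and using absolute continuity; the per-ball input to the iteration is the Radon--Nikodym comparison of conditional probabilities in condition~\ref{item-geo-iterate-compare} of Theorem~\ref{thm-geo-iterate}, not an intersection with a high- (or even bounded-below-) probability $h|_{\BB A}$-measurable event of the kind you posit. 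Second, your claim that ``scale invariance together with the matching $\frk c_r$ guarantees the same $p_0$ at every scale $\rho$'' is false for weak metrics: Axiom~\ref{item-metric-coord} gives only tightness across scales, so positive probability at one scale does not transfer to all scales; the paper only obtains the statement along a sufficiently dense set of scales (Section~\ref{sec-attained}), and all later sections carry the $\mu,\nu$ and $\mcl R$ bookkeeping precisely to cope with this.

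Third, the concentration statement $L\geq\theta\,\wt D_h(z,w)$ with probability tending to $1$, which you flag as delicate and attribute to ``approximate independence via confluence plus a second moment argument'', is exactly the content of Sections~\ref{sec-geodesic-iterate}--\ref{sec-geodesic-shortcut}, and it is resolved there only in the weaker form ``with high probability the geodesic hits at least \emph{one} good ball''. The difficulty is that whether $P$ crosses a given annulus is strongly correlated with the field inside it, so one cannot condition naively; the paper's workaround uses confluence (Theorems~\ref{thm-finite-geo0} and~\ref{thm-finite-geo-quant}) to produce stability times at which a local modification of $h$ does not alter the earlier geodesic, a filtration by filled metric balls together with the geodesic so far, Lemma~\ref{lem-geo-bdy} to control how many balls the geodesic can touch, and Hoeffding/Paley--Zygmund arguments --- and even then the improvement per pair of endpoints is only by an $\ep$-power, which yields a contradiction only when played against the quantitative many-scales bound for $\ol G_{\BB r}(C_*-\delta,\ol\beta)$ from Proposition~\ref{prop-attained-max}. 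Your stronger positive-fraction claim does not follow from confluence in any direct way and would require a genuinely new argument. So while your surgery construction of the competitor path $P'$ and the endgame are sound, the proposal as written has gaps at the three places that constitute the actual work of the paper.
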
 

Most of the paper is devoted to the proof of Theorem~\ref{thm-weak-uniqueness}. Let us now explain how Theorem~\ref{thm-weak-uniqueness} implies the other main theorems stated above. We first establish the first statement of Theorem~\ref{thm-stronger-uniqueness}.

\begin{lem} \label{lem-weak-to-strong}
Every weak $\gamma$-LQG metric is a strong $\gamma$-LQG metric.
\end{lem}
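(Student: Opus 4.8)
The plan is to take a weak $\gamma$-LQG metric $D$ and verify that it satisfies the two axioms in the definition of a strong $\gamma$-LQG metric that are not literally among the weak axioms, namely the full coordinate-change relation (Axiom~\ref{item-metric-coord0}) with general $r>0$ and $z\in\BB C$, rather than just translation invariance (Axiom~\ref{item-metric-translate}). Axioms~\ref{item-metric-length0},~\ref{item-metric-local0},~\ref{item-metric-f0} are identical in the two lists, so nothing needs to be checked there. The key point is that Axiom~\ref{item-metric-coord} for a weak metric gives us the scaling constants $\frk c_r$, which a priori need only satisfy the polynomial bound~\eqref{eqn-scaling-constant}, whereas a strong metric forces $\frk c_r = r^{\xi Q}$ exactly (up to the overall constant from~\eqref{eqn-strong-implies-weak}); the main work is pinning down $\frk c_r$ and then upgrading distributional scale covariance to an almost sure identity valid simultaneously for all points.

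First I would fix a weak $\gamma$-LQG metric $D$ and, for each $r > 0$, define a new candidate metric by rescaling: set $D_h^{(r)}(u,v) := D_{h(r\cdot) + Q\log r}(u/r, v/r)$ for $u,v \in \BB C$, where $Q = 2/\gamma + \gamma/2$. Using Axioms~\ref{item-metric-length},~\ref{item-metric-local},~\ref{item-metric-f},~\ref{item-metric-translate} for $D$, together with the scale invariance of the law of the whole-plane GFF modulo additive constant (which is what makes $h \mapsto h(r\cdot) + Q\log r$ interact well with the circle-average normalization), one checks that $D^{(r)}$ is again a weak $\gamma$-LQG metric, and moreover that it has the \emph{same} scaling constants $\frk c_\rho$ in Axiom~\ref{item-metric-coord} as $D$ itself — this is exactly the computation~\eqref{eqn-strong-implies-weak} read in reverse, using that replacing $h$ by $h(r\cdot)+Q\log r$ shifts the relevant circle averages in a way compensated by the $r^{\xi Q}$ factor, so the constants $\frk c_\rho$ do not change. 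Then Theorem~\ref{thm-weak-uniqueness} applies to the pair $D$ and $D^{(r)}$: there is a deterministic $C(r) > 0$ with $D_h^{(r)} = C(r)\, D_h$ a.s.\ whenever $h$ is a whole-plane GFF plus a continuous function. A standard argument — apply this to two different radii and compose, using $D^{(rr')} = (D^{(r)})^{(r')}$ up to relabelling — shows $C$ is multiplicative, $C(rr') = C(r)C(r')$, hence $C(r) = r^\alpha$ for some $\alpha \in \BB R$; comparing the Axiom~\ref{item-metric-coord} scaling of both sides (or using~\eqref{eqn-scaling-constant-stronger}, which holds for the weak metric $D$) forces $C(r) = 1$ for all $r$, i.e.\ $D$ is exactly scale-covariant in the distributional sense. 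Finally, to get the almost-sure statement with a \emph{fixed} $r$ and $z$ holding simultaneously for all $u,v$ — as required by Axiom~\ref{item-metric-coord0} — I would combine the distributional identity just obtained with translation invariance (Axiom~\ref{item-metric-translate}) to handle the recentering at $z$, and then promote ``equal in law as metrics'' to ``equal a.s.'' by noting that both $D_h(r\cdot+z, r\cdot+z)$ and $D_{h(r\cdot+z)+Q\log r}$ are measurable functions of the same field (via Axiom~\ref{item-metric-local}, locality, since a metric is determined by $h$) whose laws agree; uniqueness of the measurable map $h \mapsto D_h$ given its law, i.e.\ the fact that $D$ is a \emph{deterministic} measurable functional applied to $h$, then forces the a.s.\ identity.

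The main obstacle I expect is the last step: passing from equality in law of the rescaled metrics to an almost sure identity, simultaneously over all pairs of points. One cannot simply invoke Theorem~\ref{thm-weak-uniqueness} again because that theorem already outputs a deterministic constant; the issue is rather bookkeeping about which $\sigma$-algebra each side is measurable with respect to and ensuring the coupling is the identity coupling. The clean way is to observe that $h \mapsto D_{h(r\cdot+z)+Q\log r}$ and $h \mapsto D_h(r\cdot+z,r\cdot+z)$ are both of the form (fixed measurable functional) $\circ$ (fixed measurable transformation of $h$), and that the functional $D$ is the \emph{same} in both — so once we know the two resulting random metrics have the same law (which is what the $C(r)=1$ computation gives, after applying translation invariance), we are comparing $F(h)$ and $G(h)$ for explicit measurable $F,G$; the identity $F = G$ as functions, not merely in law, follows because the distributional scale-covariance we proved says precisely $F(h) \eqD G(h)$ \emph{jointly with} $h$, and both are $\sigma(h)$-measurable. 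A careful writeup needs to make sure the normalization of the additive constant for the whole-plane GFF is tracked consistently through the rescaling $h \mapsto h(r\cdot)$, since the circle average over $\bdy\BB D$ is not preserved; this is routine but is where sign/constant errors would creep in, so it deserves care rather than cleverness.
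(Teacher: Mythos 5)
Your proposal is correct and follows essentially the same route as the paper: rescale the metric, check that the rescaled metric is a weak $\gamma$-LQG metric with the same scaling constants, invoke Theorem~\ref{thm-weak-uniqueness} to get a deterministic factor $C(r)$, use multiplicativity (together with continuity/measurability in $r$, which the paper verifies explicitly) to get $C(r)=r^\alpha$, and pin down $\alpha$ via~\eqref{eqn-scaling-constant-stronger}. The worry in your final paragraph is moot: Theorem~\ref{thm-weak-uniqueness} already yields the almost sure identity $D^{(r)}_h = C(r) D_h$ for the \emph{same} field $h$ (not merely equality in law), so once $C(r)=1$ the strong coordinate-change axiom holds a.s.\ at $z=0$, and general $z$ follows by applying this statement to $h(\cdot+z)$ together with Axiom~\ref{item-metric-translate}.
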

\begin{proof}[Proof of Lemma~\ref{lem-weak-to-strong} assuming Theorem~\ref{thm-weak-uniqueness}]
Suppose that $D $ is a weak $\gamma$-LQG metric.
For $b >0$, we define 
\eqb \label{eqn-weak-to-strong-wtD}
D^{(b)}_h(\cdot,\cdot) :=   D_{h(\cdot/b)} (b\cdot , b\cdot)  .
\eqe
We claim that $D^{(b)}$ is a weak $\gamma$-LQG metric with the same scaling constants $\frk c_r$ as $D$. 
It is easily verified that $D^{(b)}$ satisfies Axioms~\ref{item-metric-length} through~\ref{item-metric-translate} in the definition of a weak $\gamma$-LQG metric.
To check Axiom~\ref{item-metric-coord} (tightness across scales), we compute for $r>0$:
\alb
\frk c_r^{-1} e^{-\xi h_r(0)} D^{(b)}_h (r \cdot , r\cdot)
&= \frk c_r^{-1} e^{-\xi h_r(0)}  D_{h(\cdot/b)} ( b r  \cdot , b r \cdot) \notag \\
&= \left( \frac{\frk c_{b r}}{\frk c_r}    e^{-\xi ( h_r(0) - h_{b r}(0) )}     \right)   \frk c_{b r}^{-1} e^{-\xi h_{b r}(0)}  D_{h(\cdot/b)} ( b r  \cdot , b r \cdot) .
\ale
In the case when $h$ is a whole-plane GFF, the random variable $h_r(0) - h_{b r}(0)$ is centered Gaussian with variance $\log b^{-1}$~\cite[Section 3.1]{shef-kpz}. 
By~\eqref{eqn-scaling-constant}, $\frk c_{b r}/\frk c_r$ is bounded above by a constant depending only on $b$ (not on $r$). 
Axiom~\ref{item-metric-coord} (tightness across scales) for $D$ applied with $h(\cdot/b)$ in place of $h$ and $b r$ in place of $r$ therefore implies that the laws of the metrics $\frk c_r^{-1} e^{-\xi h_r(0)} D^{(b)}_h (r \cdot , r\cdot)$ are tight in the case when $h$ is a whole-plane GFF, and that every subsequential limit of the laws of these metrics is supported on metrics (not pseudometrics). 

Hence we can apply Theorem~\ref{thm-weak-uniqueness} with $\wt D = D^{(b)}$ to get that for each $b >0$, there is a deterministic constant $\frk k_b >0$ such that whenever $h$ is a whole-plane GFF plus a continuous function, a.s.\ $D_h^{(b)}  = \frk k_b D_h$. 
We now argue that $\frk k_b$ is a power of $b$. 

For $b_1,b_2 > 0$, we have $D^{(b_1b_2)} =  ( D^{(b_1)} )^{(b_2)}$, which implies that a.s.\ $D_h^{(b_1b_2)} = \frk k_{b_2} D_h^{(b_1)} = \frk k_{b_1} \frk k_{b_2} D_h$. Therefore, 
\eqb \label{eqn-weak-constants-mult}
\frk k_{b_1b_2} = \frk k_{b_1} \frk k_{b_2} . 
\eqe

It is also easy to see that $\frk k_b$ depends continuously on $b$. 
Indeed, by Axiom~\ref{item-metric-f} (Weyl scaling) and since $h(\cdot/b) - h_{1/b}(0) \eqD h$, we have $e^{-\xi h_{1/b}(0)} D_h^{(b)}(\cdot/b,\cdot/b) \eqD D_h$. By the continuity of $(z,w) \mapsto D_h(z,w)$ and $r\mapsto h_r(0)$, it follows that $D_h^{(b)} \rta D_h$ in law as $b\rta 1$.
This gives the continuity of $b\mapsto \frk k_b$ at $b = 1$. Using~\eqref{eqn-weak-constants-mult} then gives the desired continuity in general. 

The relation~\eqref{eqn-weak-constants-mult} and the continuity of $b\mapsto \frk k_b$ (actually, just Lebesgue measurability is enough) imply that $\frk k_b = b^\alpha$ for some $\alpha\in\BB R$.
Equivalently, for $b > 0$, a.s.\ 
\eqb \label{eqn-weak-to-strong-switch}
D_h(b\cdot, b\cdot)  =  b^{-\alpha} D_{h(b\cdot)}(\cdot,\cdot) .
\eqe 

For a whole-plane GFF, $h(b\cdot) - h_b(0) \eqD h$. By Axiom~\ref{item-metric-f} (Weyl scaling) and the definition of $\frk k_b$, 
\eqb
b^\alpha e^{-\xi h_b(0)}  D_h(b\cdot,b\cdot)  =   D_{h(b\cdot) - h_b(0)} \eqD D_h .  
\eqe
Therefore, Axiom~\ref{item-metric-coord} holds for $D$ with $\frk c_r = r^{-\alpha}$. By~\eqref{eqn-scaling-constant-stronger}, we get that $\alpha = -\xi Q$.
Hence for $b > 0$, we have (using Axiom~\ref{item-metric-f} in the first equality) 
\eqb
D_{h(\cdot/b) + Q\log(1/b)}(b\cdot, b\cdot) = b^{-\xi Q} D_h^{(b)} =   D_h  .
\eqe
Therefore, $D$ is a strong LQG metric. 
\end{proof}

\begin{proof}[Proof of Theorems~\ref{thm-lfpp}, \ref{thm-strong-uniqueness}, and~\ref{thm-stronger-uniqueness} assuming Theorem~\ref{thm-weak-uniqueness}]
By Lemma~\ref{lem-weak-to-strong}, every weak $\gamma$-LQG metric is a strong $\gamma$-LQG metric.
By~\eqref{eqn-strong-implies-weak}, every strong LQG metric satisfies the axioms in the definition of a weak $\gamma$-LQG metric with $\frk c_r = r^{\xi Q}$. 
We can therefore apply Theorem~\ref{thm-weak-uniqueness} to get that there is at most one strong LQG metric. 
This completes the proof of the uniqueness parts of Theorems~\ref{thm-strong-uniqueness} and~\ref{thm-stronger-uniqueness}.

As for existence, we recall that~\cite[Theorem 1.2]{lqg-metric-estimates} (building on~\cite{dddf-lfpp}) shows that for every sequence of $\ep$'s tending to zero, there is a weak $\gamma$-LQG metric $D$ and a subsequence along which the re-scaled LFPP metrics $\frk a_\ep^{-1} D_h^\ep$ converge in probability to $D_h$, whenever $h$ is a whole-plane GFF plus a bounded continuous function. By the uniqueness part of Theorem~\ref{thm-stronger-uniqueness}, $D$ is in fact a strong $\gamma$-LQG metric and any two different subsequential limiting metrics differ by a deterministic multiplicative constant factor. 
Recall that $\frk a_\ep$ is the median $D_h^\ep$-distance between the left and right boundaries of the unit square in the case when $h$ is a whole-plane GFF normalized so that $h_1(0) = 0$. 
Hence for any subsequential limiting metric the median $D_h$-distance between the left and right boundaries of the unit square is 1. 
 Therefore, the multiplicative constant factor is 1, so the subsequential limit of $D_h^\ep$ in probability is unique.
 This gives Theorem~\ref{thm-lfpp} and the existence parts of Theorems~\ref{thm-strong-uniqueness} and~\ref{thm-stronger-uniqueness}. 
\end{proof}

Finally, we note that our results give non-trivial information about the approximating LFPP metrics from~\eqref{eqn-lfpp}.
Indeed, let $\{\frk a_\ep\}_{\ep > 0}$ be the scaling constants from Theorem~\ref{thm-lfpp}.
It is shown in~\cite[Theorem 1.5]{dg-lqg-dim} that $\frk a_\ep = \ep^{1-\xi Q + o_\ep(1)}$. 
Using Theorem~\ref{thm-lfpp}, we obtain the following stronger form of this relation. 

\begin{cor} \label{cor-constant}
The function $\ep\mapsto \frk a_\ep$ is regularly varying with exponent $ 1-\xi Q $, i.e., for every $C > 0$ one has $\lim_{\ep\rta 0} \frk a_{C\ep}/\frk a_\ep = C^{ 1-\xi Q }$. 
\end{cor}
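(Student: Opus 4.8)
The plan is to combine an exact scaling identity for Liouville first passage percolation with the convergence statement of Theorem~\ref{thm-lfpp} and the axioms satisfied by the limiting metric (by Theorem~\ref{thm-strong-uniqueness}), so as to compute the limit of $\frk a_{C\ep}/\frk a_\ep$ explicitly rather than merely to bound it. The key point is that passing from $\ep$ to $C\ep$ in the definition~\eqref{eqn-lfpp} of $D_h^\ep$ is the same, up to a random prefactor, as rescaling space by $C$ and replacing $h$ by a scaled copy of itself.

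First I would record the scaling identity. Fix $C>0$ and let $h$ be a whole-plane GFF with $h_1(0)=0$. A direct computation with the heat kernel shows that $(h(C\cdot))_\ep^*(z)=h_{C\ep}^*(Cz)$ for all $z\in\BB C$ and $\ep>0$, where $g_\ep^*$ denotes the mollification~\eqref{eqn-gff-convolve} of a field $g$. The change of variables $P\mapsto CP$ in the infimum~\eqref{eqn-lfpp} then gives $D_h^{C\ep}(Cz,Cw)=C\,D_{h(C\cdot)}^\ep(z,w)$. Writing $h(C\cdot)=\hat h+h_C(0)$ with $\hat h:=h(C\cdot)-h_C(0)$, the scale invariance of the whole-plane GFF modulo additive constant shows that $\hat h$ is again a whole-plane GFF with $\hat h_1(0)=0$, and since adding a constant $c$ to the field multiplies $D^\ep$ by $e^{\xi c}$ we get $D_{h(C\cdot)}^\ep=e^{\xi h_C(0)}D_{\hat h}^\ep$. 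Taking $z=0$, $w=1/C$ yields
\eqb \label{eqn-cor-const-scale}
D_h^{C\ep}(0,1)=C\,e^{\xi h_C(0)}\,D_{\hat h}^\ep(0,1/C).
\eqe

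Next I would pass to the limit. Let $D$ be the limiting metric of Theorem~\ref{thm-lfpp}. Applying that theorem to $\hat h$ gives $\frk a_\ep^{-1}D_{\hat h}^\ep(0,1/C)\to D_{\hat h}(0,1/C)$ in probability as $\ep\to 0$, and applying it to $h$ with $C\ep$ in place of $\ep$ gives $\frk a_{C\ep}^{-1}D_h^{C\ep}(0,1)\to D_h(0,1)$ in probability, where $D_h(0,1)\in(0,\infty)$ a.s.\ since $D_h$ induces the Euclidean topology. Rearranging~\eqref{eqn-cor-const-scale},
\eqb
\frac{\frk a_{C\ep}}{\frk a_\ep}=\frac{C\,e^{\xi h_C(0)}\,\frk a_\ep^{-1}D_{\hat h}^\ep(0,1/C)}{\frk a_{C\ep}^{-1}D_h^{C\ep}(0,1)}\longrightarrow\frac{C\,e^{\xi h_C(0)}\,D_{\hat h}(0,1/C)}{D_h(0,1)}
\eqe
in probability. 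As the left-hand side is deterministic, the right-hand side must a.s.\ equal a deterministic constant, which is therefore $\lim_{\ep\to 0}\frk a_{C\ep}/\frk a_\ep$. To evaluate it I would use that $D$ is a strong $\gamma$-LQG metric: Axiom~\ref{item-metric-f0} (Weyl scaling) applied to the constant function $h_C(0)$ gives $D_{\hat h}=e^{-\xi h_C(0)}D_{h(C\cdot)}$, and Axiom~\ref{item-metric-coord0} (coordinate change) with $r=C$, $z=0$, together with Weyl scaling for the constant $Q\log C$, gives $D_{h(C\cdot)}(0,1/C)=C^{-\xi Q}D_h(0,1)$. Substituting these, the factors $e^{\pm\xi h_C(0)}$ and $D_h(0,1)$ all cancel, leaving $\lim_{\ep\to 0}\frk a_{C\ep}/\frk a_\ep=C^{1-\xi Q}$, which is the assertion of the corollary.

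The calculations in the first and third steps are routine, and the convergence inputs follow from Theorem~\ref{thm-lfpp} together with continuity of point-to-point distance under local uniform convergence of metrics. The one place that needs a moment's thought is the step asserting that the deterministic sequence $\frk a_{C\ep}/\frk a_\ep$ genuinely converges, and not merely that it is bounded between two positive constants along subsequences: this is forced by the elementary fact that a deterministic sequence which converges in probability to a random variable must converge to an a.s.\ constant, and the constant is then identified via the strong-metric axioms as in the third step. No estimate beyond those already established is required.
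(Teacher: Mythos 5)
Your proof is correct, and it takes a different route from the one the paper actually uses. The published proof does not compute the limit directly: it cites \cite[Lemma 2.14]{lqg-metric-estimates} to get that $\frk a_{C\ep}/\frk a_\ep$ converges (to $C\,\frk c_{1/C}$) along any subsequence along which $\frk a_\ep^{-1}D_h^\ep$ converges in law, upgrades this to genuine convergence via Theorem~\ref{thm-lfpp}, and then identifies the exponent of regular variation only indirectly, from the a priori asymptotics $\frk a_\ep=\ep^{1-\xi Q+o_\ep(1)}$ of \cite[Theorem 1.5]{dg-lqg-dim}. You instead derive the exact LFPP scaling identity $D_h^{C\ep}(Cz,Cw)=C\,e^{\xi h_C(0)}D_{\hat h}^\ep(z,w)$ with $\hat h=h(C\cdot)-h_C(0)\eqD h$, pass to the limit in probability using Theorem~\ref{thm-lfpp} for both fields, use the elementary fact that a deterministic sequence converging in probability must converge to an a.s.\ constant, and then evaluate that constant as $C^{1-\xi Q}$ via Weyl scaling (Axiom~\ref{item-metric-f0}) and the coordinate-change axiom (Axiom~\ref{item-metric-coord0}) for the limiting strong metric supplied by Theorem~\ref{thm-strong-uniqueness}. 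The trade-off is clear: your argument is self-contained modulo the main theorems of the paper and pins down the exponent without invoking either \cite[Lemma 2.14]{lqg-metric-estimates} or the $\ep^{1-\xi Q+o_\ep(1)}$ asymptotics, whereas the paper's version is shorter given those external inputs; your heat-kernel/change-of-variables computations and the identification of the limit of $\frk a_\ep^{-1}D_{\hat h}^\ep$ with $D_{\hat h}$ (legitimate since $\hat h\eqD h$ and the limit is a fixed measurable function of the field) are all handled correctly, so no gap remains.
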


We expect, but do not prove here, that in fact Theorem~\ref{thm-lfpp} holds with $\frk a_\ep = \ep^{1-\xi Q}$.

\begin{proof}[Proof of Corollary~\ref{cor-constant}]
It is shown in~\cite[Lemma 2.14]{lqg-metric-estimates} that for any sequence of $\ep$'s tending to zero along which the re-scaled LFPP metrics $\frk a_\ep^{-1} D_h^\ep$ converge in law, also $\frk a_{C\ep}/\frk a_\ep$ converges (the limit is $C  \frk c_{1/C}$, with $\frk c_{1/C}$ as in Axiom~\ref{item-metric-coord} (tightness across scales) for the limiting weak $\gamma$-LQG metric).
By Theorem~\ref{thm-lfpp}, $\frk a_\ep^{-1} D_h^\ep$ converges in probability as $\ep\rta 0$, so in fact $\frk a_{C\ep}/\frk a_\ep$ converges, not just subsequentially. 
This means that $\frk a_{C\ep}$ is regularly varying with some exponent $\alpha > 0$. 
Since $\frk a_\ep = \ep^{1-\xi Q +o_\ep(1)}$, we must have $\alpha  =1-\xi Q$. 
\end{proof}

\subsection{Outline}
\label{sec-outline}

As explained above, to prove our main results it remains only to prove Theorem~\ref{thm-weak-uniqueness}. We emphasize that unlike many results in the theory of LQG, this paper does not build on a large amount of external input. Rather, we will only use some results from the papers~\cite{dddf-lfpp,local-metrics,lqg-metric-estimates,gm-confluence}, which can be taken as black boxes. All of the externally proven results which we will use are reviewed in Section~\ref{sec-prelim}.

Throughout this outline and the rest of the paper, we will use (without comment) the following two basic facts about $D_h$-geodesics when $D$ is a weak $\gamma$-LQG metric and $h$ is a whole-plane GFF. 
\begin{itemize}
\item Almost surely, for every $z,w\in\BB C$, there is at least one $D_h$-geodesic from $z$ to $w$. This follows from~\cite[Corollary 2.5.20]{bbi-metric-geometry} and the fact that $(\BB C , D_h)$ is a boundedly compact length space (i.e., closed bounded subsets are compact; see~\cite[Lemma 3.8]{lqg-metric-estimates}). 
\item For each fixed $z,w\in\BB C$, the $D_h$-geodesic from $z$ to $w$ is a.s.\ unique. This follows from, e.g., the proof of~\cite[Theorem 1.2]{mq-geodesics} (see also~\cite[Lemma 2.2]{gm-confluence}). 
\end{itemize}
 
In the remainder of this section we give a very rough idea of the proof of Theorem~\ref{thm-weak-uniqueness}. 
There are a number of technicalities involved, which we will gloss over in order to make the central ideas as transparent as possible. 
Consequently, some of the statements in this subsection are not exactly accurate without additional caveats. 
More detailed (and more precise) outlines can be found at the beginnings of the individual sections and subsections.

We first comment briefly on the role of the axioms in the proof. Axiom~\ref{item-metric-local} (locality) shows that the metric is compatible with the long-range independence and domain Markov properties of the GFF. These properties will be used in several places of our proofs (see Section~\ref{sec-gff-ind}). Axiom~\ref{item-metric-f} (Weyl scaling) has two main uses. First, it implies that adding a constant $C$  to the field scales distances by a factor of $e^{\xi C}$. This is important since the law of the GFF is only scale and translation invariant modulo additive constant. Second, it allows us to show that certain distance-related events occur with positive probability by adding a smooth bump function $h$ and noting that this affects the law of the GFF in an absolutely continuous way (see the outline of Section 5 below).
Axioms~\ref{item-metric-translate} (translation invariance) and~\ref{item-metric-coord} (tightness across scales) are often used together to get estimates for the restriction of the metric to the Euclidean ball of radius $r$ centered at $z$ which are uniform over all possible points $z$ and radii $r$. We will sometimes also use Axiom~\ref{item-metric-translate} by itself, with $r$ fixed, when we need more precise information than just up-to-constants estimates.
\medskip 

\noindent\textbf{Main idea of the proof.} Suppose $D$ and $\wt D$ are two weak $\gamma$-LQG metrics as in Theorem~\ref{thm-weak-uniqueness} and let $h$ be a whole-plane GFF.  
As explained in Proposition~\ref{prop-lqg-metric-bilip}, it follows from a general theorem for local metrics of the Gaussian free field~\cite[Theorem 1.6]{local-metrics} that $D_h$ and $\wt D_h$ are bi-Lipschitz equivalent, i.e.,
\eqb \label{eqn-max-min-def}
c_* := \inf\left\{ \frac{\wt D_h(u,v)}{D_h(u,v)} : u,v\in\BB C ,\: u\not=v\right\}   > 0
\quad\text{and} \quad
C_* := \sup\left\{ \frac{\wt D_h(u,v)}{D_h(u,v)} : u,v\in\BB C ,\: u\not= v\right\} < \infty .
\eqe 
It is easily seen that $c_*$ and $C_*$ are a.s.\ equal to deterministic constants (Lemma~\ref{lem-max-min-const}).
We identify $c_*$ and $C_*$ with these constants (which amounts to re-defining $c_*$ and $C_*$ on an event of probability zero).
To prove Theorem~\ref{thm-weak-uniqueness} we will show that $c_* = C_*$. 

The basic idea of the proof of this fact is as follows. Suppose by way of contradiction that $c_* < C_*$. Then for any $c' \in (c_* , C_*)$ there a.s.\ exist distinct points $u,v\in\BB C$ such that $\wt D_h(u,v) \leq c' D_h(u,v)$. 
In Section~\ref{sec-attained} (see outline below), using translation invariance of the GFF, modulo additive constant, and the local independence properties of the GFF,  we will deduce from this that the following is true. There exists $\ul\beta  , \ul p\in (0,1)$, depending only on the laws of $D_h$ and $\wt D_h$, such that for each $c' \in (c_* , C_*)$ there are many small values of $r> 0$ (how small depends on $c'$) for which 
\eqb \label{eqn-outline-ulG}
\BB P\left[ \text{$\exists u, v \in B_{r }(0)$ s.t.\ $|u - v | \geq  \ul\beta r$ and $\wt D_h(u,v) \leq c' D_h(u,v) $} \right] \geq \ul p , 
\eqe
where $B_{r }(0)$ is the Euclidean ball of radius $r $ centered at 0. 
By interchanging the roles of $D_h$ and $\wt D_h$, we can similarly find $\ol\beta  , \ol p\in (0,1)$, depending only on the laws of $D_h$ and $\wt D_h$, such that for each $C' \in (c_* , C_*)$, there are many small values of $r>0$ (how small depends on $C'$) for which
\eqb \label{eqn-outline-olG}
\BB P\left[ \text{$\exists u, v \in B_{r }(0)$ s.t.\ $|u - v | \geq  \ol\beta r$ and $\wt D_h(u,v) \geq C' D_h(u,v) $} \right] \geq \ol p . 
\eqe
See Section~\ref{sec-attained} for precise statements. 
The reason why the bounds only hold for ``many" choices of $r  > 0$, instead of for all $r > 0$, is that we only have tightness across scales (Axiom~\ref{item-metric-coord}), not exact scale invariance. 
We will use~\eqref{eqn-outline-ulG} to deduce a contradiction to~\eqref{eqn-outline-olG}.

Consider a $D_h$-geodesic $P$ between two fixed points $\BB z , \BB w \in\BB C$. 
Using~\eqref{eqn-outline-ulG} and a local independence argument for different segments of $P$ (which is explained in the outlines of Sections~\ref{sec-geodesic-iterate} and~\ref{sec-geodesic-shortcut} below), one can show that it holds with superpolynomially high probability as $\delta \rta 0$ (i.e., except on an event of probability decaying faster than any positive power of $\delta$), at a rate which is uniform over the choice of $\BB z$ and $\BB w$, that the following is true.
There are times $0 < s < t < D_h(\BB z,\BB w)$ such that $\wt D_h(P(s) , P(t)) \leq c' (t-s)$ and $D_h(P(s) ,P(t)) \geq \delta D_h(\BB z,\BB w)$.
By the definition~\eqref{eqn-max-min-def} of $C_*$, the $\wt D_h$-distance from $\BB z $ to $P(s)$ is at most $C_* s$ and the $\wt D_h$-distance from $P(t)$ to $\BB w$ is at most $C_* (D_h(\BB z,\BB w) -t)$. Combining these facts shows that with superpolynomially high probability as $\delta \rta 0$, 
\eqb \label{eqn-outline-less}
\wt D_h(\BB z, \BB w) \leq (C_* - (C_*-c') \delta ) D_h(\BB z, \BB w) .
\eqe

We now let $\ol\beta$ be as in~\eqref{eqn-outline-olG} and fix a large constant $q > 1$. 
For any $r > 0$, we can take a union bound to get that with probability tending to 1 as $\delta \rta 0$, at a rate which is uniform in $r$, the bound~\eqref{eqn-outline-less} holds simultaneously for all $\BB z ,\BB w \in \left(\delta^q r \BB Z^2\right) \cap B_{r }(0)$.
Now consider an arbitrary pair of points $\BB z , \BB w \in B_{r }(0)$ with $|\BB z - \BB w| \geq \ol\beta r $.
Let $\BB z' ,\BB w' \in \left( r \delta^q \BB Z^2\right) \cap B_{r }(0)$ be the points closest to $\BB z$ and $\BB w$, respectively.
By the bi-H\"older continuity of $D_h$ and $\wt D_h$ w.r.t.\ the Euclidean metric~\cite[Theorem 1.7]{lqg-metric-estimates}, if we choose $q$ sufficiently large, in a manner depending only on the H\"older exponents (i.e., only on $\gamma$), then $|D_h(\BB z , \BB w) - D_h(\BB z' ,\BB w')|$ and $|\wt D_h(\BB z ,\BB w) - \wt D_h(\BB z' ,\BB w')|$ are much smaller than $\delta D_h(\BB z,\BB w)$. 
From this, we infer that with probability tending to 1 as $\delta \rta 0$, at a rate which is uniform in $r$, the bound~\eqref{eqn-outline-less} holds simultaneously for all $\BB z,\BB w \in B_{r }(0)$ with $|\BB z-\BB w| \geq \ol\beta r$. 
If $\delta$ is chosen sufficiently small so that this probability is at least $1 - \ol p/2$, we get a contradiction to~\eqref{eqn-outline-olG} with $C ' = C_* - (C_*-c') \delta$.

The purpose of Sections~\ref{sec-attained}, \ref{sec-geodesic-iterate}, and~\ref{sec-geodesic-shortcut} is to fill in the details of the above argument. These three sections are mostly independent from one another: only the main theorem/proposition statements at the beginning of each section are used in later sections. 
\medskip

\noindent\textbf{Section~\ref{sec-attained}: bounds for ratios of distances at many scales.} 
The purpose of Section~\ref{sec-attained} is to prove (more quantitative versions of) the bounds~\eqref{eqn-outline-ulG} and~\eqref{eqn-outline-olG} stated above.
Since we are only working with a weak $\gamma$-LQG metric, not a strong $\gamma$-LQG metric, we do not have exact scale invariance, just tightness across scales (Axiom~\ref{item-metric-coord}). Consequently, if $c' \in (c_* , C_*)$, then we cannot necessarily say that pairs of points $u,v$ for which $\wt D_h(u,v) \leq c' D_h(u,v)$ exist with uniformly positive probability over different Euclidean scales. That is, it could in principle be that for every small fixed $\ul\beta > 0$, the probability that there exists $u,v\in B_{r }(0)$ with  $\wt D_h(u,v) \leq c' D_h(u,v)$ and $|u-v| \geq \ul\beta r$ is very small for some values of $r > 0$. 
However, we can say that such pairs of points exist with uniformly positive probability for a suitably ``dense" set of scales $r$ via an argument which proceeds (very roughly) as follows. 

Let $\ul\beta  , \ul p \in (0,1)$ be small and suppose by way of contradiction that there is a sequence $r_k \rta 0$ such that $r_{k+1} / r_k$ is bounded above and below by deterministic constants and the following is true. For each $k$, it holds with probability at least $1-\ul p$ that $\wt D_h(u,v) \geq c' D_h(u,v)$ for every pair of points $u,v\in B_{r_k }(0)$ for which $|u-v| \geq \ul\beta r_k$. Using the translation invariance of the metric (Axiom~\ref{item-metric-translate}) and the local independence properties of the GFF (in particular, Lemma~\ref{lem-annulus-iterate} below), we see that if $\ul\beta,\ul p$ are sufficiently small (how small depends only on the laws of $D_h$ and $\wt D_h$, not on $c'$ or $r_k$), then the following is true.  We can cover any fixed compact subset of $\BB C$ by Euclidean balls of the form $B_{r_k}(z)$ with the property that $\wt D_h(u,v) \geq c' D_h(u,v)$ for every pair of points $u \in \bdy B_{(1-\ul\beta)r_k}(z)$ and $v \in \bdy B_{r_k}(z)$.  By considering the times when a $\wt D_h$-geodesic between two fixed points of $\BB C$ crosses an annulus $B_{r_k}(z) \setminus B_{(1-\ul\beta)r_k}(z)$ for $z$ as above, we get that a.s.\ $\inf_{z,w\in \BB C} \wt D_h(z,w) / D_h(z,w)  \geq c''  $ for a constant $c''  \in (c_* ,c')$. This contradicts the definition~\eqref{eqn-max-min-def} of $c_*$.

Hence the set of ``bad" scales $r$ for which points $u,v \in B_{r }(0)$ with $|u-v| \geq \ul\beta r$ and $\wt D_h(u,v) \leq c' D_h(u,v)$ are unlikely to exist cannot be too large, which means that the complementary set of ``good" scales for which such points exist with probability at least $\ul p$ has to be reasonably dense.  This leads to~\eqref{eqn-outline-ulG}. The bound~\eqref{eqn-outline-olG} follows by interchanging the roles of $D_h$ and $\wt D_h$.
\medskip

\begin{figure}[t!]
 \begin{center}
\includegraphics[scale=1]{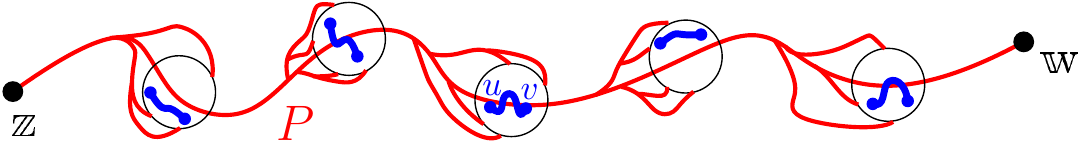}
\vspace{-0.01\textheight}
\caption{Illustration of the main ideas in Section~\ref{sec-geodesic-iterate}. Using results on confluence of geodesics from~\cite{gm-confluence}, we can show that there are many times $t$ at which the $D_h$-geodesic $P$ is \emph{stable}, in the sense that changing the behavior of the field in a small Euclidean ball around $P(t)$ does not result in a macroscopic change to the $D_h$-geodesic (the precise condition is given in~\eqref{eqn-ball-stab-event}). In particular, to produce such stable times we consider the metric ball growth started from $\BB z$ and use the confluence across a metric annulus from~\cite[Theorem 3.9]{gm-confluence} at a large number of evenly spaced radii.
 In fact, using the results of Section~\ref{sec-attained}, we can arrange that there are many such stable times whose corresponding balls contain a pair of points $u,v$ such that $\wt D_h(u,v) \leq c' D_h(u,v)$ and $|u-v|$ is comparable to the Euclidean radius of the ball. These pairs of points and the $\wt D_h$-geodesics between them are shown in blue. Using the results of Section~\ref{sec-geodesic-shortcut}, we can show that for each of these stable times, it holds with positive conditional probability given the past that $P$ gets close to the corresponding pair of points $u,v$. 
By a standard concentration inequality for Bernoulli sums, applied at the stable times, this shows that $P$ has to get close to at least one such pair of points $u,v$ with extremely high probability. 
}\label{fig-geo-outline}
\end{center}
\vspace{-1em}
\end{figure} 

\noindent\textbf{Section~\ref{sec-geodesic-iterate}: independence along an LQG geodesic.} Once we know that there are many pairs of points $u,v$ with $\wt D_h(u,v) \leq c' D_h(u,v)$, we want to use some sort of local independence to say that a $D_h$-geodesic $P$ is extremely likely to get close to at least one such pair of points (i.e., we need the $D_h$-distance from $P$ to each of $u$ and $v$ to be much smaller than $D_h(u,v)$). 
However, $D_h$-geodesics are highly non-local functionals of the field and do not satisfy any reasonable Markov property. 
So, techniques for obtaining local independence which may be familiar from the theory of SLE/GFF couplings~\cite{ss-contour,dubedat-coupling,ig1,ig2,ig3,ig4,shef-zipper,wedges} do not apply in our setting. 

Instead we need to develop a new set of techniques to obtain local independence at different points of $D_h$-geodesics.  See Figure~\ref{fig-geo-outline} for an illustration.
In fact, we will prove a general theorem (Theorem~\ref{thm-geo-iterate0}) which roughly speaking says the following. 
Suppose we are given events $\frk E_r^{\BB z , \BB w}(z)$ for $z , \BB z , \BB w \in\BB C$ and $r > 0$ with the following properties. 
The event $\frk E_r^{\BB z ,\BB w}(z)$ is determined by $h|_{B_r(z)}$ and the part of the $D_h$-geodesic $P^{\BB z,\BB w}$ from $\BB z$ to $\BB w$ which is contained in $B_r(z)$. Moreover, for each $z , \BB z ,\BB w \in \BB C$, the conditional probability of $\frk E_r^{\BB z , \BB w}(z)$  given $h|_{\BB C\setminus B_r(z)}$ and the event $\{ P^{\BB z,\BB w} \cap B_r(z)\not=\emptyset\}$ is a.s.\ bounded below by a deterministic constant. 
Then when $r$ is small it is very likely that for nearly every choice of $\BB z, \BB w \in\BB C$, the event $\frk E_r^{\BB z,\BB w}(z)$ occurs for at least one ball $B_r(z)$ hit by $P^{\BB z,\BB w}$.

We will eventually apply this theorem with $\frk E_r^{\BB z,\BB w}(z)$ given by, roughly speaking, the event that $P^{\BB z,\BB w}$ gets close to a pair of points $u,v \in B_r(z)$ with $\wt D_h(u,v) \leq c' D_h(u,v)$ and $|u-v| \geq \op{const} \times r$. This together with the triangle inequality and the bi-H\"older continuity of $D_h$ and $\wt D_h$ w.r.t.\ the Euclidean metric (to transfer from $|u-v| \geq \op{const} \times r$ to a lower bound for $D_h(u,v)$) will lead to~\eqref{eqn-outline-less}.
 
We will prove the above ``independence along a geodesic" theorem using the results on confluence of $D_h$-geodesics established in~\cite{gm-confluence}.
These results tell us that if $\BB z \in \BB C$ is fixed and $\BB w_1, \BB w_2 \in \BB C$ are close together, then the $D_h$-geodesics $P_1$ from $\BB z$ to $\BB w_1$ and $P_2$ from $\BB z$ to $\BB w_2$ typically agree until they get close to $\BB w_1$ and $\BB w_2$, i.e., $P_1|_{[0,\tau]} = P_2|_{[0,\tau]}$ for a time $\tau$ which is close to $D_h(\BB z,\BB w_1)$ (equivalently, to $D_h(\BB z, \BB w_2)$) when $D_h(\BB w_1,\BB w_2)$ is small. Note that this property is not true for geodesics for a smooth Riemannian metric, but it is true for geodesics in the Brownian map~\cite{legall-geodesics}.

Now fix $\BB z , \BB w$ and consider the $D_h$-geodesic $P = P^{\BB z,\BB w}$ from $\BB z$ to $\BB w$. 
The above confluence property applied with $\BB w_1 = P(t)$ for a typical time $t \in [0,D_h(\BB z,\BB w)]$ and $\BB w_2$ a point near $P(t)$ will allow us to show that with extremely high probability, there are many times $t\in [0, D_h(\BB z , \BB w)]$ at which $P$ is ``stable" in the following sense.
If we make a small modification to $h$ in a neighborhood of $P(t)$, then we will not change $P|_{[0,\tau]}$ for a time $\tau$ a little bit less than $t$. 
This allows us to say that events depending on the field in a small neighborhood of $P(t)$ have positive conditional probability given an initial segment of $P$. 
Applying this at a large number of evenly spaced times $t \in [0,D_h(\BB z,\BB w)]$ will show that it is extremely likely that the event $\frk E_r^{\BB z,\BB w}(z)$ discussed above occurs for at least one Euclidean ball $B_r(z)$ hit by $P$.
\medskip

\noindent\textbf{Section~\ref{sec-geodesic-shortcut}: an LQG geodesic gets close to a shortcut with positive probability.} 
Fix $\BB z,\BB w\in\BB C$ and let $P = P^{\BB z,\BB w}$ be the $D_h$-geodesic from $\BB z$ to $\BB w$.  By~\eqref{eqn-outline-ulG} and translation invariance (Axiom~\ref{item-metric-translate}) we know that there exists $\ul \beta,\ul p\in (0,1)$ such that if $c' \in (c_* , C_*)$, then there are many values of $r >0$ such that~\eqref{eqn-outline-ulG} holds with $z$ in place of $0$ (actually, we will use a variant of~\eqref{eqn-outline-ulG} which gives more precise information about the locations of $u$ and $v$; see Proposition~\ref{prop-attained-good'}).  In light of the results of Section~\ref{sec-geodesic-iterate}, we want to show that if we condition on $\{P\cap B_r(z) \not=\emptyset\}$, then the conditional probability that $P$ gets close to a pair of points $u,v$ as in~\eqref{eqn-outline-ulG} (with $z$ in place of $0$) is bounded below by a positive deterministic constant which does not depend on $r$ or $z$.
 
For a deterministic open set $U \subset \BB C$, one can prove that the $D_h$-geodesic $P$ enters $U$ with positive probability as follows. Consider a deterministic path from $\BB z$ to $\BB w$ and let $\phi$ be a smooth bump function which takes large values in a narrow ``tube" around this path and which vanishes outside a slightly larger tube. By Weyl scaling (Axiom~\ref{item-metric-f}), $D_{h-\phi}$ distances in the tube are much shorter than distances anywhere else. Hence the $D_{h-\phi}$-geodesic from $\BB z$ to $\BB w$ has to stay in the tube and hence has to enter $U$. Since the laws of $h$ and $h-\phi$ are absolutely continuous, we get that the $D_h$-geodesic enters $U$ with positive probability.

We will use a similar strategy to show that $P$ has positive conditional probability given $\{P\cap B_r(z)\not=\emptyset\}$ to get near a pair of points $u,v \in B_r(z)$ with $\wt D_h(u,v) \leq c' D_h(u,v)$ and $|u-v| \geq \ul\beta r$. However, additional complications arise.  For example, the region we want $P$ to enter (a small neighborhood of either $u$ or $v$) is random, which will be resolved by choosing a deterministic region which contains the $\wt D_h$-geodesic between $u$ and $v$ with positive probability. 
We also need to ensure that the condition $\wt D_h(u,v) \leq c' D_h(u,v)$ is not destroyed when we add our bump function.  To do this, we will need to make sure that the $\wt D_h$-geodesic between $u$ and $v$ is contained in the region where the bump function attains its largest possible value.
Another issue is that we need the bump function $\phi$ to be supported on a region of diameter of order $r \approx |u-v|$, so that its Dirichlet energy is bounded independently of $r$. In particular, this support cannot contain the starting and ending points $\BB z$ and $\BB w$ of the $D_h$-geodesic.
This will be resolved by growing the $D_h$-metric balls from $\BB z$ and $\BB w$ until they hit $B_{3r}(z)$ and choosing a bump function whose support approximates a path between the hitting points. 
\medskip

\noindent In \textbf{Section~\ref{sec-conclusion}}, we combine all of the above ingredients to conclude the proof of Theorem~\ref{thm-weak-uniqueness}, following the argument in the ``main ideas" section above.
\textbf{Section~\ref{sec-open-problems}} contains a list of open problems.

\begin{remark}[Proof for strong LQG metrics]
As explained above, we prove Theorem~\ref{thm-weak-uniqueness} instead of just proving Theorem~\ref{thm-strong-uniqueness} since subsequential limits of LFPP are only known to be weak LQG metrics, not strong LQG metrics. 
If we only wanted to prove Theorem~\ref{thm-strong-uniqueness}, we could make only a few minor simplifications to our proofs.
The most significant simplifications would be in Section~\ref{sec-attained}.  
In particular, similar arguments to the ones in Section~\ref{sec-attained} would give points $u,v$ such that $\wt D_h(u,v) = C_* D_h(u,v)$ instead of just $\wt D_h(u,v) \geq C' D_h(u,v)$ for $C'$ slightly less than $C_*$. Additionally, all of the results in Section~\ref{sec-attained} which are currently only proven to hold for ``at least $\mu\log_8 \ep^{-1}$ scales" could instead be shown to hold for all scales. This would allow us to eliminate the parameters $\mu,\nu,$ and $C'$ throughout the paper. We could of course also replace $\frk c_r$ by $r^{\xi Q}$ and eliminate the ``scale parameter" $\BB r$ throughout. This results in cosmetic simplifications in Sections~\ref{sec-geodesic-iterate} through~\ref{sec-conclusion}. 
\end{remark}

\begin{remark}[Relationship to~\cite{legall-uniqueness,miermont-brownian-map}]
It is natural to ask how our proof compares to the proofs of the Gromov-Hausdorff convergence of uniform quadrangulations to the Brownian map in~\cite{legall-uniqueness,miermont-brownian-map}. 
Both this paper and~\cite{legall-uniqueness,miermont-brownian-map} start from a tightness result and seek to show that the limiting object is unique. Moreover, all three papers rely crucially on confluence of geodesics (in the Brownian map setting, tightness is proven in~\cite{legall-topological} and confluence is proven in~\cite{legall-geodesics}). 
However, this is about the extent of the similarities.

In the Brownian map setting, one has an explicit a priori description of the conjectural limiting metric space $(X,\dmetric)$ in terms of the Brownian snake. 
In particular, there is a marked point $x_* \in X$ (which is a uniform sample from the area measure on the Brownian map) such that $\dmetric(x_* , x)$ can be described explicitly in terms of the Brownian snake.
Due to the convergence of discrete snakes to the Brownian snake and the Schaeffer bijection~\cite{cv-bijection,schaeffer-bijection}, one gets that any possible subsequential limit of uniform quadrangulations can be represented by a metric $\wt{\dmetric}$ on $X$ such that $\wt{\dmetric} \leq \dmetric$ and $\wt \dmetric(x_* , x) = \dmetric(x_*,x)$ for every $x \in X$ (see~\cite{marc-mokk-tbm}). The heart of the proof in each of~\cite{legall-geodesics,miermont-brownian-map} consists of using confluence to approximate a $\wt \dmetric$-geodesic by a concatenation of segments of $\wt \dmetric$-geodesics started from $x_*$ (the method of approximation in the two papers is quite different). 

In our setting, we do not have an a priori construction of the limiting object and we do not know a priori that any quantities related to two different weak LQG metrics are exactly equal. Instead, we have a coupling of our weak LQG metric to the GFF. We use confluence together with the Markov property of the GFF to get that far-away geodesic segments are nearly independent from each other. 
\end{remark}

\section{Preliminaries}
\label{sec-prelim}

In this subsection, we first introduce some basic (mostly standard) notation. We then review all of the results from~\cite{local-metrics,lqg-metric-estimates,gm-confluence} which we will need for the proof of Theorem~\ref{thm-weak-uniqueness}.  On a first read, the reader may wish to read only Sections~\ref{sec-notation} (which introduces notation) and~\ref{sec-bilip} (which proves the bi-Lipschitz equivalence of the metrics $D_h$ and $\wt D_h$ in Theorem~\ref{thm-bilip}) then refer back to the other subsections as needed. 

\subsection{Basic notation and terminology}
\label{sec-notation}

\subsubsection*{Integers}

\noindent
We write $\BB N = \{1,2,3,\dots\}$ and $\BB N_0 = \BB N \cup \{0\}$. 
For $a < b$, we define $[a,b]_{\BB Z}:= [a,b]\cap\BB Z$. 

\subsubsection*{Asymptotics}

\noindent
If $f  :(0,\infty) \rta \BB R$ and $g : (0,\infty) \rta (0,\infty)$, we say that $f(\ep) = O_\ep(g(\ep))$ (resp.\ $f(\ep) = o_\ep(g(\ep))$) as $\ep\rta 0$ if $f(\ep)/g(\ep)$ remains bounded (resp.\ tends to zero) as $\ep\rta 0$. We say that
\eqb \label{eqn-superpolynomial}
f(\ep) = o_\ep^\infty(\ep) \quad \text{if and only if} \quad f(\ep) = o_\ep(\ep^p) ,\: \forall p > 0. 
\eqe
We similarly define $O(\cdot)$ and $o(\cdot)$ errors as a parameter goes to infinity. 
\medskip

\noindent
If $f,g : (0,\infty) \rta [0,\infty)$, we say that $f(\ep) \preceq g(\ep)$ if there is a constant $C>0$ (independent from $\ep$ and possibly from other parameters of interest) such that $f(\ep) \leq  C g(\ep)$. We write $f(\ep) \asymp g(\ep)$ if $f(\ep) \preceq g(\ep)$ and $g(\ep) \preceq f(\ep)$. 
\medskip

\noindent
We often specify requirements on the dependencies on rates of convergence in $O(\cdot)$ and $o(\cdot)$ errors, implicit constants in $\preceq$, etc., in the statements of lemmas/propositions/theorems, in which case we implicitly require that errors, implicit constants, etc., in the proof satisfy the same dependencies. 
\medskip

\noindent
The parameter $\gamma$ is fixed throughout the paper. All implicit constants and rates of convergence are allowed to depend on $\gamma$, and this will not be stated explicitly.

\subsubsection*{Balls and annuli}

\noindent
For $z\in\BB C$ and $r>0$, we write $B_r(z)$ for the Euclidean ball of radius $r$ centered at $z$. We also define the open annulus
\eqb \label{eqn-annulus-def}
\BB A_{r_1,r_2}(z) := B_{r_2}(z) \setminus \ol{B_{r_1}(z)} ,\quad\forall 0 < r_r < r_2 < \infty .
\eqe

\noindent
For a metric space $(X,\dmetric)$ and $r>0$, we write $\mcl B_r(A;\dmetric)$ for the open ball consisting of the points $x\in X$ with $\dmetric(x,A) < r$.  
If $A = \{y\}$ is a singleton, we write $\mcl B_r(\{y\};\dmetric) = \mcl B_r(y;\dmetric)$.  
\medskip
  
\noindent
For a metric $\dmetric$ on $\BB C$, $r>0$, and $z\in\BB C$ we write $\mcl B_r^\bullet(z;\dmetric)$ for the \emph{filled metric ball} which is the union of $\ol{\mcl B_r(z;\dmetric)}$ and the bounded connected components of $\BB C\setminus \ol{\mcl B_r(z;\dmetric)}$.

\subsubsection*{Local sets}

Following~\cite[Lemma 3.9]{ss-contour}, if $(h,A)$ is a coupling of a whole-plane GFF and random compact set $A \subset\BB C$, we say that $A$ is a \emph{local set} for $h$ if for each open set $U\subset\BB C$, the event $\{A\cap U \not=\emptyset\}$ is conditionally independent from $h|_{\BB C\setminus U}$ given $h|_U$. If $A$ is determined by $h$ (which will be the case for all of the local sets we consider), this is equivalent to the statement that $A$ is determined by $h|_U$ on the event $\{A\subset U\}$.  The following lemma is a re-statement of~\cite[Lemma 2.1]{gm-confluence}.

\begin{lem}[\!\cite{gm-confluence}] \label{lem-ball-local} 
Let $D$ be a weak $\gamma$-LQG metric and let $h$ be a whole-plane GFF. Also let $z\in\BB C$ and let $\tau$ be a stopping time for the filtration generated by $(\mcl B_s^\bullet(z;D_h), h|_{\mcl B_s^\bullet(z;D_h)})$. Then $\mcl B_\tau^\bullet(z;D_h)$ is a local set for $h$. The same is true with closures of ordinary $D_h$-metric balls in place of filled $D_h$-metric balls. 
\end{lem}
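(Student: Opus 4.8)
The plan is to reduce to the case of a \emph{deterministic} radius and then to lift the result to a stopping time via the optional stopping property of local sets. Recall from \cite[Lemma 3.9]{ss-contour} that, since $\mcl B_r^\bullet(z;D_h)$ is determined by $h$, it is a local set for $h$ if and only if for every deterministic open $U\subseteq\BB C$ the event $\{\mcl B_r^\bullet(z;D_h)\subseteq U\}$ is $h|_U$-measurable and, on this event, $\mcl B_r^\bullet(z;D_h)$ is a measurable function of $h|_U$; the same characterization applies with $\ol{\mcl B_r(z;D_h)}$ in place of $\mcl B_r^\bullet(z;D_h)$. We use freely that $(\BB C,D_h)$ is a boundedly compact length space (\cite[Lemma 3.8]{lqg-metric-estimates}), so that $D_h$-geodesics exist and $\ol{\mcl B_r(z;D_h)}=\{w\in\BB C:D_h(z,w)\le r\}$.

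\textbf{Deterministic radius.} Fix a deterministic $r>0$ and a deterministic open $U\subseteq\BB C$ with $z\in U$ (the cases $r=0$ and $z\notin U$ make the relevant events trivial). The key point is that Axiom~\ref{item-metric-length} (length space) forces the internal metric $D_h(\cdot,\cdot;U)$ to coincide with $D_h$ inside a metric ball contained in $U$: if $D_h(z,w)<r$, any $D_h$-path $P$ from $z$ to $w$ with $\op{len}(P;D_h)<r$ has every point in the open ball $\mcl B_r(z;D_h)$, so on the event $\{\ol{\mcl B_r(z;D_h)}\subseteq U\}$ such a path lies in $U$, whence $D_h(z,w;U)\le\op{len}(P;D_h)<r$; together with the trivial bound $D_h\le D_h(\cdot,\cdot;U)$ this gives $\mcl B_r(z;D_h)=\mcl B_r(z;D_h(\cdot,\cdot;U))$ on $\{\ol{\mcl B_r(z;D_h)}\subseteq U\}$. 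By Axiom~\ref{item-metric-local} (locality), $D_h(\cdot,\cdot;U)$ is $h|_U$-measurable, hence so are $\mcl B_r(z;D_h(\cdot,\cdot;U))$, its Euclidean closure, and its filling; thus on $\{\ol{\mcl B_r(z;D_h)}\subseteq U\}$ both $\ol{\mcl B_r(z;D_h)}$ and $\mcl B_r^\bullet(z;D_h)$ are $h|_U$-measurable. To finish the deterministic case I would check that $\{\ol{\mcl B_r(z;D_h)}\subseteq U\}$ equals $\{\ol{\mcl B_r(z;D_h(\cdot,\cdot;U))}\subseteq U\}$, which is $h|_U$-measurable by Axiom~\ref{item-metric-local}: the inclusion ``$\subseteq$'' is immediate from the identity of balls above, and ``$\supseteq$'' follows from a first-exit argument — if some $w$ with $D_h(z,w)\le r$ lay outside $U$, a $D_h$-geodesic from $z$ to $w$ would have a first exit point $p\in\partial U$, reached from $z$ by a subpath staying in $U$ of $D_h$-length $<r$, forcing $p\in\ol{\mcl B_r(z;D_h(\cdot,\cdot;U))}\subseteq U$, contradicting $p\in\partial U$. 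The event $\{\mcl B_r^\bullet(z;D_h)\subseteq U\}$ is then also $h|_U$-measurable, being the intersection of $\{\ol{\mcl B_r(z;D_h)}\subseteq U\}$ with the $h|_U$-measurable condition that the filling of $\ol{\mcl B_r(z;D_h)}$ be contained in $U$. Hence $\ol{\mcl B_r(z;D_h)}$ and $\mcl B_r^\bullet(z;D_h)$ are local sets for every deterministic $r$.

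\textbf{Stopping time.} The family $s\mapsto\mcl B_s^\bullet(z;D_h)$ (resp.\ $s\mapsto\ol{\mcl B_s(z;D_h)}$) is increasing, adapted to the given filtration, and right-continuous: for closed metric balls $\bigcap_{s>r}\ol{\mcl B_s(z;D_h)}=\{w:D_h(z,w)\le r\}=\ol{\mcl B_r(z;D_h)}$, and for filled balls one deduces $\bigcap_{s>r}\mcl B_s^\bullet(z;D_h)=\mcl B_r^\bullet(z;D_h)$ from this together with the fact that the filled balls of radii in $(r,r+1)$ all lie in the fixed bounded set $\mcl B_{r+1}^\bullet(z;D_h)$, so no bounded complementary component can escape to infinity as $s\downarrow r$. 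Given the deterministic-radius case, write $\tau_m:=2^{-m}\lceil 2^m\tau\rceil\downarrow\tau$; on each event $\{\tau_m=k2^{-m}\}$ — which is measurable with respect to $(\mcl B_{k2^{-m}}^\bullet(z;D_h), h|_{\mcl B_{k2^{-m}}^\bullet(z;D_h)})$ — the set $\mcl B_{\tau_m}^\bullet(z;D_h)$ equals the local set $\mcl B_{k2^{-m}}^\bullet(z;D_h)$, and the local-set property is preserved under this gluing over $k$ and under the decreasing limit over $m$ (both being elementary consequences, using compactness of the sets involved, of the characterization recalled above), so $\mcl B_\tau^\bullet(z;D_h)=\bigcap_m\mcl B_{\tau_m}^\bullet(z;D_h)$ is local; the identical argument works with closed metric balls throughout. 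I expect the main obstacle to be the deterministic-radius step — in particular, squeezing the $h|_U$-measurability of the containment event out of the interplay between Axioms~\ref{item-metric-length} and~\ref{item-metric-local} — together with the easy but slightly delicate right-continuity of the filled-ball process; the optional stopping step itself is then routine.
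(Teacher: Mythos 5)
The paper does not actually prove this lemma: it is imported as a black box from \cite[Lemma 2.1]{gm-confluence}, so there is no internal proof to compare against. Your argument is correct, and it is essentially the standard proof one would expect from the cited source and from the paper's own style of locality arguments (compare the reasoning in the proofs of Lemmas~\ref{lem-max-min-const} and~\ref{lem-attained-msrble}): for a deterministic radius, Axiom~\ref{item-metric-local} together with the length-space property identifies $\mcl B_r(z;D_h)$ with the ball of the internal metric $D_h(\cdot,\cdot;U)$ on the containment event, and your first-exit argument correctly upgrades this to $h|_U$-measurability of the containment event itself; the passage to a stopping time via dyadic approximation, right-continuity of $s\mapsto\mcl B_s^\bullet(z;D_h)$ (your boundedness argument for the filled balls is the right one), and a decreasing intersection of compacts is the routine optional-stopping step. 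Two small remarks. First, in the gluing over $k$ the hinge is that on $\{\mcl B_{k2^{-m}}^\bullet(z;D_h)\subset U\}$ the \emph{pair} $\left(\mcl B_{k2^{-m}}^\bullet(z;D_h), h|_{\mcl B_{k2^{-m}}^\bullet(z;D_h)}\right)$ is itself determined by $h|_U$, so that the event $\{\tau_m=k2^{-m}\}$, which a priori is only measurable with respect to the filtration at time $k2^{-m}$, becomes $h|_U$-measurable on that event; this follows immediately from your strengthened deterministic-radius statement (and is precisely why proving the extra measurability of the containment event is worthwhile), but it is the one step currently left implicit and deserves a sentence. Second, the "if and only if" you attribute to \cite[Lemma 3.9]{ss-contour} is stated in a form slightly stronger than the characterization recalled in the paper (which only asks that the set be determined by $h|_U$ on $\{A\subset U\}$); since you verify the stronger condition, only the sufficiency direction is used and the proof is unaffected.
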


\subsubsection*{General notational conventions}

We make some comments about how various symbols are used in order to help the reader follow the paper (we will not make any precise definitions here). 
\smallskip

\noindent
We use the symbols $\BB z,\BB w,z,w,u,v$ for points in $\BB C$. Typically, $\BB z,\BB w$ are fixed (often the endpoints of a geodesic), $z$ and $w$ are allowed to vary (e.g., over some open set) or are random, and $u,v$ are dummy variables appearing, e.g., in suprema/infima. 
\smallskip

\noindent
We use the symbols $p$ and $\BB p$ for probabilities. 
Typically, $\BB p$ is fixed throughout several lemmas, whereas $p$ is allowed to change more frequently. 
\smallskip

\noindent
The symbols $r$ and $\BB r$ denote Euclidean radii. 
Typically, $\BB r$ represents a fixed Euclidean scale. The reason why we need this is that we do not have exact scale invariance, only tightness across scales, so we often need to prove things at an arbitrary Euclidean scale, rather than just considering a single scale and then re-scaling.
The symbol $r$ is used for other Euclidean radii, which may depend on $\BB r$ and/or be random.
We use $s$ and $t$ for LQG radii. 
\smallskip

\noindent
The symbol $\ep$ typically denotes a small parameter which is independent from the Euclidean scale $\BB r$ (so $\ep \rta 0$ at a rate which does not depend on $\BB r$). 
The symbols $\mu$ and $\nu$ will always carry the same meaning as in the proposition statements in Section~\ref{sec-attained}: namely, we require that for any fixed $\BB r$ and any small enough $\ep$, there are at least $\mu\log_8 \ep^{-1}$ ``good" scales $r\in [\ep^{1+\nu} \BB r , \ep\BB r]$.

\subsection{Bi-Lipschitz equivalence of weak LQG metrics}
\label{sec-bilip}

In this subsection we explain why the results of~\cite{local-metrics} imply that any two weak $\gamma$-LQG metrics with the same scaling constants are bi-Lipschitz equivalent. 

\begin{prop} \label{prop-lqg-metric-bilip}
Let $h$ be a whole-plane GFF, let $\gamma \in (0,2)$, and let $D$ and $\wt D$ be two weak $\gamma$-LQG metrics, with the same scaling constants $\frk c_r$. 
There is a deterministic constant $C>0$ such that a.s.\
\eqb \label{eqn-lqg-metric-bilip} 
C^{-1} D_h(z,w) \leq \wt D_h(z,w) \leq C D_h(z,w) ,\quad\forall z,w\in\BB C .
\eqe
\end{prop}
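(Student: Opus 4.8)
# Proof Proposal for Proposition~\ref{prop-lqg-metric-bilip}

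\textbf{Overall strategy.} The plan is to deduce the two-sided bound~\eqref{eqn-lqg-metric-bilip} from the general characterization of local metrics of the GFF in~\cite{local-metrics}, specifically from the cited ``Theorem 1.6'' of that paper. The key observation is that both $D_h$ and $\wt D_h$, as well as suitable combinations of them, are \emph{local metrics} for $h$ in the sense of~\cite{local-metrics}: this is exactly the content of Axiom~\ref{item-metric-local} (locality), which says that the internal metric on any deterministic open set is determined by $h$ restricted to that set. The remaining hypotheses of the cited theorem are the length-space property (Axiom~\ref{item-metric-length}), the fact that these metrics induce the Euclidean topology (part of the definition of a weak $\gamma$-LQG metric), and some uniform control across Euclidean scales, which is provided by Axiom~\ref{item-metric-translate} (translation invariance) together with Axiom~\ref{item-metric-coord} (tightness across scales) --- and here it is essential that $D$ and $\wt D$ share the \emph{same} scaling constants $\frk c_r$, since otherwise the natural candidate for a bi-Lipschitz constant would blow up or vanish as one moves between scales.

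\textbf{Key steps.} First I would record that $D_h$ and $\wt D_h$ are both local metrics for the whole-plane GFF $h$ (from Axiom~\ref{item-metric-local}), and that $\min(D_h,\wt D_h)$ and $\max(D_h,\wt D_h)$ --- or rather their length-space ``internal'' versions --- are also local metrics for $h$, since the locality of each metric passes through these operations. Second, I would verify the scale-uniformity hypotheses: using Axiom~\ref{item-metric-translate} to move the base point and Axiom~\ref{item-metric-coord} to control $\frk c_r^{-1} e^{-\xi h_r(0)} D_h(r\cdot,r\cdot)$ in law uniformly in $r$, one obtains the tightness of the relevant ratios of distances across Euclidean scales required to apply~\cite[Theorem 1.6]{local-metrics}; the crucial point is that because $\frk c_r$ is the same for $D$ and $\wt D$, the ``cross-ratio'' $\wt D_h(r\cdot,r\cdot)/D_h(r\cdot,r\cdot)$ has tight law uniformly in $r$, with no stray power of $r$. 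Third, I would invoke~\cite[Theorem 1.6]{local-metrics} to conclude that there is a deterministic constant $C > 0$ with $C^{-1} D_h \le \wt D_h \le C D_h$ a.s.\ on all of $\BB C \times \BB C$. The fact that the bi-Lipschitz constant can be taken \emph{deterministic} (rather than a.s.-finite random) is exactly the strength of that theorem, and it uses a zero-one law / ergodicity-type input for the GFF; alternatively one can first get an a.s.-finite random constant and then argue as in Lemma~\ref{lem-max-min-const} (referenced later in the excerpt) that $c_*, C_*$ from~\eqref{eqn-max-min-def} are a.s.\ equal to deterministic constants by a tail-triviality argument for the whole-plane GFF.

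\textbf{Main obstacle.} The main obstacle is purely one of bookkeeping: checking precisely that the hypotheses of the black-box theorem~\cite[Theorem 1.6]{local-metrics} are met by a weak $\gamma$-LQG metric, in particular the quantitative scale-uniformity condition, which must be extracted from Axiom~\ref{item-metric-coord} and the Gaussian behavior of the circle-average process $h_r(0)$ (centered Gaussian of variance $\log(1/r)$ for $r<1$, per~\cite[Section 3.1]{shef-kpz}). One has to be careful that the comparison holds simultaneously for \emph{all} pairs $z,w$ --- including pairs at very different scales or near infinity --- which is handled by a covering argument over dyadic Euclidean annuli combined with the translation invariance and tightness-across-scales axioms; the constraint~\eqref{eqn-scaling-constant} on $\frk c_{\delta r}/\frk c_r$ guarantees this covering does not accumulate an unbounded constant. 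Everything else is a direct application of the cited results, so once the hypotheses are matched the proof is short.
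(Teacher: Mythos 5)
Your proposal matches the paper's proof: both reduce the statement to the bi-Lipschitz criterion of \cite[Theorem 1.6]{local-metrics} for jointly local, $\xi$-additive metrics, and verify its hypothesis uniformly in $z$ and $r$ using translation invariance and tightness across scales together with the fact that $D$ and $\wt D$ share the same constants $\frk c_r$, so that the relevant annulus-crossing quantities for both metrics are of order $\frk c_r e^{\xi h_r(z)}$ with high probability. The only cosmetic difference is that the deterministic constant $C$ comes directly from the cited theorem applied in both directions (with $C = C_p^2$), so no separate zero-one-law or randomization step is needed.
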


Proposition~\ref{prop-lqg-metric-bilip} is a special case of a general theorem from~\cite{local-metrics} which tells us when two random metrics coupled with the same GFF are bi-Lipschitz equivalent. To state the theorem, we first recall some definitions. 
 
\begin{defn}[Jointly local metrics] \label{def-jointly-local}
Let $(h,D_1,\dots,D_n)$ be a coupling of the GFF $h$ with $n$ random continuous length metrics.
We say that $D_1,\dots,D_n$ are \emph{jointly local metrics} for $h$ if for any open set $V\subset \BB C$, the collection of internal metrics $\{ D_j(\cdot,\cdot;  V) \}_{j = 1,\dots,n}$ is conditionally independent from $(h|_{\BB C\setminus V} ,  \{ D_j(\cdot,\cdot;  U\setminus \ol V) \}_{j = 1,\dots,n}   )$ given $h|_V$.
\end{defn}

In the setting of Proposition~\ref{prop-lqg-metric-bilip}, the metrics $D_h$ and $\wt D_h$ are each local for $h$ due to Axiom~\ref{item-metric-local}. 
Since these metrics are each determined by $h$, they are conditionally independent given $h$. 
Therefore, we can apply~\cite[Lemma 1.4]{local-metrics} to get that $D_h$ and $\wt D_h$ are jointly local for $h$. 
 
\begin{defn}[Additive local metrics] \label{def-additive-local}
Let $(h,D_1,\dots,D_n)$ be a coupling of $h$ with $n$ random continuous length metric which are jointly local for $h$.
For $\xi \in\BB R$, we say that $ D_1,\dots,D_n $ are \emph{$\xi$-additive} for $h$ if for each $z\in \BB C$ and each $r> 0$ such that $B_r(z) \subset U$, the metrics $(e^{-\xi h_r(z)} D_1,\dots, e^{-\xi h_r(z)} D_n)$ are jointly local metrics for $h - h_r(z)$. 
\end{defn}

By Axiom~\ref{item-metric-f} (Weyl scaling), it follows that our metrics $D_h$ and $\wt D_h$ are jointly local for $h$. 
The following theorem is a special case of~\cite[Theorem 1.6]{local-metrics}.

\begin{thm}[\!\!\cite{local-metrics}] \label{thm-bilip}
Let $\xi \in\BB R$, let $h$ be a whole-plane GFF normalized so that $h_1(0) = 0$, and let $(h,D_h ,\wt D_h)$ be a coupling of $h$ with two random continuous metrics on $\BB C$ which are jointly local and $\xi$-additive for $h $.  
There is a universal constant $p \in (0,1)$ such that the following is true.
Suppose there is a constant $C>0$ such that (using the notation for annuli from~\eqref{eqn-annulus-def}), we have
\eqb \label{eqn-bilip}
\BB P\left[  \sup_{u,v \in \bdy B_r(z)} \wt D_h\left(u,v; \BB A_{r/2,2r}(z) \right) \leq C D_h(\bdy B_{r/2}(z) , \bdy B_r(z) ) \right] \geq p    ,\quad \forall z\in \BB C , \quad \forall r  > 0 .
\eqe 
Then a.s.\ $\wt D(z,w) \leq C D(z,w)$ for all $z,w\in\BB C$. 
\end{thm}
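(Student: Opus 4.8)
The plan is to use $D_h$-geodesics as ``scaffolding'': to bound $\wt D_h(\BB z,\BB w)$ above by $C\, D_h(\BB z,\BB w)$ we follow a $D_h$-geodesic $P$ from $\BB z$ to $\BB w$ and replace those portions of $P$ which cross a favorable annulus by a short $\wt D_h$-path supplied by~\eqref{eqn-bilip}. Call an annulus $\BB A_{r/2,2r}(z_0)$ \emph{good} if the event in~\eqref{eqn-bilip} occurs for the pair $(z_0,r)$; since any path between $\bdy B_{r/2}(z_0)$ and $\bdy B_r(z_0)$ contains a sub-path staying in the closed annulus $\{r/2\le|\cdot-z_0|\le r\}$ and joining the two circles, the quantity $D_h(\bdy B_{r/2}(z_0),\bdy B_r(z_0))$ equals the corresponding internal distance, so the good event is (essentially) measurable with respect to the internal metrics $D_h(\cdot,\cdot;\BB A_{r/2,2r}(z_0))$, $\wt D_h(\cdot,\cdot;\BB A_{r/2,2r}(z_0))$ together with $h|_{\BB A_{r/2,2r}(z_0)}$. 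First I would reduce to a fixed pair $\BB z,\BB w\in\BB C$: once $\wt D_h(\BB z,\BB w)\le C\,D_h(\BB z,\BB w)$ is established a.s.\ for every pair in a fixed countable dense subset of $\BB C\times\BB C$, the continuity of $D_h$ and $\wt D_h$ upgrades it to all pairs on a single a.s.\ event.

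For fixed $\BB z,\BB w$, let $P$ be the a.s.\ unique $D_h$-geodesic from $\BB z$ to $\BB w$, parameterized by $D_h$-length. The heart of the argument is to produce, a.s., an increasing sequence of times $0=t_0\le t_1\le\dots$ along $P$ together with good annuli $\BB A_{r_j/2,2r_j}(z_j)$ such that $P(t_{2j-1}),P(t_{2j})\in\bdy B_{r_j}(z_j)$, the sub-path $P|_{[t_{2j-1},t_{2j}]}$ enters $B_{r_j/2}(z_j)$ (hence has $D_h$-length at least $D_h(\bdy B_{r_j/2}(z_j),\bdy B_{r_j}(z_j))$ from its inward crossing), and the ``gap'' pieces $P|_{[t_{2j},t_{2j+1}]}$ have total $D_h$-length that can be made arbitrarily small. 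Granting this, the triangle inequality and the domination of ambient distance by internal distance give
\[
\wt D_h(\BB z,\BB w)\le \sum_j \wt D_h\big(P(t_{2j-1}),P(t_{2j});\BB A_{r_j/2,2r_j}(z_j)\big)+\sum_j \wt D_h\big(P(t_{2j}),P(t_{2j+1})\big),
\]
where the first sum is at most $C\sum_j D_h\big(\bdy B_{r_j/2}(z_j),\bdy B_{r_j}(z_j)\big)\le C\op{len}(P;D_h)=C\,D_h(\BB z,\BB w)$ since the $D_h$-geodesic segments $P|_{[t_{2j-1},t_{2j}]}$ are disjoint, while the second sum is handled by rerunning the same construction at a finer scale on each gap piece (a sub-path of a $D_h$-geodesic is again a $D_h$-geodesic) and iterating, so that in the limit it contributes nothing.

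It remains to explain why good annuli are abundant along $P$; this is where the universal threshold $p$ and the hypotheses of joint locality and $\xi$-additivity enter. Fix a center $z_0$ and the geometrically spaced scales $r_k=4^{-k}$, so that the annuli $\BB A_{r_k/2,2r_k}(z_0)$ for distinct $k$ have disjoint interiors and the good events at these scales are measurable with respect to the data of $(h,D_h,\wt D_h)$ in disjoint regions. Conditioning on the coarser scales amounts, by the Markov property of the whole-plane GFF, to conditioning $h$ inside $B_{r_k}(z_0)$ to be a zero-boundary GFF plus a harmonic function; that harmonic function is nearly constant on the much smaller annulus $\BB A_{r_{k+1}/2,2r_{k+1}}(z_0)$, and by $\xi$-additivity adding a constant to $h$ rescales $D_h$ and $\wt D_h$ by the \emph{same} factor and so does not affect the good event. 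Hence, up to a vanishing error, the conditional probability of a good annulus at scale $r_{k+1}$ given all coarser scales is still $\ge p$, so a good annulus centered at $z_0$ appears among any $K$ consecutive scales except on an event of probability $\le(1-p/2)^K$. A Borel--Cantelli argument over a fine mesh of centers then shows that a.s., for all small enough scales, every point of $\BB C$ has a good annulus of comparable scale centered nearby; since this is a statement about $(h,D_h,\wt D_h)$ alone, it may be intersected with the a.s.\ event that $P$ stays in a fixed bounded set, after which a greedy left-to-right selection along $P$ produces the disjoint good crossings and the small gaps required above. Translation invariance of the law of the GFF modulo additive constant makes all of this uniform over the location $z_0$.

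The main obstacle is exactly the last structural point: converting the purely local information ``good annuli are abundant'' into the \emph{sharp} global bound $\wt D_h\le C\,D_h$ with no loss in the constant. Naively covering $P$ by good crossings at a single scale and extracting a minimal subcover costs a factor of $2$ from double overlaps; recovering the exact constant forces one to iterate over a rapidly decreasing sequence of scales and to control the total $D_h$-length of the uncovered gaps so that it tends to zero, so that the chained $\wt D_h$-shortcuts converge to a path of $\wt D_h$-length exactly $C\op{len}(P;D_h)$. A secondary, more bookkeeping-level obstacle is that the $D_h$-geodesic is a highly non-local functional of $h$, so the multiscale independence cannot be applied ``conditionally on the geodesic''; instead it must be proved as a property of $h$ uniformly over space (via the mesh plus Borel--Cantelli) and only afterwards combined with the a.s.\ localization of $P$.
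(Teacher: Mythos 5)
A preliminary remark: this paper does not prove the statement at all \textemdash{} it is imported as a black box from \cite{local-metrics} (Theorem 1.6 there) \textemdash{} so your attempt can only be judged on its own terms. Your scaffolding (chain $\wt D_h$-shortcuts through good annuli along a $D_h$-geodesic, keep the crossing intervals disjoint so the shortcut costs sum to at most $C\,D_h(\BB z,\BB w)$, and use multiscale independence plus a union bound over a mesh to make good annuli dense) is the right general spirit, but the step you yourself flag as the main obstacle is exactly where the proof is missing, and your proposed fix does not close it. The event in~\eqref{eqn-bilip} only controls $\wt D_h$ between two points of the circle $\bdy B_{r_j}(z_j)$, so after the geodesic exits one good ball it must travel until it is again in a ``circle-to-circle, through the inner annulus'' configuration for a fresh good annulus before the next shortcut is usable. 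The $D_h$-length of that gap is generically of the same order as the crossing time it precedes (both are $D_h$-crossing distances at comparable Euclidean scales), so the claim that a greedy selection yields disjoint crossings together with gaps of arbitrarily small total $D_h$-length is unsubstantiated; the natural greedy loses a constant factor, which is fatal here because the conclusion must hold with the same $C$ as the hypothesis. Your fallback \textemdash{} rerun the construction at a finer scale on each gap and iterate \textemdash{} telescopes the covered lengths correctly, but after $N$ stages it leaves an additive error equal to the sum of $\wt D_h$-distances between the stage-$N$ gap endpoint pairs. The number of such pairs grows with $N$, while each individual distance is controlled only by the random modulus of continuity of $\wt D_h$; the hypotheses of the theorem (joint locality, $\xi$-additivity, continuity of the metrics, and the probability-$p$ event \textemdash{} no H\"older bounds, no moments, no Weyl scaling for non-constant functions) give no way to show this error tends to zero. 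So the exact-constant conclusion is not established by your argument.

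A second, more repairable gap concerns your justification of near-independence across scales: ``the conditional harmonic part is nearly constant on the smaller annulus, adding a constant does not affect the good event, hence the conditional probability is still $\geq p$ up to vanishing error.'' In this abstract setting $D_h$ and $\wt D_h$ are merely coupled with $h$ (not functions of it) and are only assumed jointly local and $\xi$-additive, so nothing says the good event responds continuously \textemdash{} or at all controllably \textemdash{} to adding a small non-constant function to the field; the near-constancy of the harmonic part therefore proves nothing by itself. The correct mechanism is the one encapsulated in the decorrelation-across-scales lemma of \cite{local-metrics} (quoted as Lemma~\ref{lem-annulus-iterate} in this paper): one uses that the event, being invariant under adding constants to $h$ by $\xi$-additivity, is measurable with respect to data that is conditionally independent of the outside given $h$ restricted to the annulus, and one compares the conditional law of $(h-h_{r_k}(z))|_{\text{annulus}}$ to its unconditional law by a Radon--Nikodym estimate. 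This is also where the universal threshold $p$ enters quantitatively (it must be close enough to $1$ for the per-point failure probability $\approx (1-p)^{c\log\epsilon^{-1}}$ to beat the cardinality of the mesh), a point your sketch leaves implicit.
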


\begin{proof}[Proof of Proposition~\ref{prop-lqg-metric-bilip}]
By Axioms~\ref{item-metric-translate} and~\ref{item-metric-coord} for each of $D_h$ and $\wt D_h$, for any $p\in (0,1)$ we can find a constant $C_p > 1$ such that for each $z\in\BB C$ and each $r>0$, it holds with probability at least $p$ that
\eqb
 \sup_{u,v \in \bdy B_r(z)}  D_h\left(u,v; \BB A_{r/2,2r}(z) \right) \leq C_p \frk c_r e^{\xi h_r(z)}   ,
 \quad  D_h(\bdy B_{r/2}(z) , \bdy B_r(z) )\geq C_p^{-1} \frk c_r e^{\xi h_r(z)}   ,
\eqe
and the same is true with $\wt D_h$ in place of $h$. 
Therefore,~\eqref{eqn-bilip} holds with $C = C_p^2$ for each of the pairs $(D_h,\wt D_h)$ and $(\wt D_h , D_h)$. 
Theorem~\ref{thm-bilip} therefore implies Proposition~\ref{prop-lqg-metric-bilip} with $C=C_p^2$, where $p$ is as in Theorem~\ref{thm-bilip}.  
\end{proof}

\subsection{Local independence for the GFF}
\label{sec-gff-ind}

In many places throughout the paper, we will estimate various probabilities using the local independence properties of the GFF. We will do this using two different lemmas, which we state in this section. The first is a restatement of part of~\cite[Lemma 3.1]{local-metrics}. 

\begin{lem}[Iterating events in nested annuli] \label{lem-annulus-iterate}
Fix $0 < s_1<s_2 < 1$. Let $\{r_k\}_{k\in\BB N}$ be a decreasing sequence of positive numbers such that $r_{k+1} / r_k \leq s_1$ for each $k\in\BB N$ and let $\{E_{r_k} \}_{k\in\BB N}$ be events such that $E_{r_k} \in \sigma\left( (h-h_{r_k}(0)) |_{\BB A_{s_1 r_k , s_2 r_k}(0)  } \right)$ for each $k\in\BB N$. 
For $K\in\BB N$, let $N(K)$ be the number of $k\in [1,K]_{\BB Z}$ for which $E_{r_k}$ occurs. 
For each $a > 0$ and each $b\in (0,1)$, there exists $p = p(a,b,s_1,s_2) \in (0,1)$ and $c = c(a,b,s_1,s_2) > 0$ such that if  
\eqb \label{eqn-annulus-iterate-prob}
\BB P\left[ E_{r_k}  \right] \geq p , \quad \forall k\in\BB N  ,
\eqe 
then 
\eqb \label{eqn-annulus-iterate}
\BB P\left[ N(K)  < b K\right] \leq c e^{-a K} ,\quad\forall K \in \BB N. 
\eqe 
\end{lem}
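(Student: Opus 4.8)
\textbf{Proof proposal for Lemma~\ref{lem-annulus-iterate}.}

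The plan is to reduce the statement to a standard large deviations bound for sums of independent Bernoulli random variables by constructing, scale by scale, an independent family of events that each dominate (with controlled loss) the events $E_{r_k}$ restricted to a suitable $\sigma$-algebra. First I would exploit the key structural feature: each $E_{r_k}$ is measurable with respect to $(h - h_{r_k}(0))|_{\BB A_{s_1 r_k, s_2 r_k}(0)}$. Since $r_{k+1}/r_k \le s_1$, the annuli $\BB A_{s_1 r_k, s_2 r_k}(0)$ are nested inside the balls $B_{s_2 r_k}(0) \subset B_{r_k}(0)$, and the ``innermost'' annulus for scale $k+1$ is contained in $B_{s_1 r_k}(0)$, which is disjoint from $\BB A_{s_1 r_k, s_2 r_k}(0)$. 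So the relevant annuli are pairwise disjoint, but the fields on them are not independent --- they are coupled through the harmonic part of the GFF on the complement. The standard device (this is exactly what \cite[Lemma 3.1]{local-metrics} does) is to use the Markov decomposition $h = h^{B_{r_k}(0)} + \mathfrak{h}^{B_{r_k}(0)}$ where $h^{B_{r_k}(0)}$ is a zero-boundary GFF on $B_{r_k}(0)$, independent across the nested balls, and $\mathfrak{h}^{B_{r_k}(0)}$ is harmonic on $B_{r_k}(0)$. On the annulus $\BB A_{s_1 r_k, s_2 r_k}(0)$, which is compactly contained in $B_{r_k}(0)$, the harmonic part is smooth and has Gaussian fluctuations of bounded variance (after recentering by the circle average $h_{r_k}(0)$); one shows that with probability bounded below (uniformly in $k$) the relevant modulus of continuity / sup-norm of $\mathfrak{h}^{B_{r_k}(0)} - h_{r_k}(0)$ on the annulus is at most some fixed constant $M$.

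Next I would define, for each $k$, the event $\wt E_{r_k}$ to be the event that $E_{r_k}$ holds when $h$ is replaced by $h^{B_{r_k}(0)}$ plus the worst-case admissible harmonic perturbation --- more precisely, using a coupling argument, I would find an event $\wt E_{r_k} \in \sigma(h^{B_{r_k}(0)}|_{\BB A_{s_1 r_k, s_2 r_k}(0)})$ with $\BB P[\wt E_{r_k}] \ge \BB P[E_{r_k}] - q$ for a fixed small $q$ (coming from the probability that the harmonic part is badly behaved, which can be made as small as we like at the cost of enlarging $M$), and such that on the good event for the harmonic part, $\wt E_{r_k}$ implies $E_{r_k}$ or is implied by it --- whichever direction is needed. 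Concretely the cleanest route: choose $M$ large and let $G_k$ be the (uniformly positive probability) event that $\|\mathfrak{h}^{B_{r_k}(0)} - h_{r_k}(0)\|_{L^\infty(\BB A_{s_1 r_k, s_2 r_k}(0))} \le M$; then the events $\{h^{B_{r_k}(0)}$ lies in a set forcing $E_{r_k}$ for every harmonic perturbation bounded by $M\}$ are independent across $k$, and their probabilities are bounded below in terms of $\BB P[E_{r_k}]$, $M$, and the Cameron--Martin cost of shifting by such perturbations. Choosing $p$ close enough to $1$ (depending on $a, b, s_1, s_2$), these independent events each have probability at least $1 - b'$ for a suitably small $b' = b'(b)$.

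Finally, the number of scales $k \in [1,K]_{\BB Z}$ on which these independent dominating events occur stochastically dominates $\mathrm{Binomial}(K, 1-b')$ by construction. If $N(K)$ denotes the count of $k$ for which $E_{r_k}$ occurs, then $N(K) \ge \#\{k : \wt E_{r_k} \text{ and } G_k \text{ occur}\}$, and after conditioning on the (independent) $G_k$'s and using that $\BB P[\wt E_{r_k} \mid \text{everything outside } B_{r_k}(0)]$ is bounded below we get the stochastic domination. A Chernoff bound for the binomial then gives $\BB P[N(K) < bK] \le \BB P[\mathrm{Binomial}(K,1-b') < bK] \le c\, e^{-aK}$ provided $1 - b' > b$ and $K$ is arbitrary, where $c$ and the required closeness of $b'$ to $1$ (hence of $p$ to $1$) depend only on $a, b$ and, through $M$ and the geometry of the annuli, on $s_1, s_2$. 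The main obstacle is the bookkeeping in the coupling step: making the dependence on the harmonic part truly uniform in $k$ (which is where scale invariance of the whole-plane GFF modulo additive constant is used --- the law of $(h - h_{r_k}(0))|_{B_{r_k}(0)}$ rescaled to $B_1(0)$ does not depend on $k$) and simultaneously controlling the Cameron--Martin / Girsanov cost so that the lower bound on $\BB P[\wt E_{r_k}]$ degrades by only a fixed amount. Since this is precisely \cite[Lemma 3.1]{local-metrics}, I would in the actual writeup simply invoke that result rather than reproving it.
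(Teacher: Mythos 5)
Your proposal matches the paper exactly: the paper gives no proof of this lemma at all, stating only that it is a restatement of part of \cite[Lemma 3.1]{local-metrics}, which is precisely what you invoke at the end. One caveat about your sketch, in case you ever expand it: the zero-boundary parts $h^{B_{r_k}(0)}$ of the GFF on \emph{nested} balls are not independent across $k$ (each is determined by the previous one together with a harmonic correction), so the argument in \cite{local-metrics} proceeds by conditioning sequentially from the outside in and bounding the conditional probability of $E_{r_k}$ at each scale via a Radon--Nikodym estimate on the harmonic part (in the spirit of the paper's own proof of Lemma~\ref{lem-spatial-ind}), rather than by exact independence of the zero-boundary fields; since you ultimately defer to the citation, as the paper does, this does not affect the correctness of your writeup.
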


We will only ever apply Lemma~\ref{lem-annulus-iterate} to say that $N(K) \geq 1$ with high probability, i.e., the choice of $b$ in~\eqref{eqn-annulus-iterate} will not matter for our purposes.

\begin{lem}[Iterating events in disjoint balls]  \label{lem-spatial-ind}
Let $h$ be a whole-plane GFF and fix $s > 0$. 
Let $n\in\BB N$ and let $\mcl Z$ be a collection of $\#\mcl Z = n$ points in $\BB C$ such that $|z-w| \geq 2(1+s)$ for each distinct $z,w\in\mcl Z$. 
For $z\in\mcl Z$, let $E_z$ be an event which is determined by $(h - h_{1+s}(z)) |_{B_1(z)}$. 
For each $p , q \in (0,1)$, there exists $n_* = n_*(s,p,q) \in \BB N$ such that if $\BB P[E_z] \geq p$ for each $z\in\mcl Z$, then
\eqbn
\BB P\left[ \bigcup_{z\in\mcl Z} E_z \right] \geq q ,\quad \forall n \geq n_* .
\eqen 
\end{lem}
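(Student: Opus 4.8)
\textbf{Proof proposal for Lemma~\ref{lem-spatial-ind}.}

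The plan is to establish near-independence of the events $E_z$ by decomposing the whole-plane GFF into a sum of a field that is genuinely independent across the balls $B_1(z)$ plus a harmonic remainder that is uniformly controlled. Write $h = \sum_{z \in \mcl Z} h^z + g$, where $h^z$ is the projection of $h$ onto the space of distributions supported in $B_{1+s}(z)$ (equivalently, $h^z$ is a zero-boundary GFF on $B_{1+s}(z)$ extended by zero outside, up to the usual caveats about the additive constant) and $g$ is the remaining field, which restricted to each $B_{1+s}(z)$ is harmonic. Crucially, because the balls $B_{1+s}(z)$ are pairwise disjoint (this is exactly where $|z-w| \geq 2(1+s)$ is used), the fields $\{h^z\}_{z \in \mcl Z}$ are mutually independent, and they are independent of $g$. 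Since $E_z$ is determined by $(h - h_{1+s}(z))|_{B_1(z)}$, and $h - h_{1+s}(z) = h^z + (g - h_{1+s}(z))$ on $B_1(z)$ with the second term harmonic on $B_{1+s}(z)$, the event $E_z$ is determined by $h^z|_{B_1(z)}$ together with the harmonic function $(g - h_{1+s}(z))|_{B_1(z)}$, which is in turn determined by finitely many ``coordinates'' of $g$ near $z$ (e.g., its restriction to $\partial B_{1+s}(z)$).

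First I would reduce to a clean conditional statement: conditionally on $g$, the events $E_z$ become mutually independent (each depends only on $h^z$ and a $g$-measurable quantity), so
\eqbn
\BB P\left[ \bigcap_{z \in \mcl Z} E_z^c \,\middle|\, g \right] = \prod_{z \in \mcl Z} \BB P\left[ E_z^c \,\middle|\, g \right] = \prod_{z \in \mcl Z} \left( 1 - \BB P\left[ E_z \,\middle|\, g \right] \right).
\eqen
If I could show that $\BB P[E_z \mid g]$ is bounded below by some deterministic $p_0 > 0$ with high probability, uniformly in $z$, I would be done by taking $n$ large: the product would be at most $(1-p_0)^{(\text{number of good } z)} + \BB P[\text{too many bad } z]$. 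The second term needs care, so instead I would argue via expectations and a second-moment or union-bound style estimate. Concretely, since $\BB P[E_z] = \BB E[\BB P[E_z \mid g]] \geq p$, the conditional probability $\BB P[E_z \mid g]$ cannot be small too often; by Markov applied to $1 - \BB P[E_z\mid g]$, there is $\eta = \eta(p) > 0$ and $p_0 = p_0(p) > 0$ such that $\BB P[\,\BB P[E_z \mid g] \geq p_0\,] \geq \eta$ for every $z$. Then I would split $\mcl Z$ and use the independence structure of $g$ itself across well-separated points — but here is the subtlety, and the main obstacle.

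\textbf{The main obstacle} is that $g$ is \emph{not} independent across the balls: the harmonic field $g$ has long-range correlations, so the events $\{\BB P[E_z \mid g] \geq p_0\}$ are not independent and I cannot naively conclude that at least one of them holds with probability close to $1$. To handle this I would instead avoid trying to control the random set of ``good'' $z$ and argue directly with a FKG-type or martingale argument: order $\mcl Z = \{z_1, \dots, z_n\}$ and reveal $h^{z_1}, h^{z_2}, \dots$ one at a time together with $g$. At each step, conditionally on everything revealed so far, the conditional probability of $E_{z_i}$ given the past is $\BB P[E_{z_i} \mid g, h^{z_1}, \dots, h^{z_{i-1}}] = \BB P[E_{z_i}\mid g]$ (by independence of the $h^z$'s), whose expectation is $\geq p$. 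So $\bigcap_i E_{z_i}^c$ forces a product of at least $n$ terms each of conditional expectation $\leq 1-p$; more precisely, $\BB E[\mathbbm 1_{E_{z_i}^c} \mid \mcl F_{i-1}] = 1 - \BB P[E_{z_i}\mid g] =: 1 - X_i$ where $X_i$ is $g$-measurable with $\BB E[X_i] \geq p$. Hence $\BB P[\bigcap_i E_{z_i}^c] = \BB E[\prod_{i=1}^n (1-X_i)]$, and by the AM–GM / convexity inequality $\prod(1-X_i) \leq \exp(-\sum X_i)$, it suffices to show $\sum_{i=1}^n X_i \to \infty$ in probability. Since $\BB E[\sum X_i] \geq pn \to \infty$, this follows from a weak law of large numbers for $\sum X_i$ provided the correlations $\op{Cov}(X_i, X_j)$ decay in $|z_i - z_j|$ — which they do, because $X_i = \BB P[E_{z_i}\mid g]$ is a bounded function of the harmonic field $g$ near $z_i$, and the GFF (hence its harmonic parts) has polynomially decaying correlations at separated locations. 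Averaging $\op{Var}(\sum X_i)/n^2 \to 0$ gives $\frac1n \sum X_i \to $ (something $\geq p$) in probability, so $\sum X_i \geq pn/2$ with probability $\to 1$, and then $\BB P[\bigcap E_{z_i}^c] \leq \BB E[e^{-\sum X_i}] \leq e^{-pn/4} + \BB P[\sum X_i < pn/2] \to 0$. Choosing $n_* = n_*(s,p,q)$ so that this is $\leq 1-q$ completes the proof. (Alternatively, one can cite the cited source~\cite{local-metrics} for the precise covariance-decay input; the structural decomposition $h = \sum h^z + g$ with $\{h^z\}$ i.i.d.-type and $g$ weakly correlated is the whole point.)
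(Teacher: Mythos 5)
Your first step — condition on the harmonic/outside part so that the events $E_z$ become conditionally independent across the disjoint balls — is exactly the paper's first step (the paper conditions on $h|_{\BB C\setminus U}$ with $U=\bigcup_z B_{1+s}(z)$, which is the same as conditioning on your $g$). But the second half of your argument has a genuine gap. Your route requires a quantitative decay of $\op{Cov}(X_i,X_j)$ for $X_i=\BB P[E_{z_i}\mid g]$, uniformly over admissible configurations, and you justify it by saying that ``the GFF (hence its harmonic parts) has polynomially decaying correlations at separated locations.'' As stated this is false: the whole-plane GFF is log-correlated, and its pointwise correlations do not decay at all (the field is only defined modulo additive constant). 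What does decorrelate polynomially is the \emph{recentered} observable $(g-h_{1+s}(z_i))|_{B_1(z_i)}$, and even granting that field-level decay, passing from it to covariance decay of arbitrary bounded nonlinear functionals such as $X_i$ requires an additional tool (a Gebelein/maximal-correlation inequality together with an operator-norm bound on the cross-covariance of the recentered harmonic parts, or a chaos expansion), none of which you indicate. This step is not removable within your framework: from $\BB E[X_i]\geq p$ alone one only gets $\BB P[X_i\geq p_0]\gtrsim p$, which is bounded away from $1$, so a Markov-on-the-count shortcut yields a success probability of order $p$, not an arbitrary $q$ close to $1$; you really do need the concentration of $\sum_i X_i$, hence the decorrelation input.

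For comparison, the paper avoids all covariance estimates. After the conditional-independence step it compares the conditional law of $(h-h_{1+s}(z))|_{B_1(z)}$ given $h|_{\BB C\setminus U}$ to its unconditional law via a Radon--Nikodym derivative whose positive and negative moments are bounded by $C\exp(C\frk M_z^2)$, where $\frk M_z=\sup_{u\in B_{1+s/2}(z)}|\frk h(u)-\frk h(z)|$ is the oscillation of the harmonic part. Since each $\frk M_z$ is a.s.\ finite with a law independent of $z$, one can choose $A=A(s,q)$ with $\BB P[\frk M_z\leq A]\geq 1-(1-q)/4$; on $\{\frk M_z\leq A\}$ the reverse-Hölder bound converts $\BB P[E_z]\geq p$ into a deterministic lower bound $\wt p(p,A)$ on the conditional probability, and a plain Markov bound on the number of $z$ with $\frk M_z>A$ (no independence or decorrelation of the $\frk M_z$'s is needed) shows that at least $n/2$ balls are good with probability $\geq 1-(1-q)/2$; conditional independence then finishes with $1-(1-\wt p)^{n/2}$. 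If you want to complete your version, you would either need to prove the Gebelein-type decorrelation estimate for the recentered harmonic parts (with the packing argument over configurations, which you also omit), or switch to the paper's Radon--Nikodym argument, which is substantially simpler.
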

\begin{proof}
Let $U :=  \bigcup_{z\in \mcl Z}  B_{1+s}(z)$ and let $\frk h$ be the harmonic part of $h|_{U}$. Since the balls $B_{1+ s}(z)$ for $z\in\mcl Z$ are disjoint, the Markov property of $h$ implies that the fields $(h-h_{1+s}(z))|_{B_{1+s}(z)}$ for $z\in\mcl Z$, and hence also the events $E_z$, are conditionally independent given $h|_{\BB C\setminus U}$ (equivalently, given $\frk h$). 

We will now compare the conditional law given $h|_{\BB C\setminus U}$ to the unconditional law. 
For $z\in\mcl Z$, let
\eqb
\frk M_z := \sup_{u \in B_{1+s/2}(z)} |\frk h(u) - \frk h(z)| .
\eqe
By a standard Radon-Nikodym derivative calculation for the GFF (see, e.g.,~\cite[Lemma 4.1]{mq-geodesics}) and the translation and scale invariance of the law of $h$, modulo additive constant, for each $\alpha >0$ there is a constant $C = C(\alpha,s) > 0$ such that the following is true. The conditional law given of $(h - h_{1+s}(z))|_{B_1(z)}$ given $h|_{\BB C\setminus U}$ is absolutely continuous with respect to its marginal law and if $H_z$ denotes the Radon-Nikodym derivative of the conditional law with respect to the marginal law, then a.s.\ 
\eqb \label{eqn-rn-moment}
\max\left\{ \BB E\left[ H_z^{ \alpha} \,|\,  h|_{\BB C\setminus U} \right] , \, \BB E\left[ H_r^{-\alpha} \,|\, h|_{\BB C\setminus U} \right] \right\}      
\leq C \exp\left(C   \frk M_z^2 \right) .
\eqe

Each $\frk M_z$ is an a.s.\ finite random variable. By the translation invariance of the law of $h$, modulo additive constant, the law of $\frk M_z$ does not depend on $z$. 
So, we can find a constant $A = A(s,q) > 0$ such that $\BB P[\frk M_z \leq A ] \geq 1 - (1-q)/4$ for each $z \in \mcl Z$. 
Then $\BB E[\#\{z\in\mcl Z : \frk M_z > A\}] \leq (1-q) n / 4$ so 
\eqb \label{eqn-spatial-ind-count}
\BB P\left[ \#\{z\in\mcl Z : \frk M_z \leq A\} \geq n/2 \right] \geq 1 - \frac{1-q}{2} .
\eqe 

Since $E_z$ is determined by $(h - h_{1+s}(z))|_{B_1(z)}$ and $\BB P[E_z] \geq p$ for each $z\in\mcl Z$,~\eqref{eqn-rn-moment} implies that there exists $\wt p = \wt p(p,  A) > 0$ such that on the event $\{\frk M_z \leq A\} $ (which is determined by $h|_{\BB C\setminus U}$), a.s.\ 
\eqb
\BB P\left[ E_z \,|\, h|_{\BB C\setminus U}  \right]  \geq \wt p. 
\eqe
Since the $E_z$'s are conditionally independent given $h|_{\BB C\setminus U}$, we see that a.s.\
\eqb \label{eqn-spatial-ind-iterate}
\BB P\left[ \bigcup_{z\in\mcl Z} E_z \,|\, h|_{\BB C\setminus U}  \right]  \geq 1 - \wt p^{\#\{z\in\mcl Z : \frk M_z \leq A\}}       .
\eqe
We now choose $n_*$ large enough that $1 - \wt p^{n_*/2} \geq 1 - (1-q)/2$ and combine~\eqref{eqn-spatial-ind-count} with~\eqref{eqn-spatial-ind-iterate}.
\end{proof}

\subsection{Estimates for weak LQG metrics}
\label{sec-a-priori-estimates}

In this subsection we review results from~\cite{lqg-metric-estimates} which we will need for the proofs of our main theorems. 
Throughout, $D$ denotes a weak $\gamma$-LQG metric and $h$ denotes a whole-plane GFF.  In particular, we state a bi-H\"older continuity bound for $D_h$ and the Euclidean metric (Lemma~\ref{lem-holder-uniform}), a bound for the $D_h$-diameters of squares (Lemma~\ref{lem-square-diam}), and bounds which prevent a $D_h$-geodesic from spending a long time near a line (Lemma~\ref{lem-line-path}), a circle (Lemma~\ref{lem-attained-long}), or the boundary of a $D_h$-metric ball (Lemma~\ref{lem-geo-bdy}).

All of the results which we state in this subsection involve a parameter $\BB r$, which controls the ``Euclidean scale" at which we are working. 
This parameter is necessary since we are only assuming tightness across scales (Axiom~\ref{item-metric-coord}) instead of exact scale invariance.
All estimates are required to be uniform in the choice of $\BB r$. 
Our first result, which follows from~\cite[Lemmas 3.20 and 3.22]{lqg-metric-estimates}, is a form of local H\"older continuity for the identity map $(\BB C , |\cdot|) \rta (\BB C , D_h)$ and its inverse. 

\begin{lem}[H\"older continuity] \label{lem-holder-uniform}
Fix a compact set $K\subset\BB C$ and exponents $\chi \in (0,\xi(Q-2))$ and $\chi' > \xi(Q+2)$. For each $\BB r > 0$, it holds with probability tending to $1$ as $a \rta 0$, at a rate which is uniform in $\BB r$, that for each $u,v\in \BB r K $ with $ |u-v| \leq a \BB r$, 
\eqb \label{eqn-holder-lower}
 D_h\left( u,v  \right) \geq   \frk c_{\BB r}  e^{ \xi h_{\BB r}(0)} \left|\frac{u-v}{\BB r} \right|^{\chi'} \quad \text{and} 
\eqe
\eqb \label{eqn-holder-upper}
D_h\left( u,v ; B_{2|u-v|}(u) \right)  \leq \frk c_{\BB r}  e^{ \xi h_{\BB r}(0)}  \left|\frac{u-v}{\BB r} \right|^\chi  .
\eqe
\end{lem}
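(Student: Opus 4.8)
\textbf{Proof plan for Lemma~\ref{lem-holder-uniform}.}

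The plan is to reduce both bounds to a single ``reference scale'' statement via Axioms~\ref{item-metric-translate} and~\ref{item-metric-coord}, and then upgrade a pointwise (or dyadic) estimate to a uniform-in-$u,v$ estimate over $\BB r K$ by a Borel--Cantelli / union bound argument across a hierarchy of dyadic scales. First I would normalize: by translation invariance (Axiom~\ref{item-metric-translate}) and the scaling relation $\frk c_{\BB r}^{-1} e^{-\xi h_{\BB r}(0)} D_h(\BB r\cdot,\BB r\cdot) \eqD D_{h^{\BB r}}$ where $h^{\BB r} := h(\BB r\cdot) - h_{\BB r}(0)$ is a whole-plane GFF normalized to have zero circle average over $\bdy\BB D$ (this is exactly the content of Axiom~\ref{item-metric-coord} together with the scale invariance of the GFF modulo additive constant), it suffices to prove the statement for a fixed whole-plane GFF at scale $\BB r = 1$, with $\frk c_{\BB r} e^{\xi h_{\BB r}(0)}$ replaced by $1$, and with the rate of convergence in $a$ then being automatically uniform in $\BB r$ because it came from a single scale-$1$ estimate. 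So the real task is: for a whole-plane GFF $h$ (normalized by its circle average over $\bdy\BB D$) and a compact $K$, show that with probability $\to 1$ as $a\to 0$, every $u,v\in K$ with $|u-v|\le a$ satisfy $D_h(u,v)\ge |u-v|^{\chi'}$ and $D_h(u,v;B_{2|u-v|}(u))\le |u-v|^\chi$.

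For the upper bound~\eqref{eqn-holder-upper}: the building block is the observation that for a fixed ball $B_r(z)\subset B_2(z)$, the $D_h$-diameter of $B_r(z)$ (measured with paths staying in $B_{2r}(z)$) is controlled, up to the random factor $\frk c_r e^{\xi h_r(z)}$, by a tight family of random variables (this is where one invokes the already-cited estimate from~\cite{lqg-metric-estimates}, essentially Lemma~\ref{lem-square-diam} and the diameter bounds there, together with Axioms~\ref{item-metric-translate}/\ref{item-metric-coord}). One then covers $K$ by $O(2^{2k})$ balls of radius $2^{-k}$, and for each dyadic scale $2^{-k}$ uses the Gaussian tail of $h_r(z)$ (the circle average is Gaussian with variance $\asymp \log(1/r)$, from~\cite[Section~3.1]{shef-kpz}) together with the polynomial scaling $\frk c_r \asymp r^{\xi Q + o(1)}$ from~\eqref{eqn-scaling-constant} to show that the probability that \emph{some} ball of radius $2^{-k}$ in a fixed-size neighborhood of $K$ has $D_h$-diameter exceeding $2^{-k\chi}$ decays faster than any power of $2^{-k}$, precisely because $\chi < \xi(Q-2)$ leaves room to absorb the entropy factor $2^{2k}$ and the Gaussian fluctuations $e^{\xi h_r(z)} = e^{O(\sqrt{\log(1/r)})}$. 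Summing over $k$ with $2^{-k}\le a$ and using the triangle inequality to pass from ``dyadic ball diameters'' to ``$D_h(u,v;B_{2|u-v|}(u))$ for a continuum of pairs'' (comparing $|u-v|$ to the nearest dyadic scale, and noting $B_{2|u-v|}(u)$ contains the relevant covering ball) gives the claim. The lower bound~\eqref{eqn-holder-lower} is dual: the building block is that the $D_h$-distance \emph{across} an annulus $\BB A_{r,2r}(z)$ is bounded below by $\frk c_r e^{\xi h_r(z)}$ times a tight positive random variable (this is the hard direction in~\cite{lqg-metric-estimates}, relying on the superpolynomial concentration for distances between disjoint connected sets); here the condition $\chi' > \xi(Q+2)$ is what gives the room — we need the lower bound to survive the \emph{upper} Gaussian tail of $h_r(z)$ and the entropy of the covering, and since any path from $u$ to $v$ with $|u-v|$ comparable to $2^{-k}$ must cross such an annulus, a union bound over dyadic scales and over the $O(2^{2k})$ centers, combined with Borel--Cantelli, yields that a.s.\ for all sufficiently close $u,v$ the distance is at least $|u-v|^{\chi'}$.

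The main obstacle is organizing the union bound so that the bad-event probabilities are genuinely summable while keeping the rate of convergence in $a$ uniform in $\BB r$. Concretely, one must check that at dyadic scale $2^{-k}$ the probability of the bad event is $o_k^\infty(2^{-k})$ (superpolynomial in $2^{-k}$), so that the sum over $k\ge \log_2(1/a)$ tends to $0$ as $a\to0$ at a rate depending only on the constants in the input estimates (the tightness constants from~\cite{lqg-metric-estimates}, the exponent gaps $\xi(Q-2)-\chi$ and $\chi'-\xi(Q+2)$, and $K$) — none of which depend on $\BB r$ after the reduction. This is essentially the argument of~\cite[Lemmas~3.20 and 3.22]{lqg-metric-estimates}, so I would structure the proof as a citation of those lemmas at scale $1$ followed by the one-line scaling reduction above; the only real content to spell out is the verification that the hypotheses of those lemmas (tightness of the normalized distance/diameter quantities, which is Axiom~\ref{item-metric-coord}) are in force and that the resulting estimate transfers to arbitrary $\BB r$ with a uniform rate.
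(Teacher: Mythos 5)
Your proposal ends where the paper begins and ends: the paper gives no internal proof of this lemma, it simply records that the statement follows from \cite[Lemmas 3.20 and 3.22]{lqg-metric-estimates}, and your sketch of the dyadic covering, the Gaussian tails of circle averages, and the role of the exponent gaps $\xi(Q-2)-\chi$ and $\chi'-\xi(Q+2)$ is a reasonable account of what happens inside those cited lemmas. So at the level of structure you are doing the same thing as the paper.

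There is, however, one genuine flaw in your reduction step. You claim that $\frk c_{\BB r}^{-1} e^{-\xi h_{\BB r}(0)} D_h(\BB r\cdot,\BB r\cdot) \eqD D_{h^{\BB r}}$ with $h^{\BB r}=h(\BB r\cdot)-h_{\BB r}(0)$, and call this ``exactly the content of Axiom~\ref{item-metric-coord}.'' It is not: that exact equality in law is what Axiom~\ref{item-metric-coord0} gives for a \emph{strong} LQG metric (this is precisely the computation~\eqref{eqn-strong-implies-weak}), whereas the lemma is stated for a \emph{weak} LQG metric, for which Axiom~\ref{item-metric-coord} only provides tightness of the laws of $\frk c_{\BB r}^{-1} e^{-\xi h_{\BB r}(0)} D_h(\BB r\cdot,\BB r\cdot)$, with no identification of the subsequential limits. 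Since the whole point of this paper is to prove that weak metrics are strong (so exact scaling cannot be assumed here without circularity), your ``one-line scaling reduction to $\BB r=1$'' is not available; this is exactly why the parameter $\BB r$ appears in every estimate of Section~\ref{sec-a-priori-estimates} and why uniformity in $\BB r$ must be built into the statements rather than deduced afterwards. The repair is easy and you half-acknowledge it at the end: the cited lemmas in~\cite{lqg-metric-estimates} are themselves formulated and proved for weak LQG metrics, uniformly over $\BB r$, using only translation invariance and tightness across scales, so one should quote them as stated instead of at a single reference scale. (A smaller imprecision: in the union bound the relevant control on $e^{\xi h_r(z)}$ over $\asymp r^{-2}$ centers comes from Gaussian tails at level of order $\log(1/r)$ — this is the source of the shifts $Q\mp2$ — not from typical fluctuations of order $\sqrt{\log(1/r)}$.)
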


We note that~\eqref{eqn-holder-upper} gives an upper bound for the $D_h$-distance from $u$ to $v$ \emph{along paths which stay in $B_{2|u-v|}(u)$}. This is slightly stronger than just an upper bound for $D_h(u,v)$. In Section~\ref{sec-geodesic-shortcut}, we will also need the following variant of~\eqref{eqn-holder-upper} which gives an upper bound for the $D_h$-internal diameters of Euclidean squares and is proven in~\cite[Lemma 3.20]{lqg-metric-estimates}. 

\begin{lem}[Internal diameters of Euclidean squares] \label{lem-square-diam}
Let $K$ and $\chi$ be as in Lemma~\ref{lem-holder-uniform}. 
For each $\chi \in (0,\xi(Q-2))$ and each $\BB r > 0$, it holds with probability tending to $1$ as $\ep\rta 0$, at a rate which is uniform in $\BB r$, that for each $k\in\BB N_0$ and each $2^{-k}\ep \BB r \times 2^{-k}\ep \BB r$ square $S$ with corners in $2^{-k}\ep \BB r \BB Z^2$ which intersects $\BB r K$, 
\eqb \label{eqn-square-diam}
 \sup_{u,v \in S} D_h\left( u,v ; S \right) \leq   \frk c_{\BB r} e^{ \xi h_{\BB r}(0)}  (2^{-k} \ep)^\chi .
\eqe
\end{lem}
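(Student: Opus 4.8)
The plan is to deduce Lemma~\ref{lem-square-diam} directly and deterministically from the upper H\"older bound~\eqref{eqn-holder-upper} of Lemma~\ref{lem-holder-uniform} --- that estimate is really the only probabilistic input, and no union bound over scales is needed because the event on which~\eqref{eqn-holder-upper} holds already controls \emph{all} pairs $u,v$ near $\BB rK$ simultaneously. The point I would exploit is that~\eqref{eqn-holder-upper} bounds the $D_h$-distance \emph{along paths confined to a Euclidean ball}, and that from any point of a square one can walk to its centre through a chain of such ``inscribed-ball moves'' whose Euclidean sizes increase geometrically, so that the total $D_h$-cost telescopes. Concretely, I would fix $\chi\in(0,\xi(Q-2))$, pick $\chi_0\in(\chi,\xi(Q-2))$, set $K^+:=\{z:\op{dist}(z,K)\le 2\}$, and apply Lemma~\ref{lem-holder-uniform} with the compact set $K^+$, exponent $\chi_0$, and parameter $a:=\ep$. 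This produces, for each $\BB r>0$, an event $G_\ep=G_\ep(\BB r)$ with $\BB P[G_\ep]\to 1$ as $\ep\to 0$ uniformly in $\BB r$, on which $D_h(u,v;B_{2|u-v|}(u))\le \frk c_{\BB r}e^{\xi h_{\BB r}(0)}|(u-v)/\BB r|^{\chi_0}$ for all $u,v\in\BB rK^+$ with $|u-v|\le\ep\BB r$. All remaining work is carried out on $G_\ep$, for $\BB r$, $\ep\in(0,1)$ and $k\in\BB N_0$ arbitrary; note that every dyadic square $S$ of side $\ell:=2^{-k}\ep\BB r$ meeting $\BB rK$, together with the closed ball of radius $\le\ell$ about any of its points, lies in $\BB rK^+$.

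The geometric heart of the argument I would carry out as follows. Fix such an $S$, let $c$ be its centre and $S_0\subset S$ the concentric square of side $\ell/10$. Given $u$ in the interior of $S$, set $x_0:=u$ and inductively $x_{j+1}:=x_j+\tfrac12 d_j\,\frac{c-x_j}{|c-x_j|}$ with $d_j:=\op{dist}(x_j,\partial S)$, stopping at the first $J$ with $x_J\in S_0$. Then $2|x_j-x_{j+1}|=d_j$, so $B_{2|x_j-x_{j+1}|}(x_j)$ is precisely the inscribed ball of $S$ at $x_j$, hence contained in $S$; and $|x_j-x_{j+1}|=d_j/2\le\ell/2\le\ep\BB r$, so~\eqref{eqn-holder-upper} gives a path from $x_j$ to $x_{j+1}$ inside $S$ of $D_h$-length at most $\frk c_{\BB r}e^{\xi h_{\BB r}(0)}(d_j/2\BB r)^{\chi_0}$. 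Concatenating, $u$ is joined inside $S$ to $x_J\in S_0$ by a path of $D_h$-length $\le\frk c_{\BB r}e^{\xi h_{\BB r}(0)}\BB r^{-\chi_0}\sum_{j<J}(d_j/2)^{\chi_0}$. Using that $x\mapsto\op{dist}(x,\partial S)$ is $1$-Lipschitz and concave with maximum $\ell/2$ attained only at $c$, I would show (i) $d_{j+1}\le\tfrac32 d_j$ and (ii) $d_{j+1}\ge\rho_0 d_j$ for a universal $\rho_0>1$ whenever $d_j\le\ell/8$, and that once $d_j>\ell/8$ only $O(1)$ further steps remain (each shrinks $|c-x_j|$ by $d_j/2>\ell/16$). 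Summing the geometric series --- from the top in the range $d_j\le\ell/8$ via (ii), trivially over the $O(1)$ remaining steps --- yields $\sum_{j<J}(d_j/2)^{\chi_0}\le C_0\ell^{\chi_0}$ with $C_0$ universal, \emph{independent of the starting point $u$}.

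It then remains to conclude. For $u,v$ interior to $S$ I would concatenate the path from $u$ to some $u^*\in S_0$, the path from $v$ to some $v^*\in S_0$, and --- applying~\eqref{eqn-holder-upper} once more, using $|u^*-v^*|\le\ell/5\le\ep\BB r$ and that $B_{2|u^*-v^*|}(u^*)\subset S$ (it is centred within $\ell/14$ of $c$ with radius $<2\ell/5$) --- a path from $u^*$ to $v^*$ inside $S$, obtaining $D_h(u,v;S)\le C_1\frk c_{\BB r}e^{\xi h_{\BB r}(0)}(\ell/\BB r)^{\chi_0}$ with $C_1$ universal. This extends to all $u,v\in S$ (including $\partial S$) by lower semicontinuity of $(u,v)\mapsto D_h(u,v;S)$: if $u_n\to u$ with $D_h(u_n,v;S)\le M$, near-optimal paths in $S$, parameterized by $D_h$-length, are equicontinuous and valued in the $D_h$-compact set $S$ (here I use that $(\BB C,D_h)$ is boundedly compact), so by Arzel\`a--Ascoli a subsequence converges to a path in $S$ from $u$ to $v$ of length $\le M$. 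Finally, since $\ell=2^{-k}\ep\le\ep$ and $\chi_0>\chi$, once $\ep$ is small enough that $C_1\ep^{\chi_0-\chi}\le 1$ --- a threshold uniform in $k\ge 0$ --- we get $C_1(2^{-k}\ep)^{\chi_0}\le(2^{-k}\ep)^\chi$, which is exactly~\eqref{eqn-square-diam}. Hence~\eqref{eqn-square-diam} holds for all $k$ and all such $S$ on $G_\ep$ for $\ep$ small, and $\BB P[G_\ep]\to 1$ uniformly in $\BB r$.

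The main obstacle will be making the ``spiralling'' estimate rigorous: one has to check that the inscribed-ball moves keep the path inside $S$ \emph{and} that the total $D_h$-cost is bounded by $C_0(\ell/\BB r)^{\chi_0}$ with a constant independent of the starting point, in particular uniformly as $u\to\partial S$, where infinitely many small moves are required but each has tiny cost. The Lipschitz/concavity facts about $\op{dist}(\cdot,\partial S)$ that make the series geometric are elementary but need to be verified; everything else --- the reduction to $G_\ep$, the passage to boundary points via lower semicontinuity, and absorbing the constant by trading $\chi_0$ for $\chi$ --- is routine, and no input beyond Lemma~\ref{lem-holder-uniform} and bounded compactness of $(\BB C,D_h)$ is needed.
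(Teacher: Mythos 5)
The paper does not actually prove this lemma --- it is quoted from \cite{lqg-metric-estimates}[Lemma 3.20] --- so your self-contained derivation from the H\"older bound \eqref{eqn-holder-upper} of Lemma~\ref{lem-holder-uniform} is a legitimately different route, and its overall architecture is sound: enlarging $K$ to $K^+$, running \eqref{eqn-holder-upper} at a slightly larger exponent $\chi_0\in(\chi,\xi(Q-2))$ with $a=\ep$, noting that a single event covers all scales $2^{-k}\ep\BB r$ at once because every chaining step has length at most $\ell/2\le\ep\BB r$, chaining via inscribed balls $B_{d_j}(x_j)\subset S$, the containment check for $B_{2|u^*-v^*|}(u^*)$, the Arzel\`a--Ascoli passage to boundary points, and the absorption of the constant via $(2^{-k}\ep)^{\chi_0-\chi}\le\ep^{\chi_0-\chi}$ are all correct. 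The geometric-growth claim $d_{j+1}\ge\rho_0 d_j$ for $d_j\le\ell/8$ also checks out (the component of the unit vector toward $c$ along the inward normal of any nearby edge is at least $3\sqrt 2/8$ when $d_j\le \ell/8$), so the sum over the near-boundary regime is indeed a geometric series.

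There is, however, one step that fails as written: the claim that once $d_j>\ell/8$ ``only $O(1)$ further steps remain, each shrinking $|c-x_j|$ by $d_j/2$.'' In the bulk regime the step length $d_j/2$ can exceed $|c-x_j|$, so the iterate overshoots the centre and $|c-x_{j+1}|=|d_j/2-|c-x_j||$ can be \emph{larger} than $|c-x_j|$. Concretely, take $S=[0,\ell]^2$, $c$ its centre, and $x_0=c+(\ell/10,0)$: then $d_0=2\ell/5$, the step has length $\ell/5$, and the iterate lands at $c-(\ell/10,0)$; by symmetry the process is a $2$-cycle that never enters $S_0$ (which requires $|x-c|_\infty\le\ell/20$), so $J$ is undefined and no path to $S_0$ is produced from this starting point. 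The defect is local and easily repaired --- e.g.\ cap the step at $\min(d_j/2,|c-x_j|)$ so the final step lands at $c\in S_0$, or stop the spiralling iteration as soon as $d_j\ge\ell/8$ and then march along the straight segment from $x_j$ to $c$ in steps of length $\ell/16$: by concavity of $\op{dist}(\cdot,\partial S)$ every point of that segment is at distance at least $\ell/8$ from $\partial S$, so each ball $B_{\ell/8}(\cdot)$ used in \eqref{eqn-holder-upper} stays in $S$, at most $\lceil 16/\sqrt2\rceil$ steps are needed, and each costs at most $(\ell/(16\BB r))^{\chi_0}\frk c_{\BB r}e^{\xi h_{\BB r}(0)}$ --- but as stated the termination argument is wrong and must be replaced by one of these fixes before the proof is complete.
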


In several places throughout the paper, we will want to prevent a $D_h$-geodesic from staying in small neighborhood of a fixed Euclidean path. 
The following lemma, which is a restatement of~\cite[Proposition 4.1]{lqg-metric-estimates}, will allow us to do this.

\begin{lem}[Lower bound for distances in a narrow tube] \label{lem-line-path}
Let $L\subset\BB C$ be a compact set which is either a line segment or an arc of a circle and fix $ b > 0$. 
For each $\BB r > 0$ and each $q > 0$, it holds with probability at least $1 -  \ep^{q^2/(2\xi^2) + o_\ep(1)}$ that
\eqb \label{eqn-line-path}
 \inf\left\{ D_h\left(u,v ; B_{\ep\BB r}(\BB r L ) \right) : u,v \in B_{\ep\BB r}(\BB r L ) , |u-v| \geq b\BB r \right\} \geq \ep^{  q + \xi Q - 1-\xi^2/2 } \frk c_{\BB r} e^{\xi h_{\BB r}(0)}  ,
\eqe
where the rate of the $o_\ep(1)$ depends on $L, b, q$ but not on $\BB r$.
\end{lem}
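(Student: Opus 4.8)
The plan is to reduce the statement to a concatenation estimate along the path $L$, using a Gaussian tail bound for the circle-average field together with the Weyl scaling and locality axioms. First I would normalize: by Axiom~\ref{item-metric-translate} (translation invariance), Axiom~\ref{item-metric-coord} (tightness across scales), and Axiom~\ref{item-metric-f} (Weyl scaling) applied exactly as in~\eqref{eqn-strong-implies-weak}, it suffices to prove the bound in the case $\BB r = 1$ and with $\frk c_{\BB r} e^{\xi h_{\BB r}(0)}$ replaced by $1$, provided the estimate we obtain at scale $1$ is stated in a way that is insensitive to adding a constant to the field (which it is, since both sides scale by $e^{\xi \cdot \mathrm{const}}$). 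The uniformity in $\BB r$ then comes for free because the law of $h(\BB r\cdot) - h_{\BB r}(0)$ does not depend on $\BB r$. So the task becomes: for a fixed compact line segment or circular arc $L$ and fixed $b>0$, with probability at least $1 - \ep^{q^2/(2\xi^2)+o_\ep(1)}$, every pair of points $u,v$ in the $\ep$-neighborhood of $L$ at Euclidean distance $\geq b$ satisfies $D_h(u,v; B_\ep(L)) \geq \ep^{q+\xi Q - 1 - \xi^2/2}$.

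The core mechanism is as follows. Cover $L$ by roughly $b/\ep$ balls of radius comparable to $\ep$ centered at equally spaced points $x_1,\dots,x_N$ along $L$, with $N \asymp 1/\ep$; any path in $B_\ep(L)$ joining $u$ to $v$ with $|u-v|\geq b$ must cross a definite fraction $\asymp N \asymp 1/\ep$ of the corresponding annuli $\BB A_{\ep/2,\ep}(x_j)$ (this is the only geometric input, and it is where we use that $L$ is a nice curve so that its $\ep$-tube is thin and the annuli are genuinely nested and separated along $L$). Hence the $D_h(\cdot,\cdot;B_\ep(L))$-distance between $u$ and $v$ is at least $\sum_j D_h(\bdy B_{\ep/2}(x_j), \bdy B_\ep(x_j))$ over this fraction of indices. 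For each $j$, by Axioms~\ref{item-metric-translate},~\ref{item-metric-f}, and~\ref{item-metric-coord}, the typical size of $D_h(\bdy B_{\ep/2}(x_j),\bdy B_\ep(x_j))$ is of order $\frk c_\ep e^{\xi h_\ep(x_j)}$, and by~\eqref{eqn-scaling-constant-stronger} we have $\frk c_\ep = \ep^{\xi Q + o_\ep(1)}$. The field circle average $h_\ep(x_j)$ is Gaussian with variance $\log \ep^{-1} + O(1)$, so $e^{\xi h_\ep(x_j)}$ is typically $\ep^{-\xi^2/2 + o_\ep(1)}$, and $D_h(\bdy B_{\ep/2}(x_j),\bdy B_\ep(x_j))$ is typically of order $\ep^{\xi Q - \xi^2/2 + o_\ep(1)}$. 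One bad annulus (where the distance is atypically small) is harmless, since we are summing over $\asymp 1/\ep$ of them; what we actually need is that it is very unlikely that the distance across a \emph{large} fraction of these annuli is smaller than $\ep^{q} \cdot \ep^{\xi Q - \xi^2/2}$ — i.e., smaller by an extra polynomial factor $\ep^{q}$. The extra factor $\ep^{-1}$ in the target exponent $q + \xi Q - 1 - \xi^2/2$ is precisely the count of annuli crossed.

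To make the large-deviations step precise I would use the GFF local independence machinery, specifically an application of Lemma~\ref{lem-annulus-iterate} (iterating events in nested annuli) — or rather, since here the annuli are arranged \emph{along a curve} rather than nested around a point, a spatial-independence argument in the spirit of Lemma~\ref{lem-spatial-ind} after decimating to a subfamily of $\asymp 1/\ep$ well-separated balls (say every $C$-th ball, so that the relevant balls are at mutual Euclidean distance $\geq$ const $\times \ep$, and then rescale by $\ep^{-1}$ so they have unit size and fixed separation). Define, for each ball in this decimated family, the ``good'' event $E_{x_j}$ that $e^{-\xi h_\ep(x_j)} D_h(\bdy B_{\ep/2}(x_j),\bdy B_\ep(x_j); \BB A_{\ep/2,\ep}(x_j)) \geq \frk c_\ep \ep^{\text{small}}$; by Axioms~\ref{item-metric-translate},~\ref{item-metric-f},~\ref{item-metric-coord} this event has probability close to $1$, uniformly in $j$ and $\ep$. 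The events $E_{x_j}$ are not exactly independent, but they are each measurable with respect to $(h - h_{\ep}(x_j))|_{B_\ep(x_j)}$, so after rescaling they fit the hypotheses of Lemma~\ref{lem-spatial-ind} / Lemma~\ref{lem-annulus-iterate}, giving an exponential-in-$(1/\ep)$ bound on the probability that a positive fraction of them fail. Combined with a union bound controlling the Gaussian fields $h_\ep(x_j)$ simultaneously — this is where the sharp exponent $q^2/(2\xi^2)$ enters, exactly as in the standard estimate $\BB P[\max_{j\leq N} h_\ep(x_j) \geq (\text{something})] $ or more precisely a lower-tail / concentration bound forcing $\sum_j e^{\xi h_\ep(x_j)}$ not to be too small — one gets that with probability $1 - \ep^{q^2/(2\xi^2)+o_\ep(1)}$ the sum over the decimated family of good annulus-crossing distances is at least $\ep^{q} \cdot (1/\ep) \cdot \ep^{\xi Q - \xi^2/2 + o_\ep(1)} = \ep^{q + \xi Q - 1 - \xi^2/2 + o_\ep(1)}$, which is the claim. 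Absorbing the $o_\ep(1)$ and the decimation constant into the exponent's $o_\ep(1)$ correction finishes the proof.

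The main obstacle, I expect, is obtaining the \emph{sharp} constant $q^2/(2\xi^2)$ in the probability exponent rather than just \emph{some} stretched-exponential or polynomial bound. A crude union bound over the $\asymp 1/\ep$ Gaussian circle averages $h_\ep(x_j)$ — each with variance $\sim \log\ep^{-1}$ — combined with the $\ep^{-1}$ prefactor from the count of annuli, has to be balanced very carefully: one must argue that to make the \emph{sum} $\sum_j D_h(\text{annulus } j)$ smaller than the target by a factor $\ep^q$, it is essentially necessary that a macroscopic fraction of the $h_\ep(x_j)$ are simultaneously unusually negative, and the cheapest way to do that (in the Gaussian large-deviations sense) costs $\exp(-\frac{q^2}{2\xi^2}\log\ep^{-1}) = \ep^{q^2/(2\xi^2)}$ up to lower-order corrections — the per-annulus fluctuations of $D_h$ around $\frk c_\ep e^{\xi h_\ep(x_j)}$ averaging out. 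Establishing that the optimal strategy is the ``uniform depression'' of the field (and that the metric fluctuations genuinely self-average over the $1/\ep$ annuli, rather than conspiring to help) is the delicate point; everything else is a routine combination of the axioms with the GFF independence lemmas already quoted in Section~\ref{sec-gff-ind}.
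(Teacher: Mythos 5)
You should first note that the paper does not prove this lemma at all: it is quoted verbatim from \cite[Proposition 4.1]{lqg-metric-estimates}, so there is no internal argument to compare against and your sketch has to stand on its own. It does not, because the exponent bookkeeping at its core is wrong. The factor $\ep^{-\xi^2/2}$ is not the typical size of a single annulus crossing: $h_{\ep\BB r}(x_j)-h_{\BB r}(0)$ is a centered Gaussian of variance $\log\ep^{-1}+O(1)$, so the typical value of $e^{\xi h_{\ep\BB r}(x_j)}$ is $\ep^{o(1)}e^{\xi h_{\BB r}(0)}$ and the typical crossing distance is $\ep^{\xi Q+o_\ep(1)}\frk c_{\BB r}e^{\xi h_{\BB r}(0)}$; the quantity $\ep^{-\xi^2/2}$ is only the \emph{mean} of $e^{\xi(h_{\ep\BB r}(x_j)-h_{\BB r}(0))}$, which you cannot use as a per-term lower bound. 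Consequently "a positive fraction of the $\asymp \ep^{-1}$ annuli are good at their typical value" yields only $\ep^{q+\xi Q-1}$, not the stated $\ep^{q+\xi Q-1-\xi^2/2}$. The $-\xi^2/2$ gain is a collective, multifractal effect: $\sum_j e^{\xi(h_{\ep\BB r}(x_j)-h_{\BB r}(0))}$ is a discrete one-dimensional GMC-type mass whose typical size $\ep^{-1-\xi^2/2+o(1)}$ is carried by the $\approx\ep^{-1+\xi^2/2}$ indices where the field is about $\xi\log\ep^{-1}$ above its mean, so to harvest it you need (i) a quantitative lower tail for this sum with failure probability $\ep^{q^2/(2\xi^2)+o_\ep(1)}$, and (ii) comparability of the crossings with $\frk c_{\ep\BB r}e^{\xi h_{\ep\BB r}(x_j)}$ at those atypically thick points (fixable via the uniform superpolynomial lower tails of normalized crossings and a union bound over the polynomially many $j$, but it must be said). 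Item (i) is exactly the step you flag as "the delicate point" and never prove, and the mechanism you propose for it is not correct: forcing a macroscopic fraction of the $\asymp\ep^{-1}$ scale-$\ep$ circle averages down by $(q/\xi)\log\ep^{-1}$ is cheapest via their common coarse part, and the average of $h_{\ep\BB r}$ over a macroscopic one-dimensional segment has only $O(1)$ variance, so that strategy costs $\exp\bigl(-c\,q^2(\log\ep^{-1})^2\bigr)$, not $\ep^{q^2/(2\xi^2)}$. The stated probability is the lower tail of a \emph{single} variance-$(\log\ep^{-1}+O(1))$ Gaussian, and nothing in your argument isolates a step whose failure is governed by such a quantity; so both exponents in the statement remain unproven.

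Two further steps fail as written, though they are repairable. First, the crossing geometry: with tube radius $\ep\BB r$ and annuli $\BB A_{\ep\BB r/2,\,\ep\BB r}(x_j)$ centered on $\BB r L$, a path running longitudinally through the tube can stay at distance between $\ep\BB r/2$ and $\ep\BB r$ from $x_j$ and never touch $\bdy B_{\ep\BB r/2}(x_j)$, so $D_h\bigl(\bdy B_{\ep\BB r/2}(x_j),\bdy B_{\ep\BB r}(x_j)\bigr)$ does not lower-bound the length of that portion; you need annuli whose inner radius exceeds the tube radius (e.g. $\BB A_{2\ep\BB r,3\ep\BB r}(x_j)$), after which the "must enter $B_{2\ep\BB r}(x_j)$" argument does work because circling around at radius $\geq 2\ep\BB r$ forces distance $>\ep\BB r$ from $L$. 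Second, the opening reduction to $\BB r=1$ "as in \eqref{eqn-strong-implies-weak}" is not available: that computation uses the exact coordinate-change Axiom~\ref{item-metric-coord0} of a \emph{strong} metric, whereas for a weak metric no exact scale covariance is known (this is precisely what the paper is proving). One must instead run the whole argument at scale $\BB r$, with uniformity supplied by Axiom~\ref{item-metric-coord} and \eqref{eqn-scaling-constant-stronger}, which is why the lemma is phrased in terms of $\frk c_{\BB r}e^{\xi h_{\BB r}(0)}$ in the first place; your later per-annulus estimates do invoke this axiom, but the claimed reduction itself is unjustified.
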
 

By~\cite[Theorem 1.9]{ang-discrete-lfpp}, for each $\gamma \in (0,2)$ we have $1-\xi Q  \geq 0$, and hence $\xi Q - 1 - \xi^2/2 < 0$. Therefore, the power of $\ep$ on the right side of~\eqref{eqn-line-path} is negative for small enough $q$. Hence, Lemma~\ref{lem-line-path} implies that when $\ep$ is small and $u,v\in B_{\ep \BB r}(\BB r L)$ with $|u-v| \geq b\BB r$, it holds with high probability that $D_h\left(u,v; B_{\ep\BB r}(\BB rL)\right)$ is much larger than $D_h(u,v)$. In particular, a $D_h$-geodesic from $u$ to $v$ cannot stay in $B_{\ep\BB r}(L)$.
Lemma~\ref{lem-line-path} has the following useful corollary.
For the statement, we recall the notation for Euclidean annuli from~\eqref{eqn-annulus-def}.

\begin{lem}[Lower bound for distances in a narrow annulus] \label{lem-attained-long}
For each $S > s > 0$ and each $p\in (0,1)$, there exists $\alpha_* = \alpha_*(s,S,p) \in (1/2,1)$ such that for each $\alpha\in [\alpha_*,1)$, each $z\in\BB C$, and each $\BB r > 0$, 
\eqb \label{eqn-attained-long} 
\BB P\left[ \inf\left\{ D_h\left( u , v ; \BB A_{\alpha \BB r , \BB r}(z) \right)  :  u , v \in \BB A_{\alpha \BB r ,\BB r }(z) , D_h(u,v) \geq s \frk c_{\BB r} e^{\xi h_{\BB r}(z)} \right\} \geq S  \frk c_{\BB r} e^{\xi h_{\BB r}(z)}  \right] \geq p . 
\eqe 
\end{lem}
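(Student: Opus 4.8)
The plan is to combine the upper H\"older bound of Lemma~\ref{lem-holder-uniform} with the narrow-tube lower bound of Lemma~\ref{lem-line-path}. By Axiom~\ref{item-metric-translate} (translation invariance), and since $h(\cdot + z)$ is again a whole-plane GFF, we may assume $z = 0$. Next observe that the left side of~\eqref{eqn-attained-long} is monotone non-decreasing in $\alpha$: if $\alpha < \alpha'$ then $\BB A_{\alpha'\BB r,\BB r}(0) \subset \BB A_{\alpha\BB r,\BB r}(0)$, so the set of admissible pairs $(u,v)$ shrinks while $D_h(u,v;\BB A_{\alpha'\BB r,\BB r}(0)) \geq D_h(u,v;\BB A_{\alpha\BB r,\BB r}(0))$ for each fixed pair (fewer available paths). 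Hence it suffices to produce a single $\alpha_* = 1 - \ep_* \in (1/2,1)$ for which~\eqref{eqn-attained-long} holds, the small parameter $\ep_*$ (depending only on $s$, $S$, $p$, and $\gamma$) being chosen at the very end.

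\emph{Euclidean separation and angular spread.} Fix $\chi \in (0,\xi(Q-2))$, which is possible since $Q > 2$ for $\gamma \in (0,2)$, and apply Lemma~\ref{lem-holder-uniform} with $K = \ol{B_1(0)}$. Then for each $p \in (0,1)$ there is $a = a(p,\chi) > 0$ such that, with probability at least $1 - (1-p)/2$ uniformly in $\BB r$, every $u,v \in \ol{B_{\BB r}(0)}$ with $|u-v| \leq a\BB r$ satisfies $D_h(u,v) \leq \frk c_{\BB r}e^{\xi h_{\BB r}(0)}|(u-v)/\BB r|^\chi$; in particular, on this event, every $u,v \in \BB A_{\alpha_*\BB r,\BB r}(0)$ with $D_h(u,v) \geq s\frk c_{\BB r}e^{\xi h_{\BB r}(0)}$ has $|u-v| \geq b\BB r$ where $b := \min\{a, s^{1/\chi}\} > 0$. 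Provided $\ep_* \leq b^2/8$ (so that $\alpha_*^2 \geq 1 - b^2/4$), any two points of $\BB A_{\alpha_*\BB r,\BB r}(0)$ at Euclidean distance $\geq b\BB r$ subtend an angle at $0$ of at least some $\psi_0 = \psi_0(b) \in (0,\pi/2]$, by elementary trigonometry.

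\emph{Reduction to a single arc, and conclusion.} Choose arcs $L_1,\dots,L_m$ of the unit circle, each of angular length $3\psi_0/4$ and with consecutive arcs offset by $\psi_0/4$, so that the $L_k$ cover the circle and every sub-arc of the circle of angular length $\geq \psi_0$ contains some $L_k$ entirely; here $m = m(\psi_0) \in \BB N$. Since $\ep_* < 1$ we have $\BB A_{\alpha_*\BB r,\BB r}(0) \subset \bigcup_{k=1}^m B_{\ep_*\BB r}(\BB r L_k)$. Given $u,v \in \BB A_{\alpha_*\BB r,\BB r}(0)$ with $|u-v| \geq b\BB r$ and any path $P$ from $u$ to $v$ in $\BB A_{\alpha_*\BB r,\BB r}(0)$, lift $t \mapsto \arg P(t)$ to a continuous real function; its image is an interval containing the lifted arguments of $u$ and $v$, which differ by at least $\psi_0$, so it contains the angular interval of some $L_k$. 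A standard real-variable argument then extracts a sub-arc $P|_{[t_1,t_2]}$ of $P$ whose argument stays in the angular interval of $L_k$ (hence $P|_{[t_1,t_2]} \subset B_{\ep_*\BB r}(\BB r L_k)$) and which runs between points within $\ep_*\BB r$ of the two endpoints of $\BB r L_k$; provided $\ep_* \leq b''/4$, these points are at Euclidean distance at least $b''\BB r$ for a constant $b'' = b''(\psi_0) > 0$. Now, by~\cite[Theorem 1.9]{ang-discrete-lfpp} we have $\xi Q \leq 1$, so $\kappa := 1 + \xi^2/2 - \xi Q \geq \xi^2/2 > 0$; set $q := \kappa/2 > 0$, so that $q + \xi Q - 1 - \xi^2/2 = -q < 0$. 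Applying Lemma~\ref{lem-line-path} to each $L_k$ with separation parameter $b''$ and taking a union bound, with probability at least $1 - m\,\ep_*^{q^2/(2\xi^2) + o_{\ep_*}(1)}$ (uniformly in $\BB r$) every pair of points in any $B_{\ep_*\BB r}(\BB r L_k)$ at Euclidean distance $\geq b''\BB r$ has internal $B_{\ep_*\BB r}(\BB r L_k)$-distance at least $\ep_*^{-q}\frk c_{\BB r}e^{\xi h_{\BB r}(0)}$. On the intersection of this event with the one from the previous step, every path from an admissible $u$ to an admissible $v$ inside $\BB A_{\alpha_*\BB r,\BB r}(0)$ contains the sub-arc above and hence has $D_h$-length at least $\ep_*^{-q}\frk c_{\BB r}e^{\xi h_{\BB r}(0)}$, so $D_h(u,v;\BB A_{\alpha_*\BB r,\BB r}(0)) \geq \ep_*^{-q}\frk c_{\BB r}e^{\xi h_{\BB r}(0)}$. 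Finally pick $\ep_* \in (0,1/2)$, depending only on $s$, $S$, $p$, and $\gamma$, with $\ep_* \leq \min\{b^2/8, b''/4\}$, with $\ep_*^{-q} \geq S$, and with $m\,\ep_*^{q^2/(2\xi^2)+o_{\ep_*}(1)} \leq (1-p)/2$; then~\eqref{eqn-attained-long} holds with $\alpha_* = 1 - \ep_*$, and by the monotonicity noted at the outset for every $\alpha \in [\alpha_*,1)$.

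\emph{Main obstacle.} The delicate points are the topological extraction in the last step — turning a path that may wind arbitrarily through the thin annulus into a genuine crossing of one fixed arc-tube between its two endpoints — and organizing the chain of parameter choices $\chi \rta a \rta b \rta \psi_0 \rta (m,b'') \rta q \rta \ep_*$ so that the dependencies are consistent and every estimate stays uniform in the Euclidean scale $\BB r$. Everything else is a routine combination of Lemmas~\ref{lem-holder-uniform} and~\ref{lem-line-path}.
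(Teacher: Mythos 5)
Your argument is correct, and its skeleton is the same as the paper's: reduce to $z=0$, convert the lower bound $D_h(u,v)\geq s\frk c_{\BB r}e^{\xi h_{\BB r}(z)}$ into a Euclidean separation $|u-v|\geq b\BB r$ with probability $1-(1-p)/2$, then invoke the narrow-tube estimate of Lemma~\ref{lem-line-path} with a small $q$ so that the exponent $q+\xi Q-1-\xi^2/2$ is negative. The difference is in how you apply Lemma~\ref{lem-line-path}: the paper applies it once with $L=\bdy\BB D$ and $\ep=1-\alpha$, using simply that $\BB A_{\alpha\BB r,\BB r}(0)\subset B_{(1-\alpha)\BB r}(\BB r\,\bdy\BB D)$, so any admissible pair $u,v$ with $|u-v|\geq b\BB r$ immediately has $D_h(u,v;\BB A_{\alpha\BB r,\BB r}(0))\geq D_h(u,v;B_{(1-\alpha)\BB r}(\BB r\,\bdy\BB D))\geq(1-\alpha)^{q+\xi Q-1-\xi^2/2}\frk c_{\BB r}e^{\xi h_{\BB r}(0)}$, and the proof is over; no covering of the circle by proper arcs, no lifting of the argument, and no extraction of a crossing of a single arc-tube is needed. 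In other words, the step you single out as the main obstacle is entirely avoidable once one notes that the full circle is an admissible choice of $L$ (this is exactly how the paper cites the lemma), since the estimate bounds the internal distance in the tube between \emph{any} two points at Euclidean distance $\geq b\BB r$, irrespective of how a path winds. Your winding argument is nevertheless sound as written, and your use of Lemma~\ref{lem-holder-uniform} for the separation step (where the paper quotes Axiom~\ref{item-metric-coord} directly) is an immaterial variation; your monotonicity-in-$\alpha$ remark is also fine, though one can equally apply the tube estimate separately for each $\ep=1-\alpha\leq 1-\alpha_*$ as the paper implicitly does. So the proposal is a correct but roundabout version of the paper's two-step proof.
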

\begin{proof}
By Weyl scaling (Axiom~\ref{item-metric-f}), the event in~\eqref{eqn-attained-long} does not depend on the choice of additive constant for $h$.
By Axiom~\ref{item-metric-translate} (translation invariance) and the translation invariance of the law of $h$ modulo additive constant, the probability of this event does not depend on $z$. 
By Axiom~\ref{item-metric-coord} (tightness across scales), we can find $b = b(s)  > 0$ such that with probability at least $1-(1-p)/2$, any points $u,v\in B_{\BB r}(0)$ with $D_h(u,v) \geq s \frk c_{\BB r} e^{\xi h_{\BB r}(0)}$ satisfy $|u-v| \geq b \BB r$. Combining with Lemma~\ref{lem-line-path} (with $\ep = 1-\alpha$ and $L =\bdy\BB D$) concludes the proof.
\end{proof}

Finally, we record a lemma which prevents $D_h$-geodesics from spending a long time near the boundary of a $D_h$-metric ball which is needed in Section~\ref{sec-annulus-choose}. 
The lemma is a re-statement of~\cite[Proposition 4.3]{lqg-metric-estimates}. 

\begin{lem}[Geodesics cannot spend a long time near metric ball boundary] \label{lem-geo-bdy}
For each $ M  > 0$ and each $\BB r >0$, it holds with probability $1-o_\ep^\infty(\ep)$ as $\ep\rta 0$, at a rate which is uniform in the choice of $\BB r$, that the following is true.
For each $s > 0$ for which $\mcl B_s(0;D_h)\subset B_{\ep^{-M} \BB r}(0)$ and each $D_h$-geodesic $P$ from 0 to a point outside of $\mcl B_s(0;D_h)$,  
\eqb \label{eqn-geo-bdy}
\op{area}\left( B_{\ep \BB r}(P) \cap B_{\ep \BB r}\left(\bdy\mcl B_s(0;D_h) \right) \right) \leq \ep^{2 - 1/ M} \BB r^2,
\eqe
where $\op{area}$ denotes 2-dimensional Lebesgue measure.  
\end{lem}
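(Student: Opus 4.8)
The plan is to reduce the statement to an estimate on the behavior of the geodesic $P$ in a small $D_h$-neighborhood of the point where it crosses $\bdy\mcl B_s(0;D_h)$, bound the relevant Euclidean area using the a priori estimates recorded above, and close with a union bound over a polynomial-size net of metric radii $s$ and geodesic endpoints. First I would parameterize $P$ by $D_h$-length started from $0$, so that $D_h(0,P(t)) = t$ for every $t$. Since $P$ is a geodesic emanating from $0$ and every point $w$ of $\bdy\mcl B_s(0;D_h)$ satisfies $D_h(0,w) = s$, the triangle inequality forces $D_h(P(t),\bdy\mcl B_s(0;D_h)) = |t - s|$; in particular $P$ meets $\bdy\mcl B_s(0;D_h)$ exactly at $P(s)$, lying strictly inside $\mcl B_s(0;D_h)$ for $t < s$ and strictly outside for $t > s$. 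Now suppose $x \in B_{\ep\BB r}(P)\cap B_{\ep\BB r}(\bdy\mcl B_s(0;D_h))$, say $|x - P(t)| < \ep\BB r$ and $|x - w| < \ep\BB r$ with $w \in \bdy\mcl B_s(0;D_h)$; then $|P(t) - w| < 2\ep\BB r$, so the H\"older upper bound of Lemma~\ref{lem-holder-uniform}, applied at a Euclidean scale comparable to $\ep\BB r$ and centered near $P(t)$, gives $|t - s| = D_h(P(t),\bdy\mcl B_s(0;D_h)) \le D_h(P(t),w) \le \rho$ for a small quantity $\rho$. Hence
\[
B_{\ep\BB r}(P)\cap B_{\ep\BB r}(\bdy\mcl B_s(0;D_h)) \;\subseteq\; B_{\ep\BB r}\!\left(P|_{[s-\rho,\,s+\rho]}\right)\cap B_{\ep\BB r}(\bdy\mcl B_s(0;D_h)),
\]
so it suffices to control the Euclidean footprint near $\bdy\mcl B_s(0;D_h)$ of the short geodesic segment $\Gamma := P|_{[s-\rho,s+\rho]}$, which has $D_h$-length $2\rho$ and (by Lemma~\ref{lem-holder-uniform} again) lies within a small Euclidean distance of $\bdy\mcl B_s(0;D_h)$.

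Next I would estimate this footprint. By Lemma~\ref{lem-ball-local} the filled ball $\mcl B_s^\bullet(0;D_h)$ is a local set, which puts $\bdy\mcl B_s(0;D_h)$ and the portion of $P$ inside it on a convenient footing. The geometric input is that $\Gamma$ is a $D_h$-geodesic of tiny $D_h$-length $2\rho$ which crosses $\bdy\mcl B_s(0;D_h)$ only once and transversally, so, being efficient, it can neither make many excursions back toward $\bdy\mcl B_s(0;D_h)$ nor run alongside it within a narrow Euclidean corridor of width $\ep\BB r$. I would quantify this using Lemma~\ref{lem-holder-uniform} (H\"older continuity), Lemma~\ref{lem-square-diam} (internal diameters of Euclidean squares) and, crucially, Lemma~\ref{lem-attained-long} together with Lemma~\ref{lem-line-path} — the latter taken with a large exponent parameter $q$ so that it holds with superpolynomially high probability — applied at every dyadic Euclidean scale between $\ep\BB r$ and $\ep^{-M}\BB r$ and at every center in $B_{\ep^{-M}\BB r}(0)$, making these applications uniform over scales and centers by combining the translation invariance of $D_h$ and of the law of $h$ modulo additive constant with the scaling relation~\eqref{eqn-scaling-constant-stronger} and standard regularity of the circle-average process. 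The conclusion is that $\Gamma$ accumulates at most $\ep^{2-1/M}\BB r^2$ of Euclidean area within $\ep\BB r$ of $\bdy\mcl B_s(0;D_h)$; the slack $\ep^{-1/M}$ and the containment hypothesis $\mcl B_s(0;D_h)\subset B_{\ep^{-M}\BB r}(0)$ are exactly what accommodates the passage between the scale $\BB r$ at which $\frk c$ is normalized and the scales genuinely relevant near $\Gamma$.

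Finally I would make the bound uniform in $s$ and over all geodesics. Every event invoked above holds with probability $1 - o_\ep^\infty(\ep)$, uniformly in $\BB r$. It suffices to verify the conclusion for $s$ ranging over a net of mesh a small power of $\ep$ times $\frk c_{\BB r}e^{\xi h_{\BB r}(0)}$ and for the geodesic endpoint ranging over a net of $B_{\ep^{-M}\BB r}(0)$ of mesh a small power of $\ep\BB r$ — both nets having only polynomially many points in $\ep^{-1}$ — because the H\"older continuity of $D_h$ together with the a.s.\ uniqueness and confluence of $D_h$-geodesics between fixed points let us pass from the net to all $s$ and all geodesics with only a constant-factor loss in the area bound. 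A union bound over the net then preserves the probability $1 - o_\ep^\infty(\ep)$, which gives the lemma.

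The step I expect to be the main obstacle is the area estimate of the second paragraph. A purely H\"older-type covering of the $\ep\BB r$-neighborhood of a $D_h$-length-$2\rho$ geodesic only yields an area of order $\ep^{2-c}\BB r^2$ with $c$ a fixed positive constant depending on $\gamma$ (the unavoidable gap between the two H\"older exponents), which is too weak when $M$ is large. One genuinely needs the structural fact that $\Gamma$ crosses $\bdy\mcl B_s(0;D_h)$ only once and transversally — ruling out many returns to the ball boundary — together with the narrow-corridor lower bound of Lemma~\ref{lem-line-path} applied across all intermediate Euclidean scales; arranging matters so that the implicit constants and the superpolynomial error probabilities remain uniform over the whole range $[\ep\BB r,\ep^{-M}\BB r]$ of scales and over all centers in $B_{\ep^{-M}\BB r}(0)$ is the technical heart of the argument.
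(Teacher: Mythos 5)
First, a point of orientation: the paper does not prove this lemma at all --- it is imported verbatim as \cite[Proposition 4.3]{lqg-metric-estimates} and used as a black box --- so your argument has to be judged as a self-contained proof rather than against an internal one. Your opening reduction is correct and is surely part of any proof: parameterizing $P$ by $D_h$-length from $0$ gives $D_h(P(t),\bdy\mcl B_s(0;D_h))=|t-s|$, so the portion of $P$ lying within Euclidean distance $2\ep\BB r$ of $\bdy\mcl B_s(0;D_h)$ is a single segment $P|_{[s-\rho,s+\rho]}$, with $\rho$ a supremum of $D_h$-distances between points at Euclidean distance at most $2\ep\BB r$. Everything after that, however, is a plan rather than an argument, and the step you yourself single out as the main obstacle is precisely the content of the lemma; the tools you propose do not close it.

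Concretely: (i) Lemma~\ref{lem-holder-uniform} is stated on a fixed compact set $\BB r K$, while here the relevant region is $B_{\ep^{-M}\BB r}(0)$. Applying it with base scale $\ep^{-M}\BB r$ replaces the normalization $\frk c_{\BB r}e^{\xi h_{\BB r}(0)}$ by $\frk c_{\ep^{-M}\BB r}e^{\xi h_{\ep^{-M}\BB r}(0)}$, which by~\eqref{eqn-scaling-constant-stronger} and the Gaussian fluctuation of $h_{\ep^{-M}\BB r}(0)-h_{\BB r}(0)$ (variance $M\log\ep^{-1}$) differs from it by factors of size $\ep^{-\Theta(M)}$; so both $\rho$ and any subsequent covering count degrade by $M$-dependent powers of $\ep$. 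Even ignoring this, the two-sided H\"older covering yields at best $\op{area}\lesssim \ep^{2-(\chi'-\chi)}\BB r^2$ with $\chi'-\chi>4\xi$ fixed, whereas the lemma demands exponent $2-1/M$; since the hypothesis weakens and the conclusion strengthens as $M$ grows, an argument whose loss is bounded below by a fixed constant (let alone one growing with $M$) fails exactly in the regime $M>1/(4\xi)$ in which the lemma is applied (it is invoked with $M\vee(2/\zeta)$ in Lemma~\ref{lem-good-annulus-count}). You correctly diagnose this, but the proposed remedy is not an argument: Lemma~\ref{lem-line-path} concerns deterministic line segments and circular arcs, with only polynomially small failure probability and a union bound available only over deterministic such sets; it says nothing about a Euclidean corridor around the random fractal curve $\bdy\mcl B_s(0;D_h)$, and ``crosses $\bdy\mcl B_s(0;D_h)$ only once and transversally'' is not an available notion here --- single crossing confines the \emph{time} window but does not by itself prevent $P$ from running Euclidean-parallel to the ball boundary over a long Euclidean stretch while staying at small $D_h$-distance from it, which is exactly the scenario the lemma must exclude quantitatively. (ii) The closing passage from a polynomial net of radii $s$ and endpoints to all $s$ and all geodesics ``with only a constant-factor loss'' via uniqueness and confluence is likewise unsubstantiated: geodesics are neither unique for exceptional endpoints nor continuous in their endpoints, and the confluence results (proved downstream of the estimates of~\cite{lqg-metric-estimates}) give no quantitative stability of the required kind. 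In short, the proposal reproduces the easy reduction and leaves the genuinely hard, $M$-uniform area/counting estimate unproven; that estimate is the substance of \cite[Proposition 4.3]{lqg-metric-estimates}, which is why the present paper imports it rather than reproving it.
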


\subsection{Confluence of geodesics}
\label{sec-confluence-prelim}

In this subsection we will review some facts about $D_h$-geodesics which are proven in~\cite{gm-confluence}. These facts are used only in Section~\ref{sec-stab}. 
For $z\in \BB C$, $r >0$, and $n\in\BB N$ we define the radii $\rho_r^n(z)$ as in~\cite[Equation (3.13)]{gm-confluence}.
The radius $\rho_r^n(z)$ is the $n$th smallest $t \in \{2^k r\}_{k\in\BB N}$ for which a certain event in $\sigma( (h-h_{6r}(z))|_{\BB A_{2r,5r}(z)})$ occurs.  Roughly speaking, the event in question tells us that if we fix $\BB z \in\BB C$ and $t > 0$ such that the filled LQG metric ball $\mcl B_t^\bullet(\BB z ;D_h)$ intersects $B_r(z)$, then with constant-order conditional probability given $(\mcl B_t^\bullet(\BB z ;D_h) , h|_{\mcl B_t^\bullet(\BB z ;D_h)})$, no $D_h$-geodesic from outside of $\mcl B_t^\bullet(\BB z ;D_h) \cup B_{5r}(z)$ can enter $B_r(z)$ before hitting $\mcl B_t^\bullet(\BB z ; D_h)$ (the precise definition of the event is given in~\cite[Section 3.2]{gm-confluence}).
We will not need the precise definition of $\rho_r^n(z)$ here, only a few facts which we will review in this subsection.  

We have $\rho_r^n(z) \geq 6r$ and $\rho_r^n(z)$ is a stopping time for the filtration generated by $h|_{B_{6 t}(z)}$ for $t\geq r$. 
The following is immediate from~\cite[Lemma 3.4]{gm-confluence}, the translation invariance of the law of $h$, modulo additive constant, and Axiom~\ref{item-metric-translate} (translation invariance). 
  
\begin{lem}[Bounds for radii used to control geodesics] \label{lem-clsce-all} 
There is a constant $\eta > 0$ depending only on the choice of metric such that the following is true. 
If we abbreviate 
\eqb \label{eqn-rho-abbrv}
\rho_{ \BB r , \ep}(z) := \rho_{\ep\BB r}^{\lfloor \eta\log\ep^{-1} \rfloor}(z) ,
\eqe  
then for each compact set $K\subset\BB C$, each $\BB r > 0$, and each $\BB z\in\BB C$, it holds with probability $1 - O_\ep(\ep^2)$ (at a rate depending on $K$, but not on $\BB r$ or $\BB z$) that
\eqb
\rho_{ \BB r , \ep}(z) \leq  \ep^{1/2} \BB r , \quad\forall z\in \left(\frac{\ep \BB r}{4} \BB Z^2 \right) \cap B_{\ep \BB r}( \BB r K + \BB z) .
\eqe
\end{lem}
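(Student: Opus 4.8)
The plan is to deduce the lemma from the single-point estimate \cite[Lemma 3.4]{gm-confluence} by means of a union bound over the grid, after using translation invariance to normalize the region to a standard position.

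First I would fix $\eta>0$ to be the constant produced by \cite[Lemma 3.4]{gm-confluence}, chosen so that the associated single-point bound holds: for a sufficiently large exponent $q>2$ (which can be arranged by choosing $\eta$ small enough; alternatively the bound is superpolynomially small) one has, uniformly over $z\in\BB C$ and $r>0$,
\[
\BB P\!\left[\rho_r^{\lfloor\eta\log\ep^{-1}\rfloor}(z)>\ep^{-1/2}r\right]\le\ep^q .
\]
Here the point is that at each dyadic scale $2^k r$ the defining event belongs to $\sigma\big((h-h_{6\cdot 2^k r}(z))|_{\BB A_{2\cdot 2^k r,\,5\cdot 2^k r}(z)}\big)$, it occurs with probability bounded below by a positive constant independent of $k$, $z$ and $r$ (by scale and translation invariance of the law of $h$ modulo additive constant, together with Axiom~\ref{item-metric-translate}), and events at well-separated dyadic scales are essentially independent; so an iterated-annulus concentration argument of the type of Lemma~\ref{lem-annulus-iterate} guarantees $\lfloor\eta\log\ep^{-1}\rfloor$ successful scales among dyadic scales within a factor $\ep^{-1/2}$ of $r$, off an event of probability at most $\ep^q$. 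I would then apply this with $r=\ep\BB r$, so that $\ep^{-1/2}r=\ep^{1/2}\BB r$; this gives $\rho_{\BB r,\ep}(z)=\rho_{\ep\BB r}^{\lfloor\eta\log\ep^{-1}\rfloor}(z)\le\ep^{1/2}\BB r$ for a fixed $z$, with failure probability at most $\ep^q$, uniformly in $z$ and $\BB r$.

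Next I would reduce to $\BB z=0$ and take the union bound; write $n=\lfloor\eta\log\ep^{-1}\rfloor$. Since $\rho_{\ep\BB r}^n(z)$ depends on $h$ only through circle-average-subtracted restrictions it is insensitive to the additive constant of $h$, and $h(\cdot+\BB z)$ has the same law as $h$ modulo additive constant; together with Axiom~\ref{item-metric-translate} this shows that the joint law of $\big(\rho_{\ep\BB r}^n(z)\big)_{z\in B_{\ep\BB r}(\BB r K+\BB z)}$ coincides with that of $\big(\rho_{\ep\BB r}^n(z)\big)_{z\in B_{\ep\BB r}(\BB r K)}$, so it suffices to treat the region $B_{\ep\BB r}(\BB r K)$. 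The grid $(\frac{\ep\BB r}{4}\BB Z^2)\cap B_{\ep\BB r}(\BB r K)$ has at most $C_K\,\ep^{-2}$ points, where $C_K$ depends only on $K$: for $\ep<1$ the region has Euclidean area $\preceq_K\BB r^2$, the grid spacing is $\ep\BB r/4$, and the $\BB r^2$ cancels. Summing the single-point failure probability $\ep^q$ over these points gives total failure probability at most $C_K\,\ep^{q-2}=O_\ep(\ep^2)$ (taking $q\ge 4$), with the implicit constant depending on $K$ but not on $\BB r$ or $\BB z$, as required.

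All of the probabilistic content is contained in \cite[Lemma 3.4]{gm-confluence}, so the argument is essentially routine; the two things to get right are (i) the bookkeeping of scales, namely that substituting $r=\ep\BB r$ into the single-point bound turns $\ep^{-1/2}r$ into precisely $\ep^{1/2}\BB r$, and (ii) the Euclidean volume count showing the grid contains only $O_\ep(\ep^{-2})$ points with a constant depending on $K$ alone. The only real obstacle is ensuring that the exponent coming out of \cite[Lemma 3.4]{gm-confluence} (equivalently, that $\eta$ is chosen small enough) is large enough to absorb the $\ep^{-2}$ loss from the union bound, which is automatic if that estimate is superpolynomially small.
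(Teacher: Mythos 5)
Your argument is correct in outline, but it does more work than the paper does, and the extra work is where the soft spots are. The paper's proof is a one-liner: \cite[Lemma 3.4]{gm-confluence} is already the $\BB z = 0$ case of the present lemma \emph{including} the simultaneous bound over all grid points $z\in (\frac{\ep\BB r}{4}\BB Z^2)\cap B_{\ep\BB r}(\BB r K)$ with probability $1-O_\ep(\ep^2)$, so the only new content is the translation step (law of $h$ invariant modulo additive constant under $z\mapsto z+\BB z$, plus Axiom~\ref{item-metric-translate} and the fact that the defining events only see circle-average-subtracted restrictions of the field) — which you also have, and carry out correctly, as is your volume count showing the grid has $O_K(\ep^{-2})$ points uniformly in $\BB r$ and $\BB z$. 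Where you differ is in treating the citation as a single-point estimate and re-deriving the union bound together with the multi-scale concentration; that re-derivation is essentially a sketch of what goes into the cited lemma, but as written it has two inaccuracies. First, the per-point failure probability cannot be ``superpolynomially small'': there are only $\asymp \log \ep^{-1}$ dyadic scales between $\ep\BB r$ and $\ep^{1/2}\BB r$, so the best one can get is a polynomial rate with an exponent that is tunable (large enough to absorb the $\ep^{-2}$ from the union bound) by taking $\eta$ small — your primary route — but not better. Second, Lemma~\ref{lem-annulus-iterate} cannot be applied directly to the raw per-scale events: it requires the per-event probability to exceed a threshold $p(a,b,s_1,s_2)$, whereas the events defining $\rho^n_r(z)$ are only known to have probability bounded below by some fixed constant. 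One needs to group blocks of consecutive dyadic scales (so that ``at least one good scale in the block'' has probability close to $1$) before invoking that lemma, or simply defer to the proof in \cite{gm-confluence}. Neither point affects the lemma itself if, like the paper, one takes \cite[Lemma 3.4]{gm-confluence} as a black box and only adds translation invariance.
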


Henceforth fix $\eta$ as in Lemma~\ref{lem-clsce-all} and let $\rho_{\BB r,\ep}(z)$ be as in~\eqref{eqn-rho-abbrv}.  
For $\BB r > 0$, $\ep > 0$, and a compact set $K\subset\BB C$, we define 
\eqb \label{eqn-extra-radius-eucl}
R_{\BB r}^\ep(K) :=  6 \sup\left\{ \rho_{\BB r,\ep}(z) : z\in \left( \frac{\ep \BB r}{4} \BB Z^2 \right) \cap B_{\ep \BB r}\left( K \right) \right\} +\ep \BB r .
\eqe
Since $\rho_{\BB r,\ep}(z)$ is a stopping time for the filtration generated by $h|_{B_{6 t}(z)}$ for $t\geq r$, each $\rho_{\BB r,\ep}(z)$ for $z\in \left( \frac{\ep \BB r}{4} \BB Z^2 \right) \cap B_{\ep \BB r}\left( K \right)$ is a.s.\ determined by $R_{\BB r}^\ep(K)$ and the restriction of $h$ to $B_{R_{\BB r}^\ep(K)}(K)$. 
Lemma~\ref{lem-clsce-all} shows that for each fixed choice of $K$, $\BB P[ R_{\BB r}^\ep(\BB r K + \BB z) \leq (6\ep^{1/2}  +\ep) \BB r ]$ tends to 1 as $\ep\rta 0$, uniformly over all $\BB z\in\BB C$ and $\BB r >0$.  

Recall from Section~\ref{sec-notation} that $\mcl B_s^\bullet(\BB z ; D_h) $ for $\BB z \in\BB C$ and $s > 0$ denotes the filled $D_h$-ball of radius $s$ centered at $\BB z$.  
Throughout the rest of this subsection we fix $\BB z \in\BB C$ and abbreviate $\mcl B_s^\bullet := \mcl B_s^\bullet(\BB z;  D_h)$. 
For $s >0$, define
\eqb \label{eqn-extra-radius}
\sigma_{s,\BB r}^\ep = \sigma_{s,\BB r}^\ep(\BB z) := \inf\left\{ s' > s :   B_{R_{\BB r}^\ep(\mcl B_s^\bullet)}(\mcl B_s^\bullet) \subset \mcl B_{s'}^\bullet  \right\}.
\eqe  
We observe that if $\tau$ is a stopping time for $\left\{ \left( \mcl B_t^\bullet , h|_{\mcl B_t^\bullet} \right) \right\}_{t\geq 0}$, then so is $\sigma_{\tau ,  \BB r }^\ep$. 
The following lemma is used to prevent $D_h$-geodesics from getting near a specified boundary point of a $D_h$-metric ball. It is an immediate consequence of~\cite[Lemma 3.6]{gm-confluence} (which is the case when $\BB z = 0$) together with the translation invariance of the law of $h$, modulo additive constant, and Axiom~\ref{item-metric-translate} (translation invariance). 
 
\begin{lem}[Geodesics are unlikely to get near a specified point of $\bdy\mcl B_\tau^\bullet$] \label{lem-geo-kill-pt}
There exists $\alpha  >0$, depending only on the choice of metric, such that the following is true. 
Let $r > 0$, let $\tau$ be a stopping time for the filtration generated by $\left\{ \left( \mcl B_s^\bullet , h|_{\mcl B_s^\bullet} \right) \right\}_{s \geq 0}$,
and let $x\in\bdy\mcl B_\tau^\bullet$ and $\ep\in (0,1)$ be chosen in a manner depending only on $( \mcl B_\tau^\bullet  , h|_{\mcl B_\tau^\bullet} )$. 
There is an event $ G_x^\ep \in \sigma\left(\mcl B_{  \sigma_{\tau,r}^\ep}^\bullet  , h|_{B_{  \sigma_{\tau,r}^\ep}^\bullet} \right)$ with the following properties.
\begin{enumerate}[A.]
\item If $ R_r^\ep(\mcl B_\tau^\bullet) \leq   \op{diam} \mcl B_\tau^\bullet$ and $G_x^\ep $ occurs, then no $D_h$-geodesic from $\BB z$ to a point in $\BB C\setminus \mcl B_{\sigma_{\tau,\BB r}^\ep}^\bullet$ can enter $B_{\ep r}(x) \setminus \mcl B_\tau^\bullet$. \label{item-geo-event-kill-pt}
\item There is a deterministic constant $C_0 > 1$ depending only on the choice of metric such that a.s. $\BB P\left[     G_x^\ep \,|\, \mcl B_\tau^\bullet  , h|_{\mcl B_\tau^\bullet} \right] \geq 1 -  C_0 \ep^\alpha$. \label{item-geo-event-prob-pt}
\end{enumerate}
\end{lem}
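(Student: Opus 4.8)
The plan is to reduce to the case $\BB z = 0$, which is \cite[Lemma 3.6]{gm-confluence}, via a translation of coordinates. Fix $\BB z \in \BB C$ and set $h^{\BB z} := h(\cdot + \BB z)$. By the translation invariance of the law of the whole-plane GFF modulo additive constant, $h^{\BB z}$ has the law of a whole-plane GFF; the choice of additive constant is immaterial here because all of the objects appearing in the statement --- filled $D_h$-metric balls, $D_h$-geodesics, and the radii $\rho_r^n$, $R_r^\ep$, $\sigma_{s,r}^\ep$ --- transform covariantly under a global additive shift of the field. By Axiom~\ref{item-metric-translate} (translation invariance), a.s.\ $D_{h^{\BB z}}(u,v) = D_h(u + \BB z , v + \BB z)$ for all $u,v\in\BB C$. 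Consequently: $\mcl B_s^\bullet(0 ; D_{h^{\BB z}}) = -\BB z + \mcl B_s^\bullet(\BB z ; D_h)$ for each $s > 0$; a path $P$ is a $D_{h^{\BB z}}$-geodesic from $0$ to $w$ if and only if $P + \BB z$ is a $D_h$-geodesic from $\BB z$ to $w + \BB z$; the radii $\rho_r^n$, $R_r^\ep$, $\sigma_{s,r}^\ep$ defined from $h^{\BB z}$ and $D_{h^{\BB z}}$ based near $0$ coincide with those defined from $h$ and $D_h$ based near $\BB z$, shifted by $\BB z$ (the lattice in the definition of $R_r^\ep$ produces only a harmless alignment issue); and Euclidean radii and diameters are translation invariant.

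Next I would transfer the filtration and the data $(\tau, x, \ep)$. Under $u \mapsto u - \BB z$, the filtration generated by $(\mcl B_s^\bullet(\BB z ; D_h), h|_{\mcl B_s^\bullet(\BB z ; D_h)})_{s\geq 0}$ is carried bijectively onto the filtration generated by $(\mcl B_s^\bullet(0 ; D_{h^{\BB z}}), h^{\BB z}|_{\mcl B_s^\bullet(0 ; D_{h^{\BB z}})})_{s\geq 0}$. Hence the $D_h$-ball-growth stopping time $\tau$ at $\BB z$, the point $x\in\bdy\mcl B_\tau^\bullet(\BB z ; D_h)$, and the parameter $\ep$ (the latter two chosen in a manner depending only on $(\mcl B_\tau^\bullet(\BB z;D_h), h|_{\mcl B_\tau^\bullet(\BB z;D_h)})$) correspond to a $D_{h^{\BB z}}$-ball-growth stopping time at $0$, still denoted $\tau$, a point $x - \BB z\in\bdy\mcl B_\tau^\bullet(0;D_{h^{\BB z}})$, and the same $\ep$, with the analogous measurability. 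Applying \cite[Lemma 3.6]{gm-confluence} to $h^{\BB z}$, $\tau$, $x - \BB z$, $\ep$, $r$ yields a constant $\alpha > 0$ and an event $G^\ep\in\sigma(\mcl B_{\sigma_{\tau,r}^\ep(0)}^\bullet(0;D_{h^{\BB z}}), h^{\BB z}|_{\mcl B_{\sigma_{\tau,r}^\ep(0)}^\bullet(0;D_{h^{\BB z}})})$ satisfying the $\BB z = 0$ versions of properties~\ref{item-geo-event-kill-pt} and~\ref{item-geo-event-prob-pt}. I would then define $G_x^\ep$ to be the event for $h$ that corresponds to $G^\ep$ under $u\mapsto u - \BB z$.

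The asserted properties of $G_x^\ep$ then follow from the dictionary above. Measurability: the translation identifies $\sigma(\mcl B_{\sigma_{\tau,r}^\ep(0)}^\bullet(0;D_{h^{\BB z}}), h^{\BB z}|_{\cdots})$ with $\sigma(\mcl B_{\sigma_{\tau,r}^\ep}^\bullet(\BB z;D_h), h|_{\cdots})$, using $\sigma_{\tau,r}^\ep(0) = \sigma_{\tau,r}^\ep(\BB z)$. Property~\ref{item-geo-event-kill-pt}: $R_r^\ep(\mcl B_\tau^\bullet)$ and $\op{diam}\mcl B_\tau^\bullet$ are translation invariant, $D_h$-geodesics from $\BB z$ to points outside $\mcl B_{\sigma_{\tau,r}^\ep}^\bullet(\BB z;D_h)$ correspond to $D_{h^{\BB z}}$-geodesics from $0$ to points outside $\mcl B_{\sigma_{\tau,r}^\ep}^\bullet(0;D_{h^{\BB z}})$, and $B_{\ep r}(x)\setminus\mcl B_\tau^\bullet(\BB z;D_h)$ corresponds to $B_{\ep r}(x - \BB z)\setminus\mcl B_\tau^\bullet(0;D_{h^{\BB z}})$. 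Property~\ref{item-geo-event-prob-pt}: the change of coordinates is measure preserving since $h^{\BB z}\eqD h$ modulo additive constant, and the conditioning $\sigma$-algebras correspond, so the a.s.\ conditional lower bound $1 - C_0\ep^\alpha$ is preserved.

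The argument involves no new idea: the whole of the substantive content --- the construction of the event $G_x^\ep$ and the verification of properties~\ref{item-geo-event-kill-pt} and~\ref{item-geo-event-prob-pt} --- is \cite[Lemma 3.6]{gm-confluence}. The only thing requiring care, and hence the main (minor) obstacle, is the bookkeeping: one must confirm that the metric-ball filtration, the stopping time $\tau$, the distinguished boundary point $x$, and the auxiliary radii $\rho_r^n$, $R_r^\ep$, $\sigma_{s,r}^\ep$ all transform covariantly under the coordinate change $u\mapsto u-\BB z$ and are insensitive to the choice of additive constant for the GFF, so that the $\BB z = 0$ statement transports verbatim.
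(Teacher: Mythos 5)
Your proposal is correct and matches the paper's own treatment: the paper states this lemma as an immediate consequence of \cite[Lemma 3.6]{gm-confluence} (the case $\BB z = 0$) combined with the translation invariance of the law of $h$ modulo additive constant and Axiom~\ref{item-metric-translate}, which is exactly the reduction you carry out. Your additional bookkeeping about the covariance of $\rho_r^n$, $R_r^\ep$, $\sigma_{s,r}^\ep$ and the metric-ball filtration under $u\mapsto u-\BB z$ is the (routine) content the paper leaves implicit.
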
 

We will now state a confluence property for LQG geodesics started from $\BB z$.  
Each point $x\in \bdy \mcl B_s^\bullet $ lies at $D_h$-distance exactly $s$ from $\BB z$, so every $D_h$-geodesic from $\BB z$ to $x$ stays in $\mcl B_s^\bullet$. For some atypical points $x$ there might be many such $D_h$-geodesics. But, it is shown in~\cite[Lemma 2.4]{gm-confluence} that there is always a distinguished $D_h$-geodesic from $\BB z$ to $x$, called the \emph{leftmost geodesic}, which lies (weakly) to the left of every other $D_h$-geodesic from $\BB z$ to $x$ if we stand at $x$ and look outward from $\mcl B_s^\bullet$. The following is~\cite[Theorem 1.4]{gm-confluence}. 

\begin{thm}[Confluence of geodesics across a metric annulus] \label{thm-finite-geo0}
Almost surely, for each $0 < t < s < \infty$ there is a finite set of $D_h$-geodesics from $\BB z$ to $\bdy\mcl B_t^\bullet $ such that every leftmost $D_h$-geodesic from $\BB z$ to $\bdy\mcl B_s^\bullet$ coincides with one of these $D_h$-geodesics on the time interval $[0,t]$. In particular, there are a.s.\ only finitely many points of $\bdy\mcl B_t^\bullet $ which are hit by leftmost $D_h$-geodesics from $\BB z$ to $\bdy\mcl B_s^\bullet $. 
\end{thm}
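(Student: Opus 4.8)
The plan is to fix $0 < t < s$ and build the finite set of geodesics from $\BB z$ to $\bdy\mcl B_t^\bullet$ by a compactness-plus-stability argument, using the auxiliary radii $\rho_r^n(z)$ and the event $G_x^\ep$ from Lemma~\ref{lem-geo-kill-pt} to control how many distinct "branches" can survive from $\bdy\mcl B_t^\bullet$ out to $\bdy\mcl B_s^\bullet$. First I would recall that every leftmost $D_h$-geodesic from $\BB z$ to a point $y\in\bdy\mcl B_s^\bullet$ passes through $\bdy\mcl B_t^\bullet$ at the unique time $t$ (since $D_h(\BB z,\cdot)$ restricted to such a geodesic is the identity), hitting some point $x = x(y)\in\bdy\mcl B_t^\bullet$. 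The claim is equivalent to saying that the set $X_{t,s} := \{x(y) : y\in\bdy\mcl B_s^\bullet,\ \text{via a leftmost geodesic}\}$ is finite and that there are only finitely many distinct initial segments $[0,t]$ among these geodesics. By the leftmost property and a standard ordering argument (geodesics from a common point do not cross transversally), distinct initial segments up to time $t$ correspond bijectively to distinct points of $X_{t,s}$ together with the finitely many leftmost geodesics from $\BB z$ to each such point; so it suffices to bound $\#X_{t,s}$.

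The core of the argument is a covering/stability step. For a small dyadic $\ep>0$, cover $\bdy\mcl B_t^\bullet$ (which is compact, with finite Euclidean diameter, contained in some $B_{\ep^{-M}\BB r}(\BB z)$ with high probability by the a priori estimates) by finitely many Euclidean balls $B_{\ep r}(x_i)$ with $x_i\in \left(\tfrac{\ep\BB r}{4}\BB Z^2\right)\cap B_{\ep\BB r}(\bdy\mcl B_t^\bullet)$. For each such center I would apply Lemma~\ref{lem-geo-kill-pt} with $\tau = t$: on the event $\bigcap_i G_{x_i}^\ep$ — which by part~B occurs with conditional probability at least $1 - C_0 N \ep^\alpha$ given $(\mcl B_t^\bullet, h|_{\mcl B_t^\bullet})$, where $N$ is the number of balls — no $D_h$-geodesic from $\BB z$ to a point outside $\mcl B_{\sigma_{t,\BB r}^\ep}^\bullet$ can enter $B_{\ep r}(x_i)\setminus\mcl B_t^\bullet$ for any $i$. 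But a leftmost geodesic from $\BB z$ to $y\in\bdy\mcl B_s^\bullet$ with $s > \sigma_{t,\BB r}^\ep$ must exit $\mcl B_t^\bullet$ immediately after hitting $x(y)\in\bdy\mcl B_t^\bullet$, so $x(y)$ cannot lie in any $B_{\ep r}(x_i)$ whose $G_{x_i}^\ep$ holds — contradiction unless the geodesic is blocked. The way to use this is quantitative: the event $G_x^\ep$ rules out geodesics getting within $\ep r$ of $x$, so the number of points of $\bdy\mcl B_t^\bullet$ that can be hit by leftmost geodesics to $\bdy\mcl B_s^\bullet$ is bounded by the number of "bad" centers, i.e. those $x_i$ for which $G_{x_i}^\ep$ fails. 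Taking expectations and using part~B, the expected number of bad centers is at most $C_0 N \ep^\alpha$; combined with $N \preceq (\op{diam}\mcl B_t^\bullet / \ep)^2 \preceq \ep^{-2-o(1)}$, we get $\BB E[\#X_{t,s}] \preceq \ep^{\alpha - 2 - o(1)}$, which is not yet finite — so I would instead iterate: run the argument across a sequence of metric annuli between radius $t$ and $s$, or equivalently at a geometric sequence of scales $\ep$, so that a point surviving to $\bdy\mcl B_s^\bullet$ must avoid the blocking events at every scale, and the probabilities multiply to give a convergent bound. (This is precisely the mechanism of Lemma~\ref{lem-annulus-iterate}: the blocking events at successive scales are approximately independent by the Markov property of the GFF and the fact that $G_x^\ep$ is measurable w.r.t.\ the field near $x$.) Summing over dyadic $s$ and $t$ and using monotonicity of $\mcl B_\cdot^\bullet$ then upgrades "a.s.\ for fixed $t<s$" to "a.s.\ for all $t<s$" in the statement.

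The main obstacle I expect is making the counting genuinely finite rather than merely polynomially controlled: a single scale gives only $\#X_{t,s} \preceq \ep^{\alpha-2}$, which blows up as $\ep\to 0$, so one must exploit that confluence is a multi-scale phenomenon — a branch of the geodesic tree that has not yet merged by radius $t$ must have survived the "kill events" of Lemma~\ref{lem-geo-kill-pt} at every intermediate radius, and these survival events are (conditionally) independent enough across scales to force the number of surviving branches to be finite a.s. Setting up the conditional independence cleanly — i.e.\ organizing the radii $\sigma_{\cdot,\BB r}^\ep$, the stopping-time structure of the filled balls (Lemma~\ref{lem-ball-local}), and the measurability of $G_x^\ep$ so that Lemma~\ref{lem-annulus-iterate} genuinely applies — is the delicate part. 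A secondary subtlety is the leftmost-geodesic bookkeeping: one must check that for each $x\in\bdy\mcl B_t^\bullet$ hit by \emph{some} leftmost geodesic to $\bdy\mcl B_s^\bullet$ there are only finitely many such geodesics on $[0,t]$, which follows from the ordering structure (leftmost geodesics to the same endpoint that agree near $\BB z$ and diverge must bound a region, and the tree structure of $D_h$-geodesics from a fixed root forces local finiteness), but should be stated carefully using the results of~\cite{gm-confluence} rather than reproved.
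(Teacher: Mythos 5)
You are attempting to prove a statement that this paper does not prove at all: Theorem~\ref{thm-finite-geo0} is quoted verbatim from \cite[Theorem 1.4]{gm-confluence} and is used here, together with its quantitative version Theorem~\ref{thm-finite-geo-quant}, as a black box. So the comparison is really between your sketch and the argument of that earlier paper, and there your sketch has a genuine gap at the central counting step. At a single Euclidean scale, the killing events only give that the set of points of $\bdy\mcl B_t^\bullet$ hit by leftmost geodesics to $\bdy\mcl B_s^\bullet$ is \emph{covered} by the balls $B_{\ep r}(x_i)$ whose event $G_{x_i}^\ep$ fails; a single bad ball can contain arbitrarily many hit points, so the assertion that the number of hit points is bounded by the number of bad centers is not correct, and in any case the expected number of bad centers, of order $N\ep^\alpha \asymp \ep^{\alpha-2}$, diverges, as you yourself note.

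The proposed rescue, multiplying failure probabilities across Euclidean scales via Lemma~\ref{lem-annulus-iterate} and ``approximate independence of $G_x^\ep$ across scales,'' does not work as described. First, $G_x^\ep$ is not determined by $h$ near $x$: by Lemma~\ref{lem-geo-kill-pt} it is measurable with respect to $\left(\mcl B_{\sigma_{\tau,r}^\ep}^\bullet, h|_{\mcl B_{\sigma_{\tau,r}^\ep}^\bullet}\right)$, i.e.\ with respect to a filled metric ball grown out to a larger radius, which is a non-local functional of the field; hence the measurability hypothesis of Lemma~\ref{lem-annulus-iterate} (events determined by the field in disjoint nested Euclidean annuli) fails and no independence across Euclidean scales around a fixed boundary point is available. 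Second, the heuristic is self-defeating: if failures of $G_x^\ep$ at successive scales really were independent with probability $\preceq \ep^\alpha$, the resulting product bound would show that \emph{no} point of $\bdy\mcl B_t^\bullet$ is hit by any geodesic to $\bdy\mcl B_s^\bullet$, which is absurd; the surviving points are exactly the exceptional ones and cannot be treated as typical points with independent failures. The actual mechanism in \cite{gm-confluence} iterates in the metric-radius direction rather than in the Euclidean-scale direction: one works with the filtration of $\{(\mcl B_s^\bullet, h|_{\mcl B_s^\bullet})\}_{s\geq 0}$ (using that filled balls are local sets, Lemma~\ref{lem-ball-local}), shows that in crossing each thin metric annulus the finite configuration of boundary points/arcs hit by leftmost geodesics can only shrink and has, conditionally on the filtration, a uniformly good chance of shrinking by a definite amount (this is where events of the type $G_x^\ep$ are applied, at the hit points themselves rather than at an $\ep$-net), and then multiplies these conditional bounds across many metric annuli to obtain the stretched-exponential bound of Theorem~\ref{thm-finite-geo-quant}; confluence across a macroscopic annulus follows because the hit set on $\bdy\mcl B_t^\bullet$ is monotone in the outer radius. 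Your sketch supplies neither this contraction mechanism nor a substitute for it (and its reliance on the leftmost-geodesic structure from \cite{gm-confluence} would be circular in a self-contained reproof), so as written it does not assemble into a proof.
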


Combined with~\cite[Lemma 2.7]{gm-confluence}, Theorem~\ref{thm-finite-geo0} tells us that we can decompose $\bdy\mcl B_s^\bullet$ into a finite union of boundary arcs such that for any points $x,y\in\bdy\mcl B_s^\bullet$ which lie in the same arc, the leftmost $D_h$-geodesics from $\BB z$ to $x$ and from $\BB z$ to $y$ coincide in the time interval $[0,t]$. 
We will need a more quantitative version of Theorem~\ref{thm-finite-geo0} which gives us stretched exponential concentration for the number of such arcs if we truncate on a certain high-probability regularity event.  
To this end, we define  
\eqb \label{eqn-tau_r-def}
\tau_r(\BB z) := D_h(\BB z , \bdy B_r(\BB z) )  = \inf\left\{ s > 0 : \mcl B_s^\bullet \not\subset B_r(\BB z) \right\} ,\quad\forall r  > 0 .
\eqe
We also fix $\chi \in (0, \xi(Q-2))$, chosen in a manner depending only on $\xi$ and $Q$, so that by Lemma~\ref{lem-holder-uniform} $D_h$ is a.s.\ locally $\chi$-H\"older continuous w.r.t.\ the Euclidean metric. 
For $\BB r  >0$ and $a \in (0,1)$, we define $\mcl E_{\BB r}^{\BB z}(a)  $ to be the event that the following is true.
\begin{enumerate}
\item \emph{(Comparison of $D_h$-balls and Euclidean balls)} $B_{a \BB r}(\BB z) \subset \mcl B_{\tau_{\BB r}}^\bullet$ and $ \tau_{3\BB r} - \tau_{2\BB r} \geq a \frk c_{\BB r} e^{ \xi h_{\BB r}(0)}$.   \label{item-quantum-ball-contained0} 
\item \emph{(One-sided H\"older continuity)} $\frk c_{\BB r}^{-1} e^{-\xi h_r(0)} D_h(u,v) \leq    \left( \frac{ |u - v| }{\BB r} \right)^\chi$ for each $u,v \in B_{4 \BB r}(0)$ with $|u-v|/\BB r \leq a$. \label{item-holder-cont0}
\item \emph{(Bounds for radii used to control geodesics)} The radii of Lemma~\ref{lem-clsce-all} satisfy $\rho_{\BB r,\ep}(z) \leq \ep^{1/2}\BB r $ for each $ z\in \left( \frac{\ep \BB r}{4} \BB Z^2 \right) \cap B_{4 \BB r}(\BB z) $ and each dyadic $\ep \in (0,a]$.    \label{item-good-radii-small0}
\end{enumerate}

It is easy to see that $\BB P[\mcl E_{\BB r}^{\BB z}(a)] \rta 1$ as $a\rta 0$, uniformly over the choice of $\BB r$ and $\BB z$: in particular, this follows from \cite[Lemma 3.8]{gm-confluence} (which is the case when $\BB z=0$) and Axiom~\ref{item-metric-translate}.  
We will in fact show in Section~\ref{sec-iterate-reg} that with high probability, $\mcl E_{\BB r}^{\BB z}(a)$ occurs \emph{simultaneously} for all $\BB z$ in a fixed bounded open subset of $\BB C$. 
The following more quantitative version of Theorem~\ref{thm-finite-geo0} is~\cite[Theorem 3.9]{gm-confluence}.

\begin{thm}[Quantitative confluence of geodesics] \label{thm-finite-geo-quant}
For each $a \in (0,1)$, there is a constant $b_0 > 0$ depending only on $a$ and constants $b_1 , \beta > 0$ depending only on the choice of metric $D$ such that the following is true. 
For each $\BB z\in\BB C$, each $\BB r >0$, each $N\in\BB N$, and each stopping time $\tau$ for $\{(\mcl B_s^\bullet , h|_{\mcl B_s^\bullet})\}_{s\geq 0}$ with $\tau \in [\tau_{\BB r}(\BB z) ,\tau_{2\BB r}(\BB z) ]$ a.s., the probability that $\mcl E_{\BB r}^{\BB z}(a)$ occurs and there are more than $N$ points of $\bdy\mcl B_{\tau}^\bullet$ which are hit by leftmost $D_h$-geodesics from $\BB z$ to $\bdy\mcl B_{\tau +  N^{-\beta} \frk c_{\BB r} e^{\xi h_{\BB r}(\BB z)} }^\bullet$ is at most $b_0 e^{-b_1 N^\beta}$. 
\end{thm}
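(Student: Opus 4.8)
The plan is to reduce the statement to the ``geodesics avoid a prescribed point'' estimate of Lemma~\ref{lem-geo-kill-pt}, applied at a Euclidean scale which is a small negative power of $N$, and then to upgrade the resulting union bound to stretched-exponential concentration by exploiting approximate independence of the relevant events across well-separated Euclidean balls. By Axiom~\ref{item-metric-translate} and translation invariance of the law of $h$ modulo additive constant we may take $\BB z = 0$. Fix a small $\kappa > 0$, to be chosen in terms of the constant $\alpha$ of Lemma~\ref{lem-geo-kill-pt}, and let $\ep$ be a dyadic number comparable to $N^{-\kappa}$; choose $\beta \in (0,1)$ small, depending only on $D$, with $N^{-\beta} > (2\ep^{1/2})^\chi$ and $\beta < 1-\kappa$, where $\chi \in (0,\xi(Q-2))$ is the exponent fixed as in Lemma~\ref{lem-holder-uniform}. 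On the event $\mcl E := \mcl E_{\BB r}^0(a)$ we have $B_{a\BB r}(0) \subset \mcl B_\tau^\bullet \subset \ol{B_{2\BB r}(0)}$ (using $\tau \in [\tau_{\BB r},\tau_{2\BB r}]$), the one-sided H\"older bound for $D_h$ holds on $B_{4\BB r}(0)$, and the radii of Lemma~\ref{lem-clsce-all} satisfy $\rho_{\BB r,\ep}(z) \leq \ep^{1/2}\BB r$ for the grid points $z$ entering the definition of $R_{\BB r}^\ep$; hence $R_{\BB r}^\ep(\mcl B_\tau^\bullet) \leq 7\ep^{1/2}\BB r$, which for large $N$ is $< \op{diam}\mcl B_\tau^\bullet$, and applying the H\"older bound to the short hop from each point of $B_{R_{\BB r}^\ep(\mcl B_\tau^\bullet)}(\mcl B_\tau^\bullet)$ to its nearest point of $\mcl B_\tau^\bullet$ gives $\sigma_{\tau,\BB r}^\ep - \tau \leq (2\ep^{1/2})^\chi \frk c_{\BB r} e^{\xi h_{\BB r}(0)} < \delta$, where $\delta := N^{-\beta}\frk c_{\BB r}e^{\xi h_{\BB r}(0)}$ is the growth appearing in the statement. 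Using also $\tau_{3\BB r} - \tau_{2\BB r} \geq a\frk c_{\BB r}e^{\xi h_{\BB r}(0)}$ from the definition of $\mcl E$ we get $\mcl B_{\tau+\delta}^\bullet \subset B_{3\BB r}(0)$, and since $\sigma_{\tau,\BB r}^\ep < \tau+\delta$ every point of $\bdy\mcl B_{\tau+\delta}^\bullet$ lies outside $\mcl B_{\sigma_{\tau,\BB r}^\ep}^\bullet$.

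Next, cover $\ol{B_{2\BB r}(0)}$ by the balls $B_{\ep\BB r/4}(w)$ with $w$ ranging over $(\tfrac{\ep\BB r}{8}\BB Z^2) \cap B_{3\BB r}(0)$, and let $\mcl W$ be the set of such $w$ for which $B_{\ep\BB r/4}(w)$ meets $\bdy\mcl B_\tau^\bullet$, so that $\#\mcl W \preceq \ep^{-2}$. For $w \in \mcl W$ pick $x_w \in \bdy\mcl B_\tau^\bullet \cap B_{\ep\BB r/4}(w)$ measurably with respect to $(\mcl B_\tau^\bullet, h|_{\mcl B_\tau^\bullet})$, and let $G_w := G_{x_w}^\ep$ be the event of Lemma~\ref{lem-geo-kill-pt} applied with $r = \BB r$, this $\ep$, and the stopping time $\tau$; thus $\BB P[G_w \mid \mcl B_\tau^\bullet, h|_{\mcl B_\tau^\bullet}] \geq 1 - C_0\ep^\alpha$. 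The key deterministic observation, valid on $\mcl E$, is that any point $y \in \bdy\mcl B_\tau^\bullet$ hit by a leftmost $D_h$-geodesic from $0$ to $\bdy\mcl B_{\tau+\delta}^\bullet$ lies in a ball $B_{\ep\BB r/4}(w)$ with $w \in \mcl W^{\op{bad}} := \{w \in \mcl W : G_w \text{ fails}\}$: such a geodesic exits $\mcl B_\tau^\bullet$ through $y$ and, by continuity, immediately afterwards lies in $B_{\ep\BB r}(x_w) \setminus \mcl B_\tau^\bullet$ for the $w$ with $y \in B_{\ep\BB r/4}(w)$ (so that $|y - x_w| < \ep\BB r$), while its endpoint lies in $\BB C \setminus \mcl B_{\sigma_{\tau,\BB r}^\ep}^\bullet$; this would contradict the first conclusion of Lemma~\ref{lem-geo-kill-pt} (applicable because $R_{\BB r}^\ep(\mcl B_\tau^\bullet) \leq \op{diam}\mcl B_\tau^\bullet$ on $\mcl E$) if $G_w$ held. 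Combined with an auxiliary estimate showing that, off an event of probability at most $b_0' e^{-b_1' N^\beta}$, no ball $B_{\ep\BB r/4}(w)$ contains more than a constant number $C_1 = C_1(D)$ of such points --- this being itself a confluence bound at the finer scale $\ep\BB r$, handled by the same method applied there --- we conclude that on $\mcl E$ (intersected with this further event) the number of points of $\bdy\mcl B_\tau^\bullet$ hit by leftmost $D_h$-geodesics from $0$ to $\bdy\mcl B_{\tau+\delta}^\bullet$ is at most $C_1 \#\mcl W^{\op{bad}}$.

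It therefore remains to bound $\BB P[\#\mcl W^{\op{bad}} > N/C_1, \, \mcl E]$, and this concentration statement is the crux: the trivial union bound over $\mcl W$ only yields $\#\mcl W \cdot C_0\ep^\alpha \asymp \ep^{\alpha-2}$, which is far too large. Instead one conditions on $(\mcl B_\tau^\bullet, h|_{\mcl B_\tau^\bullet})$ --- under which each $\{G_w \text{ fails}\}$ has conditional probability at most $C_0\ep^\alpha$ --- and argues that, up to a negligible error, whether $G_w$ fails is determined by the restriction of $h$ to $B_{C\ep^{1/2}\BB r}(x_w)$ (together with the conditioning). Here one uses the locality of the metric (Axiom~\ref{item-metric-local}): on $\mcl E$ the grown ball $\mcl B_{\sigma_{\tau,\BB r}^\ep}^\bullet$ is contained in $B_{7\ep^{1/2}\BB r}(\mcl B_\tau^\bullet)$, and near $\bdy\mcl B_\tau^\bullet$ the increments $\mcl B_s^\bullet \setminus \mcl B_\tau^\bullet$ for $s$ slightly above $\tau$ are governed, close to $x_w$, by $D_h(\cdot\,,\cdot\,; \BB C\setminus\mcl B_\tau^\bullet)$ near $x_w$, hence by $\mcl B_\tau^\bullet$ together with $h$ in a neighborhood of $x_w$ of the stated size; the radii $\rho$ entering the definition of $G_w$ live in annuli around $x_w$ at scales between $\ep\BB r$ and $\ep^{1/2}\BB r$, so this localization propagates. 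Granting this, the Markov property of the GFF on the local set $\BB C\setminus\mcl B_\tau^\bullet$ together with a Radon--Nikodym comparison of the type carried out in the proof of Lemma~\ref{lem-spatial-ind} shows that, conditionally on $(\mcl B_\tau^\bullet, h|_{\mcl B_\tau^\bullet})$, the events $\{G_w \text{ fails}\}$ over any family of $w$'s that are pairwise at Euclidean distance $\geq 20\ep^{1/2}\BB r$ are dominated by independent Bernoulli$(C_0'\ep^\alpha)$ variables. Splitting $\mcl W$ into $O(\ep^{-1})$ such well-separated sub-collections, each of size $O(\ep^{-1})$ and hence with conditional mean number of failures $O(\ep^{\alpha-1}) \ll N/C_1$ provided $\kappa < 1/(2-\alpha)$, and applying a Chernoff bound to each, yields an overall bound $O(\ep^{-1})\exp(-c\, N^{1-\kappa}\log N)$, which is at most $b_0 e^{-b_1 N^\beta}$ for $N$ large because $\beta < 1-\kappa$; small $N$ and the exceptional events are absorbed into the constants. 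All estimates above are uniform in $\BB r$, $N$ and the choice of $\tau$, and $b_1,\beta$ depend only on $D$ while $b_0$ may depend on $a$, as required.

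I expect the genuinely hard point to be this localization claim for $G_w$ and the attendant approximate-independence estimate. The metric ball $\mcl B_s^\bullet$ and the leftmost geodesics it controls are global functionals of $h$, so showing that the failure of $G_w$ is essentially measurable with respect to $h$ near $x_w$ requires carefully combining the locality axiom with the precise definition of the radii $\rho_r^n(\cdot)$, and the Radon--Nikodym step must be run in the unbounded domain $\BB C\setminus\mcl B_\tau^\bullet$ with a random (but $\mcl B_\tau^\bullet$-measurable) boundary. The auxiliary per-ball multiplicity bound $C_1$ is a secondary difficulty of the same flavor, and is what forces the slightly awkward ``confluence at scale $\ep\BB r$'' sub-argument.
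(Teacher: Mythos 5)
You should first note that the paper under review does not prove this statement at all: it is imported verbatim from \cite[Theorem 3.9]{gm-confluence}, so there is no in-paper proof to compare against, and your argument has to stand on its own. As written it has two genuine gaps, and both sit exactly where the stretched-exponential rate $e^{-b_1 N^\beta}$ must come from. The first is the per-ball multiplicity bound: you control the number of hit points by $C_1\,\#\mcl W^{\op{bad}}$, where $C_1$ is a claimed high-probability bound on how many points of $\bdy\mcl B_\tau^\bullet$ inside a single ball $B_{\ep\BB r/4}(w)$ can be hit by leftmost geodesics to $\bdy\mcl B_{\tau+\delta}^\bullet$. That per-ball statement is itself a quantitative confluence bound of precisely the kind being proven, and ``handled by the same method applied there'' is circular: the finer-scale argument would again need a per-ball bound at a yet finer scale, with no base case. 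Nothing in the paper or in Lemma~\ref{lem-geo-kill-pt} prevents arbitrarily many hit points from clustering in one Euclidean ball, so bounding $\#\mcl W^{\op{bad}}$ does not bound the count in the theorem.

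The second gap is the spatial-independence/localization claim for the events $G_w$, which you yourself flag but do not supply. Lemma~\ref{lem-geo-kill-pt} gives only $\BB P[G_{x_w}^\ep \mid \mcl B_\tau^\bullet, h|_{\mcl B_\tau^\bullet}] \geq 1 - C_0\ep^\alpha$; the event is measurable with respect to $(\mcl B_{\sigma_{\tau,\BB r}^\ep}^\bullet, h|_{\mcl B_{\sigma_{\tau,\BB r}^\ep}^\bullet})$, i.e.\ with respect to the globally grown ball, and is nowhere asserted to be determined by $h$ near $x_w$. Given $(\mcl B_\tau^\bullet, h|_{\mcl B_\tau^\bullet})$, the growth of $\mcl B_s^\bullet$ near $x_w$ for $s$ slightly above $\tau$ is not a local functional of $h$ near $x_w$ (it depends on the field along all of $\bdy\mcl B_\tau^\bullet$), so localizing $G_w$ to $B_{C\ep^{1/2}\BB r}(x_w)$ and then decoupling via a Radon--Nikodym comparison as in Lemma~\ref{lem-spatial-ind} is not a routine adaptation: it would require reopening the construction of $G_x^\ep$ in \cite{gm-confluence} and reproving the killing estimate in a localized, independent form. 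Without that input, the only bound available from the cited lemma is a union/Markov bound over the $\asymp \ep^{-2}$ grid points, which with $\ep \asymp N^{-\kappa}$ gives $\BB P[\#\mcl W^{\op{bad}} > N/C_1] \preceq N^{\kappa(2-\alpha)-1}$ --- polynomially small in $N$, far short of $e^{-b_1 N^\beta}$. For comparison, the proof in \cite{gm-confluence} gets the exponent $N^\beta$ from sequential rather than spatial independence: roughly, it iterates killing events at a sequence of stopping times for the filtration generated by $\{(\mcl B_s^\bullet, h|_{\mcl B_s^\bullet})\}_{s\geq 0}$ as the ball grows across the annulus of width $N^{-\beta}\frk c_{\BB r} e^{\xi h_{\BB r}(\BB z)}$, using a uniform conditional killing probability at each step, which avoids both the decoupling of the field across disjoint boundary balls and any per-ball multiplicity constant.
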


\section{The optimal bi-Lipschitz constant}
\label{sec-attained}

Throughout this section, we assume that we are in the setting of Theorem~\ref{thm-weak-uniqueness}, so that $D$ and $\wt D$ are two weak $\gamma$-LQG metrics with the same scaling constants. We also let $h$ be a whole-plane GFF. We know from Proposition~\ref{prop-lqg-metric-bilip} that $D_h$ and $\wt D_h$ are a.s.\ bi-Lipschitz equivalent. 
We define the optimal bi-Lipschitz constants $c_*$ and $C_*$ as in~\eqref{eqn-max-min-def}.
Since $D_h$ and $\wt D_h$ are a.s.\ bi-Lipschitz equivalent (Proposition~\ref{prop-lqg-metric-bilip}), a.s.\ $0 < c_* \leq C_* < \infty$. 

\begin{lem} \label{lem-max-min-const}
Each of $c_*$ and $C_*$ is a.s.\ equal to a deterministic constant.
\end{lem}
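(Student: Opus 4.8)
The statement is a 0--1 law type of claim: the random variables $c_*$ and $C_*$, defined via an infimum and supremum over all pairs of points, are invariant under the symmetries of the whole-plane GFF and should therefore be a.s.\ constant. The plan is to show that each of $c_*$ and $C_*$ is a.s.\ equal to an essential infimum (resp.\ supremum) of a deterministic quantity, and then to use the ergodicity of the field under scaling and translation.

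First I would observe that $c_*$ and $C_*$ are measurable functions of $h$ (since $D_h$ and $\wt D_h$ are measurable functions of $h$ by hypothesis). The key point is that both quantities are unchanged when we add a constant to $h$: by Axiom~\ref{item-metric-f} (Weyl scaling) with $f \equiv a$, we have $D_{h+a} = e^{\xi a} D_h$ and $\wt D_{h+a} = e^{\xi a}\wt D_h$, so the ratio $\wt D_{h+a}(u,v)/D_{h+a}(u,v) = \wt D_h(u,v)/D_h(u,v)$ for all $u\ne v$; hence $c_*$ and $C_*$ are functions of $h$ viewed modulo additive constant. Next, $c_*$ and $C_*$ are invariant under the scaling and translation maps $h \mapsto h(r\cdot + z) + Q\log r$. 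Indeed, for a strong LQG metric Axiom~\ref{item-metric-coord0} gives $D_{h(r\cdot+z)+Q\log r}(u,v) = D_h(ru+z,rv+z)$, and likewise for $\wt D$; taking the ratio and letting $u,v$ range over all distinct pairs (equivalently $ru+z,rv+z$ range over all distinct pairs) shows the infimum and supremum are unchanged. For a general weak LQG metric one uses Axiom~\ref{item-metric-translate} for the translation part, and for the scaling part one uses~\eqref{eqn-weak-to-strong-switch}-type reasoning — but more cleanly, since Lemma~\ref{lem-max-min-const} only needs to be applied after we know (via the earlier Lemma~\ref{lem-weak-to-strong}) that weak metrics are strong, or alternatively one invokes Axiom~\ref{item-metric-coord} (tightness across scales): the metrics $\frk c_r^{-1} e^{-\xi h_r(0)} D_h(r\cdot,r\cdot)$ have laws that are tight, and since $D$ and $\wt D$ have the \emph{same} $\frk c_r$, the joint law of the pair of rescaled metrics is invariant in distribution, so the ratio $c_*$ (resp.\ $C_*$) computed for $(D_h,\wt D_h)$ has the same law as that computed for the rescaled pair, which in turn equals $c_*$ (resp.\ $C_*$) itself pointwise after the rescaling.

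The cleanest route is therefore: the whole-plane GFF modulo additive constant is ergodic with respect to the group of scaling-and-translation transformations $h \mapsto h(r\cdot + z) + Q\log r$ acting on it (this is a standard fact — see, e.g., the fact that $h$ is scale-invariant modulo additive constant combined with a tail-triviality or mixing argument; concretely one can use that the whole-plane GFF has a trivial tail $\sigma$-algebra at infinity, or appeal to the ergodicity of the scaling action used elsewhere in the LQG literature). Since $c_*$ and $C_*$ are invariant under this action by the paragraph above, ergodicity forces each to be a.s.\ equal to a deterministic constant.

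The main obstacle — really the only subtlety — is making the scaling-invariance argument fully rigorous in the \emph{weak} metric setting, where we do not yet have exact scale invariance of $D_h$, only tightness across scales with matching constants $\frk c_r$. One has to be a little careful that one is not secretly assuming Theorem~\ref{thm-weak-uniqueness} or Lemma~\ref{lem-weak-to-strong}. The way around this: translation invariance (Axiom~\ref{item-metric-translate}) alone already shows $c_*$ and $C_*$ are invariant under the ergodic action of translations of $\BB C$ on the GFF modulo additive constant, and the translation action of $\BB R^2$ on the whole-plane GFF modulo additive constant is itself ergodic. So in fact one does not need the scaling at all: invoke Axioms~\ref{item-metric-f} and~\ref{item-metric-translate} to get invariance of $c_*,C_*$ under the $\BB R^2$-translation action on $h$ modulo additive constant, and then conclude by ergodicity of that action (which follows from, e.g., the spectral-gap/mixing properties of the GFF, or more elementarily from the triviality of the translation-invariant $\sigma$-algebra). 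This gives the lemma with minimal input. I would write the short version: state that $c_*$ and $C_*$ are invariant under $h \mapsto h(\cdot + z)$ for every $z\in\BB C$ (using Axioms~\ref{item-metric-f} and~\ref{item-metric-translate} to handle the additive constant and the translation), invoke the ergodicity of the whole-plane GFF modulo additive constant under spatial translations, and conclude.
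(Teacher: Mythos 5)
Your argument is correct, but it takes a genuinely different route from the paper. The paper does not apply a zero--one law to $C_*$ directly: instead it localizes, constructing for each $z\in\BB Z^2$ an event $E(z)$ (existence of $u,v\in B_R(z)$ with $\wt D_h(u,v)/D_h(u,v)>C$ together with the conditions $D_h(u,v)\le D_h(u,\bdy B_R(z))$ and $\wt D_h(u,v)\le \wt D_h(u,\bdy B_R(z))$, which force the event to be determined by $h|_{B_R(z)}$ via locality, and modulo additive constant via Weyl scaling), and then observes that the event ``$E(z)$ occurs for infinitely many $z$'' lies in the tail $\sigma$-algebra at infinity of $h$ modulo additive constant, which is trivial; translation invariance of the metric and of the field law then yields $\BB P[C_*>C]=1$ whenever it is positive. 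You instead note that $c_*$ and $C_*$ are themselves measurable functionals of $h$ modulo additive constant (Weyl scaling) that are a.s.\ invariant under each fixed translation (Axiom~\ref{item-metric-translate}), and you conclude by ergodicity of the translation action on the whole-plane GFF modulo additive constant; this bypasses the paper's localization trick entirely, and a.s.\ invariance under the countable group $\BB Z^2$ of translations is indeed enough. The one input you should not wave at too casually is the ergodicity itself: saying it follows ``from the triviality of the translation-invariant $\sigma$-algebra'' is circular (that triviality \emph{is} ergodicity), and ``spectral gap'' is not the right phrase; the clean justification is that for mean-zero test functions $f,g$ the covariance $\operatorname{Cov}\big((h,f),(h,g(\cdot-z))\big)$ decays (like $O(1/|z|)$, since the $\log|z|^{-1}$ term pairs to zero against mean-zero functions), so the stationary Gaussian field $h$ modulo additive constant is mixing, hence ergodic, under translations — include this short computation or a citation. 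With that supplied, your proof is complete and arguably shorter; the paper's version has the advantage of relying only on tail triviality, the same standard fact it invokes elsewhere. Your intermediate musings about scaling invariance (which confuse equality in law with pointwise equality) are unnecessary for the final argument and should simply be dropped.
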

\begin{proof}
We will prove the statement for $C_*$; the statement for $c_*$ is proven in an identical manner.
Suppose $C > 0$ is such that $\BB P[C_*  >C] > 0$. We will show that in fact $\BB P[C_* >C] = 1$.
 
There is some large deterministic $R > 0$ such that with positive probability, there are points $u,v\in B_R(0)$ such that $\wt D_h(u,v) / D_h(u,v) > C$. Since each of $D_h$ and $\wt D_h$ induces the Euclidean topology on $\BB C$, after possibly increasing $R$, we can arrange that with positive probability, there are points $u,v\in B_R(0)$ such that 
\eqb \label{eqn-max-min-event}
\wt D_h(u,v) / D_h(u,v) > C , \quad D_h(u,v) \leq D_h(u,\bdy B_R(0)) ,\quad \text{and} \quad \wt D_h(u,v) \leq \wt D_h(u,\bdy B_R(0)) .
\eqe
The condition that $D_h(u,v) \leq D_h(u,\bdy B_R(0))$ is equivalent to the condition that $v$ is contained in the $D_h$-metric ball of radius $D_h(u,\bdy B_R(0))$ centered at $u$. By Axiom~\ref{item-metric-local} (locality), it follows that $h|_{B_R(0)}$ a.s.\ determines $ D_h(u,\bdy B_R(0))$ for every $u\in B_R(0)$ and hence also $h|_{B_R(0)}$ determines all of the $D_h$-metric balls of radius $D_h(u,\bdy B_R(0))$ centered at points of $B_R(0)$. 
Similar considerations hold with $\wt D_h$ in place of $D_h$. 
Therefore, the event that there exist $u,v\in B_R(0)$ such that~\eqref{eqn-max-min-event} holds is determined by $h|_{B_R(0)}$. 
In fact, by Axiom~\ref{item-metric-f} (Weyl scaling) this event is determined by $h|_{B_R(0)}$ \emph{viewed modulo additive constant}, since adding a constant to $h$ results in scaling $D_h$ and $\wt D_h$ by the same constant factor.

For $z\in\BB C$, let $E(z)$ be the event that there exist points $u,v\in B_R(z)$ such that~\eqref{eqn-max-min-event} holds with $B_R(z)$ in place of $B_R(0)$. 
Then $E(z)$ is determined by $h|_{B_R(z)}$, viewed modulo additive constant. 
By Axiom~\ref{item-metric-translate} (translation invariance) and the translation invariance of the law of $h$, modulo additive constant, the probability of $E (z)$ does not depend on $z$. 
The event that $E(z)$ occurs for infinitely many $z\in\BB Z^2$ is determined by the tail $\sigma$-algebra generated by $h|_{\BB C\setminus B_r(z)}$, viewed modulo additive constant, as $r\rta \infty$. This tail $\sigma$-algebra is trivial, so we get that a.s.\ $E(z)$ occurs for infinitely many $z\in\BB C$. This means that in fact $\BB P[C_* > C] = 1$, so $C_*$ is a.s.\ equal to a deterministic constant. 
\end{proof}

We henceforth re-define each of $c_*$ and $C_*$ on an event of probability zero so that they are deterministic.  The main goal of this section is to show that there are many values of $r > 0$ for which it holds with uniformly positive probability that there are points $\BB z, \BB w \in \BB C$ such that $|\BB z| , |\BB w|,$ and $|\BB z - \BB w|$ are all of order $r$ and $\wt D_h(\BB z  ,\BB w) /D_h(\BB z , \BB w)$ is close to $C_*$ (resp.\ $c_*$).
To quantify this, we introduce the following events. 
For $r > 0$, $C' \in (0,C_*]$, and $\beta \in (0,1)$, define 
\eqb \label{eqn-max-event}
\ol G_r(C',\beta) := \left\{ \text{$\exists \BB z , \BB w  \in B_{r }(0)$ s.t.\ $|\BB z  -  \BB w   | \geq \beta r$ and $\wt D_h(\BB z , \BB w  ) \geq C'  D_h(\BB z , \BB w  )$} \right\} .
\eqe 
For $c' \geq c_*$, we similarly define
\eqb \label{eqn-min-event}
\ul G_r(c',\beta) := \left\{ \text{$\exists \BB z , \BB w   \in B_{r }(0)$ s.t.\ $|\BB z -  \BB w  | \geq \beta r$ and $\wt D_h(\BB z , \BB w  ) \leq c'  D_h(\BB z , \BB w  )$} \right\} .
\eqe  
 
It is easy to see from the definition~\eqref{eqn-max-min-def} of $C_*$ that for each fixed $r > 0$ and $C' \in (0,C_*)$, there exists $p , \beta \in (0,1)$ (allowed to depend on $C'$ and $r$) such that $\BB P[\ol G_r(C',\beta)]\geq p$.\footnote{By the definition~\eqref{eqn-max-min-def} of $C_*$, there exists some $p , \beta\in (0,1)$ and $R > 0$ (allowed to depend on $r$) such that with probability at least $p$, there exists $\BB z,\BB w \in B_{Rr}(0)$ such that $|\BB z - \BB w| \geq \beta r$ and $\wt D_h(\BB z , \BB w  ) \geq C'  D_h(\BB z , \BB w  )$. We need to replace $B_{Rr}(0)$ by $B_r(0)$. 
By possibly replacing $\BB z$ and $\BB w$ by a pair of points along a $D_h$-geodesic from $\BB z$ to $\BB w$, we can arrange that in fact $|\BB z-\BB w| = \beta r$. 
We can cover $B_{Rr}(0)$ by at most a $\beta , R$-dependent constant number $N$ of Euclidean balls of the form $B_r(z)$ for $z\in B_{R r}(0)$ such that any two points $\BB z,\BB w \in B_{Rr}(0)$ with $|\BB z - \BB w| = \beta r$ are contained in one of these balls.  
By Weyl scaling (Axiom~\ref{item-metric-f}), the translation invariance of the law of $h$ modulo additive constant, and Axiom~\ref{item-metric-translate},
the probability that there exists $\BB z,\BB w\in B_r(z)$ with $|\BB z - \BB w| \geq \beta r$ and $\wt D_h(\BB z , \BB w  ) \geq C'  D_h(\BB z , \BB w  )$ does not depend on $z$.
By a union bound, it therefore follows that $\BB P[\ol G_r(C',\beta)] \geq p/N$. \label{footnote-G-prob}}
Since we are working with weak LQG metrics, which are not known to be exactly invariant under spatial scaling, it is \emph{not} clear a priori that $p$ and $\beta$ can be taken to be uniform in the choice of $r$. It is also not clear a priori that $p$ and $\beta$ can be chosen independently of $C'$.  
Similar considerations apply for $\ul G_r(c',\beta)$. 
We will establish that one can choose $p$ and $\beta$ independently of $C'$ and $r$ provided $r$ is restricted to lie in a suitably ``dense" subset of $(0,1)$, in the following sense.

\begin{prop} \label{prop-attained-max}
For each $0 < \mu < \nu < 1$, there exists $\ol\beta = \ol\beta(\mu,\nu) \in (0,1)$ and $\ol p = \ol p(\mu,\nu) \in (0,1)$ such that for each $C' \in (0,C_*)$ and each sufficiently small $\ep > 0$ (depending on $C'$), there are at least $\mu\log_8\ep^{-1}$ values of $r \in [\ep^{1+\nu}  ,\ep   ] \cap \{8^{-k} : k\in\BB N\}$ for which $\BB P[\ol G_r(C' , \ol\beta )] \geq \ol p$.   
\end{prop}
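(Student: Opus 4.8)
The plan is to argue by contradiction, following the strategy sketched in Section~\ref{sec-outline}. Fix $0<\mu<\nu<1$ and suppose the conclusion fails for the choice $\ol\beta=1/4$ and some small $\ol p\in(0,1)$, to be pinned down below in terms of $\mu$ and $\nu$. Then there are $C'\in(0,C_*)$ and a sequence $\ep_n\downarrow 0$ such that for each $n$, fewer than $\mu\log_8\ep_n^{-1}$ of the scales $r\in[\ep_n^{1+\nu},\ep_n]\cap\{8^{-k}\}$ satisfy $\BB P[\ol G_r(C',1/4)]\geq\ol p$. Since $[\ep_n^{1+\nu},\ep_n]$ contains at least $\nu\log_8\ep_n^{-1}-1$ powers of $8$, there are at least $\tfrac12(\nu-\mu)\log_8\ep_n^{-1}$ \emph{bad} scales $r$, i.e.\ scales with $\BB P[\ol G_r(C',1/4)]<\ol p$; at such a scale, with probability $\geq 1-\ol p$ every pair $u,v\in B_r(0)$ with $|u-v|\geq r/4$ has $\wt D_h(u,v)<C'D_h(u,v)$, and by Axiom~\ref{item-metric-translate} together with translation invariance of the law of $h$ modulo additive constant the same holds with $B_r(z)$ in place of $B_r(0)$ for any $z\in\BB C$. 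From the abundance of such bad scales I will deduce a contradiction with the definition~\eqref{eqn-max-min-def} of $C_*$, which by Lemma~\ref{lem-max-min-const} is a.s.\ a deterministic constant.

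The crux is to replace this comparison of the \emph{ambient} metrics at a bad scale by a \emph{local} event that can be iterated using Lemma~\ref{lem-annulus-iterate}. For a bad scale $r$ and a point $z$ I would introduce an event $E_r(z)$, measurable with respect to $h|_{\BB A_{r/16,19r/20}(z)}$ viewed modulo additive constant — and hence, after translating via Axiom~\ref{item-metric-translate}, of the form required by Lemma~\ref{lem-annulus-iterate} — which says, schematically, that the $\wt D_h$-internal distance on $\BB A_{r/16,19r/20}(z)$ between any point at Euclidean radius $\approx r/8$ from $z$ and any point at Euclidean radius $\approx r/2$ from $z$ is at most $C'$ times the $D_h$-internal distance on $\BB A_{r/16,19r/20}(z)$ between $\bdy B_{r/6}(z)$ and $\bdy B_{r/3}(z)$. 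The a priori estimates of Section~\ref{sec-a-priori-estimates} — the H\"older bounds of Lemma~\ref{lem-holder-uniform} and the lower bound for distances across a narrow annulus of Lemma~\ref{lem-attained-long}, together with Axiom~\ref{item-metric-coord} — should show that, with probability $1-o_\ep(1)$ uniformly in $z$ and in the Euclidean scale, $D_h$- and $\wt D_h$-geodesics between the relevant pairs of points are too short to exit $\BB A_{r/16,19r/20}(z)$, so that these internal distances coincide with the ambient ones; on this event the bad-scale comparison (applicable since the relevant points are at Euclidean distance $\geq r/4$ apart) forces $E_r(z)$, whence $\BB P[E_r(z)]\geq 1-\ol p-o_\ep(1)$. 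Conversely, when $E_r(z)$ occurs, a $D_h$-geodesic that passes through the inner shell $\ol{\BB A}_{r/16,r/8}(z)$ and subsequently exits $B_{r/2}(z)$ has, between appropriate crossing times, $\wt D_h$-distance between its endpoints at most $C'$ times its $D_h$-length. I expect this conversion — making the internal/ambient comparisons precise and keeping the various radii consistent between the definition of $E_r(z)$, its verification, and its later use along a geodesic — to be the principal technical obstacle, and it is exactly the place where the narrow-annulus and geodesic-regularity estimates of Section~\ref{sec-a-priori-estimates} are needed.

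Granting this, I fix a large compact set $K_0\subset\BB C$ which contains every $D_h$-geodesic between two points of $B_R(0)$ for a given $R$ (such a $K_0$ exists with high probability, and this can be folded into the regularity events). At stage $n$ I thin the bad scales to a sub-collection with consecutive ratios $\leq 1/16$, which costs only a bounded factor and so leaves a sub-collection of size $N_n$ with $N_n\geq c(\nu-\mu)\log_8\ep_n^{-1}$ for a universal $c>0$; I then apply Lemma~\ref{lem-annulus-iterate}, translated from the origin to $z$ via Axiom~\ref{item-metric-translate}, to the events $E_r(z)$ for $z$ ranging over a grid of mesh $\ll\ep_n^{1+\nu}$ covering $K_0$. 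Choosing $\ol p$ small enough that $\BB P[E_r(z)]$ exceeds the probability threshold of Lemma~\ref{lem-annulus-iterate} for a rate $a$ large enough that $c'e^{-aN_n}$ is smaller than the reciprocal of the (polynomially large in $\ep_n^{-1}$) number of grid points — this is how $\ol p$ is ultimately fixed in terms of $\mu$ and $\nu$ — a union bound shows that, with probability tending to $1$ as $n\to\infty$, every grid point $z$ is the center of a good annulus at some bad scale $r(z)\in[\ep_n^{1+\nu},\ep_n]$; for a sufficiently fine grid this yields, for every point of $K_0$, a good annulus whose inner shell contains it. Passing to a subsequence along which this event, together with the relevant instances of the estimates of Section~\ref{sec-a-priori-estimates}, holds almost surely for all large $n$, I fix a generic realization.

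Finally, given distinct $\BB z,\BB w\in B_R(0)$, let $P$ be the $D_h$-geodesic from $\BB z$ to $\BB w$. Traversing $P$ from $\BB z$, I greedily pick times $0=t_0<t_1<\cdots<t_m$ so that each $P|_{[t_i,t_{i+1}]}$ is a crossing of a good annulus of the kind above — using that each $P(t_i)$ lies in $K_0$ and hence in the inner shell of such an annulus, and that, while $P(t_i)$ is at Euclidean distance $\geq 2\ep_n$ from $\BB w$, $P$ does exit the outer ball of that annulus after time $t_i$ — so that $\wt D_h(P(t_i),P(t_{i+1}))\leq C'(t_{i+1}-t_i)$ for each step; there are only finitely many steps since $t_{i+1}-t_i=D_h(P(t_i),P(t_{i+1}))$ is bounded below by a positive constant times $\ep_n^{(1+\nu)\chi'}$ by the H\"older lower bound~\eqref{eqn-holder-lower}. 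The leftover segment $P|_{[t_m,D_h(\BB z,\BB w)]}$ joins two points at Euclidean distance $<2\ep_n$, so its $\wt D_h$-length is $\preceq\ep_n^\chi$ by the H\"older upper bound~\eqref{eqn-holder-upper} applied to $\wt D_h$, with an implicit constant uniform over $K_0$. Summing via the triangle inequality, $\wt D_h(\BB z,\BB w)\leq C'\sum_i(t_{i+1}-t_i)+O(\ep_n^\chi)\leq C'D_h(\BB z,\BB w)+O(\ep_n^\chi)$; as the error is a fixed constant times $\ep_n^\chi\to 0$ while $D_h(\BB z,\BB w)>0$ is fixed, letting $n\to\infty$ along the subsequence gives $\wt D_h(\BB z,\BB w)\leq C'D_h(\BB z,\BB w)$. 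This holds simultaneously for all distinct $\BB z,\BB w\in B_R(0)$, hence for all distinct $\BB z,\BB w\in\BB C$ upon letting $R\to\infty$ along $\BB N$, so $C_*\leq C'<C_*$ — a contradiction, which proves the proposition.
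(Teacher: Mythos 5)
There is a genuine gap, and it sits exactly at the step you flag as ``the principal technical obstacle'': the verification that your local event $E_r(z)$ has probability at least $1-\ol p-o_\ep(1)$ at a bad scale. Your event demands that for \emph{every} $u$ at radius $\approx r/8$ and $v$ at radius $\approx r/2$ from $z$ one has $\wt D_h(u,v)\leq C'\,D_h\bigl(\bdy B_{r/6}(z),\bdy B_{r/3}(z)\bigr)$ (internal versions thereof). But the failure of $\ol G_r(C',1/4)$ only gives $\wt D_h(u,v)\leq C' D_h(u,v)$, and since any path from $u$ to $v$ must cross $\BB A_{r/6,r/3}(z)$ one has $D_h(u,v)\geq D_h(\bdy B_{r/6}(z),\bdy B_{r/3}(z))$ — the inequality goes the wrong way, and the ratio $D_h(u,v)/D_h(\bdy B_{r/6}(z),\bdy B_{r/3}(z))$ is a random quantity with no uniform control near $1$. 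So the bad-scale hypothesis does not force $E_r(z)$, and the crucial lower bound on $\BB P[E_r(z)]$ needed for Lemma~\ref{lem-annulus-iterate} is unsupported. This is not a removable technicality within your scheme: you built the comparison against a sub-annulus crossing distance precisely so that consecutive greedy intervals $[t_i,t_{i+1}]$ carry the bound $\wt D_h(P(t_i),P(t_{i+1}))\leq C'(t_{i+1}-t_i)$ with no wasted time, which is what lets you sum to $C' D_h(\BB z,\BB w)$ and contradict the definition~\eqref{eqn-max-min-def} of $C_*$ directly. If you weaken the event to the pairwise comparison (so that it does follow from the hypothesis), you either lose the local measurability required by Lemma~\ref{lem-annulus-iterate} or you must restrict, as the paper does, to pairs whose $D_h$-geodesic stays inside the annulus — and then the good comparisons only cover the sub-intervals $[s_j,t_j]$ between the last entrance to the inner circle and the exit from the outer one, leaving gaps $[t_{j-1},s_j]$ controlled only by $C_*$.

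This is exactly why the paper's argument (Propositions~\ref{prop-attained-good} and~\ref{prop-attained-bad}) carries \emph{two} constants $C'<C''$: the event $\mathsf E_r(z)$ there includes an extra condition giving a path around $\BB A_{\alpha r,r}(z)$ of $D_h$-length at most $A\,D_h(\bdy B_{\alpha r}(z),\bdy B_r(z))$, which yields $s_j-t_{j-1}\leq A(t_j-s_j)$ and hence that the covered intervals occupy a definite fraction of the geodesic; the conclusion is then only $\wt D_h(\BB z,\BB w)\leq \bigl(C'+\tfrac{A}{A+1}(C_*-C')\bigr)D_h(\BB z,\BB w)$, and the contradiction is with $\BB P[\ol G_{\BB r}(C'',\beta)]\geq\beta$ for a suitable $C''\in(C'+\tfrac{A}{A+1}(C_*-C'),C_*)$, not with $C_*\leq C'$. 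Your plan to push the constant $C'$ through unchanged therefore cannot work as designed; the constant necessarily degrades, and the proposition is recovered only because $C'$ is arbitrary in $(0,C_*)$ so one may run the argument with a larger auxiliary constant. (A smaller, fixable point: Lemma~\ref{lem-annulus-iterate} gives one good scale per center, so requiring the covering point to lie in a thin shell at radius $\approx r/8$ is too rigid; one should ask only that it lie in a fat region such as $B_{r/2}(z)$, as in the paper's Lemma~\ref{lem-shorter-annulus-all}.)
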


\begin{prop} \label{prop-attained-min}
For each $0 < \mu < \nu < 1$, there exists $\ul\beta = \ul\beta(\mu,\nu) \in (1/2,1)$ and $\ul p = \ul p(\mu,\nu) \in (0,1)$ such that for each $c'  > c_*$ and each sufficiently small $\ep > 0$ (depending on $c'$), there are at least $\mu\log_8\ep^{-1}$ values of $r \in [\ep^{1+\nu}  ,\ep   ] \cap \{8^{-k} : k\in\BB N\}$ for which $\BB P[\ul G_r(c' , \ul\beta )] \geq \ul p$.   
\end{prop}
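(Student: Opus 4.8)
My plan is to argue by contradiction, following the sketch in the outline of Section~\ref{sec-attained}. Suppose the statement fails for some $0<\mu<\nu<1$. Then for every choice of $\ul\beta,\ul p\in(1/2,1)$ we can find $c'>c_*$ and arbitrarily small $\ep>0$ such that, among the scales $r\in[\ep^{1+\nu},\ep]\cap\{8^{-k}\}$ (of which there are roughly $\nu\log_8\ep^{-1}$), fewer than $\mu\log_8\ep^{-1}$ of them satisfy $\BB P[\ul G_r(c',\ul\beta)]\geq\ul p$. Hence at least $(\nu-\mu)\log_8\ep^{-1}$ of these scales are \emph{bad}, meaning $\BB P[\ul G_r(c',\ul\beta)]<\ul p$, i.e.\ with probability $>1-\ul p$ every pair $\BB z,\BB w\in B_r(0)$ with $|\BB z-\BB w|\geq\ul\beta r$ has $\wt D_h(\BB z,\BB w)>c'D_h(\BB z,\BB w)$. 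The first step is to restate this ``bad scale'' event in terms of an annulus-crossing event: if $\ul\beta$ is close to $1$, then any pair $u\in\bdy B_{\ul\beta r}(0)$, $v\in\bdy B_r(0)$ automatically has $|u-v|$ comparable to $r$, so the bad-scale event implies (with probability $>1-\ul p$) that $\wt D_h(u,v)>c'D_h(u,v)$ for all such $u,v$, and moreover using Axiom~\ref{item-metric-translate} we can recentre this event at any $z\in\BB C$ and it has the same probability and lies in $\sigma((h-h_{r}(z))|_{\BB A_{\ul\beta r, r}(z)})$ modulo additive constant (Weyl scaling, Axiom~\ref{item-metric-f}, makes the ratio of metrics insensitive to the additive constant).

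The second step is to iterate across the many bad scales using Lemma~\ref{lem-annulus-iterate}. Since we have at least $(\nu-\mu)\log_8\ep^{-1}$ bad dyadic scales $r_1>r_2>\cdots$ in $[\ep^{1+\nu},\ep]$ with ratios bounded (consecutive bad scales among the $8^{-k}$ differ by at most a bounded power of $8$ — actually the ratio could be large, which I will address below), and for each the recentred event has probability $>1-\ul p$, taking $\ul p$ small enough (depending only on the iteration constants of Lemma~\ref{lem-annulus-iterate}, hence only on $\ul\beta$ and $\gamma$, \emph{not} on $c'$ or $\ep$) we conclude that with probability tending to $1$, for a fixed compact set $K$ we can cover $K$ by Euclidean annuli $\BB A_{\ul\beta r,r}(z)$, one for each scale-$r$ grid point $z$, across all of which $\wt D_h(u,v)>c'D_h(u,v)$ holds for $u\in\bdy B_{\ul\beta r}(z)$, $v\in\bdy B_r(z)$. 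The geometric point is that, because $\ul\beta<1$ is close to $1$, these annuli are thin and any continuous path between two far-apart points of $\BB C$ must cross one of them in an essential way: a $\wt D_h$-geodesic $P$ from $\BB z$ to $\BB w$ crossing the thin annulus $\BB A_{\ul\beta r,r}(z)$ picks up $\wt D_h$-length at least $c'$ times the $D_h$-distance across, and summing over a chain of disjoint such annuli covering a positive-Euclidean-length portion of $P$ gives $\wt D_h(\BB z,\BB w)\geq c'' D_h(\BB z,\BB w)$ for a constant $c''\in(c_*,c')$ depending only on $\ul\beta$ — here I need the lower bound on $D$-distance across a thin annulus, i.e.\ that a $\wt D_h$-geodesic cannot avoid crossing such annuli, which is where Lemma~\ref{lem-attained-long} (distances in a narrow annulus are large) enters to guarantee the crossing segments are macroscopic and non-degenerate. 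This contradicts the definition~\eqref{eqn-max-min-def} of $c_*$ as the infimal ratio.

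The main obstacle I anticipate is handling the fact that the bad dyadic scales in $[\ep^{1+\nu},\ep]$ need not be consecutive: Lemma~\ref{lem-annulus-iterate} requires a \emph{decreasing sequence} $r_{k+1}/r_k\leq s_1$, and if bad scales are sparse then consecutive ones could have a huge ratio, breaking the ``nested annuli'' structure. The resolution is that there are at least $(\nu-\mu)\log_8\ep^{-1}$ bad scales among only $\nu\log_8\ep^{-1}+O(1)$ total scales, so by pigeonhole a positive fraction $(\nu-\mu)/\nu$ of consecutive pairs of scales $(8^{-k},8^{-(k+1)})$ are both bad — more carefully, one can extract a sub-run of consecutively-spaced bad scales of length $\gtrsim\log_8\ep^{-1}$, or alternatively apply Lemma~\ref{lem-annulus-iterate} with $r_k=8^{-k}$ ranging over \emph{all} scales in the window and define $E_{r_k}$ to be the (recentred, modulo-additive-constant) crossing event, which by convention automatically ``holds'' at the good scales with probability at least $0$; then the count $N(K)$ of scales at which $E_{r_k}$ holds is at least the number of bad scales, and we need $N(K)$ to have a definite positive density, which Lemma~\ref{lem-annulus-iterate} delivers provided $\ul p$ is small. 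A second, more routine obstacle is passing from ``crossing events hold across a cover of $K$'' to the global inequality $\inf\wt D_h/D_h\geq c''$: this requires choosing the geodesic's endpoints $\BB z,\BB w$ inside $K$ first, then taking $\ep$ small (so the annuli are fine enough that the geodesic, which has no small-scale shortcuts by Lemma~\ref{lem-holder-uniform}, must traverse a macroscopic Euclidean distance and hence cross $\gtrsim 1/\log(1/\ul\beta)$ of the disjoint thin annuli), and finally using that $c_*$ and $c''$ are deterministic (Lemma~\ref{lem-max-min-const}) to conclude. Since $\ul\beta$ and $\ul p$ are chosen depending only on $\mu,\nu,\gamma$ and the contradiction is reached for \emph{every} $c'>c_*$, the proposition follows. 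The argument for Proposition~\ref{prop-attained-max} is identical after interchanging the roles of $D_h$ and $\wt D_h$ (replacing $c_*$ by $C_*$ and reversing inequalities); note that in that case $\ol\beta$ need not be taken close to $1$, so one does not get the restriction $\ol\beta\in(1/2,1)$.
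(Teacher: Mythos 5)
Your overall route---negate the statement, iterate the resulting high-probability ``no shortcut'' events across the many bad scales via Lemma~\ref{lem-annulus-iterate}, and then follow a geodesic through the resulting annuli---is the heuristic from the outline, and it is indeed the skeleton of the paper's argument (which proves the max version via Proposition~\ref{prop-attained-bad} and obtains the min version by interchanging $D_h$ and $\wt D_h$). But two steps, as you have written them, fail. First, your annulus event is not local, so Lemma~\ref{lem-annulus-iterate} does not apply to it: the statement ``$\wt D_h(u,v)>c'D_h(u,v)$ for all $u\in\bdy B_{\ul\beta r}(z)$, $v\in\bdy B_r(z)$'' involves point-to-point distances, which are \emph{not} determined by $h|_{\BB A_{\ul\beta r,r}(z)}$ (geodesics and near-optimal paths may leave the annulus); Weyl scaling only removes the additive constant, not this non-locality. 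The paper's event $\mathsf E_r(z)$ is engineered precisely to fix this: it constrains only pairs whose geodesic is unique and contained in $\ol{\BB A_{\alpha r,r}(z)}$, and it adds a condition (verified using Lemma~\ref{lem-attained-long}) guaranteeing that such geodesics and the relevant distances can be read off from $h|_{\BB A_{r/2,2r}(z)}$ (Lemma~\ref{lem-attained-msrble}). Relatedly, your geometric reduction is backwards: for $u\in\bdy B_{\ul\beta r}(z)$ and $v\in\bdy B_r(z)$ one only has $|u-v|\geq(1-\ul\beta)r$, which is tiny when $\ul\beta$ is close to $1$, so the bad-scale event (which controls only pairs with $|u-v|\geq\ul\beta r$) says nothing about crossings of your thin annulus; the correct geometry uses an annulus whose inner radius is a small fraction of $r$, so that crossing pairs are automatically $\ul\beta r$-separated. (Your ``main obstacle'' about sparse bad scales, by contrast, is a non-issue: Lemma~\ref{lem-annulus-iterate} only requires the \emph{upper} bound $r_{k+1}/r_k\leq s_1$, which holds automatically for any subsequence of $\{8^{-k}\}$; large gaps are harmless, and one simply applies the lemma to the subsequence of bad scales.)

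Second, the concluding step is missing its key mechanism and aims at the wrong target. Summing crossings ``covering a positive Euclidean fraction of $P$'' does not yield a metric inequality: the crossing increments could carry a negligible fraction of the total metric length, since Euclidean proportions say nothing about $D_h$- or $\wt D_h$-proportions. The paper extracts the needed positive fraction from condition~3 of $\mathsf E_r(z)$, the existence of a path around the annulus of $D_h$-length at most $A$ times the crossing distance: the geodesic must hit this path both before and after each crossing, so the time between consecutive crossings is at most $A$ times the crossing time, which is exactly what produces the constant $C''$ (resp.\ $c''$) strictly between $C'$ and $C_*$. Nothing in your sketch plays this role; Lemma~\ref{lem-attained-long} does not (in Section~\ref{sec-attained} it is used only to verify the locality condition). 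Finally, proving $\wt D_h(\BB z,\BB w)\geq c''D_h(\BB z,\BB w)$ for fixed macroscopic $\BB z,\BB w$ in a compact set does not contradict the definition~\eqref{eqn-max-min-def} of $c_*$: the infimum there is over \emph{all} pairs, and near-optimal pairs may live at scales far below your window $[\ep^{1+\nu},\ep]$. The paper never contradicts the pointwise definition; its contrapositive hypothesis is the base-scale statement $\BB P[\ul G_{\BB r}(c'',\beta)]\geq\beta$ (supplied by the definition of $c_*$ only at $\BB r=1$, via the covering argument in the footnote), and the annulus-iteration argument concludes that with probability $>1-\beta$ \emph{all} pairs in $B_{\BB r}(0)$ at separation $\geq\beta\BB r$---the same scale and separation as in the hypothesis---satisfy the reverse inequality, contradicting that hypothesis directly. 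To salvage your version you would need either this scale-matched formulation or an additional Borel--Cantelli argument over a sequence $\ep_j\rta 0$ with a uniform $c''$; neither appears in your sketch.
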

 
We emphasize that the parameters $\ol\beta,\ol p$ in Proposition~\ref{prop-attained-max} (resp.\ the parameters $\ul\beta,\ul p$ in Proposition~\ref{prop-attained-min}) \emph{do not} depend on $C'$ (resp.\ $c'$). The only thing which depends on $C'$ (resp.\ $c'$) is how small $\ep$ has to be in order for the conclusion of the proposition statement to hold.  

\subsection{Quantitative versions of Propositions~\ref{prop-attained-max} and~\ref{prop-attained-min}}
\label{sec-attained-statement}

We will need more quantitative versions of Propositions~\ref{prop-attained-max} and~\ref{prop-attained-min} which differ from the original proposition statements in two important ways. First, instead of starting at a constant-order scale, we will start at some given scale $\BB r > 0$ for which we have an a priori lower bound on $\BB P[\ol G_{\BB r}(C'',\beta)]$ for some $C'' \in (0,C_*)$ and $\beta\in (0,1)$ (or $\BB P[\ul G_{\BB r}(c'',\beta)]$ for some $c'' > c_*$ and $\beta\in (0,1)$).
We will then produce many radii in $[\ep^{1+\nu}\BB r, \ep\BB r]$ instead of in $[\ep^{1+\nu} , \ep]$. 
The reason for introducing $\BB r$ is that we only have tightness across scales (Axiom~\ref{item-metric-coord}) instead of true scale invariance.
Second, instead of just lower bounding the probability of $\ol G_r(C' ,  \beta )$ or $\ul G_r(c',\beta)$, we will obtain a lower bound for the probability of a smaller event which is more complicated, but also more useful. 
Let us begin by stating a more quantitative version of Proposition~\ref{prop-attained-max}.

\begin{prop} \label{prop-attained-good}
For each $0 < \mu < \nu < 1$, there exists $\alpha_* = \alpha_*(\mu,\nu) \in (1/2,1)$ and $p = p(\mu,\nu) \in (0,1)$ such that for each $\alpha \in [\alpha_*,1)$ and each $C' \in (0,C_*)$, there exists $C''  = C''(\alpha,C',\mu,\nu ) \in (C' , C_*)$ such that for each $\beta \in (0,1)$, there exists $\ep_0 = \ep_0(\beta,\alpha,C',\mu,\nu ) > 0$ such that the following holds for each $\BB r > 0$ for which $\BB P[\ol G_{\BB r}(C'',\beta)] \geq \beta$ and each $\ep \in (0,\ep_0]$. 
\begin{enumerate}[(A)] 
\item There are at least $\mu\log_8\ep^{-1}$ values of $r \in [\ep^{1+\nu} \BB r ,\ep \BB r ] \cap \{8^{-k} \BB r : k\in\BB N\}$ for which the following holds with probability at least $p$. 
There exists $u \in \bdy B_{\alpha r}(0)$ and $v \in \bdy B_r(0)$ such that 
\eqb \label{eqn-attained-good}
\wt D_h(u,v) \geq C' D_h(u,v)  
\eqe
and the $D_h$-geodesic from $u$ to $v$ is unique and is contained in $\ol{\BB A_{\alpha r , r}(0)}$.  \label{item-attained-good} 
\end{enumerate}
\end{prop}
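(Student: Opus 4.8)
The plan is to argue by contradiction, in the spirit of the proof of the bi-Lipschitz criterion Theorem~\ref{thm-bilip}. Suppose that~\ref{item-attained-good} fails for some $\BB r>0$ with $\BB P[\ol G_{\BB r}(C'',\beta)]\geq\beta$: then among the $\asymp \nu\log_8\ep^{-1}$ scales $r\in[\ep^{1+\nu}\BB r,\ep\BB r]\cap\{8^{-k}\BB r\}$, fewer than $\mu\log_8\ep^{-1}$ have the property that the event in~\eqref{eqn-attained-good}, together with uniqueness and confinement of the $D_h$-geodesic, occurs with probability at least $p$. Since $\mu<\nu$, at least $(\nu-\mu)\log_8\ep^{-1}$ of these scales are ``bad'', and by pigeonhole I would extract a subsequence $r_1>\dots>r_K$ of bad scales with $K\asymp\log_8\ep^{-1}$ and $r_{j+1}/r_j\leq 8^{-m}$, where $m=m(\mu,\nu)$.

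I would first fix $\alpha_*=\alpha_*(\mu,\nu)\in(1/2,1)$ large enough that Lemma~\ref{lem-attained-long} applies on annuli $\BB A_{\alpha r,r}(z)$ for all $\alpha\in[\alpha_*,1)$, and so that all Euclidean annuli in the argument lie inside a single annulus $\BB A_{\alpha_*/2,1}(z)$; the constant $p=p(\mu,\nu)$ is fixed just below. For $z\in\BB C$ and $r>0$ I would then build an auxiliary event $\wh A_r(z)$, measurable with respect to $(h-h_r(z))|_{\BB A_{\alpha_*/2,1}(z)}$ (after a harmless rescaling of radii so the relevant annulus lies strictly inside $B_r(z)$), with the properties: (i) $\wh A_r(z)$ implies the event in~\eqref{eqn-attained-good} together with uniqueness and confinement of the $D_h$-geodesic; (ii) $\wh A_r(z)^c$, intersected with a high-probability regularity event assembled from Lemmas~\ref{lem-holder-uniform}, \ref{lem-square-diam} and~\ref{lem-attained-long}, forces $\wt D_h(u,v;\BB A_{\alpha r/2,2r}(z))\leq C'\,D_h(\bdy B_{\alpha r}(z),\bdy B_r(z))$ for all $u\in\bdy B_{\alpha r}(z)$, $v\in\bdy B_r(z)$; and (iii) by Axiom~\ref{item-metric-translate} and translation invariance of the law of $h$ modulo additive constant, $\BB P[\wh A_r(z)]$ does not depend on $z$, and it is at most $p$ on each bad scale $r_j$. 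Property (i) is precisely what lets us deduce the conclusion of the proposition from a lower bound on $\BB P[\wh A_r(0)]$, so it suffices to derive a contradiction from $\BB P[\wh A_{r_j}(0)]<p$ for all $j$.

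Applying Lemma~\ref{lem-annulus-iterate} to the events $\wh A_{r_j}(0)^c$ (of probability exceeding $1-p$), with $b=\tfrac12$ and $p=p(\mu,\nu)$ small enough that the lemma applies, gives that with probability $1-o_\ep^\infty(\ep)$ at least $K/2$ of the $\wh A_{r_j}(0)^c$ occur; combining this with Lemma~\ref{lem-spatial-ind}, a union bound over a fine grid $\mcl Z\subset B_{\BB r}(0)$ of polynomially-in-$\ep^{-1}$ many points (spacing $\ep^{q}\BB r$ with $q=q(\mu,\nu)$ large), and translation invariance, I would obtain an event $\mcl F_\ep$ with $\BB P[\mcl F_\ep]\to 1$ on which, simultaneously for every $z\in\mcl Z$, at least $K/2$ of the $r_j$ satisfy $\wh A_{r_j}(z)^c$. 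On $\mcl F_\ep$, intersected with the regularity events of Lemmas~\ref{lem-holder-uniform} and~\ref{lem-square-diam}, I claim that $\wt D_h(\BB z,\BB w)<C''D_h(\BB z,\BB w)$ for every fixed pair $\BB z,\BB w\in B_{\BB r}(0)$ with $|\BB z-\BB w|\geq\beta\BB r$. Granting the claim, a union bound over a countable dense set of such pairs and the bi-H\"older continuity of $D_h,\wt D_h$ (Lemma~\ref{lem-holder-uniform}) bound $\BB P[\ol G_{\BB r}(C'',\beta)]$ by a quantity tending to $0$ as $\ep\to 0$, which is $<\beta$ for $\ep\leq\ep_0(\beta,\alpha,C',\mu,\nu)$, contradicting our assumption. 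To prove the claim I would take the $D_h$-geodesic $P$ from $\BB z$ to $\BB w$ and, starting at $\BB z$, greedily exhaust it by a chain of crossings $P|_{[v_{i-1},v_i]}$ of annuli $\BB A_{\alpha r_{j_i},r_{j_i}}(z_i)$ with $z_i\in\mcl Z$ and $r_{j_i}$ a bad scale at $z_i$, the crossings being consecutive along $P$ up to interstitial gaps of Euclidean size at most the grid spacing $\ep^q\BB r$ and one final gap of size $\lesssim\ep^{1+\nu}\BB r$ near $\BB w$. Since $P$ is a $D_h$-geodesic, the $D_h$-length of $P|_{[v_{i-1},v_i]}$ is at least $D_h(\bdy B_{\alpha r_{j_i}}(z_i),\bdy B_{r_{j_i}}(z_i))$, so property (ii) gives $\wt D_h(v_{i-1},v_i)\leq C'\cdot(D_h\text{-length of }P|_{[v_{i-1},v_i]})$; summing, using the triangle inequality for $\wt D_h$, and bounding the gaps via Lemma~\ref{lem-holder-uniform} and Proposition~\ref{prop-lqg-metric-bilip} (their total $D_h$-length, hence $\wt D_h$-length, being $o_\ep(1)\cdot D_h(\BB z,\BB w)$ once $q$ is large), yields $\wt D_h(\BB z,\BB w)\leq(C'+o_\ep(1))D_h(\BB z,\BB w)<C''D_h(\BB z,\BB w)$ for $\ep$ small, which also fixes $C''=C''(\alpha,C',\mu,\nu)$.

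The heart of the argument, and the main obstacle, is the greedy exhaustion of $P$: one must show that on $\mcl F_\ep$, from any current point one can always choose a grid point $z_i$ and a bad-at-$z_i$ scale $r_{j_i}$ with the previous exit point essentially on $\bdy B_{\alpha r_{j_i}}(z_i)$ and with $P$ continuing outward across that annulus, so the crossings genuinely chain up with only negligible gaps; this is where having bad events at a whole range of scales (rather than one scale) is essential, and it is what keeps the accumulated error $o_\ep(1)$ instead of divergent. A secondary technical point is the precise construction of $\wh A_r(z)$: it must at once be annulus-measurable (so Lemma~\ref{lem-annulus-iterate} applies), have its complement imply the internal-metric inequality of property (ii) after intersecting with an explicit regularity event, and have its occurrence imply the global statement~\eqref{eqn-attained-good} with the uniqueness and confinement clauses, the latter relying on Lemma~\ref{lem-attained-long} and the a.s.\ uniqueness of $D_h$-geodesics between fixed points.
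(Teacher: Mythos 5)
Your overall strategy is the same as the paper's: prove the contrapositive (the paper's Proposition~\ref{prop-attained-bad}), package the failure of the per-scale event into an annulus-measurable event, use Lemma~\ref{lem-annulus-iterate} plus a union bound over a fine grid to cover space by "good" annuli, and then chain crossings of these annuli along a $D_h$-geodesic $P$ from $\BB z$ to $\BB w$ to bound $\wt D_h(\BB z,\BB w)/D_h(\BB z,\BB w)$ away from $C_*$. However, the decisive step of your chaining argument has a genuine gap. You claim that the good-annulus crossings exhaust $P$ up to gaps of total $D_h$-length $o_\ep(1)\cdot D_h(\BB z,\BB w)$, and hence that $\wt D_h(\BB z,\BB w)\leq (C'+o_\ep(1))D_h(\BB z,\BB w)$. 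This is not justified, and is in fact false in the typical scenario. After $P$ exits one good ball at a point $v_{i-1}$, the best you can say is that $v_{i-1}$ lies in $B_{r/2}(w)$ for some grid point $w$ at some scale $r$ where the good event holds; between $v_{i-1}$ and the \emph{last} hitting time of $\bdy B_{\alpha r}(w)$ before the final outward crossing, the geodesic can dive back into $B_{\alpha r}(w)$ and spend a $D_h$-time comparable to the internal diameter of $B_{\alpha r}(w)$ there. Since $\alpha$ is close to $1$, the thin-annulus crossing time $D_h(\bdy B_{\alpha r}(w),\bdy B_r(w))$ is typically much \emph{smaller} than this wandering time, so the "gap" segments are not negligible — they can dominate. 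Nothing in your construction (you cannot choose the scale freely at a given grid point, only among the random subset of scales where the event holds, and you have no control on the geodesic's behavior after it reaches the inner circle) rules this out; you name this as "the main obstacle" but supply no mechanism for it.

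The paper's fix is precisely the ingredient your auxiliary event lacks: condition~\ref{item-attained-around} in the definition of $\mathsf E_r(z)$ demands a path in $\BB A_{\alpha r , r}(z)$ disconnecting its two boundary circles with $D_h$-length at most $A\,D_h(\bdy B_{\alpha r}(z),\bdy B_r(z))$. Because $P$ must hit this loop before time $t_{j-1}$ and again after time $s_j$, and $P$ is a geodesic, one gets $s_j-t_{j-1}\leq A(t_j-s_j)$ (see~\eqref{eqn-increment-compare}), i.e.\ the good crossings occupy at least a $\frac{1}{A+1}$ fraction of $P$. The gaps are then estimated with the trivial bound $\wt D_h\leq C_* D_h$, giving $\wt D_h(\BB z,\BB w)\leq\bigl(C'+\frac{A}{A+1}(C_*-C')\bigr)D_h(\BB z,\BB w)$; this is why $C''$ must be taken in $\bigl(C'+\frac{A}{A+1}(C_*-C'),\,C_*\bigr)$ (so close to $C_*$ when $\alpha$ is close to $1$), rather than of the form $C'+o_\ep(1)$ as you assert. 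A secondary problem is your property (ii): the failure of the per-scale event only constrains pairs $u\in\bdy B_{\alpha r}(z)$, $v\in\bdy B_r(z)$ whose $D_h$-geodesic is unique and contained in $\ol{\BB A_{\alpha r,r}(z)}$, so it does not yield your stated bound on $\wt D_h(u,v;\BB A_{\alpha r/2,2r}(z))$ for \emph{all} boundary pairs with the constant $C'$; the paper sidesteps this by applying the comparison only to the pairs $(P(s_j),P(t_j))$, whose connecting segment is automatically the unique confined geodesic, and adds condition~\ref{item-attained-long} to the event solely to make it measurable with respect to $h|_{\BB A_{r/2,2r}(z)}$.
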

 
The event described in~\eqref{item-attained-good} is contained in $\ol G_r(C' , 1-\alpha)$, so if~\eqref{item-attained-good} holds for some $\BB r > 0$ then there are at least $\mu\log_8 \ep^{-1}$ values of $r \in [\ep^{1+\nu} \BB r , \ep \BB r] \cap \{8^{-k} : k\in\BB N\}$ such that 
\eqbn
\BB P[\ol G_r(C' , 1 - \alpha)] \geq p .
\eqen
Furthermore, as explained in Footnote~\ref{footnote-G-prob}, the definition~\eqref{eqn-max-min-def} of $C_*$ implies that for any $C'' \in (0,C_*)$, there exists some $\beta \in (0,1)$ such that $\BB P[\ol G_1(C'',\beta)] \geq \beta$. Therefore, Proposition~\ref{prop-attained-good} applied with $\BB r =1$ implies Proposition~\ref{prop-attained-max} with $\ol\beta =1-\alpha$ and $\ol p = p$. 
 
By the symmetry between our hypotheses on $\wt D_h$ and $D_h$, Proposition~\ref{prop-attained-good} implies the analogous statement with the roles of $D_h$ and $\wt D_h$ interchanged, which reads as follows.

\begin{prop} \label{prop-attained-good'}
For each $0 < \mu < \nu < 1$, there exists $\alpha_* = \alpha_*(\mu,\nu) \in (1/2,1)$ and $p = p(\mu,\nu) \in (0,1)$ such that for each $\alpha \in [\alpha_*,1)$ and each $c'  > c_*$, there exists $c''  = c''(\alpha,c',\mu,\nu) \in (c_* , c')$ such that for each $\beta \in (0,1)$, there exists $\ep_0 = \ep_0(\alpha,\beta,c',\mu,\nu ) > 0$ such that the following holds for each $\BB r > 0$ for which $\BB P[\ul G_{\BB r}(c'',\beta)] \geq \beta$ and each $\ep \in (0,\ep_0]$. 
\begin{enumerate}[(A')] 
\item There are at least $\mu\log_8\ep^{-1}$ values of $r \in [\ep^{1+\nu} \BB r ,\ep \BB r ] \cap \{8^{-k} \BB r : k\in\BB N\}$ for which it holds with probability at least $p$ that the following is true. There exists $u \in \bdy B_{\alpha r}(0)$ and $v \in \bdy B_r(0)$ such that 
\eqb \label{eqn-attained-good'}
\wt D_h(u,v) \leq c' D_h(u,v)  
\eqe
and the $\wt D_h$-geodesic from $u$ to $v$ is unique and is contained in $\ol{\BB A_{\alpha r , r}(0)}$.  \label{item-attained-good'} 
\end{enumerate}
\end{prop}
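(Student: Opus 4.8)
The plan is to deduce Proposition~\ref{prop-attained-good'} directly from Proposition~\ref{prop-attained-good} by exploiting the symmetry of the hypotheses of Theorem~\ref{thm-weak-uniqueness}: there $D$ and $\wt D$ are merely two weak $\gamma$-LQG metrics with the \emph{same} scaling constants $\frk c_r$, so the pair $(\wt D, D)$ satisfies exactly the same hypotheses as the pair $(D,\wt D)$. Hence every statement established for $(D,\wt D)$ holds verbatim for $(\wt D, D)$, and it remains only to translate notation and constants through the relevant reciprocal maps.

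First I would record the dictionary. Write $\hat D := \wt D$ and $\hat{\wt D} := D$, and let $\hat c_*,\hat C_*$ be the optimal bi-Lipschitz constants of~\eqref{eqn-max-min-def} for this swapped pair. Since $c_* > 0$ and $C_* < \infty$ (Proposition~\ref{prop-lqg-metric-bilip}), we have $\hat c_* = \inf_{u\ne v} D_h(u,v)/\wt D_h(u,v) = 1/C_*$ and $\hat C_* = 1/c_*$, so $0 < \hat c_* \le \hat C_* < \infty$. For the swapped pair, the event $\ol G_r$ of~\eqref{eqn-max-event} becomes, for $C' \in (0,\hat C_*) = (0,1/c_*)$,
\[
\hat{\ol G}_r(C',\beta) = \left\{\exists\, \BB z,\BB w \in B_r(0) \text{ s.t. } |\BB z - \BB w| \geq \beta r \text{ and } D_h(\BB z,\BB w) \geq C'\wt D_h(\BB z,\BB w)\right\} = \ul G_r(1/C',\beta),
\]
with $\ul G_r$ as in~\eqref{eqn-min-event}. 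Likewise, conclusion~\eqref{eqn-attained-good} for the swapped pair, namely $\hat{\wt D}_h(u,v) \geq C' \hat D_h(u,v)$ with the $\hat D_h$-geodesic from $u$ to $v$ unique and contained in $\ol{\BB A_{\alpha r, r}(0)}$, reads $\wt D_h(u,v) \leq (1/C') D_h(u,v)$ with the $\wt D_h$-geodesic from $u$ to $v$ unique and contained in $\ol{\BB A_{\alpha r, r}(0)}$ --- which is exactly~\eqref{eqn-attained-good'} upon setting $c' = 1/C'$.

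Next I would run the parameter translation. Fix $0 < \mu < \nu < 1$ and let $\alpha_* = \alpha_*(\mu,\nu)$ and $p = p(\mu,\nu)$ be the constants supplied by Proposition~\ref{prop-attained-good} applied to $(\hat D, \hat{\wt D})$. Given $\alpha \in [\alpha_*,1)$ and $c' > c_*$, set $C' := 1/c' \in (0,1/c_*) = (0,\hat C_*)$, let $C'' = C''(\alpha,C',\mu,\nu) \in (C',\hat C_*)$ be as in Proposition~\ref{prop-attained-good}, and put $c'' := 1/C''$. Since $1/c' < C'' < 1/c_*$, taking reciprocals gives $c'' \in (c_*, c')$, and $c''$ depends only on $\alpha, c', \mu, \nu$. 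For $\beta \in (0,1)$ let $\ep_0 = \ep_0(\beta,\alpha,C',\mu,\nu) = \ep_0(\beta,\alpha,c',\mu,\nu) > 0$ be as in Proposition~\ref{prop-attained-good}. By the dictionary, the hypothesis $\BB P[\hat{\ol G}_{\BB r}(C'',\beta)] \geq \beta$ is precisely $\BB P[\ul G_{\BB r}(c'',\beta)] \geq \beta$, and the conclusion of Proposition~\ref{prop-attained-good} for $(\hat D,\hat{\wt D})$, after the same translation, is exactly statement~\ref{item-attained-good'} of Proposition~\ref{prop-attained-good'}; this completes the deduction.

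The one point that I expect to require genuine attention rather than being purely formal is to confirm that every hypothesis invoked in the proof of Proposition~\ref{prop-attained-good} --- locality, Weyl scaling, translation invariance, tightness across scales, and, crucially, the equality of the scaling constants $\frk c_r$ of the two metrics --- is symmetric under interchanging $D$ and $\wt D$, so that no hidden asymmetry obstructs the swap; and to verify that the reciprocal maps $C' \mapsto 1/C'$, $C'' \mapsto 1/C''$ carry the open intervals and inequalities of Proposition~\ref{prop-attained-good} to precisely those of Proposition~\ref{prop-attained-good'} (in particular that the chain $C' < C'' < \hat C_*$ becomes $c_* < c'' < c'$ and that the direction of~\eqref{eqn-attained-good'} is the mirror of~\eqref{eqn-attained-good}). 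Both checks are routine but should be carried out explicitly.
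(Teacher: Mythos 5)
Your proposal is correct and is exactly the paper's argument: the paper deduces Proposition~\ref{prop-attained-good'} from Proposition~\ref{prop-attained-good} in one sentence ("by the symmetry between our hypotheses on $\wt D_h$ and $D_h$"), and your reciprocal dictionary $\hat c_* = 1/C_*$, $\hat C_* = 1/c_*$, $\hat{\ol G}_r(C',\beta) = \ul G_r(1/C',\beta)$, $c' = 1/C'$, $c'' = 1/C''$ is precisely the bookkeeping that makes that remark rigorous.
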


As in the case of Proposition~\ref{prop-attained-good}, Proposition~\ref{prop-attained-good'} immediately implies Proposition~\ref{prop-attained-min}.

To prove Proposition~\ref{prop-attained-good}, we will (roughly speaking) prove the contrapositive.

\begin{prop} \label{prop-attained-bad}
For each $0 < \mu < \nu < 1$, there exists $\alpha_* = \alpha_*(\mu,\nu) \in (1/2,1)$ and $p = p(\mu,\nu) \in (0,1)$ such that for each $\alpha\in [\alpha_*,1)$ and each $C' \in (0,C_*)$, there exists $C''  = C''(\alpha,C',\mu,\nu ) \in (C' , C_*)$ such that for each $\beta \in (0,1)$, there exists $\ep_0 = \ep_0(\alpha,\beta,C',\mu,\nu)  > 0$ such that if $\BB r > 0$ and there exists $\ep \in (0,\ep_0]$ satisfying the condition~\eqref{item-attained-bad} just below, then $\BB P[\ol G_{\BB r}(C'',\beta) ]  < \beta$. 
\begin{enumerate}[(A)] 
\setcounter{enumi}{1}
\item There are at least $(\nu-\mu) \log_8\ep^{-1}$ values of $r \in [\ep^{1+\nu} \BB r ,\ep \BB r ] \cap \{8^{-k} \BB r : k\in\BB N\}$ for which it holds with probability at least $1- p$ that the following is true. For each $u\in \bdy B_{\alpha r}(0)$ and $v\in\bdy B_r(0)$ for which the $D_h$-geodesic from $u$ to $v$ is unique and is contained in $\ol{\BB A_{\alpha r , r}(0)}$, one has \label{item-attained-bad}
\eqb \label{eqn-attained-bad}
\wt D_h(u,v) \leq C' D_h(u,v) .
\eqe 
\end{enumerate}
\end{prop}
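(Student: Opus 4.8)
Proposition~\ref{prop-attained-bad} is a quantitative form of the contrapositive of Proposition~\ref{prop-attained-good}, so the plan is to take hypothesis~\eqref{item-attained-bad} as given and deduce that $\BB P[\ol G_{\BB r}(C'',\beta)]<\beta$ for a suitable $C''\in(C',C_*)$. The strategy mirrors the outline of Section~\ref{sec-attained}: the local control ``$\wt D_h(u,v)\le C'D_h(u,v)$ across the annulus $\BB A_{\alpha r,r}$'', available with probability $\ge 1-p$ at many scales $r$, propagates --- via the domain Markov property of the GFF --- to a macroscopic upper bound on the ratio $\wt D_h/D_h$ holding with probability close to $1$, which in turn rules out $\ol G_{\BB r}(C'',\beta)$.

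\textbf{Localization.} Let $F_r$ be the event in~\eqref{item-attained-bad}. Using the a priori estimates of Section~\ref{sec-a-priori-estimates} --- the two-sided H\"older continuity of $D_h$ (Lemma~\ref{lem-holder-uniform}), the tightness of $D_h$- and $\wt D_h$-diameters of Euclidean balls and of $D_h$-/$\wt D_h$-distances across fixed annuli (Axiom~\ref{item-metric-coord}), and the lower bounds for distances across narrow annuli for both metrics (Lemma~\ref{lem-attained-long}) --- I would intersect $F_r$ with a high-probability event $\mcl R_r$ depending only on $h|_{\BB A_{c_1 r,c_2 r}(0)}$ for fixed $0<c_1<\alpha<1<c_2$, viewed modulo additive constant, so that $\wh F_r:=F_r\cap\mcl R_r$ is itself (up to null sets) of this same local form and $\BB P[\wh F_r]\ge\BB P[F_r]-o_\ep(1)$ uniformly in $\BB r$: off the complement of $\mcl R_r$ the $D_h$- and $\wt D_h$-geodesics between a point of $\bdy B_{\alpha r}(0)$ and a point of $\bdy B_r(0)$ stay in $\ol{\BB A_{c_1 r,c_2 r}(0)}$ (so internal and ambient metrics agree there), and the tightness bounds for $D_h$-diameters and annulus-crossing distances hold with a fixed constant. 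By Axiom~\ref{item-metric-translate} and translation invariance of the law of $h$ mod additive constant, one gets the corresponding events $\wh F_r^z$ at any center $z$ with $\BB P[\wh F_r^z]=\BB P[\wh F_r]$. \textbf{A good scale at every grid point.} Fix a sufficiently fine lattice of centers $z$, of spacing $\asymp\ep^{1+\nu}\BB r$, in a neighborhood of $\ol{B_{\BB r}(0)}$. For each $z$, apply Lemma~\ref{lem-annulus-iterate} to the events $\wh F_{r_k}^z$, where $r_1>r_2>\cdots$ runs over the $\ge(\nu-\mu)\log_8\ep^{-1}$ scales furnished by~\eqref{item-attained-bad} (consecutive ratios $\le 1/8\le s_1$ for a suitable choice of the Lemma's parameters, after relabeling radii to account for $c_2>1$); choosing $p$ small enough, depending only on the metric, forces at least a definite fraction of these scales to have $\wh F_{r_k}^z$ occurring, except on an event of probability $o_\ep^\infty(\ep)$. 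A union bound over the $O(\ep^{-2(1+\nu)})$ centers yields an event $\mcl G$, of probability $1-o_\ep(1)$ uniformly in $\BB r$, on which for every grid center $z$ there is a scale $r(z)$ in the window with $\wh F_{r(z)}^z$ --- and hence the tightness event $\mcl R_{r(z)}^z$ --- holding.

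\textbf{From local crossings to a global ratio bound.} Work on $\mcl G$, and fix $\BB z,\BB w$ in a suitably fine grid inside $B_{\BB r}(0)$ with $|\BB z-\BB w|\ge\beta\BB r$; let $P$ be the (a.s.\ unique, for fixed endpoints) $D_h$-geodesic from $\BB z$ to $\BB w$. Build a chain of good annuli along $P$: set $v_0=\BB z$, and having reached a point $v_j$ of $P$ which is \emph{not} within $3\ep\BB r$ of $\BB w$, pick a grid center $z^{(j+1)}$ with $v_j\in B_{\alpha r(z^{(j+1)})}(z^{(j+1)})$; since $\BB w\notin\ol{B_{r(z^{(j+1)})}(z^{(j+1)})}$, the geodesic $P$ exits $B_{r(z^{(j+1)})}(z^{(j+1)})$ after $v_j$, and the sub-arc $\sigma_{j+1}$ of $P$ from its last visit to $\bdy B_{\alpha r(z^{(j+1)})}(z^{(j+1)})$ before that exit up to the exit point $v_{j+1}$ lies in $\ol{\BB A_{\alpha r(z^{(j+1)}),r(z^{(j+1)})}(z^{(j+1)})}$ and --- being a sub-arc of the unique geodesic $P$ --- is the unique $D_h$-geodesic between its endpoints $u_{j+1},v_{j+1}$ (a different geodesic between these could be spliced into $P$ to contradict uniqueness of $P$). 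Hence $\wh F_{r(z^{(j+1)})}^{z^{(j+1)}}$ applies and gives $\wt D_h(u_{j+1},v_{j+1})\le C'D_h(u_{j+1},v_{j+1})$. The crossings $\sigma_1,\dots,\sigma_J$ are pairwise disjoint sub-arcs of $P$; together with the gaps $g_0,\dots,g_J$ (each $g_j$ for $j<J$ contained in $B_{r(z^{(j+1)})}(z^{(j+1)})$, and $g_J$ the short final piece from $v_J$ to $\BB w$) they tile $P$. Replacing each $\sigma_j$ in $P$ by a $\wt D_h$-geodesic between its endpoints gives a path $P'$ from $\BB z$ to $\BB w$, and using $\wt D_h\le C_*D_h$ on the gaps and $\op{len}(\sigma_j;D_h)=D_h(u_j,v_j)$ (since $P$ is a $D_h$-geodesic),
\[
\wt D_h(\BB z,\BB w)\le\op{len}(P';\wt D_h)\le C'\sum_j D_h(u_j,v_j)+C_*\left(D_h(\BB z,\BB w)-\sum_j D_h(u_j,v_j)\right).
\]
Finally, the event $\mcl R_{r(z^{(j+1)})}^{z^{(j+1)}}$ gives that $\op{len}(g_j;D_h)$, being at most the $D_h$-diameter of $B_{r(z^{(j+1)})}(z^{(j+1)})$, is $\le C_1\frk c_{r(z^{(j+1)})}e^{\xi h_{r(z^{(j+1)})}(z^{(j+1)})}\le C_1^2 D_h(u_{j+1},v_{j+1})=C_1^2\op{len}(\sigma_{j+1};D_h)$ for a fixed constant $C_1$, while $\op{len}(g_J;D_h)\le\tfrac12 D_h(\BB z,\BB w)$ by Lemma~\ref{lem-holder-uniform} for $\ep$ small; summing the tiling identity $D_h(\BB z,\BB w)=\sum_j\op{len}(\sigma_j;D_h)+\sum_j\op{len}(g_j;D_h)$ then gives $\sum_j D_h(u_j,v_j)\ge\theta_0 D_h(\BB z,\BB w)$ with $\theta_0=1/(2(1+C_1^2))$ independent of $\ep$ and $\BB r$. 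Hence $\wt D_h(\BB z,\BB w)\le C''D_h(\BB z,\BB w)$ with $C'':=C_*-(C_*-C')\theta_0\in(C',C_*)$.

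A union bound over the polynomially many pairs $\BB z,\BB w$ in the fine grid (a.s.\ uniqueness of the geodesic being invoked for each fixed pair) shows that on an event of probability $1-o_\ep(1)$ we have $\wt D_h(\BB z,\BB w)\le C''D_h(\BB z,\BB w)$ for all such grid pairs, and the bi-H\"older continuity of $D_h,\wt D_h$ (Lemma~\ref{lem-holder-uniform}) together with the lower bound $D_h(\BB z,\BB w)\ge\op{const}\cdot\frk c_{\BB r}e^{\xi h_{\BB r}(0)}$ for macroscopically separated points upgrades this to all $\BB z,\BB w\in B_{\BB r}(0)$ with $|\BB z-\BB w|\ge\beta\BB r$, at the cost of enlarging $C''$ by $o_\ep(1)$ (still below $C_*$). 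Thus $\ol G_{\BB r}(C'',\beta)$ fails on this event, and choosing $\ep_0$ small enough (depending on $\beta,C',\alpha,\mu,\nu$) gives $\BB P[\ol G_{\BB r}(C'',\beta)]<\beta$. The main obstacle is the multi-scale bookkeeping forced on us by having only tightness (Axiom~\ref{item-metric-coord}) rather than exact scale invariance: making $\wh F_r$ genuinely local so that Lemma~\ref{lem-annulus-iterate} applies (this is where the a priori estimates of Section~\ref{sec-a-priori-estimates} do most of the work), baking the tightness regularity into the definition of a good scale so that the constant $C_1$ above --- hence $\theta_0$ and $C''$ --- does not degrade over the $O(\ep^{-2(1+\nu)})$ random centers visited by a geodesic, and keeping all estimates uniform in the auxiliary scale $\BB r$ and in the random scales $r(z)$; matching the quantifiers exactly (that $C''$ is chosen before $\ep$, and the precise roles of $\mu$ and $\nu$) requires care in how many scales one iterates over in Lemma~\ref{lem-annulus-iterate} and which ones one selects.
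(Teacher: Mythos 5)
Your overall strategy is the same as the paper's: make the per-scale event local in an annulus, iterate over the $\geq(\nu-\mu)\log_8\ep^{-1}$ scales with Lemma~\ref{lem-annulus-iterate}, union bound over a grid of centers, chain annulus crossings along a $D_h$-geodesic (using the splicing argument to see each crossing is the unique geodesic between its endpoints, so the hypothesis applies), and finish with tightness/H\"older to pass from grid pairs to all pairs. The localization of the geodesic-identification part of the event is also essentially the paper's condition on ``lower bounds for paths in the annulus''.

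However, there is a genuine gap at the crucial step where you show the crossings occupy a definite fraction of the geodesic's length. You bound each gap $g_j$ by the $D_h$-diameter of the ball $B_{r(z^{(j+1)})}(z^{(j+1)})$ and want the bound $\op{diam}_{D_h}(B_r(z))\le C_1\frk c_r e^{\xi h_r(z)}$ (with a \emph{fixed} $C_1$) to be part of the annulus-local event $\mcl R_r^z$. But the $D_h$-diameter of the ball depends on the field inside the hole $B_{c_1 r}(z)$ (a thick point near the center inflates distances to the center), so this quantity is not measurable with respect to $h|_{\BB A_{c_1 r,c_2 r}(z)}$, and Lemma~\ref{lem-annulus-iterate} — whose whole point is near-independence of events living in disjoint concentric annuli — cannot be applied to an event containing it. If you instead remove it from the per-scale event and try to enforce it by a separate union bound over the $\asymp\ep^{-2(1+\nu)}$ centers and $\log\ep^{-1}$ scales, tightness across scales only gives a fixed small failure probability per pair, so the constant $C_1$ would have to grow with $\ep^{-1}$; then $\theta_0\asymp C_1^{-2}\to 0$ and your $C''=C_*-(C_*-C')\theta_0$ tends to $C_*$, violating the required quantifier order ($C''$ must be fixed, depending only on $\alpha,C',\mu,\nu$, before $\ep$ is taken small). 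The paper's substitute for your diameter bound is precisely its condition on ``distance around the annulus'': the per-scale event includes the existence of a loop in $\BB A_{\alpha r,r}(z)$ separating the two boundary circles with $D_h$-length at most $A\,D_h(\bdy B_{\alpha r}(z),\bdy B_r(z))$ — an annulus-local statement with a fixed constant $A$. Since the geodesic must hit this loop once before the gap (when it first crossed the annulus into the ball) and once during the subsequent crossing, the geodesic property gives $\text{gap}\le A\times\text{crossing}$ with the fixed constant $A$, using only annulus data; this is the device your argument is missing, and without it (or an equivalent) the fixed $C''<C_*$ is not obtained.
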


\begin{proof}[Proof of Proposition~\ref{prop-attained-good}, assuming Proposition~\ref{prop-attained-bad}]
Assume we are given $0<\mu<\nu<1$ and let $\alpha_* , p $ be chosen as in Proposition~\ref{prop-attained-bad}. 
Also fix $\alpha\in [\alpha_*,1)$, $C' \in (0,C_*)$, and $\beta \in (0,1)$ and let $C''$ and $\ep_0$ be chosen as in Proposition~\ref{prop-attained-bad}.
For $\BB r,  \ep > 0$, let $\mcl K_{\BB r}^\ep :=  [\ep^{1+\nu} \BB r , \ep \BB r] \cap \left\{ 8^{-k}  \BB r : k\in\BB N \right\} $
and note that $\#\mcl K_{\BB r}^\ep =  \lfloor \nu \log_8 \ep^{-1} \rfloor$.  

If~\eqref{item-attained-good} does not hold for some $\BB r  > 0$ and $\ep \in (0,\ep_0]$, then there are \emph{fewer} than $\mu\log_8 \ep^{-1}$ values of $k\in \mcl K_{\BB r}^{\ep}$ for which the last sentence of~\eqref{item-attained-good} holds with probability at least $p$. 
For such a choice of $\BB r$ and $\ep$, there are at least $(\nu-\mu)\log_8\ep^{-1}$ values of $k\in \mcl K_{\BB r}^{\ep}$ for which the last sentence of~\eqref{item-attained-bad} holds with probability at least $1-p$. That is,~\eqref{item-attained-bad} holds for the pair $(\BB r , \ep)$. 
By Proposition~\ref{prop-attained-bad}, this means that $\BB P[\ol G_{\BB r}(C'',\beta) ]  < \beta$. 
Hence we have proven the contrapositive of Proposition~\ref{prop-attained-good}. 
\end{proof}

\subsection{Proof of Proposition~\ref{prop-attained-bad}}
\label{sec-attained-proof}

As explained in Section~\ref{sec-attained-statement}, to prove all of the propositions statements from earlier in this section it remains only to prove Proposition~\ref{prop-attained-bad}. The basic idea of the proof is as follows. 
If we assume that~\eqref{item-attained-bad} holds for a small enough choice of $p\in (0,1)$ (depending only on $\mu$ and $\nu$), then we can use Lemma~\ref{lem-annulus-iterate} to cover space by Euclidean balls of the form $B_{r/2}(z)$ for $r\in [\ep^{1+\nu} \BB r , \ep \BB r]$ with the following property.
For each $u \in \bdy B_{\alpha r}(z)$ and each $v\in \bdy B_{r}(z)$ such that the $D_h$-geodesic from $u$ to $v$ is unique and is contained in $\ol{\BB A_{\alpha r , r}(z)}$, we have $\wt D_h(u,v) \leq C' D_h(u,v)$.
By considering the times when a $D_h$-geodesic between two fixed points $\BB z , \BB w \in \BB C$ crosses the annulus $\BB A_{\alpha r , r}(z)$ for such a $z$ and $r$, we will be able to show that $\wt D_h(\BB z , \BB w) \leq C'' D_h(\BB z,\BB w)$ for a suitable constant $C'' \in (C' , C_*)$. 
Applying this to an appropriate $\beta$-dependent collection of pairs of points $(\BB z, \BB w)$ will show that $\BB P[\ol G_{\BB r}(C'',\beta) ]  < \beta$. 
The reason why we need to make $\alpha$ close to 1 is to ensure that the events we consider depend on $h$ in a sufficiently ``local" manner (see the discussion just after the definition of $\mathsf E_r(z)$ below). 

Let us now define the events to which we will apply Lemma~\ref{lem-annulus-iterate}. 
For $z \in \BB C$, $r > 0$, and parameters $\alpha \in (1/2,1)$, $ A > 1$ and $C' \in (0,C_*)$, let $\mathsf E_r(z) = \mathsf E_r(z; \alpha,A,C')$ be the event that the following is true.
\begin{enumerate}
\item \emph{(Comparison of $D_h$ and $\wt D_h$)} For each $u \in \bdy B_{\alpha r}(z)$ and each $v\in \bdy B_{r}(z)$ such that the $D_h$-geodesic from $u$ to $v$ is unique and is contained in $\ol{\BB A_{\alpha r , r}(z)}$, we have $\wt D_h(u,v) \leq C' D_h(u,v)$. \label{item-attained-dist}
\item \emph{(Lower bound for paths in $\BB A_{\alpha r , r}(z)$)} If $u \in \bdy B_{\alpha r}(z)$ and $v\in \bdy B_r(z)$ are such that either $D_h(u,v) > D_h(u , \bdy\BB A_{r/2,2r}(z))$ or $  \wt D_h(u,v) > \wt D_h(u , \bdy\BB A_{r/2,2r}(z))$, then each path from $u$ to $v$ which stays in $\ol{\BB A_{\alpha r , r}(z)}$ has $D_h$-length strictly larger than $D_h\left( u , v ; \BB A_{r/2,2r}(z) \right)$. \label{item-attained-long}
\item \emph{(Distance around $\BB A_{\alpha r , r}(z)$)} There is a path in $\BB A_{\alpha r , r}(z)$ which disconnects the inner and outer boundaries of $\BB A_{\alpha r , r}(z)$ and has $D_h$-length at most $A D_h\left(\bdy B_{\alpha r}(z) , \bdy B_{  r}(z) \right)$. \label{item-attained-around}
\end{enumerate}
Condition~\ref{item-attained-dist} is the main point of the event $\mathsf E_r(z)$, as discussed just above. 
The purpose of condition~\ref{item-attained-long} is to ensure that $\mathsf E_r(z)$ is determined by $h|_{\BB A_{r/2,2r}(z)}$. Without this condition, we would not necessarily be able to tell whether a path in $\ol{\BB A_{\alpha r ,r}(z)}$ is a $D_h$-geodesic without seeing the field outside of $\BB A_{r/2,2r}(z)$ (see Lemma~\ref{lem-attained-msrble}). 
The purpose of condition~\ref{item-attained-around} is as follows. If a $D_h$-geodesic between two points outside of $B_r(z)$ enters $B_{\alpha r}(z)$, then it must cross the path from condition~\ref{item-attained-around} twice. This means that it can spend at most $A D_h\left(\bdy B_{\alpha r}(z) , \bdy B_{  r}(z) \right)$ units of time in $B_{\alpha r}(z)$ since otherwise the path from condition~\ref{item-attained-around} would provide a shortcut, which would contradict the definition of a geodesic. If we assume~\eqref{item-attained-bad}, this fact will eventually allow us to force a $D_h$-geodesic to spend a positive fraction of its time tracing segments between points $u,v$ with $\wt D_h(u,v) \leq C' D_h(u,v)$. 
   
We want to use Lemma~\ref{lem-annulus-iterate} to argue that if~\eqref{item-attained-bad} holds, then with high probability there are many values of $z\in\BB C$ such that $\mathsf E_r(z)$ occurs for some $r\in [\ep^{1+\nu} \BB r , \ep \BB r]$. 
We first check the measurability condition in Lemma~\ref{lem-annulus-iterate}

\begin{lem} \label{lem-attained-msrble}
For each $z\in\BB C$ and $r>0$,  
\eqb \label{eqn-attained-msrble}
\mathsf E_r(z) \in \sigma\left(  (h-h_{4r}(z)) |_{   \BB A_{r/2,2r}(z)  } \right) .
\eqe
\end{lem}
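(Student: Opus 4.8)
The plan is to show that $\mathsf E_r(z)$ is in fact determined by $h|_U$, where $U := \BB A_{r/2,2r}(z)$, and is invariant under adding a global additive constant to $h$ (the latter being immediate from Axiom~\ref{item-metric-f}, Weyl scaling, since adding a constant $c$ to $h$ multiplies $D_h$ and $\wt D_h$, hence all $D_h$- and $\wt D_h$-lengths and all internal metrics, by the common factor $e^{\xi c}$, and every condition in the definition of $\mathsf E_r(z)$ involves only comparisons of distances, traces of geodesics, and lengths of paths). These two facts together give $\mathsf E_r(z) \in \sigma((h-h_{4r}(z))|_U)$: an event which is determined by $h|_U$ and invariant under global additive constants is measurable with respect to $\sigma(h|_U \text{ modulo additive constant})$, which is exactly $\sigma((h-h_{4r}(z))|_U)$ (the role of the random constant $h_{4r}(z)$, whose defining circle $\bdy B_{4r}(z)$ is not in $U$, is only to fix the additive normalization).

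The two basic tools are Axiom~\ref{item-metric-local} (locality), which gives that $D_h(\cdot,\cdot;U)$ and $\wt D_h(\cdot,\cdot;U)$ are measurable functions of $h|_U$, and the elementary observation that if a curve $P$ has image contained in the open set $U$ then $\op{len}(P;D_h) = \op{len}(P;D_h(\cdot,\cdot;U))$: ``$\leq$'' is immediate from $D_h \leq D_h(\cdot,\cdot;U)$ on $U$, and ``$\geq$'' holds because every sub-curve of $P$ between consecutive partition points stays in $U$. Since $1/2 < \alpha < 1$, the set $\ol{\BB A_{\alpha r,r}(z)}$ is a compact subset of $U$, so these two facts already show that the $D_h$- and $\wt D_h$-lengths of all curves contained in $\ol{\BB A_{\alpha r,r}(z)}$, and more generally the internal metrics $D_h(\cdot,\cdot;Y)$ and $\wt D_h(\cdot,\cdot;Y)$ for any closed $Y \subset U$ (obtained as a decreasing limit of internal metrics on slightly larger open annuli with closure in $U$, using Arzel\`a--Ascoli and lower semicontinuity of length under uniform convergence), are determined by $h|_U$. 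The true distances that enter the definition of $\mathsf E_r(z)$ agree with internal ones of this type: a $D_h$-geodesic realizing $D_h(\bdy B_{\alpha r}(z),\bdy B_r(z))$ can be truncated (between its last visit to $\bdy B_{\alpha r}(z)$ and the following first visit to $\bdy B_r(z)$) to stay in $\ol{\BB A_{\alpha r,r}(z)}$, so this true distance equals $D_h(\bdy B_{\alpha r}(z),\bdy B_r(z);\ol{\BB A_{\alpha r,r}(z)})$; likewise $D_h(u,\bdy\BB A_{r/2,2r}(z))$ and $\wt D_h(u,\bdy\BB A_{r/2,2r}(z))$ equal the corresponding internal distances within $\ol U$, which are determined by $h|_U$. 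Hence Conditions~\ref{item-attained-long} and~\ref{item-attained-around} are determined by $h|_U$.

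The only genuinely delicate point is Condition~\ref{item-attained-dist}, since it refers to the \emph{true} $D_h$-geodesic from $u$ to $v$, which a priori depends on $h|_{\BB C\setminus U}$; this is precisely what Condition~\ref{item-attained-long} is designed to neutralize. I would prove: on the event where Conditions~\ref{item-attained-long} and~\ref{item-attained-around} hold, for each $u\in\bdy B_{\alpha r}(z)$ and $v\in\bdy B_r(z)$, the true $D_h$-geodesic from $u$ to $v$ is unique and contained in $\ol{\BB A_{\alpha r,r}(z)}$ if and only if the $D_h(\cdot,\cdot;U)$-geodesic from $u$ to $v$ is unique and contained in $\ol{\BB A_{\alpha r,r}(z)}$, and in that case $D_h(u,v) = D_h(u,v;U)$ and $\wt D_h(u,v) = \wt D_h(u,v;U)$. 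The forward direction is easy: a true geodesic contained in $\ol{\BB A_{\alpha r,r}(z)} \subset U$ is a curve in $U$, hence has $D_h$-length equal to its internal length and realizes $D_h(u,v;U)$, so $D_h(u,v) = D_h(u,v;U)$, and any second internal geodesic would be a second true geodesic, so uniqueness transfers. For the reverse direction, suppose the internal geodesic $\eta_{u,v}$ is unique and lies in $\ol{\BB A_{\alpha r,r}(z)}$; if $D_h(u,v) < D_h(u,v;U)$ then the true $D_h$-geodesic must leave $U$, hence hit $\bdy\BB A_{r/2,2r}(z)$ strictly before reaching $v$ (as $v \in U$ and $U$ is open), which forces $D_h(u,\bdy\BB A_{r/2,2r}(z)) < D_h(u,v)$; this makes the hypothesis of Condition~\ref{item-attained-long} hold, and its conclusion contradicts the fact that $\eta_{u,v}\subset\ol{\BB A_{\alpha r,r}(z)}$ has $D_h$-length exactly $D_h(u,v;U)$. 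Running the same argument with $\wt D_h$ in the hypothesis of Condition~\ref{item-attained-long} gives $\wt D_h(u,v) = \wt D_h(u,v;U)$, and the same comparison rules out a second true $D_h$-geodesic escaping $U$. Granting the claim, on the (now $h|_U$-measurable) event defined by Conditions~\ref{item-attained-long} and~\ref{item-attained-around}, Condition~\ref{item-attained-dist} is equivalent to the statement that $\wt D_h(u,v;U) \leq C' D_h(u,v;U)$ for every pair $(u,v)$ whose $D_h(\cdot,\cdot;U)$-geodesic is unique and contained in $\ol{\BB A_{\alpha r,r}(z)}$, which is determined by $h|_U$. Intersecting the three conditions gives $\mathsf E_r(z) \in \sigma(h|_U)$, and the Weyl-scaling reduction of the first paragraph finishes the proof.

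I expect the main obstacle to be the bookkeeping in the claim of the third paragraph — handling carefully the borderline cases (equality of true and internal distances, and pairs with non-unique geodesics) so that Condition~\ref{item-attained-long} really does allow one to replace every true distance or true geodesic in Condition~\ref{item-attained-dist} by its internal counterpart — together with the routine but slightly fiddly verification that the internal metrics on closed sub-annuli of $U$ and the distances to $\bdy\BB A_{r/2,2r}(z)$ are measurable with respect to $h|_U$.
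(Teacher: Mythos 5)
Your route is essentially the paper's: reduce via Weyl scaling to measurability with respect to $h|_{\BB A_{r/2,2r}(z)}$ (your "exactly $\sigma((h-h_{4r}(z))|_U)$" is more than is needed or true — what is used is only that a constant-invariant function of $h|_U$ is a function of $(h-h_{4r}(z))|_U$, which is how the paper phrases the reduction), then show conditions~\ref{item-attained-long} and~\ref{item-attained-around} are local, and then use condition~\ref{item-attained-long} to replace the true geodesic and true distances in condition~\ref{item-attained-dist} by their internal counterparts. Your third paragraph (a true geodesic leaving $U=\BB A_{r/2,2r}(z)$ must pay strictly more than $D_h(u,\bdy U)$, so condition~\ref{item-attained-long} would force every path in $\ol{\BB A_{\alpha r,r}(z)}$ to be strictly longer than $D_h(u,v;U)$, contradicting the internal geodesic) is a correct variant of the paper's observation that a path in the closed annulus can only be a geodesic when $D_h(u,v)\le D_h(u,\bdy\BB A_{r/2,2r}(z))$ and $\wt D_h(u,v)\le \wt D_h(u,\bdy\BB A_{r/2,2r}(z))$.

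There is one overlooked point: condition~\ref{item-attained-long} is not purely internal either. Its hypothesis involves the unrestricted distances $D_h(u,v)$ and $\wt D_h(u,v)$, which are exactly the non-local quantities you flag as delicate for condition~\ref{item-attained-dist}; your second paragraph only verifies locality of $D_h(u,\bdy\BB A_{r/2,2r}(z))$, $\wt D_h(u,\bdy\BB A_{r/2,2r}(z))$, $D_h(\bdy B_{\alpha r}(z),\bdy B_r(z))$ and of internal metrics, so the assertion "hence condition~\ref{item-attained-long} is determined by $h|_U$" is not yet justified. The gap is easily filled with the same mechanism you already use: any path from $u$ to $v\in U$ that exits $U$ has $D_h$-length at least $D_h(u,\bdy U)+D_h(\bdy U,v)>D_h(u,\bdy U)$, so $D_h(u,v)\le D_h(u,\bdy U)$ forces $D_h(u,v)=D_h(u,v;U)$, and consequently $D_h(u,v)>D_h(u,\bdy U)$ holds if and only if $D_h(u,v;U)>D_h(u,\bdy U)$ (and likewise for $\wt D_h$); hence the hypothesis of condition~\ref{item-attained-long}, and with it the whole condition, is determined by $h|_U$. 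The paper instead notes that $D_h(u,v)\le D_h(u,\bdy\BB A_{r/2,2r}(z))$ is equivalent to $v$ lying in a $D_h$-metric ball centered at $u$ that is contained in $\ol{\BB A_{r/2,2r}(z)}$, and that on this event the values $D_h(u,v)$, $\wt D_h(u,v)$ are themselves determined by $h|_U$ — the same observation it then reuses to localize condition~\ref{item-attained-dist}. With this patch your proof is complete and essentially coincides with the paper's.
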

\begin{proof}
By Axiom~\ref{item-metric-f} (Weyl scaling) subtracting $h_{4r}(0)$ from $h$ results in scaling $D_h$ and $\wt D_h$ by the same factor, so does not affect the occurrence of $\mathsf E_r(z)$.  
Hence it suffices to prove~\eqref{eqn-attained-msrble} with $h|_{\BB A_{r/2,2r}(z)}$ in place of $(h-h_{4r}(z)) |_{   \BB A_{r/2,2r}(z)  }$. 
From Axiom~\ref{item-metric-f}, it is obvious that condition~\ref{item-attained-around} in the definition of $\mathsf E_r(z)$ (distance around $\BB A_{\alpha r , r}(z)$) is determined by $h |_{   \BB A_{r/2,2r}(z)  }$. 

For $u \in \bdy B_{\alpha r}(z)$ and $v\in \bdy B_{r}(z)$, we can determine whether $D_h(u,v) > D_h(u , \bdy\BB A_{r/2,2r}(z))$ from the internal metric $D_h\left(\cdot,\cdot ;\BB A_{r/2,2r}(z)  \right)$: indeed, $D_h(u , \bdy\BB A_{r/2,2r}(z))$ is clearly determined by this internal metric and $D_h(u,v) \leq D_h(u , \bdy\BB A_{r/2,2r}(z))$ if and only if $v$ is contained in the $D_h$-ball of radius $ D_h(u , \bdy\BB A_{r/2,2r}(z))$ centered at $v$, which is contained in $\ol{\BB A_{r/2,2r}(z)}$.  
Similar considerations hold with $\wt D_h$ in place of $D_h$. 
Hence condition~\ref{item-attained-long} in the definition of $\mathsf E_r(z)$ (lower bound for paths in $\BB A_{\alpha r , r}(z)$) is determined by $h|_{\BB A_{r/2,2r}(z)}$. 

If $P$ is a path from $ u \in \bdy B_{\alpha r}(z)$ to $v\in \bdy B_r(z)$ which stays in $\ol{\BB A_{\alpha r ,r}(z)}$, then $P$ is a $D_h$-geodesic if and only if $\op{len}(P ; D_h) = D_h(u,v)$. 
Hence if condition~\ref{item-attained-long} holds, then $P$ cannot be a $D_h$-geodesic unless $D_h(u,v) \leq D_h(u , \bdy\BB A_{r/2,2r}(z) )$ and $ \wt D_h(u,v) \leq \wt D_h(u , \bdy\BB A_{r/2,2r}(z) )$ (note that $D_h(u,v ; \BB A_{r/2,2r}(z)) \geq D_h(u,v)$), in which case we can tell whether $P$ is a $D_h$-geodesic from the restriction of $h$ to the $D_h$-metric ball of radius $D_h(u , \bdy\BB A_{r/2,2r}(z) )$ centered at $u$, which in turn is determined by $h|_{\BB A_{r/2,2r}(z)}$. 
Furthermore, on the event that $D_h(u,v) \leq D_h(u , \bdy\BB A_{r/2,2r}(z) )$ and $ \wt D_h(u,v) \leq \wt D_h(u , \bdy\BB A_{r/2,2r}(z) )$, both $D_h(u,v)$ and $\wt D_h(u,v)$ are determined by $h|_{\BB A_{r/2,2r}(z)}$. 
Therefore, the intersection of conditions~\ref{item-attained-dist} (comparison of $D_h$ and $\wt D_h$) and~\ref{item-attained-long} in the definition of $\mathsf E_r(z)$ is determined by $h|_{\BB A_{r/2,2r}(z)}$.  
Hence we have proven~\eqref{eqn-attained-msrble}.  
\end{proof}

We now show that~\eqref{item-attained-bad} implies a lower bound for $\BB P[\mathsf E_r(z)]$ for some values of $r \in [\ep^{1+\nu} \BB r , \ep \BB r]$.

\begin{lem} \label{lem-shorter-annulus}
For each $0 < \mu < \nu < 1$ and each $q > 0$, there exists $\alpha_* \in (1/2,1)$ and $p\in (0,1)$  depending only on $q , \mu,\nu $ such that for each $\alpha \in [\alpha_*,1)$, there exists $A = A(\alpha,q,\mu,\nu)  > 1$ such that the following is true for each $C'\in (0,C_*)$. 
If $\BB r > 0$ and $\ep \in (0,1)$ such that~\eqref{item-attained-bad} holds for the above choice of $p,\alpha , C'$, then  
\eqb \label{eqn-shorter-annulus}
\BB P\left[ \text{$\mathsf E_r(z)$ occurs for at least one $r\in [\ep^{1+\mu} \BB r , \ep \BB r] \cap \{8^{-k} \BB r :k\in\BB N\}$} \right] \geq 1 -  O_\ep(\ep^q ) ,\quad\forall z \in\BB C  ,
\eqe 
at a rate which is uniform over the choices of $z$ and $\BB r$. 
\end{lem}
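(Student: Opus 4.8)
The plan is to establish \eqref{eqn-shorter-annulus} by showing that each of the three conditions in the definition of $\mathsf E_r(z)$ holds with high probability for a suitable ``dense'' collection of scales $r$, and that one can force all three to occur simultaneously for at least one such $r$. First I would handle the three conditions separately. Condition~\ref{item-attained-dist} (comparison of $D_h$ and $\wt D_h$) is exactly the content of the hypothesis~\eqref{item-attained-bad}: by Axiom~\ref{item-metric-translate} (translation invariance) and translation invariance of the law of $h$ modulo additive constant (using Lemma~\ref{lem-attained-msrble} to see the event only depends on $h|_{\BB A_{r/2,2r}(z)}$ viewed modulo additive constant), the probability that condition~\ref{item-attained-dist} holds at scale $r$ centered at $z$ does not depend on $z$, so~\eqref{item-attained-bad} gives us at least $(\nu-\mu)\log_8\ep^{-1}$ values of $r\in[\ep^{1+\nu}\BB r,\ep\BB r]\cap\{8^{-k}\BB r\}$ for which this probability is $\geq 1-p$. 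Next, condition~\ref{item-attained-around} (short path around the annulus) holds with probability at least $1-O_\ep(\ep^q)$ at every scale $r$ once $A$ is chosen large enough depending on $\alpha$: this follows from Axioms~\ref{item-metric-translate} and~\ref{item-metric-coord} (tightness across scales), which give tightness of $\frk c_r^{-1}e^{-\xi h_r(z)}$ times both the $D_h$-length of the shortest loop around $\BB A_{\alpha r,r}(z)$ and the reciprocal of $D_h(\bdy B_{\alpha r}(z),\bdy B_r(z))$, uniformly in $z$ and $\BB r$; as $\alpha\to 1$ the ratio degenerates but for fixed $\alpha$ we just take $A$ large. Finally, condition~\ref{item-attained-long} (lower bound for paths staying in the narrow annulus $\ol{\BB A_{\alpha r,r}(z)}$) is precisely where Lemma~\ref{lem-attained-long} (lower bound for distances in a narrow annulus) enters: applying it with $S$ a large multiple of $s$ and with the relevant $\alpha_*$, and combining with the up-to-constants bounds from Axiom~\ref{item-metric-coord} relating $D_h$-distances across $\BB A_{\alpha r,r}(z)$ to $D_h(u,\bdy\BB A_{r/2,2r}(z))$ (and the analogue for $\wt D_h$, which has the same scaling constants), shows that for $\alpha$ close enough to $1$, condition~\ref{item-attained-long} holds at every scale with probability $\geq 1-O_\ep(\ep^q)$, uniformly in $z,\BB r$.

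With the three pieces in hand, the combination step is a union bound over scales combined with a pigeonhole/Borel--Cantelli type counting. Restricting to the $\lfloor(\nu-\mu)\log_8\ep^{-1}\rfloor$ ``good'' scales from~\eqref{item-attained-bad}, on each such scale $r$ the conditional-on-failure probabilities of conditions~\ref{item-attained-around} and~\ref{item-attained-long} are $O_\ep(\ep^q)$ uniformly, so the probability that $\mathsf E_r(z)$ fails is at most $p + O_\ep(\ep^q)$. The problem is that these scales are not independent. To get around this I would instead group the good scales into blocks of geometric ratio bounded away from $1$ and apply Lemma~\ref{lem-annulus-iterate} (iterating events in nested annuli) to the events $\mathsf E_r(z)$, which by Lemma~\ref{lem-attained-msrble} lie in $\sigma((h-h_{4r}(z))|_{\BB A_{r/2,2r}(z)})$: as long as each $\BB P[\mathsf E_r(z)]$ exceeds the threshold $p(a,b,s_1,s_2)$ from that lemma (which we arrange by taking $p$ small enough, depending only on $\mu,\nu$, and $q$ large enough that the $O_\ep(\ep^q)$ errors are negligible), we conclude that the number of good scales at which $\mathsf E_r(z)$ occurs is $\geq 1$ except on an event of probability $ce^{-a K}$ where $K\asymp\log\ep^{-1}$; choosing $a$ large enough makes this $O_\ep(\ep^q)$. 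This is exactly the form~\eqref{eqn-shorter-annulus}, and the uniformity in $z$ and $\BB r$ is preserved because every estimate used (Axioms~\ref{item-metric-translate} and~\ref{item-metric-coord}, Lemma~\ref{lem-attained-long}, Lemma~\ref{lem-annulus-iterate}) is uniform in these parameters.

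The main obstacle I anticipate is the interaction between the narrow-annulus condition~\ref{item-attained-long} and the requirement that $\alpha$ be close to $1$: we need $\alpha$ large enough that geodesics between points of $\bdy B_{\alpha r}(z)$ and $\bdy B_r(z)$ constrained to $\ol{\BB A_{\alpha r,r}(z)}$ are much longer than the unconstrained distance (so that whether a path is a genuine $D_h$-geodesic — hence whether condition~\ref{item-attained-dist} applies to it — can be read off locally), but this forces $A$ in condition~\ref{item-attained-around} and the constant $S/s$ in Lemma~\ref{lem-attained-long} to blow up, and we must check the dependencies line up so that $\alpha_*$ and $p$ depend only on $q,\mu,\nu$ while $A$ is allowed to depend on $\alpha$ as well. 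Threading these quantifiers in the right order — choosing $p$ and $\alpha_*$ first (via the threshold in Lemma~\ref{lem-annulus-iterate}, applied with $b$ irrelevant and $a$ dictated by $q$), then $\alpha\in[\alpha_*,1)$, then $A=A(\alpha,q,\mu,\nu)$, and only then letting the estimates depend on $C'$ and on $\BB r,\ep$ — is the delicate bookkeeping that the proof has to get right, but each individual estimate is a direct application of a result already available in the excerpt.
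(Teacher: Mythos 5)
Your proposal is correct and follows essentially the same route as the paper's proof: reduce, via Lemma~\ref{lem-annulus-iterate} (using the measurability from Lemma~\ref{lem-attained-msrble} and translation invariance) to showing that $\BB P[\mathsf E_{r_k}(z)]$ exceeds a threshold $\wt p = \wt p(q,\mu,\nu)$ at each of the $\geq (\nu-\mu)\log_8\ep^{-1}$ good scales supplied by~\eqref{item-attained-bad}, and then verify the three defining conditions of $\mathsf E_r(z)$ separately — condition~\ref{item-attained-dist} from the hypothesis with $p$ small, condition~\ref{item-attained-long} from Lemma~\ref{lem-attained-long} combined with Axiom~\ref{item-metric-coord} bounds for both metrics (choosing $\alpha_*$ close to $1$), condition~\ref{item-attained-around} from Axiom~\ref{item-metric-coord} with $A$ large — with the same quantifier ordering ($\wt p$ from Lemma~\ref{lem-annulus-iterate} with $a$ dictated by $q$ and $\nu-\mu$; then $\alpha_*$ and $p$; then $A$ depending on $\alpha$). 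One quantitative mislabel: for fixed $\alpha$ and $A$, conditions~\ref{item-attained-long} and~\ref{item-attained-around} do \emph{not} hold with probability $1-O_\ep(\ep^q)$ at each scale — tightness across scales only gives probability $\geq 1-\delta$ for any fixed $\delta$, uniformly in $z,r,\BB r$, upon choosing $\alpha_*$ close to $1$ and $A$ large, and this bound does not improve as $\ep\rta 0$; but this is harmless, since your combination step (like the paper's) only needs each per-scale probability to exceed the fixed threshold of Lemma~\ref{lem-annulus-iterate}, the $O_\ep(\ep^q)$ in~\eqref{eqn-shorter-annulus} coming entirely from the $c e^{-aK}$ bound with $K \gtrsim \log_8\ep^{-1}$, and your final paragraph already chooses $\alpha_*$, $p$, $A$ against that threshold (note also that $q$ is given in the statement, so "taking $q$ large" plays no role).
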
 
\begin{proof}  
Assume~\eqref{item-attained-bad} is satisfied for some choice of $\BB r ,\ep , p , \alpha,C'$ and let $r_1,\dots,r_K \in [\ep^{1+\mu} \BB r , \ep \BB r] \cap \{8^{-k} \BB r :k\in\BB N\}$ be the values of $r$ from~\eqref{item-attained-bad}, enumerated in decreasing order.
Note that $K\geq (\nu-\mu) \log_8\ep^{-1}$ by assumption.   
By Lemma~\ref{lem-attained-msrble}, we can apply Lemma~\ref{lem-annulus-iterate} to find that there exists $\wt p = \wt p(q,\mu , \nu) \in (0,1)$ such that if 
\eqb \label{eqn-shorter-annulus-show}
\BB P[\mathsf E_{r_k}(z)] \geq \wt p ,\quad \forall z\in\BB C ,  \quad \forall k\in [1,K ]_{\BB Z} ,
\eqe
then~\eqref{eqn-shorter-annulus} holds. It therefore suffices to choose $p$, $\alpha_*$, and $A$ in an appropriate manner depending on $\wt p$ so that if~\eqref{item-attained-bad} holds, then~\eqref{eqn-shorter-annulus-show} holds.

By tightness across scales (Axiom~\ref{item-metric-coord}), we can find $S > s > 0$ depending on $\wt p$ such that for each $z\in\BB C$ and $r>0$, it holds with probability at least $1 - (1-\wt p)/4$ that  
\eqb
  D_h\left( \bdy B_r(z) , \bdy \BB A_{r/2 , 2r}(z)  \right)   \geq  s \frk c_r e^{\xi h_r(z)} \quad \text{and} \quad 
\sup_{u,v\in \BB A_{3 r /4 , r}(z)} D_h\left( u ,v ; \BB A_{r/2,2r}(z)\right) \leq S \frk c_r e^{\xi h_r(z)} ,
\eqe 
and the same is true with $\wt D_h$ in place of $D_h$. 
Since $ \BB A_{\alpha r , r}(z) \subset  \BB A_{3 r /4 , r}(z) $ for any choice of $\alpha \in [3/4,1)$, Lemma~\ref{lem-attained-long} with the above choice of $s$ and $S$ gives an $\alpha_* \in [3/4,1)$ depending on $\wt p$ such that for each $\alpha\in [\alpha_*,1)$, $z \in \BB C$, and $r>0$, condition~\ref{item-attained-long} (lower bound for paths in $\BB A_{\alpha r , r}(z)$) in the definition of $\mathsf E_r(z)$ holds with probability at least $1-(1-\wt p)/3$.
 
Now suppose $\alpha \in [\alpha_*,1)$. We can again apply Axiom~\ref{item-metric-coord} (tightness across scales) to find that there exists $A > 1$ depending on $\alpha$ and $\wt p$ such that for each $z\in\BB C$ and $r>0$, condition~\ref{item-attained-around} (distance around $\BB A_{\alpha r , r}(z)$) in the definition of $\mathsf E_r(z)$ occurs with probability at least $1 - (1 - \wt p)/3$.
 
If~\eqref{item-attained-bad} holds for the above choice of $\alpha$ and with $p < (1-\wt p)/3$, then for each $z\in\BB C$ and each $k\in [1,K ]_{\BB Z}$, condition~\ref{item-attained-dist} (comparison of $D_h$ and $\wt D_h$) in the definition of $\mathsf E_{r_k }(z)$ holds with probability at least $1- (1-\wt p)/3$. Combining the three preceding paragraphs shows that~\eqref{eqn-shorter-annulus-show} holds.  
\end{proof}

\begin{lem} \label{lem-shorter-annulus-all}
There is a $q>1$ depending only on $\mu,\nu $ such that if $p,\alpha_*$, $\alpha \in [\alpha_*,1)$, and $A$ is chosen as in Lemma~\ref{lem-shorter-annulus} for this choice of $q$, then the following is true for each $C'\in (0,C_*)$. 
If~\eqref{item-attained-bad} holds for some $\BB r > 0$ and $\ep \in (0,1)$ and for this choice of $p,\alpha,C'$, then for each open set $U\subset\BB C$, it holds with probability tending to 1 as $\ep\rta 0$ (at a rate which is uniform in $\BB r$) that for $z\in \BB r U$, there exists $r   \in [\ep^{1+\nu} \BB r , \ep \BB r]\cap\{8^{-k} \BB r :k\in\BB N\}$ and $w\in \left(\frac{\ep^{1+\nu} \BB r}{100}  \BB Z^2 \right) \cap (\BB r U)$ such that $z\in B_{r/2}(w)$ and $\mathsf E_r(w)  $ occurs. 
\end{lem}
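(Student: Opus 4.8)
The plan is to reduce Lemma~\ref{lem-shorter-annulus-all} to Lemma~\ref{lem-shorter-annulus} by a union bound over a mesh of points, using translation invariance (Axiom~\ref{item-metric-translate}) to move the single-point estimate~\eqref{eqn-shorter-annulus} around $\BB r U$, and then passing from ``for a fixed $z$, some good $r$ exists'' to ``for \emph{all} $z \in \BB r U$ simultaneously, some good lattice center $w$ exists.'' The parameter $q$ must be chosen large enough (depending only on $\mu,\nu$) that the resulting error, after summing over roughly $\ep^{-2(1+\nu)}$ mesh points, still tends to $0$.

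\medskip

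\noindent\textbf{Step 1: Apply Lemma~\ref{lem-shorter-annulus} with a large exponent.} Fix $q > 1$ large, to be pinned down in Step 3 (it will depend only on $\mu,\nu$). Let $p, \alpha_*$ and, for each $\alpha \in [\alpha_*,1)$, the constant $A = A(\alpha,q,\mu,\nu)$ be as in Lemma~\ref{lem-shorter-annulus} for this $q$. Suppose~\eqref{item-attained-bad} holds for some $\BB r > 0$, $\ep \in (0,1)$, and this choice of $p,\alpha,C'$. Then Lemma~\ref{lem-shorter-annulus} gives, for \emph{every} $z \in \BB C$,
\eqb \label{eqn-shorter-annulus-all-pointwise}
\BB P\left[ \text{$\mathsf E_r(z)$ occurs for some $r \in [\ep^{1+\mu}\BB r , \ep\BB r] \cap \{8^{-k}\BB r : k\in\BB N\}$} \right] \geq 1 - O_\ep(\ep^q),
\eqe
at a rate uniform in $z$ and $\BB r$. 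Note that the scales produced lie in $[\ep^{1+\mu}\BB r, \ep\BB r] \subset [\ep^{1+\nu}\BB r, \ep\BB r]$ since $\mu < \nu$, so they are admissible for the conclusion of Lemma~\ref{lem-shorter-annulus-all}.

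\medskip

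\noindent\textbf{Step 2: Union bound over a fine mesh and a covering argument.} Fix a bounded open $U \subset \BB C$; by monotonicity we may assume $U$ is bounded. Let $\mcl W_\ep := \left( \tfrac{\ep^{1+\nu}\BB r}{100} \BB Z^2 \right) \cap (\BB r U)$, so $\#\mcl W_\ep = O_\ep(\ep^{-2(1+\nu)})$ with the implicit constant depending only on $U$. Apply~\eqref{eqn-shorter-annulus-all-pointwise} with $z = w$ for each $w \in \mcl W_\ep$ and take a union bound: with probability at least $1 - O_\ep(\ep^{q - 2(1+\nu)})$, for every $w \in \mcl W_\ep$ there is a scale $r = r(w) \in [\ep^{1+\mu}\BB r, \ep\BB r] \cap \{8^{-k}\BB r\}$ with $\mathsf E_{r(w)}(w)$ occurring. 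Now, given an arbitrary $z \in \BB r U$, let $w \in \mcl W_\ep$ be a closest mesh point; since $|z - w| \leq \tfrac{\sqrt 2}{100}\ep^{1+\nu}\BB r \leq \tfrac{1}{2}\ep^{1+\mu}\BB r \leq \tfrac{1}{2}r(w)$, we have $z \in B_{r(w)/2}(w)$. (Strictly, one should choose the mesh spacing so that $z \in B_{r/2}(w)$ for any admissible $r$; spacing $\tfrac{\ep^{1+\nu}\BB r}{100}$ is more than small enough since $r \geq \ep^{1+\nu}\BB r$.) Finally, one must also ensure $w \in \BB r U$; this can be arranged by working with a slightly shrunk open set and then enlarging, or by noting that since $z \in \BB r U$ and $U$ is open, for small $\ep$ the closest mesh point lies in $\BB r U$ — this is the only point where we use openness of $U$ and it costs nothing asymptotically. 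This gives exactly the conclusion of Lemma~\ref{lem-shorter-annulus-all}.

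\medskip

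\noindent\textbf{Step 3: Choice of $q$ and the main obstacle.} We need $q - 2(1+\nu) > 0$, so it suffices to take any $q > 2(1+\nu)$; since $\nu < 1$, $q = 5$ works, and in any case $q$ depends only on $\mu,\nu$ (indeed only on $\nu$). With this choice the error probability $O_\ep(\ep^{q - 2(1+\nu)}) \to 0$ as $\ep \to 0$, uniformly in $\BB r$ because the rate in~\eqref{eqn-shorter-annulus-all-pointwise} is uniform in $\BB r$ and $z$ and the cardinality bound on $\mcl W_\ep$ is $\BB r$-independent. The only genuine subtlety — and the step I expect to require the most care — is bookkeeping the interplay between the three scale exponents: Lemma~\ref{lem-shorter-annulus} outputs scales in $[\ep^{1+\mu}\BB r, \ep\BB r]$ (exponent $\mu$), the conclusion we want allows scales in $[\ep^{1+\nu}\BB r, \ep\BB r]$ (exponent $\nu$) and mesh spacing $\tfrac{\ep^{1+\nu}\BB r}{100}$, and we need the mesh spacing small relative to the \emph{smallest} possible output scale $\ep^{1+\mu}\BB r$ so that the containment $z \in B_{r/2}(w)$ holds. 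Since $1+\nu > 1+\mu$, we have $\ep^{1+\nu} \leq \ep^{1+\mu}$ for $\ep < 1$, so the mesh is indeed finer than needed and the containment is automatic; the union bound then costs a factor $\ep^{-2(1+\nu)}$, which is the worst case, and $q > 2(1+\nu)$ absorbs it. Everything else is a routine union bound plus the already-established uniformity in Lemma~\ref{lem-shorter-annulus}.
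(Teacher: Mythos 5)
Your proposal is correct and is essentially the paper's own argument: the paper proves this lemma in one line by choosing $q$ sufficiently large and taking a union bound over the mesh points $w\in \left(\frac{\ep^{1+\nu}\BB r}{100}\BB Z^2\right)\cap(\BB r U)$, exactly as you do, and your bookkeeping (mesh cardinality $O_\ep(\ep^{-2(1+\nu)})$ uniform in $\BB r$, hence any $q>2(1+\nu)$ suffices, plus the trivial containment $z\in B_{r/2}(w)$ since the mesh spacing is far smaller than $\ep^{1+\nu}\BB r/2$) fills in the intended details. The only soft spot is your remark that openness of $U$ forces the nearest mesh point into $\BB r U$ (false uniformly near $\bdy(\BB r U)$), but your alternative fix via a slightly shrunk/enlarged open set handles this, and the paper's one-line proof glosses over the same boundary issue.
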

\begin{proof} 
Upon choosing $q$ sufficiently large, this follows from Lemma~\ref{lem-shorter-annulus} and a union bound over all $w \in  \left(\frac{\ep^{1+\nu} \BB r}{100}  \BB Z^2 \right) \cap (\BB r U)$. 
\end{proof}

\begin{figure}[t!]
 \begin{center}
\includegraphics[scale=1]{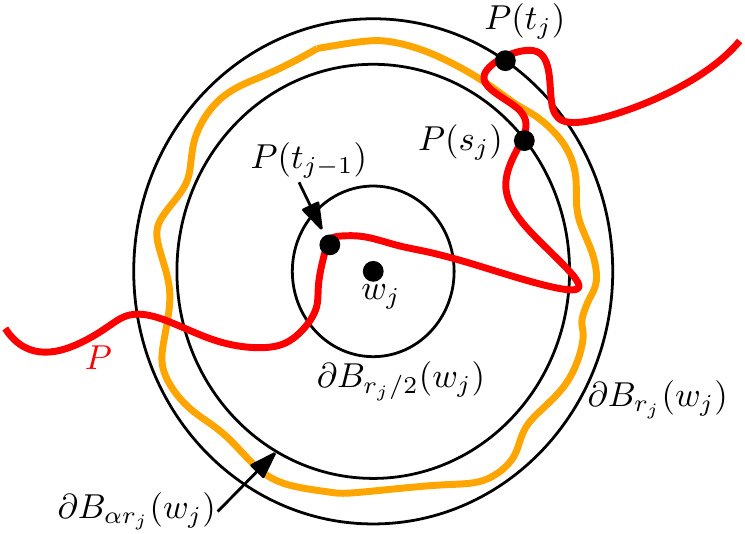}
\vspace{-0.01\textheight}
\caption{Illustration of the proof of Proposition~\ref{prop-attained-bad}. The $D_h$-geodesic $P$ from $\BB z$ to $\BB w$ along with one of the balls $B_{r_j}(w_j)$ hit by $P$ for which $\mathsf E_{r_j}(w_j)$ occurs are shown. The time $t_j$ is the first time that $P$ exits $B_{r_j}(w_j)$ after time $t_{j-1}$ and the time $s_j$ is the last time before $t_j$ at which $P$ hits $\bdy B_{\alpha r_j}(w_j)$. Condition~\ref{item-attained-dist} in the definition of $\mathsf E_{r_j}(w_j)$ shows that $\wt D_h(P(s_j) , P(t_j)) \leq C' (t_j - s_j)$. The orange path comes from condition~\ref{item-attained-around} in the definition of $\mathsf E_{r_j}(w_j)$, and its $D_h$-length is at most $A D_h(\bdy B_{\alpha r_j}(w_j) , \bdy B_{r_j}(w_j)) \leq A (t_j - s_j)$. Since $P$ crosses this orange path both before time $t_{j-1}$ and after time $s_j$ and $P$ is a $D_h$-geodesic, we have that $s_j - t_{j-1} \leq A (t_j - s_j)$. This shows that the intervals $[s_j , t_j]$ occupy a uniformly positive fraction of the total $D_h$-length of $P$, which in turn allows us to show that $\wt D_h(\BB z , \BB w) \leq C'' D_h(\BB z ,\BB w)$ for a constant $C'' \in (C' , C_*)$ depending only on $C' , A$. 
}\label{fig-attained}
\end{center}
\vspace{-1em}
\end{figure} 

\begin{proof}[Proof of Proposition~\ref{prop-attained-bad}]
See Figure~\ref{fig-attained} for an illustration of the proof.
\medskip

\noindent\textit{Step 1: setup.}
Let $p, \alpha_*$, $\alpha\in [\alpha_*,1)$, and $A > 1$ be chosen as in Lemma~\ref{lem-shorter-annulus-all}. Also fix
\eqb \label{eqn-C''-choice}
C'' \in \left( C'  + \frac{A}{ A+1 }(C_* - C')  , C_* \right) ,
\eqe 
and note that we can choose $C''$ in a manner depending only on $\alpha,C',\mu,\nu$ (since $A$ depends only on $\alpha,\mu,\nu$).

We will show that there exists $\ep_0 = \ep_0(\beta ,\alpha,C',\mu,\nu) > 0$ such that if $\BB r > 0$, $\ep \in (0,\ep_0]$, and~\eqref{item-attained-bad} holds for these values of $\BB r , \ep,p,\alpha$, then with probability greater than $1  - \beta$,  
\eqb \label{eqn-lower-ratio}
\wt D_h(\BB z , \BB w) \leq C'' D_h(\BB z,\BB w) \quad \text{$\forall \BB z,\BB w \in B_{\BB r }(0)$ with $|\BB z-\BB w| \geq \beta \BB r$}.
\eqe
In other words, $\BB P[\ol G_{\BB r}(C'',\beta)^c] > 1 - \beta$, as required. 

By Axiom~\ref{item-metric-coord} (tightness across scales), there is some large bounded open set $U\subset \BB C$ depending only on $\beta$ such that for each $\BB r >0$, it holds with probability at least $1-\beta/2$, the $D_h$-diameter of $B_{\BB r}(0)$ is smaller than the $D_h$-distance from $B_{\BB r}(0)$ to $\bdy(\BB r U)$, in which case every $D_h$-geodesic between points of $B_{\BB r}(0)$ is contained in $\BB r U$. 
Henceforth fix such a choice of $U$.
Let $F_{\BB r}^\ep$ be the event that every $D_h$-geodesic between points of $B_{\BB r}(0)$ is contained in $\BB r U$ and the event of Lemma~\ref{lem-shorter-annulus-all} with the above choices of $\alpha,A,C',$ and $U$, so that $\BB P[F^\ep] \geq 1 - \beta / 2 - o_\ep(1)$, uniformly in $\BB r$, under the assumption~\eqref{item-attained-bad}. 
\medskip

\noindent\textit{Step 2: covering a $D_h$-geodesic with paths of short $\wt D_h$-length. }
To prove~\eqref{eqn-lower-ratio}, we consider points $\BB z,\BB w \in B_{\BB r}(0) \cap \BB Q^2$ with $|\BB z-\BB w| \geq \beta \BB r$ and let $P : [0,D_h(\BB z , \BB w )] \rta \BB C$ be the (a.s.\ unique) $D_h$-geodesic from $\BB z$ to $\BB w$. 
Let $t_0  = 0$ and inductively let $t_j$ for $j\in\BB N$ be the smallest time $t \geq t_{j-1}$ at which $P$ exits a Euclidean ball of the form $B_{r}(w)$ for $w\in \left(\frac{\ep^{1+\nu} \BB r}{100} \BB Z^2 \right) \cap (\BB r U)$ and $r\in [\ep^{1+\nu} \BB r , \ep \BB r] \cap\{8^{-k} \BB r :k\in\BB N\}$ such that $P(t_{j-1}) \in B_{r/2}(w)$ and $\mathsf E_r(w)$ occurs; or let $t_j = D_h(\BB z , \BB w )$ if no such $t$ exists.  
If $t_j  < D_h(\BB z , \BB w )$, let $w_j$ and $r_j$ be the corresponding values of $w$ and $r$. 
Also let $s_j$ be the last time before $t_j$ at which $P$ hits $\bdy B_{\alpha r_j}(w)$, so that $s_j \in [t_{j-1} , t_j]$ and $P([s_j , t_j]) \subset \ol{ \BB A_{\alpha r_j , r_j}(w_j)}$.
Finally, define
\eqb
\ul J := \max\left\{ j\in \BB N : |\BB z - P(t_{j-1} )| <  2 \ep \BB r \right\}
\quad \text{and} \quad
\ol J := \min\left\{j\in\BB N : |\BB w - P(t_{j+1} )|  <  2 \ep \BB r \right\} .
\eqe
The reason for the definitions of $\ul J$ and $\ol J$ is that $\BB z , \BB w \notin B_{ r_j}(w_j)$ for $j \in [\ul J , \ol J]_{\BB Z}$ (since $r_j \leq \ep \BB r$ and $P(t_j) \in B_{r_j}(w_j)$). 
By the definition of $F_{\BB r}^\ep$, on this event we have $t_j  < D_h(\BB z,\BB w)$ and $|P(t_{j-1}) - P(t_j)| \leq 2 \ep \BB r$ whenever $|\BB w - P(t_{j-1})| \geq  \ep \BB r$. 
Therefore, on $F_{\BB r}^\ep$,  
\eqb \label{eqn-endpoint-contain}
P(t_{\ul J}) \in B_{4\ep \BB r}(\BB z) \quad \text{and} \quad P(t_{\ol J}) \in B_{4\ep\BB r}(\BB w) .
\eqe

Since $P$ is a $D_h$-geodesic, for $j\in [\ul J , \ol J]_{\BB Z}$ also $P|_{[s_j , t_j]}$ is a $D_h$-geodesic from $P(s_j) \in \bdy\mcl B_{\alpha r_j}(w_j)$ to $P(t_j) \in \bdy B_{r_j}(w_j)$ and by definition this $D_h$-geodesic stays in $\ol{\BB A_{\alpha r_j , r_j}(w_j)}$. 
Moreover, $P|_{[s_j,t_j]}$ is the only $D_h$-geodesic from $P(s_j)$ to $P(t_j)$ since otherwise we could re-route $P$ along another such $D_h$-geodesic to contradict the uniqueness of the $D_h$-geodesic from $\BB z$ to $\BB w$.  

Combining this with condition~\ref{item-attained-dist} in the definition of $\mathsf E_{r_j}(w_j)$ (comparison of $D_h$ and $\wt D_h$), applied with $u = P(s_j)$ and $v = P(t_j)$, and the definition~\eqref{eqn-max-min-def} of $C_*$, we find that  
\eqb \label{eqn-increment-tilde-D}
\wt D_h\left( P(s_j) , P(t_j)  \right) \leq C' (t_j - s_j) 
\quad \text{and} \quad
\wt D_h\left( P(t_{j-1}) , P(s_j) \right) \leq C_* (s_j - t_{j-1}  ) ,\quad \forall j \in [\ul J ,\ol J]_{\BB Z}.
\eqe 

We will now argue that $ s_j - t_{j-1}$ is not too much larger than $t_j - s_j$. 
If $j \in [\ul J , \ol J]_{\BB Z}$, then since $r_j \leq \ep \BB r$ and $|P(t_j) -\BB z| \wedge |P(t_j) - \BB w| \geq 2\ep \BB r$, the $D_h$-geodesic $P$ must cross the annulus $\BB A_{\alpha r_j,r_j}(w_j)$ at least once before time $t_{j-1}$ and at least once after time $s_j$. 
By condition~\ref{item-attained-around} in the definition of $\mathsf E_{r_j}(w_j)$, there is a path disconnecting the inner and outer boundaries of this annulus with $D_h$-length at most $A D_h\left(\bdy B_{\alpha r_j}(w_j) , \bdy B_{  r_j}(w_j) \right)$. 
The geodesic $P$ must hit this path at least once before time $t_{j-1}$ and at least once after time $s_j$. 
Since $P$ is a geodesic and $P(s_j) \in \bdy B_{\alpha r_j}(w_j)$, $P(t_j) \in \bdy B_{ r_j}(w_j)$, it follows that 
\eqbn 
s_j - t_{j-1}  \leq A D_h\left(\bdy B_{\alpha r_j}(w_j) , \bdy B_{  r_j}(w_j) \right) \leq A (t_j - s_j) .
\eqen
Adding $A (s_j - t_{j-1})$ to both sides of this inequality, then dividing by $A+1$, gives
\eqb \label{eqn-increment-compare}
s_j - t_{j-1} \leq  \frac{A}{A+1} (t_j - t_{j-1}) .
\eqe
\medskip

\noindent\textit{Step 3: upper bound for $\wt D_h$. }
By combining the above relations, we get that on $F_{\BB r}^\ep$, 
\allb \label{eqn-attained-sum}
\wt D_h\left( B_{4 \ep \BB r}(\BB z) , B_{4 \ep \BB r}(\BB w)  \right) 
&\leq \sum_{j=\ul J+1}^{\ol J} \left( \wt D_h\left( P(t_{j-1}) , P(s_j) \right) +   \wt D_h\left( P(s_j) , P(t_j)  \right)  \right) \quad  \text{(by \eqref{eqn-endpoint-contain})} \notag \\
&\leq \sum_{j=\ul J+1}^{\ol J} \left( C_*( s_j - t_{j-1})  +   C' (t_j - s_j)  \right) \quad  \text{(by \eqref{eqn-increment-tilde-D})} \notag \\
&= \sum_{j=\ul J+1}^{\ol J} \left(    C' (t_j - t_{j-1} )  + (C_* - C') (s_j - t_{j-1}) \right) \notag \\
&\leq \left( C'  + \frac{A}{A+1}(C_* - C') \right) \sum_{j=\ul J+1}^{\ol J}  (t_j - t_{j-1} )  \quad \text{(by \eqref{eqn-increment-compare})} \notag \\
&\leq \left( C'  + \frac{A}{A+1}(C_* - C') \right) D_h(\BB z,\BB w) .
\alle

By~\eqref{eqn-C''-choice}, Axiom~\ref{item-metric-coord} (tightness across scales) for $D$ and $\wt D$, and the triangle inequality, it holds with probability tending to 1 as $\ep\rta 0$, uniformly in $r$, that
\eqb \label{eqn-sup-distance}
\left| \wt D_h(\BB z,\BB w) - \wt D_h\left( B_{4 \ep \BB r}(\BB z) , B_{4 \ep \BB r}(\BB w)  \right)  \right| \leq \frac{1}{100} \left(  C'' - \left( C'  + \frac{A}{A+1}(C_* - C')     \right) \right) D_h(\BB z,\BB w) 
\eqe 
simultaneously for all $\BB z,\BB w\in B_{  \BB r }(0)$ with $|\BB z-\BB w| \geq \beta \BB r$. 
By combining this with~\eqref{eqn-attained-sum} and recalling that $\BB P[F_{\BB r}^\ep] = 1-\beta/2 - o_\ep(1)$ uniformly in $\BB r$ if~\eqref{item-attained-bad} holds,
we get that if $\ep_0$ is chosen to be sufficiently small, in a manner which does not depend on $\BB r$, then if~\eqref{item-attained-bad} holds for $\BB r > 0$ and $\ep \in (0 , \ep_0]$, then it holds with probability at least $1-\beta$ that~\eqref{eqn-lower-ratio} holds simultaneously for each  $\BB z,\BB w\in B_{  \BB r}(0) \cap \BB Q^2$ with $|\BB z-\BB w| \geq \beta \BB r$. By the continuity of $D_h$ and $\wt D_h$, we can remove the requirement that $\BB z,\BB w\in\BB Q^2$ (which was only used to get the uniqueness of the $D_h$-geodesic from $\BB z$ to $\BB w$).  
\end{proof}

\section{Independence along a geodesic}
\label{sec-geodesic-iterate}

\newcommand{\confarcs}{{\mathcal I}}
\newcommand{\confpts}{\mathrm{Conf}}
\newcommand{\arcendpts}{\mathrm{EndPts}}
\newcommand{\stabE}{\mathrm{Stab}}

Let $h$ be a whole-plane GFF and let $D$ be a weak $\gamma$-LQG metric. The goal of this section is to prove the following general ``local independence" type result for events depending on a small segment of a $D_h$-geodesic.  We will first state a simplified version of our result which is easier to parse (Theorem~\ref{thm-geo-iterate0}), then state the full version (Theorem~\ref{thm-geo-iterate}). 

\begin{thm} \label{thm-geo-iterate0} 
Suppose we are given events $\frk E_r^{\BB z , \BB w}(z) \in \sigma(h)$ for $z\in\BB C$, $r > 0$, and $\BB z  ,\BB w \in \BB C$ and a deterministic constant $\Lambda > 1$ which satisfy the following properties, where here $P = P^{\BB z,\BB w}$ denotes the (a.s.\ unique) $D_h$-geodesic from $\BB z$ to $\BB w$. 
\begin{enumerate}  
\item (Measurability) The event $\frk E_r^{\BB z,\BB w}(z)$ is determined by $h|_{ B_r(z)}$ and the geodesic $P $ stopped at the last time it exists $B_r(z)$. \label{item-geo-iterate-msrble0} 
\item (Lower bound for $\BB P[\frk E_r^{\BB z, \BB w}(z)]$) If $\BB z,\BB w\in\BB C\setminus B_r(z)$, then a.s.\ \label{item-geo-iterate-compare0}
\allb \label{eqn-geo-iterate-compare0}
 \BB P\left[ \frk E_r^{\BB z, \BB w}(z)    \,\big|\, h|_{\BB C\setminus B_{  r}(z)} ,  \{P  \cap B_{ r}(z)\not=\emptyset\} \right] \geq \Lambda^{-1} .
\alle
\end{enumerate}
For each $\nu\in (0,1)$, $q > 0$, $\ell \in (0,1)$, and bounded open set $U\subset\BB C$, it holds with probability tending to 1 as $\ep \rta 0$, at a rate depending only on $U,q,\ell,\Lambda$, that for each $\BB z , \BB w \in \left( \ep^q  \BB Z^2 \right) \cap U$ with $|\BB z-\BB w|\geq \ell  $, there exists $z\in\BB C$ and $r\in [\ep^{1+\nu} , \ep]$ such that $ P^{\BB z,\BB w} \cap B_{ r}(z)\not=\emptyset$ and $\frk E_r^{\BB z,\BB w}(z)$ occurs. 
\end{thm}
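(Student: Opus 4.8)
The plan is to extract, for each fixed pair $\BB z,\BB w$, a large collection of ``stable times'' along the geodesic $P = P^{\BB z,\BB w}$ at which we may force the event $\frk E_r^{\BB z,\BB w}(z)$ to occur with uniformly positive conditional probability, and then conclude via a Bernoulli concentration argument. The central difficulty is that $D_h$-geodesics are not local functionals of $h$ and satisfy no Markov property, so we cannot directly condition on ``the past of $P$''. To get around this we use the confluence-of-geodesics machinery from~\cite{gm-confluence}, reviewed in Section~\ref{sec-confluence-prelim}. The idea: grow the filled metric ball $\mcl B_s^\bullet(\BB z; D_h)$, and at a suitable sequence of radii $s_1 < s_2 < \dots$ use Theorem~\ref{thm-finite-geo-quant} (quantitative confluence across a metric annulus) to see that, except on an event of stretched-exponentially small probability, only boundedly many boundary points of $\bdy\mcl B_{s_i}^\bullet$ are hit by leftmost geodesics to $\bdy\mcl B_{s_{i+1}}^\bullet$. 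Consequently the geodesic $P$ is ``stable'' at the time $s_i$ it crosses such an annulus, in the sense that modifying $h$ in a small Euclidean ball around $P(s_i)$ does not change $P|_{[0,\tau]}$ for a time $\tau$ slightly less than $s_i$ (this is where Lemma~\ref{lem-geo-kill-pt} and the radii $\rho_{\BB r,\ep}$ enter, to rule out the geodesic sneaking near a specified boundary point). I would formalize a ``ball stability event'' as alluded to in~\eqref{eqn-ball-stab-event}, and show that with probability $1 - o_\ep^\infty(\ep)$ there are at least of order $\log\ep^{-1}$ disjoint Euclidean balls $B_{r}(z)$, with $r \in [\ep^{1+\nu},\ep]$, hit by $P$ at stable times.

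Next, at each stable ball $B_r(z)$ hit by $P$ at a stable time, I want to invoke hypothesis~\ref{item-geo-iterate-compare0}: conditionally on $h|_{\BB C\setminus B_r(z)}$ and on $\{P\cap B_r(z)\neq\emptyset\}$, the event $\frk E_r^{\BB z,\BB w}(z)$ has probability at least $\Lambda^{-1}$. The point of stability is precisely that the initial segment of $P$ up to (just before) the time it reaches $B_r(z)$ is determined by $h|_{\BB C\setminus B_r(z)}$ together with a little extra data that is still independent of $h|_{B_r(z)}$ modulo the Markov decomposition, so conditioning on ``$P$ reaches $B_r(z)$ and what it did beforehand'' does not destroy the lower bound. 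Hence, given the past, each stable ball independently succeeds (i.e.\ $\frk E_r^{\BB z,\BB w}(z)$ occurs) with conditional probability $\geq \Lambda^{-1}$, uniformly. Applying a standard concentration inequality for sums of Bernoulli random variables that stochastically dominate i.i.d.\ $\mathrm{Bernoulli}(\Lambda^{-1})$'s (e.g.\ via a supermartingale / Azuma-type bound, which is exactly the structure behind Lemma~\ref{lem-annulus-iterate}), the probability that \emph{none} of the order-$\log\ep^{-1}$ stable balls along $P$ has $\frk E_r^{\BB z,\BB w}(z)$ occurring is at most $(1-\Lambda^{-1})^{c\log\ep^{-1}} = \ep^{c'}$ for a constant $c' = c'(\Lambda) > 0$ which we can make as large as we like by taking enough stable balls (this uses $r\in[\ep^{1+\nu},\ep]$ so that the number of available scales grows like $\log\ep^{-1}$, and the choice of how many stable radii $s_i$ to use). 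So for each fixed $\BB z,\BB w$, with probability $1 - \ep^{c'} - o_\ep^\infty(\ep)$ there is a ball $B_r(z)$, $r\in[\ep^{1+\nu},\ep]$, with $P^{\BB z,\BB w}\cap B_r(z)\neq\emptyset$ and $\frk E_r^{\BB z,\BB w}(z)$ occurring.

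Finally I would upgrade from a fixed pair $\BB z,\BB w$ to all pairs in $(\ep^q\BB Z^2)\cap U$ with $|\BB z - \BB w|\geq \ell$ by a union bound: there are at most $O(\ep^{-2q})$ such pairs, so taking the number of stable balls large enough that $c' > 2q$ (again possible since the number of usable scales is $\asymp \log\ep^{-1}$, so $c'$ can be made arbitrarily large at the cost of a longer but still $o_\ep^\infty(\ep)$ stability estimate) makes the union bound go to $0$ as $\ep\to0$, at a rate depending only on $U,q,\ell,\Lambda$. Here one must be mildly careful that the confluence estimates of~\cite{gm-confluence} are uniform over the starting point $\BB z$ ranging over the bounded set $U$, which is why we also need a ``simultaneously over all $\BB z$'' version of the regularity event $\mcl E_{\BB r}^{\BB z}(a)$; this is the content deferred to Section~\ref{sec-iterate-reg}. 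The main obstacle, and the part requiring genuine care rather than routine estimation, is establishing the stability statement precisely — i.e.\ showing that modifying $h$ in $B_r(z)$ does not affect an initial segment of $P$ and that ``$P$ reaches $B_r(z)$ at a stable time with a given past'' is an event compatible with the Markov decomposition of $h$ across $\bdy B_r(z)$ — since this is exactly where confluence of geodesics is doing the real work and where the non-locality of geodesics has to be tamed.
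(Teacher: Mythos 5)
Your overall architecture is the same as the paper's (grow the filled metric ball from $\BB z$, use the confluence results of~\cite{gm-confluence} to produce ``stable'' times, apply the hypothesis at each stable time, and conclude by Bernoulli concentration plus a union bound over the mesh), but the step you wave through is precisely the one the paper has to work hardest on, and as stated it has a genuine gap. You assert that ``given the past, each stable ball independently succeeds with conditional probability $\geq \Lambda^{-1}$, uniformly.'' Hypothesis~\ref{item-geo-iterate-compare0} gives a lower bound conditional on $h|_{\BB C\setminus B_r(z)}$ and on the event $\{P\cap B_r(z)\neq\emptyset\}$ for a \emph{fixed} ball; it does not give a lower bound conditional on the past of the geodesic together with the information of \emph{which} ball $P$ enters after that time. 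Whether $P$ dips into a given candidate ball depends on the field inside that ball (and inside the competing balls), so conditioning on ``$B_r(z)$ is the ball the geodesic hits'' biases $h|_{B_r(z)}$, and the stability event only guarantees that the \emph{past} of $P$ is compatible with the Markov decomposition across $\bdy B_r(z)$ — it does not remove this selection effect. The paper's workaround (Section~\ref{sec-annulus-choose}) is to sum the hypothesis bound over all candidate pairs $(z,r)\in\mcl Z_k$, via Lemmas~\ref{lem-ball-stab-msrble} and~\ref{lem-cond-expectation}, to get a bound on $\BB E[\#\mcl Z_k^{\frk E}\mid\mcl F_k]$, and then convert expectation to probability using the bound of Lemma~\ref{lem-geo-bdy} on how many candidate balls a geodesic can hit near $\bdy\mcl B_{t_k}^\bullet$ (Lemma~\ref{lem-good-annulus-count}). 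This conversion costs a factor $\ep^{2\nu+o_\ep(1)}$: the per-annulus success probability obtained in Lemma~\ref{lem-nomax-cond} is $\ep^{2\nu+o_\ep(1)}$, not a constant, and no argument in your proposal recovers a constant.

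This breaks your quantitative conclusion as well. Even granting a constant per-trial bound, you use only $\asymp\log\ep^{-1}$ trials (you index them by the scales in $[\ep^{1+\nu},\ep]$), which gives failure probability $\ep^{c'}$ with $c'$ determined by $\Lambda$ and $\nu$; you cannot ``take enough stable balls'' to make $c'>2q$ for arbitrary $q$, since the number of available scales is fixed at $\asymp\nu\log_8\ep^{-1}$ once $\nu$ is fixed. The paper instead takes the trials to be the $K\asymp\ep^{-\beta}$ stable annuli of the metric-ball growth (with $\beta$ chosen so that $4\nu<\beta\wedge\theta$, Proposition~\ref{prop-nomax-quant}); with polynomially many trials, even the weak per-trial bound $\ep^{2\nu+o_\ep(1)}$ yields a per-pair failure probability of $o_\ep^\infty(\ep)$, which is what survives the union bound over the $O(\ep^{-2q})$ mesh points for \emph{every} $q>0$. (A small additional correction: the concentration across annuli is carried out via Hoeffding together with Lemma~\ref{lem-uncond-to-cond}, not Lemma~\ref{lem-annulus-iterate}, which in the general Theorem~\ref{thm-geo-iterate} is only used for the regularity event and is not needed in the simplified setting of Theorem~\ref{thm-geo-iterate0}.) So the two things to fix are: (i) replace the per-ball $\Lambda^{-1}$ claim by the counting/expectation argument (or supply a genuinely new argument that defeats the selection bias), and (ii) run the Bernoulli argument over polynomially many stable times so the failure probability is superpolynomially small before the union bound.
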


We think of the parameter $q > 0$ in Theorem~\ref{thm-geo-iterate0} as large, so the conclusion of the theorem holds for all pairs $(\BB z , \BB w)$ in a fine mesh of $U$. 

Intuitively, the reason why Theorem~\ref{thm-geo-iterate0} is true is as follows.
The geodesic segments $P\cap B_r(z)$ and $P\cap B_r(w)$ are approximately independent from one another when $|z-w| $ is much larger than~$r$.  
When $r$ is small, we can cover $P$ by a large number of balls $B_r(z)$ whose corresponding center points $z$ lie at Euclidean distance much further than $r$ from one another. 
Using~\eqref{eqn-geo-iterate-compare0} and a general concentration inequality for independent random variables, one gets that for each fixed pair $(\BB z,\BB w)$, with high probability there exists $z\in\BB C$ such that $ P^{\BB z,\BB w} \cap B_{ r}(z)\not=\emptyset$ and $\frk E_r^{\BB z,\BB w}(z)$ occurs. 
One then takes a union bound over all pairs $\BB z , \BB w \in \left( r^q  \BB Z^2 \right) \cap U$. 

The above heuristic is not quite right since $D_h$-geodesics do not depend locally on the field, so $P\cap B_r(z)$ and $P\cap B_r(w)$ are not approximately independent when $|z-w|$ is much greater than $r$. Indeed, it is possible that changing what happens in $B_r(z)$ could affect the behavior of $P$ macroscopically even when $r$ is very small.
As a substitute for this lack of long-range independence, we will use the confluence of geodesics results from~\cite{gm-confluence}, as discussed in Section~\ref{sec-outline}, and only make changes to the field at places where the geodesics are ``stable'' in the sense that a microscopic change does not lead to macroscopic changes to $P$. 
The reason why we only get a statement which holds with probability tending to $1$ as $r\rta 0$ at the end of Theorem~\ref{thm-geo-iterate0} is that we need to truncate on a global regularity event in order to make confluence hold with high probability.

We will actually prove (and use) a more general version of Theorem~\ref{thm-geo-iterate0} which differs from Theorem~\ref{thm-geo-iterate0} in the following respects.
\begin{itemize}
\item We allow for more flexibility in the Euclidean radii involved in the various conditions, which is represented by constants $\{\lambda_i\}_{i =1,\dots,5}$ (for our particular application, the constants are chosen explicitly in~\eqref{eqn-constant-choice}).
\item We introduce events $E_r(z)$ which are determined by the restriction of $h$ to an annulus $\BB A_{\lambda_1 r , \lambda_4 r}(z)$ (for constants $\lambda_1 < \lambda_4$) and which are required to have probability close to 1. We replace~\eqref{eqn-geo-iterate-compare0} by a comparison between the conditional probabilities of $\frk E_r^{\BB z,\BB w}(z)$ and $E_r(z)$ given $h|_{\BB C\setminus B_{\lambda_3 r}(z)}$, for another constant $\lambda_3$. The occurrence of $E_r(z)$ can be thought of as the statement that ``$h|_{\BB A_{\lambda_1 r , \lambda_4 r}(z)}$ is sufficiently well behaved that $\frk E_r^{\BB z,\BB w}(z)$ has a chance to occur". 
\item We do not require our events to be defined for all $r>0$, but rather only for values of $r$ in a suitably ``dense" set $\mcl R \subset (0,\infty)$. The reason why we need to allow for this is that the results of Section~\ref{sec-attained} only hold for values of $r$ in a suitably dense set. 
\item We work with a given ``base scale" $\BB r > 0$ (e.g., we consider points in $\BB r U$ instead of in $U$) and we require our estimates to be uniform in the choice of $\BB r$.  The reason for this is that we have only assumed tightness across scales (Axiom~\ref{item-metric-coord}) instead of exact scale invariance.
\end{itemize}  

\begin{thm} \label{thm-geo-iterate}
There exists $\nu_*  \in (0,1)$ depending only on the choice of metric $D$ such that for each $0 < \mu < \nu \leq \nu_*$ and each $0 < \lambda_1 < \lambda_2 \leq \lambda_3 \leq \lambda_4 < \lambda_5 $, there exists $\BB p \in (0,1)$ such that the following is true. 
Suppose $\BB r > 0$ and we are given a small number $\ep_0  > 0$; a deterministic set of radii $\mcl R\subset (0,\ep_0]$; events $E_r(z) \in \sigma(h) $ for $z\in\BB C$ and $r\in\mcl R$; events $\frk E_r^{\BB z , \BB w}(z) \in \sigma(h)$ for $z\in\BB C$, $r\in\mcl R$, and $\BB z  ,\BB w \in \BB C$; and a deterministic constant $\Lambda > 1$ which satisfy the following properties. 
\begin{enumerate} 
\item (Density of $\mcl R$) For each $\ep \in (0,\ep_0]$, there exist $\lfloor \mu\log_8 \ep^{-1} \rfloor$ radii $r_1^\ep , \dots , r_{\lfloor \mu\log_8\ep^{-1} \rfloor}^\ep \in [\ep^{1+\nu} \BB r , \ep \BB r]\cap \mcl R$ such that $r_k^\ep /r_{k-1}^\ep \geq \lambda_4/\lambda_1$ for each $k=2,\dots,\lfloor \mu\log_8\ep^{-1} \rfloor$. \label{item-geo-iterate-dense}
\item (Measurability) For each $z\in\BB C$ and $r\in\mcl R$, $E_r(z)$ is determined by $(h - h_{\lambda_5 r}(z)) |_{\BB A_{\lambda_1 r , \lambda_4 r}(z)}$ for each $\BB z ,\BB w \in\BB C$, and $\frk E_r^{\BB z,\BB w}(z)$ is determined by $h|_{ B_{\lambda_4 r}(z) }$ and the (a.s.\ unique) $D_h$-geodesic from $\BB z$ to $\BB w$ stopped at the last time it exists $B_{\lambda_4 r}(z)$. \label{item-geo-iterate-msrble}
\item (Lower bound for $\BB P[E_r(z)]$) For each $z\in\BB C$ and $r\in\mcl R$, we have $\BB P[E_r(z)] \geq \BB p$. \label{item-geo-iterate-prob} 
\item (Comparison of $E_r(z)$ and $\frk E_r^{\BB z, \BB w}(z)$) Suppose $z\in\BB C$, $r\in\mcl R$, $\BB z , \BB w$ are distinct points of $\BB C\setminus B_{\lambda_4 r}(z)$, and $P = P^{\BB z , \BB w}$ is the $D_h$-geodesic from $\BB z$ to $\BB w$. Then a.s.\ \label{item-geo-iterate-compare}
\allb \label{eqn-geo-iterate-compare}
& \Lambda^{-1}  \BB P\left[ E_r(z) \cap \{P \cap B_{\lambda_2 r}(z)\not=\emptyset\} \,\big|\, h|_{\BB C\setminus B_{\lambda_3 r}(z)}   \right] \notag \\
&\qquad \leq \BB P\left[ \frk E_r^{\BB z, \BB w}(z) \cap \{P \cap B_{\lambda_2 r}(z)\not=\emptyset\} \,\big|\, h|_{\BB C\setminus B_{\lambda_3 r}(z)}   \right] .
\alle
\end{enumerate}
Under the above hypotheses, for each $q > 0$, $\ell \in (0,1)$, and bounded open set $U\subset\BB C$, it holds with probability tending to 1 as $\ep\rta 0$, at a rate depending only on $U,q,\ell,\mu,\nu , \{\lambda_i\}_{i=1,\dots,5} ,\ep_0,\Lambda$, that for each $\BB z , \BB w \in \left( \ep^q \BB r \BB Z^2 \right) \cap \left(\BB r U\right)$ with $|\BB z-\BB w|\geq \ell \BB r$, there exists $z\in\BB C$ and $r \in [\ep^{1+\nu} \BB r , \ep \BB r]$ such that $P^{\BB z , \BB w} \cap B_{\lambda_2 r}(z) \not=\emptyset$ and $\frk E_r^{\BB z,\BB w}(z)$ occurs. 
\end{thm}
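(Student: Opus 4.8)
The plan is to prove the full statement (Theorem~\ref{thm-geo-iterate}) and recover Theorem~\ref{thm-geo-iterate0} as the special case $\mcl R = (0,\ep_0]$, $E_r(z)$ the sure event, the $\lambda_i$ fixed, and $\BB r = 1$. Throughout fix $\BB z,\BB w$ with $|\BB z-\BB w|\ge\ell\BB r$ and write $P = P^{\BB z,\BB w}$, $\mcl B_s^\bullet := \mcl B_s^\bullet(\BB z;D_h)$. The first step is to set up a global regularity event. Taking a union bound over a mesh of base points in $(\ep\BB r\BB Z^2)\cap(\BB r U')$, for a slightly enlarged $U'$, and combining Lemma~\ref{lem-holder-uniform} (Hölder continuity), Lemma~\ref{lem-clsce-all} (bounds on the confluence radii $\rho_{\BB r,\ep}$), the fact that $\BB P[\mcl E_{\BB r}^{\BB z'}(a)]\to1$ uniformly, Lemma~\ref{lem-geo-bdy}, and the quantitative confluence estimate Theorem~\ref{thm-finite-geo-quant} applied with $N$ equal to a suitable power of $\ep^{-1}$, one produces an event $\mathsf G_{\BB r}^\ep\in\sigma(h)$ with $\BB P[\mathsf G_{\BB r}^\ep]\to1$ as $\ep\to0$ uniformly in $\BB r$, on which, for every relevant base point, $D_h$ is Hölder continuous with the exponents of Lemma~\ref{lem-holder-uniform}, the radii $\rho_{\BB r,\ep}(\cdot)$ are at most $\ep^{1/2}\BB r$, and for each radius $s$ in the relevant window $\bdy\mcl B_s^\bullet$ is hit by at most $N$ leftmost geodesics toward $\bdy\mcl B_{s + \ep^{M}\frk c_{\BB r}e^{\xi h_{\BB r}}}^\bullet$. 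The per–base-point failure probability is $o_\ep^\infty(\ep)$ (take the confluence cutoff and the number of checkpoint radii below to be appropriate powers of $\ep^{-1}$), and the mesh has only polynomially many points, so $\BB P[(\mathsf G_{\BB r}^\ep)^c]\to0$.

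Next I would construct stable times along $P$. Work at scale $\BB r_* := (\ell/10)\BB r$ centered at $\BB z$, so that $\tau_{2\BB r_*}(\BB z) < D_h(\BB z,\BB w)$ and $P$ crosses the metric annulus between $\mcl B_{\tau_{\BB r_*}}^\bullet$ and $\mcl B_{\tau_{2\BB r_*}}^\bullet$; on $\mathsf G_{\BB r}^\ep$ this window has $D_h$-length $\asymp\frk c_{\BB r}e^{\xi h_{\BB r}(0)}$ with $\ell$-dependent constants. Subdivide the window into $n\asymp\ep^{-\zeta}$ consecutive pieces for a small fixed $\zeta\in(0,\chi)$, and use Theorem~\ref{thm-finite-geo-quant} at the subdivision radii to obtain radii $t_1<\cdots<t_n$ such that at each $t_j$ the segment $P|_{[0,t_j]}$ agrees with one of at most $N = n^{1/\beta}$ leftmost geodesics from $\BB z$ to $\bdy\mcl B_{t_j}^\bullet$, and (using Hölder continuity and $\zeta<\chi$, which forces a $D_h$-gap of order $n^{-1}\frk c_{\BB r}e^{\xi h_{\BB r}}$ to produce a Euclidean gap $\gg\ep\BB r$) such that $\mcl B_{t_{j-1}}^\bullet$ stays at Euclidean distance at least $100\lambda_5\ep\BB r$ from $x_j := P(t_j)$. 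Lemma~\ref{lem-geo-kill-pt} together with Lemma~\ref{lem-ball-local} then yields, for each $j$, a stopping time $\sigma_j\in(t_{j-1},t_j)$ and a \emph{stable event} $\stabE_j\in\sigma(\mcl B_{\sigma_j}^\bullet,h|_{\mcl B_{\sigma_j}^\bullet})$ with $\BB P[\stabE_j\mid\mcl B_{t_{j-1}}^\bullet,h|_{\mcl B_{t_{j-1}}^\bullet}]\ge1-C_0\ep^\alpha$, on which $P|_{[0,\sigma_j]}$ is determined by $h|_{\mcl B_{\sigma_j}^\bullet}$, is unchanged if $h$ is modified in any Euclidean ball $B_{\lambda_4 r}(z)$ disjoint from $\mcl B_{\sigma_j}^\bullet$ with $z$ within $\ep\BB r$ of $x_j$, and $P(\sigma_j)\in B_{\lambda_2 r}(z)$ for a suitably placed such $z$.

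Then, at each stable time, I would derive a conditional lower bound for the occurrence of $\frk E$. Condition on $\mcl F_j := \sigma(\mcl B_{\sigma_j}^\bullet,h|_{\mcl B_{\sigma_j}^\bullet})$ on $\stabE_j$. By the density hypothesis \ref{item-geo-iterate-dense}, among the $\lfloor\mu\log_8\ep^{-1}\rfloor$ nested scales $r_k^\ep$ choose grid points $z_k$ with $x_j\in B_{\lambda_1 r_k^\ep}(z_k)$; the annuli $\BB A_{\lambda_1 r_k^\ep,\lambda_4 r_k^\ep}(z_k)$ are pairwise disjoint and disjoint from $\mcl B_{\sigma_j}^\bullet$. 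Applying Lemma~\ref{lem-annulus-iterate} conditionally on $\mcl F_j$ — legitimate via Lemma~\ref{lem-ball-local}, the Markov property of the GFF, and a Radon--Nikodym bound as in \eqref{eqn-rn-moment} to pass between conditional and marginal probabilities, after including a bounded-harmonic-function event in $\mathsf G_{\BB r}^\ep$ — shows that, for a suitable $\BB p=\BB p(\mu,\nu,\{\lambda_i\})$, with conditional probability at least $\tfrac12$ some $k$ has $E_{r_k^\ep}(z_k)$ occurring. On that annulus, since $P(\sigma_j)\in B_{\lambda_2 r_k^\ep}(z_k)$ automatically on $\stabE_j$ (and $\mcl F_j$ is measurable w.r.t.\ $h|_{\BB C\setminus B_{\lambda_3 r_k^\ep}(z_k)}$), hypothesis \ref{item-geo-iterate-compare} gives $\BB P[\frk E_{r_k^\ep}^{\BB z,\BB w}(z_k)\mid\mcl F_j]\ge\Lambda^{-1}\cdot\mathrm{const}$. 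Hence there is a deterministic $\BB q=\BB q(\mu,\nu,\{\lambda_i\},\Lambda)>0$ with $\BB P[\exists(r,z):P\cap B_{\lambda_2 r}(z)\ne\emptyset\text{ and }\frk E_r^{\BB z,\BB w}(z)\text{ occurs}\mid\mcl F_j]\ge\BB q$ on $\stabE_j$. Since the steps are nested (each $\stabE_j$ has conditional probability $\ge1-C_0\ep^\alpha$ given $\mcl F_{j-1}$ and each success has conditional probability $\ge\BB q$ given $\mcl F_j$ on $\stabE_j$), the number of successes dominates a sum of $n$ i.i.d.\ Bernoulli$(\BB q-o_\ep(1))$ variables, so the chance that no step succeeds is $(1-\BB q+o_\ep(1))^n + o_\ep^\infty(\ep) = o_\ep^\infty(\ep)$, at a rate depending only on $U,q,\ell,\mu,\nu,\{\lambda_i\},\ep_0,\Lambda$. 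A final union bound over the $O(\ep^{-2q})$ pairs $\BB z,\BB w\in(\ep^q\BB r\BB Z^2)\cap(\BB r U)$ with $|\BB z-\BB w|\ge\ell\BB r$ and over $(\mathsf G_{\BB r}^\ep)^c$ finishes the proof.

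The hard part will be the second and third steps: making the notion of a stable time precise and establishing the conditional lower bound on $\frk E_r^{\BB z,\BB w}(z)$. This is exactly where the confluence input from~\cite{gm-confluence} is indispensable — one needs the geodesic to be rigid enough that re-randomizing $h$ in a microscopic ball near a typical geodesic point leaves an initial segment intact, so that hypothesis \ref{item-geo-iterate-compare}, which is a statement about conditional laws of $h$ given its restriction to the complement of a small ball, can actually be invoked at a stopping time adapted to the metric-ball growth. The bookkeeping of matching up the ball-growth filtration, the $\sigma$-algebra $\sigma(h|_{\BB C\setminus B_{\lambda_3 r}(z)})$, and the annuli carrying the events $E_r(z)$, all while keeping every error term summable against the union bound, is the principal technical difficulty.
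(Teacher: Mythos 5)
Your overall strategy---stable times along $P$ produced via confluence, a conditional success bound at each stable time via hypothesis~\ref{item-geo-iterate-compare}, and concentration across the stable times---is the same as the paper's, but the pivotal step as you have set it up does not work. You center the nested annuli at points $z_k$ chosen so that $x_j = P(t_j)\in B_{\lambda_1 r_k^\ep}(z_k)$, and then condition on $\mcl F_j=\sigma(\mcl B_{\sigma_j}^\bullet, h|_{\mcl B_{\sigma_j}^\bullet})$ with $\sigma_j<t_j$. But $P(t_j)$---indeed even $P(\sigma_j)$ and the initial segment $P|_{[0,\sigma_j]}$---is not determined by the ball data: which way the geodesic leaves $\mcl B_{\sigma_j}^\bullet$ depends on the field outside, in particular on the field in the very annuli $\BB A_{\lambda_1 r_k^\ep,\lambda_4 r_k^\ep}(z_k)$ that are supposed to carry the events $E_{r_k^\ep}(z_k)$. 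So applying Lemma~\ref{lem-annulus-iterate} (even with a Radon--Nikodym correction) to annuli whose location is selected using the geodesic is circular, and the assertion that ``$P(\sigma_j)\in B_{\lambda_2 r_k^\ep}(z_k)$ automatically on the stable event'' has no justification; Lemma~\ref{lem-geo-kill-pt} only prevents geodesics from approaching a prescribed boundary point and does not by itself yield the resampling stability you claim. Making geodesic information usable in the conditioning is exactly what the paper's machinery is for: its filtration includes $P|_{[0,s_k]}$, legitimized by confluence (Lemma~\ref{lem-geo-sigma-algebra}), and the stability events of~\eqref{eqn-ball-stab-event} together with Lemmas~\ref{lem-ball-stab-msrble} and~\ref{lem-cond-expectation} are what reconcile that filtration with the $\sigma$-algebra $\sigma(h|_{\BB C\setminus B_{\lambda_3 r}(z)})$ appearing in~\eqref{eqn-geo-iterate-compare}.

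There is a second, related gap: even with an $\mcl F_j$-measurable choice of ball, the claimed constant-order conditional success probability $\BB q$ cannot hold. Both sides of~\eqref{eqn-geo-iterate-compare} contain the event $\{P\cap B_{\lambda_2 r}(z)\ne\emptyset\}$, so hypothesis~\ref{item-geo-iterate-compare} only converts a lower bound for $\BB P\left[E_r(z)\cap\{P\cap B_{\lambda_2 r}(z)\ne\emptyset\}\mid\cdot\right]$ into one for $\frk E_r^{\BB z,\BB w}(z)$, and for a single prescribed ball of radius at most $\ep\BB r$ the conditional probability that the geodesic visits it is polynomially small in $\ep$, not of constant order; hence $(1-\BB q+o_\ep(1))^n$ with $n\asymp\ep^{-\zeta}$ does not deliver the superpolynomial bound. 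The paper circumvents this by (i) putting the occurrence of $E_r(z)$ at \emph{every} point of a deterministic grid into the unconditional regularity event (this is why $\BB p$ must be close to $1$), so the geodesic is forced to hit \emph{some} good ball each time it crosses a metric annulus, and (ii) summing the comparison over all candidate pairs $(z,r)$ and passing from the conditional expectation of the count to the probability of nonemptiness by a Paley--Zygmund argument (Lemmas~\ref{lem-good-annulus-count} and~\ref{lem-annulus-choose}, which rely on Lemma~\ref{lem-geo-bdy}); this costs a factor $\ep^{2\nu+o_\ep(1)}$ per step and is compensated by taking $K\asymp\ep^{-\beta}$ steps with $\beta>4\nu$, together with the conditional-to-unconditional conversion of Lemma~\ref{lem-uncond-to-cond}. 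Without substitutes for these two devices your argument does not close.
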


Theorem~\ref{thm-geo-iterate0} is the special case of Theorem~\ref{thm-geo-iterate} where $\mcl R = (0,\infty)$; $\lambda_2 = \lambda_3    = \lambda_4 = 1$; $E_r(z)$ is the whole probability space; and $\BB r =1$.  
The parameter $\BB p$ in Theorem~\ref{thm-geo-iterate} will eventually be chosen to be sufficiently close to 1 that we can apply Lemma~\ref{lem-annulus-iterate} to cover a large region of space by balls $B_{\lambda_1 r}(z)$ for pairs $(z,r)$ such that $E_r(z)$ occurs (see Lemma~\ref{lem-reg-event-prob}). 
The events $E_r(z)$ and $\frk E_r^{\BB z,\BB w}(z)$ play very different roles in the statement of Theorem~\ref{thm-geo-iterate}. The event $\frk E_r^{\BB z,\BB w}(z)$ is the main event that we are interested in, and concerns a segment of the $D_h$-geodesic from $\BB z$ to $\BB w$. The event $E_r(z)$ is locally determined by $h$, has probability close to 1, and can be thought of as the event that the restriction of $h$ to the annulus $\BB A_{\lambda_1 r , \lambda_4 r}(z)$ is sufficiently regular that $\frk E_r^{\BB z,\BB w}(z)$ has a chance to occur.

The statement of Theorem~\ref{thm-geo-iterate} is easier to understand if one thinks of the particular setting in which we will apply it. 
Recall the optimal bi-Lipschitz constants from~\eqref{eqn-max-min-def}. 
For us, $E_r(z)$ will be the event that there exists a pair of points $u,v \in \BB A_{\lambda_1 r,\lambda_2 r}(z)$ at Euclidean distance of order $r$ from each other for which $\wt D_h(u,v) \leq c_2' D_h(u,v)$ for a constant $c_2' \in (c_* , C_*)$; and some regularity conditions hold which are needed to ensure that conditions~\ref{item-geo-iterate-msrble} and~\ref{item-geo-iterate-compare} in the theorem statement are satisfied.  
We will only be able to show that $\BB P[E_r(z)]$ is bounded below for a ``dense" set of scales $\mcl R$ as in condition~\ref{item-geo-iterate-dense} due to the results in Section~\ref{sec-attained}. 
The event $\frk E_r^{\BB z,\BB w}(z)$ will be the event that, roughly speaking, the $D_h$-geodesic $P^{\BB z,\BB w}$ gets close to $u,v$ and hence (by the triangle inequality) hits a pair of points $P(s),P(t)$ at Euclidean distance of order $r$ from each other for which $\wt D_h(P(s) , P(t) ) \leq c_2' D_h(P(s) , P(t) )$. 
More precisely, we will prove the following statement in Section~\ref{sec-geodesic-shortcut}. 

\begin{prop}
\label{prop-geo-event}
Assume (by way of eventual contradiction) that $c_* < C_*$. 
Let $0 < \mu < \nu \leq \nu_*$ and $c_* < c_1' < c_2' < C_*$. 
There exist universal constants $\{\lambda_i\}_{i=1,\dots,5} $ and parameters $b , \rho \in (0,1)$ depending only on $\mu,\nu$ such that the following is true. 
Let $\BB p$ be as in Theorem~\ref{thm-geo-iterate} for the above choice of $\mu,\nu,\{\lambda_i\}_{i=1,\dots,5}$ and let $c'' =c''(c_1',\mu,\nu) > c_*$ be as in Proposition~\ref{prop-attained-good'} with $c' = c_1'$. If $\beta\in (0,1)$ and $\BB r > 0$ are such that $\BB P[\ul G_{\BB r}(c'' , \beta)] \geq \beta$ (in the notation~\eqref{eqn-min-event}), then there exists $\ep_0 = \ep_0(\beta,c_1' , c_2',\mu,\nu) \in (0,1)$, a deterministic set of radii $\mcl R\subset (0,\ep_0]$, events $E_r(z)$ and $\frk E_r^{\BB z , \BB w}(z)$, and a deterministic constant $\Lambda = \Lambda(c_1' , c_2',\mu,\nu) > 1$ which satisfy the hypotheses of Theorem~\ref{thm-geo-iterate} with $\rho^{-1} \BB r$ in place of $\BB r$ and have the following additional property. 
Suppose $z \in \BB C$, $r  \in\mcl R$, and $\BB z , \BB w \in \BB C\setminus B_{\lambda_4 r}(z)$, and let $P = P^{\BB z,\BB w}$ be the $D_h$-geodesic from $\BB z$ to $\BB w$. If $\frk E_r^{\BB z , \BB w}(z)$ occurs, then there are times $0 < s < t <  |P|$ such that
\eqb \label{eqn-geo-event}
P([s,t]) \subset B_{\lambda_2 r}(z) ,\quad |P(s) - P(t)| \geq b r ,\quad \text{and} \quad 
\wt D_h(P(s) , P(t))  \leq c_2' D_h(P(s) , P(t)) .
\eqe
\end{prop}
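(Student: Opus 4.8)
The plan is to produce the events demanded by Theorem~\ref{thm-geo-iterate} by taking $E_r(z)$ to be a high-probability \emph{regularity} event (which does \emph{not} mention shortcuts) and $\frk E_r^{\BB z,\BB w}(z)$ to be the event that the $D_h$-geodesic from $\BB z$ to $\BB w$ is rerouted past a near-optimal shortcut pair. First I would apply Proposition~\ref{prop-attained-good'} with $c'=c_1'$: fix $\alpha=\alpha_*(\mu,\nu)\in(1/2,1)$ and let $p=p(\mu,\nu)$ and $c''=c''(\alpha,c_1',\mu,\nu)>c_*$ be the resulting constants. Fix universal constants $\lambda_1=\tfrac12<\lambda_2=\lambda_3=\lambda_4=1<\lambda_5=2$ and let $\rho=\rho(\mu,\nu)\in(1/(2\alpha),1)$; the role of $\rho$ is that the closed annulus $\ol{\BB A_{\alpha\rho r,\rho r}(z)}$ sits strictly inside $B_{\lambda_2 r}(z)$ and inside $\BB A_{\lambda_1 r,\lambda_4 r}(z)$, leaving room for the regularity conditions. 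Theorem~\ref{thm-geo-iterate} will be applied with base scale $\rho^{-1}\BB r$, so a ``good scale'' $s\in[\ep^{1+\nu}\BB r,\ep\BB r]\cap\{8^{-k}\BB r\}$ furnished by Proposition~\ref{prop-attained-good'} corresponds to a radius $r=\rho^{-1}s$. Since $\BB P[\ul G_{\BB r}(c'',\beta)]\ge\beta$, for each small $\ep$ there are at least $\mu\log_8\ep^{-1}$ good scales, at each of which, with probability $\ge p$, there exist $u\in\bdy B_{\alpha s}(0)$ and $v\in\bdy B_s(0)$ with $\wt D_h(u,v)\le c_1'D_h(u,v)$, $|u-v|\ge(1-\alpha)s$, and a unique $\wt D_h$-geodesic from $u$ to $v$ contained in $\ol{\BB A_{\alpha s,s}(0)}$; by Axiom~\ref{item-metric-translate} the same holds with an arbitrary center in place of $0$. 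Let $\mcl R$ be the deterministic union over small $\ep$ of the radii $\rho^{-1}s$; consecutive elements differ by a factor $\ge 8\ge\lambda_4/\lambda_1$, so Hypothesis~\ref{item-geo-iterate-dense} holds, and one shrinks the range of $\ep$ so that $\mcl R\subset(0,\ep_0]$. Set $b=b(\mu,\nu):=\tfrac12(1-\alpha)\rho$.

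Next I would define $E_r(z)$, for $r\in\mcl R$ and $z\in\BB C$, to be the event, measurable with respect to $(h-h_{\lambda_5 r}(z))|_{\BB A_{\lambda_1 r,\lambda_4 r}(z)}$, on which: (i) the two-sided H\"older comparisons of Lemma~\ref{lem-holder-uniform} between $D_h$, $\wt D_h$ and the Euclidean metric hold at scale $r$ on $\BB A_{\lambda_1 r,\lambda_4 r}(z)$, for a fixed choice of exponents; (ii) lower bounds on internal distances across $\BB A_{\lambda_1 r,\lambda_4 r}(z)$ and a ``distance around $\BB A_{\alpha\rho r,\rho r}(z)$'' bound hold, exactly of the type used in conditions (2)--(3) of the event $\mathsf E_r(z)$ of Section~\ref{sec-attained}, which in particular force any $D_h$- or $\wt D_h$-geodesic between points of $\ol{\BB A_{\alpha\rho r,\rho r}(z)}$ to stay inside $\BB A_{\lambda_1 r,\lambda_4 r}(z)$; and (iii) the oscillation of the harmonic part of $h|_{B_{\lambda_3 r}(z)}$ over $B_{\lambda_2 r}(z)$ is at most a large constant $A$. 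Items (i)--(iii) each hold with probability close to $1$, uniformly in $r,z$, by Lemma~\ref{lem-holder-uniform}, by Lemma~\ref{lem-attained-long} together with Axiom~\ref{item-metric-coord}, and by a standard GFF estimate respectively; choosing $A$ and the exponents accordingly gives $\BB P[E_r(z)]\ge\BB p$ (Hypothesis~\ref{item-geo-iterate-prob}). Measurability of $E_r(z)$ (Hypothesis~\ref{item-geo-iterate-msrble}) follows from (ii) exactly as in Lemma~\ref{lem-attained-msrble}. I would then let $\frk E_r^{\BB z,\BB w}(z)$ be the event that $E_r(z)$ holds, the $D_h$-geodesic $P=P^{\BB z,\BB w}$ meets $B_{\lambda_2 r}(z)$, there exist $u\in\bdy B_{\alpha\rho r}(z)$, $v\in\bdy B_{\rho r}(z)$ with $\wt D_h(u,v)\le c_1'D_h(u,v)$ and a unique $\wt D_h$-geodesic from $u$ to $v$ in $\ol{\BB A_{\alpha\rho r,\rho r}(z)}$, and there are times $0<s<t<|P|$ with $P([s,t])\subset B_{\lambda_2 r}(z)$, $|P(s)-u|\le\delta r$, $|P(t)-v|\le\delta r$, for a small $\delta=\delta(c_1',c_2')$ to be fixed at the end. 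By (ii) the distance $\wt D_h(u,v)$ here is $h|_{\BB A_{\alpha\rho r,\rho r}(z)}$-measurable, and the geodesic conditions only involve $P$ up to its last exit from $B_{\lambda_2 r}(z)$; hence Hypothesis~\ref{item-geo-iterate-msrble} holds for $\frk E_r^{\BB z,\BB w}(z)$ too. Note $\frk E_r^{\BB z,\BB w}(z)\subset\{P\cap B_{\lambda_2 r}(z)\ne\emptyset\}$.

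The main obstacle, and the content of Section~\ref{sec-geodesic-shortcut}, is verifying Hypothesis~\ref{item-geo-iterate-compare}, i.e.\ \eqref{eqn-geo-iterate-compare}. Since its right side only decreases if $E_r(z)$ is replaced by the whole space, and since $\frk E_r^{\BB z,\BB w}(z)\subset\{P\cap B_{\lambda_2 r}(z)\ne\emptyset\}$, it suffices to show that conditionally on $h|_{\BB C\setminus B_{\lambda_3 r}(z)}$ and on $\{P\cap B_{\lambda_2 r}(z)\ne\emptyset\}$ the event $\frk E_r^{\BB z,\BB w}(z)$ has conditional probability at least a deterministic $\Lambda^{-1}=\Lambda^{-1}(c_1',c_2',\mu,\nu)$. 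I would argue as in the outline of Section~\ref{sec-geodesic-shortcut}: grow the filled $D_h$-metric balls from $\BB z$ and from $\BB w$ until they first reach $B_{3\lambda_2 r/2}(z)$ (these are local sets by Lemma~\ref{lem-ball-local}, and $\phi$ below is chosen supported away from them, so they are unaffected by the forthcoming field shift); condition on $h|_{\BB C\setminus B_{\lambda_3 r}(z)}$ together with these two balls. On $E_r(z)$, condition (iii) bounds the harmonic part, so the conditional law of $h$ on $B_{\lambda_3 r}(z)$ minus the balls is absolutely continuous with respect to that of an independent field with Radon--Nikodym derivative of bounded moments as in \eqref{eqn-rn-moment}; hence the (unconditionally probability $\ge p$, since $r\in\mcl R$) event that a shortcut pair $u,v$ with the listed properties exists has conditional probability $\ge p'>0$. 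Given such a pair, choose a deterministic (given the conditioned-on data) smooth $\phi\ge0$, supported in $B_{\lambda_2 r}(z)$ away from the two balls, equal to its maximum $M$ on a thin tube that runs from a point of the $\BB z$-ball boundary near $B_{\lambda_2 r}(z)$ into a $\delta r/2$-neighborhood of $u$, then along a neighborhood of $\ol{\BB A_{\alpha\rho r,\rho r}(z)}$ wide enough to contain every possible $\wt D_h$-geodesic from $u$ to $v$, then from a $\delta r/2$-neighborhood of $v$ to the $\BB w$-ball boundary, with $\|\phi\|_\nabla$ bounded uniformly in $r$. By Weyl scaling (Axiom~\ref{item-metric-f}) and the distance lower bounds in $E_r(z)$, for $M$ large the $D_{h-\phi}$-geodesic from $\BB z$ to $\BB w$ agrees with a $D_h$-geodesic inside the two metric balls and is then forced through the tube, hence through $\delta r$-neighborhoods of $u$ and $v$ while remaining in $B_{\lambda_2 r}(z)$; and since $\phi\equiv M$ on a region containing the $\wt D_h$-geodesic from $u$ to $v$ while $\phi\le M$ everywhere, $\wt D_{h-\phi}(u,v)\le e^{-\xi M}\wt D_h(u,v)\le c_1'e^{-\xi M}D_h(u,v)\le c_1'D_{h-\phi}(u,v)$, so the shortcut inequality survives the shift. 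Thus $\frk E_r^{\BB z,\BB w}(z)$ occurs under $h-\phi$ with positive conditional probability, and the Radon--Nikodym bound transfers this to $h$; integrating over the conditioning yields \eqref{eqn-geo-iterate-compare}. Coordinating the ball growth so that $\phi$ avoids $\BB z,\BB w$ yet still constrains $P$, keeping $\|\phi\|_\nabla$ scale-free, and controlling the conditional Radon--Nikodym derivative, all uniformly in $\BB r$, $z$ and $r\in\mcl R$, is the delicate part.

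Finally, the additional property \eqref{eqn-geo-event} is essentially built into $\frk E_r^{\BB z,\BB w}(z)$. When it occurs we have $0<s<t<|P|$ with $P([s,t])\subset B_{\lambda_2 r}(z)$, $|P(s)-u|\le\delta r$, $|P(t)-v|\le\delta r$, whence $|P(s)-P(t)|\ge|u-v|-2\delta r\ge(1-\alpha)\rho r-2\delta r\ge b r$ once $\delta\le b/2$. Since $P$ is a $D_h$-geodesic, $D_h(P(s),P(t))=t-s$, and by the triangle inequality together with the upper H\"older bound \eqref{eqn-holder-upper} from $E_r(z)$ (applied to $\wt D_h(P(s),u)$ and $\wt D_h(v,P(t))$, each $\le o_\delta(1)\,\frk c_r e^{\xi h_r(z)}$) one gets $\wt D_h(P(s),P(t))\le c_1' D_h(u,v)+o_\delta(1)\,\frk c_r e^{\xi h_r(z)}$ and $D_h(P(s),P(t))=t-s\ge D_h(u,v)-o_\delta(1)\,\frk c_r e^{\xi h_r(z)}$. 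The lower H\"older bound \eqref{eqn-holder-lower} gives $D_h(u,v)\ge\op{const}\cdot\frk c_r e^{\xi h_r(z)}$, so choosing $\delta$ small enough depending only on $c_1'<c_2'$ yields $\wt D_h(P(s),P(t))\le c_2' D_h(P(s),P(t))$, which is \eqref{eqn-geo-event}.
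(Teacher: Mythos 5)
There is a genuine gap at the heart of your verification of hypothesis~\ref{item-geo-iterate-compare} of Theorem~\ref{thm-geo-iterate}. You reduce \eqref{eqn-geo-iterate-compare} to the much stronger claim that $\BB P[\frk E_r^{\BB z,\BB w}(z)\mid h|_{\BB C\setminus B_{\lambda_3 r}(z)},\,P\cap B_{\lambda_2 r}(z)\neq\emptyset]\geq \Lambda^{-1}$ almost surely, i.e.\ uniformly over \emph{every} realization of the exterior field. Your sketch cannot deliver this: the existence of a shortcut pair at scale $r$ is only known with probability $\geq p$ under the \emph{unconditional} law (Proposition~\ref{prop-attained-good'}), and the transfer to the conditional law via \eqref{eqn-rn-moment} is controlled by the oscillation of the harmonic part of $h|_{B_{\lambda_3 r}(z)}$, which is a.s.\ finite but unbounded, so one only gets the bound on a high-probability event that is measurable with respect to the \emph{conditioning}. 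You try to absorb this into $E_r(z)$ through your condition (iii), but the harmonic part of $h|_{B_{\lambda_3 r}(z)}$ is not determined by $(h-h_{\lambda_5 r}(z))|_{\BB A_{\lambda_1 r,\lambda_4 r}(z)}$, so (iii) violates the measurability hypothesis of Theorem~\ref{thm-geo-iterate}; dropping (iii) leaves the required a.s.\ inequality unprovable on bad exterior realizations. A second, related problem: your bump function $\phi$ is chosen \emph{after} the random pair $(u,v)$, which is a function of the interior field, so the shift $h\mapsto h-\phi$ is not an admissible change of measure given only $h|_{\BB C\setminus B_{\lambda_3 r}(z)}$ with the stated Radon--Nikodym density. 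The paper's mechanism is different and avoids both issues: the density $\exp(-(h,\phi)_\nabla-\tfrac12(\phi,\phi)_\nabla)$ of the \emph{deterministic-given-the-conditioning} shift is bounded by a constant that is built \emph{into both events} ($E_r$ and $\frk E_r$) over a \emph{finite deterministic} family $\mcl G_r$ of bump functions, and the inequality \eqref{eqn-geo-iterate-compare} then follows from this together with a pathwise inclusion of the form $E_r\cap\{P\cap B_{2r}(0)\neq\emptyset\}\subset \frk E_r^{\phi}\cap\{P^\phi\cap B_{2r}(0)\neq\emptyset\}$, with no absolute lower bound on conditional probabilities and no harmonic-part control anywhere.

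For that mechanism to work one also needs what is missing from your construction: the shortcut pair must lie, with probability at least $\BB p$ (close to $1$), inside a tube that is \emph{deterministic} given the hitting points on $\bdy B_{3r}(0)$ of the $D_h$-balls grown from $\BB z$ and $\BB w$ (these are $h|_{\BB C\setminus B_{3r}(0)}$-measurable), and the event must hold \emph{simultaneously for all} candidate pairs $x,y\in\bdy B_{2r}(0)$ since those hitting points are random. Producing such tubes is the bulk of Section~\ref{sec-geodesic-shortcut}: one pigeonholes the grid squares traversed by an extended $\wt D_h$-geodesic into one of finitely many deterministic configurations to get a positive-probability event in a single ball, and then boosts to probability $\geq\BB p$ by running many independent copies in disjoint balls of radius $\rho r$ via Lemma~\ref{lem-spatial-ind} and stringing the resulting tubes together --- this is the actual role of $\rho$ (cf.\ Remark~\ref{remark-rho}), not the geometric containment you use it for. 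Your $E_r(z)$ contains no shortcut condition at all, so this step has no counterpart in your proposal, and your choice $\lambda_2=\lambda_3=\lambda_4=1$ leaves no room for the ball growth to $\bdy B_{3r}(0)$, for the support of $\phi$, or for the narrower access tubes carrying a larger constant which force $P^\phi$ into (rather than merely near) the region where $\phi$ attains its maximum. Your closing triangle-inequality/H\"older step from ``$P$ passes near $u,v$'' to \eqref{eqn-geo-event} is fine in spirit, but it rests on the unestablished core above.
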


Roughly speaking, Proposition~\ref{prop-geo-event} combined with Theorem~\ref{thm-geo-iterate} implies that the pairs of points $(u,v)$ such that $\wt D_h(u,v) \leq c_2' D_h(u,v)$ and $|u-v|$ is not too small are sufficiently dense that a typical $D_h$-geodesic is extremely likely to get close to such a pair of points. This will be applied in Section~\ref{sec-conclusion} to derive a contradiction to the definition~\eqref{eqn-max-min-def} of $C_*$ if we assume that $c_* < C_*$, and thereby to show that $c_* = C_*$.

\begin{remark} \label{remark-rho}
The reason for the parameter $\rho$ in Proposition~\ref{prop-geo-event} is as follows. If  $\BB P[\ul G_{\BB r}(c'' , \beta)] \geq \beta$, then Proposition~\ref{prop-attained-good'} gives us a parameter $p = p(\mu,\nu) \in (0,1)$ such that there are many values of $r \in [\ep^{1+\nu} \BB r, \ep\BB r]$ for which a certain event occurs with probability at least $p$. 
In Section~\ref{sec-geodesic-shortcut}, we will use the event of Proposition~\ref{prop-attained-good'} to build the event $E_r(z)$. 
In order to make $E_r(z)$ occur with probability at least $\BB p$ (which can be arbitrarily close to 1) instead of just probability $p$, we will consider lots of small Euclidean balls and argue (using Lemma~\ref{lem-spatial-ind}) that with probability at least $\BB p$ the event of Proposition~\ref{prop-attained-good'} occurs for at least one of these balls. In order to do this, we need the radius of the annulus involved in the definition of $E_r(z)$ to be a large deterministic constant factor times the radius of the balls involved in the event of Proposition~\ref{prop-attained-good'} (so that we can fit many such balls in the annulus). This factor is $\rho^{-1}$. 
\end{remark}

\subsection{Setup and outline}
\label{sec-iterate-setup}

Assume that we are in the setting of Theorem~\ref{thm-geo-iterate} for some $\BB r  >0$. 
To lighten notation, we will also impose the assumption that $\lambda_3 = 1$ (the proof when $\lambda_3\not=1$ is identical, just with extra factors of $\lambda_3$ in various subscripts). 
Let $U\subset \BB C$ be open and bounded and let $\ell > 0$, as in the conclusion of Theorem~\ref{thm-geo-iterate}. 
Also fix $\ep \in (0,\ep_0]$ and distinct points $\BB z , \BB w \in \BB r U$ with $|\BB z- \BB w|\geq 4 \ell \BB r$ (the reason for the factor of 4 here is to reduce factors of 4 elsewhere). Let
\eqb \label{eqn-iterate-geo-choice}
P  = P^{\BB z,\BB w} := \left(\text{$D_h$-geodesic from $\BB z$ to $\BB w$} \right) .
\eqe  
To lighten notation, throughout the rest of this section we will not include the parameters $\BB r , \ep , \BB z ,\BB w$ in the notation.
But, we will always require that all estimates are uniform in the choice of $\BB r$, $\BB z$, and~$\BB w$ (we will typically be sending $\ep\rta 0$). 
Since we will commonly be growing metric balls starting from $\BB z$, we also introduce the following abbreviations for $z\in\BB C$ and $r,s>0$: 
\eqb \label{eqn-iterate-abbrv}
\frk E_r(z) = \frk E_r^{\BB z,\BB w}(z) ,\quad
\mcl B_s^\bullet := \mcl B_s^\bullet(\BB z ; D_h) \quad\text{and} \quad
\tau_r := \tau_r(\BB z) = \inf\{s > 0 : \mcl B_s^\bullet \not\subset B_r(\BB z)\}, 
\eqe  
where here we recall that $\mcl B_s^\bullet(\BB z ;D_h)$ is the filled metric ball.

\begin{figure}[t!]
 \begin{center}
\includegraphics[scale=.6]{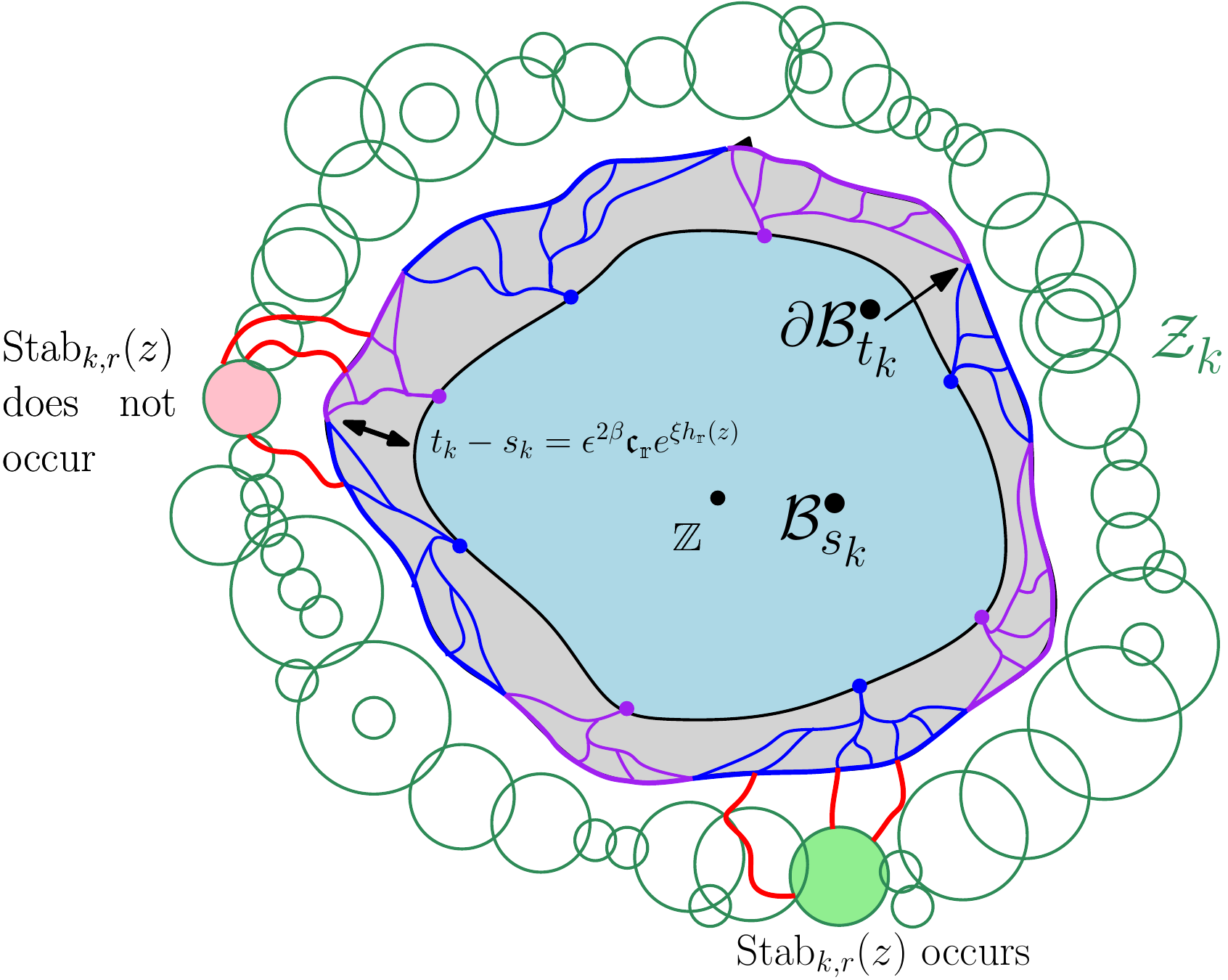}
\vspace{-0.01\textheight}
\caption{Illustration of the objects defined in Section~\ref{sec-iterate-setup}. The two filled LQG metric balls $\mcl B_{s_k}^\bullet \subset \mcl B_{t_k}^\bullet$ centered at $\BB z$ are shown, along with the set of points $\confpts_k \subset\bdy\mcl B_{s_k}^\bullet$ hit by leftmost $D_h$-geodesics from $\BB z$ to $\bdy\mcl B_{t_k}^\bullet $ (alternating blue and purple) and the set of arcs $\confarcs_k$ of $\bdy\mcl B_{t_k}^\bullet$ consisting of points whose leftmost $D_h$-geodesics hit the same point of $\confpts_k$. Several representative leftmost $D_h$-geodesics are shown for each such arc. We have also shown in green several of the balls $B_r(z)$ for $(z,r) \in \mcl Z_k$. Each such ball has radius in $[\ep^{1+\nu} \BB r , \ep \BB r]$ and its Euclidean distance from $\mcl B_{t_k}^\bullet$ is of order $\ep$. We have highlighted examples of one such ball $B_r(z)$ for which the event $\stabE_{k,r}(z)$ of~\eqref{eqn-ball-stab-event} occurs (light green), i.e., each of the red $D_h(\cdot,\cdot;\BB C\setminus \ol{B_r(z)})$-geodesics from $\BB z$ to points of $\bdy B_r(z)$ hit the same arc of $\confarcs_k$ (we have only shown the segments of these geodesics after they exit $\mcl B_{t_k}^\bullet$). We have also highlighted one ball for which $\stabE_{k,r}(z)$ does not occur (pink).  
}\label{fig-iterate-setup}
\end{center}
\vspace{-1em}
\end{figure} 

We now define several objects which we will work with throughout the rest of this section. 
See Figure~\ref{fig-iterate-setup} for an illustration. 
Fix $\beta \in (0,1)$ to be chosen later, in a manner depending only on $D$.
Define
\eqb \label{eqn-geo-iterate-times}
s_k := \tau_{\ell\BB r} + k \ep^\beta \frk c_{\BB r} e^{\xi h_{\BB r}(\BB z)}  \quad \text{and} \quad
t_k := s_k + \ep^{2\beta} \frk c_{\BB r} e^{\xi h_{\BB r}(\BB z)} \in [s_k ,s_{k+1}] , \quad \forall k \in \BB N_0 .
\eqe

By Theorem~\ref{thm-finite-geo0}, it is a.s.\ the case that for each $k\in \BB N_0$ there are only finitely many points of $\bdy\mcl B_{s_k }^\bullet$ which are hit by leftmost $D_h$-geodesics from $\BB z$ to $\bdy\mcl B_{t_k}^\bullet$. 
Let $\confpts_k \subset \bdy\mcl B_{s_{k} }^\bullet$ be the set of such points and let $\confarcs_{k}$ be the set of subsets of $\bdy\mcl B_{t_k}^\bullet$ of the form 
\eqb \label{eqn-arc-def}
\left\{y\in \bdy\mcl B_{t_k}^\bullet : \text{leftmost $D_h$-geodesic from $\BB z$ to $y$ passes through $x$}\right\} \quad \text{for} \quad x\in \confpts_{k}.
\eqe 
By~\cite[Lemma 2.7]{gm-confluence}, $\confarcs_{k}$ is a collection of disjoint arcs of $\bdy\mcl B_{t_k}^\bullet$ whose union is all of $\bdy\mcl B_{t_k}^\bullet$. We also note that by Axiom~\ref{item-metric-local} (locality), $\confarcs_k$ is determined by $\mcl B_{t_k}^\bullet$ and $h|_{\mcl B_{t_k}^\bullet}$.

For much of this section, we will work with the increasing filtration  
\eqb \label{eqn-nomax-filtration}
\mcl F_k := \sigma\left( \mcl B_{t_k}^\bullet ,h|_{\mcl B_{t_k}^\bullet} , P|_{[0,s_k]} \right) ,\quad\forall k\in\BB N_0. 
\eqe
Conditioning on all of $P|_{[0,s_k]}$ may seem rather extreme, but thanks to the confluence of geodesics this conditioning is a equivalent to a much tamer looking conditioning.

\begin{lem} \label{lem-geo-sigma-algebra}
We have the equivalent representation
\eqb \label{eqn-geo-sigma-algebra}
\mcl F_k  =  \sigma\left(\mcl B_{t_k}^\bullet , h|_{\mcl B_{t_k}^\bullet} , \text{arc of $\confarcs_k$ which contains $P(t_k)$}\right) .
\eqe
\end{lem}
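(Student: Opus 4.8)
The plan is to prove the two inclusions in~\eqref{eqn-geo-sigma-algebra} separately, using two ingredients: (i) the a.s.\ uniqueness of $D_h$-geodesics between fixed points, and (ii) the confluence structure from~\cite{gm-confluence} --- Theorem~\ref{thm-finite-geo0} together with the consistency of leftmost geodesics from~\cite[Lemmas 2.4 and 2.7]{gm-confluence} --- which says that for each arc $I \in \confarcs_k$, associated to a point $x\in\confpts_k$, all leftmost $D_h$-geodesics from $\BB z$ to points of $I$ coincide on $[0,s_k]$ with the leftmost $D_h$-geodesic from $\BB z$ to $x$. Throughout I would restrict to the case $t_k < D_h(\BB z,\BB w)$, which is the only one used later (otherwise the statement is trivial).

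I would first record an elementary fact: for $s \in (0, D_h(\BB z,\BB w))$ one has $P(s) \in \bdy\mcl B_s^\bullet$. Indeed $\bdy\mcl B_s^\bullet \subset \ol{\mcl B_s(\BB z;D_h)} \subset \{w : D_h(\BB z,w)\le s\}$ while $\BB w \notin \mcl B_s^\bullet$, so if $P(s)$ lay in the interior of $\mcl B_s^\bullet$ then $P$ would first exit $\mcl B_s^\bullet$ at some time $s^* > s$ through a point $q = P(s^*)$ with $D_h(\BB z,q) = s^* > s$, contradicting $q \in \bdy\mcl B_s^\bullet$. In particular $P(s_k)\in\bdy\mcl B_{s_k}^\bullet$ and $P(t_k)\in\bdy\mcl B_{t_k}^\bullet$, so the arc of $\confarcs_k$ containing $P(t_k)$ is well defined; moreover every point of $\bdy\mcl B_{s_k}^\bullet$ is at $D_h$-distance exactly $s_k$ from $\BB z$. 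Next I would observe that $P|_{[0,t_k]}$ is the unique, hence leftmost, $D_h$-geodesic from $\BB z$ to $P(t_k)$: any competitor $\eta$ concatenated with $P|_{[t_k,|P|]}$ would be a second $D_h$-geodesic from $\BB z$ to $\BB w$, contradicting (i).

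For ``$\supseteq$'', by the last observation $P(s_k) = P|_{[0,t_k]}(s_k)$ is the merge point in $\confpts_k$ of the leftmost geodesic to $P(t_k)$, so $P(s_k) \in \confpts_k$ and the arc $I$ of $\confarcs_k$ containing $P(t_k)$ is precisely the arc associated to $P(s_k)$. By Axiom~\ref{item-metric-local} (locality), via Lemma~\ref{lem-ball-local}, the data $\confpts_k$, $\confarcs_k$ and the association between them are determined by $(\mcl B_{t_k}^\bullet, h|_{\mcl B_{t_k}^\bullet})$, and $P(s_k)$ is determined by $P|_{[0,s_k]}$; hence $I$ is $\mcl F_k$-measurable. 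For ``$\subseteq$'', given $(\mcl B_{t_k}^\bullet, h|_{\mcl B_{t_k}^\bullet})$ and the arc $I$, let $x\in\confpts_k$ be its associated point; by (ii), every leftmost geodesic from $\BB z$ to a point of $I$ agrees on $[0,s_k]$ with the leftmost geodesic $\eta_x$ from $\BB z$ to $x$, and $\eta_x|_{[0,s_k]}$ is determined by $h|_{\mcl B_{s_k}^\bullet}$ (hence by $h|_{\mcl B_{t_k}^\bullet}$) and $x$ via locality. Since $P|_{[0,t_k]}$ is the leftmost geodesic to $P(t_k) \in I$, we conclude $P|_{[0,s_k]} = \eta_x|_{[0,s_k]}$, which is measurable with respect to the right-hand $\sigma$-algebra. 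Combining the two inclusions yields~\eqref{eqn-geo-sigma-algebra}.

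The only genuinely delicate point is invoking the correct form of ingredient (ii): one needs that the leftmost geodesic from $\BB z$ to a point of $\bdy\mcl B_{t_k}^\bullet$, restricted to $[0,s_k]$, depends only on its merge point $x \in \confpts_k$ --- not merely that it passes through $x$ --- and this is exactly the consistency property of leftmost geodesics established in~\cite{gm-confluence}. Everything else (the preliminary boundary fact, the concatenation/uniqueness argument, and the two uses of Axiom~\ref{item-metric-local}) is routine.
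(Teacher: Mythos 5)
Your route in the main case is essentially the paper's: identify $P(s_k)$ as the confluence point associated to the arc containing $P(t_k)$, use uniqueness of the $\BB z$-to-$\BB w$ geodesic (via concatenation) to see that $P|_{[0,t_k]}$ is the unique, hence leftmost, geodesic to $P(t_k)$ and that the geodesic from $\BB z$ to $P(s_k)$ is unique, and then invoke locality. Your detour through the consistency of leftmost geodesics is not actually needed — once the point associated to the arc is identified with $P(s_k)$ and the geodesic from $\BB z$ to $P(s_k)$ is unique and contained in $\mcl B_{s_k}^\bullet$, hence determined by the ball data, both inclusions follow directly — but it is harmless, and the consistency statement you invoke is indeed available from the confluence paper (a version of it is quoted right after Theorem~\ref{thm-finite-geo0}).

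There is, however, a genuine gap in your preliminary fact. You assert, for every $s\in(0,D_h(\BB z,\BB w))$, that $\BB w\notin\mcl B_s^\bullet$ and deduce $P(s)\in\bdy\mcl B_s^\bullet$. But $\mcl B_s^\bullet$ is filled relative to $\infty$: it contains every bounded complementary component of $\ol{\mcl B_s(\BB z;D_h)}$, so $\BB w$ can lie in $\mcl B_s^\bullet$ even though $D_h(\BB z,\BB w)>s$ (the growing ball can enclose $\BB w$ before reaching it, and this happens with positive probability). On that event your argument breaks at the outset: $P(t_k)$ need not lie on $\bdy\mcl B_{t_k}^\bullet$ (it can lie on the boundary of a bounded complementary component), so ``the arc containing $P(t_k)$'' is not well defined, $P(s_k)$ need not be a confluence point, and the identification of the associated point $x$ with $P(s_k)$ fails; restricting to $t_k<D_h(\BB z,\BB w)$ does not dispose of this case. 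The correct dichotomy, which is the one the paper's proof uses, is whether $\BB w\in\mcl B_{t_k}^\bullet$. If $\BB w\in\mcl B_{t_k}^\bullet$, the entire geodesic $P$ is contained in $\mcl B_{t_k}^\bullet$ (for $t\le t_k$ one has $P(t)\in\ol{\mcl B_{t_k}(\BB z;D_h)}$, while $P((t_k,|P|])$ is a connected subset of the complement of $\ol{\mcl B_{t_k}(\BB z;D_h)}$ ending at $\BB w$, so it stays in the bounded component containing $\BB w$), and then $P|_{[0,s_k]}$ is determined by $(\mcl B_{t_k}^\bullet,h|_{\mcl B_{t_k}^\bullet})$ alone via uniqueness and locality, the arc playing no role. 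On the complementary event $\{\BB w\notin\mcl B_{t_k}^\bullet\}$ one also has $\BB w\notin\mcl B_{s_k}^\bullet$, your exit-time argument for $P(s_k)\in\bdy\mcl B_{s_k}^\bullet$ does apply, and the rest of your proof goes through as written.
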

\begin{proof}
On the event that the target point $\BB w$ of $P$ lies in $\mcl B_{t_k}^\bullet$, the path $P|_{[0,s_k]}$ is determined by $(\mcl B_{t_k}^\bullet ,h|_{\mcl B_{t_k}^\bullet}) $. 
On the complementary event $\{\BB w \notin \mcl B_{t_k}^\bullet\}$, we have $P(s_k ) \in \bdy\mcl B_{s_k}^\bullet$ and $P|_{[0,s_k]}$ is the a.s.\ unique $D_h(\cdot,\cdot; \mcl B_{t_k}^\bullet)$-geodesic from $\BB z$ to $P(s_k)$. 
Hence, on this event $P|_{[0,s_k]}$ is determined by $(\mcl B_{s_k }^\bullet ,h|_{\mcl B_{s_k }^\bullet} , P(s_k))$. 
Moreover, $P|_{[0,t_k]}$ is a.s.\ the unique (hence also leftmost) $D_h$-geodesic from $\BB z$ to $P(t_k)$, hence $P(s_{k} )$ is one of the points of $\confpts_{k}$. 
By the definition of $\confarcs_k$, this point is determined by which arc of $\confarcs_k$ contains $P(t_k)$.   
\end{proof}

We now introduce the set of Euclidean balls $B_r(z)$ which we will consider when trying to produce a ball for which $\frk E_r(z)$ occurs. 
With  $r_1^\ep , \dots , r_{\lfloor \mu\log_8\ep^{-1} \rfloor}^\ep \in [\ep^{1+\nu} \BB r , \ep \BB r]\cap \mcl R$ as in condition~\ref{item-geo-iterate-dense} from Theorem~\ref{thm-geo-iterate}, let $\mcl Z_k   $ for $k\in\BB N$ be the set of pairs $(z,r)$ such that 
\allb \label{eqn-good-annulus-set0}
 z\in \left( \frac{\lambda_1 \ep^{1+\nu} \BB r }{4} \BB Z^2 \right) \setminus \mcl B_{t_k}^\bullet ,
\quad r \in \left\{  r_1^\ep , \dots , r_{\lfloor \mu\log_8\ep^{-1} \rfloor}^\ep    \right\} ,
\quad \text{and} \quad
\op{dist}\left( z , \bdy\mcl B_{t_k}^\bullet \right) \in [ \lambda_4 \ep \BB r , 2 \lambda_4 \ep \BB r] .
\alle
Note that $\mcl Z_k \in \sigma\left( \mcl B_{t_k}^\bullet   \right)$. 

We want to say that with extremely high probability, there are many values of $k\in \BB N_0$ for which the event $\frk E_r(z)$ occurs for some $(z,r)\in\mcl Z_k$ such that $P\cap B_{\lambda_2 r}(z) \not=\emptyset$. 
We will do this by lower-bounding the conditional probability given $\mcl F_k$ that $\frk E_r(z)$ occurs for at least one $(z,r)\in\mcl Z_k$, then considering a polynomial (in $\ep$) number of values of~$k$ and applying a standard concentration inequality for binomial random variables. 

In order to say something useful about the conditional law given $\mcl F_k$ of what happens in one of the balls $B_r(z)$ for $(z,r)\in\mcl Z_k$, we need to know that making a small change to what happens in $B_r(z)$ does not affect which arc of $\confarcs_k$ contains $P(t_k)$. 
For $z\in\BB C$ and $r>0$, we therefore let $\stabE_{k,r}(z)$ be the event that $(z,r) \in \mcl Z_k$ and
\eqb \label{eqn-ball-stab-event}
 \text{Each $D_h(\cdot,\cdot ; \BB C\setminus\ol{ B_r(z)})$-geodesic from $\BB z$ to a point of $\bdy  B_r(z)$ hits $\bdy\mcl B_{t_k}^\bullet$ in the same arc of $\confarcs_k$}  .
\eqe
Here, by a $D_h(\cdot,\cdot ; \BB C\setminus \ol{B_r(z)} )$-geodesic from $\BB z$ to a point $x \in \bdy B_r(z)$ we mean a path from $\BB z$ to $x$ in $\BB C\setminus B_r(z)$ which has minimal $D_h$-length among all such paths and which does not hit $\bdy B_r(z)$ except at $x$. 
Note that such a geodesic need not exist for every point of $\bdy B_r(z)$. 
However, if $P$ is a $D_h$ geodesic started from $\BB z$ which enters $B_r(z)$, then $P$, stopped at the first time when it enters $B_r(z)$, is a $D_h(\cdot,\cdot ; \BB C\setminus \ol{B_r(z)} )$-geodesic from $\BB z$ to a point of $\bdy B_r(z)$. 

In Section~\ref{sec-stab}, we will use various quantitative results on confluence of geodesics from~\cite{gm-confluence} to show that with high probability $\stabE_{k,r}(z)$ occurs for most of the pairs $(z,r)\in\mcl Z_k$ such that $P$ enters $B_{\lambda_2 r}(z)$. 
The reason why the events $\stabE_{k,r}(z)$ are useful is the following lemma, which is used only in Section~\ref{sec-annulus-choose}. 

\begin{lem} \label{lem-ball-stab-msrble}
For each $z\in\BB C$, $r>0$, and $k\in\BB N_0$ the event $\stabE_{k,r}(z)$ of~\eqref{eqn-ball-stab-event} is a.s.\ determined by $h|_{\BB C\setminus B_r(z)}$.
Furthermore, on the event $\stabE_{k,r}(z) \cap \{P\cap B_r(z) \not=\emptyset\}$, both $(\mcl B_{t_k}^\bullet , h|_{\mcl B_{t_k}^\bullet})$ and the arc of $\confarcs_k$ which contains $P(t_k)$ are a.s.\ determined by $h|_{\BB C\setminus B_r(z)}$ and the indicator $\BB 1_{\stabE_{k,r}(z) \cap \{P\cap B_r(z) \not=\emptyset\}}$.
In particular, for any event $F\in\mcl F_k$ the event $F\cap \stabE_{k,r}(z) \cap \{P\cap B_r(z) \not=\emptyset\}$ is a.s.\ determined by $h|_{\BB C\setminus B_r(z)}$ and the indicator $\BB 1_{\stabE_{k,r}(z) \cap \{P\cap B_r(z) \not=\emptyset\}}$.
\end{lem}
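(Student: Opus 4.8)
\textbf{Proof strategy for Lemma~\ref{lem-ball-stab-msrble}.} The plan is to prove the three assertions in sequence, building each from the previous one. The key input is the locality axiom (Axiom~\ref{item-metric-local}) together with the fact that all the objects appearing in $\stabE_{k,r}(z)$ are defined using only paths that avoid the open ball $B_r(z)$.

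First I would show that $\stabE_{k,r}(z)$ is determined by $h|_{\BB C\setminus B_r(z)}$. The event $\{(z,r)\in\mcl Z_k\}$ depends only on $\mcl B_{t_k}^\bullet$, and this is the first place we need a subtlety: although $\mcl B_{t_k}^\bullet$ is not in general determined by $h|_{\BB C\setminus B_r(z)}$, on the event $\{P\cap B_r(z)\neq\emptyset\}$ it is, and we will handle the complementary event separately. Let me reorganize: the cleanest route is to first argue that $\stabE_{k,r}(z)$ is determined by the \emph{internal metric} $D_h(\cdot,\cdot;\BB C\setminus\ol{B_r(z)})$ together with $h|_{\BB C\setminus B_r(z)}$, both of which are in turn determined by $h|_{\BB C\setminus B_r(z)}$ via Axiom~\ref{item-metric-local} applied to the open set $U = \BB C\setminus\ol{B_r(z)}$ (using that $D_h(\cdot,\cdot;\BB C\setminus\ol{B_r(z)})$ determines all $D_h(\cdot,\cdot;\BB C\setminus\ol{B_r(z)})$-metric balls, hence $\mcl B_{t_k}^\bullet\cap(\BB C\setminus B_r(z))$ up to the point where it would enter $B_r(z)$). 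The filled metric ball $\mcl B_{t_k}^\bullet$ and the arc structure $\confarcs_k$ that appear in~\eqref{eqn-ball-stab-event} are only probed through the leftmost $D_h$-geodesics from $\BB z$ emanating out of $\mcl B_{t_k}^\bullet$, and the hitting points of $\bdy\mcl B_{t_k}^\bullet$ by the $D_h(\cdot,\cdot;\BB C\setminus\ol{B_r(z)})$-geodesics to $\bdy B_r(z)$: all of these are data about the metric outside $B_r(z)$ — more precisely about $D_h$ restricted to $\BB C\setminus B_r(z)$ and the leftmost-geodesic ordering, which by the locality of $\mcl B_{t_k}^\bullet$ (Lemma~\ref{lem-ball-local}) and the confluence structure (Theorem~\ref{thm-finite-geo0}, \cite[Lemma 2.7]{gm-confluence}) is determined by $h|_{\BB C\setminus B_r(z)}$ on the relevant event. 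One must be careful that $\mcl B_{t_k}^\bullet$ might not lie entirely outside $B_r(z)$; but $t_k$ is a deterministic function of $\tau_{\ell\BB r}$ and $h_{\BB r}(\BB z)$ as in~\eqref{eqn-geo-iterate-times}, and what matters is that whether $\stabE_{k,r}(z)$ holds can be read off from $h|_{\BB C\setminus B_r(z)}$: if $\mcl B_{t_k}^\bullet\supset B_r(z)$ then $(z,r)\notin\mcl Z_k$ so $\stabE_{k,r}(z)$ fails, and whether this happens is itself determined by $h|_{\BB C\setminus B_r(z)}$ since $\op{dist}(z,\bdy\mcl B_{t_k}^\bullet)\in[\lambda_4\ep\BB r,2\lambda_4\ep\BB r]$ forces $\mcl B_{t_k}^\bullet$ to come within a controlled Euclidean distance of $z$ but (for small $\ep$, with $\lambda_4\ep\BB r$ much smaller than $r\geq\ep^{1+\nu}\BB r$... actually $r$ can be as small as $\ep^{1+\nu}\BB r$, so I would instead just note that the condition $\op{dist}(z,\bdy\mcl B_{t_k}^\bullet)\geq\lambda_4\ep\BB r$ combined with $z\notin\mcl B_{t_k}^\bullet$ means $B_{\lambda_4\ep\BB r}(z)\cap\mcl B_{t_k}^\bullet=\emptyset$; if $\lambda_4\ep\BB r\geq r$ for the scales in play this already gives $B_r(z)\cap\mcl B_{t_k}^\bullet=\emptyset$, and in the regime where $\lambda_4\ep < 1$ one checks $\lambda_4\ep\BB r$ versus $r$ directly — in any case for the purposes of this lemma the point is that on $\stabE_{k,r}(z)$ we have $B_r(z)\cap\mcl B_{t_k}^\bullet=\emptyset$, so $\mcl B_{t_k}^\bullet$ is a set in $\BB C\setminus B_r(z)$ determined by $h|_{\BB C\setminus B_r(z)}$ by Axiom~\ref{item-metric-local}).

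Second, on the event $\stabE_{k,r}(z)\cap\{P\cap B_r(z)\neq\emptyset\}$, I would show $(\mcl B_{t_k}^\bullet,h|_{\mcl B_{t_k}^\bullet})$ and the arc of $\confarcs_k$ containing $P(t_k)$ are determined by $h|_{\BB C\setminus B_r(z)}$ and the indicator. The pair $(\mcl B_{t_k}^\bullet,h|_{\mcl B_{t_k}^\bullet})$ is handled by the observation just made: on $\stabE_{k,r}(z)$, $\mcl B_{t_k}^\bullet\subset\BB C\setminus B_r(z)$, so $h|_{\mcl B_{t_k}^\bullet}$ is part of $h|_{\BB C\setminus B_r(z)}$, and $\mcl B_{t_k}^\bullet$ itself is determined by $h|_{\BB C\setminus B_r(z)}$ by locality as above. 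For the arc: since $P\cap B_r(z)\neq\emptyset$, the geodesic $P$ stopped at its first entrance time into $B_r(z)$ is a $D_h(\cdot,\cdot;\BB C\setminus\ol{B_r(z)})$-geodesic from $\BB z$ to a point of $\bdy B_r(z)$, hence by the definition~\eqref{eqn-ball-stab-event} of $\stabE_{k,r}(z)$ it hits $\bdy\mcl B_{t_k}^\bullet$ in the \emph{same} arc of $\confarcs_k$ as \emph{every} such geodesic. Now $P(t_k)$ lies on the leftmost $D_h$-geodesic from $\BB z$ to $P(t_k)$ (which equals $P|_{[0,t_k]}$ by a.s.\ uniqueness), and the arc of $\confarcs_k$ containing $P(t_k)$ is, by definition~\eqref{eqn-arc-def}, the one whose points have leftmost geodesic through the point of $\confpts_k$ that $P|_{[0,t_k]}$ passes through — which is exactly the arc hit by $P$ stopped at its first entrance to $B_r(z)$ (as $P$ passes through $\mcl B_{t_k}^\bullet$ entirely before entering $B_r(z)$, because $\mcl B_{t_k}^\bullet\cap B_r(z)=\emptyset$, so $P$ exits $\mcl B_{t_k}^\bullet$ strictly before entering $B_r(z)$). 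Therefore this arc coincides with the common arc hit by all $D_h(\cdot,\cdot;\BB C\setminus\ol{B_r(z)})$-geodesics from $\BB z$ to $\bdy B_r(z)$, and this common arc is determined by $D_h(\cdot,\cdot;\BB C\setminus\ol{B_r(z)})$, the leftmost-geodesic ordering structure, and $\mcl B_{t_k}^\bullet$ — all of which are functions of $h|_{\BB C\setminus B_r(z)}$. The role of the indicator $\BB 1_{\stabE_{k,r}(z)\cap\{P\cap B_r(z)\neq\emptyset\}}$ is simply that it is only on this event that these conclusions hold; off the event the right-hand side of the claimed representation is not required to recover anything.

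Third, the final sentence is immediate: given $F\in\mcl F_k$, by Lemma~\ref{lem-geo-sigma-algebra} $F$ is determined by $(\mcl B_{t_k}^\bullet,h|_{\mcl B_{t_k}^\bullet})$ and the arc of $\confarcs_k$ containing $P(t_k)$; by the second assertion both of these are, on $\stabE_{k,r}(z)\cap\{P\cap B_r(z)\neq\emptyset\}$, determined by $h|_{\BB C\setminus B_r(z)}$ and the indicator $\BB 1_{\stabE_{k,r}(z)\cap\{P\cap B_r(z)\neq\emptyset\}}$, and by the first assertion that indicator is itself a function of $h|_{\BB C\setminus B_r(z)}$. Hence $F\cap\stabE_{k,r}(z)\cap\{P\cap B_r(z)\neq\emptyset\}$ is determined by $h|_{\BB C\setminus B_r(z)}$. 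I expect the main obstacle to be making rigorous the claim that the arc structure $\confarcs_k$ and the hitting points of the various $D_h(\cdot,\cdot;\BB C\setminus\ol{B_r(z)})$-geodesics are measurable with respect to $h|_{\BB C\setminus B_r(z)}$ — this requires combining Axiom~\ref{item-metric-local} (to get $\mcl B_{t_k}^\bullet$ and $h|_{\mcl B_{t_k}^\bullet}$), Lemma~\ref{lem-ball-local} (to legitimately treat $\mcl B_{t_k}^\bullet$ as a local set and condition on it), and the confluence/leftmost-geodesic machinery of \cite{gm-confluence} (Theorem~\ref{thm-finite-geo0} and \cite[Lemma 2.7]{gm-confluence}) to know the arc decomposition is a deterministic functional of $(\mcl B_{t_k}^\bullet,h|_{\mcl B_{t_k}^\bullet})$ — and then carefully checking that the geometric coincidence $B_r(z)\cap\mcl B_{t_k}^\bullet=\emptyset$ holds on $\stabE_{k,r}(z)$ so that all this data genuinely lives outside $B_r(z)$.
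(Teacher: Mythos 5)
Your overall route is the same as the paper's: use the fact that on $\{(z,r)\in\mcl Z_k\}$ the ball $B_r(z)$ is disjoint from $\mcl B_{t_k}^\bullet$ (automatic since $r\leq \ep\BB r\leq\lambda_4\ep\BB r$, so your hedging about ``$\lambda_4\ep\BB r$ versus $r$'' is unnecessary), combine this with the local set property of Lemma~\ref{lem-ball-local} and Axiom~\ref{item-metric-local} to get that $\{(z,r)\in\mcl Z_k\}$, $(\mcl B_{t_k}^\bullet,h|_{\mcl B_{t_k}^\bullet})$, $\confarcs_k$, and hence $\stabE_{k,r}(z)$ are determined by $h|_{\BB C\setminus B_r(z)}$; then identify the arc containing $P(t_k)$ with the common arc hit by all $D_h(\cdot,\cdot;\BB C\setminus\ol{B_r(z)})$-geodesics from $\BB z$ to $\bdy B_r(z)$, using that $P$ stopped at its first entrance to $B_r(z)$ is such a geodesic; and finish via Lemma~\ref{lem-geo-sigma-algebra}. (Your opening claim that $\mcl B_{t_k}^\bullet$ is determined by $h|_{\BB C\setminus B_r(z)}$ ``on the event $\{P\cap B_r(z)\neq\emptyset\}$'' is not right — the relevant event is $\{(z,r)\in\mcl Z_k\}$ — but you abandon that route, and your eventual argument matches the paper's.)

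There is, however, one genuine error in your final paragraph: you assert that ``by the first assertion that indicator is itself a function of $h|_{\BB C\setminus B_r(z)}$'' and conclude that $F\cap\stabE_{k,r}(z)\cap\{P\cap B_r(z)\neq\emptyset\}$ is determined by $h|_{\BB C\setminus B_r(z)}$ alone. The first assertion only says that $\stabE_{k,r}(z)$ is determined by $h|_{\BB C\setminus B_r(z)}$; the event $\{P\cap B_r(z)\neq\emptyset\}$ is emphatically \emph{not}, since the $D_h$-geodesic from $\BB z$ to $\BB w$ is a non-local functional of $h$ and whether it enters $B_r(z)$ depends on $h|_{B_r(z)}$. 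This is precisely why the lemma keeps the indicator $\BB 1_{\stabE_{k,r}(z)\cap\{P\cap B_r(z)\neq\emptyset\}}$ as a separate piece of conditioning data, and why Lemma~\ref{lem-cond-expectation} (with $E=\stabE_{k,r}(z)\cap\{P\cap B_r(z)\neq\emptyset\}$ adjoined to $\mcl G=\sigma\left(h|_{\BB C\setminus B_r(z)}\right)$) is needed later in the proof of Lemma~\ref{lem-annulus-choose}; if your stronger claim were true, that machinery would be superfluous. The fix is immediate: drop the false claim, and note that for $F\in\mcl F_k$ one writes $\BB 1_F$ as a measurable function of $(\mcl B_{t_k}^\bullet,h|_{\mcl B_{t_k}^\bullet})$ and the arc containing $P(t_k)$ (Lemma~\ref{lem-geo-sigma-algebra}), multiplies by the indicator, and uses your (correct) second assertion — this yields exactly the stated conclusion that $F\cap\stabE_{k,r}(z)\cap\{P\cap B_r(z)\neq\emptyset\}$ is determined by $h|_{\BB C\setminus B_r(z)}$ together with the indicator, and no more.
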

\begin{proof}
Since $  \mcl B_{t_k}^\bullet $ is a local set for $h$ (Lemma~\ref{lem-ball-local}) and since balls $B_r(z)$ for $(z,r) \in \mcl Z_k$ are disjoint from $\mcl B_{t_k}^\bullet$, we find that $\{(z,r) \in \mcl Z_k\}$ is determined by $h|_{\BB C\setminus B_r(z)}$. 
Furthermore, $\left( \mcl B_{t_k}^\bullet , h|_{\mcl B_{t_k}^\bullet} \right)$ and hence also $\confarcs_k$ is determined by $h|_{\BB C\setminus B_r(z)}$ on the event $\{(z,r)\in\mcl Z_k\}$.
By Axiom~\ref{item-metric-local} (locality), it then follows that $\stabE_{k,r}(z)$ is determined by $h|_{\BB C\setminus B_r(z)}$. 

Since $\stabE_{k,r}(z) \subset \{(z,r) \in \mcl Z_k\}$, we already know that $\left( \mcl B_{t_k}^\bullet , h|_{\mcl B_{t_k}^\bullet} \right)$ is a.s.\ determined by $h|_{\BB C\setminus B_r(z)}$ on the event $\stabE_{k,r}(z)$. 
On the event $\{P\cap B_r(z) \not=\emptyset\}$, the $D_h$-geodesic $P$ stopped at the first time it enters $B_r(z)$ is a $D_h(\cdot,\cdot ; \BB C\setminus\ol{ B_r(z)})$-geodesic from $\BB z$ to a point of $\bdy  B_r(z)$. If $\stabE_{k,r}(z)$ occurs, then every such $D_h(\cdot,\cdot ; \BB C\setminus\ol{ B_r(z)})$-geodesic passes through the same arc of $\confarcs_k$, and we can see which arc this is by observing $h|_{\BB C\setminus B_r(z)}$. 
Therefore, on $\stabE_{k,r}(z) \cap \{P\cap B_r(z) \not=\emptyset\}$, the arc of $\confarcs_k$ which contains $P(t_k)$ is a.s.\ determined by $h|_{\BB C\setminus B_r(z)}$ and $\BB 1_{\stabE_{k,r}(z) \cap \{P\cap B_r(z) \not=\emptyset\}}$. 

The last statement of the lemma follows from the second statement and Lemma~\ref{lem-geo-sigma-algebra}. 
\end{proof}

We define the set of ``good" pairs
\eqb \label{eqn-good-annulus-set}
\mcl Z_k^E := \left\{ (z,r) \in \mcl Z_k : E_r(z) \cap \stabE_{k,r}(z) \cap \{P\cap B_{\lambda_2 r}(z) \not=\emptyset\} \: \text{occurs} \right\}  
\eqe
and the set of ``very good" pairs
\eqb \label{eqn-good-annulus-set'}
\mcl Z_k^{\frk E} := \left\{ (z,r) \in \mcl Z_k : \frk E_r(z) \cap \stabE_{k,r}(z) \cap \{P\cap B_{\lambda_2 r}(z) \not=\emptyset\} \: \text{occurs} \right\} . 
\eqe
The proof of Theorem~\ref{thm-geo-iterate0} is based on lower-bounding the conditional probability that $\mcl Z_k^{\frk E} \not=\emptyset$ given $\mcl F_k$, which allows us to say that the number of $k$ for which  $\mcl Z_k^{\frk E} \not=\emptyset$ stochastically dominates a binomial random variable.
To lower-bound $\BB P[\mcl Z_k^{\frk E}\not=\emptyset \,|\, \mcl F_k]$, we will first establish a lower bound for $\BB P[\mcl Z_k^{\frk E}\not=\emptyset \,|\, \mcl F_k]$ in terms of  $\BB P[\mcl Z_k^{ E}\not=\emptyset \,|\, \mcl F_k]$ using condition~\ref{item-geo-iterate-compare} in Theorem~\ref{thm-geo-iterate} (Section~\ref{sec-annulus-choose}).
We will then show that it is very likely that $\mcl Z_k^E \not=\emptyset$ for many values of $k$ (Section~\ref{sec-stab}). 
This will imply that it is very likely that there are many values of $k$ for which $\BB P[\mcl Z_k^{  E}\not=\emptyset \,|\, \mcl F_k]$ is bounded below, and hence there are many values of $k$ for which $\BB P[\mcl Z_k^{\frk E}\not=\emptyset \,|\, \mcl F_k]$ is bounded below (Section~\ref{sec-uncond-to-cond}).
We will now outline the rest of the proof of Theorem~\ref{thm-geo-iterate}. See Figure~\ref{fig-iterate-outline} for a schematic illustration of how the various results in this section fit together.
\medskip

\begin{figure}[t!]
 \begin{center}
\includegraphics[scale=.8]{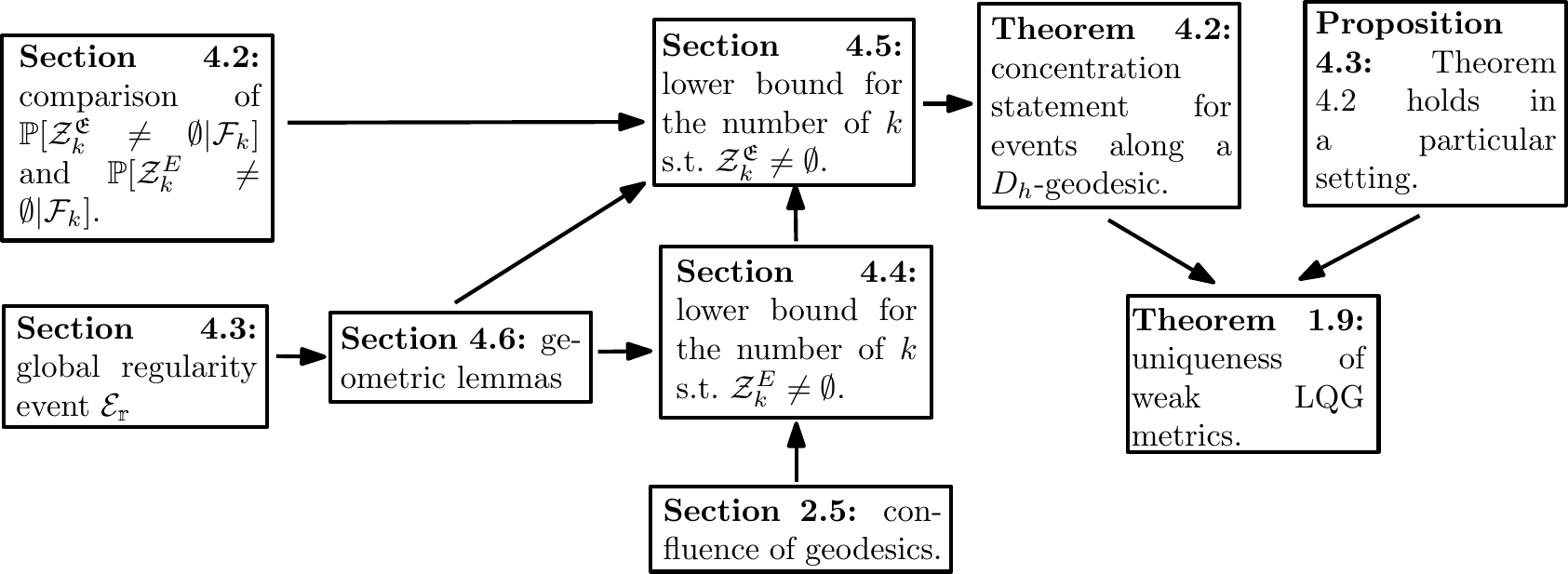}
\vspace{-0.01\textheight}
\caption{Schematic outline of Section~\ref{sec-geodesic-iterate}. An arrow between two sections/results means that the first is used in the proof of the second. Note that Proposition~\ref{prop-geo-event} is proven in Section~\ref{sec-geodesic-shortcut} and Theorem~\ref{thm-weak-uniqueness} is proven in Section~\ref{sec-conclusion}.}\label{fig-iterate-outline}
\end{center}
\vspace{-1em}
\end{figure} 

\noindent In \textbf{Section~\ref{sec-annulus-choose}}, we show that for each $k\in\BB N$, $\BB P[\mcl Z_k^{\frk E} \not=\emptyset | \mcl F_k]$ is bounded below by $\ep^{2\nu+o_\ep(1)} \BB P[\mcl Z_k^E\not=\emptyset |\mcl F_k]$, minus a small error. 
The reason why this is true is that~\eqref{eqn-geo-iterate-compare} together with Lemma~\ref{lem-ball-stab-msrble} allows us to lower-bound $\BB E[\#\mcl Z_k^{\frk E} \,|\, \mcl F_k]$ in terms of $\BB E[\#\mcl Z_k^E \,|\, \mcl F_k]$. 
Then, Lemma~\ref{lem-geo-bdy} along with a Paley-Zygmund type argument allows us to transfer from a lower bound for $\BB E[\#\mcl Z_k^{\frk E} \,|\, \mcl F_k]$ to a lower bound for $\BB P[\mcl Z_k^{\frk E} \not=\emptyset \,|\, \mcl F_k]$.
Here, one should think of $\nu$ as being small (relative to $\beta$), so that $\ep^{2\nu + o_\ep(1)}$ is not too much different from $\ep^{o_\ep(1)}$. 
\medskip

\noindent In \textbf{Section~\ref{sec-iterate-reg}}, we define a global regularity event $\mcl E_{\BB r}$ which we will truncate on for most of the rest of the proof and show that it occurs with high probability. 
This event includes various bounds for $D_h$-distances (e.g., H\"older continuity), but the most important condition is that $E_r(z)$ occurs for many pairs $(z,r)$.
To make the latter condition occur with high probability, we will make $\BB p$ sufficiently close to 1 to allow us to apply Lemma~\ref{lem-annulus-iterate} and a union bound.
\medskip

\noindent In \textbf{Section~\ref{sec-stab}}, we show that if we truncate on $\mcl E_{\BB r}$, then with very high probability there are many values of $k$ for which $\mcl Z_k^E\not=\emptyset$.
Since the definition of $\mcl E_{\BB r}$ already includes the condition that $E_r(z)$ occurs for many pairs $(z,r) \in \mcl Z_k$, 
the main difficulty here is showing that $\stabE_{k,r}(z)$ occurs for most of the pairs $(z,r)$ such that $P\cap B_{\lambda_2 r}(z)\not=\emptyset$.
This will be accomplished by applying the results on confluence of geodesics from~\cite{gm-confluence}, as reviewed in Section~\ref{sec-confluence-prelim}, and multiplying over $k$ to get concentration.
We will choose the parameter $\beta$ from~\eqref{eqn-geo-iterate-times} to be small so that we have enough ``room" between $\bdy\mcl B_{s_k}^\bullet$ and $\bdy \mcl B_{t_k}^\bullet$ for various confluence effects to occur. 
\medskip

\noindent In \textbf{Section~\ref{sec-uncond-to-cond}}, we will transfer from the statement that ``$\mcl Z_k^E\not=\emptyset$ for many values of $k$" to the statement that ``$\mcl Z_k^{\frk E}\not=\emptyset$ for many values of $k$". This will be accomplished using the result of Section~\ref{sec-annulus-choose} and an elementary probabilistic lemma (Lemma~\ref{lem-uncond-to-cond}) which allows us to convert between conditional and unconditional probabilities. We will then complete the proof of Theorem~\ref{thm-geo-iterate} by truncating on $\mcl E_{\BB r}$ and then taking a union bound over many pairs of initial and terminal points $\BB z, \BB w$.  
\medskip

\noindent In \textbf{Section~\ref{sec-geometric-lemma}}, we collect the proofs of some geometric lemmas which are stated in Sections~\ref{sec-stab} and~\ref{sec-uncond-to-cond}, but whose proofs are postponed to avoid distracting from the core of the argument. These geometric lemmas are used to control the behavior of $D_h$-geodesics on the regularity event $\mcl E_{\BB r}$.

\subsection{Comparison of $E_r(z)$ and $\frk E_r(z)$}
\label{sec-annulus-choose}

Recall the definitions of the filtration $\{\mcl F_k\}_{k\geq 0}$ from~\eqref{eqn-nomax-filtration}, the set of ``good" pairs $\mcl Z_k^E$ from~\eqref{eqn-good-annulus-set}, and the set of ``very good" pairs $\mcl Z_k^{\frk E}$ from~\eqref{eqn-good-annulus-set'}. 
The events $E_r(z)$ are easier to work with than the events $\frk E_r(z)$ since $E_r(z)$ has high probability and is determined locally by $h$. 
The goal of this subsection is to prove the following lemma, which will eventually allow us to transfer from a lower bound for the probability that $\mcl Z_k^E \not=\emptyset$ to a lower bound for the probability that $\mcl Z_k^{\frk E} \not=\emptyset$. 

\begin{lem} \label{lem-annulus-choose} 
Let $M >0$. On the event $\{ \mcl B_{t_k}^\bullet \subset B_{\ep^{-M}}(\BB z) \} \cap \{\BB w \notin B_{ 3\lambda_4 \ep \BB r}(\mcl B_{t_k}^\bullet)\}$, it holds except on an event of probability $o_\ep^\infty(\ep)$ that
\eqb \label{eqn-annulus-choose}
\BB P\left[ \mcl Z_k^{\frk E}\not=\emptyset \,|\, \mcl F_k \right] \geq \ep^{2\nu + o_\ep(1)} \BB P\left[ \mcl Z_k^E \not= \emptyset  \,|\, \mcl F_k \right]  - o_\ep^\infty(\ep)  , 
\eqe 
where the rates of the $o_\ep(1)$ and $o_\ep^\infty(\ep)$ are deterministic and depend only on $M,\mu,\nu,\{\lambda_i\}_{i=1,\dots,5}$. 
\end{lem}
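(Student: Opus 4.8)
The plan is to prove Lemma~\ref{lem-annulus-choose} in two moves: first establish a lower bound on the conditional \emph{expectation} $\BB E[\#\mcl Z_k^{\frk E} \mid \mcl F_k]$ in terms of $\BB E[\#\mcl Z_k^E \mid \mcl F_k]$, and then use a Paley--Zygmund type second-moment argument to convert a lower bound on $\BB E[\#\mcl Z_k^{\frk E}\mid\mcl F_k]$ into the desired lower bound on $\BB P[\mcl Z_k^{\frk E}\neq\emptyset\mid\mcl F_k]$. The role of the truncation events $\{\mcl B_{t_k}^\bullet\subset B_{\ep^{-M}}(\BB z)\}$ and $\{\BB w\notin B_{3\lambda_4\ep\BB r}(\mcl B_{t_k}^\bullet)\}$ is, respectively, to keep the number of relevant balls $B_r(z)$ with $(z,r)\in\mcl Z_k$ polynomially bounded in $\ep$ (so counting arguments are quantitative and uniform in $\BB r$), and to ensure that the $D_h$-geodesic $P$ genuinely has to travel a macroscopic distance past $\mcl B_{t_k}^\bullet$, so that it has a real chance to enter the balls $B_{\lambda_2 r}(z)$ that hug $\bdy\mcl B_{t_k}^\bullet$.

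For the first move: fix $(z,r)\in\mcl Z_k$. Condition on $\mcl F_k$. By Lemma~\ref{lem-ball-stab-msrble}, on the event $\stabE_{k,r}(z)\cap\{P\cap B_{\lambda_2 r}(z)\neq\emptyset\}$ any $\mcl F_k$-measurable event is determined by $h|_{\BB C\setminus B_r(z)}$ together with the single indicator $\BB 1_{\stabE_{k,r}(z)\cap\{P\cap B_{\lambda_2 r}(z)\neq\emptyset\}}$; in particular $\stabE_{k,r}(z)$ itself is $h|_{\BB C\setminus B_r(z)}$-measurable. Since $E_r(z)$ is determined by $(h-h_{\lambda_5 r}(z))|_{\BB A_{\lambda_1 r,\lambda_4 r}(z)}\subset\sigma(h|_{\BB C\setminus B_{\lambda_1 r}(z)})$ (using $\lambda_1<\lambda_2\le\lambda_3\le\lambda_4$), and $\frk E_r(z)$ is determined by $h|_{B_{\lambda_4 r}(z)}$ and the stopped geodesic, we can apply condition~\ref{item-geo-iterate-compare} of Theorem~\ref{thm-geo-iterate} (the comparison~\eqref{eqn-geo-iterate-compare}) conditionally given $h|_{\BB C\setminus B_{\lambda_3 r}(z)}$. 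Combining this with the $h|_{\BB C\setminus B_r(z)}$-measurability of $E_r(z)\cap\stabE_{k,r}(z)$ and of the relevant $\mcl F_k$-events on $\stabE_{k,r}(z)\cap\{P\cap B_{\lambda_2 r}(z)\neq\emptyset\}$, one gets $\BB P[(z,r)\in\mcl Z_k^{\frk E}\mid\mcl F_k]\ge\Lambda^{-1}\BB P[(z,r)\in\mcl Z_k^E\mid\mcl F_k]$ for each $(z,r)$, up to an $o_\ep^\infty(\ep)$ error coming from the exceptional event where the measurability description fails. Summing over $(z,r)\in\mcl Z_k$ yields $\BB E[\#\mcl Z_k^{\frk E}\mid\mcl F_k]\ge\Lambda^{-1}\BB E[\#\mcl Z_k^E\mid\mcl F_k]-o_\ep^\infty(\ep)$.

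For the second move, I need an \emph{upper} bound on $\BB E[(\#\mcl Z_k^{\frk E})^2\mid\mcl F_k]$, or rather a bound of the form $\BB E[\#\mcl Z_k^{\frk E}\mid\mcl F_k]\le \ep^{-2\nu+o_\ep(1)}\,\BB P[\mcl Z_k^{\frk E}\neq\emptyset\mid\mcl F_k]$. The point is that $\#\mcl Z_k^{\frk E}$ counts balls $B_{\lambda_2 r}(z)$ hit by $P$ that lie within Euclidean distance $\asymp\ep\BB r$ of $\bdy\mcl B_{t_k}^\bullet$; if $\mcl Z_k^{\frk E}\neq\emptyset$ then $P$ enters such a ball, and by Lemma~\ref{lem-geo-bdy} (geodesics cannot spend a long time near metric ball boundary) the geodesic $P$ can only be within distance $\ep^{1+\nu}\BB r$ of $\bdy\mcl B_{t_k}^\bullet$ along a set of $z$-values of area $\le\ep^{2-1/M}\BB r^2$; since the balls are centered on a grid of spacing $\asymp\ep^{1+\nu}\BB r$, this bounds $\#\mcl Z_k^{\frk E}$ by $\ep^{-2\nu+o_\ep(1)}$ with probability $1-o_\ep^\infty(\ep)$ on the truncation event. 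Hence on that event $\BB E[\#\mcl Z_k^{\frk E}\mid\mcl F_k]\le\ep^{-2\nu+o_\ep(1)}\BB P[\#\mcl Z_k^{\frk E}\ge1\mid\mcl F_k]+o_\ep^\infty(\ep)$. Chaining the two inequalities and rearranging gives~\eqref{eqn-annulus-choose}. The main obstacle is the careful bookkeeping in the first move: making the conditional independence argument via Lemma~\ref{lem-ball-stab-msrble} and~\eqref{eqn-geo-iterate-compare} rigorous requires matching up three different $\sigma$-algebras ($\mcl F_k$, $h|_{\BB C\setminus B_r(z)}$, $h|_{\BB C\setminus B_{\lambda_3 r}(z)}$) and controlling the exceptional events where geodesics fail to be unique or where the stopped-geodesic description of $\frk E_r(z)$ is ambiguous — this is where all the $o_\ep^\infty(\ep)$ errors are generated, and keeping their rates deterministic and uniform in $\BB r$ (depending only on $M,\mu,\nu,\{\lambda_i\}$) is the delicate part.
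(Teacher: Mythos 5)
Your proposal follows the paper's proof essentially step for step: your first move is the paper's comparison of conditional expectations (condition~4 of Theorem~\ref{thm-geo-iterate} together with Lemma~\ref{lem-ball-stab-msrble}, transferred from conditioning on $h|_{\BB C\setminus B_r(z)}$ to conditioning on $\mcl F_k$ — the paper packages this transfer as the abstract Lemma~\ref{lem-cond-expectation} — and then summed over $(z,r)\in\mcl Z_k$), and your second move is exactly the paper's Lemma~\ref{lem-good-annulus-count} (the area bound from Lemma~\ref{lem-geo-bdy}, the grid-spacing count, and the trivial polynomial bound on $\#\mcl Z_k$ furnished by the truncation event $\{\mcl B_{t_k}^\bullet\subset B_{\ep^{-M}}(\BB z)\}$, converted to a conditional statement by Markov's inequality) followed by the same Paley--Zygmund rearrangement. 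Two small inaccuracies, neither of which changes the route: $E_r(z)\cap\stabE_{k,r}(z)$ is \emph{not} $\sigma(h|_{\BB C\setminus B_r(z)})$-measurable (only $\stabE_{k,r}(z)$ is, while $E_r(z)$ simply stays inside the conditional probabilities being compared), and the event $\{\BB w\notin B_{3\lambda_4\ep\BB r}(\mcl B_{t_k}^\bullet)\}$ is needed to guarantee $\BB z,\BB w\notin B_{\lambda_4 r}(z)$ for all $(z,r)\in\mcl Z_k$ so that condition~4 is applicable (the exceptional $o_\ep^\infty(\ep)$ errors arise in the counting step, not in the $\sigma$-algebra transfer, which is exact).
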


Nothing from this section besides Lemma~\ref{lem-annulus-choose} is used in subsequent subsections.
Lemma~\ref{lem-annulus-choose} will eventually be a consequence of condition~\ref{item-geo-iterate-compare} of Theorem~\ref{thm-geo-iterate}, which together with Lemma~\ref{lem-ball-stab-msrble} allows us to compare the conditional expectations of $\#\mcl Z_k^{E}$ and $\#\mcl Z_k^{\frk E}$ given $\mcl F_k$. 
To transfer from a lower bound for the conditional expectation of $\#\mcl Z_k^{\frk E}$ to a lower bound for the probability that $\mcl Z_k^{\frk E}\not=\emptyset$, we will use a Paley-Zygmund type argument. For this purpose we need the following upper bound for $\#\mcl Z_k^{\frk E}$, which comes from Lemma~\ref{lem-geo-bdy} and Markov's inequality (to transfer from unconditional to conditional probability). 

\begin{lem} \label{lem-good-annulus-count}
Let $M > 0$ and $\zeta \in (0,1)$. Also let
\eqbn
\mcl Z_k(P) := \left\{(z,r) \in \mcl Z_k : P\cap B_r(z) \not=\emptyset\right\} ,
\eqen
so that $\mcl Z_k^{\frk E} \subset \mcl Z_k^E \subset \mcl Z_k(P)$.
 On the event $\{\mcl B_{t_k}^\bullet \subset B_{\ep^{-M} \BB r}(\BB z)\}$, it holds except on an event of probability $o_\ep^\infty(\ep)$ as $\ep\rta 0$ that
\eqb  \label{eqn-good-annulus-count}
  \BB E\left[ \#\mcl Z_k(P) \BB 1_{(\#\mcl Z_k(P) > \ep^{-2\nu-\zeta})} \,|\, \mcl F_k \right] = o_\ep^\infty(\ep) ,
\eqe 
where the rate of the $o_\ep^\infty(\ep)$ depends only on $M,\zeta,\mu,\nu,\{\lambda_i\}_{i=1,\dots,5}$. 
\end{lem}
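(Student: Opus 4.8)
The plan is to upgrade Lemma~\ref{lem-geo-bdy} into a \emph{deterministic} bound $\#\mcl Z_k(P)\leq \ep^{-2\nu-\zeta}$ valid off a global event of probability $o_\ep^\infty(\ep)$, and then pass to the conditional expectation given $\mcl F_k$ using a crude a priori count together with Markov's inequality. Throughout we work on $\{\mcl B_{t_k}^\bullet\subset B_{\ep^{-M}\BB r}(\BB z)\}$; we may assume $\lambda_4\geq 1$ (in the degenerate case $D_h(\BB z,\BB w)<t_k$ the geodesic $P$ stays inside $\mcl B_{t_k}^\bullet$, while every $(z,r)\in\mcl Z_k$ has $z$ at Euclidean distance $\geq\lambda_4\ep\BB r\geq r$ from $\mcl B_{t_k}^\bullet$, so $\mcl Z_k(P)=\emptyset$ and there is nothing to prove; hence we may also assume $D_h(\BB z,\BB w)\geq t_k$, i.e.\ $\BB w$ lies outside $\mcl B_{t_k}(\BB z;D_h)$, so $P$ is a $D_h$-geodesic from $\BB z$ to a point outside $\mcl B_{t_k}(\BB z;D_h)$).

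\emph{Packing step.} Every $(z,r)\in\mcl Z_k(P)$ has $z\in B_r(P)\subseteq B_{\ep\BB r}(P)$ (since $r\leq\ep\BB r$) and $\op{dist}(z,\bdy\mcl B_{t_k}^\bullet)\leq 2\lambda_4\ep\BB r$ by the definition~\eqref{eqn-good-annulus-set0} of $\mcl Z_k$, so $z$ lies in $R:=B_{\ep\BB r}(P)\cap B_{2\lambda_4\ep\BB r}(\bdy\mcl B_{t_k}^\bullet)$. Fix an integer $M'>M$ with $1/M'<\zeta$ and a constant $C_0=C_0(\lambda_4)>1$ large enough that the $(\lambda_1\ep^{1+\nu}\BB r/4)$-neighbourhood of $R$ is contained in $B_{C_0\ep\BB r}(P)\cap B_{C_0\ep\BB r}(\bdy\mcl B_{t_k}^\bullet)$ for all small $\ep$. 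Apply Lemma~\ref{lem-geo-bdy}, centered at $\BB z$ (translation invariance, Axiom~\ref{item-metric-translate}), with parameter $M'$ and with $C_0\ep$ in place of $\ep$: this produces an event $G_\ep$ with $\BB P[G_\ep^c]=o_\ep^\infty(\ep)$ on which, using $\mcl B_{t_k}(\BB z;D_h)\subseteq\mcl B_{t_k}^\bullet\subseteq B_{\ep^{-M}\BB r}(\BB z)\subseteq B_{(C_0\ep)^{-M'}\BB r}(\BB z)$ (valid for small $\ep$ as $M'>M$) and $\bdy\mcl B_{t_k}^\bullet\subseteq\bdy\mcl B_{t_k}(\BB z;D_h)$, one has $\op{area}\bigl(B_{C_0\ep\BB r}(P)\cap B_{C_0\ep\BB r}(\bdy\mcl B_{t_k}^\bullet)\bigr)\leq (C_0\ep)^{2-1/M'}\BB r^2\preceq\ep^{2-1/M'}\BB r^2$. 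The centres $z$ lie on the grid $\tfrac{\lambda_1\ep^{1+\nu}\BB r}{4}\BB Z^2$, whose cells are disjoint, have area $(\lambda_1\ep^{1+\nu}\BB r/4)^2$, and — for $z\in R$ — are contained in $B_{C_0\ep\BB r}(P)\cap B_{C_0\ep\BB r}(\bdy\mcl B_{t_k}^\bullet)$. Dividing, the number of admissible centres is $\preceq \ep^{2-1/M'}/\ep^{2+2\nu}=\ep^{-2\nu-1/M'}$, and multiplying by the number $\lfloor\mu\log_8\ep^{-1}\rfloor$ of admissible radii gives $\#\mcl Z_k(P)\preceq \ep^{-2\nu-1/M'}\log_8\ep^{-1}<\ep^{-2\nu-\zeta}$ for all small $\ep$, by the choice $1/M'<\zeta$. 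Hence $\#\mcl Z_k(P)\,\BB 1_{(\#\mcl Z_k(P)>\ep^{-2\nu-\zeta})}=0$ on $G_\ep\cap\{\mcl B_{t_k}^\bullet\subset B_{\ep^{-M}\BB r}(\BB z)\}$.

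\emph{Conditioning step.} On $\{\mcl B_{t_k}^\bullet\subset B_{\ep^{-M}\BB r}(\BB z)\}$ there is the crude deterministic bound $\#\mcl Z_k(P)\leq\#\mcl Z_k\leq N_\ep$, with $N_\ep\preceq\ep^{-2M-2-2\nu}\log_8\ep^{-1}$, since every admissible $z$ lies within $2\ep^{-M}\BB r$ of $\BB z$. Both $\{\mcl B_{t_k}^\bullet\subset B_{\ep^{-M}\BB r}(\BB z)\}$ and $\BB P[G_\ep^c\mid\mcl F_k]$ are $\mcl F_k$-measurable, and by the packing step $\#\mcl Z_k(P)\,\BB 1_{(\#\mcl Z_k(P)>\ep^{-2\nu-\zeta})}$ is supported on $G_\ep^c$ on this event, so $\BB E\bigl[\#\mcl Z_k(P)\,\BB 1_{(\#\mcl Z_k(P)>\ep^{-2\nu-\zeta})}\mid\mcl F_k\bigr]\leq N_\ep\,\BB P[G_\ep^c\mid\mcl F_k]$ there. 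Markov's inequality gives $\BB P\bigl[\BB P[G_\ep^c\mid\mcl F_k]>\sqrt{\BB P[G_\ep^c]}\,\bigr]\leq\sqrt{\BB P[G_\ep^c]}=o_\ep^\infty(\ep)$, and on the complement of this $\mcl F_k$-measurable exceptional event we have $N_\ep\,\BB P[G_\ep^c\mid\mcl F_k]\leq N_\ep\sqrt{\BB P[G_\ep^c]}$, which is $o_\ep^\infty(\ep)$ because $N_\ep$ is only polynomial in $\ep^{-1}$ while $\BB P[G_\ep^c]=o_\ep^\infty(\ep)$. This is exactly~\eqref{eqn-good-annulus-count}, and all rates depend only on $M,\zeta,\mu,\nu,\{\lambda_i\}_{i=1,\dots,5}$.

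The argument is essentially bookkeeping: the one genuine input is Lemma~\ref{lem-geo-bdy} (geodesics cannot linger near a metric-ball boundary), and the only points requiring care are the choice of $M'$ from $\zeta$, tracking the constants $\lambda_1,\lambda_4$ through the grid count, and noting that the good event $G_\ep$ is not $\mcl F_k$-measurable, which forces the final Markov step; none of these is a serious obstacle.
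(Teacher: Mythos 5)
Your proposal is correct and takes essentially the same route as the paper: the one real input is Lemma~\ref{lem-geo-bdy}, whose area bound is converted by a packing argument (grid cells in your case, the balls $B_r(z)$ with an overlap count in the paper's) into the bound $\#\mcl Z_k(P) \leq \ep^{-2\nu-\zeta}$ off an event of probability $o_\ep^\infty(\ep)$, and then Markov's inequality together with a crude polynomial bound on $\#\mcl Z_k$ transfers this to the conditional expectation given $\mcl F_k$, exactly as in the paper (which applies Markov to the unconditional probability of the bad event rather than to $\BB P[G_\ep^c\mid\mcl F_k]$ — a purely cosmetic difference). Your explicit treatment of the degenerate case $\BB w\in\mcl B_{t_k}(\BB z;D_h)$ is extra care the paper skips; just note that ``we may assume $\lambda_4\geq 1$'' is an appeal to the intended application (where $\lambda_4=4$) rather than something granted by the hypotheses of Theorem~\ref{thm-geo-iterate}, though the paper is no more careful on this point.
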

\begin{proof}
By Lemma~\ref{lem-geo-bdy} (applied with $M\vee (2/\zeta)$ in place of $M$ and $4\lambda_4\ep$ in place of $\ep$), on the event $\{\mcl B_{t_k}^\bullet \subset B_{\ep^{-M} \BB r}(\BB z)\}$ it is extremely unlikely that $P$ spends a long time near $\bdy \mcl B_{t_k}^\bullet$: more precisely, it holds except on an event of probability $o_\ep^\infty(\ep)$ as $\ep\rta 0$ that
\eqb \label{eqn-use-geo-bdy}
\op{area}\left( B_{4\lambda_4 \ep \BB r}(P) \cap B_{4\lambda_4\ep \BB r}\left(\bdy\mcl B_{t_k}^\bullet \right) \right) \leq \ep^{2 - \zeta /2} \BB r^2 .
\eqe

By~\eqref{eqn-good-annulus-set0}, each ball $B_r(z)$ for $(z,r) \in \mcl Z_k$ is contained in $B_{4\lambda_4 \ep \BB r}\left(\bdy \mcl B_{t_k}^\bullet \right)$ and the maximal number of such balls which contain any given point of $\BB C$ is at most a constant (depending only on $M,\mu,\nu,\{\lambda_i\}_{i=1,\dots,5}$) times $\ep^{-2\nu} \log_8 \ep^{-1}$. 
By the definition of $\mcl Z_k(P)$, each ball $B_r(z)$ for $(z,r) \in \mcl Z_k(P)$ is contained in $B_{4\lambda_4 \ep \BB r}(P)$. 
Therefore, the left side of~\eqref{eqn-use-geo-bdy} is at least a constant times $\ep^{2 +  2\nu} (\log_8\ep^{-1})^{-1} \BB r^2  \# \mcl Z_k(P)$. 
From~\eqref{eqn-use-geo-bdy}, we now get that
\eqb \label{eqn-annulus-count-uncond}
\BB P\left[\# \mcl Z_k(P) > \ep^{-2\nu-\zeta} ,\,    \mcl B_{t_k}^\bullet \subset B_{\ep^{-M} \BB r}(\BB z) \right] = o_\ep^\infty(\ep) .
\eqe

Since $\{\mcl B_{t_k}^\bullet \subset B_{\ep^{-M} \BB r}(\BB z)\}\subset \mcl F_k$, we can apply Markov's inequality to deduce from~\eqref{eqn-annulus-count-uncond} that with probability $1-o_\ep^\infty(\ep)$, 
\eqb \label{eqn-annulus-count-cond}
  \BB P\left[ \# \mcl Z_k(P) > \ep^{-2\nu-\zeta}  \,|\, \mcl F_k \right] \BB 1_{(\mcl B_{t_k}^\bullet \subset B_{\ep^{-M} \BB r}(\BB z))} = o_\ep^\infty(\ep )         .
\eqe 
If $\mcl B_{t_k}^\bullet \subset B_{\ep^{-M} \BB r}(\BB z) $, then~\eqref{eqn-good-annulus-set0} implies that for each $(z,r) \in\mcl Z_k$,  
\eqbn
z\in \mcl Z_k \subset B_{(\ep^{-M} + 2\lambda_4\ep) \BB r}(\BB z) \cap \left( \frac{\lambda_1 \ep^{1+\nu}}{4} \BB Z^2\right) . 
\eqen
Since there are at most $\mu\log_8 \ep^{-1}$ possibilities for $r$, on the event $\{\mcl B_{t_k}^\bullet \subset B_{\ep^{-M} \BB r}(\BB z)\} $, we have the trivial upper bound 
\eqb \label{eqn-annulus-count-poly}
 \# \mcl Z_k(P) \leq \#\mcl Z_k \leq O_\ep \left( \ep^{-2M(1+\nu)} \log_8 \ep^{-1} \right) .
\eqe
Combining~\eqref{eqn-annulus-count-cond} and~\eqref{eqn-annulus-count-poly} gives~\eqref{eqn-good-annulus-count}.
\end{proof}
 
We will also need the following elementary probabilistic lemma which will be used in conjunction with Lemma~\ref{lem-ball-stab-msrble} to transfer from conditional probabilities given $h|_{\BB C\setminus B_r(z)}$ to conditional probabilities given $\mcl F_k$.

\begin{lem} \label{lem-cond-expectation}
Let $(\Omega,\mcl M , \BB P)$ be a probability space.  Let $\mcl F , \mcl G\subset\mcl M$ be sub-$\sigma$-algebras. 
Let $E \in\mcl M$ be an event such that $F\cap E \in \mcl G \vee \sigma(E)$ for each $F\in\mcl F$. 
Also let $G\in\mcl F\cap \mcl G$. 
Suppose $H_1,H_2 \in \mcl M$ are events and $\Lambda >0$ is a deterministic constant such that a.s.\    
\eqb \label{eqn-event-compare-G}
 \BB P\left[ H_1 \cap E \,|\, \mcl G\right] \BB 1_G  \leq \Lambda \BB P\left[H_2 \cap E \,|\, \mcl G \right] \BB 1_G .
\eqe
Then a.s.\
\eqb \label{eqn-event-compare-F}
 \BB P\left[ H_1 \cap E \,|\, \mcl F\right] \BB 1_G  \leq \Lambda \BB P\left[H_2 \cap E \,|\, \mcl F \right] \BB 1_G .
\eqe
\end{lem}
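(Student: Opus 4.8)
The plan is to prove Lemma~\ref{lem-cond-expectation} by conditioning the hypothesis~\eqref{eqn-event-compare-G} on the smaller $\sigma$-algebra $\mcl F$, using the tower property, after first reducing the statement so that the indicator $\BB 1_G$ and the event $E$ can be absorbed cleanly. The key observation is that $G \in \mcl F \cap \mcl G$, so $\BB 1_G$ can be pulled inside or outside conditional expectations with respect to either $\sigma$-algebra; and the hypothesis ``$F \cap E \in \mcl G \vee \sigma(E)$ for each $F \in \mcl F$'' is exactly what is needed to rewrite the conditional probabilities $\BB P[H_i \cap E \mid \mcl F]$ in a way that interacts well with conditioning on $\mcl G$.

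First I would reduce to the case $G = \Omega$: replacing $\BB P$ by the conditional law given $G$ (or, more simply, multiplying through by $\BB 1_G$ and carrying it along) we may as well assume $\BB 1_G \equiv 1$, so the hypothesis reads $\BB P[H_1 \cap E \mid \mcl G] \le \Lambda \BB P[H_2 \cap E \mid \mcl G]$ a.s.\ and the goal is $\BB P[H_1 \cap E \mid \mcl F] \le \Lambda \BB P[H_2 \cap E \mid \mcl F]$ a.s. (One has to be a little careful here since $G$ need not have full measure, but since $G \in \mcl F \cap \mcl G$ all the conditional expectations below restricted to $G$ are unaffected by what happens off $G$, so working on $G$ throughout is harmless; I would just keep the $\BB 1_G$ in the displays rather than formally pass to a conditional probability measure.) Next, the heart of the argument: I claim that for any $\mcl F$-measurable event $F$,
\eqb \label{eqn-cond-exp-tower}
\BB E\left[ \BB 1_{H_1 \cap E} \BB 1_F \BB 1_G \right] \leq \Lambda \, \BB E\left[ \BB 1_{H_2 \cap E} \BB 1_F \BB 1_G \right] .
\eqe
To see~\eqref{eqn-cond-exp-tower}, note that by hypothesis $F \cap E \in \mcl G \vee \sigma(E)$, so $\BB 1_{F \cap E}$ is a measurable function of $(\BB 1_E, W)$ for some $\mcl G$-measurable $W$; writing $\BB 1_{H_i \cap E} \BB 1_F \BB 1_G = \BB 1_{H_i} \BB 1_{F \cap E} \BB 1_G = \BB 1_{H_i \cap E} \BB 1_{F \cap E} \BB 1_G$ and conditioning on $\mcl G$, the factor $\BB 1_{F \cap E} \BB 1_G$ — wait, $\BB 1_{F\cap E}$ is not $\mcl G$-measurable because of the $\BB 1_E$ piece — so instead I would condition on $\mcl G \vee \sigma(E)$. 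Concretely: $\BB 1_{H_i \cap E}\BB 1_F\BB 1_G = \BB 1_{H_i}\BB 1_E \BB 1_{F\cap E}\BB 1_G$, and $\BB 1_{F\cap E}\BB 1_G$ is $\big(\mcl G \vee \sigma(E)\big)$-measurable, so
\[
\BB E\big[\BB 1_{H_i \cap E}\BB 1_F\BB 1_G\big] = \BB E\Big[\BB 1_{F\cap E}\BB 1_G \, \BB E\big[\BB 1_{H_i}\BB 1_E \,\big|\, \mcl G \vee \sigma(E)\big]\Big] = \BB E\Big[\BB 1_{F\cap E}\BB 1_G \, \BB 1_E\,\BB P\big[H_i \cap E \,\big|\, \mcl G \vee \sigma(E)\big]\Big],
\]
and here $\BB P[H_i \cap E \mid \mcl G \vee \sigma(E)] = \BB 1_E \cdot \BB P[H_i \cap E \mid \mcl G] / \BB P[E \mid \mcl G]$ on $\{\BB P[E\mid \mcl G]>0\}$; a cleaner route is to avoid dividing and instead directly use that $\BB P[H_i\cap E \mid \mcl G \vee \sigma(E)]\BB 1_{E^c}=0$ and on $E$ it equals a version that is dominated appropriately. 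I will present it as: $\BB 1_E \BB P[H_1 \cap E \mid \mcl G] \le \Lambda \BB 1_E \BB P[H_2 \cap E \mid \mcl G]$ follows from the hypothesis, then multiply by the nonnegative $(\mcl G \vee \sigma(E))$-measurable weight $\BB 1_{F \cap E}\BB 1_G$, take expectations, and recognize both sides via the tower property over $\mcl G \vee \sigma(E)$ as $\BB E[\BB 1_{H_i\cap E}\BB 1_F\BB 1_G]$ — using once more that $F \cap E \in \mcl G \vee \sigma(E)$ so that $\BB 1_{F\cap E}$ commutes with that conditioning. This gives~\eqref{eqn-cond-exp-tower}.

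Finally, since~\eqref{eqn-cond-exp-tower} holds for every $F \in \mcl F$ and both $\BB 1_{H_1 \cap E}\BB 1_G$ and $\Lambda \BB 1_{H_2 \cap E}\BB 1_G$ have conditional expectations given $\mcl F$ that are $\mcl F$-measurable, the defining property of conditional expectation yields $\BB P[H_1 \cap E \mid \mcl F]\BB 1_G \le \Lambda \BB P[H_2 \cap E \mid \mcl F]\BB 1_G$ a.s., which is~\eqref{eqn-event-compare-F}. (Here I use $\BB 1_G = \BB E[\BB 1_G \mid \mcl F]$ since $G \in \mcl F$, so $\BB P[H_i \cap E \mid \mcl F]\BB 1_G = \BB E[\BB 1_{H_i \cap E}\BB 1_G \mid \mcl F]$, and~\eqref{eqn-cond-exp-tower} says exactly that the $\mcl F$-conditional expectation of the left-hand integrand is dominated, tested against all $F \in \mcl F$.) The main obstacle I anticipate is purely bookkeeping: making sure at each step that the event $E$ is handled via $\mcl G \vee \sigma(E)$ rather than $\mcl G$ alone, and that the hypothesis $F \cap E \in \mcl G \vee \sigma(E)$ is invoked precisely when one needs $\BB 1_{F\cap E}$ to be measurable with respect to the $\sigma$-algebra one is conditioning on; there is no analytic difficulty, only the risk of a measurability slip.
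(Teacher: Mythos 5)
Your overall architecture—reduce to testing the desired $\mcl F$-conditional inequality against every $F\in\mcl F$, with $\BB 1_G$ handled via $G\in\mcl F\cap\mcl G$—is sound, and your final step (from the integrated inequality for all $F\in\mcl F$ back to the a.s.\ conditional statement) is correct. The gap is in the justification of the central integrated inequality $\BB E[\BB 1_{H_1\cap E}\BB 1_F\BB 1_G]\le\Lambda\,\BB E[\BB 1_{H_2\cap E}\BB 1_F\BB 1_G]$. In your final presentation you multiply the hypothesis (times $\BB 1_E$) by the weight $\BB 1_{F\cap E}\BB 1_G$, take expectations, and claim both sides are ``recognized via the tower property over $\mcl G\vee\sigma(E)$'' as $\BB E[\BB 1_{H_i\cap E}\BB 1_F\BB 1_G]$. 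That recognition fails: $\BB P[H_i\cap E\mid\mcl G]$ is not a version of $\BB P[H_i\cap E\mid\mcl G\vee\sigma(E)]$, and the weight $\BB 1_{F\cap E}\BB 1_G$ is not $\mcl G$-measurable, so neither tower applies. What you actually get is $\BB E\big[\BB 1_{F\cap E}\BB 1_G\,\BB P[H_i\cap E\mid\mcl G]\big]=\BB E\big[\BB P[F\cap E\mid\mcl G]\,\BB P[H_i\cap E\mid\mcl G]\,\BB 1_G\big]$, which differs from $\BB P[H_i\cap E\cap F\cap G]$ in general: take $\mcl G$ trivial, $G=\Omega$, $\mcl F=\sigma(A)$, $E=F=H_i=A$ with $\BB P[A]=1/2$; then the first quantity is $1/4$ while the second is $1/2$. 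So the expectation inequality you derive is between the wrong quantities and does not yield the one you need.

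The repair is short and is precisely the identity you wrote down and then discarded to ``avoid dividing''. Set $\mcl G':=\mcl G\vee\sigma(E)$ and use that a.s.\ $\BB P[H\cap E\mid\mcl G']=\frac{\BB P[H\cap E\mid\mcl G]}{\BB P[E\mid\mcl G]}\,\BB 1_E$ on $\{\BB P[E\mid\mcl G]>0\}$, and $\BB P[H\cap E\mid\mcl G']=0$ a.s.\ on $\{\BB P[E\mid\mcl G]=0\}$ (no division issue: $E\cap\{\BB P[E\mid\mcl G]=0\}$ is null). Since both sides of the hypothesis are multiplied by the \emph{same} nonnegative factor $\BB 1_E/\BB P[E\mid\mcl G]$, the hypothesis transfers to $\BB P[H_1\cap E\mid\mcl G']\,\BB 1_G\le\Lambda\,\BB P[H_2\cap E\mid\mcl G']\,\BB 1_G$ a.s. Now multiply by $\BB 1_{F\cap E}$, which is $\mcl G'$-measurable exactly by the assumption $F\cap E\in\mcl G'$ (and $\BB 1_G$ is $\mcl G'$-measurable as well), take expectations, and tower over $\mcl G'$: each side is legitimately identified as $\BB P[H_i\cap E\cap F\cap G]$, giving your integrated inequality, after which your final testing step goes through. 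With this one correction your argument coincides in substance with the paper's proof, which performs the same $\mcl G\to\mcl G'$ transfer and a tower-type identity before conditioning on $\mcl F$.
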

\begin{proof}
Let $\mcl G' := \mcl G\vee \sigma(E)$. On the event that $\BB P[E \,|\, \mcl G] > 0$, for any $H\in\mcl M$,
\eqb \label{eqn-add-event}
\BB P\left[ H \cap E \,|\, \mcl G' \right]
=  \frac{ \BB P\left[ H  \cap E \,|\, \mcl G \right] }{  \BB P\left[ E \,|\, \mcl G \right] } \BB 1_{E} .
\eqe 
On the event that $\BB P[E \,|\, \mcl G] = 0$, we instead have $\BB P\left[ H \cap E \,|\, \mcl G' \right] = 0$.
Applying~\eqref{eqn-add-event} with $H=H_1$ and with $H=H_2$ and plugging the results into~\eqref{eqn-event-compare-G} shows that a.s.\  
\eqb \label{eqn-event-compare-G'}
 \BB P\left[ H_1 \cap E \,|\, \mcl G' \right] \BB 1_G \leq \Lambda \BB P\left[H_2 \cap E \,|\, \mcl G' \right] \BB 1_G .
\eqe
We claim that for any $H\in\mcl M$, a.s.\ 
\eqb \label{eqn-cond-exp-nest}
\BB E\left[ \BB P\left[ H  \cap E \,|\, \mcl G' \right] \,|\, \mcl F \right] \BB 1_G = \BB P\left[ H \cap E \,|\, \mcl F \right] \BB 1_G . 
\eqe
Once~\eqref{eqn-cond-exp-nest} is proven, we can take the conditional expectations given $\mcl F$ of both sides of~\eqref{eqn-event-compare-G'} to get~\eqref{eqn-event-compare-F}. 
To prove~\eqref{eqn-cond-exp-nest}, let $F\in\mcl F$. By hypothesis, $F\cap E \in  \mcl G'$. 
Therefore,
\allb
\BB E\left[ \BB E\left[ \BB P\left[ H  \cap E \,|\, \mcl G' \right] \,|\, \mcl F \right] \BB 1_G  \BB 1_F     \right]
&= \BB E\left[ \BB P\left[ H\cap E \,|\, \mcl G' \right]  \BB 1_{F\cap G}     \right] \quad \text{(since $F \cap G \in\mcl F$)} \notag\\
&= \BB E\left[ \BB E\left[ \BB 1_{H\cap E} \,|\, \mcl G' \right] \BB 1_{F\cap G\cap E}   \right] \quad \text{(since $E\in\mcl G'$ and $\BB 1_E\BB 1_E = \BB 1_E$)} \notag\\
&= \BB E\left[ \BB 1_{H\cap E} \BB 1_{F\cap G\cap E}   \right] \quad \text{since $F\cap G \cap E \in\mcl G'$} \notag\\
&= \BB P\left[ H\cap F \cap G \cap E \right]  .
\alle
By the definition of conditional expectation, this implies~\eqref{eqn-cond-exp-nest}. 
\end{proof}

\begin{proof}[Proof of Lemma~\ref{lem-annulus-choose}]
Recall that we are assuming that $\lambda_3 = 1$, so that our hypothesis~\eqref{eqn-geo-iterate-compare} says that for $(z,r) \in \BB C \times \mcl R$ such that $\BB z , \BB w \notin B_{ \lambda_4 r}(z)$, 
\eqb \label{eqn-geo-iterate-compare'}
\BB P\left[ E_r(z) \cap \left\{P \cap B_{\lambda_2 r}(z)\not=\emptyset \right\} \,\big|\, h|_{\BB C\setminus B_{  r}(z)}   \right] 
\leq \Lambda \BB P\left[ \frk E_r(z) \cap \left\{P \cap B_{\lambda_2 r}(z)\not=\emptyset \right\} \,\big|\, h|_{\BB C\setminus B_{ r}(z)}   \right] .
\eqe 
Since $\stabE_{k,r}(z) \in \sigma\left( h|_{\BB C\setminus B_r(z)} \right)$ (Lemma~\ref{lem-ball-stab-msrble}), we infer from~\eqref{eqn-geo-iterate-compare'} and the definitions~\eqref{eqn-good-annulus-set} and~\eqref{eqn-good-annulus-set'} of $\mcl Z_k^E$ and $\mcl Z_k^{\frk E}$ that for each $(z,r) \in \BB C\times \mcl R$ such that $\BB z ,\BB w \notin B_{\lambda_4 r}(z)$, a.s.\ 
\eqb \label{eqn-annulus-events-out}
\BB P\left[ (z,r) \in \mcl Z_k^E \,\big|\, h|_{\BB C\setminus B_r(z)}   \right]  
 \leq \Lambda \BB P\left[ (z,r) \in \mcl Z_k^{\frk E} \,\big|\, h|_{\BB C\setminus B_r(z)}   \right] .
\eqe 

We will now deduce from~\eqref{eqn-annulus-events-out} and Lemma~\ref{lem-cond-expectation} that on $\{(z,r) \in \mcl Z_k\}\cap \{\BB w \notin B_{ 3\lambda_4 \ep \BB r}(\mcl B_{t_k}^\bullet)\}$, a.s.\ 
\eqb \label{eqn-annulus-events-cond}
\BB P\left[ (z,r) \in \mcl Z_k^E \,\big|\, \mcl F_k   \right] 
 \leq \Lambda \BB P\left[ (z,r) \in \mcl Z_k^{\frk E} \,\big|\,  \mcl F_k   \right] .
\eqe 
In particular, we will apply Lemma~\ref{lem-cond-expectation} with $\mcl F = \mcl F_k$, $\mcl G = \sigma\left(h|_{\BB C\setminus B_r(z)}\right)$, $E = \stabE_{k,r}(z) \cap \{P\cap B_r(z)\not=\emptyset\}$, $G = \{(z,r) \in \mcl Z_k\}\cap \{\BB w \notin B_{ 3\lambda_4 \ep \BB r}(\mcl B_{t_k}^\bullet)\}$, $H_1 = \{(z,r) \in \mcl Z_k^E \}$, and $H_2 = \{(z,r) \in \mcl Z_k^{\frk E}\}$. 

We check the hypotheses of Lemma~\ref{lem-cond-expectation} with the above choice of parameters, starting with the requirement that the event $G$ defined above belongs to $ \mcl F_k \cap\sigma\left(h|_{\BB C\setminus B_r(z)} \right) $. 
Indeed, it is clear from the definition~\eqref{eqn-good-annulus-set0} of $\mcl Z_k$ that $G\in \sigma(\mcl B_{t_k}^\bullet , h|_{\mcl B_{t_k}^\bullet})$.
By the definition~\eqref{eqn-geo-sigma-algebra} of $\mcl F_k$, we have $G \in \mcl F_k$. 
By the definition~\eqref{eqn-good-annulus-set0} of $\mcl Z_k$ and the locality of $\mcl B_{t_k}^\bullet$ (Lemma~\ref{lem-ball-local}), also $G\in \sigma\left(h|_{\BB C\setminus B_r(z)} \right) $. 
By the definition~\eqref{eqn-good-annulus-set0} of $\mcl Z_k$, if $G$ occurs with positive probability then $\BB z ,\BB w \notin B_{\lambda_4 r}(z)$, so in particular~\eqref{eqn-annulus-events-out} holds a.s.\ on $G$.
By Lemma~\ref{lem-ball-stab-msrble}, the intersection of any event in $\mcl F_k$ with $\stabE_{k,r}(z) \cap \{P\cap B_{ r}(z)\not=\emptyset\}$ is a.s.\ determined by $h|_{\BB C\setminus B_r(z)}$ and $\BB 1_{\stabE_{k,r}(z) \cap \{P\cap B_{ r}(z)\not=\emptyset\}}$. 
 We may therefore apply Lemma~\ref{lem-cond-expectation} to deduce~\eqref{eqn-annulus-events-cond} from~\eqref{eqn-annulus-events-out}. 
 
Summing~\eqref{eqn-annulus-events-cond} over all $(z,r)\in \mcl Z_k$ gives that on $ \{\BB w \notin B_{ 3\lambda_4 \ep \BB r}(\mcl B_{t_k}^\bullet)\}$,
\eqb \label{eqn-annulus-events-sum} 
\BB E\left[ \# \mcl Z_k^{\frk E} \,|\,  \mcl F_k   \right] 
\geq \Lambda^{-1} \BB E\left[ \# \mcl Z_k^E \,|\, \mcl F_k   \right]
\geq \Lambda^{-1} \BB P\left[  \mcl Z_k^E \not=\emptyset \,|\, \mcl F_k \right] .
\eqe 
By Lemma~\ref{lem-good-annulus-count}, for each $\zeta\in (0,1)$, on the event $\{ \mcl B_{t_k}^\bullet \subset B_{\ep^{-M}}(\BB z) \}$, it holds except on an event of probability $o_\ep^\infty(\ep)$ that 
\allb  \label{eqn-annulus-count-upper}
\BB E\left[ \# \mcl Z_k^{\frk E} \,|\,  \mcl F_k   \right]
&\leq \ep^{-2\nu -\zeta} \BB P\left[ 0 <  \# \mcl Z_k^{\frk E}  \leq \ep^{-2\nu-\zeta}  \,|\, \mcl F_k \right] 
 +  \BB E\left[ \#\mcl Z_k^{\frk E} \BB 1_{(\# \mcl Z_k^{\frk E} > \ep^{-2\nu-\zeta})}  \,|\, \mcl F_k \right] \notag \\ 
&\leq \ep^{-2\nu -\zeta} \BB P\left[   \mcl Z_k^{\frk E}  \not=\emptyset \,|\, \mcl F_k \right]  + o_\ep^\infty(\ep)  .
\alle
Combining~\eqref{eqn-annulus-events-sum} and~\eqref{eqn-annulus-count-upper} gives that on the event $\{ \mcl B_{t_k}^\bullet \subset B_{\ep^{-M}}(\BB z) \} \cap \{\BB w \notin B_{ 3\lambda_4 \ep \BB r}(\mcl B_{t_k}^\bullet)\} $, it holds except on an event of probability $o_\ep^\infty(\ep)$ that  
\eqb
\ep^{-2\nu -\zeta} \BB P\left[   \mcl Z_k^{\frk E}  \not=\emptyset \,|\, \mcl F_k \right]  + o_\ep^\infty(\ep) \geq \Lambda^{-1} \BB P\left[  \mcl Z_k^E \not=\emptyset \,|\, \mcl F_k \right] .
\eqe
Re-arranging this inequality and then sending $\zeta \rta 0$ sufficiently slowly as $\ep\rta 0$ gives~\eqref{eqn-annulus-choose}.
\end{proof}

\subsection{Global regularity event}
\label{sec-iterate-reg}

Throughout most of the rest of the proof of Theorem~\ref{thm-geo-iterate}, we will truncate on a global regularity event which we define in this subsection.
The parameter $\BB p \in (0,1)$ of Theorem~\ref{thm-geo-iterate} has to be chosen sufficiently close to 1 to allow us to apply Lemma~\ref{lem-annulus-iterate} to make the probability of one of the conditions in the event as close to 1 as we like. We emphasize that our global regularity event does \emph{not} depend on the particular choice of $\BB z,\BB w$ in~\eqref{eqn-iterate-geo-choice}. 

Fix bounded, connected open sets $U \subset V \subset \BB C$ and parameters $\nu , \ell  > 0$ ($\nu$, $U$, and $\ell$ are the parameters from Theorem~\ref{thm-geo-iterate}).
Also fix, once and for all, parameters $\chi \in (0,\xi (Q-2))$ and $\chi' > \xi (Q+2)$ as in Lemma~\ref{lem-holder-uniform}, chosen in a manner which depends only on $\gamma$ (we will not make the dependence on these parameters explicit). 
For $\BB r > 0$ and $a   \in (0,1)$, let $\mcl E_{\BB r}    = \mcl E_{\BB r}(a , \nu , \ell ,  U , V  )$ be the event that the following is true.
\begin{enumerate}
\item \emph{(Comparison of domains)} $\sup_{z,w\in \BB r U} D_h(z,w) \leq D_h(\BB r U , \BB r \bdy V)$. \label{item-open-set-contain} 
\item \emph{(Comparison of $D_h$-balls and Euclidean balls)} For each $z \in \BB C$ and $r>0$, let $\tau_r(z)$ be the smallest $t > 0$ for which the filled $D_h$-metric ball $B_t^\bullet(z;D_h)$ intersects $\bdy B_r(z)$, as in~\eqref{eqn-tau_r-def}.  Then for each $z\in B_{4\ell\BB r}(\BB r V)$, we have $B_{a\BB r}(z) \subset \mcl B_{\tau_{\ell \BB r}(z)}^\bullet(z ; D_h) $ and
\label{item-quantum-ball-diam}
\eqb \label{eqn-quantum-ball-diam} 
  \min\left\{\tau_{2 \ell \BB r}(z)  - \tau_{ \ell \BB r}(z)  ,  \tau_{3 \ell \BB r}(z)  - \tau_{2\ell \BB r}(z)   \right\} \geq a \max\left\{ \frk c_{ \BB r} e^{\xi h_{\BB r}(z)} ,  \frk c_{ \ell \BB r} e^{\xi h_{\ell \BB r}(z)} \right\} .
\eqe 
\item \emph{(H\"older continuity)} For each $z,w\in B_{4\ell\BB r}(\BB r V)$ with $|z-w| \leq a \BB r$,   \label{item-holder-cont}
\eqb \label{eqn-holder-cont}
\frk c_{\BB r}^{-1} e^{-\xi h_{\BB r}(0)} D_h(z,w) \geq \left| \frac{z-w}{\BB r} \right|^{\chi'} 
\quad \text{and} \quad 
\frk c_{\BB r}^{-1} e^{-\xi h_{\BB r}(0)} D_h\left( z , w ; B_{2|z-w|}(z) \right) \leq \left| \frac{z-w}{\BB r} \right|^\chi .
\eqe 
\item \emph{(Comparison of circle averages)} We have \label{item-circle-avg}
\eqb
\sup_{z\in \BB r V} |h_{\BB r}(z) - h_{\BB r}(0)| \leq a^{-1} .
\eqe 
\item \emph{(Existence of good annuli)} Define $r_1^\ep , \dots , r_{\lfloor \mu\log_8 \ep^{-1} \rfloor}^\ep \in [\ep^{1+\nu} \BB r , \ep \BB r]\cap \mcl R$ as in condition~\ref{item-geo-iterate-dense} from Theorem~\ref{thm-geo-iterate}. For each $\ep \in (0,a \BB r] \cap \{2^{-n}\BB r\}_{n\in\BB N}$ and each $z\in \left(\frac{\lambda_1 \ep^{1+\nu} \BB r}{4} \BB Z^2 \right) \cap B_{4\ell\BB r}(\BB r V)$, there exists at least one $r \in \{r_1^\ep,\dots,r_{\lfloor \mu\log_8\ep^{-1} \rfloor} \}$ for which $E_r(z)$ occurs. \label{item-good-radii-E}
\item \emph{(Bounds for radii used to control geodesics)} Define the radii $\rho_{\BB r,\ep}(z)$ for $\ep > 0$ and $z\in\BB C$ as in Lemma~\ref{lem-clsce-all} and the discussion just preceding it. For each $\ep \in (0,a  ] \cap \{2^{-n}  \}_{n\in\BB N}$ and each $z\in \left(\frac{\ep^{1+\nu} \BB r}{4} \BB Z^2 \right) \cap B_{4\ell\BB r}(\BB r V)$, we have $\rho_{\BB r,\ep}(z) \leq \ep^{1/2} \BB r$.   \label{item-good-radii-small} 
\end{enumerate}

We note that the upper bound in~\eqref{eqn-holder-cont} uses $D_h\left( z , w ; B_{2|z-w|}(z) \right) \geq D_h(z,w)$ instead of $D_h(z,w)$. 
We will need this slightly stronger upper bound for $D_h$-distances in the proof of Lemma~\ref{lem-holder-balls} below. 

\begin{remark} \label{remark-clsce-event}
Due to conditions~\ref{item-quantum-ball-diam}, \ref{item-holder-cont}, and~\ref{item-good-radii-small}, and since $\ell \in (0,1)$, for each $\BB z\in \BB r U$ the event $\mcl E_{\BB r}$ defined just above is contained in the event $\mcl E_{\ell\BB r}^{\BB z}(a )$ as defined just above Theorem~\ref{thm-finite-geo-quant} with $\ell\BB r$ in place of $\BB r$.
\end{remark}

\begin{lem} \label{lem-reg-event-prob}
There exists $\BB p \in (0,1)$ depending only on $\mu,\nu , \{\lambda_i\}_{i=1,\dots,5}$ such that under the hypotheses of Theorem~\ref{thm-geo-iterate}, the following is true. For each bounded open set $U\subset\BB C$, $\nu ,\ell \in (0,1)$, and $p\in (0,1)$, there exists a bounded open set $V\supset U$ and a parameter $a\in (0,1)$, depending only $U,\nu,\ell , p$, such that $\BB P[\mcl E_{\BB r}] \geq p$ for each $\BB r > 0$.
\end{lem}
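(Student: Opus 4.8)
The plan is to establish each of conditions \ref{item-open-set-contain}--\ref{item-good-radii-small} in the definition of $\mcl E_{\BB r}$ separately, show that each holds with probability as close to $1$ as we like (uniformly in $\BB r$) provided $a$ is small and, for condition \ref{item-good-radii-E}, $\BB p$ is close to $1$ and $V$ is chosen large; then take a union bound. The key point is that conditions \ref{item-open-set-contain}--\ref{item-good-radii-small} other than \ref{item-good-radii-E} involve only a single weak LQG metric $D$ and are already essentially established in Section~\ref{sec-prelim}, so the only genuinely new work is condition \ref{item-good-radii-E}.

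First I would dispose of conditions \ref{item-open-set-contain}, \ref{item-quantum-ball-diam}, \ref{item-holder-cont}, \ref{item-circle-avg}. Condition \ref{item-open-set-contain} (comparison of domains) follows from tightness across scales (Axiom~\ref{item-metric-coord}) together with translation invariance: the $D_h$-diameter of $\BB r U$ rescaled by $\frk c_{\BB r}^{-1} e^{-\xi h_{\BB r}(0)}$ and the rescaled distance from $\BB r U$ to $\BB r \bdy V$ have tight laws, and by making $V$ large enough the latter is large relative to the former with probability $\geq 1 - (1-p)/10$; this is exactly the kind of argument already used in Step~1 of the proof of Proposition~\ref{prop-attained-bad}. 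Condition \ref{item-quantum-ball-diam} follows from Axiom~\ref{item-metric-coord}, Axiom~\ref{item-metric-translate} and a union bound over a net of points $z$ in $B_{4\ell\BB r}(\BB r V)$ (this is the content of \cite[Lemma 3.8]{gm-confluence}, quoted just above Theorem~\ref{thm-finite-geo-quant}, combined with Axiom~\ref{item-metric-translate}, and also appears in the definition of $\mcl E_{\BB r}^{\BB z}(a)$). Condition \ref{item-holder-cont} is immediate from Lemma~\ref{lem-holder-uniform} applied on the compact set $\ol{B_{4\ell}(V)}$ (the bound in Lemma~\ref{lem-holder-uniform} holds with probability tending to $1$ as $a\rta 0$ uniformly in $\BB r$, which is exactly what is needed). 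Condition \ref{item-circle-avg} is a statement purely about the whole-plane GFF: by the translation invariance modulo additive constant, $\sup_{z\in \BB r V}|h_{\BB r}(z)-h_{\BB r}(0)|$ has a law not depending on $\BB r$, so it is $\leq a^{-1}$ with probability tending to $1$ as $a\rta 0$. Condition \ref{item-good-radii-small} is again \cite[Lemma 3.8]{gm-confluence} / Lemma~\ref{lem-clsce-all}: a union bound over dyadic $\ep\in(0,a]$ and over $z$ in a net, using that $\BB P[\rho_{\BB r,\ep}(z) \leq \ep^{1/2}\BB r] \rta 1$ as $\ep\rta 0$ uniformly in $\BB r,z$, together with summing the resulting error bound over dyadic scales.

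The main step is condition \ref{item-good-radii-E} (existence of good annuli), and this is where the choice of $\BB p$ enters. Fix $\ep$ and a single lattice point $z$. Condition~\ref{item-geo-iterate-msrble} of Theorem~\ref{thm-geo-iterate} says $E_{r_j^\ep}(z)$ is determined by $(h - h_{\lambda_5 r_j^\ep}(z))|_{\BB A_{\lambda_1 r_j^\ep,\lambda_4 r_j^\ep}(z)}$; since the $r_j^\ep$ satisfy $r_j^\ep/r_{j-1}^\ep \geq \lambda_4/\lambda_1$ (condition~\ref{item-geo-iterate-dense}), the annuli $\BB A_{\lambda_1 r_j^\ep,\lambda_4 r_j^\ep}(z)$ are nested with ratios bounded away from $1$, so Lemma~\ref{lem-annulus-iterate} (with $s_1 = \lambda_1/\lambda_4$, $s_2 = $ something just below $1$, and the events $E_{r_j^\ep}(z)$, which by condition~\ref{item-geo-iterate-prob} all have probability $\geq \BB p$) applies after renormalizing $h$ by its circle average. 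We want $N(\lfloor \mu\log_8\ep^{-1}\rfloor) \geq 1$, i.e., at least one good radius among the $r_j^\ep$. Lemma~\ref{lem-annulus-iterate}, applied with $a$ large (in its notation) and $b$ small, gives: there is $\BB p = \BB p(\mu,\nu,\{\lambda_i\}) \in (0,1)$ such that if $\BB P[E_r(z)] \geq \BB p$ for all relevant $r$, then $\BB P[N(K) < 1] \leq c e^{-a' K}$ for $K = \lfloor\mu\log_8\ep^{-1}\rfloor$, i.e., the probability that no $r_j^\ep$ is good is at most $c\, \ep^{a'' }$ for a constant $a''$ that can be made as large as we wish by choosing $\BB p$ close to $1$. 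Now union bound over the $O(\ep^{-2(1+\nu)})$ (depending on $V,\ell$) lattice points $z\in \left(\frac{\lambda_1\ep^{1+\nu}\BB r}{4}\BB Z^2\right)\cap B_{4\ell\BB r}(\BB r V)$, and then sum over dyadic $\ep\in(0,a\BB r]$: choosing $\BB p$ so that $a'' > 2(1+\nu) + 1$ makes this double sum finite and, by taking $a$ small, as small as we like, all uniformly in $\BB r$ by Axiom~\ref{item-metric-translate} and the translation/scale invariance of the GFF modulo additive constant. This fixes $\BB p$ depending only on $\mu,\nu,\{\lambda_i\}$; then $V$ and $a$ are chosen afterwards depending on $U,\nu,\ell,p$. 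Finally, take $a$ small enough that each of the (finitely many) conditions \ref{item-open-set-contain}--\ref{item-good-radii-small} fails with probability $\leq (1-p)/6$, and conclude $\BB P[\mcl E_{\BB r}] \geq p$ by a union bound.

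I expect the main obstacle to be bookkeeping rather than conceptual: one must check that the summability over dyadic scales $\ep$ (for conditions \ref{item-good-radii-E} and \ref{item-good-radii-small}) is genuinely uniform in $\BB r$ — this is where Axiom~\ref{item-metric-translate} and the exact scale-and-translation invariance of the law of $h$ modulo additive constant are essential, since they let us reduce the estimate at scale $\BB r$ to the estimate at scale $1$ — and that the order of quantifiers matches the statement ($\BB p$ first, depending only on $\mu,\nu,\{\lambda_i\}$; then $V,a$ depending on $U,\nu,\ell,p$). A minor subtlety is that condition~\ref{item-geo-iterate-prob} guarantees $\BB P[E_r(z)] \geq \BB p$ only for $r\in\mcl R$, but condition~\ref{item-geo-iterate-dense} guarantees that the $r_j^\ep$ used in condition~\ref{item-good-radii-E} lie in $\mcl R$, so this causes no problem.
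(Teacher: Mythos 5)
Your proposal is correct and follows essentially the same route as the paper's proof: each of the six conditions in the definition of $\mcl E_{\BB r}$ is handled separately (Axiom~\ref{item-metric-coord} for conditions~\ref{item-open-set-contain}--\ref{item-quantum-ball-diam}, Lemma~\ref{lem-holder-uniform} for condition~\ref{item-holder-cont}, invariance of the circle-average field for condition~\ref{item-circle-avg}, Lemma~\ref{lem-annulus-iterate} plus a union bound over lattice points and dyadic scales for condition~\ref{item-good-radii-E} — which is exactly what fixes $\BB p$ in terms of $\mu,\nu,\{\lambda_i\}$ — and Lemma~\ref{lem-clsce-all} plus a union bound for condition~\ref{item-good-radii-small}), after which one shrinks $a$ and takes a union bound over the conditions. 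Two cosmetic points: the $\BB r$-independence in condition~\ref{item-circle-avg} comes from scale (not translation) invariance of the law of $h$ modulo additive constant, and when applying Lemma~\ref{lem-annulus-iterate} the natural reference radii are $\lambda_5 r_k^\ep$ with $s_1=\lambda_1/\lambda_5$, $s_2=\lambda_4/\lambda_5$ (thinning the radii by a $\{\lambda_i\}$-dependent constant factor if needed to satisfy the ratio hypothesis), since the circle average in the measurability hypothesis of Theorem~\ref{thm-geo-iterate} is taken at radius $\lambda_5 r$, which lies outside $\BB A_{\lambda_1 r,\lambda_4 r}(z)$.
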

\begin{proof} 
By Axiom~\ref{item-metric-coord} (tightness across scales), we can find a bounded open set $V\supset U$, depending only on $U$, such that condition~\ref{item-open-set-contain} (comparison of domains) in the definition of $\mcl E_{\BB r}$ holds with probability at least $1-(1-p)/6$. 
Again using Axiom~\ref{item-metric-coord}, we can find a small enough $a \in (0,1)$, depending on $\ell , V , p$, such that condition~\ref{item-quantum-ball-diam} (comparison of balls) holds with probability at least $1-(1-p)/6$. 
By Lemma~\ref{lem-holder-uniform}, after possibly shrinking $a$ we can further arrange that condition~\ref{item-holder-cont} (H\"older continuity) holds with probability at least $1-(1-p)/6$.
By the continuity of the circle average process and the scale invariance of the law of $h$, modulo additive constant, after possibly further shrinking $a$ we can arrange that condition~\ref{item-circle-avg} (comparison of circle averages) holds with probability at least $1- (1-p)/6$. 
By Lemma~\ref{lem-annulus-iterate}, conditions~\ref{item-geo-iterate-msrble} and~\ref{item-geo-iterate-prob} of Theorem~\ref{thm-geo-iterate}, and a union bound over all $z \in \left(\frac{\lambda_1 \ep^{1+\nu} \BB r}{4} \BB Z^2 \right) \cap V$, if $\BB p$ is chosen sufficiently close to 1, in a manner depending only on $\mu,\nu$, and $\{\lambda_i\}_{i=1,\dots,5}$, then the probability of condition~\ref{item-good-radii-E} (existence of good annuli) in the definition of $\mcl E_{\BB r}$ tends to 1 as $a \rta 0$, uniformly over the choice of $\BB r$. 
Therefore, after possibly further shrinking $a$, we can arrange that condition~\ref{item-good-radii-E} in the definition of $\mcl E_{\BB r}$ holds with probability at least $1 - (1-p)/6$. 
By Lemma~\ref{lem-clsce-all} and a union bound over values of $\ep \in  (0,a  ] \cap \{2^{-n}  \}_{n\in\BB N}$, after possibly further shrinking $a$ we can also arrange that condition~\ref{item-good-radii-small} (bounds for $\rho_{\BB r,\ep}(z)$) in the definition of $\mcl E_{\BB r}$ holds with probability at least $1-(1-p)/6$. 
\end{proof}

\subsection{Geodesic stability event occurs at many times}
\label{sec-stab}

Henceforth fix $p\in (0,1)$ (which we will eventually send to 1), a bounded open set $U\subset\BB C$, and $\ell \in (0,1)$ and let $V,a$ be as in Lemma~\ref{lem-reg-event-prob} for this choice of $p,U,\ell$ and the given values of $\mu,\nu$ from Theorem~\ref{thm-geo-iterate}. 
Let $\mcl E_{\BB r}$ be the event of Section~\ref{sec-iterate-reg} with this choice of parameters, so that $\BB P[\mcl E_{\BB r}] \geq p$. 
Define
\eqb \label{eqn-end-time}
 K := \lfloor a \ep^{-\beta} \rfloor  -1 ,
\eqe
where $\beta$ is as in~\eqref{eqn-geo-iterate-times}.
The significance of the value $K$ is that condition~\ref{item-quantum-ball-diam} (comparison of balls) in the definition of $\mcl E_{\BB r}$ implies that, in the notation~\eqref{eqn-geo-iterate-times},
\eqb \label{eqn-geo-iterate-upper}
s_{K+1} \leq \tau_{2 \ell \BB r} ,\quad \text{on $\mcl E_{\BB r}$}. 
\eqe
Recalling the parameter $\beta$ from~\eqref{eqn-geo-iterate-times} and the parameters $\chi  <\chi'$ as in condition~\ref{item-holder-cont} (H\"older continuity) in the definition of $\mcl E_{\BB r}$, we henceforth impose the requirement that
\eqb \label{eqn-beta-compare}
\beta \in (0, \chi / \chi') .
\eqe    
We will make our final choice of $\beta$ in Proposition~\ref{prop-stab} just below. 

Let $\mcl Z_k^E$ be as in~\eqref{eqn-good-annulus-set} and let $K$ be as in~\eqref{eqn-end-time}. 
The goal of this section is to show that with high probability there are many values of $k\in [0,K]_{\BB Z}$ for which $\mcl Z_k^E \not=\emptyset$.
In the next subsection, we will combine this with Lemma~\ref{lem-annulus-choose} to show that there are many values of $k\in [0,K]_{\BB Z}$ for which $\mcl Z_k^{\frk E}\not=\emptyset$. The following proposition is the main result of this subsection and is the only statement from this subsection which is referenced in Section~\ref{sec-uncond-to-cond}. 

\begin{prop} \label{prop-stab}
There are small constants $\beta,\theta \in (0,1)$ depending only on the choice of metric $D$ such that if we use this choice of $\beta$ in~\eqref{eqn-geo-iterate-times}, then on $\mcl E_{\BB r}$ it holds except on an event of probability decaying faster than any positive power of $\ep$, at a rate which is uniform in $\BB r, \BB z ,\BB w$, that there are at least $(1-\ep^\theta ) K$ values of $k \in [0,K]_{\BB Z}$ for which $\mcl Z_k^E \not=\emptyset$. 
\end{prop}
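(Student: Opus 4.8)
\textbf{Proof proposal for Proposition~\ref{prop-stab}.} The plan is to show that for each fixed $k \in [0,K]_{\BB Z}$, the conditional probability given $\mcl F_k$ that $\mcl Z_k^E \not=\emptyset$ is bounded below by a deterministic constant on the event $\mcl E_{\BB r}$ (intersected with a high-probability event), and then to multiply over $k$ using a concentration argument for the $\mcl F_k$-adapted process of indicators. The key point is that $\mcl Z_k^E$ is defined in~\eqref{eqn-good-annulus-set} as the set of $(z,r) \in \mcl Z_k$ such that $E_r(z) \cap \stabE_{k,r}(z) \cap \{P\cap B_{\lambda_2 r}(z)\not=\emptyset\}$ occurs, so I need to produce at least one such $(z,r)$ with positive conditional probability. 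I would split this into three inputs: (i) condition~\ref{item-good-radii-E} in the definition of $\mcl E_{\BB r}$ guarantees that for \emph{every} $z$ in the relevant grid near $\bdy\mcl B_{t_k}^\bullet$ there is some $r \in \{r_1^\ep,\dots\}$ with $E_r(z)$ occurring, so the constraint ``$E_r(z)$ occurs'' is essentially free; (ii) the geodesic $P$ starting from $\BB z$, being a $D_h(\cdot,\cdot;\BB C\setminus\mcl B_{t_k}^\bullet)$-geodesic to its exit point of $\mcl B_{t_k}^\bullet$ and continuing, must travel a Euclidean distance of order $\ell\BB r$ to reach $\BB w$ (using $|\BB z - \BB w| \geq 4\ell\BB r$ and $\tau_{\ell\BB r}<s_k\leq s_{K+1}\leq\tau_{2\ell\BB r}$ on $\mcl E_{\BB r}$ by~\eqref{eqn-geo-iterate-upper}), hence it must cross the annular region $\{z : \op{dist}(z,\bdy\mcl B_{t_k}^\bullet) \in [\lambda_4\ep\BB r, 2\lambda_4\ep\BB r]\}$, so it enters $B_{\lambda_2 r}(z)$ for \emph{some} $(z,r)\in\mcl Z_k$; and (iii) the stability event $\stabE_{k,r}(z)$ holds for most of the pairs $(z,r)\in\mcl Z_k$ that $P$ actually hits --- this is where the confluence results enter.

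For input (iii), which I expect to be the main obstacle, I would use the quantitative confluence theorem (Theorem~\ref{thm-finite-geo-quant}) together with Lemma~\ref{lem-geo-kill-pt}. The point is that at the scale $s_k$ to $t_k$ (a gap of order $\ep^{2\beta}\frk c_{\BB r}e^{\xi h_{\BB r}(\BB z)}$ by~\eqref{eqn-geo-iterate-times}), leftmost geodesics from $\BB z$ to $\bdy\mcl B_{t_k}^\bullet$ confluence into $\confpts_k$, a set which by Theorem~\ref{thm-finite-geo-quant} has at most $N$ points except with probability $b_0 e^{-b_1 N^\beta}$ for $N \approx \ep^{-2\beta/\beta_0}$-ish (choosing $N$ a small negative power of $\ep$). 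Since the arcs of $\confarcs_k$ partition $\bdy\mcl B_{t_k}^\bullet$ into at most $N$ pieces, a ball $B_r(z)$ with $(z,r)\in\mcl Z_k$ fails to have $\stabE_{k,r}(z)$ only if the arcs ``near'' $z$ (relative to where $D_h(\cdot,\cdot;\BB C\setminus\ol{B_r(z)})$-geodesics from $\BB z$ exit $\mcl B_{t_k}^\bullet$) are split, which happens only near the $\leq N$ endpoints of these arcs. Using Lemma~\ref{lem-geo-kill-pt} (geodesics avoid specified points of $\bdy\mcl B_\tau^\bullet$) with a union bound over the $\leq N$ arc endpoints and over the dyadic radii, and the fact that each arc endpoint can ``spoil'' only boundedly many balls at each scale, the number of $(z,r)\in\mcl Z_k$ with $P\cap B_{\lambda_2 r}(z)\not=\emptyset$ but $\stabE_{k,r}(z)$ failing is $o_\ep^\infty(\ep)$ in expectation, hence $0$ with high probability. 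Combined with inputs (i) and (ii), this gives $\mcl Z_k^E \not=\emptyset$ on $\mcl E_{\BB r}$ with conditional probability $1 - o_\ep^\infty(\ep)$ given $\mcl F_k$ — actually a much stronger statement than merely ``bounded below'', which is what we want.

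Finally, to pass from ``$\BB P[\mcl Z_k^E\not=\emptyset \mid \mcl F_k] \geq 1 - \ep^{\theta'}$ for each $k$'' (on $\mcl E_{\BB r}$, outside an $o_\ep^\infty(\ep)$ event) to ``at least $(1-\ep^\theta)K$ values of $k$ have $\mcl Z_k^E\not=\emptyset$'', I would write $N^E := \#\{k\in[0,K]_{\BB Z} : \mcl Z_k^E\not=\emptyset\}$ and bound $\BB E[(K - N^E)\BB 1_{\mcl E_{\BB r}}] = \sum_k \BB P[\{\mcl Z_k^E=\emptyset\}\cap\mcl E_{\BB r}]$. Each term is, up to an $o_\ep^\infty(\ep)$ additive error coming from the truncation on the global confluence regularity event, bounded by $\ep^{\theta'}$ times the probability of the relevant scale, but more carefully one conditions on $\mcl F_k$ and uses that $\{\mcl Z_k^E = \emptyset\}\cap\mcl E_{\BB r}'$ (where $\mcl E_{\BB r}'$ is $\mcl E_{\BB r}$ intersected with the Theorem~\ref{thm-finite-geo-quant} good event, still of probability $\geq p - o_\ep(1)$) has conditional probability $o_\ep^\infty(\ep)$. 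Thus $\BB E[(K-N^E)\BB 1_{\mcl E_{\BB r}'}] \leq K\cdot o_\ep^\infty(\ep)$, and by Markov's inequality $\BB P[\{K - N^E > \ep^\theta K\}\cap\mcl E_{\BB r}'] \leq \ep^{-\theta}K^{-1}\cdot K\cdot o_\ep^\infty(\ep) = o_\ep^\infty(\ep)$ provided $\theta$ is fixed and $\ep$ small; since $\mcl E_{\BB r}\setminus\mcl E_{\BB r}'$ also has probability $o_\ep^\infty(\ep)$, we conclude on $\mcl E_{\BB r}$ with the desired probability. The choice of $\beta$ is constrained by~\eqref{eqn-beta-compare} and by the requirement that the confluence gap $\ep^{2\beta}$ be large enough relative to the scale $\ep^{1+\nu}$ of the balls for the confluence estimates to apply with a power-of-$\ep$ rate; both can be arranged by taking $\beta$ a sufficiently small positive constant depending only on $D$, and $\theta$ correspondingly small.
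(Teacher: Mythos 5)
There is a genuine gap at the quantitative heart of your argument: the claim that, for each fixed $k$, the conditional probability that $\mcl Z_k^E \not=\emptyset$ is $1 - o_\ep^\infty(\ep)$. The only tool available for keeping geodesics away from the arc endpoints of $\confarcs_k$ is Lemma~\ref{lem-geo-kill-pt}, and it only gives $\BB P[G_x^\ep \,|\, \mcl B_\tau^\bullet, h|_{\mcl B_\tau^\bullet}] \geq 1 - C_0\ep^\alpha$ --- a \emph{polynomial} rate. After the union bound over the $\leq \ep^{-\omega}$ endpoints supplied by Theorem~\ref{thm-finite-geo-quant}, the per-$k$ failure probability is of order $\ep^{\alpha\kappa-\omega}$, still polynomial; indeed one should not expect better, since a geodesic genuinely has a polynomial-in-$\ep$ chance of passing near a specified boundary point at scale $\ep$. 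Consequently your expected-count bound $\BB E[(K-N^E)\BB 1_{\mcl E_{\BB r}'}] \leq K\cdot o_\ep^\infty(\ep)$ is unjustified; with the true per-$k$ bound it is only $\preceq K\ep^{\omega}$, and your Markov step then yields $\BB P[K-N^E > \ep^\theta K] \preceq \ep^{\omega-\theta}$, which is polynomially small rather than the superpolynomial decay the proposition demands. The superpolynomial rate cannot come from per-$k$ estimates at all: it has to come from concentration \emph{across} the $K \asymp \ep^{-\beta}$ scales. This is what the paper does: it arranges (via the bound $\sigma_k \leq s_{k+1}$, which is where the requirement $\beta < \kappa\chi/2$ and condition~\ref{item-good-radii-small} of $\mcl E_{\BB r}$ enter) that the per-$k$ good events are measurable with respect to $\sigma(\mcl B_{s_{k+1}}^\bullet, h|_{\mcl B_{s_{k+1}}^\bullet})$, so the number of good $k$'s stochastically dominates a Binomial$(K, 1-\ep^\omega)$ random variable, and then Hoeffding gives failure probability $\exp(-2(\ep^\theta-\ep^\omega)^2 K)$, which is $o_\ep^\infty(\ep)$ once $\theta < \min\{\omega,\beta/2\}$. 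You gesture at this in your opening sentence ("a concentration argument for the $\mcl F_k$-adapted process of indicators") but then abandon it in favor of Markov, which does not suffice.

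Two further points to be careful about if you repair the argument along these lines. First, working conditionally on $\mcl F_k$ is delicate, because $\mcl F_k$ contains the arc of $\confarcs_k$ containing $P(t_k)$, which is not a local functional of the field near $\mcl B_{t_k}^\bullet$; the paper avoids this by proving the present proposition \emph{unconditionally} (conditioning only on $(\mcl B_{t_k}^\bullet,h|_{\mcl B_{t_k}^\bullet})$ when applying Lemma~\ref{lem-geo-kill-pt}) and deferring all statements about $\BB P[\,\cdot\,|\,\mcl F_k]$ to Section~\ref{sec-uncond-to-cond}, where they are extracted via Lemma~\ref{lem-uncond-to-cond}. Second, your step (iii) asserts that failure of $\stabE_{k,r}(z)$ forces the geodesic near an arc endpoint, but $\stabE_{k,r}(z)$ concerns \emph{all} $D_h(\cdot,\cdot;\BB C\setminus\ol{B_r(z)})$-geodesics from $\BB z$ to $\bdy B_r(z)$, not just $P$; turning "$P$ avoids the regions $\mcl C_y^{\ep^\kappa\BB r}$" into "every such cut-ball geodesic exits through the same arc as $P(t_k)$" requires the deterministic H\"older/disconnection work of Lemmas~\ref{lem-geo-disconnect}, \ref{lem-dc-set} and~\ref{lem-geodesic-diam} (with $\kappa$ tuned to the H\"older exponents), which your sketch compresses into a single sentence.
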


\begin{figure}[t!]
 \begin{center}
\includegraphics[scale=.8]{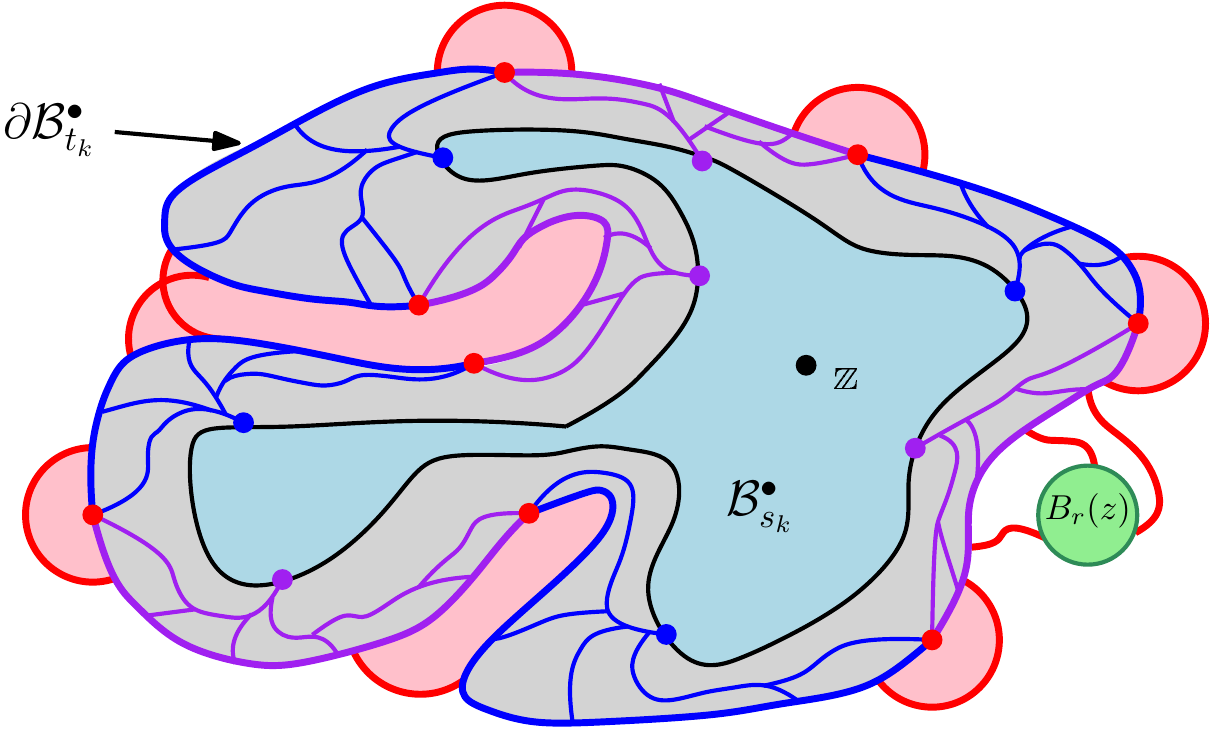}
\vspace{-0.01\textheight}
\caption{Illustration of the proof of Proposition~\ref{prop-stab}. The points in the set $\arcendpts_k$ of endpoints of arcs in $\confarcs_k$ are shown in red.  
We first use Theorem~\ref{thm-finite-geo-quant} to bound $\#\arcendpts_k = \#\confpts_k$. 
Lemma~\ref{lem-dc-set} allows us to choose for each $y\in \arcendpts_k$ a point $ z_y \in\bdy\mcl B_{t_k}^\bullet$ (not shown) such that an arc of $B_{16\ep^\kappa \BB r} (z_y)$ disconnects the set $\mcl C_y^{\ep^\kappa\BB r}$ (defined just after Lemma~\ref{lem-dc-set}) from $\infty$ in $\BB C\setminus \mcl B_{t_k}^\bullet $. The set which this arc disconnects from $\infty$, which contains $\mcl C_y^{\ep^\kappa\BB r}$, is shown in pink. 
Note that the sets $\mcl C_y^{\ep^\kappa\BB r}$ for different choices of $y$ are allowed to overlap. 
Lemma~\ref{lem-geo-kill-pt} and a union bound over $y\in\arcendpts_k$ shows that with high probability, for each $y\in\arcendpts_k$, no $D_h$-geodesic from $\BB z$ to $\bdy\mcl B_{s_{k+1}}^\bullet$ can enter any of the $\mcl C_y^{\ep^\kappa \BB r}$'s.  
This together with Lemma~\ref{lem-geo-disconnect} allows us to show that $\stabE_{k,r}(z)$ occurs for each $(z,r) \in \mcl Z_k$ such that $P\cap B_r(z)\not=\emptyset$.
One such ball $B_r(z)$ is shown in green and several segments of $D_h(\cdot,\cdot; \BB C\setminus \ol{B_r(z)})$-geodesics from $\BB z$ to points of $\bdy B_r(z)$ are shown in red.
}\label{fig-stab}
\end{center}
\vspace{-1em}
\end{figure} 

By condition~\ref{item-good-radii-E} (existence of good annuli) in the definition of $\mcl E_{\BB r}$, we already know that on this event, for each $k\in [0,K]_{\BB Z}$ there are many pairs $(z,r) \in \mcl Z_k$ for which $P\cap B_{\lambda_2 r}(z) \not=\emptyset$ and $E_r(z)$ occurs. The main point of this subsection is to show that there are many such pairs for which also the event $\stabE_{k,r}(z)$ of~\eqref{eqn-ball-stab-event} occurs. Roughly speaking, the idea of the proof is as follows; see Figure~\ref{fig-stab} for an illustration. If $P$ enters $B_r(z)$ but $\stabE_{k,r}(z)$ fails to occur, then $P$ has to get ``close" in some sense to one of the endpoints of one of the arcs in $\confarcs_k$.\footnote{Technically speaking, we are only able to show (using Lemma~\ref{lem-geo-disconnect} below) that $P$ has to enter a region which has one of these endpoints on its boundary and which can be disconnected from $\infty$ in $\BB C\setminus \mcl B_{t_k}^\bullet$ by a small set.} Indeed, otherwise H\"older continuity allows us to force all of the $D_h(\cdot,\cdot ; \BB C\setminus \ol{B_r(z)})$-geodesics from $\BB z$ to points of $\bdy B_r(z)$ to hit the same arc of $\confarcs_k$ as $P$. This is explained in Lemma~\ref{lem-geo-disconnect}. 

On the other hand, if we choose $\beta$ sufficiently small then results from~\cite{gm-confluence} (in particular, Theorem~\ref{thm-finite-geo-quant}) show that $\#\confarcs_k$ is extremely likely to be of smaller order than $\ep^{-\alpha}$, where $\alpha$ is the exponent from Lemma~\ref{lem-geo-kill-pt}. We can therefore apply that lemma once for each of the endpoints of the $\confarcs_k$'s and take a union bound to say that with polynomially high probability given $(\mcl B_{t_k}^\bullet  ,h|_{\mcl B_{t_k}^\bullet})$, no $D_h$-geodesic from $\BB z$ to a point at macroscopic distance from $\bdy\mcl B_{s_k}^\bullet$ can get near any of the endpoints of the $\confarcs_k$'s (Lemma~\ref{lem-stab-endpt}). The claimed superpolynomial concentration when we truncate on $\mcl E_{\BB r}$ comes from a standard concentration bound for independent Bernoulli random variables, provided we choose $\theta$ to be sufficiently small relative to $\alpha$.  

In order to quantify how close $D_h$-geodesics get to the endpoints of the $\confarcs_k$'s, we will need some deterministic definitions.
Let $U\subset\BB C$ be a connected domain such that $\BB C\setminus U$ is compact and connected. View $\bdy U$ as a collection of prime ends. 
If $X\subset U$, we define the \emph{prime end closure} $\op{Cl}'(X)$ to be the set of points in $z\in U\cup\bdy  U$ with the following property: if $\phi : U \cup \bdy U \rta \BB C\setminus  \BB D $ is a conformal map, then $\phi(z)$ lies in $\ol{\phi(X)}$. 
Following~\cite[Equation (2.19)]{gm-confluence}, for $z,w\in U\cup \bdy U $ we define
\eqb \label{eqn-d^U-def} 
d^U(z,w) = \inf\left\{\op{diam}(X) : \text{$X$ is a connected subset of $U$ with $z,w\in \op{Cl}'(X) $}\right\} ,
\eqe
where here $\op{diam}$ denotes the Euclidean diameter. Then $d^U$ is a metric on $U \cup \bdy U$ which is bounded below by the Euclidean metric on $\BB C$ restricted to $U \cup \bdy U$ and bounded above by the internal Euclidean metric on $U \cup \bdy U$. Note that $d^U$ is not a length metric.

\begin{figure}[t!]
 \begin{center}
\includegraphics[scale=.8]{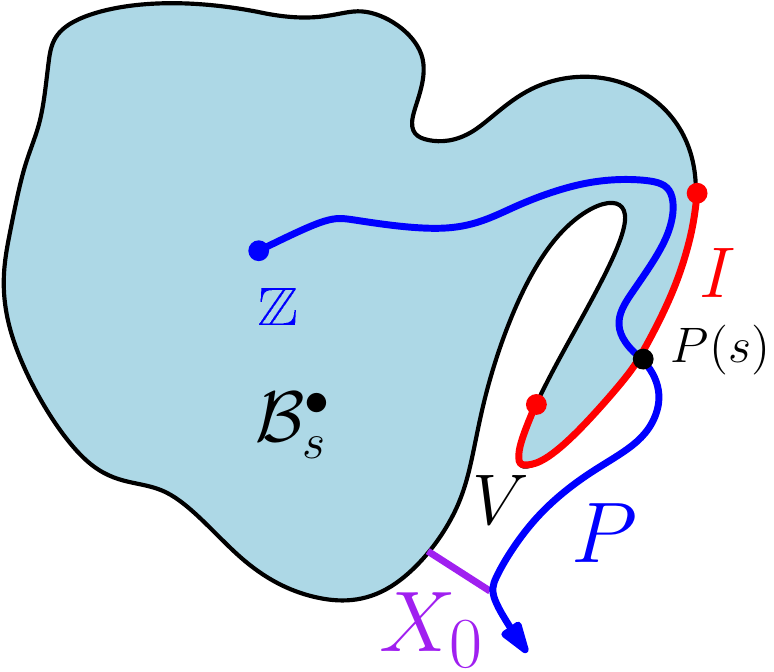}
\vspace{-0.01\textheight}
\caption{Illustration of the statement and proof of Lemma~\ref{lem-geo-disconnect}. If $P$ gets $d^{\BB C\setminus\mcl B_s^\bullet}$-close to $\bdy\mcl B_s^\bullet\setminus I$, then there is a set $X_0$ of small Euclidean diameter which intersects $P$ and $\bdy\mcl B_s^\bullet\setminus I$. Moreover the H\"older continuity condition in the definition of $\mcl E_{\BB r}$ implies that the Euclidean diameter of the segment of $P$ between $s$ and the first time it hits $X_0$ is small. The union $X$ of this segment and $X_0$ disconnects one of the endpoints of $I$ from $\infty$. 
}\label{fig-geo-disconnect}
\end{center}
\vspace{-1em}
\end{figure}

\begin{lem} \label{lem-geo-disconnect}
Almost surely, if $\mcl E_{\BB r}$ occurs then the following is true for every $s \in [0,\tau_{3\ell\BB r}]$, every $\ep \in (0,a]$, and every non-trivial proper connected arc $I \subset \bdy \mcl B_s^\bullet$ (i.e., $I$ is the image of an arc of $\bdy \BB D$ which is not a singleton or all of $\bdy\BB D$ under a conformal map $\BB C\setminus \ol{\BB D} \rta \BB C\setminus \mcl B_s^\bullet$). 
Let $P$ be a $D_h$-geodesic from $\BB z$ to a point outside of $\mcl B_s^\bullet$ which passes through $I$ and suppose that in the notation~\eqref{eqn-d^U-def}, we have $d^{\BB C\setminus \mcl B_s^\bullet}(P , \bdy\mcl B_s^\bullet\setminus I) \leq \ep \BB r$.
There is a connected set $X\subset  \BB C\setminus \mcl B_s^\bullet $ with Euclidean diameter at most $2 \ep^{\chi/\chi'}   \BB r$ such that $P(s)$ and at least one of the two endpoints of $I$ both lie in the prime end closure of the same bounded connected component of $\BB C\setminus (\mcl B_s^\bullet \cup X)$.  
\end{lem}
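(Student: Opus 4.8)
\textbf{Proof proposal for Lemma~\ref{lem-geo-disconnect}.}

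The plan is to unpack the hypothesis $d^{\BB C\setminus\mcl B_s^\bullet}(P,\bdy\mcl B_s^\bullet\setminus I)\le \ep\BB r$ directly from the definition~\eqref{eqn-d^U-def} and then graft the relevant small connected set onto a short initial segment of $P$. First I would fix a point $y\in P$ and a point $y'\in\bdy\mcl B_s^\bullet\setminus I$ which nearly realize the $d^{\BB C\setminus\mcl B_s^\bullet}$-distance, so that there is a connected set $X_0\subset\BB C\setminus\mcl B_s^\bullet$ with Euclidean diameter at most (say) $2\ep\BB r$ whose prime-end closure contains both $y$ and $y'$. Since $P$ goes from $\BB z$ to a point outside $\mcl B_s^\bullet$, passes through $I$, and $P$ is a geodesic, there is a well-defined ``last time $P$ is at radius $s$'', namely $P(s)\in\bdy\mcl B_s^\bullet$ with $P((s,\,|P|])\subset\BB C\setminus\mcl B_s^\bullet$; more precisely I should parametrize $P$ by $D_h$-length and let $s$ denote the first time $P$ exits $\mcl B_s^\bullet$ — the notation in the statement already fixes this. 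Then $y=P(t_0)$ for some $t_0\ge s$, and the relevant segment is $P([s,t_0])$.

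Next I would bound the Euclidean diameter of $P([s,t_0])$. Because $P$ is a $D_h$-geodesic, $D_h(P(s),y)=t_0-s\le D_h(P(s),\bdy B_{\,\cdot}(P(s))\cdots)$; the key point is that $t_0-s$ is at most the $D_h$-diameter of the small set $X_0\cup\{P(s)\}$-neighborhood, which is controlled since $y$ and $y'$ both lie in $\op{Cl}'(X_0)$, hence $y$ is within Euclidean distance $2\ep\BB r$ of a point of $\bdy\mcl B_s^\bullet$. Using the lower H\"older bound~\eqref{eqn-holder-cont} in condition~\ref{item-holder-cont} of $\mcl E_{\BB r}$ (i.e. $D_h(z,w)\ge \frk c_{\BB r}e^{\xi h_{\BB r}(0)}(|z-w|/\BB r)^{\chi'}$) together with the upper H\"older bound to estimate $D_h(P(s),y)$ from above by a power of the Euclidean distance, one gets $t_0-s\le \frk c_{\BB r}e^{\xi h_{\BB r}(0)}(2\ep)^{\chi}$ or so, and then the lower bound forces $|P(s)-y|\le (2\ep)^{\chi/\chi'}\BB r$, hence $\op{diam}P([s,t_0])\le \ep^{\chi/\chi'}\BB r$ up to constants — this is where the exponent $\chi/\chi'$ and the factor $2$ in the statement come from. (One must be slightly careful that $|P(s)-y|\le a\BB r$ so that~\eqref{eqn-holder-cont} applies; this holds for $\ep$ small since $\ep\le a$.)

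Finally I would set $X:=P([s,t_0])\cup X_0$, a connected subset of $\BB C\setminus\mcl B_s^\bullet$ (connected because $P(t_0)=y\in\op{Cl}'(X_0)$, so the closures glue) of Euclidean diameter at most $2\ep^{\chi/\chi'}\BB r$. The topological heart of the argument is then: $X$ together with $\mcl B_s^\bullet$ separates one endpoint of $I$ from $\infty$. For this I would argue in the uniformizing coordinate $\phi:\BB C\setminus\mcl B_s^\bullet\to\BB C\setminus\ol{\BB D}$ (viewing $\bdy\mcl B_s^\bullet$ as prime ends). The arc $I$ corresponds to an arc $\phi(I)\subsetneq\bdy\BB D$ with two distinct endpoints. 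The set $X$ has $P(s)\in\op{Cl}'(X)$ with $\phi(P(s))$ in the \emph{interior} of $\phi(I)$ (since $P$ passes through $I$, its first boundary contact is on $I$), and $X$ also touches $\bdy\mcl B_s^\bullet\setminus I$ via $y'$, so $\op{Cl}'(X)$ meets both $\phi(I)$ and its complementary arc. A connected set whose prime-end closure meets two disjoint boundary arcs must, together with the inner boundary, disconnect the annular region; concretely, $\BB C\setminus(\mcl B_s^\bullet\cup X)$ has a bounded connected component whose prime-end closure contains $P(s)$ and at least one endpoint of $I$ — this is a standard planar separation statement (cf. the Jordan curve theorem applied to the curve formed by $X$, a crosscut of $\mcl B_s^\bullet$, and arcs of $\bdy\mcl B_s^\bullet$). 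I expect this last topological step — making the prime-end separation claim rigorous and identifying which endpoint of $I$ ends up in the right component — to be the main obstacle; everything before it is a routine application of the H\"older estimates on $\mcl E_{\BB r}$. A clean way to handle it is to reduce to the case where $\BB C\setminus\mcl B_s^\bullet$ is replaced by a Jordan domain (via $\phi$ and an approximation of $X$ by a polygonal arc) and invoke the Jordan curve theorem there, then transfer back.
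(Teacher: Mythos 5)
Your overall route is the same as the paper's: extract from the definition of $d^{\BB C\setminus \mcl B_s^\bullet}$ a small connected set $X_0\subset\BB C\setminus\mcl B_s^\bullet$ whose prime end closure meets both $P$ and $\bdy\mcl B_s^\bullet\setminus I$, show that the geodesic segment from time $s$ until it reaches $X_0$ has Euclidean diameter of order $\ep^{\chi/\chi'}\BB r$ via the two H\"older bounds in $\mcl E_{\BB r}$, and take $X$ to be the union of that segment with $X_0$. However, your justification of the central estimate $t_0-s\leq (2\ep)^{\chi}\frk c_{\BB r}e^{\xi h_{\BB r}(0)}$ does not hold together as written, and this is the only place where the geodesic property is genuinely used. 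The inequality ``$t_0-s$ is at most the $D_h$-diameter of $X_0\cup\{P(s)\}$'' is trivially true but useless: that diameter includes $D_h(P(s),X_0)$, and the contact point $y'$ of $X_0$ with $\bdy\mcl B_s^\bullet$ may be far from $P(s)$, so this diameter can be of order $s$ and is not controlled by the Euclidean proximity of $y$ to the ball boundary. Likewise, ``use the upper H\"older bound to estimate $D_h(P(s),y)$ from above by a power of the Euclidean distance'' is circular if the distance meant is $|P(s)-y|$ --- that is precisely the quantity you are trying to prove is small. The correct mechanism routes through the basepoint: since $P$ is a geodesic, $t_0=D_h(\BB z,y)$; since every point of $\bdy\mcl B_s^\bullet$ lies at $D_h$-distance exactly $s$ from $\BB z$, and $|y-y'|\leq 2\ep\BB r$ with $y'\in\bdy\mcl B_s^\bullet$, the triangle inequality through $y'$ and the upper H\"older bound give $t_0\leq s+D_h(y,y')\leq s+(2\ep)^\chi\frk c_{\BB r}e^{\xi h_{\BB r}(0)}$. (The paper phrases this as: with $t$ the first time after $s$ that $P$ hits $X_0$, $t-s$ is at most the $D_h$-diameter of $X_0$, using $D_h(X_0,\mcl B_s^\bullet)=0$; either version works, but in both the bound comes from comparing $D_h(\BB z,\cdot)$ to $s$ via the contact of $X_0$ with the ball, not from a H\"older bound between $P(s)$ and $y$.)

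Two smaller points. First, with the crude choice ``$X_0$ of diameter at most $2\ep\BB r$'' the bookkeeping does not give the stated constant $2\ep^{\chi/\chi'}\BB r$ uniformly over $\ep\in(0,a]$ (for $\ep$ comparable to $a$, $(2\ep)^{\chi/\chi'}+2\ep$ can exceed $2\ep^{\chi/\chi'}$); take near-optimal sets of diameter at most $(\ep+\delta)\BB r$ and send $\delta\rta 0$ at the end, as the paper does. Second, your topological step is treated at essentially the same level of detail as in the paper, and your plan (uniformize and reduce to a Jordan-curve argument) is the standard way to make it rigorous; but note the paper first shrinks $X_0$ so that its prime end closure meets $\bdy\mcl B_s^\bullet$ in a single prime end of $\bdy\mcl B_s^\bullet\setminus I$, so that after uniformizing, $\ol X$ touches the circle at exactly two points, one of which is $P(s)$. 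This reduction is what makes it clean that the two complementary arcs are each adjacent to a single component, that at least one of them is enclosed, and that $P(s)$ (being an endpoint of both arcs) lies in the prime end closure of that bounded component; you should incorporate it, and you should not assume $P(s)$ is an interior point of $I$ (it need not be, and this is not needed).
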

\begin{proof}
See Figure~\ref{fig-geo-disconnect} for an illustration of the statement and proof of the lemma. 
Assume that $\mcl E_{\BB r}$ occurs and let $s , I  ,\ep$, and $ P$ be as in the lemma statement. 
By hypothesis, for each $\delta \in (0,1)$ there is a connected set $X_0 \subset  \BB C\setminus \mcl B_s^\bullet $ which has Euclidean diameter at most $(\ep+\delta)\BB r$ and which satisfies $P\cap X_0\not=\emptyset$ and $\op{Cl}'(X_0) \cap (\bdy \mcl B_s^\bullet\setminus I) \not=\emptyset$. 
By possibly shrinking $X_0$, we can assume without loss of generality that $\op{Cl}'(X_0) \cap (\bdy \mcl B_s^\bullet\setminus I)$ is a single prime end, which is necessarily in $\mcl B_s^\bullet\setminus I$.

Let $t$ be the first time after $s$ at which $P$ hits $X_0$. 
By the upper bound in condition~\ref{item-holder-cont} (H\"older continuity) in the definition of $\mcl E_{\BB r}$, the $D_h$-diameter of $X_0$ is at most $(\ep+\delta)^{\chi} \frk c_{\BB r} e^{\xi h_{\BB r}(0)} $. 
Since $P$ is a $D_h$-geodesic, $P(t) \in X_0$, and $\op{Cl}'(X_0)$ contains a point of $\bdy\mcl B_s^\bullet$ (which implies that $D_h(X_0, \bdy\mcl B_s^\bullet) = 0$), it follows that $t-s \leq \text{($D_h$-diameter of $X_0$)} \leq (\ep+\delta)^{ \chi} \frk c_{\BB r} e^{\xi h_{\BB r}(0)} $. 
By the lower bound in condition~\ref{item-holder-cont} (H\"older continuity) in the definition of $\mcl E_{\BB r}$, the Euclidean diameter of $P([s,t])$ is at most $(\ep+\delta)^{ \chi/ \chi'} \BB r  $.
The set $X := X_0 \cup P((s,t])$ has Euclidean diameter at most $((\ep+\delta)^{\chi/ \chi'} + \ep + \delta ) \BB r$ and its prime end closure contains both the point $P(s) \in I$ and a point of $\bdy\mcl B_s^\bullet \setminus I$. 
Hence one of the connected components $V$ of $\BB C\setminus (\mcl B_s^\bullet \cup X)$ is bounded and contains an endpoint of $I$. 
Since $\op{Cl}'(X) $ intersects $I$ only at $P(s)$ (here we use that $\op{Cl}'(X_0) \cap \bdy\mcl B_s^\bullet$ is a single point), it follows that also $P(s) \in \bdy V$. 
We now conclude the proof by choosing $\delta$ to be sufficiently small (depending on $\ep$) so that $(\ep+\delta)^{\chi/ \chi'} + \ep + \delta \leq 2\ep^{\chi/\chi'}$. 
\end{proof}

We will eventually apply the contrapositive of Lemma~\ref{lem-geo-disconnect}, i.e., we will say that if $P$ does \emph{not} enter a region which contains one of the endpoints of $I$ and which is disconnected from $\infty$ in $\BB C\setminus \mcl B_s^\bullet$ by a set of small diameter, then $d^{\BB C\setminus \mcl B_s^\bullet}(P ; \bdy\mcl B_s^\bullet\setminus I)$ is bounded below. 
The following elementary deterministic lemma will be used in conjunction with Lemma~\ref{lem-geo-kill-pt} to prevent $P$ from entering such a region (we will apply the lemma with $\mcl K = \mcl B_{t_k}^\bullet$). 

\begin{lem} \label{lem-dc-set}
Let $\mcl K \subset \BB C$ be a compact connected set such that $\BB C\setminus \mcl K$ is connected and view $\bdy \mcl K$ as a collection of prime ends.
For $y\in\bdy \mcl K$ and $\ep > 0$, let $\mcl C_y^\ep$ be the set of points in $z \in  \BB C\setminus \mcl K $ such that the following is true.
There is a connected set $X\subset \BB C\setminus \mcl K $ (allowed to depend on $z$ and $y,\ep$) with Euclidean diameter at most $\ep$ such that $z$ and $y$ lie in the prime end closure of the same bounded connected component of $\BB C\setminus (X\cup \mcl K)$. 
Then there is a compact connected set $Y_y^\ep \subset \BB C\setminus \mcl K $ of Euclidean diameter at most $16\ep$ (depending only on $y,\ep$) such that $\mcl C_y^\ep$ is contained in the prime end closure of a single bounded connected component of $\BB C\setminus (Y_y^\ep \cup \mcl K)$.
\end{lem}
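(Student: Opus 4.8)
\textbf{Proof proposal for Lemma~\ref{lem-dc-set}.}

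The plan is to work in the conformal uniformization of $\BB C\setminus\mcl K$ and reduce the statement to a purely one-dimensional fact about arcs of the unit circle. Fix a conformal map $\phi : \BB C\setminus\mcl K \rta \BB C\setminus\ol{\BB D}$ (which extends to prime ends), and let $\widetilde y = \phi(y) \in \bdy\BB D$. First I would record the elementary topological observation underlying the definition of $\mcl C_y^\ep$: if $X\subset\BB C\setminus\mcl K$ is connected with Euclidean diameter at most $\ep$ and $z,y$ both lie in the prime end closure of the same bounded component $V$ of $\BB C\setminus(X\cup\mcl K)$, then $\bdy V$ consists of part of $\bdy\mcl K$, part of $X$, and the prime ends of $\mcl K$ "cut off" by $X$ form a connected sub-arc of $\bdy\mcl K$ (as prime ends) containing $y$; moreover $z$ is separated from $\infty$ by $X\cup\mcl K$. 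So every $z\in\mcl C_y^\ep$ is, together with $y$, enclosed by some connected set $X$ of diameter $\le\ep$ meeting $\bdy\mcl K$.

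The key step is a diameter bound: if $X\subset\BB C\setminus\mcl K$ is connected of Euclidean diameter at most $\ep$ and the bounded complementary component $V$ of $X\cup\mcl K$ has $y$ on its boundary, then the Euclidean diameter of $V$ is at most a universal constant times $\ep$ — I expect the constant $16$ in the statement to come precisely from such an estimate (with room to spare). The reason is that $\bdy V\subset \bdy\mcl K \cup X$ and the portion of $\bdy\mcl K$ on $\bdy V$ is an arc cut off by $X$; since $X$ has diameter $\le\ep$ and $\mcl K$ is connected, this arc of $\bdy\mcl K$ has Euclidean diameter at most $\ep$ as well (its two "endpoints", as accessible boundary points, are each within $\ep$ of a common point of $X$, and the arc, being a connected piece of the boundary of the connected set $\mcl K$ that is separated from the rest of $\mcl K$ by $X$, cannot stray far — one makes this precise by noting that a path in $\mcl K$ joining any two points of this arc, together with $X$, would have to enclose or be enclosed in a controlled way). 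Consequently $V$ itself, being bounded by two sets of diameter $\le\ep$ that share points, has diameter $\le$ (a small multiple of) $\ep$. Then since $z\in\ol{V}$ and $y\in\bdy V$, we get $|z-y|\preceq\ep$ for every $z\in\mcl C_y^\ep$, so $\mcl C_y^\ep$ itself has Euclidean diameter at most a universal constant times $\ep$, and $\mcl C_y^\ep\cup\{y\}$ is contained in a ball of radius $\preceq\ep$.

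Given this, I would construct $Y_y^\ep$ as follows. Let $r\preceq\ep$ be the radius just bounded, so $\mcl C_y^\ep\subset B_r(y)$. Consider the connected component of $B_{r}(y)\setminus\mcl K$ (or a slightly enlarged ball to guarantee it meets $\mcl K$, using connectedness of $\mcl K$ and of $\BB C\setminus\mcl K$) whose prime end closure contains $y$; an arc of the circle $\bdy B_{r'}(y)$ for a suitable $r'$ of order $\ep$ then disconnects this whole region from $\infty$ within $\BB C\setminus\mcl K$. Take $Y_y^\ep$ to be (the closure of) this circular arc together with, if necessary, a short segment joining its endpoints to $\bdy\mcl K$; its diameter is $\le 16\ep$ after tracking constants, it depends only on $y$ and $\ep$, and by construction every point of $\mcl C_y^\ep$ lies in the bounded component of $\BB C\setminus(Y_y^\ep\cup\mcl K)$ cut off by this arc. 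One checks this bounded component is a single component (it is the one containing $B_{r}(y)\setminus\mcl K$ near $y$) so all of $\mcl C_y^\ep$ lies in the prime end closure of one component, as required.

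The main obstacle is the diameter estimate in the second paragraph: controlling the Euclidean size of a complementary component $V$ of $X\cup\mcl K$ with $y\in\bdy V$ purely in terms of $\op{diam}(X)$, using only that $\mcl K$ is compact and connected and $\BB C\setminus\mcl K$ is connected. The subtlety is that $\mcl K$ can be wild (e.g. $\mcl K$ could have long thin "fingers"), so one must argue carefully that the arc of $\bdy\mcl K$ bordering $V$ cannot have large Euclidean diameter even though $\mcl K$ does: the point is that $X$, of diameter $\le\ep$, must separate this arc from the rest of $\bdy\mcl K$, and a connected set of diameter $\le\ep$ can only "trap" a region (together with the obstacle $\mcl K$) near itself. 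Making this rigorous is a standard but slightly delicate plane topology argument — essentially a quantitative form of the fact that $\bdy V$ is a "cut" of diameter comparable to $\op{diam}(X)$ — and is where all the constants are generated. Everything else is bookkeeping with prime ends.
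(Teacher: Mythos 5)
There is a genuine gap, and it is not just a matter of delicacy: the central estimate you rely on in your second paragraph is false. You claim that if $X\subset\BB C\setminus\mcl K$ is connected with $\op{diam}(X)\leq \ep$ and $V$ is a bounded component of $\BB C\setminus(X\cup\mcl K)$ with $y\in\bdy V$, then $\op{diam}(V)\preceq \ep$, and hence $\mcl C_y^\ep\subset B_{O(\ep)}(y)$. Consider a ``horseshoe'': let $\mcl K$ be a thick C-shaped compact connected set (complement connected) enclosing a region of diameter $R\gg\ep$, with an opening of width less than $\ep$. Taking $X$ to be a small connected set plugging the opening, the bounded component $V$ of $\BB C\setminus(X\cup\mcl K)$ is the entire enclosed region, of diameter comparable to $R$; if $y$ is a prime end on the inner boundary, then every point of this huge region belongs to $\mcl C_y^\ep$. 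So neither $V$ nor $\mcl C_y^\ep$ is small, the arc of $\bdy\mcl K$ bounding $V$ is not of diameter $O(\ep)$ (it is essentially the whole inner boundary), and your subsequent construction collapses: an arc of a circle of radius $O(\ep)$ centered near $y$ does not separate the far parts of $\mcl C_y^\ep$ from $\infty$ in $\BB C\setminus\mcl K$, since those points can still escape through the (unplugged) opening. A tiny connected set together with $\mcl K$ can trap an arbitrarily large region; only the \emph{cut} is small, not the region it cuts off.

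The correct statement therefore cannot be proved by bounding $\op{diam}(\mcl C_y^\ep)$; one must instead produce a single small cut that dominates all possible cuts $X$ simultaneously. The paper does this by considering the finite family of closed balls $\ol{B_{2\ep}(z)}$, $z\in\frac{\ep}{4}\BB Z^2$, which meet $\bdy\mcl K$ and which every unbounded connected subset of $\BB C\setminus\mcl K$ accumulating at $y$ must intersect; to each such ball $B$ it associates an arc $Y_B$ of $\bdy B\setminus\mcl K$ cutting off a region $U_B$ with $y$ on its boundary, and shows that every admissible $X$ (hence every $z\in\mcl C_y^\ep$) is captured by some $U_B$. The key point your outline is missing is then a maximality argument: among these arcs, any two \emph{maximal} ones (with respect to inclusion of the regions $U_B$) must intersect each other, so the union of the maximal arcs is connected of diameter at most $8\ep$, and an arc of a ball of radius $8\ep$ containing this union gives the desired $Y_y^\ep$ of diameter at most $16\ep$. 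In your horseshoe example this mechanism correctly selects arcs plugging the narrow opening rather than arcs near $y$, which is exactly what your circle-around-$y$ construction cannot do.
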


The proof of Lemma~\ref{lem-dc-set} is straightforward, but it takes a few paragraphs so we postpone it until Section~\ref{sec-geometric-lemma} to avoid interrupting the proof of Theorem~\ref{thm-geo-iterate}. The reader may want to refer to Figure~\ref{fig-dc-set} for an illustration of the definition of $\mcl C_y^\ep$.

Returning now to the setting of Proposition~\ref{prop-stab}, for $k\in [0,K]_{\BB Z}$ let $\arcendpts_k$ be the set of endpoints of the arcs in $\confarcs_k$.
As in Lemma~\ref{lem-dc-set}, for $\delta >0$ and $y\in \bdy \mcl B_{t_k}^\bullet$, we let $\mcl C_y^\delta$ be the set of points $z\in (\BB C\setminus \mcl B_{t_k}^\bullet) \cup \bdy \mcl B_{t_k}^\bullet$ with the following property: there is a compact connected set $X\subset\ol{\BB C\setminus \mcl B_{t_k}^\bullet}$ with Euclidean diameter at most $\delta$ such that $z$ and $y$ lie in the closure of the same bounded connected component of $\BB C\setminus (\mcl B_{t_k}^\bullet \cup X)$. 

\begin{lem} \label{lem-stab-endpt}
Fix $\kappa \in (0,1)$. If $\beta,\theta\in (0,1)$ are chosen sufficiently small, in a manner depending only on $\kappa$ and the choice of metric $D$, then on $\mcl E_{\BB r}$ it holds except on an event of probability decaying faster than any positive power of $\ep$, at a rate which is uniform in $\BB r$, that there are at least $(1-\ep^\theta ) K$ values of $k \in [0,K]_{\BB Z}$ for which the following is true.
In the notation introduced just above, no $D_h$-geodesic from $\BB z$ to $\bdy\mcl B_{s_{k+1}}^\bullet$ can enter $\bigcup_{y\in \arcendpts_k} \mcl C_y^{\ep^\kappa \BB r}$. 
\end{lem}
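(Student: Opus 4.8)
\textbf{Proof proposal for Lemma~\ref{lem-stab-endpt}.}

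The plan is to combine a union bound over the endpoints of the confluence arcs (using Lemma~\ref{lem-geo-kill-pt} to keep $D_h$-geodesics away from each such endpoint) with a concentration estimate over the indices $k$. First I would work on the event $\mcl E_{\BB r}$, so that (via Remark~\ref{remark-clsce-event}) the event $\mcl E_{\ell\BB r}^{\BB z}(a)$ holds and the radii $\rho_{\BB r,\ep}(z)$ are under control. Fix $k\in[0,K]_{\BB Z}$. Condition on $\mcl F_k = \sigma(\mcl B_{t_k}^\bullet, h|_{\mcl B_{t_k}^\bullet}, \dots)$; then $\confarcs_k$, hence $\arcendpts_k$, is determined. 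The first key step is to bound $\#\arcendpts_k = \#\confpts_k$: by Theorem~\ref{thm-finite-geo-quant} applied with the stopping time $\tau = s_k$ (which lies in $[\tau_{\ell\BB r},\tau_{2\ell\BB r}]$ on $\mcl E_{\BB r}$ by~\eqref{eqn-geo-iterate-upper}), the probability that there are more than $N := \ep^{-\beta\beta_0}$ points of $\bdy\mcl B_{s_k}^\bullet$ hit by leftmost geodesics from $\BB z$ to $\bdy\mcl B_{t_k}^\bullet$ is at most $b_0 e^{-b_1 N^{\beta_1}}$ — here I use that $t_k - s_k = \ep^{2\beta}\frk c_{\BB r}e^{\xi h_{\BB r}(\BB z)}$, so choosing $\beta$ small relative to $\beta_1$ and $\kappa$ makes $N^{-\beta_1}\frk c_{\BB r}e^{\xi h_{\BB r}(\BB z)} \geq \ep^{2\beta}\frk c_{\BB r}e^{\xi h_{\BB r}(\BB z)}$ so the theorem applies, and this probability is $o_\ep^\infty(\ep)$. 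So off an event of probability $o_\ep^\infty(\ep)$ we have $\#\arcendpts_k \leq \ep^{-\beta\beta_0}$ with $\beta\beta_0$ as small as we like.

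The second key step: for each $y\in\arcendpts_k$, apply Lemma~\ref{lem-dc-set} with $\mcl K = \mcl B_{t_k}^\bullet$, $\ep^\kappa\BB r$ in place of $\ep$, to get a compact connected set $Y_y^{\ep^\kappa\BB r}$ of Euclidean diameter $\leq 16\ep^\kappa\BB r$ such that $\mcl C_y^{\ep^\kappa\BB r}$ lies in the prime-end closure of a single bounded component of $\BB C\setminus(Y_y^{\ep^\kappa\BB r}\cup\mcl B_{t_k}^\bullet)$. Now I want to prevent $D_h$-geodesics from $\BB z$ to $\bdy\mcl B_{s_{k+1}}^\bullet$ from entering $\mcl C_y^{\ep^\kappa\BB r}$. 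The idea is that to enter $\mcl C_y^{\ep^\kappa\BB r}$ a geodesic from $\BB z$ must first pass through a point of $\bdy B_{16\ep^\kappa\BB r}(x)\setminus\mcl B_{t_k}^\bullet$ for an appropriate $x$ near $y$, i.e.\ it must enter a small Euclidean ball touching $\bdy\mcl B_{t_k}^\bullet$. Apply Lemma~\ref{lem-geo-kill-pt} with the stopping time $\tau = s_k$ (or rather with $\mcl B_{t_k}^\bullet$ and the radius parameter $r$ chosen $\asymp\ep^\kappa\BB r$, using that $\sigma_{s_k,r}^{\ep'}\leq s_{k+1}$ on $\mcl E_{\BB r}$ by condition~\ref{item-quantum-ball-diam} and the bound $\rho_{\BB r,\ep'}\leq(\ep')^{1/2}\BB r$) and with $x$ the point of $\bdy\mcl B_{t_k}^\bullet$ (a prime end) nearest $y$: the event $G_x^{\ep'}$ of that lemma has conditional probability $\geq 1 - C_0(\ep')^\alpha$ given $(\mcl B_{t_k}^\bullet,h|_{\mcl B_{t_k}^\bullet})$, and on $G_x^{\ep'}$ no $D_h$-geodesic from $\BB z$ to a point outside $\mcl B_{\sigma}^\bullet \supset \mcl B_{s_{k+1}}^\bullet$ enters $B_{\ep'r}(x)\setminus\mcl B_{t_k}^\bullet$, hence (since a geodesic entering $\mcl C_y^{\ep^\kappa\BB r}$ would have to cross such a small ball) none enters $\mcl C_y^{\ep^\kappa\BB r}$. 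Taking a union bound over the $\leq\ep^{-\beta\beta_0}$ points $y\in\arcendpts_k$, and choosing $\beta$ small enough that $\beta\beta_0 < \kappa\alpha$ (so $\ep^{-\beta\beta_0}\cdot(\ep^\kappa)^\alpha \to 0$), we conclude that with conditional probability $1 - O_\ep(\ep^{\kappa\alpha - \beta\beta_0})$ given $\mcl F_k$, on $\mcl E_{\BB r}$, no $D_h$-geodesic from $\BB z$ to $\bdy\mcl B_{s_{k+1}}^\bullet$ enters $\bigcup_{y\in\arcendpts_k}\mcl C_y^{\ep^\kappa\BB r}$. Call this conditional-probability-$\geq 1-p_\ep$ event $\frk S_k$, with $p_\ep = O_\ep(\ep^{\kappa\alpha - \beta\beta_0})$ a positive power of $\ep$.

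Finally, the concentration step. Since $\BB 1_{\frk S_k^c}$ has conditional probability $\leq p_\ep$ given $\mcl F_k$, and $\frk S_{k+1}$ concerns the metric ball between radii $s_{k+1}$ and $s_{k+2}$ while $\mcl F_{k+1}\supset\sigma$ of everything up to $t_{k+1}$, the indicators $\{\BB 1_{\frk S_k^c}\}$ are dominated by a sequence of conditionally independent Bernoulli$(p_\ep)$ variables along the filtration $\{\mcl F_k\}$; more precisely $\BB E[\BB 1_{\frk S_k^c}\mid\mcl F_k]\leq p_\ep$, so $\sum_{k=0}^K \BB 1_{\frk S_k^c}$ is stochastically dominated by Binomial$(K+1,p_\ep)$ up to the $o_\ep^\infty(\ep)$ error from the $\#\arcendpts_k$ bound. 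By a standard Chernoff bound, $\BB P[\sum_k\BB 1_{\frk S_k^c} > \ep^\theta K]\leq o_\ep^\infty(\ep)$ provided $\theta$ is chosen small enough relative to $\kappa\alpha-\beta\beta_0$ (recall $K = \lfloor a\ep^{-\beta}\rfloor - 1$ is a negative power of $\ep$, so $K p_\ep$ is still a negative-or-positive power depending on signs, but $\ep^\theta K$ dominates $K p_\ep$ once $\theta < \kappa\alpha - \beta\beta_0$, and then large-deviations gives superpolynomial decay). On the complement, there are at least $(1-\ep^\theta)K$ values of $k$ for which $\frk S_k$ holds, which is exactly the assertion. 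The uniformity in $\BB r$ (and $\BB z,\BB w$) is inherited from the uniformity in all the inputs: Theorem~\ref{thm-finite-geo-quant}, Lemma~\ref{lem-geo-kill-pt}, Lemma~\ref{lem-clsce-all}, and the definition of $\mcl E_{\BB r}$.

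The main obstacle I expect is the careful bookkeeping in the second step: making sure that the radius parameter fed into Lemma~\ref{lem-geo-kill-pt} (which must be chosen before looking at $h$ outside the ball, and for which we need $R_r^{\ep'}(\mcl B_{t_k}^\bullet)\leq\operatorname{diam}\mcl B_{t_k}^\bullet$ and $\sigma_{s_k,r}^{\ep'}\leq s_{k+1}$) can indeed be taken comparable to $\ep^\kappa\BB r$ uniformly, using only the information in $\mcl E_{\BB r}$ (conditions~\ref{item-quantum-ball-diam} and~\ref{item-good-radii-small}), and that "entering $\mcl C_y^{\ep^\kappa\BB r}$" really does force crossing one of the controlled small balls $B_{\ep' r}(x)$ — this is a topological/prime-end argument combining Lemma~\ref{lem-dc-set} with the fact that $\mcl B_{t_k}^\bullet\subset B_{\ep^{-M}\BB r}(\BB z)$ (condition~\ref{item-open-set-contain} plus H\"older continuity) so that diameters are comparable at the right scales. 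Coordinating the three small parameters $\beta,\theta,\kappa$ so that all the error terms are simultaneously superpolynomial is delicate but routine once the chain of inequalities $\beta\beta_0 < \kappa\alpha$, $\theta < \kappa\alpha - \beta\beta_0$, $\beta < \chi/\chi'$ is written down.
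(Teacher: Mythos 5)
You follow the same three-step strategy as the paper's proof (bound $\#\confpts_k$ via Theorem~\ref{thm-finite-geo-quant}; protect each endpoint in $\arcendpts_k$ via Lemma~\ref{lem-dc-set}, Lemma~\ref{lem-geo-kill-pt} and a union bound; then iterate over $k$ with Hoeffding), but two of your steps contain genuine errors as written.

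First, your application of Theorem~\ref{thm-finite-geo-quant} is justified by an inequality in the wrong direction. Writing $\beta_1$ for the exponent called $\beta$ in that theorem, the theorem controls the number of points of $\bdy\mcl B_{s_k}^\bullet$ hit by leftmost geodesics to $\bdy\mcl B_{s_k+N^{-\beta_1}\frk c_{\BB r}e^{\xi h_{\BB r}(\BB z)}}^\bullet$. To deduce a bound on $\#\confpts_k$, whose target boundary is $\bdy\mcl B_{t_k}^\bullet$, one needs the theorem's annulus to be \emph{narrower} than $[s_k,t_k]$, i.e. $N^{-\beta_1} \le \ep^{2\beta}$: confluence only improves for farther target boundaries (leftmost geodesics restrict to leftmost geodesics), whereas if the target is closer than the theorem's radius, more points of $\bdy\mcl B_{s_k}^\bullet$ can be hit and the theorem yields nothing. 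You require the opposite inequality $N^{-\beta_1}\ge\ep^{2\beta}$; moreover, with your choice $N=\ep^{-\beta\beta_0}$ that condition reads $\beta_0\beta_1\le 2$ and does not involve $\beta$ at all, so "choosing $\beta$ small" does not produce it. The correct bookkeeping is the paper's: take $N=\lfloor\ep^{-\omega}\rfloor$ for a free small exponent $\omega$, require $2\beta\le\omega\beta_1$ (this is where $\beta$ must be small depending on $D$) and $\omega<\alpha\kappa$ for the union bound over $\arcendpts_k$.

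Second, your concentration step conditions on $\mcl F_k$, but the only available input, Lemma~\ref{lem-geo-kill-pt}, gives a bound conditional on $(\mcl B_{t_k}^\bullet , h|_{\mcl B_{t_k}^\bullet})$ alone. Since $\mcl F_k$ also contains $P|_{[0,s_k]}$, which is not determined by the ball data (it depends on $P(s_k)$ and hence on the field outside $\mcl B_{t_k}^\bullet$), the asserted estimate $\BB E[\BB 1_{\frk S_k^c}\mid\mcl F_k]\le p_\ep$ does not follow without an extra argument; such transfers between $\sigma$-algebras are delicate enough that the paper proves Lemma~\ref{lem-cond-expectation} for a related purpose. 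The paper's proof avoids the issue by running the iteration along the filtration generated by $\{(\mcl B_s^\bullet , h|_{\mcl B_s^\bullet})\}_{s\ge 0}$, using that $\sigma_k=\sigma_{t_k,\BB r}^{\ep^\kappa}$ is a stopping time for it, so that the events $\bigcap_y G_y$ intersected with $\{\sigma_k\le s_{k+1}\}$ and $\{\#\confpts_k\le\ep^{-\omega}\}$ are measurable at radius $s_{k+1}$; the truncation on $\mcl E_{\BB r}$ (not measurable at any finite radius) is only imposed after the Hoeffding bound. Finally, the geometric point you defer as bookkeeping is more than that: the pocket $\mcl C_y^{\ep^\kappa\BB r}$ can be Euclidean-far from $y$, so one cannot protect it with a ball centered at (or near) $y$; the paper instead chooses a gate point $z_y\in\bdy\mcl B_{t_k}^\bullet$, measurably in the ball data, such that every path in $\BB C\setminus\mcl B_{t_k}^\bullet$ from $\mcl C_y^{\ep^\kappa\BB r}$ to $\BB C\setminus\mcl B_{s_{k+1}}^\bullet$ must enter $B_{16\ep^\kappa\BB r}(z_y)$, and this requires the inclusion $B_{16\ep^\kappa\BB r}(\mcl B_{t_k}^\bullet)\subset\mcl B_{s_{k+1}}^\bullet$, hence the constraint $\beta<\kappa\chi/2$ (derived from conditions on $\rho_{\BB r,\ep}$, H\"older continuity and circle-average comparison in $\mcl E_{\BB r}$), which your list of parameter constraints omits. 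These are all repairable, but as written they are the load-bearing steps and they do not go through.
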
 
\begin{proof} 
Fix parameters $\theta , \omega  \in (0,1)$ to be chosen later, in a manner depending only on $D$. 
We will first choose $\beta$ in a manner depending on $\omega ,D$ and then choose $\omega$ in a manner depending on $\kappa, D$, and then choose $\theta$ in a manner depending on $\beta  ,\omega$. In particular, we will take $\omega < \alpha \kappa /2$ and $\theta <  \min\{\omega , \beta/2 \} $.
The parameter $\kappa$ will be chosen in a manner depending only on $D$ in the proof of Proposition~\ref{prop-stab} below. 

We will first show, using Theorem~\ref{thm-finite-geo-quant}, that if $\beta$ is chosen to be sufficiently small (depending on $\omega,D$) then with extremely high probability on $\mcl E_{\BB r}$ one has for each $k\in [0,K]_{\BB Z}$ that $ \#\confpts_k \leq  \ep^{-\omega}$, which implies that $ \#\arcendpts_k \leq  \ep^{-\omega}$. 
We then show using Lemma~\ref{lem-geo-kill-pt} and a union bound over at most $\ep^{-\omega}$ elements of $\arcendpts$ that if $\omega$ is chosen to be sufficiently small (depending on the parameter $\alpha$ of Lemma~\ref{lem-geo-kill-pt}, which depends only on $D$), then for each $k$ it holds with conditional probability at least $1-\ep^\omega$ given $ \mcl B_{t_k}^\bullet , h|_{\mcl B_{t_k}^\bullet}$ that no $D_h$-geodesic from $\BB z$ to $\bdy\mcl B_{s_{k+1}}^\bullet$ can enter $\bigcup_{y\in \arcendpts_k} \mcl C_y^{\ep^\kappa \BB r}$.
Finally, we will use the Markovian structure of the GFF together with a standard concentration inequality for Bernoulli random variables to show that if $\theta$ is chosen to be sufficiently small then with extremely high probability this happens for at least $(1-\ep^\theta)K$ values of $k\in [0,K]_{\BB Z}$. 
\medskip

\noindent\textit{Step 1: bounding the number of confluence points.}   
Recall from Section~\ref{sec-iterate-setup} that $t_k = s_k + \ep^{2\beta} \frk c_{\BB r} e^{\xi h_{\BB r}(0)}$ and $\confpts_k$ is the set of points of $\bdy\mcl B_{s_k}^\bullet$ which are hit by leftmost $D_h$-geodesics from $\BB z$ to $\bdy\mcl B_{t_k}^\bullet$. 
Due to Remark~\ref{remark-clsce-event}, we can apply Theorem~\ref{thm-finite-geo-quant} (with $N = \lfloor \ep^{-\omega} \rfloor$ and $\tau = s_k$) to get that if $\beta$ is chosen sufficiently small, in a manner depending only on $\omega$ and $D$, then for each $k\in [0,K]_{\BB Z}$, the probability that $\mcl E_{\BB r}$ occurs and $ \#\confpts_k  >  \ep^{-\omega} $ decays faster than any positive power of $\ep$. By a union bound over $k$, 
\eqb \label{eqn-stab-interval-count}
\BB P\left[\mcl E_{\BB r} ,\, \max_{k\in [0,K]_{\BB Z}} \#\confpts_k  >  \ep^{-\omega} \right]  = o_\ep^\infty(\ep)  .
\eqe  
\medskip
 
\noindent\textit{Step 2: bounding the parameters from Lemma~\ref{lem-geo-kill-pt}.}  
Recall the radii $\rho_{\BB r,\ep}(z)$, which appear in Lemma~\ref{lem-clsce-all} and condition~\ref{item-good-radii-small} (bounds for $\rho_{\BB r,\ep}(z)$) in the definition of $\mcl E_{\BB r}$ (the precise definition of these radii is not needed here, only their role in Lemma~\ref{lem-geo-kill-pt}). 
To lighten notation, for $k\in [0,K]_{\BB Z}$ we define
\allb \label{eqn-stab-extra-times}
R_k 
&:= R_{\BB r}^{\ep^\kappa}(\mcl B_{t_k}^\bullet)
= 6 \max\left\{\rho_{\BB r,\ep^\kappa}(z) : z\in \left( \frac{\ep^\kappa \BB r}{4} \BB Z^2 \right) \cap B_{\ep^\kappa \BB r}\left(\mcl B_{t_k}^\bullet \right) \right\} +\ep\BB r
,\quad \text{as in~\eqref{eqn-extra-radius-eucl} and} \notag\\
\sigma_k 
&:= \sigma_{t_k,\BB r}^{\ep^\kappa}
= \inf\left\{ s' > s :   B_{R_k}(\mcl B_{t_k}^\bullet) \subset \mcl B_{s'}^\bullet  \right\} ,
\quad \text{as in~\eqref{eqn-extra-radius}} .
\alle
Note that we use~\eqref{eqn-extra-radius-eucl} and~\eqref{eqn-extra-radius} with $\ep^\kappa$ in place of $\ep$.   

On $\mcl E_{\BB r}$, we have $\mcl B_{t_k}^\bullet \subset \mcl B_{\tau_{2\ell\BB r}}^\bullet \subset B_{2\ell\BB r}(\BB z) \subset B_{4\ell\BB r}(\BB r V)$ for each $k\in [0,K]_{\BB Z}$ (see~\eqref{eqn-geo-iterate-upper}).
Hence we can apply condition~\ref{item-good-radii-small} (bounds for $\rho_{\BB r,\ep}(z)$) in the definition of $\mcl E_{\BB r}$ and the definition~\eqref{eqn-stab-extra-times} of $R_k$ to get that if $\ep$ is chosen sufficiently small, depending on $a$ and $\kappa$, then on $\mcl E_{\BB r}$, we have $R_k \leq  (6\ep^{\kappa/2} + \ep^{\kappa/2}) \BB r \leq 7\ep^{\kappa/2} \BB r$ for each $k\in [0,K]_{\BB Z}$. 
By combining this with the upper bound for  $D_h$-distances from condition~\ref{item-holder-cont} (H\"older continuity) in the definition of $\mcl E_{\BB r}$, we get that $B_{R_k}(\mcl B_{t_k}^\bullet) \subset \mcl B_{t_k + 7^{\chi } \ep^{\kappa\chi/2} \frk c_{\BB r} e^{\xi h_{\BB r}(0)}}^\bullet$. 
By this together with the definition of $\sigma_k$ and condition~\ref{item-circle-avg} (comparison of circle averages) in the definition of $\mcl E_{\BB r}$ (to replace $h_{\BB r}(0)$ with $h_{\BB r}(\BB z)$), on $\mcl E_{\BB r}$ we have $\sigma_k \leq t_k +   A \ep^{\kappa \chi/2} \frk c_{\BB r} e^{\xi h_{ \BB r }(\BB z)}  $, where $A = 7^\chi e^{\xi/a}$ is an unimportant constant.  

We henceforth assume that $\beta < \kappa \chi  /2$, so that by the conclusion of the preceding paragraph and the definition~\eqref{eqn-geo-iterate-times} of $t_k$ and $s_{k+1}$, for small enough $\ep \in (0,1)$ (how small depends only on $a , \beta,\kappa$), 
\eqb \label{eqn-stab-extra-radius}
\sigma_k \leq t_k + ( \ep^\beta - \ep^{2\beta} ) \frk c_{ \BB r } e^{\xi h_{ \BB r }(\BB z)}     = s_{k+1} ,\quad\forall k \in [0,K]_{\BB Z}  ,\quad \text{on $\mcl E_{\BB r}$} .
\eqe  
\medskip

\noindent\textit{Step 3: killing off geodesics near the endpoints with polynomially high probability.} 
Recall that $\arcendpts_k$ denotes the set of endpoints of arcs in $\confarcs_k$. 
We have $\#\arcendpts_k = \#\confarcs_k$. 
By Lemma~\ref{lem-dc-set}, each of the sets $\mcl C_y^{\ep^\kappa \BB r}$ can be disconnected from $\infty$ in $\BB C\setminus \mcl B_{t_k}^\bullet$ by a connected subset of $\BB C\setminus \mcl B_{t_k}^\bullet$ of Euclidean diameter at most $16\ep^\kappa\BB r$. 
By an argument as in~\eqref{eqn-stab-extra-radius}, if $\ep \in (0,1)$ is chosen sufficiently small (how small depends only on $\beta,\kappa$), then $B_{16\ep^\kappa\BB r}(\mcl B_{t_k}^\bullet) \subset \mcl B_{s_{k+1}}^\bullet$. 
We may therefore choose for each $y\in \arcendpts_k$ a point $z_y \in \bdy\mcl B_{t_k}^\bullet$, in a manner depending only on $(\mcl B_{t_k}^\bullet , h|_{\mcl B_{t_k}^\bullet})$, with the following property.
\begin{enumerate}[($*$)] \label{eqn-z-choice}
\item Every path in $\BB C\setminus \mcl B_{t_k}^\bullet$ from $\mcl C_y^{\ep^\kappa \BB r}$ to $\BB C\setminus \mcl B_{s_{k+1}}^\bullet$ must enter $B_{16\ep^\kappa \BB r}(z_y)$. 
\end{enumerate}

By Lemma~\ref{lem-geo-kill-pt} (applied with $\tau = t_k$ and $16\ep^\kappa$ in place of $\ep$), there are constants $C_0  > 1$ and $  \alpha > 0$, depending only on the choice of metric, and an event $G_y$ for each $y\in \arcendpts_k$ such that $G_y \in \sigma\left( \mcl B_{\sigma_k}^\bullet , h|_{\mcl B_{\sigma_k}^\bullet} \right)$ and the following is true.  
\begin{enumerate}[A.]
\item If $R_k   \leq   \op{diam}(\mcl B_{t_k}^\bullet)$ and $G_y$ occurs, then no $D_h$-geodesic from $\BB z$ to a point of $\BB C\setminus \mcl B_{\sigma_k}^\bullet$ can enter $B_{16\ep^\kappa \BB r }(z_y) \setminus \mcl B_{t_k}^\bullet  $.  \label{item-geo-stab-occur}
\item Almost surely, $\BB P[G_y |  \mcl B_{t_k}^\bullet , h|_{\mcl B_{t_k}^\bullet} ] \geq 1 - C_0 \ep^{\alpha\kappa}$. \label{item-geo-stab-prob}
\end{enumerate} 

Henceforth assume that $\omega  \in (0,\alpha \kappa /2)$. 
On the event $\{\#\confpts_k  \leq  \ep^{-\omega}\} $ (which is in $\sigma( \mcl B_{t_k}^\bullet , h|_{\mcl B_{t_k}^\bullet})$ and has high probability by~\eqref{eqn-stab-interval-count} and the fact that $\mcl E_{\BB r}$ has high probability), we can take a union bound over at most $\ep^{-\omega}$ elements of $\arcendpts_k$ to get 
\eqb \label{eqn-stab-union}
\BB P\left[ \bigcap_{y\in \arcendpts_k} G_y     \bigg|  \mcl B_{t_k}^\bullet , h|_{\mcl B_{t_k}^\bullet} \right] \geq 1 -  C_0 \ep^{\alpha \kappa -\omega}   .
\eqe
Since $\omega < \alpha\kappa/2$, the right side of~\eqref{eqn-stab-union} is at least $1-\ep^\omega$ for small enough $\ep \in (0,1)$ (how small depends only on $\alpha,\kappa,\omega,C_0$).
This implies that for each such $\ep$, 
\eqb \label{eqn-stab-union'}
\BB P\left[ \left( \bigcap_{y\in \arcendpts_k} G_y \right)^c ,\, \sigma_k \leq s_{k+1} ,\,  \#\confpts_k  \leq  \ep^{-\omega}  \bigg|  \mcl B_{t_k}^\bullet , h|_{\mcl B_{t_k}^\bullet} \right] \leq   \ep^{ \omega} .
\eqe
Note that we have added the additional event $\{\sigma_k \leq s_{k+1}\}$, for reasons which will become apparent just below.
\medskip

\noindent\textit{Step 4: independence across radii to get concentration.} 
The radius $\sigma_k$ is a stopping time for $\{(\mcl B_s^\bullet , h|_{\mcl B_s^\bullet})\}_{s\geq 0}$, so the event inside the conditional probability in~\eqref{eqn-stab-union'} belongs to $\sigma\left( \mcl B_{s_{k+1}}^\bullet , h|_{\mcl B_{s_{k+1}}^\bullet} \right)$. 
Since $t_{k+1} \geq s_{k+1}$, it therefore follows from~\eqref{eqn-stab-union'} that the number of $k \in [0,K]_{\BB Z}$ for which either $ \bigcap_{y\in \arcendpts_k} G_y $ occurs, $\sigma_k > s_{k+1}$, or $\#\confpts_k  > \ep^{-\omega}$ stochastically dominates a binomial distribution with $K$ trials and success probability $1 - \ep^{\omega}$. 
By Hoeffding's inequality, for any choice of $\theta\in (0,1)$ the probability that there are fewer than $(1-\ep^{\theta}) K$ such values of $k$ is at most 
\eqbn
\exp\left( - 2 (\ep^\theta - \ep^{\omega})^2 K \right) .
\eqen
Since $K = \lfloor a \ep^{-\beta} \rfloor  -1$ by~\eqref{eqn-end-time}, this last quantity decays faster than any positive power of $\ep$ provided we take $\theta \in (0 , \min\{\omega , \beta/2\})$. 

By~\eqref{eqn-stab-extra-radius}, on $\mcl E_{\BB r}$ we have $ \sigma_k \leq s_{k+1} $ for each $k\in [0,K]_{\BB Z}$.
By~\eqref{eqn-stab-interval-count}, if $\mcl E_{\BB r}$ occurs then except on an event of probability $o_\ep^\infty(\ep)$ we have $\#\confpts_k  \leq  \ep^{-\omega}$ for each $K\in [0,K]_{\BB Z}$. 
Combining these observations with the preceding paragraph shows that 
\eqb \label{eqn-stab-iterate}
\BB P\left[ \mcl E_{\BB r} ,\, \#\left\{  k \in [0,K]_{\BB Z} : \bigcap_{y\in \arcendpts_k} G_y \: \text{occurs} \right\} < (1-\ep^{\theta}) K \right] = o_\ep^\infty(\ep) .
\eqe

Recall that $R_k \leq 7 \ep^{\kappa/2}  \BB r$ on $\mcl E_{\BB r}$ (see this discussion just after~\eqref{eqn-stab-extra-times}).  As $t_k \geq \tau_{\ell\BB r}$ we have that $\op{diam} \mcl B_{t_k}^\bullet \geq \ell \BB r$.  By choosing $\ep > 0$ sufficiently small we can arrange so that $ \ell \BB r \geq 7 \ep^{\kappa/2} \BB r$.  That is, $R_k \leq   \op{diam}\mcl B_{t_k}^\bullet$ on $\mcl E_{\BB r}$ provided $\ep$ is chosen sufficiently small (in a manner depending only on $\kappa$ and $\ell$). 
Consequently,~\eqref{eqn-stab-iterate} together with property~\ref{item-geo-stab-occur} of $G_y$ show that on $\mcl E_{\BB r}$, it holds except on an event of probability $o_\ep^\infty(\ep)$ that there are at least $(1-\ep^\theta ) K$ values of $k \in [0,K]_{\BB Z}$ for which no $D_h$-geodesic from $\BB z$ to a point outside of $\mcl B_{\sigma_k}^\bullet$ can enter $\bigcup_{y\in\arcendpts_k} B_{16\ep^\kappa \BB r }(z_y) \setminus \mcl B_{t_k}^\bullet$. 
By~\eqref{eqn-stab-extra-radius}, this holds in particular for each $D_h$-geodesic from $\BB z$ to $\bdy\mcl B_{s_{k+1}}^\bullet$. 

A $D_h$-geodesic started from $\BB z$ can hit $\bdy\mcl B_{t_k}^\bullet$ at most once. Therefore, the defining property ($*$) of $z_y$, applied to the path $P|_{(t_k , |P|]}$, shows that for each $k$ as in the preceding paragraph, no $D_h$-geodesic from $\BB z$ to $\bdy\mcl B_{s_{k+1}}^\bullet$ can enter $\bigcup_{y\in \arcendpts_k} \mcl C_y^{\ep^\kappa \BB r}$. 
\end{proof}

To deduce Proposition~\ref{prop-stab} from Lemma~\ref{lem-stab-endpt}, we need some quantitative control on the $D_h(\cdot,\cdot ; \BB C\setminus \ol{B_r(z)})$-geodesics appearing in the definition~\eqref{eqn-ball-stab-event} of $\stabE_{k,r}(z)$. 
The needed control is provided by the following lemma.

\begin{lem} \label{lem-geodesic-diam}
If $\mcl E_{\BB r}$ occurs, then for each $k\in [0,K]_{\BB Z}$, each $(z,r) \in \mcl Z_k$, and each $D_h(\cdot,\cdot ; \BB C\setminus \ol{B_r(z)})$-geodesic $P' : [0,|P'|] \rta \BB C \setminus B_r(z)$ from $\BB z$ to a point of $\bdy B_r(z)$, we have
\eqb \label{eqn-geodesic-diam}
\op{diam} P'([t_k , |P'|]) \preceq \ep^{\chi/\chi'} \BB r ,
\eqe
with a deterministic implicit constant depending only on $a$ and $\lambda_4$, where $\op{diam}$ denotes Euclidean diameter.  
\end{lem}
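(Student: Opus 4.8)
\textbf{Proof proposal for Lemma~\ref{lem-geodesic-diam}.}

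The plan is to bound the Euclidean diameter of the ``tail'' $P'([t_k,|P'|])$ by bounding its $D_h$-length and then invoking the one-sided H\"older estimates built into $\mcl E_{\BB r}$. First I would record the key $D_h$-length bound: since $P'$ is a $D_h(\cdot,\cdot;\BB C\setminus\ol{B_r(z)})$-geodesic from $\BB z$ to a point of $\bdy B_r(z)$, its total length is at most the $D_h(\cdot,\cdot;\BB C\setminus\ol{B_r(z)})$-distance, which in turn is at most $D_h(\BB z,\bdy B_r(z))$ \emph{along some reference path}. A cleaner route: because $(z,r)\in\mcl Z_k$ we have $\op{dist}(z,\bdy\mcl B_{t_k}^\bullet)\leq 2\lambda_4\ep\BB r$ (from~\eqref{eqn-good-annulus-set0}), so the ball $B_r(z)$ lies within Euclidean distance $O(\lambda_4\ep\BB r)$ of $\mcl B_{t_k}^\bullet$; hence there is a point $y\in\bdy\mcl B_{t_k}^\bullet$ with $|y-x|\preceq\lambda_4\ep\BB r$ for any $x\in\bdy B_r(z)$. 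Then $D_h(\BB z,x)\leq D_h(\BB z,y)+D_h(y,x)\leq t_k + D_h(y,x;B_{2|y-x|}(y))$, and condition~\ref{item-holder-cont} (H\"older continuity) in the definition of $\mcl E_{\BB r}$ gives $D_h(y,x;B_{2|y-x|}(y))\leq \frk c_{\BB r}e^{\xi h_{\BB r}(0)}(|y-x|/\BB r)^\chi\preceq \frk c_{\BB r}e^{\xi h_{\BB r}(0)}(\lambda_4\ep)^\chi$.

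Next, I would compare this to $D_h$-distances measured from $\BB z$ directly. The key observation is that $P'|_{[0,t_k]}$ already travels $D_h$-distance $t_k$, and $t_k\geq s_0 = \tau_{\ell\BB r}$. Since on $\mcl E_{\BB r}$ (via condition~\ref{item-quantum-ball-diam}) we have lower bounds relating $\tau_{\ell\BB r}$ to $\frk c_{\BB r}e^{\xi h_{\BB r}(0)}$, while $t_k\leq s_{K+1}\leq\tau_{2\ell\BB r}$ stays bounded by a constant multiple of $\frk c_{\BB r}e^{\xi h_{\BB r}(0)}$ as well, the quantity $|P'| - t_k = \op{len}(P'|_{[t_k,|P'|]};D_h)$ is at most $D_h(\BB z,x) - t_k$, which by the previous paragraph is at most $t_k + \frk c_{\BB r}e^{\xi h_{\BB r}(0)}(\lambda_4\ep)^\chi - t_k = \frk c_{\BB r}e^{\xi h_{\BB r}(0)}(\lambda_4\ep)^\chi$. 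Wait — that is not quite right since $P'$ is a geodesic for the restricted metric, not $D_h$; but $\op{len}(P'|_{[t_k,|P'|]};D_h) = |P'| - t_k \leq D_h(\cdot,\cdot;\BB C\setminus\ol{B_r(z)})$-distance minus $t_k$, and the restricted distance is at least $D_h(\BB z,x)$, so I must instead bound $|P'|$ from above by exhibiting an explicit competitor path from $\BB z$ to $\bdy B_r(z)$ staying outside $B_r(z)$: e.g., follow a $D_h$-geodesic from $\BB z$ to a nearby point of $\bdy\mcl B_{t_k}^\bullet$, then route around $\bdy B_r(z)$ using the internal-diameter bound from Lemma~\ref{lem-square-diam} (or condition~\ref{item-holder-cont}) in a Euclidean neighborhood of radius $O(\lambda_4\ep\BB r)$. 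This gives $|P'|\leq t_k + C(\lambda_4)\frk c_{\BB r}e^{\xi h_{\BB r}(0)}\ep^\chi$, hence $|P'|-t_k\preceq\frk c_{\BB r}e^{\xi h_{\BB r}(0)}\ep^\chi$.

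Finally I would convert the $D_h$-length bound on the tail into a Euclidean-diameter bound. For any two parameters $t_k\leq u<v\leq|P'|$, the points $P'(u),P'(v)$ satisfy $D_h(P'(u),P'(v))\leq v-u\leq |P'|-t_k\preceq\frk c_{\BB r}e^{\xi h_{\BB r}(0)}\ep^\chi$. If $|P'(u)-P'(v)|$ were to exceed $2\ep^{\chi/\chi'}\BB r$ — which we may assume is $\leq a\BB r$ for $\ep$ small — then the lower H\"older bound in condition~\ref{item-holder-cont} would force $D_h(P'(u),P'(v))\geq\frk c_{\BB r}e^{\xi h_{\BB r}(0)}(|P'(u)-P'(v)|/\BB r)^{\chi'} > \frk c_{\BB r}e^{\xi h_{\BB r}(0)}(2\ep^{\chi/\chi'})^{\chi'}\succeq\frk c_{\BB r}e^{\xi h_{\BB r}(0)}\ep^\chi$, a contradiction for a suitable implicit constant. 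Taking the supremum over $u,v$ yields~\eqref{eqn-geodesic-diam} with implicit constant depending only on $a$ and $\lambda_4$. I expect the main obstacle to be bookkeeping the competitor path carefully: one must make sure the competitor stays in $\BB C\setminus B_r(z)$ (so that $|P'|$ is genuinely bounded by its length) and that all circle-average normalizations $h_{\BB r}(0)$ versus $h_{\BB r}(z)$ are reconciled using condition~\ref{item-circle-avg}, but these are routine given the regularity event.
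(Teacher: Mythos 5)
Your overall skeleton (get a $D_h$-length bound on part of the tail, then convert it to a Euclidean diameter bound via the lower H\"older estimate in condition~\ref{item-holder-cont}, reconciling $h_{\BB r}(0)$ and $h_{\BB r}(\BB z)$ with condition~\ref{item-circle-avg}) is the same as the paper's, and your final paragraph is correct. The gap is in the middle step, where you propose to bound the \emph{total} length $|P'|$ by $t_k+O(\ep^\chi)\frk c_{\BB r}e^{\xi h_{\BB r}(\BB z)}$ via a competitor path ``from $\BB z$ to $\bdy B_r(z)$ staying outside $B_r(z)$''. Recall that $P'$ is, by the definition in Section~\ref{sec-iterate-setup}, a geodesic from $\BB z$ to a \emph{specific} point $x\in\bdy B_r(z)$ (e.g.\ the first point of $\bdy B_r(z)$ hit by the actual geodesic $P^{\BB z,\BB w}$), so a competitor ending at a different point of the circle gives no bound on $|P'|$ whatsoever. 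If instead you insist the competitor end at $x$, you must produce a path reaching $x$ while avoiding $\ol{B_r(z)}$ of controlled $D_h$-length, and nothing in $\mcl E_{\BB r}$ gives this: the upper H\"older bound and Lemma~\ref{lem-square-diam} only furnish short paths inside Euclidean balls $B_{2|u-v|}(u)$, which for $v=x\in\bdy B_r(z)$ overlap the forbidden ball, and there is no condition controlling the internal metric of $\BB C\setminus\ol{B_r(z)}$ near its boundary. So the inequality $|P'|-t_k\preceq \ep^\chi\frk c_{\BB r}e^{\xi h_{\BB r}(\BB z)}$ that your third paragraph relies on is not justified (and the proof should not need it).

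The paper's proof sidesteps exactly this point. It never bounds $|P'|$: instead it lets $t'$ be the \emph{last} time $P'$ hits $\bdy B_{2\lambda_4\ep\BB r}(z)$ and bounds only $t'$. Since $P'|_{[0,t']}$ avoids $\bdy B_r(z)$, any path from $\BB z$ to $P'(t')$ avoiding $\ol{B_r(z)}$ is a legitimate competitor for this initial segment, so $t'$ is at most the distance from $\BB z$ to $\bdy B_{2\lambda_4\ep\BB r}(z)$ within $\mcl B_{s_{k+1}}^\bullet\setminus\ol{B_{\ep\BB r}(z)}$, which is bounded by $t_k+c\ep^\chi\frk c_{\BB r}e^{\xi h_{\BB r}(\BB z)}$ by~\eqref{eqn-holder-balls} of Lemma~\ref{lem-holder-balls0} — and the covering construction proving that lemma is precisely your ``route around'' idea, but carried out on the circle $\bdy B_{2\lambda_4\ep\BB r}(z)$, where the auxiliary balls $B_{\ep\BB r}(w)$ are disjoint from $B_{\ep\BB r}(z)$ (this is why the target circle is kept at distance $\geq\lambda_4\ep\BB r$ from $z$, rather than being $\bdy B_r(z)$ itself). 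The segment $P'([t_k,t'])$ then has $D_h$-length $\leq c\ep^\chi\frk c_{\BB r}e^{\xi h_{\BB r}(\BB z)}$ and hence Euclidean diameter $\preceq\ep^{\chi/\chi'}\BB r$ by your H\"older argument, while the remaining piece $P'([t',|P'|])$ is handled with no length estimate at all: since $t'$ is the last hitting time of $\bdy B_{2\lambda_4\ep\BB r}(z)$ and the endpoint lies in $B_r(z)\subset B_{\ep\BB r}(z)$, this piece is contained in $B_{2\lambda_4\ep\BB r}(z)$, whose Euclidean diameter $4\lambda_4\ep\BB r$ is already $\preceq\ep^{\chi/\chi'}\BB r$. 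If you replace your bound on $|P'|$ by this two-piece decomposition, your argument becomes correct.
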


Lemma~\ref{lem-geodesic-diam} is a straightforward consequence of the definition of $\mcl E_{\BB r}$. We postpone the proof until Section~\ref{sec-geometric-lemma}.

\begin{proof}[Proof of Proposition~\ref{prop-stab}] 
Let $\chi , \chi'$ be the H\"older exponents from condition~\ref{item-holder-cont} in the definition of $\mcl E_{\BB r} $ and let $\beta,\theta \in (0,1)$ be chosen so that the conclusion of Lemma~\ref{lem-stab-endpt} holds with $\kappa = \frac12 (\chi/\chi')^2$. 
By Lemma~\ref{lem-stab-endpt}, we only need to prove that if $\mcl E_{\BB r}$ occurs and $\ep \in (0,1)$ is chosen to be sufficiently small (in a deterministic manner which does not depend on $k$ or $\BB r$), then the following is true.
If $k\in [0,K]_{\BB Z}$ is such that no $D_h$-geodesic from $\BB z$ to $\bdy\mcl B_{s_{k+1}}^\bullet$ can enter $\bigcup_{y\in \arcendpts_k} \mcl C_y^{\ep^\kappa \BB r}$, then $\mcl Z_k^E \not=\emptyset$.  

Henceforth assume that $\mcl E_{\BB r}$ occurs and $k$ is as above.  Recall the definition~\eqref{eqn-good-annulus-set0} of $\mcl Z_k$.  By condition~\ref{item-good-radii-E} (existence of good annuli) in the definition of $\mcl E_{\BB r} $, each point of $\bdy B_{ 2\lambda_4 \ep \BB r}(\mcl B_{t_k}^\bullet) $ is contained in a Euclidean ball $B_{\lambda_2 r}(z)$ for some $(z,r) \in \mcl Z_k $ for which $E_r(z)$ occurs. 
By the definition~\eqref{eqn-good-annulus-set0}, each of these Euclidean balls has radius $r \leq \ep\BB r$, so is contained in $B_{4\lambda_4 \ep \BB r}(\mcl B_{t_k}^\bullet)$. 

Since $t_k \leq s_{k+1} \leq \tau_{3\ell\BB r}$ (by~\eqref{eqn-geo-iterate-upper}) and $|\BB z - \BB w| \geq 4\ell\BB r$, if $\ep$ is sufficiently small then the union of these Euclidean balls disconnects $\bdy\mcl B_{t_k}^\bullet$ from $\BB w$.  Therefore, $P$ must enter $B_{\lambda_2 r}(z)$ for some $(z,r) \in \mcl Z_k$ such that $E_r(z)$ occurs. 

We will now conclude the proof by showing that, in the notation~\eqref{eqn-ball-stab-event},
\eqb \label{eqn-ball-stab-occurs}
\text{$\stabE_{r,k}(z)$ occurs for every $(z,r) \in \mcl Z_k$ with $P\cap B_r(z) \not=\emptyset$.}
\eqe  
Recall that we are assuming that $k$ is such that no $D_h$-geodesic from $\BB z$ to $\bdy\mcl B_{s_{k+1}}^\bullet$ can enter $\bigcup_{y\in \arcendpts_k} \mcl C_y^{\ep^\kappa \BB r}$.
Since $s_{k+1} \leq \tau_{3\ell\BB r} \leq \tau_{|\BB z-\BB w|}$ we must have $|P| \geq s_{k+1}$, so $P$ passes through $\bdy\mcl B_{s_{k+1}}^\bullet$. 
Hence $P|_{[0,s_{k+1}]}$ cannot enter $\bigcup_{y\in \arcendpts_k} \mcl C_y^{\ep^\kappa \BB r}$.
Since $P$ does not re-enter $\mcl B_{s_{k+1}}^\bullet$ after time $s_{k+1}$ and $\bigcup_{y\in \arcendpts_k} \mcl C_y^{\ep^\kappa \BB r}\subset\mcl B_{s_{k+1}}^\bullet$, also $P$ cannot enter $\bigcup_{y\in \arcendpts_k} \mcl C_y^{\ep^\kappa \BB r}$.
From this and Lemma~\ref{lem-geo-disconnect} (applied in the contrapositive direction with $(\ep^\kappa /2)^{ \chi'/\chi}$ in place of $\ep$ and $I_k$ in place of $I$), we infer that if $I_k \in \confarcs_k$ is chosen so that $P(t_k) \in I_k$, then 
\eqb \label{eqn-stab-external-dist}
d^{\BB C\setminus \mcl B_{t_k}^\bullet}\left(P , \bdy\mcl B_{t_k}^\bullet\setminus I_k \right) \geq (\ep^\kappa /2)^{\chi'/\chi} \BB r .
\eqe

Now let $(z,r) \in \mcl Z_k $ with $P\cap B_r(z) \not=\emptyset$ and let $P' : [0,|P'|] \rta \BB C\setminus  B_r(z)$ be a $D_h(\cdot,\cdot ; \BB C\setminus \ol{B_r(z)} )$-geodesic from $\BB z$ to a point of $\bdy B_r(z)$. 
We will show that $P'(t_k) \in I_k$ for any possible choice of $P'$, which by definition implies that $\stabE_{r,k}(z)$ occurs. 
By Lemma~\ref{lem-geodesic-diam}, 
\eqb \label{eqn-stab-external-diam}
\op{diam}\left( P'([t_k , |P'|] )\right) \preceq \ep^{\chi/\chi'} \BB r  
\eqe
with a deterministic implicit constant depending only on $a$ and $\lambda_4$. 

Since $B_r(z) \subset \BB C\setminus \mcl B_{t_k}^\bullet$, the definition~\eqref{eqn-d^U-def} of $d^{\BB C\setminus \mcl B_{t_k}^\bullet}$ implies that the $d^{\BB C\setminus \mcl B_{t_k}^\bullet}$-diameter of $B_r(z)$ is the same as its Euclidean diameter, which is $2\ep\BB r$. 
Since $P\cap B_r(z) \not=\emptyset$ and $P'(|P'|) \in \bdy B_r(z)$, it follows from~\eqref{eqn-stab-external-diam} and the triangle inequality that for small enough $\ep\in (0,1)$, 
\eqb \label{eqn-stab-external-tri}
d^{\BB C\setminus\mcl B_{t_k}^\bullet}(P , P'(t_k) ) \preceq  \ep  \BB r + \ep^{\chi/\chi'} \BB r \preceq \ep^{\chi /\chi'} \BB r .   
\eqe  
Since $\kappa < (\chi/\chi')^2$, we infer that the left side of~\eqref{eqn-stab-external-tri} is strictly smaller than $(\ep^\kappa /2)^{\chi'/\chi} \BB r$ for small enough $\ep \in (0,1)$ (depending only on $a$ and $\lambda_4$).
By combining~\eqref{eqn-stab-external-dist} and~\eqref{eqn-stab-external-tri} we infer that $P'(t_k) \notin \bdy\mcl B_{t_k}^\bullet \setminus I_k$. 
Hence $P'(t_k) \in  I_k$. 
Since this holds for every choice of $P'$, we get that $\stabE_{r,k}(z)$ occurs, as required.
\end{proof}

\subsection{Transferring from $E_r(z)$ to $\frk E_r(z)$}
\label{sec-uncond-to-cond}

We now want to combine Lemma~\ref{lem-annulus-choose} and Proposition~\ref{prop-stab} to say that with high probability, there are many values of $k\in [0,K]_{\BB Z}$ for which there exists $(z,r)\in \mcl Z_k$ for which $\frk E_r(z)$ occurs. In particular, we will establish the following statement.

\begin{prop} \label{prop-nomax-quant} 
Let $\beta,\theta \in (0,1)$ be as in Proposition~\ref{prop-stab} and suppose we have chosen $\nu$ sufficiently small that $4\nu < \beta \wedge \theta$. 
Also let $\zeta\in (0,1)$ be a small ``error" parameter. If $\mcl E_{\BB r}$ occurs, then except on an event of probability $o_\ep^\infty(\ep)$, at a rate which is uniform in the choice of $\BB r,\BB z,\BB w$, there are at least $\ep^{2\nu+\zeta} K$ values of $k\in [0,K]_{\BB Z}$ for which $\mcl Z_k^{\frk E}\not=\emptyset$. 
\end{prop}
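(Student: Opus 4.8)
The strategy is to combine Proposition~\ref{prop-stab} (which gives many values of $k$ with $\mcl Z_k^E\not=\emptyset$) with Lemma~\ref{lem-annulus-choose} (which converts a lower bound for $\BB P[\mcl Z_k^E\not=\emptyset\,|\,\mcl F_k]$ into a lower bound for $\BB P[\mcl Z_k^{\frk E}\not=\emptyset\,|\,\mcl F_k]$), and then apply a concentration inequality for sums of conditionally-Bernoulli random variables along the filtration $\{\mcl F_k\}$. First I would set $M$ to be a large enough constant so that, on $\mcl E_{\BB r}$, we have $\mcl B_{t_k}^\bullet\subset B_{\ep^{-M}}(\BB z)$ and $\BB w\notin B_{3\lambda_4\ep\BB r}(\mcl B_{t_k}^\bullet)$ for every $k\in[0,K]_{\BB Z}$; this uses condition~\ref{item-open-set-contain} and~\ref{item-holder-cont} in the definition of $\mcl E_{\BB r}$ together with $s_{K+1}\le\tau_{2\ell\BB r}$ from~\eqref{eqn-geo-iterate-upper} and the fact that $\BB z,\BB w$ are separated by at least $4\ell\BB r$ while the balls $\mcl B_{t_k}^\bullet$ stay within $B_{2\ell\BB r}(\BB z)$. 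With this choice, Lemma~\ref{lem-annulus-choose} applies for every $k\in[0,K]_{\BB Z}$ on $\mcl E_{\BB r}$, so that outside an event of probability $o_\ep^\infty(\ep)$,
\eqbn
\BB P\left[\mcl Z_k^{\frk E}\not=\emptyset\,|\,\mcl F_k\right]\geq \ep^{2\nu+o_\ep(1)}\BB P\left[\mcl Z_k^E\not=\emptyset\,|\,\mcl F_k\right]-o_\ep^\infty(\ep),\quad\forall k\in[0,K]_{\BB Z}.
\eqen

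Next I would turn Proposition~\ref{prop-stab}, which is an \emph{unconditional} statement that $\mcl Z_k^E\not=\emptyset$ for at least $(1-\ep^\theta)K$ values of $k$, into a statement about the conditional probabilities $\BB P[\mcl Z_k^E\not=\emptyset\,|\,\mcl F_k]$. This is where an elementary probabilistic lemma (of the flavor of Lemma~\ref{lem-uncond-to-cond}, referenced in the section outline) is needed: if a sequence of events $A_k$ is such that $\sum_k\BB 1_{A_k}$ is large with high probability, and each $A_k$ is adapted in the sense that $A_k\in\sigma(\mcl F_{k+1})$ while $\mcl F_k$ is the conditioning $\sigma$-algebra, then $\sum_k\BB P[A_k\,|\,\mcl F_k]$ is also large with high probability (up to polynomially small corrections). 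Applying this with $A_k=\{\mcl Z_k^E\not=\emptyset\}$ — noting that $\mcl Z_k^E\in\sigma(\mcl B_{t_k}^\bullet,h|_{\mcl B_{t_k}^\bullet},P|_{[0,t_k]})$ and $t_k\le s_{k+1}$ so $A_k\in\mcl F_{k+1}$ — gives that on $\mcl E_{\BB r}$, except on an event of probability $o_\ep^\infty(\ep)$, there are at least $(1-2\ep^\theta)K$ (say) values of $k$ with $\BB P[\mcl Z_k^E\not=\emptyset\,|\,\mcl F_k]\geq\ep^\theta$, hence (using $4\nu<\theta$, so that $\ep^{2\nu+o_\ep(1)}\cdot\ep^\theta$ dominates $o_\ep^\infty(\ep)$) at least $(1-2\ep^\theta)K$ values of $k$ with $\BB P[\mcl Z_k^{\frk E}\not=\emptyset\,|\,\mcl F_k]\geq\ep^{2\nu+\theta/2}$, say, for small enough $\ep$.

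Finally, I would run a concentration argument along $\{\mcl F_k\}_{k=0}^{K}$. Let $N$ be the number of $k\in[0,K]_{\BB Z}$ for which $\mcl Z_k^{\frk E}\not=\emptyset$. Conditionally on the ``good'' event just described (probability $1-o_\ep^\infty(\ep)$ on $\mcl E_{\BB r}$), at least $(1-2\ep^\theta)K$ of the increments $\BB 1_{\{\mcl Z_k^{\frk E}\not=\emptyset\}}$ have conditional success probability at least $\ep^{2\nu+\theta/2}$ given the past, so $N$ stochastically dominates a sum of $(1-2\ep^\theta)K$ independent Bernoulli's with this parameter. Since $K=\lfloor a\ep^{-\beta}\rfloor-1$ and $2\nu+\theta/2<\beta$ (because $4\nu<\beta$ and $\theta<\beta/2$ from Proposition~\ref{prop-stab}, hence $\theta/2<\beta/4$ and $2\nu<\beta/2$), the expected number of successes $\asymp\ep^{2\nu+\theta/2}K\asymp\ep^{2\nu+\theta/2-\beta}$ tends to infinity as a negative power of $\ep$; a standard Chernoff/Hoeffding bound then shows $\BB P[N<\ep^{2\nu+\zeta}K]=o_\ep^\infty(\ep)$ provided $\zeta>\theta/2$ and $\zeta$ is small enough that $2\nu+\zeta<\beta$ (both compatible with $4\nu<\beta\wedge\theta$ after shrinking $\zeta$; one absorbs the $o_\ep(1)$ in the exponent from Lemma~\ref{lem-annulus-choose} into $\zeta$ by taking $\ep$ small). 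Summing over $\BB r$-uniform error terms throughout, this gives the claimed bound. The main obstacle is the passage from the unconditional count in Proposition~\ref{prop-stab} to a lower bound on $\sum_k\BB P[\mcl Z_k^E\not=\emptyset\,|\,\mcl F_k]$: one must be careful that the measurability hypothesis ($A_k$ adapted to $\mcl F_{k+1}$) holds and that the elementary lemma is applied with the correct filtration, since $\mcl F_k$ conditions on all of $P|_{[0,s_k]}$ — but Lemma~\ref{lem-geo-sigma-algebra} shows this is a mild conditioning, which is exactly what makes the argument go through.
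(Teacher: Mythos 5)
Your overall route is the same as the paper's: Proposition~\ref{prop-stab} plus the unconditional-to-conditional Lemma~\ref{lem-uncond-to-cond} to lower bound $\BB P[\mcl Z_k^E\not=\emptyset\,|\,\mcl F_k]$ for most $k$, then Lemma~\ref{lem-annulus-choose} to get $\BB P[\mcl Z_k^{\frk E}\not=\emptyset\,|\,\mcl F_k]\geq \ep^{2\nu+o_\ep(1)}$, then concentration along the filtration. But there is a genuine gap in the measurability step on which both of your filtration arguments rest. You assert that $\{\mcl Z_k^E\not=\emptyset\}\in\sigma(\mcl B_{t_k}^\bullet,h|_{\mcl B_{t_k}^\bullet},P|_{[0,t_k]})\subset\mcl F_{k+1}$. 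This is false: for $(z,r)\in\mcl Z_k$ the point $z$ lies \emph{outside} $\mcl B_{t_k}^\bullet$ at Euclidean distance of order $\ep\BB r$ from $\bdy\mcl B_{t_k}^\bullet$, so the event $E_r(z)$, which is determined by the field on $\BB A_{\lambda_1 r,\lambda_4 r}(z)$, is not determined by $h|_{\mcl B_{t_k}^\bullet}$; the stability event~\eqref{eqn-ball-stab-event} involves $D_h(\cdot,\cdot;\BB C\setminus\ol{B_r(z)})$-geodesics from $\BB z$ to $\bdy B_r(z)$ which need not be contained in $\mcl B_{s_{k+1}}^\bullet$; and $\{P\cap B_{\lambda_2 r}(z)\not=\emptyset\}$ can depend on $P$ after time $s_{k+1}$ since $B_{\lambda_2 r}(z)$ need not be contained in $\mcl B_{s_{k+1}}^\bullet$. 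The same objection applies to $\{\mcl Z_k^{\frk E}\not=\emptyset\}$ in your final concentration step. The paper's fix is Lemma~\ref{lem-holder-balls}: one introduces an event $F_k\in\sigma(\mcl B_{s_{k+1}}^\bullet,h|_{\mcl B_{s_{k+1}}^\bullet})$ on which $B_{\lambda_4 r}(z)\subset\mcl B_{s_{k+1}}^\bullet$ for all $(z,r)\in\mcl Z_k$ and the auxiliary geodesics are determined by $(\mcl B_{s_{k+1}}^\bullet,h|_{\mcl B_{s_{k+1}}^\bullet})$; then $\{\mcl Z_k^E\not=\emptyset\}\cap F_k$ and $\{\mcl Z_k^{\frk E}\not=\emptyset\}\cap F_k$ are $\mcl F_{k+1}$-measurable (Lemma~\ref{lem-nomax-msrble}), and since $F_k$ occurs for every $k$ on $\mcl E_{\BB r}$, intersecting with $F_k$ costs nothing. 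Your argument needs this (or an equivalent device) in both places; without it the hypothesis of Lemma~\ref{lem-uncond-to-cond} simply fails.

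Two smaller points. First, your closing step (``conditionally on the good event, $N$ stochastically dominates a Binomial'') is loose as stated: the set of indices $k$ with a good conditional probability is random, and conditioning on a high-probability event that is not adapted to $\{\mcl F_k\}$ can distort the conditional Bernoulli bounds. The clean formalization is a second application of Lemma~\ref{lem-uncond-to-cond}, to the complements $\{\mcl Z_k^{\frk E}=\emptyset\}\cap F_k$ with $\alpha=1-\ep^{2\nu+\zeta/2}$ and $\delta=\ep^{2\nu+\zeta/2}/2$, which gives an error $\exp\left(-\tfrac12\ep^{4\nu+\zeta}K\right)=o_\ep^\infty(\ep)$ precisely because $4\nu<\beta$ and $K\asymp\ep^{-\beta}$; this is how the paper concludes. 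Second, because you threshold $\BB P[\mcl Z_k^E\not=\emptyset\,|\,\mcl F_k]$ at $\ep^\theta$ rather than at a constant (the paper gets the threshold $1/2$), your expected success count is only of order $\ep^{2\nu+\theta/2}K$, so your argument delivers the conclusion only for $\zeta>\theta/2$ rather than for all small $\zeta$; this is harmless for the downstream use (only one good $k$ is needed in the proof of Theorem~\ref{thm-geo-iterate}), but it is weaker than the statement as claimed.
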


Lemma~\ref{lem-annulus-choose} gives a comparison of the conditional probabilities given $\mcl F_k$ of $\{\mcl Z_k^E\not=\emptyset\}$ and $\{\mcl Z_k^{\frk E}\not=\emptyset\}$ (the reason why we have this comparison is that condition~\ref{item-geo-iterate-compare} in Theorem~\ref{thm-geo-iterate} has a comparison of conditional probabilities).
On the other hand, Propositions~\ref{prop-stab} and~\ref{prop-nomax-quant} give statements which hold with high unconditional probability. To transfer between conditional and unconditional probabilities we will use the following elementary lemma. 

\begin{lem} \label{lem-uncond-to-cond}
Let $K \in\BB N$ and let $E_0,\dots,E_K$ be events (not necessarily independent). Also let $\mcl F_1 \subset \mcl F_1\subset \cdots\subset \mcl F_K$ be $\sigma$-algebras such that $E_k \in \mcl F_{k+1}$ for each $k\in [0,K-1]_{\BB Z}$. 
For $\alpha \in (0,1)$, $\delta \in (0,\alpha)$, and $m\in\BB N$, 
\eqb \label{eqn-uncond-to-cond}
\BB P\left[ \sum_{k=0}^K \BB 1_{\left( \BB P[E_k \,|\, \mcl F_k] \geq \alpha \right)} \leq  K -   m ,\, 
\sum_{k=0}^K \BB 1_{E_k} \geq K - (1 - \alpha-\delta) m 
\right]
\leq    e^{-2 \delta^2 m} . 
\eqe 
\end{lem}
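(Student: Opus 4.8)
\textbf{Proof plan for Lemma~\ref{lem-uncond-to-cond}.}

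The plan is to set up a supermartingale from the ``bad'' events and apply a standard Azuma--Hoeffding argument. First I would observe that the event in~\eqref{eqn-uncond-to-cond} forces a discrepancy between how often the conditional probability $\BB P[E_k \,|\, \mcl F_k]$ is large and how often $E_k$ actually occurs: if $\BB P[E_k \,|\, \mcl F_k] < \alpha$ for at least $m$ values of $k$, then on those indices the events $E_k$ are ``individually unlikely,'' yet the second condition asks that $E_k$ fail for fewer than $(1-\alpha-\delta)m$ indices total, so in particular $E_k$ must occur on at least $m - (1-\alpha-\delta)m = (\alpha+\delta)m$ of those ``unlikely'' indices. Quantitatively: let $I := \{k \in [0,K]_{\BB Z} : \BB P[E_k \,|\, \mcl F_k] < \alpha\}$. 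On the event in question, $\#I \geq m$; restrict attention to the first $m$ elements of $I$ (reading off in increasing order), call this set $J$ — note $J$ is not deterministic, but the $k$-th element of $I$ is a stopping time for $\{\mcl F_k\}$ since $\{\BB P[E_j \,|\, \mcl F_j] < \alpha\} \in \mcl F_{j+1} \subset \mcl F_{k}$ for $j < k$. The second condition implies $\sum_{k \in J} \BB 1_{E_k} \geq (\alpha+\delta) m$.

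Next I would build the supermartingale. Enumerate the elements of $I$ as $K_1 < K_2 < \cdots$, each a stopping time, and set $X_i := \BB 1_{E_{K_i}} - \BB P[E_{K_i} \,|\, \mcl F_{K_i}]$ for $i \le m$ (on the event $\#I \geq m$; one can extend the definition arbitrarily otherwise, e.g. by stopping). Using the optional stopping / tower property with the filtration $\mcl G_i := \mcl F_{K_i}$ (here one uses $E_{K_i} \in \mcl F_{K_i + 1} \subset \mcl F_{K_{i+1}} = \mcl G_{i+1}$ and that $K_i$ is $\mcl G_{i}$-measurable), the partial sums $S_i := \sum_{j=1}^i X_j$ form a martingale with increments bounded in $[-1,1]$. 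On the event in~\eqref{eqn-uncond-to-cond} we have $\sum_{i=1}^m \BB 1_{E_{K_i}} \geq (\alpha+\delta)m$ while $\BB P[E_{K_i}\,|\,\mcl F_{K_i}] < \alpha$ for each $i \leq m$, so $S_m = \sum_{i=1}^m X_i > (\alpha+\delta)m - \alpha m = \delta m$. Hence the event is contained in $\{S_m > \delta m\}$, and Azuma--Hoeffding gives $\BB P[S_m > \delta m] \leq \exp(-2\delta^2 m / (m \cdot 1)) \cdot$, wait — with increments in $[-1,1]$ (range $2$) Azuma gives $\exp(-2(\delta m)^2 / (m \cdot 2^2)) = \exp(-\delta^2 m/2)$; to get the stated constant $e^{-2\delta^2 m}$ one instead centers so that $X_j \in [-\alpha, 1-\alpha] \subset$ an interval, but more simply one notes $\BB 1_{E_{K_i}} \in \{0,1\}$ has conditional mean $< \alpha$, so $\sum \BB 1_{E_{K_i}}$ stochastically dominates... actually the cleanest route is: $\sum_{i=1}^m \BB 1_{E_{K_i}}$ is stochastically dominated by a sum of independent Bernoulli$(\alpha)$ variables (by the standard stopping-time/conditional-domination argument), and Hoeffding's inequality for such a sum gives $\BB P[\sum \BB 1_{E_{K_i}} \geq (\alpha+\delta)m] \leq e^{-2\delta^2 m}$.

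The main obstacle — and the step requiring care — is the stopping-time bookkeeping: making precise that the successive indices $K_i$ at which $\BB P[E_k\,|\,\mcl F_k] < \alpha$ are stopping times for $\{\mcl F_k\}$, that the re-indexed events $\BB 1_{E_{K_i}}$ have conditional mean $< \alpha$ given the appropriate $\sigma$-algebra, and that they are conditionally stochastically dominated by independent Bernoulli$(\alpha)$'s so that Hoeffding applies. This is the classical ``random number of not-too-likely events'' lemma; the hypothesis $E_k \in \mcl F_{k+1}$ is exactly what is needed to slot the indicator $\BB 1_{E_{K_i}}$ into the filtration before the next stopping time $K_{i+1}$. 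Once that domination is in place, the Hoeffding bound $e^{-2\delta^2 m}$ for a sum of $m$ independent Bernoulli$(\alpha)$ variables exceeding its mean by $\delta m$ is immediate, and intersecting with the event $\{\#I \geq m\}$ only decreases the probability, so the stated inequality follows.
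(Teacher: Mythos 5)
Your argument is essentially the paper's proof: you enumerate the indices at which $\BB P[E_k \,|\, \mcl F_k] < \alpha$ as stopping times, use the hypothesis $E_k \in \mcl F_{k+1}$ to see that the corresponding indicators are conditionally dominated by independent Bernoulli($\alpha$) variables (the paper phrases this equivalently via the complements stochastically dominating a Binomial$(m+1,1-\alpha)$), and conclude with Hoeffding's inequality. The minor off-by-one slack in your counting of successes among the stopped indices is the same level of looseness present in the paper's own bookkeeping and is immaterial to how the lemma is used.
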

\begin{proof} 
For $j \in \BB N$, let $\tau_j$ be the $j$th smallest $k \in [0 , K ]_{\BB Z}$ for which $\BB P[E_k \,|\, \mcl F_k] < \alpha$, or $\tau_j = K+1$ if no such $j$ exists. 
Then $\{\tau_j = k\} \in \mcl F_k$ for each $k \in [0 ,K]_{\BB Z}$ and
\eqb \label{eqn-uncond-to-cond-event}
\left\{ \sum_{k=0}^K \BB 1_{\left( \BB P[E_k \,|\, \mcl F_{k}] \geq \alpha \right)} \leq  K - m \right\} 
= \left\{ \sum_{k=0}^K \BB 1_{\left( \BB P[E_k \,|\, \mcl F_{k}] < \alpha \right)} \geq m  + 1 \right\} 
= \{ \tau_{m+1}  \leq K \} .
\eqe 

By the definition of the $\tau_j$'s, for each $j \in \BB N$,
\eqb
\BB P\left[ E_{\tau_j}^c  \,|\, \mcl F_{\tau_j} \right]  
 \geq 1 -  \alpha .
\eqe
Since $E_{\tau_{j'}} \in \mcl F_{\tau_{j-1}}$ for each $j' \leq j-1$, it follows that $\sum_{j=1}^{m+1} \BB 1_{E_{\tau_j}^c}$ stochastically dominates a binomial distribution with $m+1$ trials and success probability $1-\alpha$. 
By Hoeffding's inequality, for $m \in \BB N$ the probability that the number of $j \in [1,m+1]_{\BB Z}$ for which $E_{\tau_j}^c $ occurs is smaller than $(1 - \alpha -  \delta) m$ is at most $e^{-2\delta^2 m}$. 
Therefore,
\eqb
\BB P\left[\tau_{m+1} \leq  K  ,  \sum_{j=0}^K \BB 1_{E_j} \geq  K - (1 - \alpha-\delta) m     \right]  \leq e^{-2\delta^2 m} .
\eqe 
Combining this with~\eqref{eqn-uncond-to-cond-event} gives~\eqref{eqn-uncond-to-cond}.
\end{proof}

We want to apply Lemma~\ref{lem-uncond-to-cond} to the events $\{\mcl Z_k^E \not=\emptyset\}$ and $\{\mcl Z_k^{\frk E} \not=\emptyset\}$. 
However, these events are not $\mcl F_{k+1}$-measurable since for $(z,r) \in\mcl Z_k$, the ball $B_r(z)$ and the $D_h(\cdot,\cdot ; \BB C\setminus \ol{B_r(z)})$-geodesics from $\BB z$ to $\bdy B_r(z)$ are not necessarily contained in $\mcl B_{s_{k+1}}^\bullet$.
To get around this, we need to instead work with a slightly modified event which is $\mcl F_{k+1}$-measurable. 
In particular, we will intersect each of $\{\mcl Z_k^E \not=\emptyset\}$ and $\{\mcl Z_k^{\frk E} \not=\emptyset\}$ with the event $F_k$ of the following lemma.

\begin{lem} \label{lem-holder-balls}
For each $k\in [0,K]_{\BB Z}$, there is an event $F_k \in \sigma\left( \mcl B_{s_{k+1}}^\bullet , h|_{\mcl B_{s_{k+1}}^\bullet} \right)$ with the following properties.  
If $\ep$ is sufficiently small (how small depends only on $a$, $\lambda_4$), then whenever $\mcl E_{\BB r}$ occurs also $ \bigcap_{k=0}^K F_k$ occurs. Moreover, if $F_k$ occurs then $s_{k+1} \leq \tau_{2 \ell\BB r}$ and for each $(z,r) \in \mcl Z_k$ we have $B_{\lambda_4 r}(z) \subset \mcl B_{s_{k+1}}^\bullet$ and the set of $D_h(\cdot,\cdot ; \BB C\setminus \ol{B_r(z)})$-geodesics from $\BB z$ to points of $\bdy B_r(z)$ is determined by $(\mcl B_{s_{k+1}}^\bullet , h|_{\mcl B_{s_{k+1}}^\bullet})$. 
\end{lem}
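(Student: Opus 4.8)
\textbf{Proof proposal for Lemma~\ref{lem-holder-balls}.}

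The plan is to define $F_k$ as the intersection of three conditions which are each $\sigma(\mcl B_{s_{k+1}}^\bullet , h|_{\mcl B_{s_{k+1}}^\bullet})$-measurable and which together are implied by $\mcl E_{\BB r}$: (i) $s_{k+1} \leq \tau_{2\ell\BB r}$; (ii) every ball $B_{\lambda_4 r}(z)$ for $(z,r) \in \mcl Z_k$ (note $\mcl Z_k \in \sigma(\mcl B_{t_k}^\bullet) \subset \sigma(\mcl B_{s_{k+1}}^\bullet)$ by~\eqref{eqn-good-annulus-set0}) is contained in $\mcl B_{s_{k+1}}^\bullet$; and (iii) for each such $(z,r)$, every $D_h(\cdot,\cdot;\BB C\setminus\ol{B_r(z)})$-geodesic from $\BB z$ to a point of $\bdy B_r(z)$ is entirely contained in $\mcl B_{s_{k+1}}^\bullet$. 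Once (iii) holds, the set of such geodesics is determined by $(\mcl B_{s_{k+1}}^\bullet , h|_{\mcl B_{s_{k+1}}^\bullet})$, because $D_h(\cdot,\cdot;\BB C\setminus\ol{B_r(z)})$-distances from $\BB z$ to points of $\bdy B_r(z)$ — and hence the geodesics realizing them, which stay inside $\mcl B_{s_{k+1}}^\bullet$ when (iii) holds — are computed using only paths that avoid $B_r(z)$, and by Axiom~\ref{item-metric-local} (locality) these are measurable w.r.t.\ $h|_{\mcl B_{s_{k+1}}^\bullet \setminus B_r(z)}$; one must also check that the event (iii) itself (that no such geodesic exits $\mcl B_{s_{k+1}}^\bullet$) is $\sigma(\mcl B_{s_{k+1}}^\bullet , h|_{\mcl B_{s_{k+1}}^\bullet})$-measurable, which follows by the usual argument that a geodesic leaving a filled metric ball can be detected from inside the ball (cf.\ the measurability arguments in Lemma~\ref{lem-attained-msrble} and Lemma~\ref{lem-ball-stab-msrble}).

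The substance of the proof is showing that $\mcl E_{\BB r}$ implies (i), (ii), (iii) for small enough $\ep$. Condition (i) is exactly~\eqref{eqn-geo-iterate-upper} (which is condition~\ref{item-quantum-ball-diam} in the definition of $\mcl E_{\BB r}$ together with $K = \lfloor a\ep^{-\beta}\rfloor - 1$). For (ii): on $\mcl E_{\BB r}$ each $B_{\lambda_4 r}(z)$ with $(z,r)\in\mcl Z_k$ has $\op{dist}(z,\bdy\mcl B_{t_k}^\bullet) \leq 2\lambda_4\ep\BB r$ by~\eqref{eqn-good-annulus-set0}, so $B_{\lambda_4 r}(z) \subset B_{4\lambda_4\ep\BB r}(\mcl B_{t_k}^\bullet)$; using the upper bound in condition~\ref{item-holder-cont} (H\"older continuity) in the definition of $\mcl E_{\BB r}$, $D_h$-distances from $\mcl B_{t_k}^\bullet$ to points of $B_{4\lambda_4\ep\BB r}(\mcl B_{t_k}^\bullet)$ are at most a constant times $(\ep)^\chi \frk c_{\BB r}e^{\xi h_{\BB r}(0)}$, hence (after replacing $h_{\BB r}(0)$ by $h_{\BB r}(\BB z)$ via condition~\ref{item-circle-avg}) at most $\ep^\beta \frk c_{\BB r}e^{\xi h_{\BB r}(\BB z)}$ for small $\ep$, since $\beta < \chi/\chi' < \chi$. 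Therefore $B_{4\lambda_4\ep\BB r}(\mcl B_{t_k}^\bullet) \subset \mcl B_{t_k + \ep^\beta \frk c_{\BB r}e^{\xi h_{\BB r}(\BB z)}}^\bullet = \mcl B_{s_{k+1}}^\bullet$ by~\eqref{eqn-geo-iterate-times}, giving (ii).

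For (iii), the key geometric input is Lemma~\ref{lem-geodesic-diam} (whose proof is deferred to Section~\ref{sec-geometric-lemma} but which we may invoke): on $\mcl E_{\BB r}$, any $D_h(\cdot,\cdot;\BB C\setminus\ol{B_r(z)})$-geodesic $P'$ from $\BB z$ to a point of $\bdy B_r(z)$ with $(z,r)\in\mcl Z_k$ satisfies $\op{diam} P'([t_k,|P'|]) \preceq \ep^{\chi/\chi'}\BB r$. Since $P'(t_k) \in \bdy\mcl B_{t_k}^\bullet$ (a geodesic from $\BB z$ parameterized by $D_h$-length is at $D_h$-distance $t_k$ from $\BB z$ at time $t_k$, hence on $\bdy\mcl B_{t_k}^\bullet$, provided $|P'| \geq t_k$ — which holds because $|P'| \geq D_h(\BB z,\bdy B_r(z)) \geq \tau_{\ell\BB r} \geq t_k$ is not quite right; rather one uses $\op{dist}(z,\bdy\mcl B_{t_k}^\bullet)\geq\lambda_4\ep\BB r$ so $B_r(z)$ is at positive $D_h$-distance from $\mcl B_{t_k}^\bullet$, forcing $|P'|>t_k$), the tail $P'([t_k,|P'|])$ lies within Euclidean distance $\preceq \ep^{\chi/\chi'}\BB r$ of $\bdy\mcl B_{t_k}^\bullet$, hence within $B_{C\ep^{\chi/\chi'}\BB r}(\mcl B_{t_k}^\bullet)$, and the initial segment $P'([0,t_k])$ lies in $\mcl B_{t_k}^\bullet$. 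Arguing as for (ii) — again using the H\"older upper bound and $\beta < \chi/\chi'$ — we get $B_{C\ep^{\chi/\chi'}\BB r}(\mcl B_{t_k}^\bullet) \subset \mcl B_{s_{k+1}}^\bullet$ for small $\ep$, so all of $P'$ lies in $\mcl B_{s_{k+1}}^\bullet$, giving (iii). The main obstacle is the bookkeeping around measurability of the geodesic set — precisely, confirming that ``no $D_h(\cdot,\cdot;\BB C\setminus\ol{B_r(z)})$-geodesic exits $\mcl B_{s_{k+1}}^\bullet$'' is $\sigma(\mcl B_{s_{k+1}}^\bullet,h|_{\mcl B_{s_{k+1}}^\bullet})$-measurable and that on that event the geodesics themselves are too; this is the same circle of ideas as in Lemmas~\ref{lem-attained-msrble} and~\ref{lem-ball-stab-msrble} but needs to be spelled out carefully here.
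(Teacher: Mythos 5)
Your geometric estimates (steps (i), (ii), and the containment argument via Lemma~\ref{lem-geodesic-diam}) are in the same spirit as the paper's, but your definition of $F_k$ has a genuine gap at exactly the point you flag as "bookkeeping". You define $F_k$ as the conjunction of the desired conclusions, including (iii): every $D_h(\cdot,\cdot;\BB C\setminus\ol{B_r(z)})$-geodesic from $\BB z$ to $\bdy B_r(z)$ stays in $\mcl B_{s_{k+1}}^\bullet$. This event is \emph{not} in general determined by $(\mcl B_{s_{k+1}}^\bullet , h|_{\mcl B_{s_{k+1}}^\bullet})$: the metric $D_h(\cdot,\cdot;\BB C\setminus\ol{B_r(z)})$ allows competitor paths that leave $\mcl B_{s_{k+1}}^\bullet$, and whether such a path beats every path inside depends on $h$ outside the filled ball whenever the internal distance in $\mcl B_{s_{k+1}}^\bullet\setminus\ol{B_r(z)}$ from $\BB z$ to the relevant boundary points is not already known (from inside data) to be $< s_{k+1}$. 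The "detectable from inside" heuristic only works in one direction: an inside path of $D_h$-length $< s_{k+1}$ certifies that no geodesic exits (any exiting path has length $> s_{k+1}$), but the complementary situation cannot be certified from inside, so the event you propose is not measurable with respect to the required $\sigma$-algebra, and the cited Lemmas~\ref{lem-attained-msrble} and~\ref{lem-ball-stab-msrble} do not rescue this — those events are either measured with respect to the $\sigma$-algebra that actually determines the constrained metric ($h|_{\BB C\setminus B_r(z)}$ in Lemma~\ref{lem-ball-stab-msrble}), or include built-in quantitative conditions (condition 2 of $\mathsf E_r(z)$) precisely so that measurability holds. The paper resolves this by defining $F_k$ not as the conclusion but as a quantitative certificate stated purely in terms of inside-computable quantities: the bound $\sup_{u\in\bdy B_{2\lambda_4\ep\BB r}(z)} D_h(\BB z, u;\mcl B_{s_{k+1}}^\bullet\setminus\ol{B_{\ep\BB r}(z)}) \leq t_k + c\ep^\chi\frk c_{\BB r}e^{\xi h_{\BB r}(\BB z)}$ for all $z$ in the relevant annular region (Lemma~\ref{lem-holder-balls0}). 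This is measurable by Axiom~\ref{item-metric-local} since it involves only internal distances in $\mcl B_{s_{k+1}}^\bullet$, and since the right-hand side is $< s_{k+1}$ it is strong enough to force every constrained geodesic to stay inside and to coincide with an internal geodesic, which is where the measurability of the geodesic set then comes from. Your proof would need to be restructured along these lines; it is a change to the definition of $F_k$, not merely a verification to be spelled out.

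A secondary, smaller issue: in your step (iii) you claim $B_{C\ep^{\chi/\chi'}\BB r}(\mcl B_{t_k}^\bullet)\subset\mcl B_{s_{k+1}}^\bullet$ "using the H\"older upper bound and $\beta<\chi/\chi'$". The H\"older bound converts a Euclidean displacement of order $\ep^{\chi/\chi'}\BB r$ into a $D_h$-displacement of order $\ep^{\chi^2/\chi'}\frk c_{\BB r}e^{\xi h_{\BB r}(0)}$, so to fit inside $s_{k+1}-t_k\asymp\ep^\beta\frk c_{\BB r}e^{\xi h_{\BB r}(\BB z)}$ you need $\beta<\chi^2/\chi'$, which is strictly stronger than $\beta<\chi/\chi'$. (This happens to hold for the value of $\beta$ ultimately fixed in Proposition~\ref{prop-stab}, but it must be checked rather than deduced from \eqref{eqn-beta-compare} alone.) The paper's argument sidesteps this by bounding the relevant internal distances via a covering of $\bdy B_{2\lambda_4\ep\BB r}(z)$ by balls of radius $\ep\BB r/2$, which only requires $\beta<\chi$.
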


Lemma~\ref{lem-holder-balls} is a relatively straightforward consequence of the definition of $\mcl E_{\BB r}$. The proof is postponed until Section~\ref{sec-geometric-lemma}.
The event $F_k$ is defined explicitly in Lemma~\ref{lem-holder-balls0} below, but only the properties of the event given in Lemma~\ref{lem-holder-balls} are important for our purposes. 

\begin{lem} \label{lem-nomax-msrble}
Let $F_k$ for $k\in [0,K]_{\BB Z}$ be the event of Lemma~\ref{lem-holder-balls} and let $\mcl F_k$ be the $\sigma$-algebra from~\eqref{eqn-nomax-filtration}. 
Then for $k\in\BB N$,
\eqb \label{eqn-nomax-msrble}
 \left\{\mcl Z_k^E \not=\emptyset \right\} \cap F_k \in \mcl F_{k+1} 
 \quad \text{and} \quad
  \left\{\mcl Z_k^{\frk E} \not=\emptyset \right\} \cap F_k \in \mcl F_{k+1} .
\eqe
\end{lem}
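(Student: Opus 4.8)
\textbf{Proof proposal for Lemma~\ref{lem-nomax-msrble}.}

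The plan is to verify that each of the two events in~\eqref{eqn-nomax-msrble} is determined by the data $(\mcl B_{s_{k+1}}^\bullet , h|_{\mcl B_{s_{k+1}}^\bullet} , P|_{[0,s_{k+1}]})$, which by definition~\eqref{eqn-nomax-filtration} of $\mcl F_{k+1}$ (with $s_{k+1} \leq t_{k+1}$ and the fact that $(\mcl B_{t_{k+1}}^\bullet , h|_{\mcl B_{t_{k+1}}^\bullet})$ determines $(\mcl B_{s_{k+1}}^\bullet , h|_{\mcl B_{s_{k+1}}^\bullet})$) suffices for $\mcl F_{k+1}$-measurability. Since both events are intersected with $F_k \in \sigma(\mcl B_{s_{k+1}}^\bullet , h|_{\mcl B_{s_{k+1}}^\bullet})$ by Lemma~\ref{lem-holder-balls}, we may argue on the event $F_k$ throughout, using the two structural consequences of $F_k$: that $B_{\lambda_4 r}(z) \subset \mcl B_{s_{k+1}}^\bullet$ for each $(z,r)\in\mcl Z_k$, and that the set of $D_h(\cdot,\cdot ; \BB C\setminus \ol{B_r(z)})$-geodesics from $\BB z$ to points of $\bdy B_r(z)$ is determined by $(\mcl B_{s_{k+1}}^\bullet , h|_{\mcl B_{s_{k+1}}^\bullet})$.

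First I would unpack the definition~\eqref{eqn-good-annulus-set} of $\mcl Z_k^E$: on $F_k$, the set $\mcl Z_k$ is determined by $\mcl B_{t_k}^\bullet \subset \mcl B_{s_{k+1}}^\bullet$ (it lies in $\sigma(\mcl B_{t_k}^\bullet)$ by the remark after~\eqref{eqn-good-annulus-set0}); for each $(z,r)\in\mcl Z_k$ the event $E_r(z)$ is determined by $(h-h_{\lambda_5 r}(z))|_{\BB A_{\lambda_1 r , \lambda_4 r}(z)}$ (condition~\ref{item-geo-iterate-msrble} of Theorem~\ref{thm-geo-iterate}), which since $B_{\lambda_4 r}(z)\subset\mcl B_{s_{k+1}}^\bullet$ is a function of $h|_{\mcl B_{s_{k+1}}^\bullet}$ — here one uses that adding a constant to the field does not affect $E_r(z)$ since it is expressed in terms of $h - h_{\lambda_5 r}(z)$, but $h_{\lambda_5 r}(z)$ itself is determined by $h|_{B_{\lambda_5 r}(z)}$, so one should arrange that $B_{\lambda_5 r}(z) \subset \mcl B_{s_{k+1}}^\bullet$ as well; this is automatic from $F_k$ if $F_k$ is strengthened to include $B_{\lambda_5 r}(z)\subset \mcl B_{s_{k+1}}^\bullet$, or alternatively one notes that $E_r(z)\in\sigma(h)$ is already globally defined and by Lemma~\ref{lem-ball-stab-msrble}-type reasoning is determined by $h|_{\BB C\setminus B_{\lambda_1 r}(z)}$ combined with $h|_{B_{\lambda_4 r}(z)}$, hence by $h|_{\mcl B_{s_{k+1}}^\bullet}$ when the relevant balls are contained there. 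Next, $\stabE_{k,r}(z)$ is determined by $h|_{\BB C\setminus B_r(z)}$ by Lemma~\ref{lem-ball-stab-msrble}, and more precisely: on $F_k$ the set $\mcl Z_k$, the ball $\mcl B_{t_k}^\bullet$, the arcs $\confarcs_k$, and the $D_h(\cdot,\cdot;\BB C\setminus\ol{B_r(z)})$-geodesics from $\BB z$ to $\bdy B_r(z)$ are all determined by $(\mcl B_{s_{k+1}}^\bullet , h|_{\mcl B_{s_{k+1}}^\bullet})$ (the geodesics by the second property of $F_k$, the arcs since $\mcl B_{t_k}^\bullet\subset\mcl B_{s_{k+1}}^\bullet$ together with locality Axiom~\ref{item-metric-local}), so the condition~\eqref{eqn-ball-stab-event} defining $\stabE_{k,r}(z)$ — that all these geodesics hit $\bdy\mcl B_{t_k}^\bullet$ in the same arc of $\confarcs_k$ — is determined by $(\mcl B_{s_{k+1}}^\bullet , h|_{\mcl B_{s_{k+1}}^\bullet})$. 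Finally the event $\{P\cap B_{\lambda_2 r}(z)\neq\emptyset\}$: since $B_{\lambda_2 r}(z)\subset B_{\lambda_4 r}(z)\subset\mcl B_{s_{k+1}}^\bullet$ and $P$ is a geodesic from $\BB z$, whether $P$ enters $B_{\lambda_2 r}(z)$ is determined by $P|_{[0,s_{k+1}]}$ (if $P$ reaches $\bdy\mcl B_{s_{k+1}}^\bullet$, which it does as $s_{k+1}\leq\tau_{2\ell\BB r}\leq\tau_{|\BB z-\BB w|}$ on $F_k$, then any visit of $P$ to a subset of $\mcl B_{s_{k+1}}^\bullet$ occurs before time $s_{k+1}$). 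Combining, $\{(z,r)\in\mcl Z_k^E\}\cap F_k$ is determined by $(\mcl B_{s_{k+1}}^\bullet , h|_{\mcl B_{s_{k+1}}^\bullet} , P|_{[0,s_{k+1}]})$ for each $(z,r)$, and taking the (finite, since $\mcl Z_k$ is finite) union over $(z,r)\in\mcl Z_k$ gives $\{\mcl Z_k^E\neq\emptyset\}\cap F_k\in\mcl F_{k+1}$.

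The argument for $\{\mcl Z_k^{\frk E}\neq\emptyset\}\cap F_k$ is identical except that $E_r(z)$ is replaced by $\frk E_r(z) = \frk E_r^{\BB z,\BB w}(z)$, which by condition~\ref{item-geo-iterate-msrble} of Theorem~\ref{thm-geo-iterate} is determined by $h|_{B_{\lambda_4 r}(z)}$ together with the $D_h$-geodesic from $\BB z$ to $\BB w$ stopped at the last time it exits $B_{\lambda_4 r}(z)$; on $F_k$ we have $B_{\lambda_4 r}(z)\subset\mcl B_{s_{k+1}}^\bullet$, and as above the geodesic $P$ restricted to times it spends inside $B_{\lambda_4 r}(z)$ is determined by $P|_{[0,s_{k+1}]}$, so $\frk E_r(z)$ is $\mcl F_{k+1}$-measurable on $F_k$. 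I expect the only delicate point — and thus the main obstacle — to be the careful bookkeeping showing that $E_r(z)$ (resp.\ $\frk E_r(z)$), which is a priori defined as an element of $\sigma(h)$ on the whole plane, is genuinely determined by the restriction $h|_{\mcl B_{s_{k+1}}^\bullet}$ and not just by $h$ near $z$: this requires that all the balls appearing in the measurability statements of Theorem~\ref{thm-geo-iterate}\ref{item-geo-iterate-msrble} (namely $B_{\lambda_4 r}(z)$, and for $E_r(z)$ also the circle-average radius $\lambda_5 r$) be contained in $\mcl B_{s_{k+1}}^\bullet$, which should follow from (a mild strengthening of) the containment $B_{\lambda_4 r}(z)\subset\mcl B_{s_{k+1}}^\bullet$ asserted in Lemma~\ref{lem-holder-balls}, using $\lambda_5$ being a fixed constant and the definition~\eqref{eqn-good-annulus-set0} of $\mcl Z_k$ placing $B_r(z)$ at Euclidean distance of order $\ep\BB r$ from $\mcl B_{t_k}^\bullet$ while $s_{k+1}-t_k \asymp \ep^\beta\BB r$ gives ample room once $\ep$ is small.
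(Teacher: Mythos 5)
Your proposal is correct and follows essentially the same route as the paper's proof: it reduces both events to measurability with respect to $(\mcl B_{s_{k+1}}^\bullet, h|_{\mcl B_{s_{k+1}}^\bullet}, P|_{[0,s_{k+1}]})$, using the two properties of $F_k$ from Lemma~\ref{lem-holder-balls}, the measurability hypotheses in condition~\ref{item-geo-iterate-msrble} of Theorem~\ref{thm-geo-iterate}, the fact that $P$ does not re-enter $\mcl B_{s_{k+1}}^\bullet$ after time $s_{k+1}$, and a union over the countable set of candidate pairs $(z,r)$. Your aside about the circle-average normalization $h_{\lambda_5 r}(z)$ in the definition of $E_r(z)$ is a fine point the paper glosses over (it simply asserts $E_r(z)$ is determined by $h|_{B_{\lambda_4 r}(z)}$, which in the application holds because $E_r(z)$ is invariant under adding a constant to the field), so your extra care there does not change the argument.
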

\begin{proof}
By Lemma~\ref{lem-holder-balls}, we have $F_k \in \mcl F_{k+1}$. 
By the definition~\eqref{eqn-good-annulus-set0} we also have $\mcl Z_k \in \mcl F_k \subset\mcl F_{k+1}$.

We now argue that on $F_k$, the set $\mcl Z_k^E$ is determined by $\mcl F_{k+1}$.
Since there are only countably many pairs $(z,r) \in \BB C\times (0,\infty)$ which can possibly belong to $\mcl Z_k^E$, it suffices to show that the event $\{(z,r) \in \mcl Z_k^E\} \cap F_k$ is $\mcl F_{k+1}$-measurable for each such pair $(z,r)$.
Recall from~\eqref{eqn-good-annulus-set} that $\mcl Z_k^E$ is the set of $(z,r) \in \mcl Z_k $ for which $E_r(z) \cap \stabE_{r,k}(z) \cap \{P\cap B_{\lambda_2 r}(z) \not=\emptyset\}$ occurs.
By Lemma~\ref{lem-holder-balls}, if $F_k$ occurs then $B_{\lambda_4 r}(z) \subset \mcl B_{s_{k+1}}^\bullet$ for each $(z,r) \in \mcl Z_k$. 
Since $E_r(z)$ is determined by $h|_{B_{\lambda_4 r}(z)}$ (condition~\ref{item-geo-iterate-msrble}), it follows that $F_k \cap E_r(z) \cap \{(z,r) \in \mcl Z_k\} \in \mcl F_{k+1}$ for each $(z,r) \in \BB C\times (0,\infty)$. 
Moreover, since $P|_{[0,s_{k+1}]} \in \mcl F_{k+1}$ and $P$ does not re-enter $\mcl B_{s_{k+1}}^\bullet$ after time $s_{k+1}$, we have $F_k \cap \{P\cap B_{\lambda_2 r}(z) \not=\emptyset\} \cap \{(z,r) \in \mcl Z_k\} \in \mcl F_{k+1}$ for each $(z,r)$. 
By~\eqref{eqn-ball-stab-event}, each of the events $\stabE_{r,k}(z)$ for $(z,r) \in \mcl Z_k$ is determined by $\mcl F_k$ and the set of $D_h(\cdot,\cdot ; \BB C\setminus \ol{B_r(z)})$-geodesics from $\BB z$ to points of $\bdy B_r(z)$. 
By Lemma~\ref{lem-holder-balls}, it therefore follows that $F_k \cap \stabE_{r,k}(z) \in \mcl F_{k+1}$ for each $(z,r) \in \mcl Z_k$. 
Combining these statements shows that $\{\mcl Z_k^E\not=\emptyset\} \cap F_k \in  \mcl F_{k+1}$. 

Using condition~\ref{item-geo-iterate-msrble} from Theorem~\ref{thm-geo-iterate}, we similarly obtain that $  \left\{\mcl Z_k^{\frk E} \not=\emptyset \right\} \cap F_k \in \mcl F_{k+1}$. 
\end{proof}

\begin{lem} \label{lem-nomax-cond}
Let $\theta$ be as in Proposition~\ref{prop-stab} and let $F_k$ for $k\in \BB N$ be as in Lemma~\ref{lem-holder-balls}.
If $\mcl E_{\BB r}$ occurs, then except on an event of probability $o_\ep^\infty(\ep)$ there are at least $(1 - 4\ep^\theta )K$ values of $k\in [0,K]_{\BB Z}$ for which 
\eqb \label{eqn-nomax-cond}
\BB P\left[ \left\{\mcl Z_k^E \not=\emptyset \right\} \cap F_k \,\big| \, \mcl F_k \right] \geq \frac12 
\eqe
and
\eqb \label{eqn-nomax-pos}
\BB P\left[ \mcl Z_k^{\frk E} \not=\emptyset   \,\big| \, \mcl F_k \right] \geq \ep^{2\nu + o_\ep(1)}  ,
\eqe
where the rate of the $o_\ep(1)$ in~\eqref{eqn-nomax-pos} is deterministic and depends only on $\nu $ and the choice of metric $D$. 
\end{lem}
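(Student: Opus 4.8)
\textbf{Proof proposal for Lemma~\ref{lem-nomax-cond}.}
The plan is to combine Proposition~\ref{prop-stab} (which says $\mcl Z_k^E \not=\emptyset$ for at least $(1-\ep^\theta)K$ values of $k$ on $\mcl E_{\BB r}$, with superpolynomial concentration), Lemma~\ref{lem-holder-balls} (which says $\bigcap_k F_k$ occurs on $\mcl E_{\BB r}$ for small $\ep$, so that intersecting with $F_k$ costs nothing), Lemma~\ref{lem-nomax-msrble} (which gives the $\mcl F_{k+1}$-measurability needed to invoke Lemma~\ref{lem-uncond-to-cond}), and Lemma~\ref{lem-uncond-to-cond} (to pass from the unconditional count to a conditional lower bound), and finally Lemma~\ref{lem-annulus-choose} (to pass from a conditional lower bound for $\{\mcl Z_k^E\not=\emptyset\}$ to one for $\{\mcl Z_k^{\frk E}\not=\emptyset\}$). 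First I would record that on $\mcl E_{\BB r}$, for $\ep$ small enough (depending on $a,\lambda_4$), we have $\bigcap_{k=0}^K F_k \subset \{\mcl B_{t_k}^\bullet \subset B_{\ep^{-M}}(\BB z)\}$ for a suitable fixed $M$ (using condition~\ref{item-quantum-ball-diam} in the definition of $\mcl E_{\BB r}$ and~\eqref{eqn-geo-iterate-upper}, plus H\"older continuity to control the Euclidean size of $\mcl B_{\tau_{2\ell\BB r}}^\bullet$), so that the hypotheses of Lemmas~\ref{lem-annulus-choose} and~\ref{lem-good-annulus-count} are met. Also, by Lemma~\ref{lem-holder-balls}, on $\mcl E_{\BB r}$ and for small $\ep$ we have $s_{k+1}\le \tau_{2\ell\BB r}$, and $\BB w\notin B_{3\lambda_4\ep\BB r}(\mcl B_{t_k}^\bullet)$ holds for all $k\in[0,K]_{\BB Z}$ since $|\BB z-\BB w|\ge 4\ell\BB r$ while $\mcl B_{t_k}^\bullet\subset B_{2\ell\BB r}(\BB z)$ and $\lambda_4\ep$ is tiny; this is exactly what Lemma~\ref{lem-annulus-choose} needs.

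Next I would apply Lemma~\ref{lem-uncond-to-cond} with $E_k := \{\mcl Z_k^E\not=\emptyset\}\cap F_k$ (which lies in $\mcl F_{k+1}$ by Lemma~\ref{lem-nomax-msrble}), with the $\sigma$-algebras $\mcl F_k$ of~\eqref{eqn-nomax-filtration}, taking $\alpha=1/2$, $\delta=1/4$, and $m = \lfloor c\ep^\theta K\rfloor$ for a suitable constant $c$. The point is that, by Proposition~\ref{prop-stab} and Lemma~\ref{lem-holder-balls}, on $\mcl E_{\BB r}$ it holds except on an event of probability $o_\ep^\infty(\ep)$ that $\sum_k \BB 1_{E_k} \ge (1-\ep^\theta)K \ge K - (1-\alpha-\delta)m$ once $c$ is chosen so that $(1-\alpha-\delta)m = \frac14 m \ge \ep^\theta K$, i.e. $c\ge 4$. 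Then~\eqref{eqn-uncond-to-cond} forces (up to an event of probability $e^{-2\delta^2 m} = e^{-\Omega(\ep^\theta K)} = e^{-\Omega(\ep^{\theta-\beta})}$, which is $o_\ep^\infty(\ep)$ since $\theta<\beta$) that $\sum_k \BB 1_{(\BB P[E_k\mid\mcl F_k]\ge 1/2)} > K - m \ge (1-4\ep^\theta)K$ (absorbing constants into the exponent; after relabeling $\theta$ slightly smaller if desired, or simply noting $m = O(\ep^\theta K)$ so the count of good $k$ is $\ge (1-C\ep^\theta)K$ and one can rename the exponent). This gives~\eqref{eqn-nomax-cond} for at least $(1-4\ep^\theta)K$ values of $k$, intersected with $\mcl E_{\BB r}$ up to $o_\ep^\infty(\ep)$.

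Finally, for each such $k$, I would feed~\eqref{eqn-nomax-cond} into Lemma~\ref{lem-annulus-choose}: since $\BB P[\mcl Z_k^E\not=\emptyset\mid\mcl F_k] \ge \BB P[\{\mcl Z_k^E\not=\emptyset\}\cap F_k\mid\mcl F_k]\ge 1/2$ and the event $\{\mcl B_{t_k}^\bullet\subset B_{\ep^{-M}}(\BB z)\}\cap\{\BB w\notin B_{3\lambda_4\ep\BB r}(\mcl B_{t_k}^\bullet)\}$ holds on $\mcl E_{\BB r}\cap\bigcap F_k$, Lemma~\ref{lem-annulus-choose} yields, except on an event of probability $o_\ep^\infty(\ep)$, that $\BB P[\mcl Z_k^{\frk E}\not=\emptyset\mid\mcl F_k]\ge \ep^{2\nu+o_\ep(1)}\cdot\frac12 - o_\ep^\infty(\ep) \ge \ep^{2\nu+o_\ep(1)}$, which is~\eqref{eqn-nomax-pos}. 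Taking a union bound of the $o_\ep^\infty(\ep)$ errors over the at most $O(\ep^{-\beta})$ values of $k$ (polynomially many, so the union bound preserves $o_\ep^\infty(\ep)$) completes the argument; the uniformity in $\BB r,\BB z,\BB w$ is inherited from Proposition~\ref{prop-stab}, Lemma~\ref{lem-annulus-choose}, and Lemma~\ref{lem-holder-balls}. The main obstacle, and the step requiring the most care, is the bookkeeping around the constant factors and exponents in the application of Lemma~\ref{lem-uncond-to-cond} — in particular making sure that the ``slack'' $(1-\alpha-\delta)m$ is chosen so that Proposition~\ref{prop-stab}'s count $(1-\ep^\theta)K$ is enough, while the resulting failure probability $e^{-2\delta^2 m}$ still beats every power of $\ep$ (this is where $\theta<\beta$, hence $m\asymp \ep^{\theta-\beta}\to\infty$ polynomially, is used) — together with checking that all the auxiliary events ($\{\mcl B_{t_k}^\bullet\subset B_{\ep^{-M}}(\BB z)\}$, $\{s_{k+1}\le\tau_{2\ell\BB r}\}$, $\{\BB w\notin B_{3\lambda_4\ep\BB r}(\mcl B_{t_k}^\bullet)\}$) really do follow from $\mcl E_{\BB r}\cap\bigcap_k F_k$ for small $\ep$, uniformly in the scale $\BB r$.
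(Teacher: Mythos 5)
Your proposal is correct and follows essentially the same route as the paper's proof: set $E_k = \{\mcl Z_k^E\not=\emptyset\}\cap F_k$, invoke Lemma~\ref{lem-nomax-msrble} for $\mcl F_{k+1}$-measurability, apply Lemma~\ref{lem-uncond-to-cond} with $\alpha=1/2$, $\delta=1/4$, $m=\lfloor 4\ep^\theta K\rfloor$ together with Proposition~\ref{prop-stab} and Lemma~\ref{lem-holder-balls} to get~\eqref{eqn-nomax-cond}, and then feed this into Lemma~\ref{lem-annulus-choose} (whose geometric hypotheses hold on $\mcl E_{\BB r}$ since $\mcl B_{t_k}^\bullet\subset B_{3\ell\BB r}(\BB z)$ and $|\BB z-\BB w|\ge 4\ell\BB r$) to get~\eqref{eqn-nomax-pos}. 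Your extra bookkeeping (the constant $c\ge 4$, the bound $e^{-\Omega(\ep^{\theta-\beta})}=o_\ep^\infty(\ep)$, and the polynomial union bound over $k$) matches what the paper does implicitly, so there is no gap.
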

\begin{proof}
For $k\in\BB N$, let $E_k := \left\{\mcl Z_k^E \not=\emptyset \right\} \cap F_k$.
By Lemma~\ref{lem-nomax-msrble}, we have $E_k \in \mcl F_{k+1}$. 
We may therefore apply Lemma~\ref{lem-uncond-to-cond} with $m = \lfloor  4 \ep^{\theta} K   \rfloor$, $\alpha=1/2$, and $\delta=1/4$ to get that
\eqb \label{eqn-use-uncond}
\BB P\left[  \sum_{k=0}^K \BB 1_{\left( \BB P[E_k \,|\, \mcl F_{k}] \geq 1/2 \right)} \leq (1- 4\ep^{\theta } ) K   ,\, 
\sum_{k=0}^K \BB 1_{E_k} \geq (1 - \ep^\theta) K  \right] = o_\ep^\infty(\ep) .
\eqe
By Proposition~\ref{prop-stab} and Lemma~\ref{lem-holder-balls}, on $\mcl E_{\BB r}$ it holds except on an event of probability $o_\ep^\infty(\ep)$ that $\sum_{k=1}^K \BB 1_{E_k} \geq (1 - \ep^\theta) K$. 
Combining this with~\eqref{eqn-use-uncond} shows that if $\mcl E_{\BB r}$ occurs, then except on an event of probability $o_\ep^\infty(\ep)$ there are at least $(1-4\ep^\theta)K$ values of $k\in [0,K]_{\BB Z}$ for which~\eqref{eqn-nomax-cond} holds. 

On $\mcl E_{\BB r}$, for each $k\in [0,K]_{\BB Z}$ we have $\mcl B_{t_k}^\bullet \subset B_{3\ell \BB r}(\BB z)$ (by~\eqref{eqn-geo-iterate-upper}) and $\BB w \notin B_{3\lambda_4 \ep\BB r}(\mcl B_{t_k}^\bullet)$ (since $|\BB z - \BB w| \geq 4\ell\BB r$).  
By Lemma~\ref{lem-annulus-choose}, whenever these latter conditions hold it holds except on an event of probability $o_\ep^\infty(\ep)$ that 
\eqbn
\BB P\left[ \mcl Z_k^{\frk E} \not=\emptyset   \,\big| \, \mcl F_k \right] \geq \ep^{2\nu + o_\ep(1)} \BB P\left[ \mcl Z_k^E \not=\emptyset   \,\big| \, \mcl F_k \right] - o_\ep^\infty(\ep) .
\eqen
Combining this with~\eqref{eqn-nomax-cond} shows that if $\mcl E_{\BB r}$ occurs, then except on an event of probability $o_\ep^\infty(\ep)$ there are at least $(1 - 4\ep^\theta )K$ values of $k\in [0,K]_{\BB Z}$ for which~\eqref{eqn-nomax-cond} and~\eqref{eqn-nomax-pos} both hold. 
\end{proof}

We now apply the estimate~\eqref{eqn-nomax-pos} to deduce Proposition~\ref{prop-nomax-quant}.

\begin{proof}[Proof of Proposition~\ref{prop-nomax-quant}]
Let $F_k$ be the event of Lemma~\ref{lem-holder-balls}, so that by Lemma~\ref{lem-nomax-msrble} we have $\left\{\mcl Z_k^{\frk E}  =\emptyset \right\} \cap F_k \in \mcl F_{k+1}$. By Lemma~\ref{lem-nomax-cond}, if $\mcl E_{\BB r}$ occurs then except on an event of probability $o_\ep^\infty(\ep)$ there are at least $(1-4\ep^\theta) K$ values of $k\in [0,K]_{\BB Z}$ for which
\eqb \label{eqn-max-quant-prob}
\BB P\left[ \left\{ \mcl Z_k^{\frk E} = \emptyset \right\} \cap F_k \,|\,\mcl F_k \right] \leq  1 -   \ep^{2\nu  + \zeta/2} ,
\eqe
equivalently, there are at most $4\ep^\theta K$ values of $k\in [0,K]_{\BB Z}$ for which
\eqbn
\BB P\left[ \left\{ \mcl Z_k^{\frk E} = \emptyset \right\} \cap F_k \,|\,\mcl F_k \right] \geq  1 -   \ep^{2\nu  + \zeta/2} .
\eqen
By Lemma~\ref{lem-uncond-to-cond} applied with $E_k = \left\{ \mcl Z_k^{\frk E} =\emptyset \right\} \cap F_k$,
$m = \lfloor (1- 4\ep^\theta) K \rfloor$, $\alpha = 1 - \ep^{2\nu + \zeta/2}$, and $\delta = \ep^{2\nu  + \zeta/2}/2$, it follows that if $\mcl E_{\BB r}$ occurs and $\ep$ is sufficiently small, then except on an event of probability at most
\eqb  \label{eqn-max-quant-hoeffding}
\exp\left( - \frac12 \ep^{4\nu  + \zeta} \lfloor (1-4\ep^\theta) K \rfloor \right)
\eqe 
 there are at most
\eqbn
K - (1-\alpha-\delta) m \leq  \left( 1 - \ep^{2\nu + \zeta/2} (1-4\ep^\theta )/ 2 \right) K \leq (1 - \ep^{2\nu+\zeta}) K
\eqen
values of $k \in [0,K]_{\BB Z}$ for which $E_k$ occurs. Equivalently, there are at least $\ep^{2\nu+\zeta} K$ values of $k\in [0,K]_{\BB Z}$ for which either $\mcl Z^{\frk E} \not= \emptyset$ or $F_k$ does not occur. By Lemma~\ref{lem-holder-balls}, on $\mcl E_{\BB r}$ the event $F_k$ occurs for every $k\in [0,K]_{\BB Z}$. 
Since $K \asymp \ep^{-\beta}$ (by~\eqref{eqn-end-time}), if $4\nu < \beta $ then for a small enough choice of $\zeta \in (0,1)$, the quantity~\eqref{eqn-max-quant-hoeffding} is of order $o_\ep^\infty(\ep)$. 
The proposition now follows.
\end{proof}

\begin{proof}[Proof of Theorem~\ref{thm-geo-iterate}]
Assume we are in the setting of the theorem statement with $\nu_* = \frac18(\beta\wedge\theta)$. 
Fix $q > 0$. 
Recall that we have been fixing $\BB z,\BB w \in \BB r U$ with $|\BB z - \BB w| \geq 4\ell\BB r$ throughout this section. 
Proposition~\ref{prop-nomax-quant} implies that if $\mcl E_{\BB r}$ occurs, then for each fixed choice of $\BB z , \BB w \in \left( \ep^q \BB r \BB Z^2 \right) \cap \left(\BB r U\right)$ with $|\BB z -\BB w| \geq 4\ell\BB r$, it holds except on an event of probability $o_\ep^\infty(\ep)$, at a rate which does not depend on $\BB z , \BB w$, or $\BB r$, that there exists $k\in [0,K]_{\BB Z}$ for which the corresponding set $\mcl Z_k^{\frk E}$ of~\eqref{eqn-good-annulus-set'} is non-empty. By~\eqref{eqn-good-annulus-set'}, this means that there exists $z\in\BB C$ and $r \in [\ep^{1+\nu} \BB r , \ep \BB r] \cap\mcl R$ such that $P^{\BB z , \BB w} \cap B_{\lambda_2 r}(z) \not=\emptyset$ and $\frk E_r^{\BB z,\BB w}(z)$ occurs. 

Since the definition of $\mcl E_{\BB r}$ does not depend on $\BB z,\BB w$, we can truncate on $\mcl E_{\BB r}$, then take a union bound over all pairs $\BB z , \BB w \in  \left( \ep^q \BB r \BB Z^2 \right) \cap \left(\BB r U\right)$ with $|\BB z -\BB w | \geq 4 \ell \BB r$, to get that if $\mcl E_{\BB r}$ occurs then the following is true except on an event of probability $o_\ep^\infty(\ep)$. For each such pair $\BB z,\BB w$ that there exists $z\in\BB C$ and $r \in [\ep^{1+\nu} \BB r , \ep \BB r] \cap\mcl R$ such that $P^{\BB z , \BB w} \cap B_{\lambda_2 r}(z) \not=\emptyset$ and $\frk E_r^{\BB z,\BB w}(z)$ occurs. 

Since the parameters in the definition of $\mcl E_{\BB r}$ can be chosen so as to make $\BB P[\mcl E_{\BB r}]$ as close to 1 as we like (Lemma~\ref{lem-reg-event-prob}), we obtain the theorem statement with $4\ell$ in place of $\ell$, which is sufficient since $\ell$ is arbitrary. 
\end{proof}

\subsection{Proofs of geometric lemmas}
\label{sec-geometric-lemma}
 
In this section we prove the geometric lemmas stated in Sections~\ref{sec-stab} and~\ref{sec-uncond-to-cond} whose proofs were postponed to avoid distracting from the main argument, namely Lemmas~\ref{lem-dc-set}, \ref{lem-geodesic-diam}, and~\ref{lem-holder-balls}. The arguments in this section use only the definitions in Sections~\ref{sec-iterate-setup} and~\ref{sec-iterate-reg}. 
In particular, we do not use any of the results in Sections~\ref{sec-stab} or~\ref{sec-uncond-to-cond}.

\begin{figure}[t!]
 \begin{center}
\includegraphics[scale=.8]{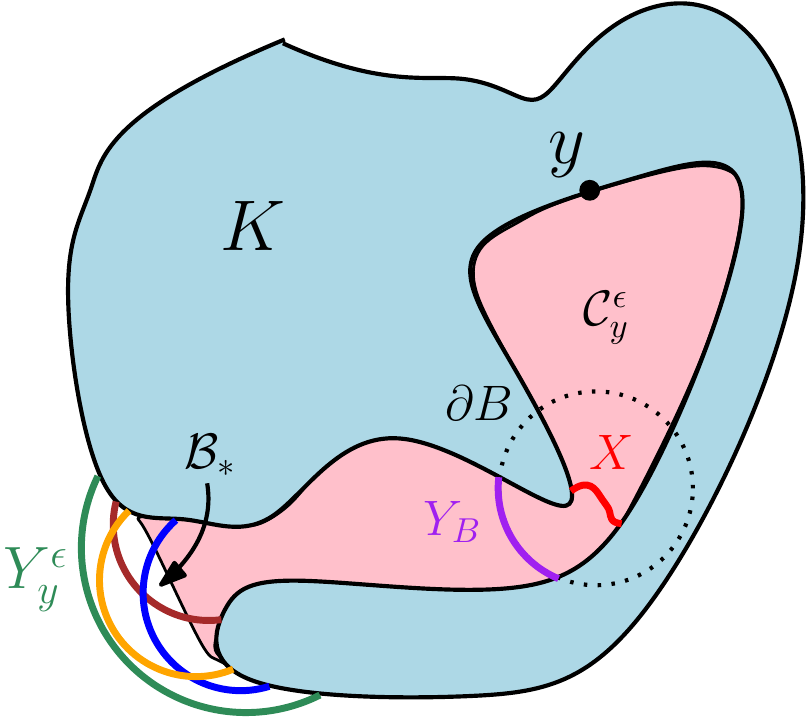}
\vspace{-0.01\textheight}
\caption{Illustration of the proof of Lemma~\ref{lem-dc-set}. 
The set $\mcl C_y^\ep$ is shown in pink. We have shown the boundary of a (non-maximal) ball $B\in\mcl B$ as a dashed line and the associated arc $Y_B\subset\bdy B\setminus K$ in purple. Each set $X$ as in the lemma statement is contained in such a ball $B$ and lies in the bounded connected component $U_B$ of $\BB C\setminus (Y_B\cup K)$. 
Several arcs $Y_B$ for maximal balls $B\in\mcl B_*$ are shown in various colors. Any two such arcs must intersect each other, so the Euclidean diameter of their union is at most $8\ep$. The set $Y_y^\ep$ (green) in the lemma statement is chosen so as to disconnect this union from $\infty$ in $\BB C\setminus K$. 
}\label{fig-dc-set}
\end{center}
\vspace{-1em}
\end{figure} 
  
\begin{proof}[Proof of Lemma~\ref{lem-dc-set}]
See Figure~\ref{fig-dc-set} for an illustration. The proof consists of two main steps. 
\begin{enumerate}
\item We show that there is a \emph{finite} collection of connected sets $X\subset \BB C\setminus \mcl K$ with Euclidean diameter at most $4\ep$ such that each point of $\mcl C_y^\ep$ is contained in the bounded connected component of $\BB C\setminus ( \mcl K \cup X)$ for one of these sets $X$. The sets $X$ can be taken to be appropriate boundary arcs of Euclidean balls of radius $2\ep$.
\item We consider the maximal elements of our finite collection, i.e., those which do not lie in a bounded connected component of any other set in the collection. 
We show that any two maximal elements have to intersect, so the union of the maximal elements has Euclidean diameter at most $8\ep$. We then choose a single connected set (which can be taken to be an arc of a Euclidean ball of radius $8\ep$) which disconnects the union of the maximal elements from $\infty$ in $\BB C\setminus \mcl K$.  
\end{enumerate}
\medskip

\noindent\textit{Step 1: reducing to finitely many arcs of Euclidean balls.}
We will first reduce to considering only a finite collection of sets $X$ as in the statement of the lemma by looking at arcs of Euclidean balls.
Let $\mcl B$ be the set of closed Euclidean balls of the form $B = \ol{B_{2\ep}(z)}$ for $z\in \frac{\ep}{4} \BB Z^2$ with the following properties: $B \cap \bdy \mcl K\not=\emptyset$ and every unbounded connected subset of $\BB C\setminus \mcl K$ whose prime end closure contains $y$ has to intersect $B$.  Since $\mcl K$ is compact, $\mcl B$ is a finite set. 

For $B\in\mcl B$, the set $\bdy B\setminus \mcl K$ is a countable union of open arcs of $\bdy B$. 
Each such arc divides $\BB C\setminus \mcl K$ into a bounded connected component and an unbounded connected component. 
There is one such arc $Y_B$ with the property that $y$ lies on the boundary of the bounded connected component of $\BB C\setminus (\mcl K\cup Y_B )$ and $Y_B $ is not contained in the bounded connected component of $\BB C\setminus (\mcl K\cup X)$ for any other such arc $X\not=Y_B$.  
Note that since $B$ has radius $2\ep$, the arc $Y_B$ is connected and has Euclidean diameter at most $4\ep$. 

For $B\in\mcl B$, let $U_B$ be the bounded connected component of $\BB C\setminus (\mcl K\cup Y_B)$ so that $y\in \bdy U_B$. 
We claim that 
\eqb \label{eqn-ball-arc-cover}
\forall z\in \mcl C_y^\ep, \quad \exists B\in\mcl B \quad \text{such that} \quad z \in U_B .
\eqe 
Indeed, let $X$ be as in the definition of $\mcl C_y^\ep$ for our given $z$ and let $V_X$ be the bounded connected component of $\BB C\setminus X$ with $y$ on its boundary. 
Since $X$ has Euclidean diameter at most $\ep$, we can find $B\in \mcl B$ such that $X$ is contained in the interior of $B$.  
We claim that $V_X\subset U_B$, and hence $z\in U_B$.  
Since $X $ is connected and $X\cap Y_B\subset X\cap \bdy B=\emptyset$, it follows that $X$ is either entirely contained in $U_B$ or $X$ is entirely contained in the unbounded connected component of $\BB C\setminus (\mcl K\cup Y_B)$. 
We claim that $X$ cannot be entirely contained in the unbounded connected component of $\BB C\setminus (\mcl K\cup U_B)$. 
Indeed, by the definition of $X$, each unbounded connected subset of $\BB C\setminus \mcl K$ with $y$ on its boundary must intersect $X $.
Since $X \cap U_B =\emptyset$ and $y\in\bdy U_B$, each unbounded connected subset of $\BB C\setminus \mcl K$ which intersects $U_B$ must intersect $X$. 
This implies that $U_B\subset V_X$, but this cannot happen since $X\subset B$ and by the definition of $Y_B$. 
Therefore $X\subset U_B$, so $V_X\subset U_B$, so~\eqref{eqn-ball-arc-cover} holds.
\medskip

\noindent\textit{Step 2: maximal elements of $\mcl B$.}
We define a partial order on $\mcl B$ by declaring that $B\preceq B'$ if and only if $U_B\subset U_{B'}$. 
Let $\mcl B_*$ be the set of maximal elements of $\mcl B$, i.e., $B_* \in \mcl B_*$ if and only if there is no $B \in \mcl B \setminus \{B_*\}$ such that $B_* \preceq B$.
Since $\mcl B$ is a finite set, for every $B\in\mcl B$ there exists $B_* \in\mcl B_* $ satisfying $B\preceq B_*$. 
  
We claim that if $B_1,B_2 \in\mcl B_*$, then $Y_{B_1}\cap Y_{B_2}\not=\emptyset$. 
Indeed, if $Y_{B_1}\cap Y_{B_2} =\emptyset$ then $Y_{B_1} $ is contained in either $U_{B_2}$ or in the unbounded connected component of $\BB C\setminus ( Y_{B_2} \cup \mcl K)$.
By the maximality of $B_1$, $Y_{B_1}$ must be contained in the unbounded connected component of $\BB C\setminus (\mcl K\cup Y_{B_2})$.
We will now argue that $U_{B_2} \subset U_{B_1}$, which will contradict the maximality of $B_2$. 
Indeed, by the definition of $Y_{B_1}$, every unbounded connected subset of $\BB C\setminus \mcl K$ whose prime end closure contains $y$ has to intersect $Y_{B_1}$.
Since $Y_{B_1}$ is disjoint from $U_{B_2}$ and $y\in \op{Cl}'( U_{B_2})$, it follows that every unbounded connected subset of $\BB C\setminus \mcl K$ which intersects $U_{B_2}$ has to intersect $Y_{B_1}$. 
Therefore, $U_{B_2} \subset U_{B_1}$, which gives the desired contradiction.

Since each set $Y_B$ for $B\in\mcl B$ has Euclidean diameter at most $4\ep$, the preceding paragraph implies that the set $\wt Y_y^\ep := \ol{\bigcup_{B_* \in\mcl B_*} Y_{B_*}}$ is connected and has Euclidean diameter at most $8\ep$. 
Choose a Euclidean ball $\wt B$ of radius at most $8\ep$ which contains $\wt Y_y^\ep$. As in Step 1, there is a unique connected arc $Y_y^\ep$ of $\bdy \wt B\setminus \mcl K$ with the property that $y$ lies on the boundary of the bounded connected component of $\BB C\setminus (\mcl K\cup Y_y^\ep )$ and $Y_y^\ep$ is not contained in the bounded connected component of $\BB C\setminus (\mcl K\cup X)$ for any other such arc $X$. This arc $Y_y^\ep$ has Euclidean diameter at most $16\ep$. 
Then each $Y_{B_*}$ for $B_*\in\mcl B_*$, and hence also each $U_{B_*}$ for $B_*\in\mcl B_*$, is contained in the bounded connected component of $\BB C\setminus (\mcl K\cup Y_y^\ep)$. Since each $z\in \mcl C_y^\ep$ is contained in $U_B$ for some $B\in\mcl B$, and hence in $U_{B_*}$ for some $B_*\in\mcl B_*$, we get that $Y_y^\ep$ satisfies the desired property.  
\end{proof}

We now turn our attention to Lemmas~\ref{lem-geodesic-diam} and~\ref{lem-holder-balls}.
Both lemmas will be proven using the following statement, which in particular gives an explicit definition of the event $F_k$ of Lemma~\ref{lem-holder-balls}.

\begin{figure}[t!]
 \begin{center}
\includegraphics[scale=.8]{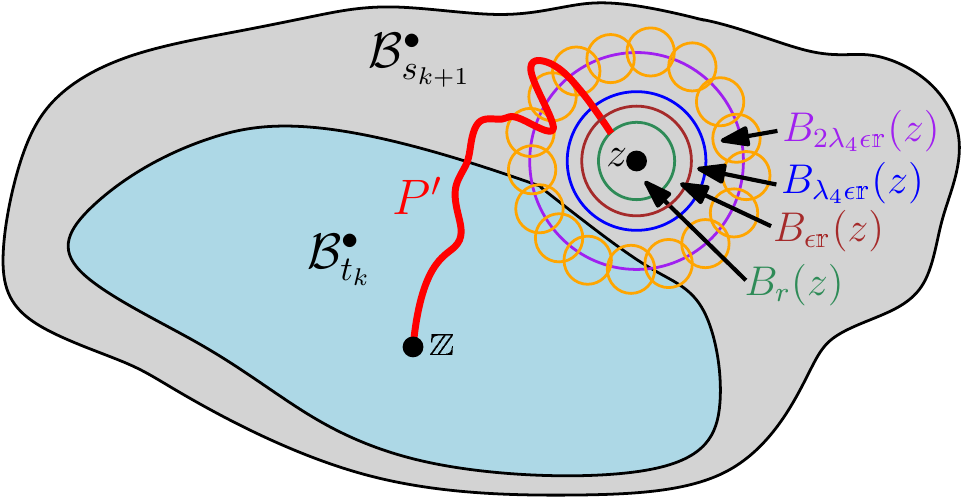}
\vspace{-0.01\textheight}
\caption{
Illustration of the statement and proof of Lemma~\ref{lem-holder-balls0}. 
In order to upper-bound $\sup_{u \in \bdy B_{2\lambda_4 \ep \BB r}(z)} D_h\left( \BB z , u ; \mcl B_{s_{k+1}}^\bullet \setminus \ol{B_{\ep\BB r}(z)} \right)$, we cover $\bdy B_{2\lambda_4 \ep\BB r}(z)$ by Euclidean balls of radius $\ep \BB r/2$ (orange) and upper-bound the $D_h$-diameters of these balls using condition~\ref{item-holder-cont} (H\"older continuity) in the definition of $\mcl E_{\BB r}$. Each of these balls is disjoint from $B_{\ep\BB r}(z)$ and is contained in $\mcl B_{s_{k+1}}^\bullet$, which leads to~\eqref{eqn-holder-balls}. Using Lemma~\ref{lem-holder-balls0} we get an upper bound for the $D_h$-length of the segment of a $D_h(\cdot,\cdot ; \BB C\setminus \ol{B_r(z)})$-geodesic from $\BB z$ to a point of $\bdy B_r(z)$ (such as the one shown in red) stopped at the last time it hits $\bdy B_{2\lambda_4 \ep\BB r}(z)$. This upper bound allows us to prevent such a $D_h$-geodesic from exiting $\mcl B_{s_{k+1}}^\bullet$. These considerations lead to the proofs of Lemmas~\ref{lem-geodesic-diam} and~\ref{lem-holder-balls}. 
}\label{fig-holder-balls}
\end{center}
\vspace{-1em}
\end{figure}

\begin{lem} \label{lem-holder-balls0}
For $k\in [0,K]_{\BB Z}$, let $F_k$ be the event that the following is true. We have $s_{k+1} \leq \tau_{2 \ell\BB r}$ and for each $z \in B_{2\lambda_4 \ep \BB r}(\mcl B_{t_k}^\bullet) \setminus B_{\lambda_4 \ep \BB r}(\mcl B_{t_k}^\bullet)$,  
\eqb \label{eqn-holder-balls}
\sup_{u \in \bdy B_{2\lambda_4 \ep \BB r}(z)} D_h\left( \BB z , u ; \mcl B_{s_{k+1}}^\bullet \setminus \ol{B_{  \ep\BB r}(z)} \right) \leq t_k +c  \ep^\chi \frk c_{\BB r} e^{\xi h_{\BB r}(\BB z)}   ,
\eqe
where $\lambda_4$ is the constant from Theorem~\ref{thm-geo-iterate}, $\chi$ is as in condition~\ref{item-holder-cont} (H\"older continuity) in the definition of $\mcl E_{\BB r}$, and $c > 0$ is constant depending only on $ a , \lambda_4$ (which we do not make explicit).  
If $\mcl E_{\BB r}$ occurs and $\ep$ is sufficiently small (how small depends only on $a, \lambda_4 $), then $F_k$ occurs for each $k\in [0,K]_{\BB Z}$. 
\end{lem}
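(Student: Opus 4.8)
The plan is to unpack the definition of $F_k$ and verify it holds on $\mcl E_{\BB r}$ directly from the regularity conditions, using only a covering-plus-H\"older-continuity argument. First I would establish the claim $s_{k+1} \leq \tau_{2\ell\BB r}$: this is exactly $\eqref{eqn-geo-iterate-upper}$, which follows from condition~\ref{item-quantum-ball-diam} (comparison of balls) in the definition of $\mcl E_{\BB r}$ together with the choice $K = \lfloor a\ep^{-\beta}\rfloor - 1$ in $\eqref{eqn-end-time}$, since the increments $s_{k+1} - s_k = \ep^\beta \frk c_{\BB r} e^{\xi h_{\BB r}(\BB z)}$ accumulate to at most $a \frk c_{\BB r} e^{\xi h_{\BB r}(\BB z)} \leq \tau_{2\ell\BB r} - \tau_{\ell\BB r}$. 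So I would simply cite $\eqref{eqn-geo-iterate-upper}$ and move on.

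Next, for the main bound $\eqref{eqn-holder-balls}$: fix $k$ and $z \in B_{2\lambda_4 \ep \BB r}(\mcl B_{t_k}^\bullet) \setminus B_{\lambda_4\ep\BB r}(\mcl B_{t_k}^\bullet)$. The idea (illustrated in Figure~\ref{fig-holder-balls}) is that the $D_h$-ball $\mcl B_{t_k}^\bullet$ already reaches within $2\lambda_4\ep\BB r$ (Euclidean) of $\bdy B_{2\lambda_4\ep\BB r}(z)$, and then one crosses an annulus of Euclidean width of order $\ep\BB r$ which avoids $B_{\ep\BB r}(z)$. Concretely: pick a point $x \in \bdy\mcl B_{t_k}^\bullet$ with $|x - z| \leq 2\lambda_4\ep\BB r$, so $D_h(\BB z, x) \leq t_k$ (in fact $= t_k$ if $t_k$ is a regular radius, but $\leq$ suffices). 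Then cover a Euclidean path from $x$ to any $u \in \bdy B_{2\lambda_4\ep\BB r}(z)$ — say the concatenation of a segment from $x$ to $\bdy B_{2\lambda_4\ep\BB r}(z)$ and an arc of $\bdy B_{2\lambda_4\ep\BB r}(z)$, both staying in $\ol{B_{4\lambda_4\ep\BB r}(z)} \setminus B_{\ep\BB r}(z)$ once one notes $\lambda_4 \geq 1$ so $2\lambda_4\ep\BB r - \ep\BB r \geq \ep\BB r$ leaves room — by finitely many (order $1$, depending only on $\lambda_4$) Euclidean balls of radius $\ep\BB r/2$ contained in this annular region. Each such ball $B$ has $D_h$-diameter at most $D_h(\cdot,\cdot; B_{2|\cdot|}(\cdot))$-diameter bounded by $(\ep/2)^\chi \frk c_{\BB r} e^{\xi h_{\BB r}(0)}$ by the upper bound in $\eqref{eqn-holder-cont}$ of condition~\ref{item-holder-cont}, valid since $\ep \leq a$ and the relevant points lie in $B_{4\ell\BB r}(\BB r V)$ (using that $z$ is within $O(\ell\BB r)$ of $\BB z \in \BB r U$). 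Summing over the $O_{\lambda_4}(1)$ balls and converting $h_{\BB r}(0)$ to $h_{\BB r}(\BB z)$ via condition~\ref{item-circle-avg} (at the cost of the bounded factor $e^{\xi/a}$ absorbed into the constant $c$) gives that the $D_h$-distance from $x$ to $u$ along a path in $\mcl B_{s_{k+1}}^\bullet \setminus \ol{B_{\ep\BB r}(z)}$ is at most $c\ep^\chi \frk c_{\BB r} e^{\xi h_{\BB r}(\BB z)}$. One must check the covering path actually stays in $\mcl B_{s_{k+1}}^\bullet$: since it lies in $B_{4\lambda_4\ep\BB r}(\mcl B_{t_k}^\bullet)$ and, by the H\"older upper bound again, $B_{4\lambda_4\ep\BB r}(\mcl B_{t_k}^\bullet) \subset \mcl B_{t_k + c'\ep^\chi \frk c_{\BB r}e^{\xi h_{\BB r}(\BB z)}}^\bullet \subset \mcl B_{s_{k+1}}^\bullet$ for small $\ep$ (because $s_{k+1} - t_k = (\ep^\beta - \ep^{2\beta})\frk c_{\BB r}e^{\xi h_{\BB r}(\BB z)} \gg \ep^\chi$ once $\beta < \kappa\chi/2 < \chi$, which is $\eqref{eqn-beta-compare}$ together with $\kappa < 1$). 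Triangle inequality with $D_h(\BB z, x) \leq t_k$ then yields $\eqref{eqn-holder-balls}$.

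Finally I would note measurability: the filled metric ball $\mcl B_{s_{k+1}}^\bullet$ and the restriction $h|_{\mcl B_{s_{k+1}}^\bullet}$ determine all the internal distances $D_h(\BB z, u; \mcl B_{s_{k+1}}^\bullet \setminus \ol{B_{\ep\BB r}(z)})$ appearing in $\eqref{eqn-holder-balls}$ (these are infima of $D_h$-lengths of paths staying inside $\mcl B_{s_{k+1}}^\bullet$, and $D_h$-lengths of such paths are determined by $h|_{\mcl B_{s_{k+1}}^\bullet}$ via Axiom~\ref{item-metric-local}), as well as $t_k$, $\frk c_{\BB r}$, and $h_{\BB r}(\BB z)$ (the latter since $\mcl B_{s_{k+1}}^\bullet \supset B_{\BB r/2}(\BB z)$ or one simply observes $h_{\BB r}(\BB z)$ is already part of the conditioning built into the $s_k, t_k$). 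Also the condition $s_{k+1} \leq \tau_{2\ell\BB r}$ is measurable w.r.t.\ this $\sigma$-algebra. Hence $F_k \in \sigma(\mcl B_{s_{k+1}}^\bullet, h|_{\mcl B_{s_{k+1}}^\bullet})$. I do not expect any serious obstacle here — the lemma is bookkeeping — the one point requiring a little care is ensuring all Euclidean balls used in the covering genuinely avoid $B_{\ep\BB r}(z)$ and stay within $B_{4\ell\BB r}(\BB r V)$ where condition~\ref{item-holder-cont} applies, which is handled by the constraint $\lambda_4 \geq 1$ and by $\ep$ small relative to $a$ and $\lambda_4$.
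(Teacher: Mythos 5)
Your proposal is correct in substance and follows essentially the same route as the paper: the bound $s_{k+1}\leq \tau_{2\ell\BB r}$ via~\eqref{eqn-geo-iterate-upper}, a covering by Euclidean balls of radius $\ep\BB r/2$ whose internal $D_h$-diameters are controlled by the H\"older condition in the definition of $\mcl E_{\BB r}$, the circle-average comparison to trade $h_{\BB r}(0)$ for $h_{\BB r}(\BB z)$, containment of the covering in $\mcl B_{s_{k+1}}^\bullet$ from $\ep^\chi \ll \ep^\beta-\ep^{2\beta}$, and a triangle inequality from a point of $\bdy\mcl B_{t_k}^\bullet$ at $D_h$-distance $t_k$ from $\BB z$. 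The one place where your variant is weaker than the paper's: you cover a radial segment from the nearest boundary point $x$ (which may lie at Euclidean distance only $\lambda_4\ep\BB r$ from $z$) plus an arc, and for the covering balls near $x$ the H\"older paths, which live in $B_{2|u-v|}(\cdot)$ and hence can wander up to order $\ep\BB r$ away, are only guaranteed to avoid $\ol{B_{\ep\BB r}(z)}$ when $\lambda_4\geq 2$ — the constraint $\lambda_4\geq 1$ you invoke does not handle the segment portion. The paper sidesteps this by covering only the circle $\bdy B_{2\lambda_4\ep\BB r}(z)$ (all centers at distance exactly $2\lambda_4\ep\BB r\geq 2\ep\BB r$ from $z$) and using that this circle intersects $\bdy\mcl B_{t_k}^\bullet$, so no inward segment is needed. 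This is a minor, easily repaired point (and immaterial in the application, where $\lambda_4=4$), but as written your claim about the segment covering would fail for $\lambda_4\in[1,2)$.
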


The reason why we use internal distances in $\mcl B_{s_{k+1}}^\bullet\setminus \ol{B_{\ep\BB r}(z)}$ in~\eqref{eqn-holder-balls} is as follows. Such distances are bounded above by $D_h(\cdot,\cdot ; \BB C\setminus \ol{B_r(z)})$-distances if $r \leq \ep\BB r$ (which is the case if $(z,r) \in \mcl Z_k$), which will be important for controlling $D_h(\cdot,\cdot ; \BB C\setminus \ol{B_r(z)})$-geodesics. Furthermore, such distances are determined by $(\mcl B_{s_{k+1}}^\bullet , h|_{\mcl B_{s_{k+1}^\bullet}})$ by Axiom~\ref{item-metric-local} (locality), which will be important for the proof of Lemma~\ref{lem-holder-balls}. 
We also emphasize that the right side of~\eqref{eqn-holder-balls} is smaller than $s_{k+1}  = t_k +  (\ep^\beta -\ep^{2\beta})\frk c_{\BB r} e^{\xi h_{\BB r}(\BB z)}$ if $\ep$ is small since $\beta < \chi$.

\begin{proof}[Proof of Lemma~\ref{lem-holder-balls0}]
See Figure~\ref{fig-holder-balls} for an illustration of the statement and proof. 
Assume that $\mcl E_{\BB r}$ occurs. 
By~\eqref{eqn-geo-iterate-upper}, we have $s_{k+1}  \leq \tau_{2\ell\BB r}$. 
Hence we just need to check~\eqref{eqn-holder-balls}. 
By the definition~\eqref{eqn-geo-iterate-times} of $t_k$ and $s_{k+1}$ and since $\beta <\chi/\chi' < \chi $ (by~\eqref{eqn-beta-compare}), it holds for small enough $\ep \in (0,1)$ that
\eqb \label{eqn-holder-balls-lower}
D_h(\bdy\mcl B_{t_k}^\bullet , \bdy\mcl B_{s_{k+1}}^\bullet) \geq (\ep^\beta - \ep^{2\beta}) \frk c_{\BB r} e^{\xi h_{\BB r}(\BB z)} >  \ep^\chi \frk c_{\BB r} e^{\xi h_{\BB r}(\BB z)} .
\eqe
Note that $\chi' > \xi(Q+2) \geq 1$, where the last inequality follows, e.g., from the fact that $1 - \xi Q \leq 2\xi$, which is obvious from the definition of LFPP and an estimate for the maximum of $h_\ep^*$ on a bounded open set.

For $z \in B_{2 \lambda_4 \ep \BB r}(\mcl B_{t_k}^\bullet) \setminus B_{\lambda_4\ep\BB r}(\mcl B_{t_k}^\bullet)$,
the Euclidean circle $\bdy B_{2\lambda_4 \ep\BB r}(z)$ intersects $\bdy\mcl B_{t_k}^\bullet$.
We can cover $\bdy B_{2  \lambda_4 \ep \BB r}(z)$ by a $\lambda_4$-dependent constant number of Euclidean balls of the form $B_{\ep \BB r/2}(w)$ for $w\in\bdy B_{2\lambda_4 \ep \BB r}(z)$.  
Note that since $\lambda_4 \geq 1$, the corresponding balls $B_{\ep\BB r}(w)$ are disjoint from $B_{\lambda_4\ep \BB r}(z) \supset B_{\ep\BB r}(z)$. 
By the upper bound for $D_h$-distances from condition~\ref{item-holder-cont} in the definition of $\mcl E_{\BB r}$ and then condition~\ref{item-circle-avg}  (comparison of circle averages) in the definition of $\mcl E_{\BB r}$, each such ball satisfies 
\eqb \label{eqn-holder-balls-bdy}
\sup_{u,v\in B_{\ep \BB r/2}(w)} D_h(u,v ; B_{\ep\BB r}(w)) \leq 2(\ep/2)^\chi \frk c_{\BB r} e^{\xi h_{\BB r}(0)} \preceq \ep^\chi \frk c_{\BB r} e^{\xi h_{\BB r}(\BB z)} ,
\eqe 
with the implicit constant depending only on $a$. 

By summing~\eqref{eqn-holder-balls-bdy} over all such balls $B_{\ep\BB r/2}(w)$, using that $\bdy B_{2\lambda_4 \ep\BB r}(z) \cap \bdy\mcl B_{t_k}^\bullet \not=\emptyset$, 
and comparing to~\eqref{eqn-holder-balls-lower}, we get that for small enough $\ep$ each such ball $B_{\ep\BB r}(w)$ is contained in $\mcl B_{s_{k+1}}^\bullet \setminus \ol{B_{\ep\BB r}(z)}$. 
We deduce that the $D_h\left( \cdot,\cdot; \mcl B_{s_{k+1}}^\bullet \setminus \ol{B_{\ep \BB r}(z)} \right)$-diameter of $\bdy B_{2 \lambda_4 \ep \BB r}(z)$ is at most a $a,\lambda_4$-dependent constant times $\ep^\chi \frk c_{\BB r} e^{\xi h_{\BB r}(\BB z)} $. 
Since $\bdy B_{2 \lambda_4  \ep \BB r}(z)\cap \bdy\mcl B_{t_k}^\bullet \not=\emptyset$, we get that the left side of~\eqref{eqn-holder-balls} is at most $t_k + c \ep^\chi   \frk c_{\BB r} e^{\xi h_{\BB r}(\BB z)}  $ for an appropriate constant $c$. 
\end{proof}

\begin{proof}[Proof of Lemma~\ref{lem-geodesic-diam}]
Assume that $\mcl E_{\BB r} $ occurs and let $P'$ be a $D_h(\cdot,\cdot; \ol{B_r(z)})$-geodesic from $\BB z$ to a point of $\bdy B_r(z)$, as in the statement of the lemma. 
Let $t' \in [t_k , |P'|]_{\BB Z}$ be the last time that $P'$ hits $\bdy B_{2 \lambda_4 \ep \BB r}(z)$. 
Since $\ol{B_r(z)}$ is disjoint from $\mcl B_{t_k}^\bullet$, the segment $P'|_{[0,t_k]}$ is a $D_h$-geodesic and $P'$ does not re-enter $\mcl B_{t_k}^\bullet$ after time $t_k$. 
By~\eqref{eqn-holder-balls} of Lemma~\ref{lem-holder-balls} and since $P'$ is $D_h(\cdot,\cdot ; \BB C\setminus \ol{B_r(z)})$-geodesic, it follows that the $D_h$-length of $P'|_{[0,t']}$ (which equals $t'$) is at most $t_k + c \ep^\chi \frk c_r e^{\xi h_r(\BB z)}$. Therefore, the $D_h$-length of $P'([t_k ,t'])$ is at most $c\ep^\chi \frk c_r e^{\xi h_r(\BB z)}$. 
By conditions~\ref{item-holder-cont} (H\"older continuity) and~\ref{item-circle-avg} (comparison of circle averages) in the definition of $\mcl E_{\BB r}$, the Euclidean diameter of $P'([t_k,t'])$ is at most a $a,\lambda_4$-dependent constant times $\ep^{\chi/\chi'}\BB r$. 
Since $P'([t',|P'|])\subset B_{2\lambda_4 \ep \BB r}(z)$, we obtain~\eqref{eqn-geodesic-diam}. 
\end{proof}

\begin{proof}[Proof of Lemma~\ref{lem-holder-balls}]
Define $F_k$ as in Lemma~\ref{lem-holder-balls0}. That lemma tells us that $\mcl E_{\BB r} \subset \bigcap_{k=0}^K F_k$ for small enough $\ep \in (0,1)$ (depending only on $a,\lambda_4$). 
Furthermore, it is clear from the definition of $F_k$ and Axiom~\ref{item-metric-local} (locality) that $F_k \in \sigma\left(\mcl B_{s_{k+1}}^\bullet, h|_{\mcl B_{s_{k+1}}^\bullet} \right)$. 
Now assume that $F_k$ occurs.
By definition, we have $s_k \leq \tau_{2\ell\BB r}$. 
We consider $(z,r) \in \mcl Z_k$ and check that if $\ep \in (0,1)$ is small enough, then $B_{\lambda_4 r}(z) \subset \mcl B_{s_{k+1}}^\bullet$ and the set of $D_h(\cdot,\cdot ; \BB C\setminus \ol{B_r(z)})$-geodesics from $\BB z$ to points of $\bdy B_r(z)$ is determined by $(\mcl B_{s_{k+1}}^\bullet , h|_{\mcl B_{s_{k+1}}^\bullet})$. 
 
Note that the right side of~\eqref{eqn-holder-balls} satisfies $t_k  + c  \ep^\chi \frk c_{\BB r} e^{\xi h_{\BB r}(\BB z)}   \leq s_{k+1}$.  
Since the left side of~\eqref{eqn-holder-balls} is an upper bound for $\sup_{u\in \bdy B_{2 \lambda_4 \ep \BB r}(z)} D_h(\BB z,u)$, it follows that $\bdy B_{2 \lambda_4 \ep \BB r}(z) \subset \mcl B_{s_{k+1}}^\bullet$. Since $B_{\lambda_4 r}(z) \subset B_{\lambda_4 \ep\BB r}(z)$ (by~\eqref{eqn-good-annulus-set0}) and $\mcl B_{s_{k+1}}^\bullet$ contains every point which it disconnects from $\infty$, we therefore have $B_{\lambda_4 r}(z) \subset \mcl B_{s_{k+1}}^\bullet$. 
 
Finally, we claim that a $D_h(\cdot,\cdot ; \BB C\setminus \ol{B_r(z)})$-geodesic from $\BB z$ to a point of $\bdy B_r(z)$ is the same as a $D_h(\cdot,\cdot ; \mcl B_{s_{k+1}}^\bullet \setminus \ol{B_r(z)})$-geodesic from $\BB z$ to a point of $\bdy B_r(z)$, which gives the desired measurability statement due to Axiom~\ref{item-metric-local} for $D_h$. 
To see this, it suffices to show that if $P'$ is a $D_h(\cdot,\cdot ; \BB C\setminus \ol{B_r(z)})$-geodesic from $\BB z$ to a point of $\bdy B_r(z)$, then $P' \subset \mcl B_{s_{k+1}}^\bullet$.

To this end, let $t$ be the last time that $P'$ hits $\bdy B_{2  \lambda_4 \ep \BB r}(z)$. 
By~\eqref{eqn-holder-balls} and since $P'$ is a $D_h(\cdot,\cdot ; \BB C\setminus \ol{B_r(z)})$-geodesic, it follows that the $D_h$-length of $P'|_{[0,t]}$ (which equals $t$) is at most $t_k +c  \ep^\chi \frk c_{\BB r} e^{\xi h_{\BB r}(\BB z)} < s_{k+1}$. 
Consequently, $P'$ cannot exit $\mcl B_{s_{k+1}}^\bullet$ before time $t$. Since $t$ is the \emph{last} time that $P'$ hits $\bdy B_{2 \lambda_4 \ep  \BB r}(z)$ and the terminal point of $P'$ is contained in $\bdy B_r(z) \subset B_{ \ep \BB r}(z)$, $P'$ cannot exit $\mcl B_{s_{k+1}}^\bullet$ after time $t$, either.
\end{proof}

\section{Forcing a geodesic to take a shortcut}
\label{sec-geodesic-shortcut}

The goal of this section is to prove Proposition~\ref{prop-geo-event}. Throughout, we assume that we are in the setting of Theorem~\ref{thm-weak-uniqueness}, so $D$ and $\wt D$ are two weak $\gamma$-LQG metrics with the same scaling constants. We also let $h$ be a whole-plane GFF and we implicitly assume (by way of eventual contradiction) that the optimal bi-Lipschitz constants $c_*$ and $C_*$ of~\eqref{eqn-max-min-def} satisfy $c_* < C_*$. 

With $\nu_*$ as in Theorem~\ref{thm-geo-iterate}, fix $0 < \mu < \nu \leq \nu_*$ and let $\alpha_* \in (1/2,1)$ and $p_0 \in (0,1)$ be the parameters from Proposition~\ref{prop-attained-good'} for this choice of $\mu$ and $\nu$ (we write $p_0$ instead of $p$ to avoid confusion with another parameter called $p$ below). 
Also fix $\alpha\in [\alpha_* ,1)$ (to be chosen in Lemma~\ref{lem-endpoint-geodesic} just below) and parameters $c_1' ,c_2' $ such that $c_* < c_1' < c_2' < C_*$. 

Let $\mcl R_0$ be the set of $r  > 0$ for which it holds with probability at least $p_0$ that the following is true.
There exists $u \in \bdy B_{\alpha r}(0)$ and $v \in \bdy B_r(0)$ such that 
\eqb \label{eqn-attained-good-event}
\wt D_h(u,v) \leq c_1' D_h(u,v) 
\eqe 
and the $\wt D_h$-geodesic from $u$ to $v$ is unique and is contained in $\ol{\BB A_{\alpha r , r}(0)}$. 
We note that Proposition~\ref{prop-attained-good} implies in particular that for each $\BB r  >0$ one has $\#(\mcl R_0\cap [\ep^{1+\nu} \BB r , \ep \BB r] \cap \{8^{-k} \BB r\}_{k\in\BB N}) \geq\mu\log_8\ep^{-1}$ for small enough $\ep \in (0,1)$. 

\subsection{Outline of the proof of Proposition~\ref{prop-geo-event}}
\label{sec-shortcut-outline}

The main task in the proof of Proposition~\ref{prop-geo-event} is to define the event $E_r(0)$ (which we abbreviate as $E_r$ throughout most of this section). The other events $E_r(z)$ for $z\in\BB C$ will be defined by translation. 
\medskip

\noindent\textbf{Main ideas.}
The basic idea to define $E_r $ is as follows. 
We will define for each pair of points $x' , y' \in \bdy B_{3r}(0)$ a deterministic smooth bump function $\phi $ which takes a large (but independent of $r,x',y'$) value in a long, narrow ``tube" contained in $B_{3r}(0)$ which (almost) contains a path from $x'$ to $y'$ and which vanishes outside of a small neighborhood of this tube. 
Roughly speaking, $E_r$ will be the event that, simultaneously for every choice of $x'$ and $y'$, this tube contains a pair of points $u,v$ such that $\wt D_h(u,v) \leq c_1' D_h(u,v)$ and $|u-v| \asymp r$; and several regularity conditions hold. 
We will show using Proposition~\ref{prop-attained-max} and basic estimates for LQG distances that when $\rho \in (0,1)$ is small (but independent of $r$), $\BB P[E_r]$ is close to 1 for all $r\in \rho^{-1} \mcl R_0$ (Lemma~\ref{lem-geo-event-prob}).

We will then consider a fixed pair of points $\BB z,\BB w \in \BB C \setminus B_{4r}(0)$ and let $x'$ and $y'$ be the first points of $\bdy B_{3r}(0)$ hit by the $D_h$-metric balls grown from $\BB z$ and $\BB w$, respectively. This choice of $x'$ and $y'$ (and hence also the corresponding bump function $\phi$) are random, but are determined by $h|_{\BB C\setminus B_{3r}(0)}$. 
We will show that if $E_r$ occurs and the $D_h$-geodesic between $\BB z$ and $\BB w$ enters $B_{2r}(0)$, then the $D_{h-\phi}$-geodesic between $\BB z$ and $\BB w$ has to stay close to the long narrow tube where $\phi$ is large, and hence has to get close to points $u,v$ with $\wt D_h(u,v) \leq c_1' D_h(u,v)$ and $|u-v| \asymp r$. Essentially, this is because Axiom~\ref{item-metric-f} (Weyl scaling) implies that subtracting $\phi$ makes distances inside the tube much shorter than distances outside. 
If we let $\frk E_r^{\BB z,\BB w}(0)$ be the event that the $D_h$-geodesic gets close to such points $u,v$, then since the conditional laws of $h-\phi$ and $h$ given $h|_{\BB C\setminus B_{3r}(0)}$ are mutually absolutely continuous (and we can add regularity conditions to $E_r$ to control the Radon-Nikodym derivative), we get condition~\ref{item-geo-iterate-compare} in Theorem~\ref{thm-geo-iterate} (with $\lambda_3 = 3$).

We emphasize that the event $E_r$ does \emph{not} include the condition that $P$ stays in the long narrow tube where $\phi$ is large. Indeed, $E_r$ cannot include any conditions which depend on $P$ since $E_r$ needs to be locally determined by $h$. Rather, as explained in the preceding paragraph, if $E_r$ occurs then we can force $P$ to stay in the tube by subtracting the bump function $\phi$ from $h$.
\medskip
 
\noindent\textbf{Section~\ref{sec-geo-event-proof}.}
We give a precise statement of the properties that we need the event $E_r$ and the bump function $\phi$ described above to satisfy.
We then assume the existence of these objects and deduce Proposition~\ref{prop-geo-event}. 
Condition~\ref{item-geo-iterate-dense} of Theorem~\ref{thm-geo-iterate} (with $\mcl R = \rho^{-1}\mcl R_0$) is true in our framework by the definition of $\mcl R_0$ and Proposition~\ref{prop-attained-good}. Conditions~\ref{item-geo-iterate-msrble} and~\ref{item-geo-iterate-prob} are true by assumption (these conditions will be clear from the construction of $E_r$ and $\phi$). Condition~\ref{item-geo-iterate-compare} is proven by comparing the conditional laws of $h$ and $h-\phi$ given $h|_{\BB C\setminus B_{3r}(0)}$, as discussed above. 
The rest of the section is devoted to constructing the event $E_r$ and the bump functions $\phi$. 
\medskip

\noindent\textbf{Section~\ref{sec-endpoint-geodesic}.}
We first show that for any $z\in\BB C$ and $r\in\mcl R_0$, we can find a \emph{deterministic} open ``tube" $V_r(z) \subset B_{3r}(z)$ such that with uniformly positive probability over the choice of $z$ and $r$, there are points $u,v \in V_r(z)$ with the following properties. We have $\wt D_h(u,v) \leq c_1' D_h(u,v) $, $|u-v|\asymp r$, the $\wt D_h$-geodesic from $u$ to $v$ is contained in $V_r(z)$, and any path in $V_r(z)$ between $z-2r$ and $z+2r$ has to get close to each of $u$ and $v$ (Lemma~\ref{lem-deterministic-geodesic}). This is illustrated in Figure~\ref{fig-deterministic-geodesic}. 

To do this, we start with a pair of points $u,v$ as in the definition of $\mcl R_0$, but with $z$ in place of 0. Such a pair of points exists with probability at least $p_0$ by Axiom~\ref{item-metric-translate} (translation invariance). We then extend the $\wt D_h$-geodesic $\wt P$ from $u$ to $v$ to a path $\wt P'$ from $z-2r$ to $z+2r$ by concatenating $\wt P$ with smooth paths. For this purpose, the fact that $\wt P$ is contained in $\ol{\BB A_{\alpha r , r}(z)}$ is useful to ensure that the extra smooth paths intersect $\wt P$ only at $u$ and $v$. We consider the set of squares in a fine grid which intersect $\wt P'$. Since there are only finitely many possibilities for this set of squares, there has to be a deterministic set of squares which equals the set of squares which intersect $\wt P'$ with uniformly positive probability. We define $V_r(z)$ to be the interior of the union of the squares in this set.
\medskip

\noindent\textbf{Section~\ref{sec-highprob-geodesic}.}
We now have an event which satisfies many of the conditions which we are interested in, but it holds only with uniformly positive probability, not with probability close to 1. To get an event which holds with probability close to 1, we consider a small but fixed $\rho \in (0,1)$ and a radius $r\in \rho^{-1}\mcl R_0$. We can find a large number of disjoint balls of the form $B_{\rho r}(z)$ contained in $B_{2r}(0)$ (note that $\rho r\in \mcl R_0$). By the spatial independence properties of the GFF (Lemma~\ref{lem-spatial-ind}), if we make $\rho$ sufficiently small then it holds with high probability that the event of the preceding subsection occurs for a large number of these balls $B_{\rho r}(z)$. We then link up the corresponding sets $V_{\rho r}(z)$ by deterministic paths of squares to find a deterministic open ``tube" $U_r^{x,y}$ joining any two given points of $x,y\in \bdy B_{2r}(0)$ with the following property. With probability close to 1, there are points $u,v \in U_r^{x,y}$ such that $\wt D_h(u,v) \leq c_1' D_h(u,v)$, $|u-v|\asymp r$, the $\wt D_h$-geodesic from $u$ to $v$ is contained in $U_r^{x,y}$, and any path in $U_r^{x,y}$ between $x$ and $y$ has to get close to each of $u$ and $v$ (Lemma~\ref{lem-highprob-geodesic}). See Figure~\ref{fig-highprob-geodesic} for an illustration of this part of the argument. 
\medskip

\noindent\textbf{Section~\ref{sec-E-def}.}
Taking Lemma~\ref{lem-highprob-geodesic} as our starting point, we then build the high-probability event $E_r$ in Proposition~\ref{prop-geo-event} for $r\in \rho^{-1} \mcl R_0$. 
In addition to the aforementioned conditions on the tube $U_r^{x,y}$, we also include extra regularity conditions which will eventually be used to prevent $D_h$-geodesics from staying close to the boundary of $U_r^{x,y}$ without entering it, to get geodesics from $\bdy B_{3r}(0)$ to $\bdy B_{2r}(0)$, and to control the Radon-Nikodym derivative between the conditional law of $h$ and $h-\phi$ (where $\phi$ is the bump function mentioned above) given $h|_{\BB C \setminus B_{3r}(0)}$. 
We also give a precise definition of the bump function $\phi$ which we will subtract from the field: it is equal to a large positive constant on the long narrow tube $U_r^{x,y}$, it is equal to an even larger constant on even narrower tubes which approximate each of the segments $[x,3x/2]$ and $[y,3y/2]$, and it vanishes outside of a small neighborhood of the union of $U_r^{x,y}$ and these two narrower tubes. The definitions of these objects are illustrated in Figure~\ref{fig-internal-geo}.  
\medskip

\noindent\textbf{Section~\ref{sec-bump-function}.} 
We prove that a $D_{h-\phi}$-geodesic is likely to get near points $u,v$ satisfying~\eqref{eqn-attained-good-event}, using the definition of $E_r(0)$ and deterministic arguments to compare various distances. A key point here is that we have set things up so that on $E_r$, the $\wt D_h$-geodesic from $u$ to $v$ is contained in $U_r^{x,y}$ and is far away from the narrow tubes where $\phi$ is larger than it is on $U_r^{x,y}$. 
This allows us to show that subtracting $\phi$ does not change the fact that $\wt D_h(u,v) \leq c_1' D_h(u,v) $.

\begin{remark}
Our proof only shows that the $D_h$-geodesic $P$ gets close to each of the points $u$ and $v$ from~\eqref{eqn-attained-good-event} with positive probability (we then use the triangle inequality to compare the $D_h$-length of a segment of $P$ to $D_h(u,v)$). We do \emph{not} show that $P$ actually merges into the $D_h$-geodesic from $u$ to $v$. We believe that it should be possible to show that $P$ merges into this $D_h$-geodesic, but doing so is highly non-trivial. Indeed, this is closely related to the problem of showing that there are no ``ghost geodesics" for $D_h$ which do not merge into any other $D_h$-geodesics; see~\cite[Section 1.4]{akm-geodesics} for some discussion about the analogous problem in the setting of the Brownian map. Because we do not show that $P$ merges into the $D_h$-geodesic from $u$ to $v$, the arguments of this section do not immediately imply other statements of the form ``if an event occurs for some (random) geodesic with high probability, then with high probability it occurs somewhere along the $D_h$-geodesic between two fixed points".
\end{remark}

\subsection{Proof of Proposition~\ref{prop-geo-event} assuming the existence of events and functions}
\label{sec-geo-event-proof}

In this subsection, we assume the existence of an event $E_r = E_r(0)$ and a collection  $\mcl G_r$ of smooth bump functions $\phi$ which satisfy a few simple properties and deduce Proposition~\ref{prop-geo-event} from the existence of these objects.
The later subsections are devoted to constructing these objects.  
In particular, we will deduce Proposition~\ref{prop-geo-event} from the following proposition.

\begin{prop} \label{prop-geo-event0}
Let $ 0 <\mu < \nu \leq \nu_*$ be as above and let $\BB p \in (0,1)$. 
There exists $\rho \in (0,1)$, depending only on $\BB p , \mu , \nu $, such that for each $r \in \rho^{-1} \mcl R_0$, there is an event $E_r$ and a finite collection $\mcl G_r$ of smooth bump functions, each of which is supported on a compact subset of $\BB A_{r/4,3r}(0)$, with the following properties.
\begin{enumerate}[(A)] 
\item \emph{(Measurability and high probability)} We have $E_r \in \sigma\left( (h - h_{5r}(0) ) |_{\BB A_{r/4,4r}(0)} \right) $ and $\BB P[E_r] \geq \BB p$. \label{item-geo-event0} 
\item \emph{(Bound for Dirichlet inner products)} There is a deterministic constant $\Lambda_0  > 0$ depending only on $\BB p , \mu , \nu ,c_1',c_2' $ such that, writing $(\cdot,\cdot)_\nabla$ for the Dirichlet inner product, it holds on $E_r$ that \label{item-dirichlet-bound}
\eqb \label{eqn-dirichlet-bound}
|(h,\phi )_\nabla |  + \frac12 (\phi  , \phi )_\nabla \leq \Lambda_0 ,\quad\forall \phi\in \mcl G_r .
\eqe
\item \emph{(Subtracting a bump function forces a geodesic to take a shortcut)} Suppose we are given points $\BB z , \BB w \in \BB C\setminus B_{4r}(0)$.
There is a random $\phi \in \mcl G_r$ depending only on $\BB z$, $\BB w$, and $h|_{\BB C\setminus B_{3r}(0)}$ such that the following is true. 
Let $P$ (resp.\ $P^\phi$) be the a.s.\ unique $D_h$- (resp.\ $D_{h-\phi}$-) geodesic from $\BB z$ to $\BB w$. 
There is a deterministic constant $b_0  >0$ depending only on $\BB p , \mu , \nu,c_1',c_2' $ such that if $P\cap B_{2r}(0) \not=\emptyset$ and $E_r$ occurs, then there are times $0  < s < t  <  D_{h - \phi}(\BB z, \BB w) $ and such that \label{item-internal-geo}
\allb \label{eqn-internal-geo0}
&P^\phi(s) , P^\phi(t) \in B_{3r/2}(0) , \quad 
|P^\phi(s) - P^\phi(t)| \geq b_0 r , \notag\\
&\qquad \wt D_{h-\phi }\left(P^\phi(s) , P^\phi(t)\right) \leq c_2'  D_{h- \phi}\left(P^\phi(s) , P^\phi(t)\right) ,\quad \text{and} \notag\\
&\qquad  \wt D_{h- \phi}\left(P^\phi(s) , P^\phi(t)\right) \leq (c_* / C_*) \wt D_{h-\phi}\left(P^\phi(s) , \bdy B_{3r}(0) \right) .
\alle
\end{enumerate}
\end{prop}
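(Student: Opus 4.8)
The plan is to construct $E_r$ and the finite family $\mcl G_r$ for $r\in\rho^{-1}\mcl R_0$ in three stages, all carried out at a fixed Euclidean scale and then translated and rescaled so that the estimates are uniform in $r$; the four axioms enter as follows: Axiom~\ref{item-metric-translate} and Axiom~\ref{item-metric-coord} give scale- and position-uniform distance comparisons, Axiom~\ref{item-metric-local} gives the required local measurability, and Axiom~\ref{item-metric-f} converts additive constants of the field into multiplicative rescalings of the metrics and is what makes the bump-function argument work. We use throughout that $r\in\rho^{-1}\mcl R_0$ forces $\rho r\in\mcl R_0$, and that by Proposition~\ref{prop-attained-good} the set $\mcl R_0$ meets $[\ep^{1+\nu}\BB r,\ep\BB r]\cap\{8^{-k}\BB r\}_{k\in\BB N}$ in at least $\mu\log_8\ep^{-1}$ points for small $\ep$.

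\emph{Stage 1 (a deterministic ``tube'' carrying a shortcut with uniformly positive probability).} For $\rho r\in\mcl R_0$ and any $z\in\BB C$, Axiom~\ref{item-metric-translate} and the definition of $\mcl R_0$ give, with probability at least $p_0$, a pair $u\in\bdy B_{\alpha\rho r}(z)$, $v\in\bdy B_{\rho r}(z)$ with $\wt D_h(u,v)\leq c_1' D_h(u,v)$ and with the $\wt D_h$-geodesic $\wt P$ from $u$ to $v$ unique and contained in $\ol{\BB A_{\alpha\rho r,\rho r}(z)}$. Since $\wt P$ lies in that closed annulus, one extends it to a path $\wt P'$ from $z-2\rho r$ to $z+2\rho r$ inside $B_{3\rho r}(z)$ by concatenating it with two smooth arcs meeting $\wt P$ only at $u$ and $v$, with $u$ preceding $v$ along $\wt P'$. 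Fixing a small $\delta\in(0,1)$ (selected in Stage 3), there are finitely many possibilities for the collection of closed squares of a dyadic grid of mesh $\delta\rho r$ meeting $\wt P'$ (together with their neighbours), so one of them is realized with probability at least some $p_1=p_1(p_0,\delta)>0$; let $V_{\rho r}(z)$ be the interior of its union, a translate of a deterministic subset of $B_{3\rho r}(0)$. On the realization event, $u,v\in V_{\rho r}(z)$, $\wt P\subset V_{\rho r}(z)$, $|u-v|\geq(1-\alpha)\rho r$, and every path in $V_{\rho r}(z)$ from $z-2\rho r$ to $z+2\rho r$ comes within $\delta\rho r$ of $u$, and then of $v$, because it must cross the two narrow necks of $V_{\rho r}(z)$ through which $\wt P'$ passes. \emph{Stage 2 (boosting to high probability by chaining).} Fix $r\in\rho^{-1}\mcl R_0$ and a finite $\delta$-net $\mcl N$ of $\bdy B_{2r}(0)$ of fixed cardinality. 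For each ordered pair $(x,y)\in\mcl N\times\mcl N$ one fixes a deterministic open set $U_r^{x,y}\subset B_{3r/2}(0)$ consisting of a chain of $N\asymp\rho^{-1}$ disjoint copies $V_{\rho r}(z_1),\dots,V_{\rho r}(z_N)$ laid end to end and joined by short corridors of squares, flanked on the two ends by long ``padding'' corridors of Euclidean length of order $r$ joining $x$ to $z_1-2\rho r$ and $z_N+2\rho r$ to $y$; the padding guarantees that whichever link carries the shortcut, its shortcut points lie at $D_h$-distance of order $\frk c_r e^{\xi h_r(0)}$ from the two ends of $U_r^{x,y}$ near $x$ and $y$. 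By Stage 1 and Lemma~\ref{lem-spatial-ind} (the Stage-1 events for the disjoint balls $B_{\rho r}(z_i)$ each have probability $\geq p_1$ and, by Axioms~\ref{item-metric-local} and~\ref{item-metric-f}, are each determined by $h$ near $B_{3\rho r}(z_i)$ modulo additive constant), for $\rho$ small enough the probability that the Stage-1 event holds for at least one $i$ is at least $1-(1-\BB p)/(2\#(\mcl N\times\mcl N))$; on that event, taking the least such $i$ produces $u,v\in V_{\rho r}(z_i)\subset U_r^{x,y}$ with $\wt D_h(u,v)\leq c_1' D_h(u,v)$, $|u-v|\geq(1-\alpha)\rho r$, the $\wt D_h$-geodesic from $u$ to $v$ contained in $U_r^{x,y}$, and every path in $U_r^{x,y}$ from $x$ to $y$ passing within $\delta\rho r$ of $u$ and of $v$ (it must traverse $V_{\rho r}(z_i)$ from $z_i-2\rho r$ to $z_i+2\rho r$). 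A union bound over $\mcl N\times\mcl N$ gives an event of probability $\geq 1-(1-\BB p)/2$ on which this holds for every $(x,y)$.

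\emph{Stage 3 (defining $E_r$ and $\mcl G_r$, and verifying \ref{item-geo-event0}--\ref{item-internal-geo}).} Define $E_r$ to be the Stage-2 event intersected with regularity events, each of probability tending to $1$ uniformly in $r$ after rescaling: up-to-constant two-sided bounds (Axiom~\ref{item-metric-coord}) on $D_h$- and $\wt D_h$-distances between and around the circles $\bdy B_{2^{-k}r}(0)$ inside $\BB A_{r/4,4r}(0)$; Hölder continuity of $D_h,\wt D_h$ versus the Euclidean metric there (Lemma~\ref{lem-holder-uniform}); upper bounds on internal diameters of small Euclidean squares (Lemma~\ref{lem-square-diam}); lower bounds on $D_h$- and $\wt D_h$-distances across narrow tubes (Lemma~\ref{lem-line-path}), used to keep geodesics from hugging the boundary of $U_r^{x,y}$ and to keep $\wt D$-distances across $\BB A_{3r/2,3r}(0)$ bounded below; a bound on $\sup_{z\in\BB A_{r/4,4r}(0)}|h_r(z)-h_{r/4}(z)|$; and a bound $\sup_{\phi\in\mcl G_r}|(h,\phi)_\nabla|\leq\Lambda_0'$, tight because after rescaling $(h,\phi)_\nabla=(h(r\cdot),\phi(r\cdot))_\nabla$ with $\phi(r\cdot)$ ranging over a fixed finite deterministic family. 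Taking $\rho$ small makes all of this hold with total probability $\geq\BB p$; since every listed condition is invariant under adding a constant to $h$ (using Axiom~\ref{item-metric-f} for the distance conditions), $E_r\in\sigma\big((h-h_{5r}(0))|_{\BB A_{r/4,4r}(0)}\big)$, which gives~\ref{item-geo-event0}. For $\mcl G_r$ take, for each $(x,y)\in\mcl N\times\mcl N$, a fixed smooth function $\phi_{x,y}$ supported in a scale-$\delta\rho r$ neighbourhood of $U_r^{x,y}\cup T_x\cup T_y$ contained in $\BB A_{r/4,3r}(0)$, equal to a constant $M$ on $U_r^{x,y}$ and to a larger constant $M'$ on the thin radial tubes $T_x,T_y$ joining $\bdy B_{3r}(0)$ to $\bdy B_{2r}(0)$ near $x$ and $y$, with $0\leq\phi_{x,y}\leq M'$; after rescaling these form a finite deterministic family, so $(\phi,\phi)_\nabla$ is bounded by a constant depending only on $M,M'$ and the fixed geometry, and with the pairing bound built into $E_r$ this yields~\ref{item-dirichlet-bound}. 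The constants $M,M',\delta,\rho$ (hence $b_0=(1-\alpha)\rho/2$ and $\Lambda_0$) are pinned down, depending only on $\BB p,\mu,\nu,c_1',c_2'$, in the verification of~\ref{item-internal-geo}.

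For~\ref{item-internal-geo}: given $\BB z,\BB w\in\BB C\setminus B_{4r}(0)$, grow $D_h$-metric balls from $\BB z$ and from $\BB w$ until they first hit $\bdy B_{3r}(0)$; by Axiom~\ref{item-metric-local} their hitting points $x',y'$ are determined by $h|_{\BB C\setminus B_{3r}(0)}$, so letting $(x,y)\in\mcl N\times\mcl N$ be the net pair whose radial tubes $T_x,T_y$ land closest to $x',y'$ and setting $\phi=\phi_{x,y}$ makes $\phi$ depend only on $\BB z,\BB w$ and $h|_{\BB C\setminus B_{3r}(0)}$. Assume $E_r$ occurs and $P\cap B_{2r}(0)\neq\emptyset$, and let $u,v\in U_r^{x,y}$ be the Stage-2 shortcut points. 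Since $\phi\geq0$ and $\phi\equiv0$ off $B_{3r}(0)$, concatenating a $D_h$-geodesic from $\BB z$ to $x'$, a path across $T_x$, a path through $U_r^{x,y}$ from $x$ to $y$ passing within $\delta\rho r$ of $u$ and $v$, a path across $T_y$, and a $D_h$-geodesic from $y'$ to $\BB w$, and using Axiom~\ref{item-metric-f} with the regularity bounds, gives, for $M,M'$ large, $D_{h-\phi}(\BB z,\BB w)\leq D_h(\BB z,\bdy B_{3r}(0))+D_h(\BB w,\bdy B_{3r}(0))+\frac{1}{100}\frk c_r e^{\xi h_r(0)}$; conversely the regularity conditions (in particular Lemma~\ref{lem-line-path}) show that any $D_{h-\phi}$-path from $\BB z$ to $\BB w$ that does not enter $B_{2r}(0)$ through a neighbourhood of $U_r^{x,y}\cup T_x$ near $x'$ and leave through a neighbourhood of $U_r^{x,y}\cup T_y$ near $y'$ is strictly longer, so the $D_{h-\phi}$-geodesic $P^\phi$ traverses $U_r^{x,y}$ from near $x$ to near $y$ and hence passes within $\delta\rho r$ of $u$ and of $v$. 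Choosing $0<s<t<D_{h-\phi}(\BB z,\BB w)$ to be the first time $P^\phi$ is that close to $u$ and the last time it is that close to $v$ (using that $u$ precedes $v$ along the chain) gives $P^\phi(s),P^\phi(t)\in B_{3r/2}(0)$ and $|P^\phi(s)-P^\phi(t)|\geq(1-\alpha)\rho r-2\delta\rho r\geq b_0 r$. The two inequalities in~\eqref{eqn-internal-geo0} then follow from the triangle inequality, Hölder continuity, the bi-Lipschitz equivalence of $D_h$ and $\wt D_h$, and Axiom~\ref{item-metric-f} applied to the $\wt D_h$-geodesic from $u$ to $v$, which lies in $U_r^{x,y}$ where $\phi\equiv M$ so that $\wt D_{h-\phi}(u,v)\leq e^{-\xi M}\wt D_h(u,v)\leq e^{-\xi M}c_1' D_h(u,v)$, combined with the deterministic metric-geometry facts that, thanks to the padding corridors, $D_{h-\phi}(u,v)\geq e^{-\xi M}(1-o_\rho(1))D_h(u,v)$ and $\wt D_{h-\phi}(P^\phi(s),\bdy B_{3r}(0))$ is at least a positive constant times $e^{-\xi M}\frk c_r e^{\xi h_r(0)}$ (an escape from $B_{3r/2}(0)$ either crosses $\BB A_{3r/2,3r}(0)$ away from $T_x\cup T_y$, costing $\gtrsim\frk c_r e^{\xi h_r(0)}$ unscaled by Lemma~\ref{lem-line-path}, or traverses a definite portion of $U_r^{x,y}$, costing $\gtrsim e^{-\xi M}\frk c_r e^{\xi h_r(0)}$); for $\delta,\rho$ small these give $\wt D_{h-\phi}(P^\phi(s),P^\phi(t))\leq c_2' D_{h-\phi}(P^\phi(s),P^\phi(t))$ and $\wt D_{h-\phi}(P^\phi(s),P^\phi(t))\leq(c_*/C_*)\wt D_{h-\phi}(P^\phi(s),\bdy B_{3r}(0))$.

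The main obstacle is this last step: simultaneously tuning $M$, $M'$, $\delta$, $\rho$ and the padding length so that at once (i) $\phi$ has $r$-independent Dirichlet energy, (ii) $P^\phi$ is forced through $U_r^{x,y}$ and to within $\delta\rho r$ of both shortcut points, (iii) the ratio $\wt D_{h-\phi}(P^\phi(s),P^\phi(t))/D_{h-\phi}(P^\phi(s),P^\phi(t))$ stays below $c_2'$ despite the Weyl rescaling, and (iv) the lower bound on $\wt D_{h-\phi}(P^\phi(s),\bdy B_{3r}(0))$ survives. The tension is that making the tube very cheap (large $M,M'$) helps (ii) and (iv) but endangers (iii), since a path between the shortcut points could in principle detour through the very cheap thin tubes $T_x,T_y$; the padding corridors and the narrow-tube lower bounds of Lemma~\ref{lem-line-path}, together with the slack $c_1'<c_2'$, are exactly what resolve this. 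A secondary bookkeeping burden is keeping $E_r$ independent of $\BB z,\BB w$ while having it control the shortcut for every net pair $(x,y)$ at once, and checking the attendant measurability claims.
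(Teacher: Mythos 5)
Your overall architecture matches the paper's: a single-annulus shortcut event with positive probability, boosted to high probability by chaining disjoint balls via Lemma~\ref{lem-spatial-ind}, then a bump function that is large on the chain and even larger on thin radial tubes reaching toward $\bdy B_{3r}(0)$, with the geodesic forced through the cheap region by comparing an upper bound on $D_{h-\phi}(\BB x',\BB y')$ against unperturbed crossing costs. However, there is a genuine gap in how you handle the random hitting points $x',y'$. You discretize them to a finite $\delta$-net of $\bdy B_{2r}(0)$, so the access cost from $x'$ to the nearest tube $T_x$ is an \emph{unscaled} $D_h$-distance of order $\delta^\chi \frk c_r e^{\xi h_r(0)}$. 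To force $P^\phi$ not merely into the support of $\phi$ but to within $\delta\rho r$ of $u$ and $v$ (which you need for $|P^\phi(s)-P^\phi(t)|\geq b_0 r$ and for transferring the shortcut inequality), you must beat the ``hugging'' lower bound for paths that skirt $\bdy U_r^{x,y}$ without entering, and that bound is itself scaled by $e^{-\xi M}$; so the access cost, hence the net mesh $\delta$, must be chosen \emph{after} $M$. But $M$ must dominate the internal $D_h$-diameter of $U_r^{x,y}$, which grows as $\rho\to 0$, and your $\rho$ was chosen via Lemma~\ref{lem-spatial-ind} together with a union bound over the $\asymp\delta^{-2}$ net pairs, i.e.\ it depends on $\delta$. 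As written this is a circular parameter scheme ($\delta\to|\mcl N|\to\rho\to\operatorname{diam}\to M\to\delta$), and it is exactly the tension you flag as ``the main obstacle'' without resolving it. The paper breaks the loop by never discretizing the hitting points: the event and the tubes $W_r^x$ are defined simultaneously for \emph{all} $x,y\in\bdy B_{2r}(0)$ (finiteness of $\mcl G_r$ comes from the tubes being unions of grid squares), and the access cost is governed by a separate offset parameter $\Atiny$, chosen last via condition~\ref{item-highprob-line-close} after $K_f$ is fixed, with no feedback into $\rho$ or $\delta$.

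A second, smaller but real gap is measurability of your Stage-1 events, which you need for Lemma~\ref{lem-spatial-ind}: whether a given path in $\ol{\BB A_{\alpha\rho r,\rho r}(z)}$ is a $\wt D_h$-geodesic is not determined by $h$ near $B_{3\rho r}(z)$, since a shorter competing path could exit the ball. The paper builds in the localization $\wt D_h(u,v)\leq (c_*/C_*)^{2}\wt D_h\left(u,\bdy B_{2\rho r}(z)\right)$ already at the level of Lemma~\ref{lem-endpoint-geodesic}, which both makes the event local (Lemma~\ref{lem-geo-event-local}) and later yields, essentially for free, the disjointness of the $D_{h-\phi}$-geodesic from the thin tubes and the final inequality $\wt D_{h-\phi}(P^\phi(s),P^\phi(t))\leq (c_*/C_*)\wt D_{h-\phi}(P^\phi(s),\bdy B_{3r}(0))$. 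Your substitute for these two facts (padding corridors plus H\"older bounds plus taking $\rho$ small) is plausible for the final inequality, but without the localization condition the chaining step itself is not justified as stated. Both gaps are fixable, and the fixes are precisely the devices the paper uses, but in their absence the proposal does not go through.
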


The event $E_r$ and the collection of functions $\mcl G_r$ will be defined explicitly in Section~\ref{sec-E-def}; see Section~\ref{sec-shortcut-outline} for an overview of the definitions. The reason why we are able to restrict to a finite collection $\mcl G_r$ of bump functions $\phi$ is that we will break up space into a fine grid and require that the ``tube" where $\phi$ is very large (as referred to in Section~\ref{sec-shortcut-outline}) is a finite union of squares in the grid.  
As explained in Lemma~\ref{lem-E-frkE-compare} just below, Properties~\eqref{item-dirichlet-bound} and~\eqref{item-internal-geo} are used to check condition~\ref{item-geo-iterate-compare} in Theorem~\ref{thm-geo-iterate}. 
The purpose of Property~\eqref{item-dirichlet-bound} is to control the Radon-Nikodym derivative between the conditional laws of $h$ and $h-\phi$ given $h|_{\BB C \setminus B_{3r}(0)}$. 

We now explain how to conclude the proof of Proposition~\ref{prop-geo-event} assuming Proposition~\ref{prop-geo-event0}.
Fix points $\BB z,\BB w\in\BB C\setminus B_{4r}(0)$ and let $P$ be the $D_h$-geodesic from $\BB z$ to $\BB w$, as in Property~\eqref{item-internal-geo}.
We first define the event $\frk E_r^{\BB z,\BB w}(0)$ appearing in Proposition~\ref{prop-geo-event}. 
Let $\frk E_r = \frk E_r^{\BB z , \BB w}(0)$ be the event that there are times $0  < s < t  <  D_{h }(\BB z, \BB w) $ such that 
\allb \label{eqn-frkE-dist}
&P (s) , P  (t) \in B_{3r/2}(0) , \quad 
|P (s) - P (t)| \geq b_0 r , \notag\\
&\qquad \wt D_h(P(s) , P(t)) \leq c_2'  D_h(P (s) , P (t))  ,\quad \text{and} \notag\\
&\qquad \wt D_h(P(s) , P(t)) \leq (c_*/C_*) \wt D_h(P (s) , \bdy B_{3r}(0) ),
\alle 
and 
\allb \label{eqn-frkE-dirichlet}
\exp\left( -   (h ,\phi )_\nabla  +\frac12 (\phi , \phi)_\nabla \right)
\leq \Lambda, \quad \forall \phi \in \mcl G_r ,\quad \text{where} \quad \Lambda := e^{\Lambda_0}  ,
\alle
where $\Lambda_0$ is the constant from Property~\eqref{item-dirichlet-bound} and $b_0$ is the constant from Property~\eqref{item-internal-geo}. 
We note that~\eqref{eqn-frkE-dist} is the same as~\eqref{eqn-internal-geo0} from Property~\eqref{item-internal-geo}, but with $h$ instead of $h-\phi$. 
This condition is the main point of the definition of $\frk E_r$. The extra condition~\eqref{eqn-frkE-dirichlet} is only included to control a Radon-Nikodym derivative when we compare the conditional probabilities of $\frk E_r$ and $E_r$ given $h|_{\BB C\setminus B_{3r}(0)}$. 

\begin{lem} \label{lem-frkE-msrble}
The event $\frk E_r$ is a.s.\ determined by $h|_{B_{3r}(0)}$ and the $D_h$-geodesic $P$ stopped at its last exit time from $B_{3r}(0)$. 
\end{lem}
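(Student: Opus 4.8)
\textbf{Proof proposal for Lemma~\ref{lem-frkE-msrble}.} The plan is to show that each of the two defining conditions of $\frk E_r$ — the distance condition~\eqref{eqn-frkE-dist} and the Dirichlet-energy condition~\eqref{eqn-frkE-dirichlet} — is determined by the pair consisting of $h|_{B_{3r}(0)}$ and the geodesic $P$ stopped at its last exit time from $B_{3r}(0)$. The second condition is the easy one: every $\phi \in \mcl G_r$ is supported on a compact subset of $\BB A_{r/4,3r}(0) \subset B_{3r}(0)$ (by the hypotheses of Proposition~\ref{prop-geo-event0}), and the Dirichlet inner products $(h,\phi)_\nabla$ and $(\phi,\phi)_\nabla$ depend only on the restriction of $h$ to the support of $\phi$; since $\mcl G_r$ is a finite deterministic collection, the event~\eqref{eqn-frkE-dirichlet} is a measurable function of $h|_{B_{3r}(0)}$ alone.

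For the distance condition~\eqref{eqn-frkE-dist}, first I would observe that whenever $P\cap B_{2r}(0)\not=\emptyset$ and $E_r$ occurs, the times $s,t$ furnished by Property~\eqref{item-internal-geo} satisfy $P(s),P(t)\in B_{3r/2}(0)$, so the relevant portion of $P$ lies inside $B_{3r}(0)$ and is visible from the stopped geodesic; if $P\cap B_{2r}(0) = \emptyset$ then no such $s,t$ with $P(s),P(t)\in B_{3r/2}(0)$ exist, so $\frk E_r$ does not occur, and this dichotomy is itself detectable from the stopped geodesic. The first two clauses of~\eqref{eqn-frkE-dist}, namely $P(s),P(t)\in B_{3r/2}(0)$ and $|P(s)-P(t)|\geq b_0 r$, are manifestly functions of $P$ restricted to $B_{3r}(0)$. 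The substantive point is the third and fourth clauses, which involve $D_h$- and $\wt D_h$-distances between points of $B_{3r/2}(0)$. Here I would invoke the locality axiom (Axiom~\ref{item-metric-local}): the internal metrics $D_h(\cdot,\cdot;B_{3r}(0))$ and $\wt D_h(\cdot,\cdot;B_{3r}(0))$ are determined by $h|_{B_{3r}(0)}$, so it suffices to argue that $D_h(P(s),P(t)) = D_h(P(s),P(t);B_{3r}(0))$ (and similarly for $\wt D_h$ and for $D_h(P(s),\bdy B_{3r}(0))$).

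The key geometric observation is that, since $P$ is a $D_h$-geodesic and $P(s),P(t)\in B_{3r/2}(0)$ with $[s,t]\mapsto P$ the restriction of $P$ to $[s,t]$, this restriction $P|_{[s,t]}$ is itself a $D_h$-geodesic between $P(s)$ and $P(t)$; moreover, by the way $s,t$ are produced in Property~\eqref{item-internal-geo} (they arise from the analogous $D_{h-\phi}$ construction where the subtracted bump function forces the geodesic to stay near the tube $U_r^{x,y}\subset B_{2r}(0)$, and the corresponding statement for $P$ is what defines $\frk E_r$), we should have $P([s,t])\subset B_{3r}(0)$ — indeed $P([s,t])\subset B_{2r}(0)$ or at worst a slightly larger ball still inside $B_{3r}(0)$. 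Once $P([s,t])\subset B_{3r}(0)$, the geodesic $P|_{[s,t]}$ is a path in $B_{3r}(0)$ realizing $D_h(P(s),P(t))$, so $D_h(P(s),P(t);B_{3r}(0)) \leq \op{len}(P|_{[s,t]};D_h) = D_h(P(s),P(t))$; the reverse inequality is automatic. For the term $\wt D_h(P(s),\bdy B_{3r}(0))$, the identity $\wt D_h(P(s),\bdy B_{3r}(0)) = \wt D_h(P(s),\bdy B_{3r}(0);B_{3r}(0))$ holds trivially since any path from $P(s)$ to $\bdy B_{3r}(0)$ of near-minimal $\wt D_h$-length can be truncated at its first hitting of $\bdy B_{3r}(0)$ to lie in $\ol{B_{3r}(0)}$. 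Thus all distances appearing in~\eqref{eqn-frkE-dist} are determined by $h|_{B_{3r}(0)}$ together with the points $P(s),P(t)$ and the arc $P|_{[s,t]}$, all of which are read off from the stopped geodesic. Combining the two conditions gives the claim.

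\textbf{Main obstacle.} The delicate step is confirming that the times $s,t$ in the definition of $\frk E_r$ can always be taken so that $P([s,t])$ stays inside $B_{3r}(0)$ (so that $D_h(P(s),P(t))$ equals the internal distance), rather than $P$ wandering far outside $B_{3r}(0)$ between times $s$ and $t$ while still having $P(s),P(t)\in B_{3r/2}(0)$. This is not a genuine difficulty because the event $\frk E_r$ is our own construction — we are free to build into~\eqref{eqn-frkE-dist} the extra requirement $P([s,t])\subset B_{3r}(0)$ (or to note that the $s,t$ produced by Property~\eqref{item-internal-geo} already satisfy it, since the tube-confinement mechanism keeps the relevant geodesic segment inside $B_{2r}(0)$) — but it must be checked carefully that this added requirement is compatible with the eventual application of Proposition~\ref{prop-geo-event}, i.e. that the conclusion~\eqref{eqn-geo-event} of Proposition~\ref{prop-geo-event} still follows. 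I would handle this by making the containment $P([s,t])\subset B_{2r}(0)$ an explicit clause of~\eqref{eqn-frkE-dist} from the outset, mirroring the first clause $P^\phi([s,t])\subset B_{\lambda_2 r}(z)$ that already appears implicitly in the $D_{h-\phi}$ version.
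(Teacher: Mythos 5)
Your treatment of the Dirichlet condition~\eqref{eqn-frkE-dirichlet} matches the paper, but for the distance condition~\eqref{eqn-frkE-dist} there is a genuine gap, and your proposed repair changes the object being studied. The lemma concerns $\frk E_r$ exactly as defined: the times $s,t$ are existentially quantified, not ``produced'' by Property~\eqref{item-internal-geo} — that property concerns $P^\phi$ and $h-\phi$, and in any case the event must be measurable on the whole probability space, not just on $E_r\cap\{P\cap B_{2r}(0)\not=\emptyset\}$. So you cannot invoke the tube-confinement mechanism to get $P([s,t])\subset B_{3r}(0)$, and adding such a containment as an extra clause of~\eqref{eqn-frkE-dist} is not a proof of the stated lemma (it would also oblige you to re-derive Lemma~\ref{lem-internal-geo} and the deduction of Proposition~\ref{prop-geo-event} for the modified event). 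The containment is in fact unnecessary for the $D_h$-term: since $P$ is a $D_h$-geodesic, $D_h(P(s),P(t)) = t-s$, which is read off directly from the parameterization of $P$ stopped at its last exit from $B_{3r}(0)$ (both $s$ and $t$ precede that time because $P(s),P(t)\in B_{3r/2}(0)$); no identification with an internal distance is needed.

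The more serious gap is the phrase ``and similarly for $\wt D_h$''. Your geodesic-containment argument does not transfer: $P|_{[s,t]}$ is a $D_h$-geodesic, not a $\wt D_h$-geodesic, so even granting $P([s,t])\subset B_{3r}(0)$ you get no control over whether near-$\wt D_h$-minimal paths from $P(s)$ to $P(t)$ stay in $B_{3r}(0)$, i.e.\ no identity $\wt D_h(P(s),P(t)) = \wt D_h(P(s),P(t);B_{3r}(0))$. The paper's mechanism is the fourth clause of~\eqref{eqn-frkE-dist} itself, which is built into the definition of $\frk E_r$ precisely for this purpose: if $\wt D_h(P(s),P(t)) \leq (c_*/C_*)\,\wt D_h(P(s),\bdy B_{3r}(0))$, then $P(t)$ lies in the $\wt D_h$-metric ball centered at $P(s)$ of radius $(c_*/C_*)\,\wt D_h(P(s),\bdy B_{3r}(0))$, which is contained in $B_{3r}(0)$ (recall the standing assumption $c_*<C_*$ in this part of the paper); hence the full distance coincides with the internal one. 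Consequently the set of pairs $(s,t)$ satisfying that clause, and the values of every distance appearing in~\eqref{eqn-frkE-dist}, are determined by the internal metric $\wt D_h(\cdot,\cdot;B_{3r}(0))$ — hence by $h|_{B_{3r}(0)}$ via Axiom~\ref{item-metric-local} — together with the stopped geodesic. With these two corrections the argument closes without any modification of $\frk E_r$.
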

\begin{proof}  
Recall that each of the functions $\phi \in \mcl G_r$ is supported on $\BB A_{r/4,3r}(0)$. 
Since $\mcl G_r$ is a finite set, it is clear that the condition~\eqref{eqn-frkE-dirichlet} is determined by $h|_{B_{3r}(0)}$. 

To deal with~\eqref{eqn-frkE-dist}, we first observe that the set of pairs of times $s ,t \in [0, D_h(\BB z , \BB w)]$ satisfying $\wt D_h(P(s) , P(t)) \leq (c_*/C_*) \wt D_h(P (s) , \bdy B_{3r}(0) )$ is determined by $P$ stopped at its last exit time from $B_{3r}(0)$ and the internal metric $\wt D_h(\cdot,\cdot ; B_{3r}(0))$.
Indeed, a pair $(s,t)$ belongs to this set if and only if $P(t)$ is contained in the $\wt D_h$-metric ball of radius $(c_* / C_*) \wt D_h(P (s) , \bdy B_{3r}(0) )$ centered at $P(s)$. 
For each such pair of times $s,t$, we have $\wt D_h(P(s) , P(t)) = \wt D_h(P(s) , P(t) ; B_{3r}(0))$. 
Since $P$ is a $D_h$-geodesic, the points $P(s) , P(t)$ and the distance $D_h(P(s) , P(t)) = t-s$ for each such pair of points $s,t$ is determined by $P$ stopped at its last exit time from $B_{3r}(0)$. 
Since $\wt D_h(\cdot,\cdot ; B_{3r}(0))$ is determined by $h|_{B_{3r}(0)}$ (Axiom~\ref{item-metric-local}) we get that the event that there exists times $s ,t \in [0, D_h(\BB z , \BB w)]$ satisfying~\eqref{eqn-frkE-dist} is determined by $h|_{B_{3r}(0)}$ and $P$ stopped at its last exit time from $B_{3r}(0)$.
\end{proof}

We can now check condition~\ref{item-geo-iterate-compare} of Theorem~\ref{thm-geo-iterate} for the above definitions of $E_r = E_r(0)$ and $\frk E_r = \frk E_r^{\BB z,\BB w}(0)$ using the mutual absolute continuity of the laws of $h$ and $h-\phi$. 

\begin{lem} \label{lem-E-frkE-compare}
Assume Proposition~\ref{prop-geo-event0}. 
With $\Lambda$ as in~\eqref{eqn-frkE-dirichlet}, it is a.s.\ the case that 
\allb \label{eqn-E-frkE-compare}
\BB P\left[ E_r \cap \{P\cap B_{2r}(0) \not=\emptyset\} \,|\, h|_{\BB C\setminus B_{3r}(0)}   \right]
\leq  \Lambda \BB P\left[ \frk E_r   \cap \{P \cap B_{2r}(0)\not=\emptyset\} \,|\, h|_{\BB C\setminus B_{3r}(0)}   \right] .
\alle
\end{lem}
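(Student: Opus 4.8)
\textbf{Proof proposal for Lemma~\ref{lem-E-frkE-compare}.} The plan is to realize $h - \phi$ as a re-weighting of the conditional law of $h$ given $h|_{\BB C \setminus B_{3r}(0)}$, and to use Property~\eqref{item-internal-geo} of Proposition~\ref{prop-geo-event0} to transfer the event $E_r$ into an occurrence of $\frk E_r$ after this re-weighting. First I would record the relevant absolute-continuity statement: since $\phi$ is deterministic given $h|_{\BB C\setminus B_{3r}(0)}$ and is supported on a compact subset of $B_{3r}(0)$, the standard Cameron--Martin computation for the GFF (as in, e.g., \cite{mq-geodesics}) shows that conditionally on $h|_{\BB C\setminus B_{3r}(0)}$, the law of $h - \phi$ restricted to $B_{3r}(0)$ is absolutely continuous with respect to the conditional law of $h$, with Radon--Nikodym derivative
\eqbn
Z_\phi := \exp\left( -(h,\phi)_\nabla + \frac12 (\phi,\phi)_\nabla \right)
\eqen
(here the Dirichlet pairing $(h,\phi)_\nabla$ makes sense because $\phi$ is smooth and compactly supported). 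Here $\phi$ is the random function from Property~\eqref{item-internal-geo}, which depends only on $\BB z,\BB w$ and $h|_{\BB C\setminus B_{3r}(0)}$, so we may treat it as fixed under the conditional law.

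Next I would chain together the implications. Work on the event $\{P \cap B_{2r}(0) \neq \emptyset\}$, where $P$ is the $D_h$-geodesic. On $E_r \cap \{P\cap B_{2r}(0)\neq\emptyset\}$, Property~\eqref{item-internal-geo} produces times $0 < s < t < D_{h-\phi}(\BB z,\BB w)$ such that the chain~\eqref{eqn-internal-geo0} holds for the $D_{h-\phi}$-geodesic $P^\phi$; these are precisely the conditions~\eqref{eqn-frkE-dist} but with $h-\phi$ in place of $h$. Moreover Property~\eqref{item-dirichlet-bound} gives $|(h,\phi)_\nabla| + \frac12(\phi,\phi)_\nabla \le \Lambda_0$ on $E_r$, hence $Z_\phi^{-1} = \exp\big((h,\phi)_\nabla - \frac12(\phi,\phi)_\nabla\big) \le e^{\Lambda_0} = \Lambda$, and the analogous bound for $h-\phi$ in place of $h$ — i.e.\ condition~\eqref{eqn-frkE-dirichlet} with $h-\phi$ substituted for $h$ — follows as well since $(h-\phi,\phi)_\nabla = (h,\phi)_\nabla - (\phi,\phi)_\nabla$ and everything is bounded by $\Lambda_0$ in absolute value (possibly after absorbing a factor of $2$ into the choice of $\Lambda_0$, which only depends on $\BB p,\mu,\nu,c_1',c_2'$). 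Also, on $E_r$, one needs that $P^\phi$ actually enters $B_{2r}(0)$; this is part of what Property~\eqref{item-internal-geo} gives, since the times $s,t$ it produces satisfy $P^\phi(s),P^\phi(t)\in B_{3r/2}(0) \subset B_{2r}(0)$. Therefore the event
\[
E_r \cap \{P \cap B_{2r}(0)\neq\emptyset\}
\]
is contained in the event that the field $h - \phi$ (in place of $h$) satisfies both~\eqref{eqn-frkE-dist} and~\eqref{eqn-frkE-dirichlet} and that its geodesic meets $B_{2r}(0)$ — that is, it is contained in the ``$\frk E_r \cap \{P\cap B_{2r}(0)\neq\emptyset\}$ event evaluated at $h-\phi$.''

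Finally I would run the change-of-measure. Let $G := \{$the field satisfies the conditions defining $\frk E_r \cap \{P\cap B_{2r}(0)\neq\emptyset\}\}$, which by Lemma~\ref{lem-frkE-msrble} (together with the measurability of $E_r$ and of the map $h|_{\BB C\setminus B_{3r}(0)}\mapsto \phi$) is a measurable event depending on $h|_{B_{3r}(0)}$ and an initial segment of the geodesic; one checks that these quantities are determined by $h|_{B_{3r}(0)}$ in the way needed to apply the conditional absolute continuity above. Then, using that $h-\phi$ restricted to $B_{3r}(0)$ has conditional law $Z_\phi \cdot (\text{conditional law of } h)$, we get a.s.
\allb
\BB P\left[ E_r \cap \{P\cap B_{2r}(0)\neq\emptyset\} \,|\, h|_{\BB C\setminus B_{3r}(0)} \right]
&\leq \BB P\left[ (h-\phi) \in G \,\big|\, h|_{\BB C\setminus B_{3r}(0)} \right] \notag\\
&= \BB E\left[ Z_\phi \BB 1_{G}(h) \,\big|\, h|_{\BB C\setminus B_{3r}(0)} \right] \notag\\
&\leq \Lambda \, \BB P\left[ \frk E_r \cap \{P\cap B_{2r}(0)\neq\emptyset\} \,\big|\, h|_{\BB C\setminus B_{3r}(0)} \right],
\alle
where in the last step I used that $Z_\phi \le \Lambda$ on the event $G$ (this is exactly condition~\eqref{eqn-frkE-dirichlet}, which is part of the definition of $\frk E_r$ and hence holds on $G$). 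This is~\eqref{eqn-E-frkE-compare}.

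\textbf{Main obstacle.} The delicate point is the bookkeeping around which field the various events are ``evaluated at'': Property~\eqref{item-internal-geo} is phrased in terms of the $D_{h-\phi}$-geodesic $P^\phi$ and the metric $\wt D_{h-\phi}$, whereas $\frk E_r$ is phrased in terms of $D_h$ and $P$. The resolution is that both $\frk E_r$ and the conclusion of Property~\eqref{item-internal-geo} are instances of the \emph{same} measurable functional applied to two different fields ($h$ and $h-\phi$ respectively), and the conditional absolute continuity lets us pass between them; but one must be careful that $\phi$ itself is $h|_{\BB C\setminus B_{3r}(0)}$-measurable so that, under the conditional law, it is genuinely a fixed shift. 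A second, more routine, technical point is justifying the Dirichlet-pairing Radon--Nikodym formula for the whole-plane GFF conditioned on its values outside a ball, and checking the constant $\Lambda_0$ absorbs the passage from bounds on $(h,\phi)_\nabla$ to bounds on $(h-\phi,\phi)_\nabla$; both are standard given the estimates already assembled.
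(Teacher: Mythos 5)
Your proposal is correct and follows essentially the same route as the paper's proof: the containment of $E_r\cap\{P\cap B_{2r}(0)\neq\emptyset\}$ in the $(h-\phi)$-version of $\frk E_r\cap\{P\cap B_{2r}(0)\neq\emptyset\}$ via Properties~\eqref{item-dirichlet-bound} and~\eqref{item-internal-geo}, followed by a conditional Cameron--Martin change of measure in which the Radon--Nikodym derivative is bounded by $\Lambda$ on $\frk E_r$ through~\eqref{eqn-frkE-dirichlet}. The only slip is the sign of the quadratic term in your Radon--Nikodym derivative: for $h-\phi$ it is $\exp\left(-(h,\phi)_\nabla-\tfrac12(\phi,\phi)_\nabla\right)$ rather than $\exp\left(-(h,\phi)_\nabla+\tfrac12(\phi,\phi)_\nabla\right)$; since the former is dominated by the latter, which is exactly the quantity controlled in~\eqref{eqn-frkE-dirichlet}, your estimate goes through unchanged (and this is how the paper argues).
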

\begin{proof}
The occurrence of the events $E_r$ and $\frk E_r$ is unaffected by adding a constant to $h$, so we can assume without loss of generality that $h$ is normalized so that its circle average over $\bdy B_{4r}(0)$, say, is zero. 
By the Markov property of $h$, under the conditional law given $h|_{\BB C\setminus B_{3r}(0)}$, we can decompose $h|_{B_{3r}(0)}$ as the sum of a harmonic function which is determined by $h|_{\BB C\setminus B_{3r}(0)}$ and a zero-boundary GFF on $B_{3r}(0)$ which is independent from $h|_{\BB C\setminus B_{3r}(0)}$.

Let $\phi \in \mcl G_r$ be the smooth bump function from Property~\eqref{item-internal-geo}, which is determined by $h|_{\BB C\setminus B_{3r}(0)}$. 
By a standard Radon-Nikodym derivative calculation for the GFF, 
if we condition $h|_{\BB C\setminus B_{3r}(0)}$ then the conditional law of $h-\phi$ is a.s.\ absolutely continuous with respect to the conditional law of $h$, and the Radon-Nikodym derivative of the former w.r.t.\ the latter is
\eqb
M_h = \exp\left( -   (h ,\phi )_\nabla  - \frac12 (\phi , \phi)_\nabla \right) .
\eqe
Note that since $\phi$ is supported on $B_{3r}(0)$, the Radon-Nikodym derivative $M_h$ depends only on the zero-boundary part of $h|_{B_{3r}(0)}$.

Define the $D_{h-\phi}$-geodesic $P^\phi$ from $\BB z$ to $\BB w$ and the event $\frk E_r^\phi$ in the same manner as $P$ and $\frk E_r$ but with $h-\phi$ in place of $h$. 
By~\eqref{eqn-frkE-dirichlet}, on $\frk E_r $, we have $M_h \leq \Lambda$. 
Therefore,
\allb \label{eqn-annulus-rn}
\BB P\left[ \frk E_r^\phi  \cap \{P^\phi \cap B_{2r}(0)\not=\emptyset\} \,|\, h|_{\BB C\setminus B_{3r}(0)}   \right]
&= \BB E\left[ M_h \BB 1_{\frk E_r  \cap \{P  \cap B_{2r}(0)\not=\emptyset\}} \,|\, h|_{\BB C\setminus B_{3r}(0)}   \right] \notag \\
&\leq \Lambda \BB P\left[ \frk E_r   \cap \{P \cap B_{2r}(0)\not=\emptyset\} \,|\, h|_{\BB C\setminus B_{3r}(0)}   \right] .
\alle

We now claim that
\eqb \label{eqn-frkE-contain}
E_r \cap \{P\cap B_{2r}(0) \not=\emptyset\} \subset \frk E_r^\phi \cap \{P^\phi \cap B_{2r}(0) \not=\emptyset\} .
\eqe
Indeed, Property~\eqref{item-internal-geo} (subtracting a bump function) says that the main condition~\eqref{eqn-frkE-dist} in the definition of $\frk E_r$ is satisfied with $h-\phi$ in place of $h$ whenever $E_r \cap \{P\cap B_{2r}(0) \not=\emptyset\}$ occurs, which implies in particular that $P^\phi \cap B_{2r}(0) \not=\emptyset$ whenever $E_r \cap \{P\cap B_{2r}(0) \not=\emptyset\}$ occurs. 
Furthermore, Property~\eqref{item-dirichlet-bound} (bound for Dirichlet inner products) implies that the Dirichlet energy condition~\eqref{eqn-frkE-dirichlet} in the definition of $\frk E_r$ holds with $h-\phi$ in place of $h$ whenever $E_r$ occurs.
Thus~\eqref{eqn-frkE-contain} holds. 

As an immediate consequence of~\eqref{eqn-frkE-contain}, a.s.\ 
\allb \label{eqn-annulus-mono}
\BB P\left[ E_r \cap \{P\cap B_{2r}(0) \not=\emptyset\} \,|\, h|_{\BB C\setminus B_{3r}(0)}   \right] 
\leq  \BB P\left[ \frk E_r^\phi \cap \{P^\phi \cap B_{2r}(0)\not=\emptyset\} \,|\, h|_{\BB C\setminus B_{3r}(0)}   \right] .
\alle
Combining~\eqref{eqn-annulus-rn} and~\eqref{eqn-annulus-mono} gives~\eqref{eqn-E-frkE-compare}.
\end{proof}

\begin{proof}[Proof of Proposition~\ref{prop-geo-event}, assuming Proposition~\ref{prop-geo-event0}]
Let $\BB p$ be as in Theorem~\ref{thm-geo-iterate} with our given choice of $0 < \mu < \nu \leq \nu_*$ and with the constants
\eqb \label{eqn-constant-choice}
\lambda_1 := 1/4 ,\quad \lambda_2 := 2,\quad \lambda_3 := 3, \quad \lambda_4 := 4,\quad \text{and} \quad \lambda_5 :=5.
\eqe
For $z\in\BB C$, $r \in \rho^{-1} \mcl R_0$, and $\BB z,\BB w\in \BB C\setminus B_{4r}(z)$, let $E_r(z)$ (resp.\ $\frk E_r^{\BB z,\BB w}(z)$) be the event $E_r$ of Proposition~\ref{prop-geo-event0} (resp.\ the event and $\frk E_r^{\BB z +z , \BB w + z}$ defined above) with the field $h(\cdot+z) - h_1(z)\eqD h$ in place of $h$. 

Let $c'' = c''(\alpha,c_1',\mu,\nu) \in (c_* , c_1')$ be chosen as in Proposition~\ref{prop-attained-good'} with $\alpha$ as in Lemma~\ref{lem-endpoint-geodesic} and $c_1'$ in place of $c'$.  
Also let $\mcl R_0$ be defined as in the discussion surrounding~\eqref{eqn-attained-good-event} and let $\mcl R :=  \rho^{-1} \mcl R_0$. By the definition of $\frk E_r^{\BB z,\BB w}(z)$ (in particular,~\eqref{eqn-frkE-dist}), the conditions~\eqref{eqn-geo-event} hold on $\frk E_r^{\BB z,\BB w}(z)$ with $b = b_0$.

If $\BB r >0$ such that $\BB P[\ul G_{\BB r}(c'', \beta) ] \geq \beta$, then Proposition~\ref{prop-attained-good} implies that there exists $\ep_0 = \ep_0(\beta,c_1',\mu,\nu) > 0$ such that for each $\ep\in (0,\ep_0]$,  
\eqb
\#(\mcl R_0 \cap [\ep^{1+\nu} \BB r , \ep \BB r] \cap \{8^{-k} \BB r\}_{k\in\BB N}) \geq\mu\log_8 \ep^{-1}  , 
\eqe
equivalently, 
\eqb
 \#( \mcl R\cap [\ep^{1+\nu} \rho^{-1} \BB r , \ep \rho^{-1} \BB r] \cap \{8^{-k} \rho^{-1}\BB r \}_{k\in\BB N}) \geq\mu\log_8 \ep^{-1} .
\eqe
This shows that condition~\ref{item-geo-iterate-dense} of  Theorem~\ref{thm-geo-iterate} is satisfied with $\rho^{-1} \BB r$ in place of $\BB r$.
By Property~\eqref{item-geo-event0} (measurability and high probability) and Lemma~\ref{lem-frkE-msrble}, conditions~\ref{item-geo-iterate-msrble} and~\ref{item-geo-iterate-prob} of Theorem~\ref{thm-geo-iterate} are satisfied for the events $E_r(z)$ and $\frk E_r^{\BB z,\BB w}(z)$ above. 
By Lemma~\ref{lem-E-frkE-compare}, condition~\ref{item-geo-iterate-compare} of Theorem~\ref{thm-geo-iterate} is also satisfied.
\end{proof}

\subsection{Building a tube which contains a shortcut with positive probability}
\label{sec-endpoint-geodesic}

\begin{figure}[t!]
 \begin{center}
\includegraphics[scale=1]{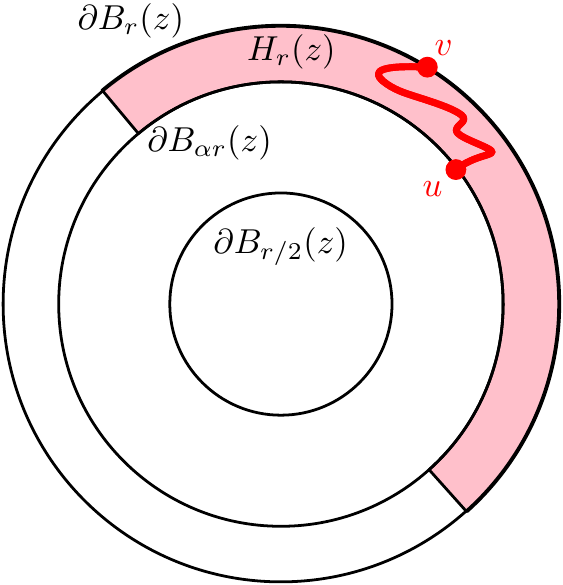}
\vspace{-0.01\textheight}
\caption{Illustration of the statement of Lemma~\ref{lem-endpoint-geodesic}. The lemma asserts that with probability at least $p_0/8$, there is a $D_h$-geodesic (red) between points $u$ and $v$ in the inner and outer boundaries, resp., of the pink half-annulus $H_r(z)$ which is contained in $\ol{H_r(z)}$ and satisfies $\wt D_h(u,v) \leq c_1' D_h(u,v)$. 
The main task of Section~\ref{sec-geodesic-shortcut} is to force a $D_h$-geodesic between two far away points to get near a $\wt D_h$-geodesic like the red one in the picture.  
}\label{fig-endpoint-geodesic}
\end{center}
\vspace{-1em}
\end{figure} 

We now turn our attention to constructing the event $E_r$ and the collection of functions $\mcl G_r$ of Proposition~\ref{prop-geo-event0}, following the strategy outlined in Section~\ref{sec-shortcut-outline}. 
Recall that $\alpha_* \in (1/2,1)$ and $p_0 \in (0,1)$ are the parameters from Proposition~\ref{prop-attained-good'} with $\mu$ and $\nu$ as in Proposition~\ref{prop-geo-event}.

Our goal is to define for each $z\in\BB C$ and each $r\in\mcl R_0$ a deterministic open ``tube" $V_r(z) \subset B_{3r}(z)$ and an event $F_r(z)$ such that $\BB P[F_r(z)]$ is bounded below uniformly over $z$ and $r$, $F_r(z) \in \sigma\left( (h-h_{4r}(z)) |_{B_{3r}(z)} \right)$, and on $F_r(z)$ there are points $u,v\in V_r(z)$ which satisfy~\eqref{eqn-attained-good-event} plus some additional conditions. 
We will define $V_r(z)$ and $F_r(z)$ and prove a lower bound for $\BB P[F_r(z)]$ in Lemma~\ref{lem-deterministic-geodesic}, with Lemma~\ref{lem-endpoint-geodesic} as an intermediate step. We will prove the required measurability in Lemma~\ref{lem-geo-event-local}. 

We define a \emph{half-annulus} of an annulus $A$ to be the intersection of $A$ with a half-plane whose boundary passes through the center of $A$. 
It is easier for us to work with a $\wt D_h$-geodesic which is constrained to stay in a half-annulus rather than a whole annulus. The reason for this is that it allows us to easily find paths from each of the endpoints of the geodesic to points far away from the half-annulus which do not get near the geodesic except at their endpoints (this might be trickier if the geodesic wraps around the whole annulus). 
The following lemma, which is a slight improvement on the condition in the definition of $\mcl R_0$, will allow us to work with a half-annulus rather than a whole annulus. 
 
\begin{lem} \label{lem-endpoint-geodesic}
There exists $\alpha  \in [\alpha_* ,1)$ depending only on $\mu,\nu$ such that for each $r\in \mcl R_0$ and each $z\in\BB C$, there is a deterministic half-annulus $H_r(z) \subset \BB A_{\alpha r , r}(z)$ such that with probability at least $p_0/8$, there exists $u \in \bdy B_{\alpha r}(z)$ and $v \in \bdy B_r(z)$ with the following properties.  
\begin{enumerate}
\item $\wt D_h(u,v) \leq c_1' D_h(u,v)$. 
\item The $\wt D_h$-geodesic from $u$ to $v$ is unique and is contained in $\ol{H_r(z)}$. 
\item $ \wt D_h(u,v) \leq (c_*/C_*)^{2} \wt D_h\left(\BB A_{\alpha r , r}(z) , \bdy B_{2r}(z) \right)$, where $c_*$ and $C_*$ are as in~\eqref{eqn-max-min-def}.  
\end{enumerate}  
\end{lem}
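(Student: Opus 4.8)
\textbf{Proof strategy for Lemma~\ref{lem-endpoint-geodesic}.}
The plan is to start from the event defining membership in $\mcl R_0$ and upgrade it in two ways: first, so that the $\wt D_h$-geodesic is contained in a \emph{half}-annulus rather than the full annulus $\BB A_{\alpha r,r}(z)$; and second, so that it satisfies the extra smallness condition (3) relating $\wt D_h(u,v)$ to $\wt D_h(\BB A_{\alpha r,r}(z),\bdy B_{2r}(z))$. By Axiom~\ref{item-metric-translate} (translation invariance) and the translation invariance of the law of $h$ modulo additive constant, together with Axiom~\ref{item-metric-f} (Weyl scaling), which makes the events in question independent of the choice of additive constant, it suffices to treat $z=0$; the half-annulus $H_r(0)$ and all probabilities will then be transported to general $z$ by translation. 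For $z=0$ and $r\in\mcl R_0$, the definition of $\mcl R_0$ gives that with probability at least $p_0$ there exist $u\in\bdy B_{\alpha r}(0)$, $v\in\bdy B_r(0)$ with $\wt D_h(u,v)\le c_1' D_h(u,v)$ and with the (unique) $\wt D_h$-geodesic $\wt P$ from $u$ to $v$ contained in $\ol{\BB A_{\alpha r,r}(0)}$.

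First I would handle the half-annulus reduction. Fix four half-planes $H^{(1)},\dots,H^{(4)}$ through the origin whose corresponding half-annuli $\BB A_{\alpha r,r}(0)\cap H^{(i)}$ cover $\BB A_{\alpha r,r}(0)$, in such a way that any connected subset of $\ol{\BB A_{\alpha r,r}(0)}$ of Euclidean diameter less than $(1-\alpha)r$ lies in at least one $\ol{\BB A_{\alpha r,r}(0)\cap H^{(i)}}$ (this is possible if $\alpha$ is close enough to $1$, since then the annulus is thin and the geodesic, having endpoints on the two boundary circles, has diameter at least $(1-\alpha)r$ but — crucially — we do \emph{not} need it to be short; rather we cover $\bdy\BB A$ by four overlapping arcs and argue that an arc-to-arc geodesic staying in the thin annulus must lie near one arc). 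More carefully: a $\wt D_h$-geodesic from $u\in\bdy B_{\alpha r}(0)$ to $v\in\bdy B_r(0)$ that stays in $\ol{\BB A_{\alpha r,r}(0)}$ cannot wind around the annulus too many times without being shortcut by a path around the annulus; using Axiom~\ref{item-metric-coord} (tightness across scales) to get, with probability close to $1$, an upper bound of the form $A\,\frk c_r e^{\xi h_r(0)}$ for the $\wt D_h$-distance around $\BB A_{\alpha r,r}(0)$ and a matching lower bound for the $\wt D_h$-length of $\wt P$, one concludes that on an event of probability at least $p_0/2$, $\wt P$ is contained in $\ol{\BB A_{\alpha r,r}(0)\cap H^{(i)}}$ for at least one $i$. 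By pigeonhole there is a \emph{deterministic} choice $i=i(r)$ for which $\BB P[\wt P\subset \ol{\BB A_{\alpha r,r}(0)\cap H^{(i)}}\text{ and (1) holds}]\ge p_0/8$; set $H_r(0):=\BB A_{\alpha r,r}(0)\cap H^{(i(r))}$, then translate.

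Next I would incorporate condition (3). Note that $\wt D_h\!\left(\BB A_{\alpha r,r}(0),\bdy B_{2r}(0)\right)\ge \wt D_h\!\left(\bdy B_r(0),\bdy B_{2r}(0)\right)$, and by Axiom~\ref{item-metric-coord} applied to $\wt D$ there is a constant $s_0>0$, independent of $r$, such that $\wt D_h\!\left(\bdy B_r(0),\bdy B_{2r}(0)\right)\ge s_0\,\frk c_r e^{\xi h_r(0)}$ with probability at least $1-p_0/16$. On the other hand, by the bi-Lipschitz equivalence of $\wt D_h$ and $D_h$ (Proposition~\ref{prop-lqg-metric-bilip}) and again Axiom~\ref{item-metric-coord} for $D$, we have $\wt D_h(u,v)\le c_1' D_h(u,v)\le c_1'\,(\text{diam of }\BB A_{\alpha r,r}(0)\text{ in }D_h)\le c_1' S_0\,\frk c_r e^{\xi h_r(0)}$ for a constant $S_0$, with probability at least $1-p_0/16$; but this bound involves the wrong direction for making $\wt D_h(u,v)$ \emph{small}. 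To actually force $\wt D_h(u,v)\le (c_*/C_*)^2\,\wt D_h(\BB A_{\alpha r,r}(0),\bdy B_{2r}(0))$, I would instead shrink the annulus: replace the fixed ratio $\alpha$ by taking $\alpha$ even closer to $1$ (allowed, since the constant $\alpha_*$ from Proposition~\ref{prop-attained-good'} only needs $\alpha\in[\alpha_*,1)$, and Proposition~\ref{prop-attained-good'} still produces scales in $\mcl R_0$ for any such $\alpha$). As $\alpha\to1$, the $\wt D_h$-diameter of $\ol{\BB A_{\alpha r,r}(0)}$, hence $\wt D_h(u,v)$, becomes an arbitrarily small multiple of $\wt D_h(\bdy B_r(0),\bdy B_{2r}(0))$ with probability close to $1$ (this is exactly the content of Lemma~\ref{lem-attained-long} / tightness across scales, comparing the small-diameter-of-thin-annulus to the fixed macroscopic gap). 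Fixing $\alpha$ so that this small multiple is below $(c_*/C_*)^2$ on an event of probability at least $1-p_0/16$, and intersecting with the events from the previous two paragraphs, leaves probability at least $p_0/8$ on which all of (1), (2), (3) hold.

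The main obstacle I anticipate is the half-annulus reduction: one must rule out, with the right quantitative probability bound, the possibility that every $\wt D_h$-geodesic from the inner to the outer boundary circle that stays in the thin annulus genuinely requires winding substantially around the annulus (so that it cannot be trapped in any single half-annulus). The resolution is the standard ``shortcut around a thin annulus'' argument via Axiom~\ref{item-metric-coord}: a path winding around $\BB A_{\alpha r,r}(0)$ has $\wt D_h$-length at most $A\,\frk c_r e^{\xi h_r(0)}$ with high probability, while a geodesic that winds can be rerouted along such a path to save length, contradicting minimality — so with high probability $\wt P$ does not separate the two boundary circles and therefore lies in a half-annulus. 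Combining this with a union bound over the (constantly many) exceptional events and the pigeonhole choice of $i(r)$ gives the claimed constant $p_0/8$; all implicit constants depend only on $\mu,\nu$ (through $p_0$ and $\alpha_*$) as required.
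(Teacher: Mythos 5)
Your overall skeleton (start from the defining event of $\mcl R_0$, take $\alpha$ close to $1$, pigeonhole over finitely many deterministic half-annuli to pass from $p_0$ to $p_0/8$, and add regularity events via Axioms~\ref{item-metric-translate} and~\ref{item-metric-coord}) is the same as the paper's, but the two steps that carry the real content are done with incorrect mechanisms. For the half-annulus reduction, you argue that a small $\wt D_h$-distance around the annulus lets you ``shortcut'' a winding geodesic, and that a geodesic which ``does not separate the two boundary circles therefore lies in a half-annulus.'' Neither implication holds: a geodesic cannot be shortcut by definition, so the rerouting remark produces no contradiction without a quantitative length comparison, and a path from $\bdy B_{\alpha r}(z)$ to $\bdy B_r(z)$ whose angular extent lies in $(\pi,2\pi)$ separates nothing yet fits in no half-annulus. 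Your covering criterion (connected sets of Euclidean diameter less than $(1-\alpha)r$ lie in one of four half-annuli) can also never apply to $\wt P$, whose endpoints lie on the two circles so that its diameter is at least $(1-\alpha)r$. The argument that works is via Lemma~\ref{lem-attained-long}: first use Axioms~\ref{item-metric-translate} and~\ref{item-metric-coord} to fix $s<S$ so that with probability at least $1-p_0/2$ the $\wt D_h$-diameter of $\ol{\BB A_{3r/4,r}(z)}$ is at most $S\frk c_r e^{\xi h_r(z)}$ and any two points of that annulus not contained in a common quarter-annulus are at $\wt D_h$-distance at least $s\frk c_r e^{\xi h_r(z)}$; then choose $\alpha$ so that, by Lemma~\ref{lem-attained-long}, internal distances in $\BB A_{\alpha r,r}(z)$ between points at $\wt D_h$-distance at least $s\frk c_r e^{\xi h_r(z)}$ exceed $S\frk c_r e^{\xi h_r(z)}$. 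Applying this to every pair of points of $\wt P$ (each subsegment is again a $\wt D_h$-geodesic lying in the annulus) shows all such pairs lie in a common quarter-annulus, so $\wt P$ has angular extent at most $\pi/2$ and lies in a random quarter-annulus, which is contained in one of four deterministic half-annuli; pigeonhole then yields $p_0/8$.

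For condition (3), your mechanism is that ``as $\alpha\to 1$ the $\wt D_h$-diameter of $\ol{\BB A_{\alpha r,r}(0)}$ becomes an arbitrarily small multiple of $\wt D_h(\bdy B_r(0),\bdy B_{2r}(0))$.'' This is false: $\ol{\BB A_{\alpha r,r}(0)}$ contains the full circle $\bdy B_r(0)$, whose $\wt D_h$-diameter is of order $\frk c_r e^{\xi h_r(0)}$ and hence comparable to the gap $\wt D_h(\bdy B_r(0),\bdy B_{2r}(0))$ uniformly in $\alpha$; Lemma~\ref{lem-attained-long} is a lower bound for internal distances in a thin annulus, not a smallness statement for its diameter, so citing it here does not rescue the step. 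The correct route is the same narrow-annulus application as above: any $u,v$ joined by a $\wt D_h$-geodesic contained in $\ol{\BB A_{\alpha r,r}(z)}$ must satisfy $\wt D_h(u,v)< s\,\frk c_r e^{\xi h_r(z)}$ (otherwise the geodesic's length would exceed the diameter bound), and one adds to the high-probability regularity event the requirement $(c_*/C_*)^2\,\wt D_h\bigl(\BB A_{3r/4,r}(z),\bdy B_{2r}(z)\bigr)\ge s\,\frk c_r e^{\xi h_r(z)}$, which holds by tightness across scales; combining the two gives (3). In short, both of your problematic steps are repaired simultaneously by the single internal-distance estimate of Lemma~\ref{lem-attained-long}, but as written neither your half-annulus containment nor your derivation of (3) goes through.
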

\begin{proof} 
By Axioms~\ref{item-metric-translate} and~\ref{item-metric-coord}, we can find $S > s > 0$ depending only on $p_0$ (and hence only on $\mu,\nu$) such that for each $r>0$, it holds with probability at least $1 - p_0/2$ that the following is true.
\begin{itemize}
\item Any two points of $\BB A_{3r/4, r}(z)$ which are not contained in a single quarter-annulus of $\ol{\BB A_{3r/4,r}(z)}$ lie at $\wt D_h$-length at least $s \frk c_r e^{\xi h_r(z)}$ from each other.
\item $(c_*/C_*)^{2} \wt D_h\left(\BB A_{3 r/4 , r}(z) , \bdy B_{2r}(z) \right) \geq s \frk c_r e^{\xi h_r(z)}$. 
\item $\wt D_h(u,v) \leq S \frk c_r e^{\xi h_r(z)}$ for each $u,v\in \ol{\BB A_{3r/4,r}(z)}$.
\end{itemize}  
Since $\BB A_{\alpha r ,r}(z) \subset \BB A_{3r/4,r}(z)$ for each $\alpha \in [3/4,1)$,  Lemma~\ref{lem-attained-long} applied with the above choice of $s$ and $S$ gives an $\alpha \in [(3/4)\vee \alpha_* ,1)$ depending on $p_0$ and $\alpha_*$ such that for each $r > 0$ it holds with probability at least $1-p_0/2$ that the following is true. For each pair of points $u,v\in \ol{\BB A_{\alpha r , r}(z)}$ such that $\wt D_h(u,v; \BB A_{\alpha r ,r}(0 ) ) = \wt D_h(u,v)$, it holds that $u$ and $v$ are contained in a single quarter-annulus of $\ol{\BB A_{\alpha r , r}(z)}$ and $\wt D(u,v) \leq \wt D_h\left(\BB A_{\alpha r , r}(z) , \bdy B_{2r}(z) \right)$.   
This happens in particular if there is a $\wt D_h$-geodesic from $u$ to $v$ contained in $\ol{\BB A_{\alpha r ,r}(z)}$. 

Combining this with translation invariance (Axiom~\ref{item-metric-translate}) and the definition of $\mcl R_0$ shows that for $r\in\mcl R_0$, it holds with probability at least $p_0/2$ that the conditions in the lemma statement hold but with a \emph{random} quarter-annulus in place of a deterministic half-annulus. This random quarter annulus is a.s.\ contained in one of four possible deterministic half-annuli, so must be contained in one of these four half-annuli with probability at least $p_0/8$. 
We therefore obtain that for an appropriate choice of $H_r(z)$, it holds with probability at least $p_0/8$ that all of the conditions in the lemma statement hold. 
\end{proof}

\begin{figure}[t!]
 \begin{center}
\includegraphics[scale=1]{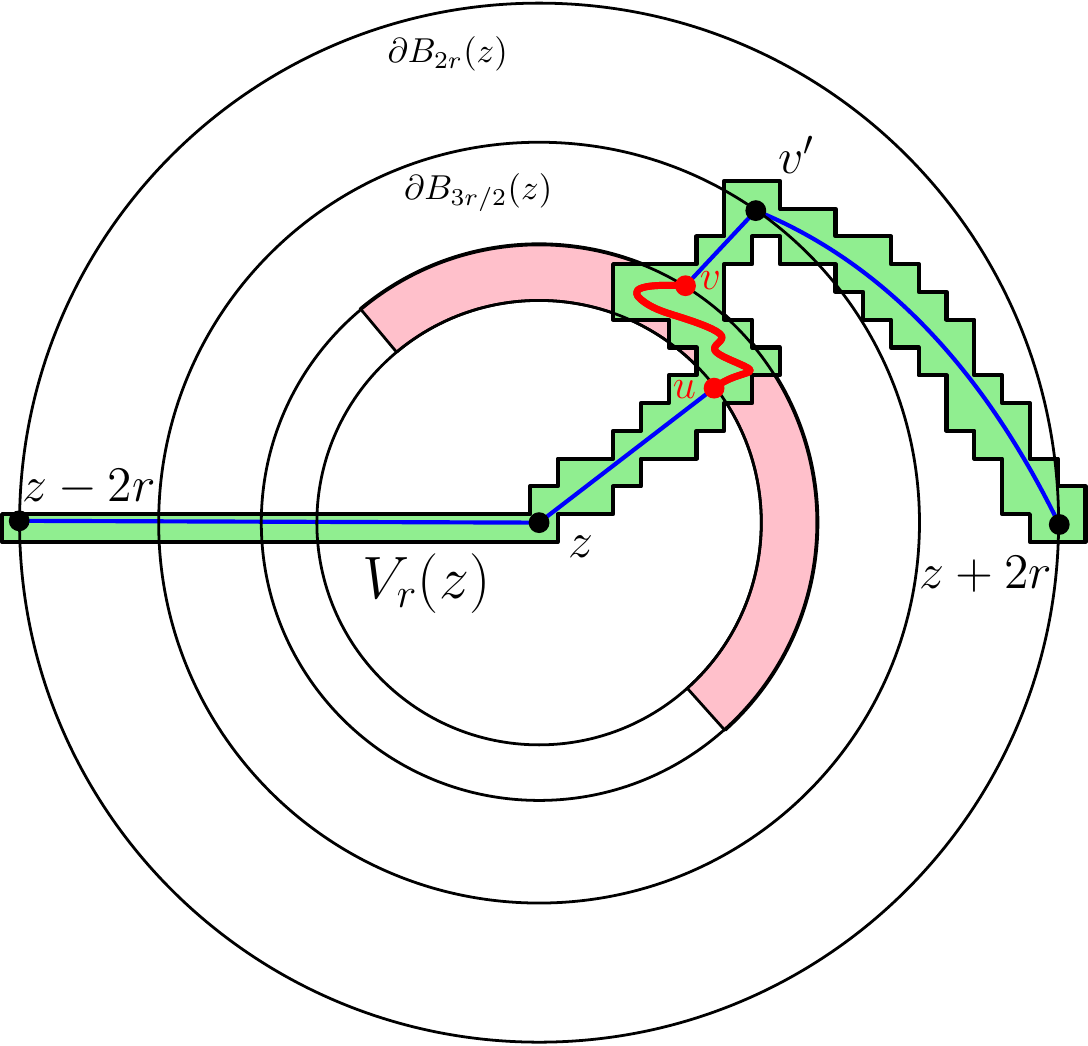}
\vspace{-0.01\textheight}
\caption{Illustration of the statement and proof of Lemma~\ref{lem-deterministic-geodesic}. Building on the setting of Figure~\ref{fig-endpoint-geodesic}, we show that there is a deterministic long narrow ``tube" $V_r(z)$ (light green), which is the interior of the set of $\ep_\dtm r\times \ep_\dtm r$ squares with corners in $\ep_\dtm r \BB Z^2$ which intersect a certain path from $z-2r$ to $z + 2r$, with the following property. With positive probability, every path in the tube from a point near $z-2r$ to a point near $z+2r$ has to get near a pair of points $u,v$ in the tube for which $\wt D_h(u,v) \leq c_1' D_h(u,v)$. We will eventually add a bump function to $h$ which takes a very negative value in such a tube in order to force a geodesic between points which are far away from $B_{2r}(z)$ to get near $u$ and $v$. 
}\label{fig-deterministic-geodesic}
\end{center}
\vspace{-1em}
\end{figure} 

We henceforth assume that $\alpha \in [\alpha_*,1)$ is chosen so that the conclusion of Lemma~\ref{lem-endpoint-geodesic} is satisfied. 
In order to construct deterministic ``tubes" as described in Section~\ref{sec-shortcut-outline}, we will look at unions of squares in a fine grid.
For $\ep > 0$ and $X\subset\BB C$, let 
\eqb \label{eqn-square-set-def}
\mcl S_\ep(X) := \left\{\text{closed $\ep\times \ep$ squares with corners in $\ep \BB Z^2$ which intersect $X$} \right\} .
\eqe
Recall that we have fixed $c_2' > c_1' > c_*$. Choose, in a manner depending only on $c_1',c_2',c_*,C_*$, a small parameter $\eta \in (0,1)$ such that
\eqb \label{eqn-const-error}
\frac{ c_1' (1+2\eta) }{ 1 - 2 c_*^{-1} C_* \eta } < c_2' \quad \text{and} \quad  1+2\eta  < C_* / c_* .
\eqe
The particular choice of $\eta$ in~\eqref{eqn-const-error} will not be used until~\eqref{eqn-scaled-end} below.
For now, the reader should just think of it as a small constant depending on $c_1',c_2'$. 
We also note that $\eta$ is fixed in a way that depends only on $c_1',c_2',c_*,C_*$ (hence only on $c_1',c_2'$ and the choice of $D,\wt D$), so we do not need to explicitly mention the dependence on $\eta$ in what follows.
The following lemma gives us the basic ``building blocks" which will be used to construct $E_r$ in the next two subsections.

\begin{lem} \label{lem-deterministic-geodesic}
There exist small parameters $  b_\dtm , p_\dtm \in (0,1/100)$ depending only on $ \mu,\nu $ and a parameter $\ep_\dtm \in (0,b_\dtm/100)$ depending only on $c_1',c_2',\mu,\nu$ such that for each $z\in\BB C$ and each $r\in \mcl R_0$, there exists a deterministic connected open set $V_r(z) \subset B_{(2+ 2\ep_\dtm)r}(z)$ with the following properties.
The set $V_r(z)$ is the interior of a finite union of squares in $\mcl S_{\ep_\dtm r}(B_{2r}(z))$, $z-2r, z + 2r \in V_r$, and we have $\BB P[F_r(z)]  \geq p_\dtm$, where $F_r(z)$ is the event that the following is true. There are points $u,v\in V_r(z)\cap \ol{B_r(z)}$ with the following properties.
\begin{enumerate} 
\item \emph{(Existence of a shortcut)} We have
\eqb
|u-v| \geq b_\dtm r,\quad \wt D_h(u , v ) \leq c_1' D_h(u  ,v ) ,\quad \wt D_h(u,v) \leq (c_*/C_*)^{2} \wt D_h\left( u  , \bdy B_{2r}(z) \right) ,
\eqe
and the $\wt D_h$-geodesic from $u$ to $v $ is unique and is contained in $V_r(z)  \cap \ol{B_r(z)}$.  \label{item-tube-geo}
\item \emph{(Removing neighborhoods of $u,v$ disconnects $V_r(z)$)} Let $O_u$ be the connected component of $ V_r(z) \cap B_{20\ep_\dtm r}(u)$ which contains $u$ and similarly define $O_v$ with $v$ in place of $u$.  
The connected component of $  V_r(z)   \setminus O_u $ which contains $z-2r$ lies at Euclidean distance at least $\ep_\dtm r $ from the union of the other connected components of $V_r(z) \setminus O_u$. The same is true with $v$ in place of $u$ and $z+2r$ in place of $z-2r$.  \label{item-tube-dc}
\item \emph{(Upper bound for internal diameters of neighborhoods of $u$ and $v$)} Each point of $O_u$ lies at $\wt D_h(\cdot,\cdot ; V_r(z))$-distance at most $\eta \wt D_h(u,v)$ from $u $, and the same is true with $v $ in place of $u $ (here $\eta$ is as in~\eqref{eqn-const-error}). \label{item-tube-internal} 
\end{enumerate}  
\end{lem}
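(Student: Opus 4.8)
\textbf{Proof plan for Lemma~\ref{lem-deterministic-geodesic}.}
The plan is to build $V_r(z)$ from the random half-annulus geodesic supplied by Lemma~\ref{lem-endpoint-geodesic} by a pigeonhole argument over finitely many possible ``square configurations''. First I would work at $z = 0$ and a single $r \in \mcl R_0$; the general case follows by translation invariance (Axiom~\ref{item-metric-translate}) applied to $h(\cdot + z) - h_1(z) \eqD h$, and by rescaling considerations which are harmless since $r$ is a fixed deterministic radius and $\mcl R_0$ is already defined at all scales. Let $H_r := H_r(0) \subset \BB A_{\alpha r, r}(0)$ and $p_0/8$ be as in Lemma~\ref{lem-endpoint-geodesic}. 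On the probability-$(\geq p_0/8)$ event of that lemma there are $u \in \bdy B_{\alpha r}(0)$, $v \in \bdy B_r(0)$ and a unique $\wt D_h$-geodesic $\wt P$ from $u$ to $v$ contained in $\ol{H_r}$ with $\wt D_h(u,v) \leq c_1' D_h(u,v)$ and $\wt D_h(u,v) \leq (c_*/C_*)^2 \wt D_h(\BB A_{\alpha r,r}(0), \bdy B_{2r}(0))$. Since $H_r$ is a half-annulus, I can concatenate $\wt P$ with two deterministic smooth arcs --- one from $0 - 2r$ (which lies outside $B_r(0)$, on the far side of the half-annulus's diameter line) to $v \in \bdy B_r(0)$, and one from $u \in \bdy B_{\alpha r}(0)$ through the ``hole'' $B_{\alpha r}(0)$ to $0 + 2r$ --- chosen so that these arcs meet $\wt P$ only at their endpoints $u, v$; this is where the half-annulus (rather than full annulus) is used, so that the arcs can be routed to avoid $\wt P$. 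Call the resulting path $\wt P'$ from $0-2r$ to $0+2r$. Then $\mcl S_{\ep_\dtm r}(\wt P')$ is a finite set of squares, all contained in $B_{(2+2\ep_\dtm)r}(0)$ once $\ep_\dtm$ is small, and there are only finitely many such sets; hence some deterministic configuration $\mcl S^*$ occurs as $\mcl S_{\ep_\dtm r}(\wt P')$ with probability at least $p_\dtm := (p_0/8)/N$ where $N$ is this (deterministic, $\ep_\dtm$-dependent) count. Define $V_r := V_r(0)$ to be the interior of $\bigcup \mcl S^*$ and $F_r(0)$ to be the intersection of the Lemma~\ref{lem-endpoint-geodesic} event with $\{\mcl S_{\ep_\dtm r}(\wt P') = \mcl S^*\}$, so $\BB P[F_r(0)] \geq p_\dtm$. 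Since $\wt P' \subset V_r$ joins $0-2r$ to $0+2r$ and $\wt P' $ has points on both sides, $V_r$ is connected, contains $0 \pm 2r$, and is a finite union of squares in $\mcl S_{\ep_\dtm r}(B_{2r}(0))$ (the squares meeting $B_r(0) \supset \wt P$ plus a controlled number along the two smooth arcs).

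Next I would verify properties~\eqref{item-tube-geo}--\eqref{item-tube-internal} on $F_r(0)$. For~\eqref{item-tube-geo}: we have $u, v \in \wt P \subset V_r \cap \ol{B_r(0)}$, and the inequalities $\wt D_h(u,v) \leq c_1' D_h(u,v)$ and $\wt D_h(u,v) \leq (c_*/C_*)^2 \wt D_h(u, \bdy B_{2r}(0))$ come directly from Lemma~\ref{lem-endpoint-geodesic} (using that $u \in \BB A_{\alpha r, r}(0)$ so $\wt D_h(u, \bdy B_{2r}(0)) \geq \wt D_h(\BB A_{\alpha r,r}(0), \bdy B_{2r}(0))$). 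The uniqueness of the $\wt D_h$-geodesic from $u$ to $v$ and its containment in $\ol{H_r} \subset \ol{B_r(0)}$ is part of Lemma~\ref{lem-endpoint-geodesic}; since $\ol{H_r} \subset \bigcup \mcl S_{\ep_\dtm r}(\wt P) \subset V_r$ (the geodesic $\wt P$ is exactly what generates the squares meeting $B_r(0)$), this geodesic is contained in $V_r \cap \ol{B_r(0)}$. The lower bound $|u - v| \geq b_\dtm r$ is a.s.\ not automatic from Lemma~\ref{lem-endpoint-geodesic}; I would add it to the defining event --- by Axiom~\ref{item-metric-coord} (tightness across scales) and the fact that $\wt D_h(u,v)$ is comparable to $\frk c_r e^{\xi h_r(0)}$ on the relevant event, the probability that $|u-v| < b_\dtm r$ for $\wt D_h$-distinct points $u \in \bdy B_{\alpha r}(0), v \in \bdy B_r(0)$ can be made smaller than $p_0/16$ by choosing $b_\dtm$ small depending only on $\mu, \nu$, and then one absorbs this into $p_\dtm$ (halving it and re-running the pigeonhole). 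For~\eqref{item-tube-dc}: this is a purely deterministic statement about $V_r$, which I arrange by the choice of $\ep_\dtm$ and by including in the defining event the mild requirement that $u, v$ are ``interior'' points of the geodesic in the sense that $|u - v| \geq b_\dtm r$ with $b_\dtm \gg \ep_\dtm$; then $O_u = $ the component of $V_r \cap B_{20\ep_\dtm r}(u)$ containing $u$ is a small piece of the tube, and removing it separates the tube into a ``$0-2r$ side'' and a ``$0+2r$ side''. Because $V_r$ is a union of $\ep_\dtm r$-squares, two distinct connected components of $V_r \setminus O_u$ are automatically at Euclidean distance $\geq \ep_\dtm r$ from one another (distinct unions of closed grid squares that are disjoint must be separated by a full grid line). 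One must also make sure that near $u$ the tube genuinely looks like a single strand, not a self-crossing; this can be guaranteed by requiring $b_\dtm \geq 100 \ep_\dtm$ and noting that the $\wt D_h$-geodesic $\wt P$ is a simple curve, so the squares along it near $u$ form a single strand provided $\ep_\dtm$ is small relative to the scale at which $\wt P$ is non-self-approaching --- which, however, is random; I would instead include in the event the requirement that $\wt P$ does not return within Euclidean distance $10\ep_\dtm r$ of $u$ after leaving $B_{20\ep_\dtm r}(u)$, again a condition of probability close to $1$ by tightness and Lemma~\ref{lem-attained-long}-type estimates, absorbed into $p_\dtm$. For~\eqref{item-tube-internal}: on the event, $O_u \subset V_r \cap B_{20 \ep_\dtm r}(u)$, and by Axiom~\ref{item-metric-coord} (tightness across scales) together with Lemma~\ref{lem-square-diam} (internal diameters of Euclidean squares) and Weyl scaling, the $\wt D_h(\cdot,\cdot; V_r)$-diameter of $B_{20\ep_\dtm r}(u)$ is at most $C \ep_\dtm^\chi \frk c_r e^{\xi h_r(0)}$ with high probability; while $\wt D_h(u,v)$ is at least $c \frk c_r e^{\xi h_r(0)}$ (the geodesic crosses $\BB A_{\alpha r, r}(0)$, use Lemma~\ref{lem-attained-long} or Axiom~\ref{item-metric-coord}). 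Choosing $\ep_\dtm$ small enough depending on $c_1',c_2'$ (through $\eta$ and these constants) makes $C\ep_\dtm^\chi \leq \eta c$, giving~\eqref{item-tube-internal}; again this is a high-probability condition included in the event.

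The main obstacle I anticipate is the deterministic topology in step~\eqref{item-tube-dc}: one needs the random path $\wt P'$ to generate a grid-tube $V_r$ which, after deleting a $20\ep_\dtm r$-ball around $u$ (resp.\ $v$), splits into pieces one of which contains $0-2r$ (resp.\ $0+2r$) and is genuinely separated from the rest. This requires controlling how $\wt P'$ (in particular its extension arcs and the geodesic $\wt P$ near $u,v$) can wind around near $u$ and $v$ at the scale $\ep_\dtm r$; the clean way to handle it is to bake all the needed ``no small-scale self-approach'' and ``crosses the annulus with definite $\wt D_h$-length'' conditions into the event $F_r(0)$ before taking the pigeonhole, verifying each has probability $\geq 1 - p_0/100$ via Lemma~\ref{lem-attained-long}, Lemma~\ref{lem-holder-uniform}, Lemma~\ref{lem-square-diam} and Axiom~\ref{item-metric-coord}, so that the intersection still has probability $\geq p_0/16$, and then the finite pigeonhole over square configurations produces a deterministic $V_r$ and event of probability $\geq p_\dtm$ depending only on the stated parameters. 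The remaining bookkeeping --- that $V_r \subset B_{(2+2\ep_\dtm)r}(z)$, that it is a finite union of squares in $\mcl S_{\ep_\dtm r}(B_{2r}(z))$, that $z \pm 2r \in V_r$ --- is immediate from the construction and the smallness of $\ep_\dtm$.
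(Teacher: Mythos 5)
Your overall skeleton is the same as the paper's: extend the half-annulus $\wt D_h$-geodesic of Lemma~\ref{lem-endpoint-geodesic} to a path from $z-2r$ to $z+2r$ by arcs meeting it only at $u,v$, pigeonhole over the finitely many possible configurations of squares in $\mcl S_{\ep_\dtm r}(B_{2r}(z))$ hit by this path, and verify conditions~\ref{item-tube-geo}--\ref{item-tube-internal} via square-diameter bounds (Lemma~\ref{lem-square-diam}) and tightness across scales. Two of your patches, however, do not hold up. First, a minor one: the bound $|u-v|\geq b_\dtm r$ needs no probabilistic argument at all --- since $u\in\bdy B_{\alpha r}(z)$ and $v\in\bdy B_r(z)$, it is automatic with $b_\dtm := 1-\alpha$, which is exactly what the paper does; your proposed estimate for $\BB P[|u-v|<b_\dtm r]$ is both unnecessary and, as written, circular.

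The genuine gap is in your treatment of condition~\ref{item-tube-dc}, which you yourself flag as the main obstacle. Your two mechanisms are (i) "disjoint unions of closed grid squares are separated by a grid line," which does not apply because $O_u$ is carved out of $V_r(z)$ by the Euclidean ball $B_{20\ep_\dtm r}(u)$, not along grid lines, and in any case says nothing about whether removing $O_u$ disconnects the tube at all; and (ii) an extra event requiring $\wt P$ not to return within $10\ep_\dtm r$ of $u$ after leaving $B_{20\ep_\dtm r}(u)$, claimed to have probability close to $1$. Since $u$ is a field-dependent random point, (ii) would require a statement uniform over all locations at scale $\ep_\dtm r$ (e.g.\ that everywhere the $\wt D_h$-diameter of $B_{10\ep_\dtm r}(\cdot)$ is smaller than the $\wt D_h$-distance across $\BB A_{10\ep_\dtm r,20\ep_\dtm r}(\cdot)$), which is not supplied by Lemma~\ref{lem-attained-long} or Axiom~\ref{item-metric-coord} and should fail somewhere in the annulus once $\ep_\dtm$ is small. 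The correct fix is purely deterministic and requires no extra event: choose the extension arcs so that their distance to $\ol{\BB A_{\alpha r , r}(z)}\supset \wt P$ grows linearly in the distance to the attachment point. The paper does this by taking the radial segment $L_-$ from $z$ to $u$ (and a radial segment from $v$ outward to $(3/2)(v-z)+z$), so that every $x\in L_-$ satisfies $\op{dist}(x , \ol{\BB A_{\alpha r , r}(z)}) = |x-u|$; consequently every square of the configuration lying near both the extension part and the geodesic part is contained in $B_{10\ep_\dtm r}(u)$ and hence in $\ol{O_u}$, which yields both the disconnection and the $\ep_\dtm r$ separation regardless of how $\wt P$ winds near $u$. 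Your routing (an arc to $v$ staying outside $B_r(z)$, and an arc from $u$ through the hole and across the opposite half-annulus) can be made to work, but only if you articulate and enforce this quantitative transversality at $u$ and $v$; "the arcs meet $\wt P$ only at their endpoints" is not enough by itself.
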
 
\begin{proof}
Let $\alpha$ be as in Lemma~\ref{lem-endpoint-geodesic} and set $b_\dtm := 1-\alpha$.  
On the event that points $u \in \bdy B_{\alpha r}(z)$ and $v\in \bdy B_r(z)$ satisfying the conditions on Lemma~\ref{lem-endpoint-geodesic} exist (which happens with probability at least $p_0/8$), choose one such pair of points $(u,v)$ in some measurable manner. Otherwise, let $u = v = 0$. 
On the event $\{u \not=0\}$, let $\wt P$ be the unique $\wt D_h$-geodesic from $u$ to $v$ and let $H_r(z) \subset \BB A_{\alpha r , r}(z)$ be the half-annulus with $\wt P \subset \ol{H_r(z)}$ as in Lemma~\ref{lem-endpoint-geodesic}.
  
We will now extend $\wt P$ to a path $\wt P'$ in $B_{2r}(z)$ from $z-2r$ to $z+2r$ (which will no longer be a $\wt D_h$-geodesic). 
To this end, we first let $v' := (3/2)(v - z) +z \in \bdy B_{3r/2}(z)$ and we let $L_-$ (resp.\ $L_+$) be the linear segment from $z$ to $u$ (resp.\ $v$ to $v'$).
We note that the Euclidean distance between $L_-$ and $L_+$ is at least $b_\dtm r$. 
We can choose a path $\pi_-$ from $z-2r$ to $z$ and a path $\pi_+$ from $v'$ to $z+2r$ in $B_{2r}(z)$ such that the Euclidean distances from $\pi_-\cup \pi_+$ to $H_r(z)$ and from $\pi_- \cup L_-$ to $\pi_+ \cup L_+$ are each at least $b_\dtm r$. 
Let $\wt P'$ be the concatenation of $\pi_- , L_- , \wt P , L_+ , \pi_+$. 
 
Since $|u-v| \geq b_\dtm r$ on the event $\{u\not=0\}$, Axiom~\ref{item-metric-coord} (tightness across scales) together with Lemma~\ref{lem-square-diam} imply that we can find $\ep_\dtm \in (0,b_\dtm/100)$ depending only on $c_1',c_2',\mu,\nu$ such that with probability at least $p_0/9$, the event of Lemma~\ref{lem-endpoint-geodesic} occurs (i.e., $u\not=0$) and also
\eqb \label{eqn-dtm-internal}
\sup_{S\in\mcl S_{\ep_\dtm r}(B_{2r}(z))} \sup_{w_1,w_2 \in S} \wt D_h(w_1,w_2; S) \leq    \frac{\eta}{100} \wt D_h(u,v)   .
\eqe
The number of subsets of $\mcl S_{\ep_\dtm r}(B_{2r}(z))$ is bounded above by a deterministic constant depending only on $\ep_\dtm$. Consequently, we can choose $p_\dtm \in (0,1)$ depending only on $ \mu,\nu,D$ and a deterministic $\mcl K_r(z)   \subset\mcl S_{\ep_\dtm r}(B_{2r}(z))$ such that with probability at least $p_\dtm$, the events of Lemma~\ref{lem-endpoint-geodesic} and~\eqref{eqn-dtm-internal} occur and also 
\eqb \label{eqn-dtm-set}
\mcl K_r(z) = \left\{S \in \mcl S_{\ep_\dtm r}(B_{2r}(z)) : S\cap \wt P' \not=\emptyset \right\} .
\eqe
Let $V_r(z)$ be the interior of the union of the squares in $\mcl K_r(z)$. 
Since $z-2r ,z +  2r \in \wt P'$ and $\wt P'$ is connected, it follows that $V_r(z)$ is connected and contains $z-2r$ and $z+ 2r$.

Henceforth assume that the events of Lemma~\ref{lem-endpoint-geodesic}, \eqref{eqn-dtm-internal}, and~\eqref{eqn-dtm-set} occur. We will check the conditions in the lemma statement with $V_r(z)$ as above. 
\medskip

\noindent\textit{Condition~\ref{item-tube-geo}.} This is immediate from the conditions on $u$ and $v$ from Lemma~\ref{lem-endpoint-geodesic}. 
\medskip

\noindent\textit{Condition~\ref{item-tube-dc}.}   
By the above definitions of $\pi_-$ and $L_-$, the Euclidean $ \ep_\dtm r$-neighborhood of each square of $\mcl S_{\ep_\dtm r}$ which intersects both $B_{2\ep_\dtm r} ( \pi_- \cup L_-) $ and $B_{2\ep_\dtm r}( H_r(z) )$ must be contained in $B_{10\ep_\dtm r}(u)$. 
Furthermore, using that $L_-$ is a linear segment, we get that the $\ep_\dtm$-neighborhood of each such square which intersects $B_{2\ep_\dtm r} ( \pi_- \cup L_-) $ and belongs to $\mcl K_r(z)$ (as defined in~\eqref{eqn-dtm-set}) must be contained in $\ol{O_u}$, with $O_u$ as in the lemma statement. 
Since the Euclidean distance between $\pi_- \cup L_-$ and $\pi_+ \cup L_+$ is at least $b_\dtm r \geq 100 \ep_\dtm r$ and $\wt P \subset H_r(z)$, we see that removing $O_u$ disconnects $ V_r(z) $ into at least two connected components, and the Euclidean distance between the connected component which contains $ z -  2r$ and the union of the other connected components is at least $\ep_\dtm r  $.  
A similar argument applies with $v$ in place of $u$. 
\medskip

\noindent\textit{Condition~\ref{item-tube-internal}.}  
Each point of $O_u$ is contained in a square of $\mcl K_r(z)$ which lies at graph distance at most $40$ from a square which contains $u$ in the adjacency graph of squares of $\mcl K_r(z)$. The same is true with $v$ in place of $u$. It therefore follows from~\eqref{eqn-dtm-internal} that condition~\ref{item-tube-internal} in the lemma statement is satisfied.  
\end{proof}

For $z\in\BB C$ and $r>0$, let $F_r(z)$ be as in Lemma~\ref{lem-deterministic-geodesic}.  In the next subsection, we will use the local independence properties of the GFF (in the form of Lemma~\ref{lem-spatial-ind}) to argue that for a small enough $\rho\in (0,1)$ and for all $r\in \rho^{-1}\mcl R_0$, it is very likely that $F_{\rho r}(z)$ occurs for many points $z\in  B_r(0)$.  To apply the lemma, we will need the following measurability statement. 

\begin{lem} \label{lem-geo-event-local}
For each $z\in\BB C$ and $r>0$, the event $F_r(z)$ is a.s.\ determined by $(h-h_{4r}(z)) |_{B_{3r}(z)}$.
\end{lem}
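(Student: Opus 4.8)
\textbf{Proof proposal for Lemma~\ref{lem-geo-event-local}.}

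The plan is to inspect the construction of $F_r(z)$ in the proof of Lemma~\ref{lem-deterministic-geodesic} and check that every object entering its definition is measurable with respect to $(h-h_{4r}(z))|_{B_{3r}(z)}$. First, note that adding a constant to $h$ rescales both $D_h$ and $\wt D_h$ by the same factor (Axiom~\ref{item-metric-f}, Weyl scaling), and the three conditions~\ref{item-tube-geo}, \ref{item-tube-dc}, \ref{item-tube-internal} defining $F_r(z)$ are all invariant under such a rescaling (condition~\ref{item-tube-geo} involves only ratios of distances and a comparison $\wt D_h(u,v) \leq (c_*/C_*)^2 \wt D_h(u,\bdy B_{2r}(z))$ which is homogeneous; condition~\ref{item-tube-internal} is likewise a ratio; condition~\ref{item-tube-dc} is purely Euclidean). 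Hence it suffices to show $F_r(z)$ is determined by $h|_{B_{3r}(z)}$, and then the normalization by $h_{4r}(z)$ is immaterial.

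Next I would run through the pieces. The deterministic set $V_r(z)$ is fixed once and for all (it does not depend on $h$ at all), so only the event that the points $u,v \in V_r(z) \cap \ol{B_r(z)}$ with the stated properties exist needs to be controlled. For condition~\ref{item-tube-geo}: since $V_r(z) \subset B_{(2+2\ep_\dtm)r}(z) \subset B_{3r}(z)$ (as $\ep_\dtm < b_\dtm/100 < 1/100$), and the $\wt D_h$-geodesic from $u$ to $v$ is required to stay in $V_r(z) \cap \ol{B_r(z)} \subset B_{3r}(z)$, the quantity $\wt D_h(u,v)$ restricted to this event equals the internal distance $\wt D_h(u,v;B_{3r}(z))$, which by Axiom~\ref{item-metric-local} (locality) is determined by $h|_{B_{3r}(z)}$; similarly $D_h(u,v)$ along a path in $B_r(z)$ — one uses that whenever a $\wt D_h$-geodesic from $u$ to $v$ is contained in $V_r(z)$ we automatically have $\wt D_h(u,v) = \wt D_h(u,v;B_{3r}(z))$ and we can read off from the internal metric whether this is the case (the argument is the same as in Lemma~\ref{lem-attained-msrble}: a pair $u,v$ admits a $\wt D_h$-geodesic inside $V_r(z)$ iff $v$ lies in the $\wt D_h(\cdot,\cdot;B_{3r}(z))$-ball of the appropriate radius around $u$ and that ball is contained in $V_r(z)$). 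The comparison $\wt D_h(u,v) \leq (c_*/C_*)^2 \wt D_h(u,\bdy B_{2r}(z))$ is likewise determined by $\wt D_h(\cdot,\cdot;B_{3r}(z))$ since $\bdy B_{2r}(z) \subset B_{3r}(z)$ and the distance to $\bdy B_{2r}(z)$ is realized by paths in $B_{2r}(z)$, so $\wt D_h(u,\bdy B_{2r}(z)) = \wt D_h(u,\bdy B_{2r}(z);B_{3r}(z))$. Condition~\ref{item-tube-dc} is purely Euclidean and involves only the deterministic set $V_r(z)$, hence carries no dependence on $h$. Condition~\ref{item-tube-internal} involves only the internal metric $\wt D_h(\cdot,\cdot;V_r(z))$ with $V_r(z) \subset B_{3r}(z)$, which is determined by $h|_{B_{3r}(z)}$ by Axiom~\ref{item-metric-local}.

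Combining these observations, $F_r(z)$ is a countable union over rational pairs (and a limiting argument to recover all of $V_r(z) \cap \ol{B_r(z)}$, using continuity of $D_h$ and $\wt D_h$) of events each determined by $h|_{B_{3r}(z)}$, hence $F_r(z) \in \sigma(h|_{B_{3r}(z)})$; by the first paragraph this equals $\sigma((h-h_{4r}(z))|_{B_{3r}(z)})$. I do not anticipate a genuine obstacle here — the lemma is bookkeeping — but the one point requiring a little care is making sure that the distances $\wt D_h(u,v)$, $D_h(u,v)$, and $\wt D_h(u,\bdy B_{2r}(z))$ appearing in the conditions can all be replaced by their $B_{3r}(z)$-internal versions on the relevant event, which is exactly the kind of argument already carried out in the proof of Lemma~\ref{lem-attained-msrble} and which I would mimic.
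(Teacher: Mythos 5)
Your overall strategy (use Weyl scaling to reduce to determination by $h|_{B_{3r}(z)}$, then invoke locality of the internal metrics) is the same as the paper's, and your treatment of the purely-$\wt D_h$ conditions is essentially fine. But there is a genuine gap at the point you wave through with ``similarly $D_h(u,v)$ along a path in $B_r(z)$.'' The event $F_r(z)$ constrains only the $\wt D_h$-geodesic from $u$ to $v$ to lie in $V_r(z)\cap\ol{B_r(z)}$; nothing in its definition forces the $D_h$-distance $D_h(u,v)$ to be realized by a path staying anywhere near $B_{3r}(z)$. Knowing $h|_{B_{3r}(z)}$ gives you the internal distance $D_h(u,v;B_{3r}(z))$, which is only an \emph{upper} bound for $D_h(u,v)$, whereas verifying the condition $\wt D_h(u,v)\leq c_1' D_h(u,v)$ requires a matching lower bound: a shortcut for $D_h$ far outside $B_{3r}(z)$ could make the condition fail even when the internal-distance version holds. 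So your reduction does not go through as stated, and the appeal to Lemma~\ref{lem-attained-msrble} does not rescue it, because there the corresponding locality was built into the event itself (condition~\ref{item-attained-long} of $\mathsf E_r(z)$), which has no analogue in $F_r(z)$.

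The missing idea is exactly why the constant $(c_*/C_*)^{2}$ appears in condition~\ref{item-tube-geo}: one must use the global bi-Lipschitz equivalence $c_* D_h \leq \wt D_h \leq C_* D_h$ from~\eqref{eqn-max-min-def}. On the event $\wt D_h(u,v)\leq (c_*/C_*)^{2}\,\wt D_h(u,\bdy B_{2r}(z))$ this yields $D_h(u,v)\leq (c_*/C_*)\, D_h(u,\bdy B_{2r}(z))$, so the $D_h$-ball of radius $D_h(u,v)$ about $u$ is contained in $B_{2r}(z)$; hence $D_h(u,v)=D_h\left(u,v;B_{2r}(z)\right)$ and is determined by $h|_{B_{3r}(z)}$ via Axiom~\ref{item-metric-local}. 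The same ball-containment argument (applied to $\wt D_h$ directly) is also the clean way to justify that all $\wt D_h$-geodesics from $u$ to $v$ stay in $B_{2r}(z)$, so that uniqueness and containment in $V_r(z)$ are readable from the internal metric; your stated criterion (``the $\wt D_h(\cdot,\cdot;B_{3r}(z))$-ball is contained in $V_r(z)$'') is not an equivalence and should be replaced by this containment-in-$B_{2r}(z)$ argument.
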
 
\begin{proof} 
First note that the occurrence of $F_r(z)$ is unaffected by scaling each of $D_h$ and $\wt D_h$ by the same constant factor. 
Therefore, Axiom~\ref{item-metric-f} (Weyl scaling) implies that $F_r(z)$ is determined by $h$, viewed modulo additive constant.
So, we only need to show that $F_r(z) \in \sigma\left( h|_{B_{3r}(z)} \right)$. 

We first observe that for $u,v \in \ol{B_r(z)}$, we have $\wt D_h(u,v) \leq (c_*/C_*)^{2} \wt D_h\left( u  , \bdy B_{2r}(z) \right)$ if and only if $v$ is contained in the $\wt D_h$-metric ball of radius $(c_*/C_*)^{2} \wt D_h\left( u  , \bdy B_{2r}(z) \right)$ centered at $v$.
Since this $\wt D_h$-metric ball is contained in $B_{2r}(z)$, we infer from the locality of $\wt D_h$ that the set of $u,v \in B_{2r}(z)$ for which $\wt D_h(u,v) \leq  (c_*/C_*)^{2} \wt D_h\left( u  , \bdy B_{2r}(z) \right)  $ is determined by $ h|_{B_{3r}(z)}$.  
If $\wt D_h(u,v) \leq (c_*/C_*)^{2} \wt D_h\left( u  , \bdy B_{2r}(z) \right)$, then each $\wt D_h$-geodesic from $u$ to $v$ is contained in $B_{2r}(z)$, so the set of $\wt D_h$-geodesics from $u$ to $v$ is the same as the set of $\wt D_h(\cdot,\cdot; B_{2r}(z))$-geodesics from $u$ to $v$.

Furthermore, by the definition~\eqref{eqn-max-min-def} of $c_*$ and $C_*$, we see that
\eqb
\wt D_h(u,v) \leq (c_*/C_*)^{2} \wt D_h\left( u  , \bdy B_{2r}(z) \right)
\Rightarrow 
 D_h(u,v) \leq (c_*/C_*)  D_h\left( u  , \bdy B_{2r}(z) \right) ,
\eqe
so $D_h(u,v)  =D_h(u,v ; \bdy B_{2r}(z))$ whenever $\wt D_h(u,v) \leq (c_*/C_*)^{2} \wt D_h\left( u  , \bdy B_{2r}(z) \right)$. 

By combining these observations with the locality of the metrics $D_h$ and $\wt D_h$, it follows that $F_r(z)$ is determined by $h|_{B_{3r}(z)}$. 
\end{proof}

\subsection{Building a tube which contains a shortcut with high probability}
\label{sec-highprob-geodesic}

\begin{figure}[t!]
 \begin{center}
\includegraphics[scale=.55]{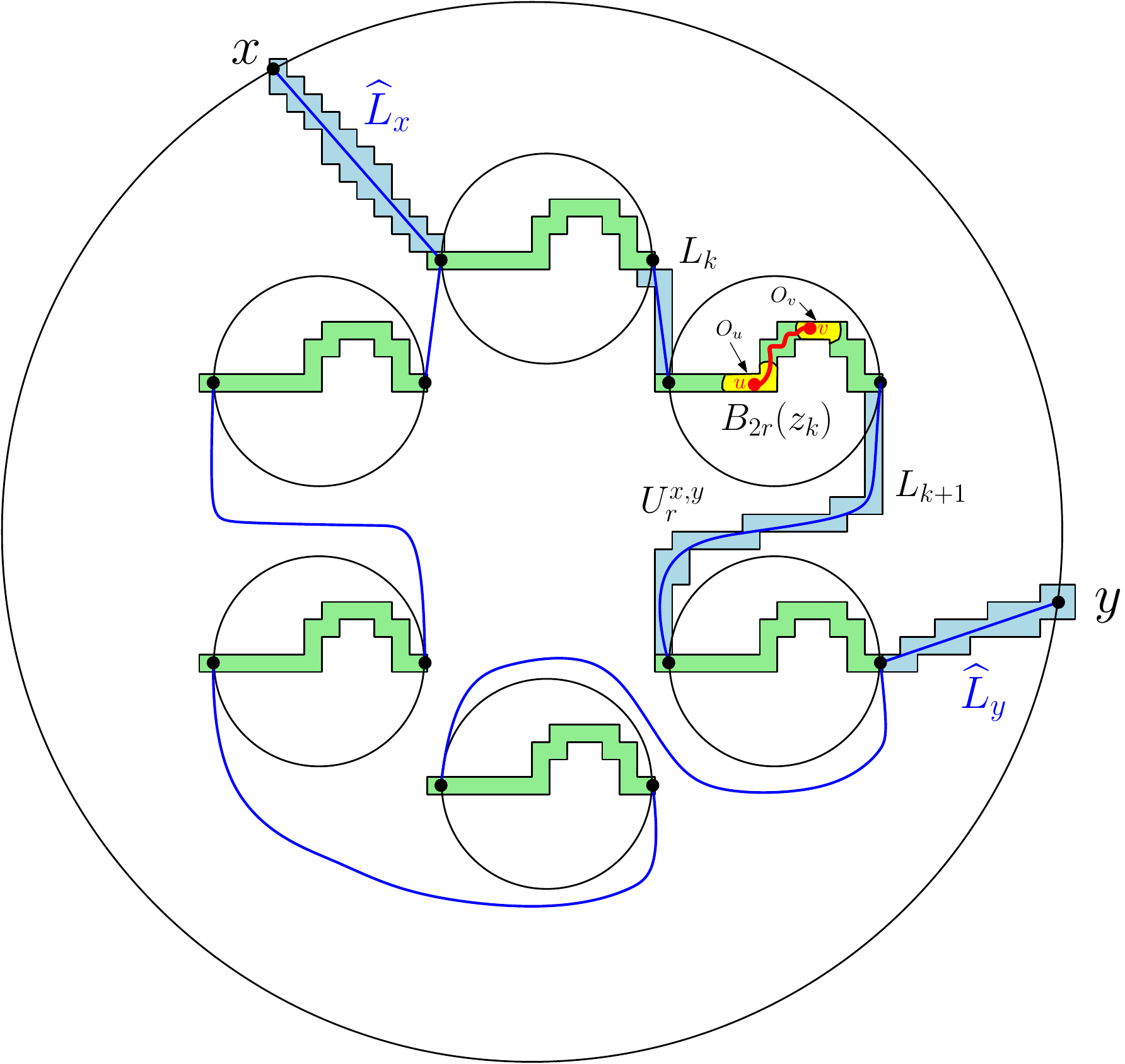}
\vspace{-0.01\textheight}
\caption{Illustration of the statement and proof of Lemma~\ref{lem-highprob-geodesic}.
To get an event with probability close to 1, instead of just an event with uniformly positive probability, we consider a large number of disjoint balls $B_{\rho r}(z)$ centered at a finite set of points $\mcl Z\subset \bdy B_r(0)$ and use Lemma~\ref{lem-spatial-ind} to argue that with high probability, the event $F_{\rho r}(z)$ of Lemma~\ref{lem-deterministic-geodesic} occurs for a suitably ``dense" set of points $z\in\mcl Z$. Then, we link up the tubes $V_{\rho r}(z)$ for $z\in\mcl Z$ (light green) via deterministic paths $L_k$ (blue). For a given choice of points $x,y\in\bdy B_{2r}(0)$ with $|x-y|\geq \delta r$, we define $U_r^{x,y}$ to be the union of the sets $V_{\rho r}(z)$ for points $z \in \mcl Z$ along the counterclockwise arc of $\bdy B_r(0)$ from $x/2$ to $y/2$, the squares of $\mcl S_{\ep_\dtm \rho r}(B_r(0))$ which intersect the deterministic paths joining these sets $V_{\rho r}(z)$, and paths of squares starting from each of $x$ and $y$ (light blue). The sets $O_u$ and $O_v$ from assertion~\ref{item-highprob-dc} are shown in yellow.
}\label{fig-highprob-geodesic}
\end{center}
\vspace{-1em}
\end{figure} 

In the rest of this section, unlike in Section~\ref{sec-endpoint-geodesic}, our events will no longer depend on a parameter $z$. Rather, we will only define events for Euclidean balls centered at 0.
We will now prove a variant of Lemma~\ref{lem-deterministic-geodesic} which holds with probability close to $1$, not just with uniformly positive probability.  This will be accomplished as follows. We fix a small parameter $\rho > 0$ and consider a large number of radius-$\rho r$ balls $B_{\rho r}(z)$ contained in $B_{2r}(0)$ for which the event $F_{\rho r}(z)$ of Lemma~\ref{lem-deterministic-geodesic} occurs with positive probability. We join up the ``tubes" $V_{\rho r}(z)$ for the individual balls into a single large tube, which we will denote by $U_r^{x,y}$. We use Lemma~\ref{lem-spatial-ind} to say that with high probability the event $F_{\rho r}(z)$ occurs for at least one of the small balls, which means that with high probability the tube $U_r^{x,y}$ contains a pair of points $u,v$ as in~\eqref{eqn-attained-good-event}.
See Figure~\ref{fig-highprob-geodesic} for an illustration.

\begin{lem} \label{lem-highprob-geodesic}
For each $p , \delta  \in (0,1)$, there exists $b   , \rho \in (0,1/100)$ depending only on $p ,\delta, \mu,\nu $ and $\ep_0 \in (0,b/100)$ depending only on $c_1',c_2' , p,\delta,\mu,\nu$ such that for each $r\in \rho^{-1} \mcl R_0$ and each $x,y \in \bdy B_{2r}(0)$ with $|x-y| \geq \delta r$, there exists a deterministic connected open set $U_r^{x,y} \subset B_{3r}(0)$ with the following properties.
The set $U_r^{x,y}$ is the interior of a finite union of squares in $\mcl S_{\ep_0 r}(\BB A_{ r/2, 2r}(0) )$, $x,y\in U_r^{x,y}$. Moreover, with probability at least $p$, it holds simultaneously for each $x,y\in \bdy B_{2r}(0)$ with $|x-y| \geq \delta r$ that there are points $u,v\in \BB A_{(1-4\rho) r, (1+4\rho) r}(0)\cap U_r^{x,y}$ with the following properties. 
\begin{enumerate} 
\item \emph{(Existence of a shortcut)} We have
\eqb \label{eqn-highprob-geo}
|u-v| \geq b  r,\quad \wt D_h(u , v ) \leq c_1' D_h(u  ,v ) ,\quad \wt D_h(u,v) \leq (c_*/C_*)^2 \wt D_h\left( u  , \bdy B_{4\rho r}(u) \right) ,
\eqe 
and the $\wt D_h$-geodesic from $u$ to $v$ is unique and is contained in $U_r^{x,y}$.  \label{item-highprob-geo}
\item \emph{(Removing neighborhoods of $u,v$ disconnects $U_r^{x,y}$)} Let $O_u$ be the connected component of $U_r^{x,y} \cap B_{20\ep_0 r}(u)$ which contains $u$ and define $O_v$ similarly with $v$ in place of $u$. 
The connected component of $  U_r^{x,y}  \setminus  O_u$ which contains $x$ lies at Euclidean distance at least $\ep_0 r $ from the union of the other connected components, and the same is true with $v $ in place of $u $ and $y$ in place of $x$.  \label{item-highprob-dc}
\item \emph{(Upper bound for internal diameters of neighborhoods of $u$ and $v$)} Each point of $O_u$ lies at $\wt D_h(\cdot,\cdot ;U_r^{x,y} )$-distance at most $\eta \wt D_h(u,v)$ from $u $, and the same is true with $v $ in place of $u $ (here $\eta$ is as in~\eqref{eqn-const-error}). \label{item-highprob-internal} 
\end{enumerate} 
\end{lem}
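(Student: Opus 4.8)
\textbf{Proof plan for Lemma~\ref{lem-highprob-geodesic}.}
The plan is to upgrade Lemma~\ref{lem-deterministic-geodesic} from a uniformly-positive-probability statement to a high-probability statement by packing many independent small balls along a circle and applying Lemma~\ref{lem-spatial-ind}. First I would fix a small $\rho \in (0,1)$, to be chosen later depending on $p,\delta,\mu,\nu$, and require $r \in \rho^{-1}\mcl R_0$ so that $\rho r \in \mcl R_0$. For a given pair $x,y \in \bdy B_{2r}(0)$ with $|x-y| \geq \delta r$, I would choose a finite set $\mcl Z^{x,y} \subset \bdy B_r(0)$ of points along, say, the counterclockwise arc of $\bdy B_r(0)$ from $x/2$ to $y/2$, evenly spaced at Euclidean distance $\asymp 2(1+\rho)\cdot 3\rho r$ from one another so the balls $B_{3\rho r}(z)$ for $z \in \mcl Z^{x,y}$ are pairwise disjoint; the number of such points is $\asymp \delta/\rho$, which I can make as large as I like by shrinking $\rho$. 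Since $|x-y| \geq \delta r$, any two points of $\bdy B_{2r}(0)$ at distance $\geq \delta r$ lie on such an arc for one of finitely many (depending on $\delta$) choices of arc endpoints, so it suffices to handle finitely many deterministic arcs and take a union bound over them at the end.

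Next I would link up the tubes. For consecutive points $z_j, z_{j+1} \in \mcl Z^{x,y}$, the sets $V_{\rho r}(z_j)$ contain $z_j - 2\rho r$ and $z_j + 2\rho r$, and since these are consecutive on the $\bdy B_r(0)$ scale I can choose deterministic straight segments $L_j$ (slightly perturbed to be disjoint from the $V_{\rho r}(z_i)$ except near their endpoints) connecting $z_j + 2\rho r$ to $z_{j+1} - 2\rho r$, and also segments from $x$ into $V_{\rho r}(z_{\text{first}})$ and from $y$ into $V_{\rho r}(z_{\text{last}})$; all of this stays inside $B_{3r}(0)$ and away from $B_{r/2}(0)$. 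I would then define $\ep_0 := \ep_\dtm \rho$ (so that $\ep_0 \in (0,b/100)$ with $b := b_\dtm \rho$, say, both depending only on the allowed parameters), and set $U_r^{x,y}$ to be the interior of the union of all squares in $\mcl S_{\ep_0 r}(\BB A_{r/2,2r}(0))$ which meet $\bigcup_j V_{\rho r}(z_j) \cup \bigcup_j L_j \cup (\text{the two segments at } x,y)$. This is a deterministic finite union of squares, it is connected because the pieces overlap, and it contains $x$ and $y$; and because the $L_j$ and the $x,y$-segments stay at definite distance from the $V_{\rho r}(z_i)$, removing a $20\ep_0 r$-neighborhood of a point $u \in V_{\rho r}(z_j) \cap \ol{B_{\rho r}(z_j)}$ inside $U_r^{x,y}$ disconnects it in the required way, inheriting assertion~\ref{item-highprob-dc} from condition~\ref{item-tube-dc} of Lemma~\ref{lem-deterministic-geodesic} (one must check the $L_j$'s don't provide a bypass, which they don't since they pass through $V_{\rho r}(z_j)$ only near $z_j \pm 2\rho r$, both on the ``far'' side of the disconnecting neighborhood by the same argument as in the proof of Lemma~\ref{lem-deterministic-geodesic}).

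Then I would invoke independence. By Lemma~\ref{lem-geo-event-local}, $F_{\rho r}(z)$ is determined by $(h - h_{4\rho r}(z))|_{B_{3\rho r}(z)}$, and by Lemma~\ref{lem-deterministic-geodesic}, $\BB P[F_{\rho r}(z)] \geq p_\dtm$ for each $z \in \mcl Z^{x,y}$, with $p_\dtm$ depending only on $\mu,\nu$. Rescaling so $B_{3\rho r}(z)$ becomes a unit ball, Lemma~\ref{lem-spatial-ind} (applied with a scaled version of the collection $\{z/(3\rho r)\}$, $s$ a fixed constant, $p = p_\dtm$, and $q$ close to $1$) gives that there is $n_* = n_*(p_\dtm, p)$ such that if $\#\mcl Z^{x,y} \geq n_*$ then with probability at least $1 - (1-p)/N$ (where $N$ is the number of deterministic arcs from the first paragraph), at least one $F_{\rho r}(z)$ occurs. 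Choosing $\rho$ small enough that $\#\mcl Z^{x,y} \geq n_*$ for every allowed arc, then union-bounding over the $N$ arcs, gives probability at least $p$ that simultaneously for every $x,y$ with $|x-y| \geq \delta r$ there is $z \in \mcl Z^{x,y}$ with $F_{\rho r}(z)$. On that event, condition~\ref{item-tube-geo} of Lemma~\ref{lem-deterministic-geodesic} for $(z,\rho r)$ hands us $u,v \in V_{\rho r}(z) \cap \ol{B_{\rho r}(z)} \subset \BB A_{(1-4\rho)r,(1+4\rho)r}(0) \cap U_r^{x,y}$ with $|u-v| \geq b_\dtm \rho r = br$, $\wt D_h(u,v) \leq c_1' D_h(u,v)$, and $\wt D_h(u,v) \leq (c_*/C_*)^2 \wt D_h(u,\bdy B_{2\rho r}(z)) \leq (c_*/C_*)^2 \wt D_h(u,\bdy B_{4\rho r}(u))$ since $B_{4\rho r}(u) \supset B_{2\rho r}(z)$; and with the $\wt D_h$-geodesic from $u$ to $v$ unique and contained in $V_{\rho r}(z) \subset U_r^{x,y}$. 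This gives~\ref{item-highprob-geo}. Assertion~\ref{item-highprob-internal} follows from condition~\ref{item-tube-internal} of Lemma~\ref{lem-deterministic-geodesic}, once I note that $O_u$ for $U_r^{x,y}$ coincides with (or is contained in a $\wt D_h(\cdot,\cdot;V_{\rho r}(z))$-controlled enlargement of) the corresponding set for $V_{\rho r}(z)$, using that $U_r^{x,y}$ agrees with $V_{\rho r}(z)$ near $u$ since $u$ is in the ``interior'' of the $z$-tube, away from the linking segments; here I would use Lemma~\ref{lem-square-diam} / Axiom~\ref{item-metric-coord} one more time exactly as in the proof of Lemma~\ref{lem-deterministic-geodesic} to absorb the extra squares of $U_r^{x,y}$ near $u$ into the $\eta \wt D_h(u,v)$ budget, possibly shrinking $\ep_0$ once more.

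The main obstacle I anticipate is the bookkeeping in assertion~\ref{item-highprob-dc}: I must be sure that splicing many tubes $V_{\rho r}(z)$ together with the segments $L_j$ does not create a path in $U_r^{x,y}$ that goes from $x$ to $y$ while avoiding a neighborhood of $u$ — i.e. the linking segments must not provide a ``shortcut'' around $u$. This is handled by arranging the $L_j$'s to enter each $V_{\rho r}(z_j)$ only through the two designated endpoints $z_j \pm 2\rho r$, which by condition~\ref{item-tube-dc} of Lemma~\ref{lem-deterministic-geodesic} both lie in the connected component of $V_{\rho r}(z_j) \setminus O_u$ on the ``$z_j - 2\rho r$ side'' when $u$ is the shortcut point of that particular tube — but one must double-check the case where the shortcut point $u,v$ comes from a tube at the very end of the chain, adjacent to the $x$- or $y$-segment, and the case of the two tiny tubes attached at $x$ and $y$ themselves. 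This is purely planar topology and follows the pattern of the proof of Lemma~\ref{lem-deterministic-geodesic}, but it requires care in specifying the deterministic segments so that all the distance-$\ep_0 r$ separations hold uniformly.
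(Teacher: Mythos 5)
Your proposal is correct and follows essentially the same route as the paper's proof: apply Lemma~\ref{lem-deterministic-geodesic} at scale $\rho r$ to many disjoint balls centered on $\bdy B_r(0)$, use Lemma~\ref{lem-spatial-ind} (after rescaling, via Lemma~\ref{lem-geo-event-local}) plus a union bound over finitely many deterministic arcs to get a good ball on every admissible arc with probability $\geq p$, link the tubes $V_{\rho r}(z)$ by deterministic paths of $\ep_0 r$-squares kept at distance $\gtrsim \rho r$ from the shortcut points, and take $\ep_0 = \ep_\dtm\rho$, $b = b_\dtm\rho$, with conditions \ref{item-highprob-geo}--\ref{item-highprob-internal} inherited directly from the corresponding conditions of Lemma~\ref{lem-deterministic-geodesic} (in particular $U_r^{x,y}$ coincides with $V_{\rho r}(z)$ on $B_{20\ep_0 r}(u)$, so no extra absorption into the $\eta$-budget is needed). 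The only bookkeeping point to make explicit is that the per-pair sets $\mcl Z^{x,y}$ must be drawn from a single deterministic net of all of $\bdy B_r(0)$ (rather than re-spaced along each arc), so that the finitely many arc events produced by the union bound deliver a good point lying in $\mcl Z^{x,y}$ simultaneously for the uncountably many admissible pairs $(x,y)$ --- exactly how the paper organizes its Step 1.
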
   
\begin{proof} 
Define the event $F_{\rho r}(z)$ for $z\in \BB C$ and $r\in\rho^{-1} \mcl R_0$ as in Lemma~\ref{lem-deterministic-geodesic}. 
\medskip

\noindent\textit{Step 1: $F_{\rho r}(z)$ occurs for many points $z\in B_{2r}(0)$.} 
Let $n_* \in \BB N$ be chosen so that the conclusion of Lemma~\ref{lem-spatial-ind} is satisfied with $s = 1/3$, $p_\dtm$ in place of $p$, and $1 - \delta (1-p)/100$ in place of $q$. 
Let $\rho :=  (500 n_*)^{-1} \delta  $ and define the set of points
\eqb
\mcl Z := \left\{ r \exp\left( \frac{2\pi \BB i \delta k }{ 100 n_* } \right)   : k \in [1,  100 n_* \delta^{-1}]_{\BB Z} \right\} \subset \bdy B_r(0) .
\eqe
Then the balls $B_{4\rho r}(z)$ for $z\in\mcl Z$ are disjoint and each such ball is contained in $ \BB A_{(1-4\rho) r , (1+4\rho) r}(0)$. 

By Lemmas~\ref{lem-deterministic-geodesic} and~\ref{lem-geo-event-local}, if $r \in \rho^{-1} \mcl R_0$, then each of the events $F_{\rho r}(z)$ for $z\in\mcl Z$ has probability at least $p_\dtm$ and is determined by $(h-h_{4\rho r}(z)) |_{B_{3\rho r}(z)}$.
Each arc $I\subset \bdy B_r(0)$ with Euclidean length at least $\delta r/4$ satisfies $\#(\mcl Z\cap I) \geq n_*$. 
Therefore, Lemma~\ref{lem-spatial-ind} (applied with the whole-plane GFF $h(\cdot/(3\rho r))$ in place of $h$) implies that for each such arc $I$, 
\eqb
\BB P\left[ \text{$\exists z\in \mcl Z \cap I$ such that $F_{\rho r}(z)$ occurs} \right] \geq 1 - \frac{\delta (1-p)}{100 } .
\eqe
We can choose at most $4\pi\delta^{-1}$ arcs of $\bdy B_r(0)$ with Euclidean length $\delta r/4$ in such a way that each arc of $\bdy B_r(0)$ with Euclidean length at least $\delta r/2$ contains one of these arcs. By a union bound, we therefore get that with probability at least $1-(1-p)/4$, 
\eqb \label{eqn-highprob-arcs}
\text{Each arc of $\bdy B_r(0)$ with length at least $\delta r/2$ contains a point $z\in \mcl Z$ s.t.\ $F_{\rho r}(z)$ occurs.}
\eqe
We will show that the statement of the lemma is satisfied with
\eqb \label{eqn-highprob-param}
\ep_0 = \ep_\dtm \rho \quad \text{and} \quad b = b_\dtm \rho .
\eqe
\medskip

\noindent\textit{Step 2: defining $U_r^{x,y}$.} 
Enumerate $\mcl Z = \{z_1,\dots,z_{N} \}$, where $N := \lfloor 100 n_*\delta^{-1} \rfloor$ and  $z_k := r \exp\left( \frac{2\pi \BB i \delta k }{ 100 n_* } \right)$. 
Also set $z_0 := z_N$. 
We now join up the balls $B_{2\rho r}(z_k)$, in a manner which is illustrated in Figure~\ref{fig-highprob-geodesic}. For $k \in [1,N]_{\BB Z}$, choose in a deterministic manner a piecewise linear path $L_k$ from $z_{k-1} + 2\rho r$ to $z_k - 2\rho r$ which is contained in $\BB A_{(1-4\rho)r , (1+4\rho)r}(0)$. We can choose the paths $L_k$ in such a way that the $L_k$'s do not intersect any of the balls $B_{2\rho r}(z)$ for $z\in\mcl Z$ and lie at Euclidean distance at least $\rho r$ from one another.  

Now consider points $x,y\in\bdy B_{2r}(0)$ with $|x-y| \geq \delta r$. By possibly re-labeling, we can assume without loss of generality that the counterclockwise arc of $\bdy B_{2r}(0)$ from $x$ to $y$ is shorter than the clockwise arc. Let $ J \subset \bdy B_r(0)$ by the counterclockwise arc from $x/2$ to $y/2$, so that $J$ has length at least $\delta r/2$. Let $k_x , k_y \in [1,N]_{\BB Z}$ be chosen so that $J \cap \mcl Z = \{z_{k_x} ,\dots, z_{k_y}\}$. 
Let $\wh L_x$ (resp.\ $\wt L_y$) be a smooth path from $x$ to $z_{k_y} - 2r$ (resp.\ from $z_{k_y} + 2r$ to $y$) which does not intersect any of the $B_{2\rho r}(z)$'s for $z\in\mcl Z$ and such that $\wh L_x$ and $\wh L_y$ lie Euclidean distance at least $\rho r$ from each other and from each $L_k$ for $k\in [k_x+1,k_y]_{\BB Z}$. 

Recall that for $X\subset\BB C$, $\mcl S_{\ep_\dtm \rho r}(X)$ denotes the set of closed Euclidean squares of side length $\ep_\dtm \rho r$ with corners in $\ep_\dtm \rho r\BB Z^2$ which intersect $X$. 
With $V_{\rho r}(z)$ as in the definition of $F_{\rho r}(z)$, we define
\eqb \label{eqn-highprob-set-def}
\ol{ U_r^{x,y} } :=  \bigcup_{k = k_x}^{k_y} \ol{V_{\rho r}(z_k)} \cup \bigcup \mcl S_{\ep_\dtm \rho r}\left(\wh L_x \cup \wh L_y \cup  \bigcup_{k=k_x+1}^{k_y} L_k \right)   
\eqe
and we let $U_r^{x,y}$ be the interior of $\ol{U_r^{x,y}}$. Since each $V_r(z_k)$ is the interior of a finite union of squares in $\mcl S_{\ep_\dtm \rho r}(B_{\rho r}(z_k))$, it follows that $U_r^{x,y}$ is the interior of a finite union of squares $\mcl S_{\ep_\dtm \rho r}(\BB A_{r/2,2r}(0) )$.
Since the $V_r(z_k)$'s are connected, it is clear that $U_r^{x,y}$ is connected and contains $x,y$.
We also note that $U_r^{x,y}$ is deterministic. 
\medskip

\noindent\textit{Step 3: checking the conditions for $u$ and $v$.} 
On the event that~\eqref{eqn-highprob-arcs} holds, there is a random $k \in [k_x , k_y]_{\BB Z}$ for which $F_{\rho r}(z_k)$ occurs. If this is the case, choose such a $k$ and point $u,v \in  V_{\rho r}(z_k) \cap \ol{B_{\rho r}(z_k)} $ as in the definition of $F_{\rho r}(z_k)$ in some measurable manner. We will show that for $\ep_0 , b$ as in~\eqref{eqn-highprob-param}, the conditions in the lemma statement hold whenever~\eqref{eqn-highprob-arcs} holds. 
\medskip

\noindent\textit{Condition~\ref{item-highprob-geo}.} Since $B_{2\rho r}(z_k) \subset B_{4\rho r}(u)$ and $V_{\rho r}(z_k) \subset U_r^{x,y}$, it is immediate from Condition~\ref{item-tube-geo} in the definition of $F_{\rho r}(z_k)$ that this condition holds with $b =  b_\dtm \rho$ whenever~\eqref{eqn-highprob-arcs} holds. 
\medskip

\noindent\textit{Condition~\ref{item-highprob-dc}.} Assume~\eqref{eqn-highprob-arcs}.  
Let
\allb
&\ol{W_k(x)}  := \bigcup_{j = k_x}^{k-1} \ol{V_{\rho r}(z_j)} \cup \bigcup \mcl S_{\ep_\dtm \rho r}\left(\wh L_x \cup   \bigcup_{j=k_x+1}^{k} L_j \right) \quad \text{and}  \notag \\ 
&\qquad \ol{W_k(y)}  := \bigcup_{j = k+1}^{k_y} \ol{V_{\rho r}(z_j)} \cup \bigcup \mcl S_{\ep_\dtm \rho r}\left(\wh L_y \cup   \bigcup_{j=k+1}^{k_y} L_j \right)  
\alle
and let $W_k(x)$ and $W_k(y)$ be the interiors of $W_k(x)$ and $W_k(y)$, respectively. 
By~\eqref{eqn-highprob-set-def}, $\ol{U_r^{x,y}} = \ol{W_k(x)} \cup \ol{W_k(y)} \cup \ol{V_{\rho r}(z_k)}$. 
Since $\wh L_x$, $\wh L_y$, and the $L_k$'s for $k\in [k_x+1,k_y]_{\BB Z}$ each lie at Euclidean distance at least $\rho r$ from one another and do not intersect the interiors of the balls $B_{\rho r}(z)$ for $z\in\mcl Z$ and $\ep_\dtm < 1/100$, the sets $W_k(x)$ and $W_k(y)$ lie at Euclidean distance at least $\rho r/2$ from each other and from $B_{\rho r}(z_k)$. 

We have 
\eqb \label{eqn-highprob-inclusion}
U_r^{x,y} \cap B_{20 \ep_\dtm \rho r}(u) = V_{\rho r}(z_k) \cap B_{20 \ep_\dtm \rho r}(u)  , 
\eqe
so the definition of $O_u$ is unaffected if we replace $U_r^{x,y}$ by $V_{\rho r}(z_k)$.  
Furthermore, the connected component of $ U_r^{x,y}  \setminus O_u $ which contains $x$ is the same as the union of $ W_k(x) $ and the connected component of $ V_{\rho r}(z_k)  \setminus  O_u $ which contains $z_k-2 \rho r$; and the union of the other connected components of $ U_r^{x,y}  \setminus O_u $ is the same as the union of $W_k(y))$ and the connected components of $ V_{\rho r}(z_k)  \setminus  O_u $ which do not contain $z_k + 2 \rho r$. 
By condition~\ref{item-tube-dc} in the definition of $F_{\rho r}(z_k)$, we find that these two sets lie at Euclidean distance at least $\ep_\dtm \rho r$ from one another.
\medskip

\noindent\textit{Condition~\ref{item-highprob-internal}.} 
By~\eqref{eqn-highprob-inclusion}, condition~\ref{item-tube-internal} in the definition of $F_{\rho r}(z_k)$ implies that each point of $O_u$ lies at $D_h(\cdot,\cdot ; U_r^{x,y} )$-distance at most $\eta\wt D_h(u,v)$ from $u $. The same is true with $v $ in place of $u $. 
\end{proof}

\subsection{Definition of the event $E_r $ and the bump functions $\mcl G_r$}
\label{sec-E-def}

\newcommand{\Aendpts}{\Delta}
\newcommand{\Adiam}{A}
\newcommand{\Abdy}{\zeta}
\newcommand{\Aacross}{a}
\newcommand{\Atiny}{\theta}
\newcommand{\Aline}{M}

The goal of this subsection is to define the event $E_r$ and the collection of smooth bump functions $\mcl G_r$ appearing in Proposition~\ref{prop-geo-event0}. 
We will also check Properties~\eqref{item-geo-event0} and~\eqref{item-dirichlet-bound} from that proposition (measurability and high probability and bounds for Dirichlet inner products). Property~\eqref{item-internal-geo} (subtracting a bump function) will be checked in Section~\ref{sec-bump-function}.

\begin{figure}[t!]
 \begin{center}
\includegraphics[scale=.8]{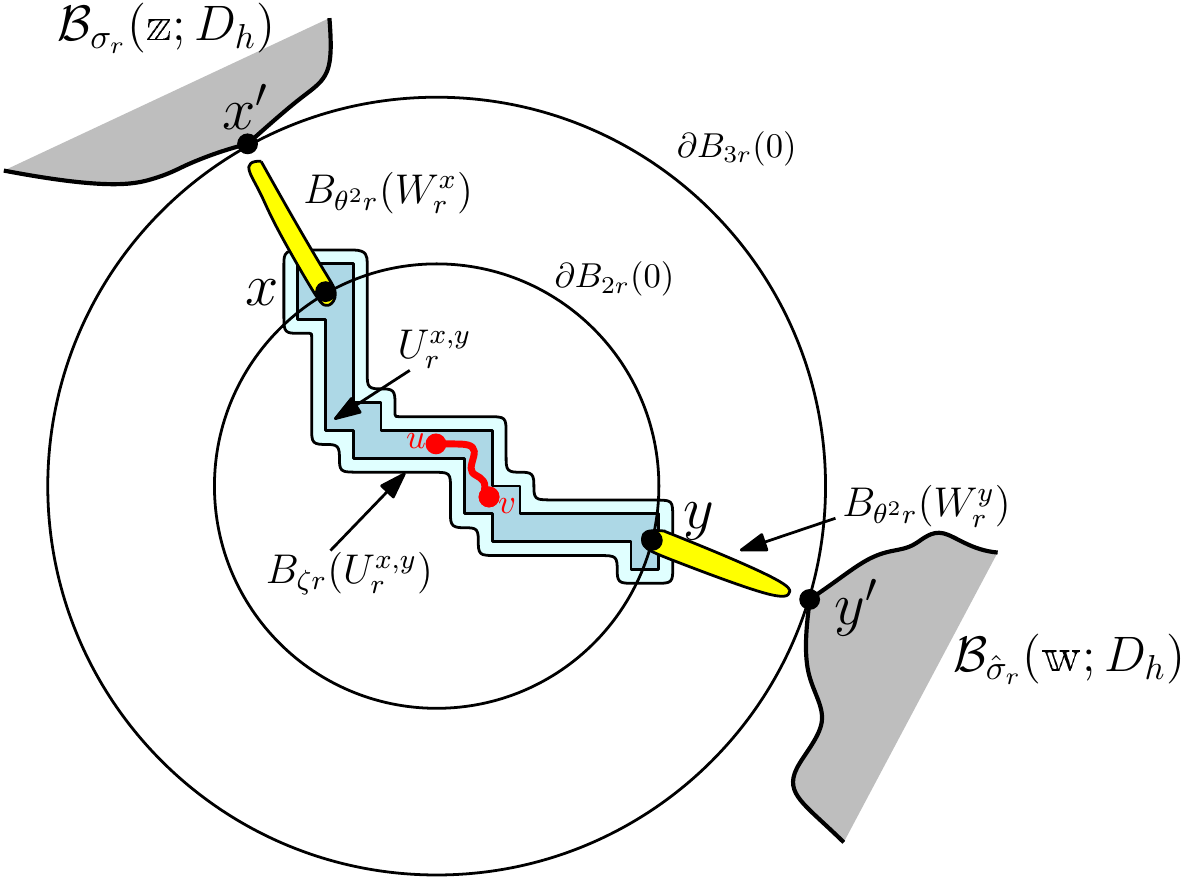} \hspace{5pt}
\includegraphics[scale=1.2]{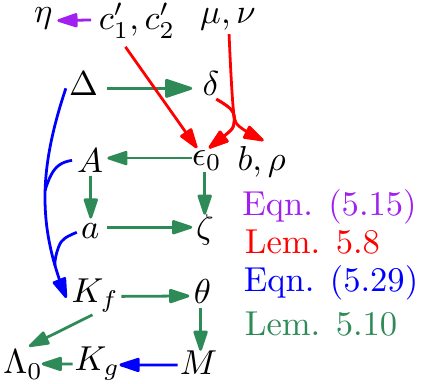} 
\vspace{-0.01\textheight}
\caption{\textbf{Left:} Illustration of the definition of the event $E_r$. 
The blue set in the middle is the set $U_r^{x,y}$ of Lemma~\ref{lem-highprob-geodesic}. The light blue region surrounding it is $B_{\Abdy r}(U_r^{x,y})$, which is the support of the bump function $f_r^{x,y}$. The yellow regions are the supports of the bump functions $g_r^x$ and $g_r^y$, which are used to force $D_h$-geodesics started from points outside of $B_{3r}(0)$ to enter $B_{\zeta r}(U_r^{x,y})$. The figure shows the relevant set for one pair of points $x,y\in\bdy B_{2r}(0)$, but all of the conditions in the event $E_r$ are required to hold \emph{simultaneously} for all pairs of points $x,y\in\bdy B_{2r}(0)$ with $|x-y|\geq \delta r$. This is important since in Section~\ref{sec-bump-function} we will take $x' = (3/2) x$ and $y'  =(3/2) y$ to be the \emph{random} points where the metric balls based at the starting and ending points of a given geodesic (here shown in grey) first hit $\bdy B_{3r}(0)$. 
\textbf{Right:} Schematic diagram of how the various quantities in the definitions of $E_r$ and $\mcl G_r$ are chosen. An arrow between two parameters indicates that one is chosen in a way which depends directly on the other. The colors indicate where the choice is made. Most of the choices in the figure depend on $\BB p$, but this is not illustrated. In the end, all of the parameters depend only on $\BB p , \mu , \nu$ (and the choice of metric). 
}\label{fig-internal-geo}
\end{center}
\vspace{-1em}
\end{figure}

The definitions in this section are illustrated in Figure~\ref{fig-internal-geo}, left.
Before proceeding with the details, we briefly discuss the main ideas involved.
Following Section~\ref{sec-shortcut-outline}, we want to define $\mcl G_r$ to include for each $x,y \in \bdy B_{3r}(0)$ a function $\phi$ which is equal to a large positive constant on the region $U_r^{x,y}$ of Lemma~\ref{lem-highprob-geodesic} and which is supported on the union of a small neighborhood of $U_r^{x,y}$ and two even narrower ``tubes" which approximate the segments $[x,3x/2]$ and $[y,3y/2]$ (shown in yellow in the figure).  
The event $E_r$ will consist of the conditions of Lemma~\ref{lem-highprob-geodesic} plus several regularity conditions discussed below.  

We will eventually consider a fixed pair of points $\BB z,\BB w \in \BB C \setminus B_{4r}(0)$ and choose $x,y\in\bdy B_{2r}(0)$ in such a way that $x' := 3x/2$ and $y' := 3y/2$ are the first points of $\bdy B_{3r}(0)$ hit by the $D_h$-metric balls grown from $\BB z$ and $\BB w$, respectively. Since these points are random, it is important that the conditions in our event hold simultaneously for all possible choices of $x$ and $y$.   We will show in Section~\ref{sec-bump-function} that on $E_r$, subtracting a suitable $\phi \in \mcl G_r$ from the field makes distances in the support of $\phi$ much shorter than distances outside, so the $D_{h-\phi}$-geodesic has to travel through the support of $\phi$ and hence has to get close to the points $u,v$ of Lemma~\ref{lem-highprob-geodesic}.

There are several subtleties involved in this argument which are dealt with via regularity conditions in the definition of $E_r$. 
For example, Lemma~\ref{lem-highprob-geodesic} requires that $|x-y| \geq \delta r$, so we need to ensure that our random metric ball hitting points $x',y'$ are separated. 
This is the purpose of condition~\ref{item-highprob-endpts} in the definition of $E_r$.  
Another difficulty is that it is relatively straightforward to get $D_{h-\phi}$-geodesics into the \emph{support} of $\phi$, but we want such geodesics to actually enter the region $U_r^{x,y}$ where $\phi$ is equal to a large positive constant. 
The reason for this is that we will be comparing ratios of distances via Weyl scaling (Axiom~\ref{item-metric-f}) and it could be that $\phi$ is much smaller on some parts of its support than it is on $U_r^{x,y}$. 
To deal with this, we will include a condition to the effect that paths which stay in a small neighborhood of $\bdy U_r^{x,y}$ without entering $U_r^{x,y}$ are very long (condition~\ref{item-highprob-bdy}).   
We also need functions in $\mcl G_r$ to be supported on $\BB A_{r,3r}(0)$ so we need to make the yellow tubes in Figure~\ref{fig-internal-geo} very close to $x'$ and $y'$ without actually allowing these tubes to contain $x'$ and $y'$ (condition~\ref{item-highprob-line-close}).
The choice of constants involved in these conditions is somewhat delicate, so the event $E_r$ will include several parameters. 

We now commence with the definitions. 
Fix a parameter $\delta \in (0,1)$, to be chosen in a manner depending only on $\BB p  $ in Lemma~\ref{lem-geo-event-prob} below.
Let $\rho , b , \ep_0$ be as in Lemma~\ref{lem-highprob-geodesic} for this choice of $\delta$ and with $p = 1-(1-\BB p)/2$, so that $\rho,b,\ep_0$ depend only on $\delta,\BB p , \mu , \nu$. 
The definitions of $E_r$ and $\mcl G_r$ involve several additional small parameters $\Aendpts \in (0,1)$ and $  \Abdy  , \Aacross ,\Atiny \in (0,\ep_0)$ and large parameters $\Adiam, \Aline,\Lambda_0 >1$ which we will choose in Lemma~\ref{lem-geo-event-prob} below, in a manner depending only on $\BB p , \mu , \nu$. 
See Figure~\ref{fig-internal-geo}, right for a schematic illustration of how the parameters are chosen. 

\subsubsection{Definition of $\mcl G_r$}

We first give the definition of $\mcl G_r$ in terms of the above parameters. 
For each $x,y\in\bdy B_{2r}(0)$ with $|x-y| \geq \delta r$, choose in a deterministic manner depending only on $ U_r^{x,y}$ (not on the particular values of $x$ and $y$) a smooth, compactly supported bump function $  f_r^{x,y} : \BB C\rta [0,1]$ which is identically equal to 1 on $U_r^{x,y}$ and vanishes outside of $B_{ \Abdy r}(U_r^{x,y} )$. 

Since each $U_r^{x,y}$ is the interior of a finite union of squares in $\mcl S_{\ep_0 r}(B_{2r}(0))$, there are at most a finite, $r$-independent number of possibilities for $U_r^{x,y}$ as $x$ and $y$ vary. 
From this and the scale invariance of Dirichlet energy (i.e., $(f(r\cdot), f(r\cdot))_\nabla = (f,f)_\nabla$) it follows that we can arrange that the Dirichlet energy $(f_r^{x,y} , f_r^{x,y})_\nabla$ is bounded above by a constant depending only on $\ep_0,\Abdy$.  
 
If we subtract a large constant multiple of $f_r^{x,y}$ from $h$, then LQG geodesics for the resulting field between points of $U_r^{x,y}$ will tend to stay in $U_r^{x,y}$. 
However, we also need to get geodesics between points of $\BB C\setminus B_{4r}(0)$ into $U_r^{x,y}$. 
For this purpose, we will also subtract even larger constant multiples of bump functions $g_r^x$ and $g_r^y$ which are supported in narrow tubes which approximate the segments $[x,3x/2]$ and $[y,3y/2]$. 
The supports of these bump functions are shown in yellow in Figure~\ref{fig-internal-geo}. 

To define these bump functions, we first define for $x\in \bdy B_{2r}(0)$ the set
\eqb \label{eqn-line-tube-def}
W_r^x = W_r^x(\theta) := \left( \text{Interior of $\bigcup_{S\in \mcl S_{\Atiny r} ( [x , (3/2-\Atiny) x])} S$} \right) \subset \BB A_{r,3r}(0) 
\eqe
where here we recall from~\eqref{eqn-square-set-def} that $\mcl S_{\Atiny r}([x , (3/2-\Atiny) x])$ is the set of $\Atiny r \times \Atiny r$ squares with corners in $\Atiny r \BB Z^2$ which intersect $[x , (3/2-\Atiny) x]$.  
Let $g_r^x : \BB C\rta [0,1]$ be a smooth compactly supported function which is identically equal to 1 on $W_r^x$ and is identically equal to 0 outside of $B_{\Atiny^2 r}(W_r^x) \subset B_{3r}(0)$. 
As in the case of $f_r^{x,y}$ (see the paragraph just above~\eqref{eqn-bump-function-const}), we can arrange that the Dirichlet energy of $g_r^x$ is bounded above by a constant  depending only on $\Atiny,\BB p$. 

We define the large constants
\eqb \label{eqn-bump-function-const}
K_f := \frac{1}{\xi} \log\left(\frac{100 \Adiam}{\Aacross \Aendpts} \right)
\quad \text{and} \quad
K_g := K_f + \frac{1}{\xi} \log\left( \Aline \right) .
\eqe
For each $x,y\in \bdy B_{2r}(0)$ with $|x-y| \geq \delta r$, we define
\eqb \label{eqn-bump-function-def}
\phi_r^{x,y} := K_f f_r^{x,y} + K_g(g_r^x + g_r^y) .
\eqe
Since each of $f_r^{x,y}, g_r^x,g_r^y$ is supported on $\BB A_{r/4,3r}(0)$, so is $\phi_r^{x,y}$. 
We set
\eqb \label{eqn-bump-function-collection}
\mcl G_r := \left\{ \phi_r^{x,y} : x,y\in \bdy B_{2r}(0) ,\: |x-y| \geq \delta r \right\} \cup \{\text{zero function}\} .
\eqe
We emphasize that the definition of $\mcl G_r$ does not depend on the parameter $\Lambda_0$. This will be important when we choose $\Lambda_0$ in Lemma~\ref{lem-geo-event-prob} below.

Recall from the above discussion that the number of possibilities for each of $f_r^{x,y} , g_r^x , g_r^y$ as $x$ and $y$ vary and the Dirichlet energies of each of these functions is bounded above by a constant which does not depend on $r,x$, or $y$. 
Consequently, each of
\eqb \label{eqn-bump-function-bound}
\#\mcl G_r \quad \text{and} \quad \max_{\phi \in \mcl G_r} (\phi,\phi)_\nabla
\eqe
is bounded above by a constant which does not depend on $r$, $x$, or $y$.  

\subsubsection{Definition of $E_r$}

We now define the event $E_r$ appearing in Proposition~\ref{prop-geo-event0}. 

We encourage the reader to skim the list of conditions on a first read and refer back to them as they are used while reading the proof of Lemma~\ref{lem-internal-geo} below. 

With the parameters $\delta, \Aendpts , \Adiam, \Abdy,\Aacross,\Atiny ,\Aline,\Lambda_0 $ as above, we define $E_r$ to be the event that the following is true.
For each $x,y\in\bdy B_{2r}(0)$ with $|x-y| \geq \delta r$, there exists $u,v\in \BB A_{(1-4\rho)r,(1+4\rho)r}(0)$ satisfying the three numbered conditions of Lemma~\ref{lem-highprob-geodesic} and moreover the following additional conditions hold.   
\begin{enumerate}
\setcounter{enumi}{3}
\item For each $x,y\in\bdy B_{2r}(0)$ with $|x -y  |  <  \delta r$,  \label{item-highprob-endpts}
\eqbn
D_h\left( x',y' ; \BB A_{ r , 4r} (0) \right)  \leq \Aendpts \frk c_r e^{\xi h_r(0)} \leq  D_h\left( \bdy B_{2r}(0), \bdy B_{3r}(0) \right) 
,\quad \text{where} \quad x' = \frac32 x \: \text{and}\: y' =\frac32 y.
\eqen 
\item For each $x,y \in \bdy B_{2r}(0)$ with $|x-y| \geq \delta r$ the $D_h$-internal diameter of $U_r^{x,y}$ satisfies  \label{item-highprob-diam}
\eqbn \label{eqn-highprob-diam}
\sup_{w_1,w_2 \in  U_r^{x,y} } D_h\left(w_1,w_2 ; U_r^{x,y} \right) \leq \Adiam \frk c_r e^{\xi h_r(0)} . 
\eqen 
\item For each $x,y \in \bdy B_{2r}(0)$ with $|x-y| \geq \delta r$, the $D_h$-length of every continuous path of Euclidean diameter at least $\ep_0 r/100$ which is contained in $B_{2\Abdy r}(\bdy U_r^{x,y})$ is at least $100 \Adiam \frk c_r e^{\xi h_r(0)} $. \label{item-highprob-bdy}
\item For each $z_1,z_2 \in  \BB A_{r/4,4r}(0)$ such that $  |z_1-z_2| \geq \Abdy r$,   \label{item-highprob-across}
\eqbn \label{eqn-highprob-across}
D_h\left( z_1, z_2 ; \BB A_{r/4,4r}(0) \right)  \geq \Aacross \frk c_r e^{\xi h_r(0)}    .
\eqen
\item With $K_f $ as in~\eqref{eqn-bump-function-const},\label{item-highprob-line-close}
\eqbn \label{eqn-highprob-line-close}
D_h\left(3x/2, (3/2-\Atiny) x ; \BB A_{r,4r}(0) \right) \leq   e^{-\xi K_f} \frk c_r e^{\xi h_r(0)}  , \quad\forall x\in \bdy B_{2r}(0) .
\eqen
\item If we let $W_r^x \subset \BB A_{r,3r}(0)$ be the long narrow tube as in~\eqref{eqn-line-tube-def}, then \label{item-highprob-line-internal}
\eqbn \label{eqn-highprob-line-internal} 
\sup_{w_1,w_2 \in W_r^x} D_h(w_1,w_2 ; W_r^x) \leq \Aline \frk c_r e^{\xi h_r(0)} . 
\eqen
\item With $\mcl G_r$ as in~\eqref{eqn-bump-function-collection}, we have $(h,\phi)_\nabla + \frac12 |(\phi,\phi)_\nabla| \leq \Lambda_0$ for each $\phi\in\mcl G_r$. \label{item-highprob-dirichlet} 
\end{enumerate}

The conditions in the definition of $E_r$ are numbered in such a way that the new parameters involved in each condition depend only on the parameters from the previous conditions. 
We now comment briefly on the purpose of each of the conditions.
As discussed in Section~\ref{sec-shortcut-outline}, to prove Property~\eqref{item-internal-geo} (subtracting a bump function) of Proposition~\ref{prop-geo-event0}, we will grow the $D_h$-metric balls started from $\BB z$ and $\BB w$ until they hit $B_{3r}(0)$. 
We will let $\BB x' $ and $\BB y'$ be their respective hitting points, and we will apply the above conditions with $x = \BB x := (2/3) \BB x'$ and $y = \BB y = (2/3)\BB y'$ (note that $\BB x,\BB y\in \bdy B_{2r}(0)$).

Condition~\ref{item-highprob-endpts} is used to ensure that if $P$ hits $B_{2r}(0)$, then $|\BB x - \BB y| \geq \delta r$ (see Lemma~\ref{lem-hitpt-lower}). 
Condition~\ref{item-highprob-diam} gives us a deterministic upper bound for the $D_h$-diameter of $U_r^{\BB x , \BB y}$ before we subtract the bump function $\phi$. This allows us say that  the $D_{h-\phi}$-diameter of $U_r^{x,y}$ is very small, which is what forces the $D_{h-\phi}$-geodesic $P^\phi$ to enter $U_r^{\BB x , \BB y}$.
Condition~\ref{item-highprob-bdy} prevents $P^\phi$ from staying close to $\bdy U_r^{\BB x , \BB y}$ (in the region where $\phi$ positive, but does not attain its largest possible value) without entering $U_r^{x,y}$ itself. 
Condition~\ref{item-highprob-across} is used to prevent $P^\phi$ from exiting $B_{\Abdy r} ( U_r^{\BB x , \BB y})$ prematurely. 
Conditions~\ref{item-highprob-line-close} and~\ref{item-highprob-line-internal} concern the yellow tubes in Figure~\ref{fig-internal-geo}. These conditions are used to force $P^\phi$ to enter and exit $U_r^{\BB x , \BB y}$ at points near $\BB x$ and $\BB y$, respectively.
Condition~\ref{item-highprob-dirichlet} is used to prove Property~\eqref{item-dirichlet-bound} (bounds for Dirichlet inner products) of Proposition~\ref{prop-geo-event0}.

\subsubsection{Proof of Properties~\eqref{item-geo-event0} and~\eqref{item-dirichlet-bound}}

It is immediate from condition~\ref{item-highprob-dirichlet} in the definition of $E_r$ that Property~\eqref{item-dirichlet-bound} (bounds for Dirichlet inner products) of Proposition~\ref{prop-geo-event0} is satisfied. In the next two lemmas we check the two assertions of Property~\eqref{item-geo-event0} (measurability and high probability). 
 
\begin{lem} \label{lem-geo-event-msrble}
The event $E_r$ is determined by $(h-h_{5r}(0)) |_{\BB A_{r/4,4r}(0)}$
\end{lem}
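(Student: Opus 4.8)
\textbf{Proof proposal for Lemma~\ref{lem-geo-event-msrble} (measurability of $E_r$).}

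The plan is to verify that each of the ten numbered conditions comprising the event $E_r$ is determined by $(h-h_{5r}(0))|_{\BB A_{r/4,4r}(0)}$, and then conclude by intersecting. The first observation is that the occurrence of $E_r$ is unaffected by adding a constant to $h$: every condition is phrased either as a comparison between $D_h$-distances and $\frk c_r e^{\xi h_r(0)}$ (which scales the same way as $D_h$ under $h\mapsto h+c$ by Axiom~\ref{item-metric-f}) or as a statement about the existence of points $u,v$ satisfying ratio inequalities like $\wt D_h(u,v)\leq c_1' D_h(u,v)$ (which are invariant under Weyl scaling since both metrics scale by the same factor), or else, in condition~\ref{item-highprob-dirichlet}, as a bound on $(h,\phi)_\nabla + \frac12|(\phi,\phi)_\nabla|$ where each $\phi\in\mcl G_r$ has mean zero so $(c,\phi)_\nabla = 0$. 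Hence it suffices to show $E_r \in \sigma(h|_{\BB A_{r/4,4r}(0)})$, with $h$ viewed modulo additive constant.

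Next I would handle the conditions involving $D_h$- and $\wt D_h$-distances using Axiom~\ref{item-metric-local} (locality) together with the standard trick, already used repeatedly in the paper (e.g.\ in the proof of Lemma~\ref{lem-attained-msrble} and Lemma~\ref{lem-geo-event-local}), that an internal metric $D_h(\cdot,\cdot;A)$ and hence a metric ball contained in $A$ is determined by $h|_A$. Concretely: condition~\ref{item-highprob-endpts} involves $D_h(x',y';\BB A_{r,4r}(0))$ and $D_h(\bdy B_{2r}(0),\bdy B_{3r}(0))$, both of which are internal-metric quantities on $\BB A_{r,4r}(0)\subset\BB A_{r/4,4r}(0)$, hence determined by $h|_{\BB A_{r/4,4r}(0)}$. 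Conditions~\ref{item-highprob-diam}, \ref{item-highprob-bdy}, \ref{item-highprob-across}, \ref{item-highprob-line-close}, \ref{item-highprob-line-internal} are all explicitly internal-diameter or internal-distance statements on sets ($U_r^{x,y}$, $B_{2\Abdy r}(\bdy U_r^{x,y})$, $\BB A_{r/4,4r}(0)$, $\BB A_{r,4r}(0)$, $W_r^x$) contained in $\BB A_{r/4,4r}(0)$, so each is determined by $h|_{\BB A_{r/4,4r}(0)}$ by Axiom~\ref{item-metric-local}. For the three numbered conditions of Lemma~\ref{lem-highprob-geodesic}, I would argue exactly as in Lemma~\ref{lem-geo-event-local}: the points $u,v$ range over $\BB A_{(1-4\rho)r,(1+4\rho)r}(0)$, and the condition $\wt D_h(u,v)\leq (c_*/C_*)^2\wt D_h(u,\bdy B_{4\rho r}(u))$ forces the relevant $\wt D_h$-metric ball, the $\wt D_h$-geodesic, and (via the definition of $c_*,C_*$) the corresponding $D_h$-ball and $D_h$-geodesic, all to lie inside $B_{4\rho r}(u)\subset\BB A_{r/4,4r}(0)$; since $U_r^{x,y}\subset B_{3r}(0)$ and the neighborhoods $O_u,O_v$ are subsets of $U_r^{x,y}$, every quantity appearing in these conditions is an internal quantity on a region contained in $\BB A_{r/4,4r}(0)$, hence determined by $h|_{\BB A_{r/4,4r}(0)}$ by locality. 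The existence quantifier over the (uncountably many) pairs $x,y$ is harmless because $U_r^{x,y}$ takes only finitely many values as $x,y$ vary — it is always the interior of a union of squares in $\mcl S_{\ep_0 r}(\BB A_{r/2,2r}(0))$ — so $E_r$ is a finite intersection over the finitely many possible tubes of events each of which is of the form just analyzed, intersected with conditions quantified over the compact set $\{(x,y)\in(\bdy B_{2r}(0))^2 : |x-y|\geq\delta r\}$; in the latter case continuity of $D_h$ and $\wt D_h$ in their spatial arguments reduces the quantifier to a countable dense subset.

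Finally, condition~\ref{item-highprob-dirichlet}: each $\phi\in\mcl G_r$ is a fixed smooth bump function supported on a compact subset of $\BB A_{r/4,3r}(0)\subset\BB A_{r/4,4r}(0)$ (this is how $\mcl G_r$ was constructed in~\eqref{eqn-bump-function-def}--\eqref{eqn-bump-function-collection}), and $\mcl G_r$ is a finite set by~\eqref{eqn-bump-function-bound}; the pairing $(h,\phi)_\nabla$ only probes $h$ on the support of $\phi$, so this condition is determined by $h|_{\BB A_{r/4,4r}(0)}$ modulo additive constant (using again that $(\phi,1)_\nabla = 0$). Intersecting all ten conditions gives $E_r\in\sigma((h-h_{5r}(0))|_{\BB A_{r/4,4r}(0)})$, which is the claim. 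I do not anticipate a serious obstacle here; the only mildly delicate point is bookkeeping the uncountable quantifier over $(x,y)$, which is resolved by the finiteness of the square-tube combinatorics and continuity of the metrics, exactly as in the analogous measurability arguments earlier in the paper.
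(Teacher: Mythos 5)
Your proof is correct and follows essentially the same route as the paper: use Weyl scaling to dispose of the additive constant, handle the shortcut condition exactly as in Lemma~\ref{lem-geo-event-local}, and check the remaining conditions by inspection via Axiom~\ref{item-metric-local} together with the fact that $U_r^{x,y}$, $W_r^x$, $W_r^y$ and the supports of the finitely many $\phi\in\mcl G_r$ all lie in $\BB A_{r/4,4r}(0)$ (your extra care about the quantifier over $(x,y)$ is fine but not needed beyond what the paper does implicitly). One trivial remark: $(c,\phi)_\nabla=0$ for a constant $c$ simply because $\nabla c=0$, not because $\phi$ has mean zero.
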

\begin{proof}
By Axiom~\ref{item-metric-f} (Weyl scaling), the occurrence of $E_r$ is unaffected by adding a real number to $h$, so we only need to show $E_r \in \sigma\left( h|_{\BB A_{r/4,4r}(0)} \right)$. 
The measurability of condition~\ref{item-highprob-geo} follows from exactly the same argument used in the proof of Lemma~\ref{lem-geo-event-local} (this can also be seen from Lemma~\ref{lem-geo-event-local} and the proof of Lemma~\ref{lem-deterministic-geodesic}).
Since $U_r^{x,y} , W_r^x , W_r^y \subset \BB A_{(1/2 - 2\ep_0) r , 3 r}(0)$ and $D_h$ and $\wt D_h$ are local metrics for $h$, the measurability of the other conditions in the definition of $E_r$ follows by inspection and Axiom~\ref{item-metric-local} (locality). 
\end{proof}

\begin{lem} \label{lem-geo-event-prob}
We can choose the parameters $\delta, \Aendpts , \Adiam, \Abdy,\Aacross,\Atiny ,\Aline,\Lambda_0 $ in a manner depending only on $\BB p , \mu,\nu,c_1',c_2'$ in such a way that $\BB P[E_r] \geq \BB p$ for each $r\in \rho^{-1}\mcl R_0$. 
\end{lem}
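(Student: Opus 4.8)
The plan is to choose the parameters in the natural order dictated by the numbering of the conditions in the definition of $E_r$, making each one force the probability of the corresponding condition to be at least $1 - (1-\BB p)/C$ for a large enough constant $C$ depending only on the number of conditions, and then take a union bound. Since there are finitely many conditions (ten, say) and one "base" event coming from Lemma~\ref{lem-highprob-geodesic}, it suffices to make each of these hold with probability at least $1 - (1-\BB p)/12$, and the union bound gives $\BB P[E_r] \geq \BB p$. The key point, which we must track carefully, is that all estimates have to be uniform in $r \in \rho^{-1}\mcl R_0$; this is exactly what Axioms~\ref{item-metric-translate} and~\ref{item-metric-coord} (translation invariance and tightness across scales), together with the $\BB r$-uniform estimates recalled in Section~\ref{sec-a-priori-estimates}, are designed to provide.

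First I would fix $\delta \in (0,1)$ small enough (depending only on $\BB p$) that condition~\ref{item-highprob-endpts} holds with probability at least $1 - (1-\BB p)/12$: the left inequality there follows from Lemma~\ref{lem-square-diam} (internal diameters of Euclidean squares, applied to cover the short segment between $x'$ and $y'$ by a few small squares when $|x-y| < \delta r$), and the right inequality from Axiom~\ref{item-metric-coord} (tightness across scales) applied to the annulus $\BB A_{2r,3r}(0)$; both can be made to hold with high probability by taking $\delta$ small, uniformly in $r$. With $\delta$ now fixed, apply Lemma~\ref{lem-highprob-geodesic} with $p = 1 - (1-\BB p)/2$ to obtain $\rho, b, \ep_0$ (depending only on $\delta, \BB p, \mu, \nu$) and the deterministic tubes $U_r^{x,y}$, so that the three numbered conditions of that lemma hold simultaneously for all admissible $x,y$ with probability at least $1 - (1-\BB p)/2$. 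Next, since there are only finitely many possibilities for $U_r^{x,y}$ (it is an interior of a union of squares in a fixed mesh), I would choose $\Adiam$ large enough via Axiom~\ref{item-metric-coord} that condition~\ref{item-highprob-diam} holds with high probability; then $\Abdy \in (0,\ep_0)$ small and $\Aacross \in (0,\ep_0)$ small so that conditions~\ref{item-highprob-bdy} and~\ref{item-highprob-across} hold with high probability — here condition~\ref{item-highprob-bdy} uses Lemma~\ref{lem-line-path} (lower bound for distances in a narrow tube) applied to the finitely many boundary curves $\bdy U_r^{x,y}$, and condition~\ref{item-highprob-across} uses Axiom~\ref{item-metric-coord} again. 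Then choose $\Atiny \in (0,\ep_0)$ small so that condition~\ref{item-highprob-line-close} holds (using Lemma~\ref{lem-square-diam} to bound the internal distance along the narrow tube $W_r^x$ approximating $[3x/2,(3/2-\Atiny)x]$; note $K_f$ in~\eqref{eqn-bump-function-const} depends only on $\Adiam,\Aacross,\Aendpts$, all already fixed), and $\Aline$ large so that condition~\ref{item-highprob-line-internal} holds (again via Lemma~\ref{lem-square-diam}). Finally, with $\mcl G_r$ now completely determined (it does not depend on $\Lambda_0$), and since $\#\mcl G_r$ and $\max_{\phi\in\mcl G_r}(\phi,\phi)_\nabla$ are bounded uniformly in $r$ by~\eqref{eqn-bump-function-bound}, the random variables $(h,\phi)_\nabla$ for $\phi \in \mcl G_r$ are centered Gaussians with uniformly bounded variances (by scale invariance of the Dirichlet pairing and of the law of $h$ modulo additive constant), so we may pick $\Lambda_0$ large enough, depending only on the already-chosen parameters, that condition~\ref{item-highprob-dirichlet} holds with probability at least $1 - (1-\BB p)/12$, uniformly in $r$.

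The main obstacle is bookkeeping rather than a genuine conceptual difficulty: one must verify that the conditions really can be arranged in a dependency chain, so that when we shrink or enlarge a later parameter we do not spoil an earlier condition. The subtle points are (i) that $\Abdy$ appears both in the support of $f_r^{x,y}$ (via $B_{\Abdy r}(U_r^{x,y})$) and in conditions~\ref{item-highprob-bdy}, \ref{item-highprob-across}, \ref{item-highprob-line-close}, so it must be chosen small enough that the narrow tubes $W_r^x$ stay disjoint from $B_{\Abdy r}(U_r^{x,y})$ except near $x$, and so that $3x/2 \notin \ol{B_{\Atiny^2 r}(W_r^x)}$ — this is why $\Atiny$ is chosen after $\Abdy$ and why the yellow tubes in Figure~\ref{fig-internal-geo} are cut off before reaching $x'=3x/2$; and (ii) that the finitely-many-possibilities argument for the tubes $U_r^{x,y}$ (and for $W_r^x$, $f_r^{x,y}$, $g_r^x$) is what makes all the constants $r$-independent — this relies on the fact that after rescaling by $r$ the configuration lives in a fixed bounded region with a fixed mesh. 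Once these points are checked, the union bound over the finitely many conditions and the base event of Lemma~\ref{lem-highprob-geodesic} yields $\BB P[E_r] \geq \BB p$ for all $r \in \rho^{-1}\mcl R_0$, as required. All chosen parameters depend only on $\BB p, \mu, \nu, c_1', c_2'$ (and the choice of metrics $D, \wt D$, via $c_*, C_*, \eta$), which completes the proof.
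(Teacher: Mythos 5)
Your proposal is correct and takes essentially the same approach as the paper's proof: the parameters are chosen sequentially in the order of the conditions, each via Axiom~\ref{item-metric-coord}, Lemma~\ref{lem-square-diam}, or Lemma~\ref{lem-line-path} together with the finitely-many-possibilities argument for the square-built tubes, followed by a Gaussian bound for $(h,\phi)_\nabla$ and a union bound. The only cosmetic differences are that the paper chooses $\Aendpts$ first (via tightness across scales, since the right-hand inequality in condition~\ref{item-highprob-endpts} is governed by $\Aendpts$, not $\delta$) and then $\delta$, and that it bounds the internal diameter of $U_r^{x,y}$ by chaining the square-diameter estimate of Lemma~\ref{lem-square-diam} rather than invoking Axiom~\ref{item-metric-coord} directly.
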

\begin{proof}
By tightness across scales (Axiom~\ref{item-metric-coord}), we can choose $\Aendpts$ and then $\delta$ in such a way that condition~\ref{item-highprob-endpts} holds with probability at least $1-(1-\BB p)/100$.  
As above, we choose $b,\rho,\ep_0$ as in Lemma~\ref{lem-highprob-geodesic} with the above choice of $\delta$ and with $p = 1-(1-\BB p)/100$ (so that $b,\rho$ depend only on $\BB p,\mu,\nu$ and $\ep_0$ depends only on $\BB p , \mu,\nu,c_1', c_2'$) and define $U_r^{x,y}$ for $x,y \in \bdy B_{2r}(0)$ with $|x-y| \geq \delta r$ as in that lemma.  Then the first four conditions (including the three from Lemma~\ref{lem-highprob-geodesic}) in the definition of $E_r$ occur simultaneously with probability at least $1-2(1-\BB p)/100$.

We will now choose the parameters so as to lower-bound the probabilities of the other conditions in the definition of $E_r$ in numerical order. 
By Lemma~\ref{lem-square-diam}, we can find $C> 0$ depending only on $\ep_0$ (and hence only on $p ,\mu,\nu ,c_1',c_2' $) such that with probability at least $1- (1-\BB p)/100$, we have, with $\mcl S_{\ep_0 r}(\cdot)$ as in~\eqref{eqn-square-set-def},
\eqb \label{eqn-highprob-squares}
\sup_{S\in \mcl S_{\ep_0  r}(B_{2r}(0))} \sup_{w_1,w_2 \in S} D_h(w_1,w_2 ; S) \leq C \frk c_r e^{\xi h_r(0)} .
\eqe 
The total number of squares of $\mcl S_{\ep_0  r}(B_{2r}(0))$ is at bounded above by a constant depending only on $\ep_0$ (and hence only on $\BB p,\mu,\nu,c_1',c_2'  $).
Since each $U_r^{x,y}$ is connected and is the interior of a finite union of such squares, the triangle inequality shows that there is an $\Adiam >1$ depending only on $\BB p ,\mu,\nu $ such that whenever~\eqref{eqn-highprob-squares} holds, also condition~\ref{item-highprob-diam} holds. 
Hence the probability of condition~\ref{item-highprob-diam} is at least $1-(1-\BB p)/100$. 

The set $\bdy U_r^{x,y}$ is the union of some subset of the set of sides of squares in $\mcl S_{\ep_0  r}(B_{2r}(0))$. 
By Lemma~\ref{lem-line-path} (applied with $\Abdy$ in place of $\ep $) and a union bound over all of the sides of all of the squares in $\mcl S_{\ep_0 r}(B_{2r}(0))$, we can choose $\Abdy \in (0,\ep_0/100)$ depending only on $\BB p , \ep_0 , \Adiam$ (and hence only on $  \BB p ,  \mu , \nu ,c_1',c_2' $) such that condition~\ref{item-highprob-bdy} holds with probability at least $1 -  (1-\BB p)/100$. 
 
Since $D_h$ induces the Euclidean topology, we can find $\Aacross \in (0,1)$ depending only on $\BB p , \Abdy$ (and hence only on $ \BB p , \mu , \nu ,c_1',c_2'   $) such that condition~\ref{item-highprob-across} holds with probability at least $1- (1-\BB p)/100$.   

Since the constant $K_f$ of~\eqref{eqn-bump-function-const} depends only on $\Adiam, \Aendpts,\Aacross$, which have already been chosen in a manner depending only on $\BB p , \mu , \nu ,c_1',c_2' $, we can find a small enough $\Atiny \in (0,\Abdy/100)$ depending only on $\BB p , \mu , \nu ,c_1',c_2' $ such that condition~\ref{item-highprob-line-close} holds with probability at least $1-(1-\BB p)/100$.  

Recall from~\eqref{eqn-line-tube-def} that $W_r^x$ is the interior of the union of a set of squares in $\mcl S_{\Atiny r}(B_{3r}(0))$. 
By Axiom~\ref{item-metric-coord} (tightness across scalings) and Lemma~\ref{lem-square-diam}, we can find a sufficiently large $\Aline > 0$ depending only $\Atiny$ (hence only on $\BB p ,\mu,\nu ,c_1',c_2' $) such that  condition~\ref{item-highprob-line-internal} holds with probability at least $1-(1-\BB p)/100$. 

The definition of the set of bump functions $\mcl G_r$ above does not use the parameter $\Lambda_0$. As discussed just after~\eqref{eqn-bump-function-bound}, the number of functions in $\mcl G_r$ and the Dirichlet energies of these functions are each bounded above by constants which depend only on $\BB p ,\mu , \nu ,c_1',c_2' $ and the other parameters which we have already chosen in a manner depending only on $\BB p , \mu , \nu ,c_1',c_2' $. 
Consequently, we can find a constant $\Lambda_0 > 0$ depending only on $\BB p,\mu,\nu,c_1',c_2' $ such that condition~\ref{item-highprob-dirichlet} holds with probability at least $1- (1-\BB p)/100$. 
Combining our above estimates gives the statement of the lemma.
\end{proof}

\subsection{Subtracting a bump function to move a geodesic}
\label{sec-bump-function}

To prove Proposition~\ref{prop-geo-event0}, it remains to check Property~\eqref{item-internal-geo} (subtracting a bump function) for the event $E_r$ and the collection of smooth bump functions $\mcl G_r$ defined above. 
To this end, fix distinct points $\BB z , \BB w \in \BB C\setminus B_{4r}(0)$ and let $P = P^{\BB z,\BB w}$ be the (a.s.\ unique) $D_h$-geodesic from $\BB z$ to $\BB w$.  
We first grow the $D_h$-metric balls until they hit $\bdy B_{3r}(0)$. 
Let $\sigma_r$ (resp.\ $\wh \sigma_r$) be the smallest $s > 0$ for which the $D_h$-metric ball $\mcl B_s(\BB z ;D_h)$ (resp.\ $\mcl B_s(\BB w ; D_h)$) intersects $\ol{B_{3r}(0)}$.
Also let $\BB x'$ (resp.\ $\BB y'$) be a point of $\bdy B_{3r}(0) \cap \mcl B_{\sigma_r}(\BB z ; D_h)$ (resp.\ $\mcl B_{\wh \sigma_r}(\BB w ;D_h)$), chosen in some manner depending only on the appropriate $D_h$-metric ball\footnote{It is in fact not difficult to see that there is a.s.\ a unique intersection point by repeating the argument of \cite[Theorem~1.2]{mq-geodesics}.}, and define the points of $\bdy B_{2r}(0)$
\eqb
\BB x := (2/3) \BB x'  \quad \text{and} \quad \BB y := (2/3) \BB y ' .
\eqe
Note that $\BB x , \BB y \in \sigma\left( h|_{\BB C\setminus B_{3r}(0)} \right)$. 

In the notation~\eqref{eqn-bump-function-def}, we set
\eqb \label{eqn-bump-function-choice}
\phi = \begin{cases}
\phi_r^{\BB x , \BB y} , \quad &\text{if $|\BB x-\BB y| \geq \delta r$} \\
0 ,\quad &\text{otherwise} .
\end{cases}
\eqe
Then $\phi \in \mcl G_r$, as defined in~\eqref{eqn-bump-function-collection}, and $\phi$ is determined by $\BB x , \BB y$ and hence by $h|_{\BB C\setminus B_{3r}(0)}$. 
Hence to prove Property~\eqref{item-internal-geo} it remains only to prove the following. 

\begin{lem} \label{lem-internal-geo}
Let $P^\phi$ be the (a.s.\ unique) $D_{h - \phi}$-geodesic from $\BB z$ to $\BB w$.
If $P\cap B_{2r}(0) \not=\emptyset$ and $E_r$ occurs, then there are times $0  < s < t  <  D_{h - \phi}(\BB z, \BB w) $ such that 
\allb \label{eqn-internal-geo}
&P^\phi(s) , P^\phi(t) \in B_{3r/2}(0) , \quad 
|P^\phi(s) - P^\phi(t)| \geq (b - 40 \ep_0) r , \notag\\
&\qquad \wt D_{h-\phi }\left(P^\phi(s) , P^\phi(t)\right) \leq c_2'  D_{h- \phi}\left(P^\phi(s) , P^\phi(t)\right) ,\quad \text{and} \notag\\
&\qquad  \wt D_{h- \phi}\left(P^\phi(s) , P^\phi(t)\right) \leq (c_* / C_*) \wt D_{h-\phi}\left(P^\phi(s) , \bdy B_{3r}(0) \right) .
\alle
\end{lem}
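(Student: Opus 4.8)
\textbf{Proof strategy for Lemma~\ref{lem-internal-geo}.}
The plan is to use Weyl scaling (Axiom~\ref{item-metric-f}) together with the many regularity conditions packaged into $E_r$ to show that subtracting $\phi$ from $h$ makes the region $U_r^{\BB x , \BB y}$ (and the two narrow tubes $W_r^{\BB x}, W_r^{\BB y}$) into a ``shortcut'' which the $D_{h-\phi}$-geodesic $P^\phi$ is forced to use. First I would deal with the case distinction in~\eqref{eqn-bump-function-choice}: show that if $P\cap B_{2r}(0)\not=\emptyset$ and $E_r$ occurs, then in fact $|\BB x - \BB y| \geq \delta r$, so that $\phi = \phi_r^{\BB x,\BB y}$ is the non-trivial bump function. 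Indeed, if $|\BB x - \BB y| < \delta r$, then $|\BB x' - \BB y'| < \delta r$ as well, and condition~\ref{item-highprob-endpts} gives $D_h(\BB x' , \BB y' ; \BB A_{r,4r}(0)) \leq \Aendpts \frk c_r e^{\xi h_r(0)} \leq D_h(\bdy B_{2r}(0),\bdy B_{3r}(0))$. On the other hand $P$ starts and ends outside $B_{4r}(0)$ and passes through $B_{2r}(0)$, so $P$ crosses the annulus $\BB A_{2r,3r}(0)$ at least twice; but the $D_h$-metric balls from $\BB z$ and $\BB w$ reach $\bdy B_{3r}(0)$ at $\BB x'$ and $\BB y'$, and since $D_h(\BB x',\BB y' ; \BB A_{r,4r}(0))$ is at most the cost of crossing from $\bdy B_{2r}(0)$ to $\bdy B_{3r}(0)$, one can reroute $P$ through a path joining $\BB x'$ to $\BB y'$ inside $\BB A_{r,4r}(0)$ to get a strictly shorter path (using that $P$ must spend at least $2 D_h(\bdy B_{2r}(0),\bdy B_{3r}(0))$ units of time near this annulus), contradicting that $P$ is a geodesic. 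This is the content of a lemma that should be stated separately (``Lemma~\ref{lem-hitpt-lower}'' as referenced in the text).

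Next I would establish the geometric picture for $P^\phi$. By Axiom~\ref{item-metric-f}, $D_{h-\phi}$-lengths of paths contained in $U_r^{\BB x,\BB y}$ are scaled by $e^{-\xi K_f}$ relative to $D_h$, and lengths of paths in $W_r^{\BB x}\cup W_r^{\BB y}$ are scaled by $e^{-\xi K_g}$; lengths of paths avoiding $\op{supp}\phi$ are unchanged, and on the intermediate region $B_{\Abdy r}(U_r^{\BB x,\BB y})\setminus U_r^{\BB x,\BB y}$ the scaling factor is at most $1$. The key quantitative inputs are: (i) condition~\ref{item-highprob-diam} plus the choice of $K_f$ in~\eqref{eqn-bump-function-const} make the $D_{h-\phi}$-internal diameter of $U_r^{\BB x,\BB y}$ at most $e^{-\xi K_f}\Adiam\frk c_r e^{\xi h_r(0)}$, which is tiny compared to $\Aacross\frk c_r e^{\xi h_r(0)}$; (ii) condition~\ref{item-highprob-across} shows that to exit $B_{\Abdy r}(U_r^{\BB x,\BB y})$ one must travel $D_{h-\phi}$-distance at least $\Aacross\frk c_r e^{\xi h_r(0)}$ (the bump function only helps inside, and outside $U_r^{\BB x,\BB y}$ the factor is $\leq 1$); (iii) conditions~\ref{item-highprob-line-close} and~\ref{item-highprob-line-internal} with the choice of $K_g$ make the narrow tubes from $\BB x',\BB y'$ into $U_r^{\BB x,\BB y}$ extremely cheap, so $\BB z$ and $\BB w$ connect to $U_r^{\BB x,\BB y}$ via these tubes at negligible $D_{h-\phi}$-cost after the metric balls reach $\bdy B_{3r}(0)$; and (iv) condition~\ref{item-highprob-bdy} forces any path which lingers near $\bdy U_r^{\BB x,\BB y}$ without entering $U_r^{\BB x,\BB y}$ to be very long. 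Combining these, the $D_{h-\phi}$-geodesic $P^\phi$ from $\BB z$ to $\BB w$ must enter $U_r^{\BB x,\BB y}$; moreover it must enter and exit through (neighborhoods of) $\BB x$ and $\BB y$ because of the cheapness of the $W_r^{\BB x},W_r^{\BB y}$ tubes and condition~\ref{item-highprob-line-close}, and once inside it essentially traces a near-geodesic of the internal metric $D_{h-\phi}(\cdot,\cdot ; U_r^{\BB x,\BB y})$ between a point near $\BB x$ and a point near $\BB y$.

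Now let $u,v\in \BB A_{(1-4\rho)r,(1+4\rho)r}(0)\cap U_r^{\BB x,\BB y}$ be the shortcut points from conditions~\ref{item-highprob-geo}--\ref{item-highprob-internal} of Lemma~\ref{lem-highprob-geodesic} (included in $E_r$). By condition~\ref{item-highprob-dc}, removing the small neighborhood $O_u$ disconnects $\BB x$ from $\BB y$ inside $U_r^{\BB x,\BB y}$, and similarly for $O_v$; since $P^\phi$ travels inside $U_r^{\BB x,\BB y}$ from near $\BB x$ to near $\BB y$, it must pass through both $O_u$ and $O_v$. Let $s$ be the time $P^\phi$ is in $O_u$ and $t$ the time it is in $O_v$ (say with $s<t$ after relabeling; note $O_u, O_v$ lie at Euclidean distance $\gtrsim \ep_0 r$ apart by~\ref{item-highprob-dc}, so $s\not= t$, and both lie well inside $D_{h-\phi}(\BB z,\BB w)$). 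Then $P^\phi(s),P^\phi(t)\in B_{3r/2}(0)$ (since $u,v\in \BB A_{(1-4\rho)r,(1+4\rho)r}(0)$ and $O_u, O_v$ have diameter $\leq 40\ep_0 r$, with $\rho, \ep_0$ small), and $|P^\phi(s)-P^\phi(t)| \geq |u-v| - 40\ep_0 r \geq (b - 40\ep_0) r$ by condition~\ref{item-highprob-geo}. For the distance comparisons, the point is that the $\wt D_h$-geodesic from $u$ to $v$ lies inside $U_r^{\BB x,\BB y}$ and is \emph{disjoint} from $W_r^{\BB x}\cup W_r^{\BB y}$ (it stays in $\BB A_{(1-4\rho)r,(1+4\rho)r}(0)$ which avoids the tubes), so on this geodesic $\phi \equiv K_f$ is constant; hence $\wt D_{h-\phi}(u,v) = e^{-\xi K_f}\wt D_h(u,v)$, and similarly by conditions~\ref{item-highprob-internal} and~\ref{item-highprob-diam} together with constancy of $\phi = K_f$ on the relevant region, $D_{h-\phi}(P^\phi(s),v), D_{h-\phi}(u,P^\phi(t))$ are controlled by $\eta e^{-\xi K_f}\wt D_h(u,v)$ etc. Plugging these into the triangle inequality, using that $e^{-\xi K_f}$ cancels out of the ratio, and finally using the choice of $\eta$ in~\eqref{eqn-const-error} (this is precisely where the inequality $\frac{c_1'(1+2\eta)}{1-2c_*^{-1}C_*\eta} < c_2'$ is used) yields $\wt D_{h-\phi}(P^\phi(s),P^\phi(t)) \leq c_2' D_{h-\phi}(P^\phi(s),P^\phi(t))$. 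The last inequality in~\eqref{eqn-internal-geo} follows from the third bound in condition~\ref{item-highprob-geo}, $\wt D_h(u,v)\leq (c_*/C_*)^2\wt D_h(u,\bdy B_{4\rho r}(u))$, combined with the fact that distances from $u$ to $\bdy B_{3r}(0)$ dominate (up to the bi-Lipschitz constants $c_*,C_*$ from~\eqref{eqn-max-min-def}) distances from $P^\phi(s)$ to $\bdy B_{3r}(0)$, again absorbing the $e^{-\xi K_f}$ factor.

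\textbf{Main obstacle.} The genuinely delicate part is showing that $P^\phi$ does not ``cheat'' by entering $U_r^{\BB x,\BB y}$, leaving, and re-entering, or by passing through the intermediate collar $B_{\Abdy r}(U_r^{\BB x,\BB y})\setminus U_r^{\BB x,\BB y}$ where $\phi$ is positive but smaller than $K_f$ — in particular one must rule out that the cheapest $D_{h-\phi}$-route between $\BB x$-side and $\BB y$-side goes along $\bdy U_r^{\BB x,\BB y}$ inside the collar rather than through the interior. This is exactly what condition~\ref{item-highprob-bdy} (paths of macroscopic diameter in $B_{2\Abdy r}(\bdy U_r^{\BB x,\BB y})$ are very long) is designed to handle, but stitching together the estimates — tracking which scaling factor applies on which sub-path of $P^\phi$, and ensuring the ``cheap interior'' genuinely beats every alternative — requires careful bookkeeping with the hierarchy of constants $\ep_0 \gg \Abdy \gg \Aacross$ and $K_g \gg K_f \gg 1$. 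A secondary subtlety is verifying that $P^\phi$ enters $U_r^{\BB x,\BB y}$ near $\BB x$ and exits near $\BB y$ (as opposed to entering and exiting near the same boundary point), which is needed so that the disconnection property~\ref{item-highprob-dc} actually forces $P^\phi$ through $O_u$ and $O_v$; this uses condition~\ref{item-highprob-line-close} to pin down the entry/exit points within $\Atiny r$ of $\BB x'$ and $\BB y'$.
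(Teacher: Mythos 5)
Your proposal is correct and follows essentially the same route as the paper's proof: first $|\BB x-\BB y|\geq\delta r$ via condition~\ref{item-highprob-endpts} and the fact that $P$ crosses $\BB A_{2r,3r}(0)$ twice; then an upper bound on $D_{h-\phi}(\BB x',\BB y')$ from conditions~\ref{item-highprob-diam}, \ref{item-highprob-line-close}, \ref{item-highprob-line-internal} and the choice of $K_f,K_g$, which together with conditions~\ref{item-highprob-across} and~\ref{item-highprob-bdy} confines $P^\phi$ (away from the two metric balls grown from $\BB z,\BB w$) to a small neighborhood of $U_r^{\BB x,\BB y}\cup W_r^{\BB x}\cup W_r^{\BB y}$ with no long excursions near $\bdy U_r^{\BB x,\BB y}$ outside the tubes, so that the disconnection condition~\ref{item-highprob-dc} forces it through $O_u$ and $O_v$; and finally the ratio bounds via condition~\ref{item-highprob-internal} and the choice of $\eta$ in~\eqref{eqn-const-error}, exactly as in the paper. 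The one step to spell out when you ``cancel $e^{-\xi K_f}$ from the ratio'' is the lower bound $D_{h-\phi}(u,v)\geq e^{-\xi K_f}D_h(u,v)$ (your argument as written only gives the matching upper bound for $\wt D_{h-\phi}(u,v)$): since the last inequality in~\eqref{eqn-highprob-geo} combined with the bi-Lipschitz equivalence~\eqref{eqn-max-min-def} gives $D_h(u,v)\leq D_h\left(u, W_r^{\BB x}\cup W_r^{\BB y}\right)$, geodesics between $u$ and $v$ never reach the region where $\phi$ exceeds $K_f$, which is precisely how the paper closes this step.
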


The rest of this section is devoted to the proof of Lemma~\ref{lem-internal-geo}.
To lighten notation, write 
\eqb \label{eqn-internal-geo-domains}
U = U_r^{\BB x,\BB y} \quad \text{and} \quad \mcl W = B_{\Atiny^2 r} ( W_r^{\BB x})  \cup B_{\Atiny^2 r}( W_r^{\BB y} ) .
\eqe
Throughout, we assume that $E_r$ occurs and $P\cap B_{2r}(0)\not=\emptyset$.   
The proof is an elementary (though somewhat technical) deterministic argument using the conditions in the definition of $E_r$, and is divided into several lemmas.

\begin{lem} \label{lem-hitpt-lower}
We have $|\BB x - \BB y| \geq \delta r$. 
\end{lem}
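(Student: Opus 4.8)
\textbf{Proof proposal for Lemma~\ref{lem-hitpt-lower}.}

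The plan is to use the fact that $P$ is a $D_h$-geodesic from $\BB z$ to $\BB w$ which enters $B_{2r}(0)$, together with condition~\ref{item-highprob-endpts} in the definition of $E_r$, to rule out the possibility $|\BB x - \BB y| < \delta r$. Suppose for contradiction that $|\BB x - \BB y| < \delta r$; equivalently $|\BB x' - \BB y'| < (3/2)\delta r$ since $\BB x' = (3/2)\BB x$ and $\BB y' = (3/2)\BB y$. Recall that $\BB x'$ (resp.\ $\BB y'$) is the first point of $\bdy B_{3r}(0)$ hit by the $D_h$-metric ball grown from $\BB z$ (resp.\ $\BB w$), so $\sigma_r = D_h(\BB z, \bdy B_{3r}(0)) = D_h(\BB z, \BB x')$ and $\wh\sigma_r = D_h(\BB w, \bdy B_{3r}(0)) = D_h(\BB w, \BB y')$.

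First I would estimate $D_h(\BB z, \BB w)$ from above. Since $P$ enters $B_{2r}(0)$, it must cross from $\bdy B_{3r}(0)$ inward to $B_{2r}(0)$ and then back out, so $P$ passes through $\bdy B_{3r}(0)$; in particular $D_h(\BB z, \BB w) \geq D_h(\BB z, \bdy B_{3r}(0)) = \sigma_r$ and likewise $D_h(\BB z,\BB w) \geq \wh\sigma_r$. On the other hand, concatenating a $D_h$-geodesic from $\BB z$ to $\BB x'$ (length $\sigma_r$), a path from $\BB x'$ to $\BB y'$ staying in $\BB A_{r,4r}(0)$ (length $\leq \Aendpts \frk c_r e^{\xi h_r(0)}$ by the first inequality in condition~\ref{item-highprob-endpts}, applicable since $|\BB x - \BB y| < \delta r$), and a $D_h$-geodesic from $\BB y'$ to $\BB w$ (length $\wh\sigma_r$) gives a path from $\BB z$ to $\BB w$ of $D_h$-length at most $\sigma_r + \wh\sigma_r + \Aendpts \frk c_r e^{\xi h_r(0)}$, hence
\[
D_h(\BB z, \BB w) \leq \sigma_r + \wh\sigma_r + \Aendpts \frk c_r e^{\xi h_r(0)} .
\]
Now since $P$ is a $D_h$-geodesic from $\BB z$ to $\BB w$ which passes through $\bdy B_{2r}(0)$, let $t_*$ be a time at which $P(t_*) \in \bdy B_{2r}(0)$. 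Then $t_* = D_h(\BB z, P(t_*)) \geq D_h(\BB z, \bdy B_{2r}(0)) \geq D_h(\BB z, \bdy B_{3r}(0)) = \sigma_r$, and similarly $D_h(\BB z, \BB w) - t_* = D_h(P(t_*), \BB w) \geq D_h(\BB w, \bdy B_{3r}(0)) = \wh\sigma_r$ (here I use that a path from $\BB z$ or $\BB w$, both outside $B_{4r}(0)$, to a point of $\bdy B_{2r}(0)$ must cross $\bdy B_{3r}(0)$). Adding these gives $D_h(\BB z,\BB w) \geq \sigma_r + \wh\sigma_r + (t_* - \sigma_r) + (D_h(\BB z,\BB w) - t_* - \wh\sigma_r)$, i.e. the segment of $P$ between its last visit to $\bdy B_{3r}(0)$ before hitting $B_{2r}(0)$ and its first visit afterward is a path between two points of $\bdy B_{3r}(0)\cup\bdy B_{2r}(0)$ lying outside $B_{2r}(0)$'s complement\ldots more cleanly: the portion of $P$ in $\ol{B_{3r}(0)}$ has $D_h$-length at least $D_h(\bdy B_{3r}(0), \bdy B_{2r}(0)) + D_h(\bdy B_{2r}(0), \bdy B_{3r}(0)) \geq 2 D_h(\bdy B_{2r}(0), \bdy B_{3r}(0))$, so
\[
D_h(\BB z, \BB w) \geq \sigma_r + \wh\sigma_r + 2 D_h\left(\bdy B_{2r}(0), \bdy B_{3r}(0)\right) .
\]
Combining the upper and lower bounds yields $2 D_h(\bdy B_{2r}(0), \bdy B_{3r}(0)) \leq \Aendpts \frk c_r e^{\xi h_r(0)}$, which contradicts the second inequality in condition~\ref{item-highprob-endpts}, namely $\Aendpts \frk c_r e^{\xi h_r(0)} \leq D_h(\bdy B_{2r}(0), \bdy B_{3r}(0))$. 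This contradiction shows $|\BB x - \BB y| \geq \delta r$.

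The main obstacle is getting the geometry of the lower bound on the $D_h$-length of the portion of $P$ inside $\ol{B_{3r}(0)}$ exactly right: I need to carefully track the entrance and exit times of $P$ relative to $\bdy B_{3r}(0)$ and $\bdy B_{2r}(0)$, using that $\BB z, \BB w \notin B_{4r}(0)$ so that any subpath of $P$ reaching $\bdy B_{2r}(0)$ must first traverse the annulus $\BB A_{2r,3r}(0)$, and that $P$ is a geodesic so its length decomposes additively along these times. Everything else is a routine application of the triangle inequality and the definition of $E_r$; one should double-check that the concatenated comparison path from $\BB z$ to $\BB w$ indeed stays in a region where condition~\ref{item-highprob-endpts} provides the claimed bound (it does, since the middle segment from $\BB x'$ to $\BB y'$ is taken inside $\BB A_{r,4r}(0)$ and the geodesic segments to $\BB x', \BB y'$ exit $B_{4r}(0)$ only near their far endpoints).
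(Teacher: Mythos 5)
Your proposal is correct and follows essentially the same argument as the paper: the geodesic must cross the annulus $\BB A_{2r,3r}(0)$ twice, giving a lower bound of $2D_h(\bdy B_{2r}(0),\bdy B_{3r}(0))$, while condition~\ref{item-highprob-endpts} under $|\BB x-\BB y|<\delta r$ forces the complementary upper bound through $D_h(\BB x',\BB y';\BB A_{r,4r}(0))$, a contradiction. The only (cosmetic) difference is that you phrase the comparison in terms of $D_h(\BB z,\BB w)$ with the terms $\sigma_r+\wh\sigma_r$ added to both sides, whereas the paper compares the $D_h$-distance between the metric balls $\mcl B_{\sigma_r}(\BB z;D_h)$ and $\mcl B_{\wh\sigma_r}(\BB w;D_h)$ directly.
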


Lemma~\ref{lem-hitpt-lower} allows us to apply all of the conditions in the definition of $E_r$ with $x = \BB x$ and $y=\BB y$ (note that these conditions hold for all $x,y\in \bdy B_{2r}(0)$ with $|x-y|\geq \delta r$ simultaneously). 
We will use this fact without comment throughout the rest of the proof. 

\begin{proof}[Proof of Lemma~\ref{lem-hitpt-lower}] 
Since $P$ is a $D_h$-geodesic, the $D_h$-distance between the metric balls $\mcl B_{\sigma_r}(\BB z ; D_h)$ and $\mcl B_{\wh \sigma_r}(\BB w ;D_h)$ is equal to the $D_h$-distance traveled by $P$ between the times when it hits these two metric balls. 
Since $P$ enters $B_{2r}(0)$, it must cross between the inner and outer boundaries of $\BB A_{2r,3r}(0)$ at least twice between hitting these two metric balls, so the $D_h$-distance between $\mcl B_{\sigma_r}(\BB z ; D_h)$ and $\mcl B_{\wh \sigma_r}(\BB w ;D_h)$ must be at least $2 D_h(\bdy B_{2r}(0) , \bdy B_{3r}(0))$. Condition~\ref{item-highprob-endpts} in the definition of $E_r$ implies that if $|\BB x - \BB y| < \delta r$ then $D_h(\bdy B_{2r}(0), \bdy B_{3r}(0)) \geq D_h(\BB x', \BB y' ; \BB A_{r,4r}(0))$ which is at least the $D_h$-distance between $\mcl B_{\sigma_r}(\BB z ; D_h)$ and $\mcl B_{\wh \sigma_r}(\BB w ;D_h)$.  This is a contradiction and therefore $|\BB x - \BB y| \geq \delta r$.
\end{proof}

We now prove an upper bound for $D_{h-\phi}(\BB x',\BB y')$. Since $P^\phi$ is a $D_{h-\phi}$-geodesic, this upper bound will allow us to constrain the behavior of $P^\phi$ since $P^\phi$ cannot have any segment whose $D_{h-\phi}$-length is larger than $D_{h-\phi}(\BB x',\BB y')$ (see Lemma~\ref{lem-geomove-restrict} below). 

\begin{lem} \label{lem-geomove-dist}
We have
\eqb \label{eqn-geomove-dist}
D_{h-\phi}\left( \BB x' , \BB y' \right) 
\leq e^{- \xi K_f} (\Adiam +4) \frk c_r e^{\xi h_r(0)}   .
\eqe
\end{lem}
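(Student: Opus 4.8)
The plan is to bound $D_{h-\phi}(\BB x',\BB y')$ by exhibiting an explicit path from $\BB x'$ to $\BB y'$ and estimating its $D_{h-\phi}$-length using Weyl scaling (Axiom~\ref{item-metric-f}). The natural path to use is the concatenation of three pieces: a path from $\BB x' = 3\BB x/2$ to $(3/2 - \Atiny)\BB x$ contained in the narrow tube $W_r^{\BB x}$ where $\phi$ equals the large constant $K_g$; a path from $(3/2-\Atiny)\BB x$ to $(3/2-\Atiny)\BB y$ through $U = U_r^{\BB x,\BB y}$, where $\phi \geq K_f$ (using that $U$ contains points near $\BB x$ and $\BB y$ and that $W_r^{\BB x}, W_r^{\BB y}$ approach $U$); and finally a path from $(3/2-\Atiny)\BB y$ back to $\BB y'$ contained in $W_r^{\BB y}$. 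Along the first and third pieces, $\phi = K_g = K_f + \xi^{-1}\log\Aline$, so by Weyl scaling the $D_{h-\phi}$-length of each such piece is at most $e^{-\xi K_g}$ times its $D_h$-length, and condition~\ref{item-highprob-line-internal} in the definition of $E_r$ gives a $D_h$-length bound of $\Aline \frk c_r e^{\xi h_r(0)}$; hence each of these two pieces has $D_{h-\phi}$-length at most $e^{-\xi K_g}\Aline \frk c_r e^{\xi h_r(0)} = e^{-\xi K_f}\frk c_r e^{\xi h_r(0)}$.

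For the middle piece, I would first use condition~\ref{item-highprob-line-close} to connect $(3/2-\Atiny)\BB x$ to a point near $U$: actually it is cleaner to route through $\BB x$ itself. Specifically, $U_r^{\BB x,\BB y}$ contains $\BB x$ and $\BB y$ by the conclusion of Lemma~\ref{lem-highprob-geodesic}, and condition~\ref{item-highprob-line-close} bounds $D_h(3\BB x/2, (3/2-\Atiny)\BB x; \BB A_{r,4r}(0)) \leq e^{-\xi K_f}\frk c_r e^{\xi h_r(0)}$, which after subtracting $\phi$ (which is at least $0$ everywhere, so distances can only shrink) contributes at most $e^{-\xi K_f}\frk c_r e^{\xi h_r(0)}$; a symmetric bound holds near $\BB y$. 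To cross $U$ from $\BB x$ to $\BB y$, note that on $U$ we have $\phi \geq K_f$, so by Weyl scaling the $D_{h-\phi}$-internal distance in $U$ is at most $e^{-\xi K_f}$ times the $D_h$-internal distance in $U$, and condition~\ref{item-highprob-diam} bounds the latter diameter by $\Adiam \frk c_r e^{\xi h_r(0)}$. This gives a contribution of at most $e^{-\xi K_f}\Adiam \frk c_r e^{\xi h_r(0)}$ for crossing $U$. One technical point to handle: the narrow tubes $W_r^{\BB x}$ and the region $U$ need to actually be joined so that the concatenated path is continuous; this is arranged by the geometry — $W_r^{\BB x}$ runs from $\BB x$ outward along $[\BB x, (3/2-\Atiny)\BB x]$ and $\BB x \in U$, so the relevant endpoint of the tube and the entry point into $U$ coincide (or are within the same square), and condition~\ref{item-highprob-line-close} / the definition of $W_r^{\BB x}$ bridge the remaining gap near $(3/2-\Atiny)\BB x$ to $\BB x'$.

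Summing the five contributions: two tube pieces giving $e^{-\xi K_f}\frk c_r e^{\xi h_r(0)}$ each, the $U$-crossing giving $e^{-\xi K_f}\Adiam\frk c_r e^{\xi h_r(0)}$, and two ``bridging'' pieces near $\BB x$ and $\BB y$ from condition~\ref{item-highprob-line-close} giving $e^{-\xi K_f}\frk c_r e^{\xi h_r(0)}$ each, yields a total of at most $e^{-\xi K_f}(\Adiam + 4)\frk c_r e^{\xi h_r(0)}$, which is exactly~\eqref{eqn-geomove-dist}. The main obstacle I anticipate is purely bookkeeping: making sure the path pieces genuinely concatenate into a continuous path lying in the correct regions (so that the relevant Weyl-scaling bound with the correct value of $\phi$ applies on each piece), and in particular checking that one does not accidentally route through a part of the support of $\phi$ where $\phi$ is small (in the neighborhoods $B_{\Abdy r}(U)\setminus U$ or $B_{\Atiny^2 r}(W_r^{\BB x})\setminus W_r^{\BB x}$) — but since $\phi \geq 0$ everywhere, using $D_{h-\phi} \leq D_h$ on any stray sub-segment is always a safe fallback, so the estimate is robust. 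I would present this as a short sequence of three or four displayed inequalities, one per path segment, then add them.
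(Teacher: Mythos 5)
Your proof is correct and follows essentially the same route as the paper's: bridge from $\BB x'$ (resp.\ $\BB y'$) to the tube $W_r^{\BB x}$ (resp.\ $W_r^{\BB y}$) using condition~\ref{item-highprob-line-close} together with $D_{h-\phi}\leq D_h$, cross the tubes using condition~\ref{item-highprob-line-internal} with Weyl scaling and $\phi \geq K_g$ there, cross $U$ using condition~\ref{item-highprob-diam} with $\phi\geq K_f$ there, and sum the five pieces by the triangle inequality (using that $W_r^{\BB x}$ and $W_r^{\BB y}$ intersect $U$) to get $e^{-\xi K_f}(\Adiam+4)\frk c_r e^{\xi h_r(0)}$. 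The only slip is your opening claim that the first piece from $\BB x'=3\BB x/2$ to $(3/2-\Atiny)\BB x$ lies inside $W_r^{\BB x}$ --- it does not, since the tube is truncated at radius $(3-2\Atiny)r$ precisely so that $g_r^{\BB x}$ is supported in $B_{3r}(0)$ --- but you correct this yourself by invoking condition~\ref{item-highprob-line-close} for exactly that bridge, which is what the paper does.
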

\begin{proof}
By condition~\ref{item-highprob-line-close} in the definition of $E_r$ and since $D_{h-\phi} \leq D_h$,
\eqb \label{eqn-geomove-tip}
D_{h-\phi}\left(\BB x' , W_r^{\BB x} \right) \leq   e^{-\xi K_f} \frk c_r e^{\xi h_r(0)}   \quad \text{and} \quad
D_{h-\phi}\left(\BB y' , W_r^{\BB y} \right) \leq   e^{-\xi K_f} \frk c_r e^{\xi h_r(0)} .
\eqe
By condition~\ref{item-highprob-line-internal}, Axiom~\ref{item-metric-f} (Weyl scaling), and since $\phi \geq K_g$ on each of $W_r^{\BB x}$ and $W_r^{\BB y}$ (with $K_g$ as in~\eqref{eqn-bump-function-const}), 
\eqb \label{eqn-geomove-line}
\text{The internal $D_{h-\phi}$-diameters of $W_r^{\BB x}$ and $W_r^{\BB y}$ are each $\leq e^{-\xi K_g} \Aline \frk c_r e^{ \xi h_r(0)} \leq e^{-\xi K_f} \frk c_r e^{\xi h_r(0)}$. }
\eqe
By condition~\ref{item-highprob-diam}, Axiom~\ref{item-metric-f}, and since $\phi \geq  K_f$ on $U$, 
\eqb \label{eqn-geomove-tube}
\sup_{w_1,w_2 \in  U } D_{h-\phi}\left(w_1,w_2 ; U \right) 
\leq e^{-\xi K_f} \Adiam \frk c_r e^{\xi h_r(0)}  .
\eqe
Since $W_r^{\BB x}$ and $W_r^{\BB y}$ each intersect $U$, we can combine~\eqref{eqn-geomove-tip}, \eqref{eqn-geomove-line}, and~\eqref{eqn-geomove-tube} and use the triangle inequality to get~\eqref{eqn-geomove-dist}.
\end{proof}  

\begin{lem} \label{lem-geomove-restrict}
To lighten notation, let 
\eqbn
\ol P^\phi := P^\phi  \setminus \left( \mcl B_{\sigma_r}(\BB z ; D_h) \cup \mcl B_{\wh \sigma_r}(\BB w ;D_h) \right).
\eqen 
In the notation~\eqref{eqn-internal-geo-domains}, $\ol P^\phi$ is contained in $B_{2\Abdy r}(U \cup \mcl W)$. Furthermore, there is no segment of $\ol P^\phi$ of Euclidean diameter $\geq \ep_0 r/100$ which is contained in $B_{2\Abdy r}(\bdy U) \setminus  \mcl W$. 
\end{lem}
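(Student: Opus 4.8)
\textbf{Proof plan for Lemma~\ref{lem-geomove-restrict}.}

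The plan is a Weyl-scaling argument: subtracting the bump function $\phi$ makes distances along the intended ``tube'' short, so a $D_{h-\phi}$-geodesic cannot afford to wander. Concretely, by Lemma~\ref{lem-geomove-dist} we have the upper bound $D_{h-\phi}(\BB x',\BB y') \leq e^{-\xi K_f}(\Adiam+4)\frk c_r e^{\xi h_r(0)}$. Since $\BB x'\in \mcl B_{\sigma_r}(\BB z;D_h)$ and $\BB y'\in \mcl B_{\wh\sigma_r}(\BB w;D_h)$ and $D_{h-\phi}\leq D_h$, any initial and terminal segment of $P^\phi$ inside these two metric balls contributes nothing to a path between $\BB x'$ and $\BB y'$; hence $\ol P^\phi$, the part of $P^\phi$ outside these two balls, is a $D_{h-\phi}$-geodesic between a point of $\bdy B_{3r}(0)$ (where it first leaves $\mcl B_{\sigma_r}(\BB z;D_h)$, near $\BB x'$) and another such point near $\BB y'$, and its $D_{h-\phi}$-length is at most $D_{h-\phi}(\BB x',\BB y') \leq e^{-\xi K_f}(\Adiam+4)\frk c_r e^{\xi h_r(0)}$. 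First I would make precise this last claim: because $P^\phi$ is a $D_{h-\phi}$-geodesic from $\BB z$ to $\BB w$ and the metric balls are grown in the metric $D_h \geq D_{h-\phi}$, the $D_{h-\phi}$-length of $\ol P^\phi$ is bounded by the $D_{h-\phi}$-distance between the (closed) metric balls, which is $\leq D_{h-\phi}(\BB x',\BB y')$ since $\BB x',\BB y'$ lie in them.

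Next I would use conditions~\ref{item-highprob-bdy} and~\ref{item-highprob-across} in the definition of $E_r$ to rule out the forbidden behaviors. For the first claim (containment in $B_{2\Abdy r}(U\cup\mcl W)$): suppose $\ol P^\phi$ exits $B_{2\Abdy r}(U\cup\mcl W)$. On $B_{r/4,4r}(0)\setminus B_{2\Abdy r}(U\cup \mcl W)$ the bump function $\phi$ vanishes, so $D_{h-\phi}$ agrees with $D_h$ there. A subpath of $\ol P^\phi$ realizing this excursion would have to cross from $\bdy B_{2\Abdy r}(U\cup \mcl W)$ out to Euclidean distance at least $\Abdy r$ and back (or connect two points of $\BB A_{r/4,4r}(0)$ at Euclidean distance $\geq \Abdy r$ while staying in the $\phi$-free region), hence by condition~\ref{item-highprob-across} it has $D_h$-length (= $D_{h-\phi}$-length, since $\phi=0$ there) at least $\Aacross \frk c_r e^{\xi h_r(0)}$. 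Since $K_f = \tfrac1\xi\log\bigl(\tfrac{100\Adiam}{\Aacross\Aendpts}\bigr)$ (so $e^{-\xi K_f}(\Adiam+4) = \tfrac{(\Adiam+4)\Aacross\Aendpts}{100\Adiam} < \Aacross$ since $\Aendpts<1$), this exceeds the upper bound on the total $D_{h-\phi}$-length of $\ol P^\phi$, a contradiction. (Here I must also handle the possibility that $\ol P^\phi$ enters $B_{r/4}(0)$ or leaves $B_{4r}(0)$, but $U\cup\mcl W\subset \BB A_{r/2,3r}(0)$ and $\Abdy$ is tiny, so any such excursion again forces a macroscopic crossing controlled by condition~\ref{item-highprob-across}; alternatively note $\ol P^\phi$ connects two points of $\bdy B_{3r}(0)$.)

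For the second claim, suppose some segment $Q$ of $\ol P^\phi$ of Euclidean diameter $\geq \ep_0 r/100$ is contained in $B_{2\Abdy r}(\bdy U)\setminus \mcl W$. Then condition~\ref{item-highprob-bdy} directly gives that the $D_h$-length of $Q$ is at least $100\Adiam \frk c_r e^{\xi h_r(0)}$; since $\phi \leq K_f$ everywhere and $\phi$ may be positive on $B_{2\Abdy r}(\bdy U)$, I instead observe that on $B_{2\Abdy r}(\bdy U)$ the value of $\phi$ is at most $K_f$ (the tubes $W_r^{\BB x},W_r^{\BB y}$ where $\phi=K_g>K_f$ are removed via the hypothesis $Q\cap\mcl W=\emptyset$, and outside $B_{\Abdy r}(U)$ one has $\phi\leq K_f f_r^{\BB x,\BB y}\leq K_f$), so by Axiom~\ref{item-metric-f} (Weyl scaling) the $D_{h-\phi}$-length of $Q$ is at least $e^{-\xi K_f}\cdot 100\Adiam \frk c_r e^{\xi h_r(0)} = \tfrac{100\Adiam\Aacross\Aendpts}{100\Adiam}\frk c_r e^{\xi h_r(0)} = \Aacross\Aendpts\,\frk c_r e^{\xi h_r(0)}$. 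Comparing with the length bound $e^{-\xi K_f}(\Adiam+4)\frk c_r e^{\xi h_r(0)} = \tfrac{(\Adiam+4)\Aacross\Aendpts}{100\Adiam}\frk c_r e^{\xi h_r(0)}$, which is smaller (since $\Adiam+4 < 100\Adiam$), we again get a contradiction.

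The main obstacle I anticipate is bookkeeping with the constant $K_f$ and making sure the Weyl-scaling comparisons point in the right direction on each region — in particular being careful that on $B_{2\Abdy r}(\bdy U)\setminus\mcl W$ the bump function $\phi$ is genuinely bounded above by $K_f$ (this uses that $f_r^{\BB x,\BB y}\leq 1$ and that the $W$-tubes have been excised), and that the ``macroscopic crossing'' appeals to condition~\ref{item-highprob-across} correctly identify two points at Euclidean distance $\geq \Abdy r$ lying in the $\phi$-free subregion $\BB A_{r/4,4r}(0)\setminus B_{2\Abdy r}(U\cup\mcl W)$. Everything else is a routine concatenation/triangle-inequality argument.
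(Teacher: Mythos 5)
Your overall strategy is the paper's (bound the $D_{h-\phi}$-length of $\ol P^\phi$ via Lemma~\ref{lem-geomove-dist}, then use the conditions in the definition of $E_r$ to forbid the bad behaviors), and your treatment of the second assertion is essentially correct, but there are two genuine gaps. First, your justification of the key length bound is not valid: from ``the metric balls are grown in the metric $D_h\ge D_{h-\phi}$'' one only gets $\mcl B_{\sigma_r}(\BB z;D_h)\subset \mcl B_{\sigma_r}(\BB z;D_{h-\phi})$, i.e.\ a containment in the wrong direction. A $D_{h-\phi}$-geodesic can exit and re-enter a set which is not a $D_{h-\phi}$-metric ball centered at its starting point, so the $D_{h-\phi}$-time it spends outside $\mcl B_{\sigma_r}(\BB z;D_h)\cup\mcl B_{\wh\sigma_r}(\BB w;D_h)$ is not a priori bounded by the $D_{h-\phi}$-distance between these sets (for general sets containing $\BB z$ and $\BB w$ the inequality you assert is simply false). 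What makes it true here --- and what the paper records as the first sentence of its proof --- is that $\phi$ vanishes outside a compact subset of $\BB A_{r/4,3r}(0)$ while the balls are grown only until they first hit $\ol{B_{3r}(0)}$; hence $\sigma_r,\wh\sigma_r$ and the two balls are unchanged when $h$ is replaced by $h-\phi$, so they are $D_{h-\phi}$-metric balls centered at the endpoints of $P^\phi$, $\ol P^\phi$ is the single arc of $P^\phi$ between times $\sigma_r$ and $D_{h-\phi}(\BB z,\BB w)-\wh\sigma_r$, and its length is at most $D_{h-\phi}(\BB x',\BB y')$. Relatedly, your claim that $\ol P^\phi$ runs between two points of $\bdy B_{3r}(0)$ near $\BB x'$ and $\BB y'$ is unjustified a priori: $P^\phi$ exits $\mcl B_{\sigma_r}(\BB z;D_h)$ somewhere on that ball's boundary, most of which lies far from $B_{3r}(0)$.

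Second, and more seriously, your proof of the first assertion never establishes that $\ol P^\phi$ comes near $U\cup\mcl W$ at all. Your excursion argument via condition~\ref{item-highprob-across} only applies to a portion of $\ol P^\phi$ that departs from and/or returns to a neighborhood of $U\cup\mcl W$, or that has a subsegment of Euclidean diameter at least $\Abdy r$ inside $\BB A_{r/4,4r}(0)$; it does not exclude the scenario in which $\ol P^\phi$ avoids the support of $\phi$ (and even $\BB A_{r/4,4r}(0)$) entirely, and your fallback ``$\ol P^\phi$ connects two points of $\bdy B_{3r}(0)$'' is precisely the unproved claim above. The paper closes this with a step you omit: if $\ol P^\phi$ avoided the support $B_{\Abdy r}(U)\cup\mcl W$, its $D_{h-\phi}$-length would equal its $D_h$-length, which is at least the $D_h$-distance between the two metric balls; by the hypothesis $P\cap B_{2r}(0)\neq\emptyset$ (exactly as in the proof of Lemma~\ref{lem-hitpt-lower}) together with condition~\ref{item-highprob-endpts}, that distance is at least $2\Aendpts\frk c_r e^{\xi h_r(0)}$, which exceeds the bound of Lemma~\ref{lem-geomove-dist}; this forces $\ol P^\phi$ to enter the support, and only then does the argument via condition~\ref{item-highprob-across} control every excursion out of $B_{2\Abdy r}(U\cup\mcl W)$, including the initial and final portions of $\ol P^\phi$. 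Note that your proposal nowhere uses the hypothesis $P\cap B_{2r}(0)\neq\emptyset$ or condition~\ref{item-highprob-endpts}, while the conclusion of the lemma can fail without that hypothesis (with $E_r$ still occurring, e.g.\ for $\BB z,\BB w$ positioned so that neither geodesic approaches $B_{3r}(0)$), so this omission is a genuine gap rather than a shortcut.
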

\begin{proof}
Since $\phi$ is supported on $B_{3r}(0)$, the definitions of $\sigma_r$, $\wh\sigma_r$, $\mcl B_{\sigma_r}(\BB z ; D_h)$, and $\mcl B_{\wh \sigma_r}(\BB w ;D_h)$ are unaffected if we replace $h$ by $h-\phi$. 
Since $\ol P^\phi$ is the $D_{h-\phi}$-shortest path between these metric balls, Lemma~\ref{lem-geomove-dist} implies that 
\eqb \label{eqn-geomove-length}
\left(\text{$D_{ h-\phi }$-length of $\ol P^\phi$} \right) \leq e^{- \xi K_f} (\Adiam +4) \frk c_r e^{\xi h_r(0)} .
\eqe  
We will now explain how~\eqref{eqn-geomove-length} together with the definition of $E_r$ allows us to constrain the behavior of $\ol P^\phi$. 

As in the proof of Lemma~\ref{lem-hitpt-lower}, condition~\ref{item-highprob-endpts} in the definition of $E_r$ implies that the $D_h$-distance between $\mcl B_{\sigma_r}(\BB z ; D_h)$, and $\mcl B_{\wh \sigma_r}(\BB w ;D_h)$ is at least $2 \Aendpts \frk c_r e^{\xi h_r(0)}$, which is larger than $e^{- \xi K_f} (\Adiam +4) \frk c_r e^{\xi h_r(0)}$ by the definition~\eqref{eqn-bump-function-const} of $K_f$. 
If $\ol P^\phi$ did not enter the support $B_{\Abdy r}(U) \cup \mcl W$ of $\phi$, then the $D_{h-\phi}$-length of $\ol P^\phi$ would be the same as its $D_h$-length, which must be at least $2 \Aendpts \frk c_r e^{\xi h_r(0)}$.
Hence~\eqref{eqn-geomove-length} implies that $\ol P^\phi$ must enter $B_{\Abdy r}(U) \cup \mcl W$.  

Since $\phi \leq K_f$ outside of $\mcl W$, Axiom~\ref{item-metric-f} (Weyl scaling) together with condition~\ref{item-highprob-bdy} in the definition of $E_r$ implies that the $D_{h - \phi}$-length of every continuous path of Euclidean diameter at least $\ep_0 r/100$ which is contained in $B_{2\Abdy r}(\bdy U) \setminus \mcl W$ is at least $100 e^{-\xi K_f} \Adiam \frk c_r e^{\xi h_r(0)}$. 

It therefore follows from~\eqref{eqn-geomove-length} that the second assertion of the lemma holds. 

We now prove the first assertion of the lemma. 
Since $\phi$ is identically equal to 0 on $\BB C\setminus (B_{\Abdy r}(U) \cup \mcl W)$, condition~\ref{item-highprob-across} in the definition of $E_r$ implies that the $D_{h- \phi}$-length of any curve which is contained in $\BB A_{r/4,4r}(0) \setminus (B_{\Abdy r}(U)  \cup  \mcl W)$ and has Euclidean diameter at least $\Abdy r$ is at least $\Aacross  \frk c_r e^{\xi h_r(0)}$.  
This last quantity is strictly larger than the right side of~\eqref{eqn-geomove-length} by the definition~\eqref{eqn-bump-function-const} of $K_f$.  
It follows that there is no segment of $\ol P^\phi$ of Euclidean diameter at least $ \Abdy r$ which is contained in $\BB A_{r/4,4r}(0) \setminus (B_{\Abdy r}(U) \cup \mcl W)$. 
Each path from $ B_{\Abdy r}(U) \cup \mcl W$ to a point outside of $B_{2\Abdy r}(U \cup \mcl W)$ has a sub-path which is contained in $\BB A_{r/4,4r}(0) \setminus  (B_{\Abdy r}(U) \cup \mcl W)$ and has Euclidean diameter at least $\Abdy r$. Since we know that $\ol P^\phi$ has to hit $B_{\Abdy r}(U) \cup \mcl W$, we infer that $\ol P^\phi$ is contained in $B_{2\Abdy r}(U) \cup \mcl W)$. 
\end{proof}

We now produce the points $0  < s < t  <  D_{h - \phi}(\BB z, \BB w) $ from Lemma~\ref{lem-internal-geo} and check all of the conditions of the lemma except $\wt D_{h-\phi }\left(P^\phi(s) , P^\phi(t)\right) \leq c_2'  D_{h- \phi}\left(P^\phi(s) , P^\phi(t)\right)$ (we will check this last condition in the proof of Lemma~\ref{lem-internal-geo} just below).

\begin{lem} \label{lem-internal-geo0}
There are times $0  < s < t  <  D_{h - \phi}(\BB z, \BB w) $ such that $P^\phi(s) , P^\phi(t) \in B_{3r/2}(0) $, $|P^\phi(s) - P^\phi(t)| \geq (b - 40 \ep_0) r$, and
\eqb \label{eqn-internal-dist-bound}
\wt D_{h-\phi}(P^\phi(s) , P^\phi(t) ) \leq (c_*/C_*)  \wt D_{h -\phi }\left( P^\phi(s) , \bdy B_{3r}(0) \right) .
\eqe 
\end{lem}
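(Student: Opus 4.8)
\textbf{Proof plan for Lemma~\ref{lem-internal-geo0}.} The plan is to use the conditions in the definition of $E_r$ to locate the points $u,v \in \BB A_{(1-4\rho)r,(1+4\rho)r}(0) \cap U$ from Lemma~\ref{lem-highprob-geodesic} (applied with $x = \BB x$, $y = \BB y$; this is permitted by Lemma~\ref{lem-hitpt-lower}) and then force the $D_{h-\phi}$-geodesic $P^\phi$ to pass near each of them. By condition~\ref{item-highprob-geo} in the definition of $E_r$, we have $|u-v| \geq b r$, the $\wt D_h$-geodesic from $u$ to $v$ is unique and contained in $U$, and $\wt D_h(u,v) \leq (c_*/C_*)^2 \wt D_h(u, \bdy B_{4\rho r}(u))$. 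First I would use Lemma~\ref{lem-geomove-restrict}, which tells us that $\ol P^\phi := P^\phi \setminus (\mcl B_{\sigma_r}(\BB z;D_h) \cup \mcl B_{\wh\sigma_r}(\BB w;D_h))$ is contained in $B_{2\Abdy r}(U\cup\mcl W)$ and has no segment of Euclidean diameter $\geq \ep_0 r/100$ inside $B_{2\Abdy r}(\bdy U) \setminus \mcl W$. Since $\ol P^\phi$ joins a point of $\mcl B_{\sigma_r}(\BB z;D_h)$ (which touches $\BB x'$, hence $\mcl W$ via the tube $W_r^{\BB x}$) to a point of $\mcl B_{\wh\sigma_r}(\BB w;D_h)$ (touching $\BB y'$, hence $W_r^{\BB y}$), and these tubes $\mcl W$ are at Euclidean distance $\geq \ep_0 r /2$ from $O_u$ and $O_v$ (here $O_u,O_v$ are the neighborhoods from condition~\ref{item-highprob-dc}, which are contained in $\BB A_{(1-4\rho)r - 20\ep_0 r, (1+4\rho)r + 20\ep_0 r}(0) \subset B_{3r/2}(0)$ for $\rho,\ep_0$ small), a topological argument using condition~\ref{item-highprob-dc} (removing $O_u$, resp.\ $O_v$, disconnects the $\BB x$-side from the $\BB y$-side of $U$) shows that $\ol P^\phi$ must enter both $O_u$ and $O_v$: otherwise it would have a segment of diameter $\geq \ep_0 r/100$ inside $B_{2\Abdy r}(\bdy U)\setminus \mcl W$ as it crosses the "gap," contradicting Lemma~\ref{lem-geomove-restrict}.

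Given this, let $s$ (resp.\ $t$) be a time at which $P^\phi$ is in $O_u$ (resp.\ $O_v$); by re-labeling if needed we may assume $s < t$, and since $\BB z,\BB w \notin B_{4r}(0) \supset O_u \cup O_v$ we have $0 < s < t < D_{h-\phi}(\BB z,\BB w)$. Then $P^\phi(s) \in O_u \subset B_{20\ep_0 r}(u)$ and $P^\phi(t) \in O_v \subset B_{20\ep_0 r}(v)$, so $P^\phi(s),P^\phi(t) \in B_{3r/2}(0)$ and $|P^\phi(s) - P^\phi(t)| \geq |u-v| - 40\ep_0 r \geq (b - 40\ep_0) r$. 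The remaining inequality~\eqref{eqn-internal-dist-bound} is obtained by the triangle inequality combined with condition~\ref{item-highprob-internal}: since $\phi$ is constant equal to $K_f$ on $U$ (it is larger only on the narrow tubes $\mcl W$, which are disjoint from $O_u,O_v$ and — crucially — from the $\wt D_h$-geodesic between $u$ and $v$, which lies in $U$), Axiom~\ref{item-metric-f} (Weyl scaling) gives $\wt D_{h-\phi}(\cdot,\cdot;U) = e^{-\xi K_f}\wt D_h(\cdot,\cdot;U)$ on paths staying in $U$; thus $\wt D_{h-\phi}(P^\phi(s), u) \leq \eta \, e^{-\xi K_f} \wt D_h(u,v)$ and similarly for $P^\phi(t)$ and $v$, while $\wt D_{h-\phi}(u,v) \leq e^{-\xi K_f}\wt D_h(u,v)$. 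Hence $\wt D_{h-\phi}(P^\phi(s),P^\phi(t)) \leq (1+2\eta) e^{-\xi K_f}\wt D_h(u,v)$. On the other hand, a lower bound $\wt D_{h-\phi}(P^\phi(s), \bdy B_{3r}(0)) \geq \wt D_{h-\phi}(u, \bdy B_{3r}(0)) - \eta e^{-\xi K_f}\wt D_h(u,v)$ together with the fact that outside $B_{\Abdy r}(U) \cup \mcl W$ we have $\phi = 0$, so $\wt D_{h-\phi}$ agrees with $\wt D_h$ there, and $\wt D_h(u,\bdy B_{3r}(0)) \geq \wt D_h(u, \bdy B_{4\rho r}(u))$ for $\rho$ small, gives $\wt D_{h-\phi}(P^\phi(s), \bdy B_{3r}(0)) \geq (1 - 2 c_*^{-1}C_* \eta) e^{-\xi K_f}(c_*/C_*)^{-2}\wt D_h(u,v)$ after using condition~\ref{item-highprob-geo}. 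Dividing and invoking the first inequality in~\eqref{eqn-const-error} (for the $c_2'$ bound used later) and the analogue for the $(c_*/C_*)$ bound here yields~\eqref{eqn-internal-dist-bound}.

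The main obstacle I expect is the topological/combinatorial bookkeeping in the first paragraph: carefully tracking which neighborhood of $\bdy U$ a segment of $\ol P^\phi$ must traverse to get from the $\BB x$-side to the $\BB y$-side without passing through $O_u$ or $O_v$, and verifying that such a traversal forces a segment of Euclidean diameter $\geq \ep_0 r/100$ inside $B_{2\Abdy r}(\bdy U) \setminus \mcl W$ — this requires knowing that $O_u, O_v$ are genuinely "small" (diameter $\lesssim \ep_0 r$) relative to the $\Abdy$-neighborhood and the gap of width $\ep_0 r$ guaranteed by condition~\ref{item-highprob-dc}, and that the tubes $\mcl W$ do not accidentally provide a bypass around $O_u$ or $O_v$. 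Keeping all the constant choices consistent (in particular $\Abdy, \Atiny \ll \ep_0 \ll \rho$) is what makes this delicate, but all the needed separations are already built into the definition of $E_r$ and Lemma~\ref{lem-highprob-geodesic}.
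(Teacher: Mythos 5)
Your route is the same as the paper's: Lemma~\ref{lem-geomove-restrict} plus the disconnection condition~\ref{item-highprob-dc} to force $P^\phi$ into $O_u$ and $O_v$, condition~\ref{item-highprob-internal} and Weyl scaling to make $P^\phi(s),P^\phi(t)$ close to $u,v$ in $\wt D_{h-\phi}$, and condition~\ref{item-highprob-geo} together with the choice of $\eta$ in~\eqref{eqn-const-error} to close. Two steps need repair, one of them substantive.

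The substantive one is your justification of the lower bound on $\wt D_{h-\phi}(P^\phi(s),\bdy B_{3r}(0))$. You argue via ``$\phi=0$ outside the support of $\phi$, so $\wt D_{h-\phi}$ agrees with $\wt D_h$ there'' together with $\wt D_h(u,\bdy B_{3r}(0))\geq \wt D_h(u,\bdy B_{4\rho r}(u))$. This gives nothing: since $\phi\geq 0$, subtracting $\phi$ only shrinks distances, and the $\wt D_{h-\phi}$-shortest path from $P^\phi(s)$ to $\bdy B_{3r}(0)$ is free to run through the support of $\phi$ — in particular through $U$ and then through the tubes $\mcl W$, where $\phi=K_g>K_f$ so distances are contracted by the even smaller factor $e^{-\xi K_g}$, and $\mcl W$ comes very close to $\bdy B_{3r}(0)$. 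The correct reason — and the reason condition~\ref{item-highprob-geo} is phrased with $\bdy B_{4\rho r}(u)$ rather than a larger circle — is localization to a region where $\phi\leq K_f$ \emph{everywhere}: since $P^\phi(s)\in O_u\subset B_{20\ep_0 r}(u)\subset B_{4\rho r}(u)\subset B_{3r/2}(0)$ and $\mcl W$ is disjoint from $B_{3r/2}(0)$, we have $\phi\leq K_f$ on all of $B_{4\rho r}(u)$; every path from $P^\phi(s)$ to $\bdy B_{3r}(0)$ must first exit $B_{4\rho r}(u)$, and the initial crossing segment has $\wt D_{h-\phi}$-length at least $e^{-\xi K_f}$ times its $\wt D_h$-length, whence $\wt D_{h-\phi}(P^\phi(s),\bdy B_{3r}(0))\geq e^{-\xi K_f}\bigl(\wt D_h(u,\bdy B_{4\rho r}(u))-\eta\,\wt D_h(u,v)\bigr)$ after a triangle-inequality correction from condition~\ref{item-highprob-internal}. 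Combined with condition~\ref{item-highprob-geo} this yields (a bound at least as strong as) the numerical lower bound you wrote, and your final division then closes because $(1+2\eta)/(1-2c_*^{-1}C_*\eta)<c_2'/c_1'<C_*/c_*$ by~\eqref{eqn-const-error}, which is the ``analogue'' you waved at.

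The minor one: ``by re-labeling if needed we may assume $s<t$'' is not innocuous, because both~\eqref{eqn-internal-dist-bound} and condition~\ref{item-highprob-geo} are asymmetric — the right-hand side of~\eqref{eqn-internal-dist-bound} is anchored at $P^\phi(s)$ and the ball in condition~\ref{item-highprob-geo} is centered at $u$, so you need the \emph{first} of the two times to be the visit to $O_u$. If $P^\phi$ visited $O_v$ first, the argument above would have to be run from a point near $v$, for which no such ball estimate is available, and when $C_*/c_*$ is close to $1$ the bound genuinely fails. Fortunately the ordering holds and your own disconnection argument gives it: $O_u$ separates the component of $(U\cup\mcl W)\setminus O_u$ containing the $\BB x'$-side, where $\ol P^\phi$ starts, from everything else including $O_v$, so $P^\phi$ enters $O_u$ before $O_v$; this should be stated in place of the relabeling.
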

\begin{proof}
Recall the points $u,v \in \BB A_{(1-4\rho)r,(1 + 4\rho) r}(0)$ from condition~\ref{item-highprob-geo} in the definition of $E_r $. 
That condition says that the $\wt D_h$-geodesic $\wt P$ from $u$ to $v$ is contained in $U$ and its $\wt D_h$-length is at most $(c_*/C_*)^2 \wt D_h(u , \bdy B_{4\rho r}(u))$. 
The idea of the proof is to use Lemma~\ref{lem-geomove-restrict} to force $P^\phi$ to get close to each of $u$ and $v$, and then to take $s$ and $t$ to be the times at which it does so. 
Since $\phi$ attains its largest possible value on $B_{4\rho r}(u)$ (namely, $K_f$) at every point of $B_{4\rho r}(u) \cap U$ (here we note that $\mcl W$ is disjoint from $B_{3r/2}(0) \supset B_{4\rho r}(u)$), it follows that $\wt P(e^{\xi K_f}\cdot)$ is a $\wt D_{h-\phi}$-geodesic from $u$ to $v$ and 
\eqb \label{eqn-internal-tildeD}
\wt D_{h - \phi}(u,v) = \wt D_{h - \phi}\left(u,v ; U \cap B_{4\rho r}(u) \right)  = e^{-\xi K_f} \wt D_h(u,v) . 
\eqe

Recall from condition~\ref{item-highprob-dc} in the definition of $E_r$ that $O_u$ (resp.\ $O_v$) is the connected component of $U\cap B_{20\ep_0 r}(u)$ which contains $u$.
Since $B_{20\ep_0 r}(u) $ is contained in $B_{3r/2}(0)$, so is disjoint from $\mcl W$, that condition tells us that the connected component of $ (U \cup \mcl W )\setminus O_u$ which contains $\BB x'$ lies at Euclidean distance at least $\ep_0 r$ from the union of the other connected components of $(U\cup \mcl W) \setminus O_u$. 
Since $\Abdy < \ep_0/100$, the $2\Abdy r$-neighborhoods of these two sets lie at Euclidean distance at least $\ep_0 r/2$ from one another.
By Lemma~\ref{lem-geomove-restrict}, $\ol P^\phi = P^\phi  \setminus \left( \mcl B_{\sigma_r}(\BB z ; D_h) \cup \mcl B_{\wh \sigma_r}(\BB w ;D_h) \right)$ cannot exit $B_{2\Abdy r}(U \cup \mcl W)$, so $\ol P^\phi$ must have a segment of Euclidean diameter at least $\ep_0 r/2$ which is contained in  
\eqbn
B_{2\Abdy r}(O_u) \subset O_u \cup \left( B_{2\Abdy r}(\bdy U) \setminus  \mcl W \right)  .
\eqen 
By the other assertion of Lemma~\ref{lem-geomove-restrict}, this segment cannot be entirely contained in $ B_{2\Abdy r}(\bdy U) \setminus  \mcl W $, so $P^\phi$ must enter $O_u$. 
Similarly, $P^\phi$ must enter $O_v$ (and must do so at some time after it enters $O_u$). 

Choose times $0 < s < t < |P^\phi|$ such that $P^\phi(s) \in O_u$ and $P^\phi(t) \in O_v$. 
Then $|P^\phi(s) - u| \leq 20 \ep_0 r$ and $|P^\phi(t) - v| \leq 20 \ep_0 r$, 
By condition~\ref{item-highprob-internal} in the definition of $E_r$,~\eqref{eqn-internal-tildeD}, and the fact that $\phi \equiv K_f$ on $U \setminus \mcl W$, we get that 
\eqb \label{eqn-internal-times}
\wt D_{h-\phi}\left(P^\phi(s) , u ; U \right) \leq \eta  \wt D_{h-\phi}(u,v) 
\quad \text{and} \quad 
\wt D_{h-\phi}\left( P^\phi(t) , v ; U \right) \leq \eta  \wt D_{h-\phi}(u,v) .
\eqe
Since $|u-v| \geq b r$, we have $|P^\phi(s) - P^\phi(t) | \geq (b- 40 \ep_0) r$ and since $u,v \in \ol{B_r(0)}$ we have $P^\phi(s) , P^\phi(t) \in B_{3r/2}(0)$. 

It remains to check the condition~\eqref{eqn-internal-dist-bound}. 
Recall that $\wt D_h(u,v) \leq  (c_*/C_*)^2 \wt D_h(u , \bdy B_{4\rho r}(u))$ and the $\wt D_h$-geodesic from $u$ to $v$ is contained in $U$.
Since $\phi \equiv K_f$ on $U$ and $\phi \leq K_f$ on $B_{4\rho r}(u)$, it follows that 
\eqbn
\wt D_{h-\phi}(u,v) \leq (c_*/C_*)^2 \wt D_{h-\phi}(u , \bdy B_{4\rho r}(u)) \leq (c_*/C_*)^2 \wt D_{h -\phi}\left( P^\phi(s) , \bdy B_{3r}(0) \right).
\eqen
By~\eqref{eqn-internal-times} and the triangle inequality,
\eqbn
\wt D_{h-\phi}\left(P^\phi(s) , P^\phi(t) \right)
\leq (1+2\eta) \wt D_{h-\phi}(u,v) 
\leq (1+2\eta)(c_*/C_*)^2 \wt D_{h-\phi}\left( P^\phi(s) , \bdy B_{3r}(0) \right) ,
\eqen
which is bounded above by the right side of~\eqref{eqn-internal-dist-bound} by the definition~\eqref{eqn-const-error} of $\eta$.  
\end{proof}

\begin{proof}[Proof of Lemma~\ref{lem-internal-geo}]
Let $s$ and $t$ be as in Lemma~\ref{lem-internal-geo0}. By that lemma, it remains only to check that
\eqbn
\wt D_{h-\phi }\left(P^\phi(s) , P^\phi(t)\right) \leq c_2'  D_{h- \phi}\left(P^\phi(s) , P^\phi(t)\right).
\eqen
By~\eqref{eqn-internal-times} and the definitions of $c_*$ and $C_*$,  
\eqb \label{eqn-internal-times'}
 D_{h - \phi}\left(P^\phi(s) , u ; U \right) \leq   c_*^{-1} C_* \eta D_{h - \phi}(u,v) 
\quad \text{and} \quad 
 D_{h - \phi}\left( P^\phi(t) , v ; U \right) \leq  c_*^{-1} C_*  \eta D_{h - \phi}(u,v) .
\eqe 
By the triangle inequality,~\eqref{eqn-internal-times'} implies that
\alb
D_{h - \phi}(u,v) 
&\leq  D_{h - \phi}\left( P^\phi(s) , P^\phi(t)   \right) +   D_{h - \phi}\left(P^\phi(s) , u   \right) + D_{h - \phi}\left(P^\phi(t) , v \right) \notag \\
&\leq D_{h - \phi}\left( P^\phi(s) , P^\phi(t)   \right) + 2 c_*^{-1} C_*  \eta D_{h - \phi}(u,v)
\ale
which re-arranges to give
\eqb \label{eqn-internal-times-D}
D_{h - \phi}(u,v)  \leq \left(1 - 2 c_*^{-1} C_* \eta \right)^{-1}  D_{h - \phi}\left( P^\phi(s) , P^\phi(t)   \right) .
\eqe

Recall that $\phi \leq K_f$ on $\BB C\setminus \mcl W$ and by the last condition in~\eqref{eqn-highprob-geo} we have $D_h(u,v) \leq D_h(u,\mcl W)$. 
It follows from this that each $D_{h-\phi}$-geodesic from $u$ to $v$ is disjoint from $\mcl W$ and $D_{h- \phi}(u,v) \geq e^{-\xi K_f} D_h(u,v)$. 
By combining this with~\eqref{eqn-internal-tildeD} and condition~\ref{item-highprob-geo} in the definition of $E_r $, we get
\eqb \label{eqn-scaled-compare}
 \wt D_{h - \phi}(u,v) \leq c_1'   D_{h - \phi}(u,v) . 
\eqe
By the triangle inequality,
\allb \label{eqn-scaled-end}
\wt D_{h - \phi}\left(P^\phi(s) , P^\phi(t) ; U \right)
&\leq   \wt D_{h- \phi}(u, v ; U) +  \wt D_{h - \phi}\left(P^\phi(s) , u ; U \right) + \wt D_{h - \phi}\left(v , P^\phi(t) ; U \right) \notag \\
&\leq  (1+2\eta) \wt D_{h- \phi}(u, v ) \quad \text{(by \eqref{eqn-internal-tildeD} and \eqref{eqn-internal-times})} \notag \\
&\leq  c_1' (1+2\eta)   D_{h- \phi}(u, v ) \quad \text{(by \eqref{eqn-scaled-compare})}  \notag \\ 
&\leq  \frac{ c_1' (1+2\eta) }{ 1 - 2 c_*^{-1} C_* \eta }  D_{h - \phi}\left( P^\phi(s) , P^\phi(t)   \right)  \quad \text{(by \eqref{eqn-internal-times-D})} \notag \\
&\leq c_2' D_{h-\phi}(P^\phi(s)  , P^\phi(t)) \quad \text{(by the definition~\eqref{eqn-const-error} of $\eta$)} .
\alle
\end{proof}

\section{Proof of Theorem~\ref{thm-weak-uniqueness}}
\label{sec-conclusion}

Assume we are in the setting of Theorem~\ref{thm-weak-uniqueness} and let $h$ be a whole-plane GFF. Also recall the definitions of the optimal bi-Lipschitz constants $c_*$ and $C_*$ from~\eqref{eqn-max-min-def} and the events $\ol G_r(C' , \beta)$ and $\ul G_r(c',\beta)$ from~\eqref{eqn-max-event} and~\eqref{eqn-min-event}.  
We want to show that $c_* = C_*$. 
To do this we will assume that $c_*  <C_*$ and derive a contradiction. 
The following proposition will be used in conjunction with Proposition~\ref{prop-attained-min} to tell us that there are many scales for which the following is true: the pairs $(u,v)$ such that $\wt D_h(u,v) / D_h(u,v)$ is close to $C_*$ are very sparse.

\begin{prop} \label{prop-away-from-max}
Assume that $c_* < C_*$. 
Then there exists $c''  > c_*$, depending only on the values of $c_*$ and $C_*$, such that the following is true.  
If $\beta\in (0,1)$ and $\BB r > 0$ are such that $\BB P[\ul G_{\BB r}(c'',\beta)] \geq \beta$, then for every choice of $\ol\beta \in (0,1)$, one has
\eqb \label{eqn-away-from-max}
\lim_{\delta\rta 0} \BB P\left[ \ol G_{\BB r}(C_* - \delta , \ol\beta )  \right]  = 0
\eqe
at a rate depending only on $\beta,\ol\beta$ (not on $\BB r$). 
\end{prop}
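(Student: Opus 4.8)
The plan is to follow the ``main idea'' argument sketched in Section~\ref{sec-outline}: use Proposition~\ref{prop-attained-good'} (via Proposition~\ref{prop-geo-event}) to produce, on many scales $r\in[\ep^{1+\nu}\BB r,\ep\BB r]$, pairs of points $u,v$ with $\wt D_h(u,v)\le c_1' D_h(u,v)$ and $|u-v|\asymp r$, then apply Theorem~\ref{thm-geo-iterate} to force a $D_h$-geodesic between two fixed points to pass near such a pair, and finally combine this with the definition~\eqref{eqn-max-min-def} of $C_*$ to beat the bound~\eqref{eqn-away-from-max}. Concretely, fix $c_* < c_1' < c_2' < C_*$ (depending only on $c_*,C_*$) and let $c'' = c''(c_1',\mu,\nu) > c_*$ be the constant from Proposition~\ref{prop-attained-good'} with $c'=c_1'$; this is the $c''$ in the proposition statement (note $\mu,\nu$ are fixed once and for all depending on the metric, so $c''$ depends only on $c_*,C_*$).

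First I would invoke Proposition~\ref{prop-geo-event}: under the hypothesis $\BB P[\ul G_{\BB r}(c'',\beta)]\ge\beta$ we obtain universal constants $\{\lambda_i\}$, parameters $b,\rho\in(0,1)$ depending only on $\mu,\nu$, a threshold $\ep_0=\ep_0(\beta,c_1',c_2',\mu,\nu)$, a dense set of radii $\mcl R\subset(0,\ep_0]$, events $E_r(z)$ and $\frk E_r^{\BB z,\BB w}(z)$, and a constant $\Lambda>1$ satisfying the hypotheses of Theorem~\ref{thm-geo-iterate} with $\rho^{-1}\BB r$ in place of $\BB r$; moreover on $\frk E_r^{\BB z,\BB w}(z)$ there are times $0<s<t<|P^{\BB z,\BB w}|$ with $P([s,t])\subset B_{\lambda_2 r}(z)$, $|P(s)-P(t)|\ge br$, and $\wt D_h(P(s),P(t))\le c_2' D_h(P(s),P(t))$. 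Then I apply Theorem~\ref{thm-geo-iterate} with a large exponent $q>0$ (chosen in terms of the H\"older exponents from Lemma~\ref{lem-holder-uniform}, as in the outline), a bounded open set $U\ni 0$ and $\ell$ small: with probability $\to 1$ as $\ep\to 0$, uniformly in $\BB r$, for every $\BB z,\BB w\in(\ep^q\rho^{-1}\BB r\,\BB Z^2)\cap(\rho^{-1}\BB r\,U)$ with $|\BB z-\BB w|\ge \ell\rho^{-1}\BB r$ there is a pair $(z,r)$ with $P^{\BB z,\BB w}\cap B_{\lambda_2 r}(z)\ne\emptyset$ and $\frk E_r^{\BB z,\BB w}(z)$ occurring, hence times $s,t$ as above. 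Using $|P(s)-P(t)|\ge br$ together with the lower H\"older bound~\eqref{eqn-holder-lower} (on the regularity event of Lemma~\ref{lem-holder-uniform}, which also holds with probability $\to 1$) gives $D_h(P(s),P(t))\ge \delta_0 D_h(\BB z,\BB w)$ for a deterministic $\delta_0>0$ depending on $\ell,b,q$ and the H\"older exponents, but not on $\BB r$ or the particular points; this is where the union-bound over the $\ep^q$-mesh and the transfer to arbitrary $\BB z,\BB w$ (via bi-H\"older continuity, as in the ``main idea'' paragraph) is carried out.

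Next, for such $\BB z,\BB w$, the definition~\eqref{eqn-max-min-def} of $C_*$ gives $\wt D_h(\BB z,P(s))\le C_* s$ and $\wt D_h(P(t),\BB w)\le C_*(D_h(\BB z,\BB w)-t)$, while $\wt D_h(P(s),P(t))\le c_2'(t-s)$. Adding and using $D_h(P(s),P(t))=t-s\ge\delta_0 D_h(\BB z,\BB w)$ yields
\[
\wt D_h(\BB z,\BB w)\le \big(C_*-(C_*-c_2')\delta_0\big)D_h(\BB z,\BB w).
\]
Set $C'':=C_*-(C_*-c_2')\delta_0<C_*$; this is a fixed constant depending only on $c_*,C_*$ (and $\mu,\nu$, hence the metric) and on $\ell,b,q$, not on $\BB r$. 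So on an event of probability $\to 1$ as $\ep\to 0$, uniformly in $\BB r$, the inequality $\wt D_h(\BB z,\BB w)\le C'' D_h(\BB z,\BB w)$ holds simultaneously for all $\BB z,\BB w\in B_{\BB r}(0)$ with $|\BB z-\BB w|\ge\ol\beta\BB r$ (after rescaling $\rho^{-1}\BB r\mapsto\BB r$ and translating/using Axiom~\ref{item-metric-coord} to reduce $U$ to a ball, exactly as in the outline). In other words, for $\delta<C_*-C''$ the event $\ol G_{\BB r}(C_*-\delta,\ol\beta)$ fails with probability $\to 1$ as $\ep\to 0$, which is~\eqref{eqn-away-from-max}, with the rate depending only on $\beta,\ol\beta$ (the $\ep\to 0$ rate in Theorem~\ref{thm-geo-iterate} depends on $U,q,\ell,\mu,\nu,\{\lambda_i\},\ep_0,\Lambda$, and the only $\beta$-dependence enters through $\ep_0$; one then takes $\delta=\delta(\beta,\ol\beta)$ small enough and $\ep=\ep(\beta,\ol\beta)$ small enough).

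The main obstacle is the bookkeeping needed to make $C''$ (equivalently $\delta_0$) genuinely independent of $\BB r$ and of the endpoints $\BB z,\BB w$: this requires that the good regularity event in Theorem~\ref{thm-geo-iterate} and the H\"older event of Lemma~\ref{lem-holder-uniform} both hold with probability $\to 1$ at a rate uniform in $\BB r$, and that the $\ep^q$-mesh can be chosen coarse enough (depending only on the H\"older exponents, i.e.\ only on $\gamma$) that approximating arbitrary $\BB z,\BB w$ by mesh points changes $D_h$- and $\wt D_h$-distances by much less than $\delta_0 D_h(\BB z,\BB w)$. All of these are exactly the ingredients assembled in Sections~\ref{sec-attained}--\ref{sec-geodesic-shortcut} and reviewed in Section~\ref{sec-a-priori-estimates}, so the argument here is essentially a matter of chaining them together carefully; the one genuinely delicate point is verifying that the rate of convergence in~\eqref{eqn-away-from-max} depends on $\ol\beta$ only (through the choice of $U$ and $\ell$) and on $\beta$ only (through $\ep_0$ and hence through how small $\ep$ must be), which is what the proposition asserts.
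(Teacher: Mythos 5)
Your overall route is the same as the paper's: feed Proposition~\ref{prop-geo-event} into Theorem~\ref{thm-geo-iterate}, use H\"older continuity and a fine $\ep^q$-mesh to upgrade to all pairs $\BB z,\BB w\in B_{\BB r}(0)$ with $|\BB z-\BB w|\ge\ol\beta\BB r$, and then use the definition of $C_*$ to subtract the contribution of the shortcut. However, there is a genuine error in the key quantitative step: you claim that $D_h(P(s),P(t))\ge\delta_0\,D_h(\BB z,\BB w)$ with $\delta_0$ a deterministic constant depending only on $\ell,b,q$ and the H\"older exponents, and hence that $C'':=C_*-(C_*-c_2')\delta_0$ is a \emph{fixed} constant strictly below $C_*$. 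This cannot be true with the ingredients you use. The shortcut produced by Theorem~\ref{thm-geo-iterate} lives at a scale $r\in[\ep^{1+\nu}\rho^{-1}\BB r,\ep\rho^{-1}\BB r]$, so the lower bound on $|P(s)-P(t)|$ is $b\ep^{1+\nu}\rho^{-1}\BB r$, which shrinks as $\ep\to0$; via the H\"older lower bound this gives $t-s\gtrsim \ep^{(1+\nu)\chi'}\frk c_{\BB r}e^{\xi h_{\BB r}(0)}$, while $D_h(\BB z,\BB w)$ is of order $\frk c_{\BB r}e^{\xi h_{\BB r}(0)}$ (and is only bounded above by $\ep^{-\chi'}\frk c_{\BB r}e^{\xi h_{\BB r}(0)}$ on the high-probability event). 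So the ratio $\delta_0$ necessarily decays like a power of $\ep$ (in the paper, $a\ep^{(2+\nu)\chi'}$), and with it $C_*-C''$. Consequently your final deduction — fix $\delta<C_*-C''$ and send $\ep\to0$ — is circular: $C''$ moves with $\ep$, so you cannot hold $\delta$ fixed while taking that limit. (If your claim were correct, you would in fact conclude $\BB P[\ol G_{\BB r}(C_*-\delta,\ol\beta)]=0$ for a fixed positive $\delta$, which is strictly stronger than the proposition and is not what this argument can deliver.)

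The repair is exactly the bookkeeping the paper does: keep the $\ep$-dependence explicit, obtaining that with probability $1-o_\ep(1)$ (uniformly in $\BB r$) one has $\wt D_h(z,w)\le\bigl(C_*-a\ep^{(2+\nu)\chi'}\bigr)D_h(z,w)$ simultaneously for all $z,w\in B_{\BB r}(0)$ with $|z-w|\ge\ol\beta\BB r$, where $a>0$ depends only on $q,\ol\beta,c_1',c_2',\mu,\nu$; this also forces the mesh exponent to satisfy $q>\chi'(1+\nu)/\chi$ so that the mesh-approximation error is negligible compared with the gain $\ep^{(1+\nu)\chi'}\frk c_{\BB r}e^{\xi h_{\BB r}(0)}$. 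Then, given $\delta$, one \emph{couples} $\ep$ to $\delta$ by setting $a\ep^{(2+\nu)\chi'}=\delta$, so that $\delta\to0$ corresponds to $\ep\to0$ and $\BB P[\ol G_{\BB r}(C_*-\delta,\ol\beta)]\le o_\ep(1)\to0$, with the rate depending only on $\beta,\ol\beta$ as claimed. With this correction your argument coincides with the paper's proof.
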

\begin{proof}
Assume $c_* < C_*$. 
Let $\nu_*$ be as in Theorem~\ref{thm-geo-iterate} and
fix parameters $0 < \mu < \nu \leq \nu_*$ and $c_*  < c_1' < c_2' < C_*$ chosen in a manner depending only on $c_*$ and $C_*$.
The proof follows the strategy outlined in the ``main idea" part of the outline in Section~\ref{sec-outline}.
Theorem~\ref{thm-geo-iterate} and Proposition~\ref{prop-geo-event} will allow us to show that if $q > 0$ is fixed, then with probability tending to 1 as $\ep\rta 0$, the following is true. For every pair of points $\BB z,\BB w \in (\ep^q \BB r \BB Z^2) \cap B_{\BB r}(0)  $ with $|\BB z - \BB w | \geq \ol\beta \BB r$, the $D_h$-geodesic $P$ from $\BB z$ to $\BB w$ has to hit a pair of points $P(s) , P(t)$ such that $|P(s) - P(t)| \geq \op{const} \times \ep^{1+\nu} \BB r$ and $\wt D_h(P(s) , P(t)) \leq c_2' D_h(P(s) , P(t))$. This allows us to show that $\wt D_h(\BB z,\BB w) / D_h(\BB z,\BB w)$ is bounded above by $C_*$ minus a $\gamma$-dependent power of $\ep$ for all such pairs of points $\BB z,\BB w$. We can then use H\"older continuity to get the same statement for all pairs of points $\BB z,\BB w \in B_{\BB r}(0)$ with $|\BB z - \BB w|\geq \ol\beta \BB r$ simultaneously.
Choosing $\ep$ to be an appropriate $\gamma$-dependent power of $\delta$ then gives~\eqref{eqn-away-from-max}.  
\medskip

\noindent\textit{Step 1: setup and regularity events.}
Let $c'' = c''(c_1', \mu,\nu)$, $b = b( \mu,\nu) \in (0,1)$, and $\rho = \rho(\mu,\nu) \in(0,1)$ be as in Proposition~\ref{prop-geo-event} with the above choice of $\mu,\nu,c_1',c_2'$.
Also fix $q   > 0$ to be chosen later in a manner depending on $ \beta,\ol\beta$. 

By Theorem~\ref{thm-geo-iterate} applied to the objects of Proposition~\ref{prop-geo-event} and with the above choice of $q$, $\rho^{-1} \BB r$ in place of $\BB r$, $U = B_2(0)$, and $\ell = \rho\ol\beta$, we get the following.
If $\BB r > 0$ is such that $\BB P[\ul G_{\BB r}(c'',\beta)] \geq \beta$, then it holds with probability tending to 1 as $\ep\rta 0$, at a rate depending only on $ q,\ol\beta,\beta,c_1',c_2',\mu,\nu $, that the following is true.
Let $\BB z , \BB w \in \left( \ep^q \rho^{-1} \BB r  \BB Z^2 \right) \cap B_{2\BB r}\left( 0 \right)$ with $|\BB z-\BB w|\geq \ol\beta \BB r$ and let $P  = P^{\BB z,\BB w}$ be the $D_h$-geodesic from $\BB z$ to $\BB w$. 
Then there exists times $0 < s < t < |P |$ such that 
\eqb \label{eqn-away-from-max-times}
|P (s) - P (t) | \geq b \ep^{1+\nu} \rho^{-1} \BB r  
\quad \text{and} \quad
 \wt D_h\left(P (s) , P (t)\right)  \leq c_2' D_h\left( P(s) , P (t)\right)  
\eqe
(in particular, the times $s,t$ arise from a radius $r\in [\ep^{1+\nu} \rho^{-1} \BB r, \ep \rho^{-1} \BB r]$ and a point $z\in\BB C$ for which $\frk E_r^{\BB z,\BB w}(z)$ occurs). 
Henceforth assume that~\eqref{eqn-away-from-max-times} holds for every $\BB z , \BB w \in \left( \ep^q \rho^{-1} \BB r \BB Z^2 \right) \cap B_{2\BB r}\left( 0 \right)$ with $|\BB z-\BB w|\geq \ol\beta \BB r$.

Fix $\chi \in (0,\xi(Q-2))$ and $\chi' > \xi (Q+2)$, as in Lemma~\ref{lem-holder-uniform}. 
By Axiom~\ref{item-metric-coord} (tightness across scales), for each $p\in (0,1)$ we can find a bounded open set $U \subset\BB C$ which contains $B_2(0)$ such that $\BB P[\sup_{u,v\in B_{2\BB r}(0) } D_h(u,v) < D_h( B_{2\BB r}( 0 ) , \BB r \bdy U )] \geq p$ for every $\BB r  >0$. 
On the event of the preceding sentence, every $D_h$-geodesic between two points of $B_{2\BB r}(0)$ is contained in $\BB r U $. 
By applying Lemma~\ref{lem-holder-uniform} with $K= \ol U $ and then sending $p\rta 1$, we get that with probability tending to 1 as $\ep\rta 0$, at a rate which is uniform in $\BB r$, for any two points $z,w\in \BB C$ with $|z-w| \leq (\ep^q \vee (b \ep^{1+\nu}))\rho^{-1} \BB r$ which are either contained in $B_{2\BB r}( 0 )$ or which lie on a $D_h$-geodesic between two points of $B_{2\BB r}(0)$, 
\eqb \label{eqn-away-holder-cont}
 \left| \frac{z-w}{\BB r} \right|^{\chi'} \leq \frk c_{\BB r}^{-1} e^{-\xi h_{\BB r}(0)} D_h(z,w) \leq  \left| \frac{z-w}{\BB r} \right|^\chi  .
\eqe  
Henceforth assume that this is the case. 
\medskip

\noindent\textit{Step 2: bounding $\wt D_h(\BB z,\BB w)/D_h(\BB z,\BB w)$ for points in a fine mesh.}
By~\eqref{eqn-away-from-max-times} and~\eqref{eqn-away-holder-cont}, the times $s$ and $t$ from~\eqref{eqn-away-from-max-times} satisfy 
\eqb \label{eqn-away-times-lower}
t-s = D_h\left(P (s) , P (t)\right)   \geq  (b/\rho)^{\chi'} \frk c_{\BB r} e^{\xi h_{\BB r}(0)} \ep^{(1+\nu)\chi'} .
\eqe 
By the definition~\eqref{eqn-max-min-def} of $C_*$, the $\wt D_h$-lengths of the segments $P|_{[0,s]}$ and $P|_{[t,|P|]}$ are bounded above by $C_* s$ and $C_* (|P|-t)$, respectively. 
Therefore, for each $\BB z , \BB w \in \left( \ep^q \rho^{-1} \BB r \BB Z^2 \right) \cap B_{2\BB r}(0)$ with $|\BB z-\BB w|\geq \ol\beta \BB r$, 
\allb \label{eqn-away-sep}
\wt D_h(\BB z,\BB w) 
&\leq C_* \left( |P| - t + s \right) +  \wt D_h\left(P (s) , P (t)\right) \notag \\
&\leq C_* \left( |P| - t + s \right) +  c_2' (t-s) \quad \text{(by~\eqref{eqn-away-from-max-times})} \notag \\
&\leq C_* D_h(\BB z , \BB w)   - (C_* - c_2') (t-s)  \notag \\
&\leq C_* D_h(\BB z , \BB w)   - (C_* - c_2')(b/\rho)^{\chi'} \frk c_{\BB r} e^{\xi h_{\BB r}(0)} \ep^{(1+\nu)\chi'}   \quad \text{(by~\eqref{eqn-away-times-lower})} .
\alle
\medskip

\noindent\textit{Step 3: transferring from points in a fine mesh to general points.}
If $z,w\in B_{\BB r}(0)$ with $|z-w| \geq \ol\beta\BB r$, then we can find $\BB z , \BB w \in \left( \ep^q \BB r \BB Z^2 \right) \cap B_{2\BB r}\left(0\right)$ such that $|\BB z-\BB w|\geq \ol\beta \BB r$ and $\max\{|z-\BB z| , |w-\BB w|\} \leq 2\ep^q \rho^{-1} \BB r$.
By~\eqref{eqn-away-holder-cont} and the triangle inequality,
\eqb \label{eqn-away-close}
|D_h(\BB z , \BB w) - D_h(z,w)| \leq 2^{2+\chi} \rho^{-\chi}   \frk c_{\BB r} e^{\xi h_{\BB r}(0)} \ep^{q \chi}  , 
\eqe
and the same is true with $\wt D_h$ in place of $D_h$. If we choose $q > \chi'(1+\nu)/\chi$, then~\eqref{eqn-away-close} and~\eqref{eqn-away-sep} together imply that for each $z,w\in B_{\BB r}(0)$ with $|z-w| \geq \ol\beta \BB r$ and each small enough $\ep$, 
\eqb \label{eqn-away-uniform}
\wt D_h(z,w) \leq C_* D_h(z,w)   - a \frk c_{\BB r} e^{\xi h_{\BB r}(0)} \ep^{(1+\nu)\chi'} ,\quad\forall z,w\in B_{\BB r}(0) \quad \text{s.t.} \quad  |z-w| \geq \ol\beta \BB r .
\eqe
where $a > 0$ is a constant depending only on $q,\ol\beta ,c_1',c_2',\mu,\nu$.
\medskip

\noindent\textit{Step 4: choosing $\ep$.}
By Axiom~\ref{item-metric-coord} (tightness across scales), it holds with probability tending to 1 as $\ep\rta 0$, uniformly over all $\BB r > 0$, that $ D_h(z,w) \leq  \frk c_{\BB r} e^{\xi h_{\BB r}(0)} \ep^{-\chi'}$ for each $z,w\in B_{\BB r}(0) $. If this is the case then $a \frk c_{\BB r} e^{\xi h_{\BB r}(0)} \ep^{(1+\nu)\chi'} \geq a \ep^{(2+\nu)\chi'} D_h(z,w)$. Hence~\eqref{eqn-away-uniform} implies that with probability tending to 1 as $\ep\rta 0$, at a rate depending only on $q,\ol\beta, c',\mu,\nu$, 
\eqb \label{eqn-away-end}
\wt D_h(z,w) \leq \left( C_*  - a \ep^{(2+\nu) \chi'} \right)    D_h(z,w)  ,\quad\forall z,w\in B_{\BB r}(0) \quad \text{s.t.}\quad  |z-w| \geq \ol\beta \BB r.
\eqe
Recalling the definition~\eqref{eqn-max-event} of $\ol G_{\BB r}(C_*-\delta,\ol\beta)$, we can choose $\ep$ so that $a\ep^{(2+\nu) \chi'} =\delta$ to get the proposition statement. 
\end{proof}

\begin{proof}[Proof of Theorem~\ref{thm-weak-uniqueness}]
Let $D$ and $\wt D$ be as in Theorem~\ref{thm-weak-uniqueness}, let $h$ be a whole-plane GFF, and define the maximal and minimal ratios $c_*$ and $C_*$ as in~\eqref{eqn-max-min-def}. 
We claim that $c_* = C_*$, i.e., a.s.\ $\wt D_h = c_* D_h$. 
This gives the theorem statement in the case of a whole-plane GFF, which in turn implies the theorem statement for a whole-plane GFF plus a continuous function due to Axiom~\ref{item-metric-f} (Weyl scaling).

It remains to prove that $c_* = C_*$. 
By Proposition~\ref{prop-attained-max} applied with $C'  = C_* -\delta$, there exists $\ol\beta = \ol\beta(\mu,\nu)  \in (0,1)$ and $\ol p = \ol p(\mu,\nu) \in (0,1)$
with the following property. For each $\delta \in (0,1)$, there exists $\ep_0 = \ep_0(\delta,\mu,\nu)  > 0$ such that for each $\ep \in (0,\ep_0]$, there are at least $\mu\log_8 \ep^{-1}$ values of $\BB r \in [\ep^{1+\nu} ,\ep   ] \cap \{8^{-k}   : k\in\BB N\}$ for which 
\eqb \label{eqn-close-to-max}
\BB P\left[\ol G_{\BB r}(C_*-\delta , \ol\beta )\right] \geq  \ol p .
\eqe
We emphasize that $\ol\beta$ and $\ol p$ do not depend on $\delta$. 

We now assume by way of contradiction that $c_* < C_*$ and show that this assumption is incompatible with the conclusion of the preceding paragraph.
To this end, let $c'' \in (c_* , C_*)$ be as in Proposition~\ref{prop-away-from-max}, so that $c''$ depends only on the choice of metrics $D$ and $\wt D$.
Proposition~\ref{prop-attained-min} applied with $c''$ in place of $c'$ shows that there exists $\ul\beta  = \ul\beta(\mu,\nu) \in (0,1)$, $\ul p = \ul p(\mu,\nu) \in (0,1)$, and $\ep_1 = \ep_1(\mu,\nu) > 0$ such that for each $\ep \in (0,\ep_1]$, there are at least $\mu\log_8 \ep^{-1}$ values of $\BB r \in [\ep^{1+\nu} ,\ep   ] \cap \{8^{-k} : k\in\BB N\}$ for which $\BB P[\ul G_{\BB r}(c'',\ul\beta)] \geq \ul p$.

Proposition~\ref{prop-away-from-max} applied with $\beta  =\ul\beta \wedge \ul p$ therefore implies that there exists $\delta = \delta(\mu,\nu) \in (0,1)$ 
such that for each $\ep \in (0,\ep_1]$, there are at least $\mu\log_8 \ep^{-1}$ values of $\BB r \in [\ep^{1+\nu} ,\ep   ] \cap \{8^{-k} : k\in\BB N\}$ for which $\BB P\left[ \ol G_{\BB r}(C_* - \delta , \ol\beta )  \right] \leq \ol p/2$. 
If we take $\mu  > \nu/2$, then this is incompatible with~\eqref{eqn-close-to-max} whenever $\ep \in (0, \ep_0 \wedge \ep_1)$, so we have obtained the desired contradiction.
\end{proof}

\section{Open problems}
\label{sec-open-problems}

\subsubsection*{Dimension calculations}

An important remaining question concerning the LQG metric is the following. 

\begin{prob}[Hausdorff dimension of $\gamma$-LQG] \label{prob-dimension}
Compute the exponent $d_\gamma$ appearing in~\eqref{eqn-xi}, which is the Hausdorff dimension of $\BB C$ with respect to the $\gamma$-LQG metric (this is proven in~\cite{gp-kpz}). 
\end{prob}

Since $\xi = \gamma/d_\gamma$ and $Q=2/\gamma+\gamma/2$, Problem~\ref{prob-dimension} is equivalent to determining the relationship between these two parameters.
The only case in which $d_\gamma$ is known is when $\gamma=\sqrt{8/3}$, in which case $d_{\sqrt{8/3}}=4$.
Due to existing results in the literature, $d_\gamma$ can equivalently be defined in a large number of other equivalent ways, e.g., the following.
\begin{enumerate}
\item For a large class of infinite-volume random planar maps in the $\gamma$-LQG universality class, the number of vertices in the graph distance ball of radius $r$ centered at the root vertex is of order $r^{d_\gamma+o_r(1)}$~\cite[Theorem 1.6]{dg-lqg-dim} and the graph distance traveled by a simple random walk started from the root vertex and run for $n$ steps is of order $n^{1/d_\gamma + o_n(1)}$~\cite{gm-spec-dim,gh-displacement}.
\item For fixed distinct points $z,w\in\BB C$, the Liouville heat kernel (as constructed in~\cite{grv-heat-kernel}) satisfies $\mathsf p_t^\gamma(z,w) = \exp\left( - t^{-\frac{1}{d_\gamma-1} + o_t(1)} \right)$ as $t\rta 0$~\cite[Theorem 1.1]{dzz-heat-kernel}.
\item The optimal H\"older exponent for the $\gamma$-LQG metric w.r.t.\ the Euclidean metric is $\frac{\gamma}{d_\gamma}(Q-2)$ and the optimal H\"older exponent for the Euclidean metric w.r.t.\ the $\gamma$-LQG metric is $\frac{d_\gamma}{\gamma}(Q+2)^{-1}$~\cite[Theorem 1.7]{lqg-metric-estimates}.
\end{enumerate}

The best-known physics prediction for the value of $d_\gamma$ is the Watabiki prediction~\cite{watabiki-lqg},
\eqb \label{eqn-watabiki}
d_\gamma = 1 + \frac{\gamma^2}{4} + \frac14 \sqrt{(4+\gamma^2)^2 + 16\gamma^2} .
\eqe
However, this prediction is known to be false at least for small values of $\gamma$ due to the results of Ding-Goswami~\cite{ding-goswami-watabiki}. 
See~\cite{dg-lqg-dim,gp-lfpp-bounds} for rigorous upper and lower bounds for $d_\gamma$ as well as additional discussion about various possibilities for its value. 
In addition to $d_\gamma$, there are a number of other interesting dimensions related to $\gamma$-LQG which have not yet been computed, for example the following.

\begin{prob}[Geodesic dimension] \label{prob-geo-dimension}
Compute the Euclidean Hausdorff dimension of the $\gamma$-LQG geodesic between two typical points of $\BB C$. 
\end{prob}

\begin{prob}[Ball boundary dimension] \label{prob-bdy-dimension}
Compute the $\gamma$-LQG Hausdorff dimension and the Euclidean Hausdorff dimension of the boundary of a filled $\gamma$-LQG metric ball $\mcl B_s^\bullet(0;D_h)$. 
\end{prob}

In the setting of Problem~\ref{prob-geo-dimension}, the $\gamma$-LQG Hausdorff dimension of a $\gamma$-LQG geodesic is trivially equal to 1. 
The Euclidean dimensions of $\gamma$-LQG geodesics and filled metric ball boundaries are unknown even for $\gamma=\sqrt{8/3}$ and there are not even any conjectures as to their values. The $\sqrt{8/3}$-LQG dimension of the outer boundary of a filled $\sqrt{8/3}$-LQG metric ball is 2~\cite{lqg-tbm2}, but this quantity is not known (even heuristically) for any other value of $\gamma$. 
See~\cite{gp-kpz} for upper bounds for the Euclidean Hausdorff dimension of a $\gamma$-LQG geodesic and for the outer boundary of a filled $\sqrt{8/3}$-LQG metric ball. 

Currently, no explicit lower bounds for any of these quantities are known, although we expect it is not hard to show that they are strictly larger than 1; c.f.~\cite{ding-zhang-geodesic-dim}. 

Another natural random fractal associated with the LQG metric is the boundary of a (non-filled) LQG metric ball (note that this boundary is typically not connected). It is shown in~\cite{gwynne-ball-bdy,lqg-zero-one} that a.s.\ the Hausdorff dimension of the LQG metric ball boundary w.r.t.\ the Euclidean (resp.\ LQG) metric is $2-\xi Q + \xi^2/2$ (resp.\ $d_\gamma-1$). It is also shown in~\cite{lqg-zero-one} that a.s.\ the Hausdorff dimension of the boundary of a filled metric ball w.r.t.\ the Euclidean (resp.\ LQG) metric is strictly smaller than this quantity.

The ``quantum dimension" part of Problem~\ref{prob-bdy-dimension} is closely related to the following question. 

\begin{prob}[$\gamma$-LQG boundary length of metric balls] \label{prob-bdy-measure}
Is there a natural LQG length measure on the boundary of a filled $\gamma$-LQG metric ball? 
\end{prob}
 
In the case when $\gamma=\sqrt{8/3}$, for $s > 0$ the field $h|_{\BB C\setminus\mcl B_s(0;D_h)}$ locally looks like a free-boundary GFF near $\bdy \mcl B_s(0;D_h)$. This allows one to define the $\gamma$-LQG boundary length measure on $\bdy\mcl B_s(0;D_h)$ in the manner of~\cite[Section 6]{shef-kpz}. Alternatively, the length measure on $\bdy\mcl B_s(0;D_h)$ can equivalently be constructed using Brownian surface theory; see~\cite{tbm-characterization,legall-disk-snake}. 
For general $\gamma \in (0,2)$, it is not expected that $h|_{\BB C\setminus\mcl B_s(0;D_h)}$ locally looks like a free-boundary GFF near $\bdy \mcl B_s(0;D_h)$. 
Indeed, if this were the case then the heuristic argument in~\cite[Section 3.3]{qle} would imply that the dimension of $\gamma$-LQG is given by Watabiki's prediction~\eqref{eqn-watabiki}, which we know is false, at least for small $\gamma$, by the results of~\cite{ding-goswami-watabiki}.  
Hence new ideas are required to construct a natural length measure on $\bdy\mcl B_s(0;D_h)$ in this case.

\subsubsection*{Discrete approximations}

Another interesting open problem is to connect the $\gamma$-LQG metric to its discrete counterparts. 

\begin{prob}[Scaling limit of random planar maps] \label{prob-discrete}
Prove Conjecture~\ref{conj-rpm-limit}, which asserts that random planar maps, equipped with their graph distance, converge to the $\gamma$-LQG surface, equipped with the $\gamma$-LQG metric, w.r.t.\ the Gromov-Hausdorff topology.
\end{prob}

One possible approach to Problem~\ref{prob-discrete} is to first prove a scaling limit result for the so-called \emph{mated-CRT maps}, as studied, e.g., in~\cite{gms-tutte,ghs-map-dist,ghs-dist-exponent} using their direct connection to Liouville quantum gravity. One could then try to transfer to other random planar map models by improving on the strong coupling techniques used in~\cite{ghs-map-dist}, which currently only give estimates for distances up to polylogarithmic multiplicative errors. We emphasize, however, that both of these steps are highly non-trivial and are likely to require substantial new ideas.
Another possible approach would be to find some sort of ``combinatorial miracle" which allows one to analyze distances in weighted random planar maps directly (analogous to the Schaeffer bijection~\cite{cv-bijection,schaeffer-bijection,bdg-bijection} for uniform random planar maps). 

A likely easier scaling limit problem is to show universality of the $\gamma$-LQG metric across different approximation schemes. 
One of the most natural approximation schemes is \emph{Liouville graph distance (LGD)}, whereby the distance between two points $z,w\in\BB C$ is defined to be the minimal number of Euclidean balls of $\gamma$-LQG mass $\ep$ whose union contains a path from $z$ to $w$. 

\begin{prob}[Other approximation schemes] \label{prob-lgd}
Show that the LGD metrics, appropriately re-scaled, converge in law to the $\gamma$-LQG metric as $\ep\rta 0$. 
\end{prob}

We expect that the difficulties involved in solving Problem~\ref{prob-lgd} are similar to the difficulties involved in showing that the mated-CRT map converges to $\gamma$-LQG in the metric sense, due to the SLE/LQG representation of the mated-CRT map (see~\cite{gms-tutte,ghs-map-dist}). 

It is shown in~\cite{ding-dunlap-lgd} that LGD, re-scaled by the median distance across a square, is tight and each subsequential limit induces the Euclidean topology. We expect that it is not hard to check that these subsequential limits satisfy Axioms~\ref{item-metric-length0}, \ref{item-metric-local0}, and~\ref{item-metric-coord0} in the definition of the $\gamma$-LQG metric (the latter is just a consequence of the coordinate change formula for the LQG area measure~\cite[Proposition 2.1]{shef-kpz}). One can also obtain a much weaker version of Weyl scaling analogous to the ``tightness across scales" condition (Axiom~\ref{item-metric-coord}) used in our definition of a weak $\gamma$-LQG metric, where one requires that the metrics obtained by adding different constants to the field, then re-scaling appropriately, are tight. 

Hence one possible approach to Problem~\ref{prob-lgd} is to adapt the arguments of this paper and its predecessors to the case when we know that our metric satisfies the coordinate change formula for translations and scalings, but we do not know that it satisfies Weyl scaling. 
However, our arguments are in some ways optimized to work for subsequential limits of LFPP, so there may also be an entirely different argument which is more appropriate for subsequential limits of LGD. 

Theorem~\ref{thm-lfpp} says that the LFPP metrics converge in probability, unlike the case of various approximations of the LQG measure which are known to converge a.s.~\cite{shef-kpz,rhodes-vargas-review,shef-wang-lqg-coord}. 

\begin{prob}[Almost sure convergence of LFPP] \label{prob-lfpp-as}
Can the convergence $\frk a_\ep^{-1} D_h^\ep \rta D_h$ in Theorem~\ref{thm-lfpp} be improved from convergence in probability to a.s.\ convergence?
\end{prob}

\subsubsection*{Metric space structure vs.\ quantum surface structure}

In~\cite{lqg-tbm3}, it is shown that a $\sqrt{8/3}$-LQG surface is a.s.\ determined by its structure as a metric measure space, i.e., the metric measure space $(\BB C , \mu_h , D_h)$ a.s.\ determines its embedding into $\BB C$ and the associated GFF $h$ (modulo conformal automorphisms). Our next problem asks for an extension of this result to the case when $\gamma \in (0,2)$. 

\begin{prob}[Metric measure space structure determines the field] \label{prob-mm-structure}
Show that the field $h$ is a.s.\ determined (modulo rotation and scaling) by the pointed $\gamma$-LQG metric measure space $(\BB C , 0 , \mu_h, D_h)$. 
\end{prob}

Likely the easiest approach to Problem~\ref{prob-mm-structure} is to adapt the arguments of~\cite{gms-poisson-voronoi}, which gives for $\gamma=\sqrt{8/3}$ an explicit way of re-constructing $h$ from $(\BB C ,0, \mu_h , D_h)$ using the adjacency graph of a fine mesh of Poisson-Voronoi cells. 
The arguments of~\cite{gms-poisson-voronoi} are not very specific to the case when $\gamma=\sqrt{8/3}$. The main missing ingredient to extend these arguments to general values of $\gamma$ is the following estimate of independent interest. 

\begin{prob}[Concentration of areas of LQG metric balls] \label{prob-area-conc}
Show that the $\gamma$-LQG area of a $\gamma$-LQG metric ball has superpolynomial concentration, i.e., show that for $C > 1$, 
\eqb \label{eqn-area-conc}
\BB P\left[ C^{-1} \leq \mu_h\left(\mcl B_1(0;D_h) \right) \leq C     \right] = 1 - O_C(C^{-p}) ,\quad\forall p > 0 .
\eqe
\end{prob}

Problem~\ref{prob-area-conc} in the case when $\gamma=\sqrt{8/3}$ follows from known estimates for the Brownian map; see~\cite[Corollary 6.2]{legall-geodesics} and~\cite[Section 4.3]{gms-poisson-voronoi}. 
\medskip

\noindent\textit{Update:} Problems~\ref{prob-mm-structure} and~\ref{prob-area-conc} are solved in~\cite{afs-metric-ball}. 
\medskip

It is shown in~\cite{bss-lqg-gff} that the LQG measure a.s.\ determines the GFF. 
It is also natural to try to recover the LQG measure (and thereby the GFF) from the LQG metric. 

\begin{prob} \label{prob-metric-to-measure}
Does the LQG metric a.s.\ determine the LQG measure? 
More concretely, can the LQG measure be recovered as some sort of Minkowski content measure w.r.t.\ the LQG metric?
\end{prob}

In this paper, we gave a characterization of the $\gamma$-LQG metric in terms of its coupling with the GFF. 
In light of Problem~\ref{prob-mm-structure}, it is natural to ask if there is also a characterization solely in terms of the metric space structure, which does \emph{not} require reference to the GFF.
Such a characterization of the Brownian map (equivalently, the $\sqrt{8/3}$-LQG sphere) is proven in~\cite{sphere-constructions}. 
A purely metric characterization of $\gamma$-LQG could potentially play an important role in a solution to Problem~\ref{prob-discrete}.

\begin{prob}[Metric space characterization] \label{prob-char}
Is there a characterization of $(\BB C , D_h  )$ as a metric space (or of $(\BB C , \mu_h, D_h)$ as a metric measure space), without reference to the GFF and the embedding of this metric space into $\BB C$?
\end{prob}

It is likely that the most natural setting to consider in Problem~\ref{prob-char} is the one where $h$ the field corresponding to a quantum cone or quantum sphere (as defined in~\cite{wedges}) rather than a whole-plane GFF.

\subsubsection*{Additional properties of the LQG metric}

The construction of the $\sqrt{8/3}$-LQG metric in~\cite{lqg-tbm1,lqg-tbm2,lqg-tbm3} yields many special properties of the metric in this case which are not known (and in many cases not expected to hold) for general $\gamma \in (0,2)$. 
For example, one has $d_{\sqrt{8/3}} =4$. Moreover, in the case when $h$ is the GFF associated with a quantum sphere or $\sqrt{8/3}$-quantum wedge, the quantum surfaces obtained by restricting $h$ to the complementary connected components of a $\sqrt{8/3}$-LQG metric ball are conditionally independent quantum disks given their boundary lengths. 
Many further properties can be obtained using the equivalence of $\sqrt{8/3}$-LQG surfaces and Brownian surfaces.
However, there is nothing obviously special about $\gamma=\sqrt{8/3}$ from either of the definitions of the LQG metric given in this paper (the limit of LFPP or the axiomatic definition).

\begin{prob} \label{prob-c0-special}
Can one prove that $d_{\sqrt{8/3}}=4$, the independence properties for complementary connected components of a $\sqrt{8/3}$-LQG metric ball, or any other special property of the $\sqrt{8/3}$-LQG metric directly from the LFPP definition or the axiomatic definition?
\end{prob}

There has been a recent proliferation of exact formulas for quantities related to the $\gamma$-LQG area and boundary length measures for general $\gamma \in (0,2)$, proven using ideas from conformal field theory: see, e.g.,~\cite{krv-dozz,remy-fb-formula,rz-gmc-interval}.  
In the special case when $\gamma=\sqrt{8/3}$, exact formulas for various quantities associated with the $\sqrt{8/3}$-LQG metric can be obtained using its connection to the Brownian surfaces.
Exact formulas for the $\gamma$-LQG metric, if they can be found, could be very useful in attempts to solve most of the other problems listed above. 
 
\begin{prob}[Exact formulas] \label{prob-formulas}
Are there exact formulas for any objects related to the $\gamma$-LQG metric for general $\gamma\in (0,2)$? 
\end{prob}

\begin{prob}[Topology of geodesics] \label{prob-geodesics}
For a general value of $\gamma \in (0,2)$, what is the maximal possible number of $\gamma$-LQG geodesics joining two points in $\BB C$? Is this number finite, and, if so, does it depend on $\gamma$? 
More generally, can one prove results about the possible topologies of the set of $\gamma$-LQG geodesics joining two points in $\BB C$ analogous to the results for the Brownian map in~\cite{akm-geodesics}?
\end{prob}

\noindent\textit{Update:} This problem is solved for $\gamma=\sqrt{8/3}$ in \cite{mq-strong-confluence} using Brownian map based techniques.  Substantial progress on Problem~\ref{prob-geodesics} is made in~\cite{gwynne-geodesic-network}, where it is shown that the results about geodesic networks from~\cite{akm-geodesics} extend verbatim to the case of general $\gamma\in (0,2)$ and that the maximal number of LQG geodesics joining any two points is a.s.\ finite. It is also conjectured in~\cite{gwynne-geodesic-network} that the maximal number of geodesics is $9$, regardless of the value of $\gamma$.
\medskip

Liouville Brownian motion~\cite{grv-lbm,berestycki-lbm} is the natural ``quantum time" parameterization of Brownian motion on an LQG surface. 
If we condition Liouville Brownian motion to travel a macroscopic distance (e.g., from the origin to the unit circle) in a short amount of time, then it is natural to expect that it would roughly follow a path of minimal LQG length. 

\begin{prob}[Liouville Brownian motion and LQG geodesics] \label{prob-lbm}
Does Liouville Brownian motion conditioned to travel a macroscopic (Euclidean or quantum) distance in a short amount of time approximate an LQG geodesic? 
\end{prob}

There is a one-parameter family of infinite-volume $\gamma$-LQG surfaces with boundary called \emph{quantum wedges}, which can be indexed by the \emph{weight} parameter $\frk w > 0$. See~\cite{wedges} for details. 
In~\cite{wedges}, building on~\cite{shef-zipper}, it is shown that one can conformally weld together a weight-$\frk w_1$ quantum wedge and a weight-$\frk w_2$ quantum wedge according to the quantum length measure along their boundaries to get a weight-$\frk w_1 + \frk w_2$ quantum wedge decorated by an SLE$_\kappa(\frk w_1-2;\frk w_2-2)$ curve which corresponds to the gluing interface. 
In~\cite{gwynne-miller-gluing}, it is shown that in the special case when $\gamma=\sqrt{8/3}$, this conformal welding is compatible with the $\sqrt{8/3}$-LQG metric in the following sense: the weight-$(\frk w_1+\frk w_2)$ quantum wedge, equipped with its $\sqrt{8/3}$-LQG metric, is the metric space quotient of the weight-$\frk w_1$ and weight-$\frk w_2$ quantum wedges, equipped with their $\sqrt{8/3}$-LQG metrics, under the same equivalence relation used to define the conformal welding. 

\begin{prob}[Metric gluing of $\gamma$-LQG surfaces] \label{prob-gluing}
Prove metric gluing statements for quantum wedges analogous to the ones in~\cite{gwynne-miller-gluing} for general $\gamma\in (0,2)$. 
\end{prob}

The main missing ingredient needed to solve Problem~\ref{prob-gluing} is suitable estimates for distances between points of $\bdy \BB D$ with respect to the $\gamma$-LQG metric induced by a free-boundary GFF on $\BB D$ (or a variant thereof, like the quantum disk). For $\gamma=\sqrt{8/3}$, the needed estimates are proven in~\cite[Section 3.2]{gwynne-miller-gluing} using results for the Brownian disk.

\subsubsection*{Extensions of the theory}

Throughout this paper, we have neglected the critical case when $\gamma=2$.

\begin{prob}[Critical LQG metric] \label{prob-critical}
Construct a metric on $\gamma$-LQG when $\gamma=2$. 
\end{prob}

See~\cite{shef-deriv-mart,shef-renormalization} for a construction of the $\gamma$-LQG measure for $\gamma=2$. 
One possible approach to Problem~\ref{prob-critical} is to try to take a limit of the $\gamma$-LQG metrics as $\gamma$ increases to 2 (it is shown that the 2-LQG measure is the $\gamma \nearrow 2$ limit of the $\gamma$-LQG measures, appropriately renormalized, in~\cite{aps-critical-lqg-lim}). 
Another (likely more involved) possibility is to adapt the arguments of this paper and its predecessors~\cite{local-metrics,lqg-metric-estimates,gm-confluence} to the critical case, corresponding to LFPP with parameter $\xi = 2/d_2$.
A major difficulty in the critical case is that the 2-LQG metric is not expected to be H\"older continuous w.r.t.\ the Euclidean metric (indeed, the optimal H\"older exponent from~\cite[Theorem 1.7]{lqg-metric-estimates} converges to zero as $\gamma\rta 2^-$), so more refined estimates for the continuity of the metric and for LFPP are likely to be required. 

Recall that our metric for $\gamma \in (0,2)$ is constructed as the limit of LFPP with parameter $\xi = \gamma/d_\gamma$.
Extending further, it is natural to ask what happens when $\xi > 2/d_2$ (it is shown in~\cite[Proposition 1.7]{dg-lqg-dim} that $\gamma\mapsto \gamma/d_\gamma$ is increasing, so $\gamma/d_\gamma  < 2/d_2$). Very recently, it was shown in~\cite{dg-supercritical-lfpp} that LFPP is tight w.r.t.\ the topology on lower semicontinuous functions for all $\xi > 0$. For $\xi > 2/d_2$ every possible subsequential limit is a metric on $\BB C$ which does \emph{not} induce the Euclidean topology. Rather, there is an uncountable, dense, fractal set of ``singular points" whose distance to every other point is infinite. These singular points arise from the thick points of the GFF~\cite{hmp-thick-pts}.

\begin{prob}[LFPP with $\xi  >2/d_2$] \label{prob-central-charge}
Show that LFPP with parameter $\xi  >2/d_2$ converges in law to a limiting metric w.r.t.\ the topology of~\cite{dg-supercritical-lfpp}. 
\end{prob}
 
This metric of Problem~\ref{prob-central-charge} should be related to Liouville quantum gravity with central charge $\mathbf c \in (1,25)$. Note that the central charge associated with $\gamma$-LQG for $\gamma \in (0,2]$ is $\mathbf c = 25 -6(2/\gamma+\gamma/2)^2 \in (-\infty,1]$. 
We refer to~\cite{ghpr-central-charge,dg-supercritical-lfpp} and the references therein for more on LQG with $\mathbf c \in (1,25)$.

The $\gamma$-LQG measure is a special case of a more general theory of random measures called \emph{Gaussian multiplicative chaos} (GMC)~\cite{kahane,rhodes-vargas-review}, which studies limits of regularized versions of ``$e^{\gamma X} \,dz$" for certain Gaussian random distributions $X$. 
Here, $X$ is a random distribution on $\BB R^n$ for some $n\in\BB N$ and $dz$ denotes Lebesgue measure on $\BB R^n$. 

\begin{prob}[More general random metrics] \label{prob-metric-gmc}
Is there a more general theory of random metrics associated with log-correlated random Gaussian distributions analogous to GMC? In particular, can one construct metrics with similar properties to the $\gamma$-LQG metric in higher dimensions? 
\end{prob}

Some of the arguments in the construction of the LQG metric, in this paper as well as~\cite{dddf-lfpp,local-metrics,lqg-metric-estimates,gm-confluence} are specific to the two-dimensional case. The following seem to be the places where the use of two-dimensionality is the most fundamental. 
\begin{itemize}
\item The construction of the LQG metric makes extensive use of the Markov property of the GFF: for an open set $U\subset\BB C$, $h|_U$ decomposes as a zero-boundary GFF in $U$ plus an independent random harmonic function on $U$. This property is not satisfied for log-correlated fields in dimension $\geq 3$, see, e.g.,~\cite{lgf-survey} (note that the GFF is only log-correlated in dimension 2). 
\item The proof of tightness in~\cite{dddf-lfpp}, as well as several proofs in~\cite{lqg-metric-estimates}, use RSW-type arguments which are based on the fact that one can force two paths to intersect each other in dimension 2. 
\item The proof of confluence in~\cite{gm-confluence} is based on a decomposition of the boundary of a filled LQG metric ball into arcs of topological dimension 1, together with an iterative argument where one ``kills off" all but one of the arcs by preventing LQG geodesics from passing through them. In higher dimensions, the boundary of an LQG metric ball cannot be decomposed into sets of dimension 1. In fact, it is plausible that confluence fails in higher dimensions since there is more ``room" for geodesics to move around. 
\end{itemize}

\bibliography{cibiblong,cibib}

\newcommand{\etalchar}[1]{$^{#1}$}
\def\cprime{$'$}
\begin{thebibliography}{DRSV14b}

\bibitem[AFS20]{afs-metric-ball}
M.~{Ang}, H.~{Falconet}, and X.~{Sun}.
\newblock {Volume of metric balls in Liouville quantum gravity}.
\newblock {\em ArXiv e-prints}, Jan 2020, \arxiv{2001.11467}.

\bibitem[AHS17]{ahs-sphere}
J.~Aru, Y.~Huang, and X.~Sun.
\newblock Two perspectives of the 2{D} unit area quantum sphere and their
  equivalence.
\newblock {\em Comm. Math. Phys.}, 356(1):261--283, 2017, \arxiv{1512.06190}.
  \MR{3694028}

\bibitem[AKM17]{akm-geodesics}
O.~Angel, B.~Kolesnik, and G.~Miermont.
\newblock Stability of geodesics in the {B}rownian map.
\newblock {\em Ann. Probab.}, 45(5):3451--3479, 2017, \arxiv{1502.04576}.
  \MR{3706747}

\bibitem[Ang19]{ang-discrete-lfpp}
M.~Ang.
\newblock {Comparison of discrete and continuum Liouville first passage
  percolation}.
\newblock {\em ArXiv e-prints}, Apr 2019, \arxiv{1904.09285}.

\bibitem[APS19]{aps-critical-lqg-lim}
J.~{Aru}, E.~{Powell}, and A.~{Sep{\'u}lveda}.
\newblock Critical {L}iouville measure as a limit of subcritical measures.
\newblock {\em Electron. Commun. Probab.}, 24:Paper No. 18, 16, 2019,
  \arxiv{1802.08433}. \MR{3933042}

\bibitem[BBI01]{bbi-metric-geometry}
D.~Burago, Y.~Burago, and S.~Ivanov.
\newblock {\em A course in metric geometry}, volume~33 of {\em Graduate Studies
  in Mathematics}.
\newblock American Mathematical Society, Providence, RI, 2001. \MR{1835418}

\bibitem[BDFG04]{bdg-bijection}
J.~Bouttier, P.~Di~Francesco, and E.~Guitter.
\newblock Planar maps as labeled mobiles.
\newblock {\em Electron. J. Combin.}, 11(1):Research Paper 69, 27, 2004,
  \arxiv{math/0405099}. \MR{2097335 (2005i:05087)}

\bibitem[Ber15]{berestycki-lbm}
N.~Berestycki.
\newblock Diffusion in planar {L}iouville quantum gravity.
\newblock {\em Ann. Inst. Henri Poincar\'e Probab. Stat.}, 51(3):947--964,
  2015, \arxiv{1301.3356}. \MR{3365969}

\bibitem[Ber17]{berestycki-gmt-elementary}
N.~Berestycki.
\newblock An elementary approach to {G}aussian multiplicative chaos.
\newblock {\em Electron. Commun. Probab.}, 22:Paper No. 27, 12, 2017,
  \arxiv{1506.09113}. \MR{3652040}

\bibitem[BM17]{bet-mier-disk}
J.~Bettinelli and G.~Miermont.
\newblock Compact {B}rownian surfaces {I}: {B}rownian disks.
\newblock {\em Probab. Theory Related Fields}, 167(3-4):555--614, 2017,
  \arxiv{1507.08776}. \MR{3627425}

\bibitem[BSS14]{bss-lqg-gff}
N.~{Berestycki}, S.~{Sheffield}, and X.~{Sun}.
\newblock {Equivalence of Liouville measure and Gaussian free field}.
\newblock {\em ArXiv e-prints}, October 2014, \arxiv{1410.5407}.

\bibitem[{Cer}19]{cercle-quantum-disk}
B.~{Cercl{\'e}}.
\newblock {Unit boundary length quantum disk: a study of two different
  perspectives and their equivalence}.
\newblock {\em ArXiv e-prints}, Dec 2019, \arxiv{1912.08012}.

\bibitem[CV81]{cv-bijection}
R.~Cori and B.~Vauquelin.
\newblock Planar maps are well labeled trees.
\newblock {\em Canadian J. Math.}, 33(5):1023--1042, 1981. \MR{638363}

\bibitem[DD18]{ding-dunlap-lgd}
J.~{Ding} and A.~{Dunlap}.
\newblock {Subsequential scaling limits for Liouville graph distance}.
\newblock {\em ArXiv e-prints}, December 2018, \arxiv{1812.06921}.

\bibitem[DD19]{ding-dunlap-lqg-fpp}
J.~Ding and A.~Dunlap.
\newblock Liouville first-passage percolation: {S}ubsequential scaling limits
  at high temperature.
\newblock {\em Ann. Probab.}, 47(2):690--742, 2019, \arxiv{1605.04011}.
  \MR{3916932}

\bibitem[DDDF19]{dddf-lfpp}
J.~Ding, J.~Dub{\'e}dat, A.~Dunlap, and H.~Falconet.
\newblock {Tightness of Liouville first passage percolation for $\gamma \in
  (0,2)$}.
\newblock {\em ArXiv e-prints}, Apr 2019, \arxiv{1904.08021}.

\bibitem[DF18]{df-lqg-metric}
J.~{Dub{\'e}dat} and H.~{Falconet}.
\newblock {Liouville metric of star-scale invariant fields: tails and Weyl
  scaling}.
\newblock {\em {P}robability {T}heory and {R}elated {F}ields}, to appear, 2018,
  \arxiv{1809.02607}.

\bibitem[DFG{\etalchar{+}}19]{lqg-metric-estimates}
J.~Dub{\'e}dat, H.~Falconet, E.~Gwynne, J.~Pfeffer, and X.~Sun.
\newblock {W}eak {LQG} metrics and {L}iouville first passage percolation.
\newblock {\em ArXiv e-prints}, May 2019, \arxiv{1905.00380}.

\bibitem[DG16]{ding-goswami-watabiki}
J.~{Ding} and S.~{Goswami}.
\newblock {Upper bounds on Liouville first passage percolation and Watabiki's
  prediction}.
\newblock {\em {C}ommunications in {P}ure and {A}pplied {M}athematics}, to
  appear, 2016, \arxiv{1610.09998}.

\bibitem[DG18]{dg-lqg-dim}
J.~{Ding} and E.~{Gwynne}.
\newblock {The fractal dimension of {L}iouville quantum gravity: universality,
  monotonicity, and bounds}.
\newblock {\em {C}ommunications in {M}athematical {P}hysics}, 374:1877--1934,
  2018, \arxiv{1807.01072}.

\bibitem[DG20]{dg-supercritical-lfpp}
J.~{Ding} and E.~{Gwynne}.
\newblock {Tightness of supercritical Liouville first passage percolation}.
\newblock {\em ArXiv e-prints}, May 2020, \arxiv{2005.13576}.

\bibitem[DKRV16]{dkrv-lqg-sphere}
F.~David, A.~Kupiainen, R.~Rhodes, and V.~Vargas.
\newblock Liouville quantum gravity on the {R}iemann sphere.
\newblock {\em Comm. Math. Phys.}, 342(3):869--907, 2016, \arxiv{1410.7318}.
  \MR{3465434}

\bibitem[DMS14]{wedges}
B.~{Duplantier}, J.~{Miller}, and S.~{Sheffield}.
\newblock {Liouville quantum gravity as a mating of trees}.
\newblock {\em {Asterisque}}, to appear, 2014, \arxiv{1409.7055}.

\bibitem[DRSV14a]{lgf-survey}
B.~{Duplantier}, R.~{Rhodes}, S.~{Sheffield}, and V.~{Vargas}.
\newblock {Log-correlated Gaussian fields: an overview}.
\newblock {\em ArXiv e-prints}, July 2014, \arxiv{1407.5605}.

\bibitem[DRSV14b]{shef-deriv-mart}
B.~Duplantier, R.~Rhodes, S.~Sheffield, and V.~Vargas.
\newblock Critical {G}aussian multiplicative chaos: convergence of the
  derivative martingale.
\newblock {\em Ann. Probab.}, 42(5):1769--1808, 2014, \arxiv{1206.1671}.
  \MR{3262492}

\bibitem[DRSV14c]{shef-renormalization}
B.~Duplantier, R.~Rhodes, S.~Sheffield, and V.~Vargas.
\newblock Renormalization of critical {G}aussian multiplicative chaos and {KPZ}
  relation.
\newblock {\em Comm. Math. Phys.}, 330(1):283--330, 2014, \arxiv{1212.0529}.
  \MR{3215583}

\bibitem[DS11]{shef-kpz}
B.~Duplantier and S.~Sheffield.
\newblock Liouville quantum gravity and {KPZ}.
\newblock {\em Invent. Math.}, 185(2):333--393, 2011, \arxiv{1206.0212}.
  \MR{2819163 (2012f:81251)}

\bibitem[Dub09]{dubedat-coupling}
J.~Dub{\'e}dat.
\newblock S{LE} and the free field: partition functions and couplings.
\newblock {\em J. Amer. Math. Soc.}, 22(4):995--1054, 2009, \arxiv{0712.3018}.
  \MR{2525778 (2011d:60242)}

\bibitem[DZ16]{ding-zhang-geodesic-dim}
J.~{Ding} and F.~{Zhang}.
\newblock {Liouville first passage percolation: geodesic length exponent is
  strictly larger than 1 at high temperatures}.
\newblock {\em {P}robability {T}heory and {R}elated {F}ields}, to appear, 2016,
  \arxiv{1610.02766}.

\bibitem[DZZ18]{dzz-heat-kernel}
J.~{Ding}, O.~{Zeitouni}, and F.~{Zhang}.
\newblock {Heat kernel for Liouville Brownian motion and Liouville graph
  distance}.
\newblock {\em {C}ommunications in {M}athematical {P}hysics}, to appear, 2018,
  \arxiv{1807.00422}.

\bibitem[GH18]{gh-displacement}
E.~{Gwynne} and T.~{Hutchcroft}.
\newblock {Anomalous diffusion of random walk on random planar maps}.
\newblock {\em ArXiv e-prints}, July 2018, \arxiv{1807.01512}.

\bibitem[GHPR19]{ghpr-central-charge}
E.~{Gwynne}, N.~{Holden}, J.~{Pfeffer}, and G.~{Remy}.
\newblock {L}iouville quantum gravity with matter central charge in {$(1,25)$}:
  a probabilistic approach.
\newblock {\em {Communications in Mathematical Physics}}, to appear, 2019,
  \arxiv{1903.09111}.

\bibitem[GHS17]{ghs-map-dist}
E.~{Gwynne}, N.~{Holden}, and X.~{Sun}.
\newblock {A mating-of-trees approach for graph distances in random planar
  maps}.
\newblock {\em {Probability Theory and Related Fields}}, to appear, 2017,
  \arxiv{1711.00723}.

\bibitem[GHS19]{ghs-dist-exponent}
E.~{Gwynne}, N.~{Holden}, and X.~{Sun}.
\newblock {A distance exponent for Liouville quantum gravity}.
\newblock {\em {Probability Theory and Related Fields}}, 173(3):931--997, 2019,
  \arxiv{1606.01214}.

\bibitem[GM17]{gm-spec-dim}
E.~{Gwynne} and J.~{Miller}.
\newblock {Random walk on random planar maps: spectral dimension, resistance,
  and displacement}.
\newblock {\em ArXiv e-prints}, November 2017, \arxiv{1711.00836}.

\bibitem[GM19a]{gm-confluence}
E.~Gwynne and J.~Miller.
\newblock Confluence of geodesics in {L}iouville quantum gravity for {$\gamma
  \in (0,2)$}.
\newblock {\em {A}nnals of {P}robability}, to appear, 2019, \arxiv{1905.00381}.

\bibitem[GM19b]{gm-coord-change}
E.~Gwynne and J.~Miller.
\newblock Conformal covariance of the {L}iouville quantum gravity metric for
  {$\gamma \in (0,2)$}.
\newblock {\em ArXiv e-prints}, May 2019, \arxiv{1905.00384}.

\bibitem[GM19c]{local-metrics}
E.~Gwynne and J.~Miller.
\newblock Local metrics of the {G}aussian free field.
\newblock {\em {A}nnales de {l'Institut} {F}ourier}, to appear, 2019,
  \arxiv{1905.00379}.

\bibitem[GM19d]{gwynne-miller-gluing}
E.~Gwynne and J.~Miller.
\newblock Metric gluing of {B}rownian and {$\sqrt{8/3}$}-{L}iouville quantum
  gravity surfaces.
\newblock {\em Ann. Probab.}, 47(4):2303--2358, 2019, \arxiv{1608.00955}.
  \MR{3980922}

\bibitem[GMS17]{gms-tutte}
E.~{Gwynne}, J.~{Miller}, and S.~{Sheffield}.
\newblock {The Tutte embedding of the mated-CRT map converges to Liouville
  quantum gravity}.
\newblock {\em ArXiv e-prints}, May 2017, \arxiv{1705.11161}.

\bibitem[GMS20]{gms-poisson-voronoi}
E.~Gwynne, J.~Miller, and S.~Sheffield.
\newblock The {T}utte {E}mbedding of the {P}oisson--{V}oronoi {T}essellation of
  the {B}rownian {D}isk {C}onverges to {$\sqrt{8/3}$}-{L}iouville {Q}uantum
  {G}ravity.
\newblock {\em Comm. Math. Phys.}, 374(2):735--784, 2020, \arxiv{1809.02091}.
  \MR{4072229}

\bibitem[GP19a]{gp-lfpp-bounds}
E.~{Gwynne} and J.~{Pfeffer}.
\newblock {Bounds for distances and geodesic dimension in Liouville first
  passage percolation}.
\newblock {\em {E}lectronic {C}ommunications in {P}robability}, 24:no. 56, 12,
  2019, \arxiv{1903.09561}.

\bibitem[GP19b]{gp-kpz}
E.~{Gwynne} and J.~{Pfeffer}.
\newblock {KPZ formulas for the Liouville quantum gravity metric}.
\newblock {\em {T}ransactions of the {A}merican {M}athematical {S}ociety}, to
  appear, 2019.

\bibitem[GPS]{lqg-zero-one}
E.~Gwynne, J.~Pfeffer, and S.~Sheffield.
\newblock {Geodesics and metric ball boundaries in Liouville quantum gravity}.
\newblock In preparation.

\bibitem[GRV14]{grv-heat-kernel}
C.~Garban, R.~Rhodes, and V.~Vargas.
\newblock On the heat kernel and the {D}irichlet form of {L}iouville {B}rownian
  motion.
\newblock {\em Electron. J. Probab.}, 19:no. 96, 25, 2014, \arxiv{1302.6050}.
  \MR{3272329}

\bibitem[GRV16]{grv-lbm}
C.~Garban, R.~Rhodes, and V.~Vargas.
\newblock Liouville {B}rownian motion.
\newblock {\em Ann. Probab.}, 44(4):3076--3110, 2016, \arxiv{1301.2876}.
  \MR{3531686}

\bibitem[Gwy]{gwynne-geodesic-network}
E.~Gwynne.
\newblock {Geodesic networks in Liouville quantum gravity surfaces}.
\newblock In preparation.

\bibitem[{Gwy}19]{gwynne-ball-bdy}
E.~{Gwynne}.
\newblock {The dimension of the boundary of a Liouville quantum gravity metric
  ball}.
\newblock {\em {Communications in Mathematical Physics}}, to appear, 2019,
  \arxiv{1909.08588}.

\bibitem[HMP10]{hmp-thick-pts}
X.~Hu, J.~Miller, and Y.~Peres.
\newblock Thick points of the {G}aussian free field.
\newblock {\em Ann. Probab.}, 38(2):896--926, 2010, \arxiv{0902.3842}.
  \MR{2642894 (2011c:60117)}

\bibitem[HRV18]{hrv-disk}
Y.~Huang, R.~Rhodes, and V.~Vargas.
\newblock Liouville quantum gravity on the unit disk.
\newblock {\em Ann. Inst. Henri Poincar\'{e} Probab. Stat.}, 54(3):1694--1730,
  2018, \arxiv{1502.04343}. \MR{3825895}

\bibitem[Kah85]{kahane}
J.-P. Kahane.
\newblock Sur le chaos multiplicatif.
\newblock {\em Ann. Sci. Math. Qu\'ebec}, 9(2):105--150, 1985. \MR{829798
  (88h:60099a)}

\bibitem[KMSW19]{kmsw-bipolar}
R.~Kenyon, J.~Miller, S.~Sheffield, and D.~B. Wilson.
\newblock Bipolar orientations on planar maps and {${\rm SLE}_{12}$}.
\newblock {\em Ann. Probab.}, 47(3):1240--1269, 2019, \arxiv{1511.04068}.
  \MR{3945746}

\bibitem[KPZ88]{kpz-scaling}
V.~Knizhnik, A.~Polyakov, and A.~Zamolodchikov.
\newblock {Fractal structure of 2D-quantum gravity}.
\newblock {\em {Modern Phys. Lett A}}, 3(8):819--826, 1988.

\bibitem[KRV17]{krv-dozz}
A.~{Kupiainen}, R.~{Rhodes}, and V.~{Vargas}.
\newblock {Integrability of Liouville theory: proof of the DOZZ Formula}.
\newblock {\em {A}nnals of {M}athematics}, to appear, 2017, \arxiv{1707.08785}.

\bibitem[{Le }07]{legall-topological}
J.-F. {Le Gall}.
\newblock The topological structure of scaling limits of large planar maps.
\newblock {\em Invent. Math.}, 169(3):621--670, 2007, \arxiv{math/0607567}.
  \MR{2336042 (2008i:60022)}

\bibitem[{Le }10]{legall-geodesics}
J.-F. {Le Gall}.
\newblock Geodesics in large planar maps and in the {B}rownian map.
\newblock {\em Acta Math.}, 205(2):287--360, 2010, \arxiv{0804.3012}.
  \MR{2746349 (2012b:60272)}

\bibitem[{Le }13]{legall-uniqueness}
J.-F. {Le Gall}.
\newblock Uniqueness and universality of the {B}rownian map.
\newblock {\em Ann. Probab.}, 41(4):2880--2960, 2013, \arxiv{1105.4842}.
  \MR{3112934}

\bibitem[LG19]{legall-disk-snake}
J.-F. Le~Gall.
\newblock Brownian disks and the {B}rownian snake.
\newblock {\em Ann. Inst. Henri Poincar\'{e} Probab. Stat.}, 55(1):237--313,
  2019, \arxiv{1704.08987}. \MR{3901647}

\bibitem[Mie13]{miermont-brownian-map}
G.~Miermont.
\newblock The {B}rownian map is the scaling limit of uniform random plane
  quadrangulations.
\newblock {\em Acta Math.}, 210(2):319--401, 2013, \arxiv{1104.1606}.
  \MR{3070569}

\bibitem[MM06]{marc-mokk-tbm}
J.-F. Marckert and A.~Mokkadem.
\newblock Limit of normalized quadrangulations: the {B}rownian map.
\newblock {\em Ann. Probab.}, 34(6):2144--2202, 2006, \arxiv{math/0403398}.
  \MR{2294979 (2007m:60092)}

\bibitem[MQ]{mq-strong-confluence}
J.~Miller and W.~Qian.
\newblock {Geodesics in the Brownian map: strong confluence and geometric
  structure}.
\newblock In preparation.

\bibitem[MQ18]{mq-geodesics}
J.~{Miller} and W.~{Qian}.
\newblock {The geodesics in Liouville quantum gravity are not Schramm-Loewner
  evolutions}.
\newblock {\em ArXiv e-prints}, December 2018, \arxiv{1812.03913}.

\bibitem[MS15]{tbm-characterization}
J.~{Miller} and S.~{Sheffield}.
\newblock {An axiomatic characterization of the Brownian map}.
\newblock {\em {J}ournal {\'E}cole {P}olytechnique}, to appear, 2015,
  \arxiv{1506.03806}.

\bibitem[MS16a]{lqg-tbm2}
J.~{Miller} and S.~{Sheffield}.
\newblock {Liouville quantum gravity and the Brownian map II: geodesics and
  continuity of the embedding}.
\newblock {\em ArXiv e-prints}, May 2016, \arxiv{1605.03563}.

\bibitem[MS16b]{lqg-tbm3}
J.~{Miller} and S.~{Sheffield}.
\newblock {Liouville quantum gravity and the Brownian map III: the conformal
  structure is determined}.
\newblock {\em ArXiv e-prints}, August 2016, \arxiv{1608.05391}.

\bibitem[MS16c]{ig1}
J.~Miller and S.~Sheffield.
\newblock Imaginary geometry {I}: interacting {SLE}s.
\newblock {\em Probab. Theory Related Fields}, 164(3-4):553--705, 2016,
  \arxiv{1201.1496}. \MR{3477777}

\bibitem[MS16d]{ig2}
J.~Miller and S.~Sheffield.
\newblock Imaginary geometry {II}: {R}eversibility of
  {$\operatorname{SLE}_\kappa(\rho_1;\rho_2)$} for {$\kappa\in(0,4)$}.
\newblock {\em Ann. Probab.}, 44(3):1647--1722, 2016, \arxiv{1201.1497}.
  \MR{3502592}

\bibitem[MS16e]{ig3}
J.~Miller and S.~Sheffield.
\newblock Imaginary geometry {III}: reversibility of {$\mathrm{SLE}_\kappa$}
  for {$\kappa\in(4,8)$}.
\newblock {\em Ann. of Math. (2)}, 184(2):455--486, 2016, \arxiv{1201.1498}.
  \MR{3548530}

\bibitem[MS16f]{qle}
J.~Miller and S.~Sheffield.
\newblock Quantum {L}oewner evolution.
\newblock {\em Duke Math. J.}, 165(17):3241--3378, 2016, \arxiv{1312.5745}.
  \MR{3572845}

\bibitem[MS17]{ig4}
J.~Miller and S.~Sheffield.
\newblock Imaginary geometry {IV}: interior rays, whole-plane reversibility,
  and space-filling trees.
\newblock {\em Probab. Theory Related Fields}, 169(3-4):729--869, 2017,
  \arxiv{1302.4738}. \MR{3719057}

\bibitem[MS19]{sphere-constructions}
J.~Miller and S.~Sheffield.
\newblock Liouville quantum gravity spheres as matings of finite-diameter
  trees.
\newblock {\em Ann. Inst. Henri Poincar\'{e} Probab. Stat.}, 55(3):1712--1750,
  2019, \arxiv{1506.03804}. \MR{4010949}

\bibitem[MS20]{lqg-tbm1}
J.~Miller and S.~Sheffield.
\newblock Liouville quantum gravity and the {B}rownian map {I}: the {${\rm
  QLE}(8/3,0)$} metric.
\newblock {\em Invent. Math.}, 219(1):75--152, 2020, \arxiv{1507.00719}.
  \MR{4050102}

\bibitem[Pol81a]{polyakov-qg1}
A.~M. Polyakov.
\newblock Quantum geometry of bosonic strings.
\newblock {\em Phys. Lett. B}, 103(3):207--210, 1981. \MR{623209 (84h:81093a)}

\bibitem[Pol81b]{polyakov-qg2}
A.~M. Polyakov.
\newblock Quantum geometry of fermionic strings.
\newblock {\em Phys. Lett. B}, 103(3):211--213, 1981. \MR{623210 (84h:81093b)}

\bibitem[{Rem}17]{remy-fb-formula}
G.~{Remy}.
\newblock {The Fyodorov-Bouchaud formula and Liouville conformal field theory}.
\newblock {\em ArXiv e-prints}, October 2017, \arxiv{1710.06897}.

\bibitem[RV14]{rhodes-vargas-review}
R.~Rhodes and V.~Vargas.
\newblock Gaussian multiplicative chaos and applications: {A} review.
\newblock {\em Probab. Surv.}, 11:315--392, 2014, \arxiv{1305.6221}.
  \MR{3274356}

\bibitem[RZ18]{rz-gmc-interval}
G.~{Remy} and T.~{Zhu}.
\newblock {The distribution of Gaussian multiplicative chaos on the unit
  interval}.
\newblock {\em ArXiv e-prints}, April 2018, \arxiv{1804.02942}.

\bibitem[Sch97]{schaeffer-bijection}
G.~Schaeffer.
\newblock Bijective census and random generation of {E}ulerian planar maps with
  prescribed vertex degrees.
\newblock {\em Electron. J. Combin.}, 4(1):Research Paper 20, 14 pp.\
  (electronic), 1997. \MR{1465581 (98g:05074)}

\bibitem[Sha16]{shamov-gmc}
A.~Shamov.
\newblock On {G}aussian multiplicative chaos.
\newblock {\em J. Funct. Anal.}, 270(9):3224--3261, 2016, \arxiv{1407.4418}.
  \MR{3475456}

\bibitem[She07]{shef-gff}
S.~Sheffield.
\newblock Gaussian free fields for mathematicians.
\newblock {\em Probab. Theory Related Fields}, 139(3-4):521--541, 2007,
  \arxiv{math/0312099}. \MR{2322706 (2008d:60120)}

\bibitem[She16a]{shef-zipper}
S.~Sheffield.
\newblock Conformal weldings of random surfaces: {SLE} and the quantum gravity
  zipper.
\newblock {\em Ann. Probab.}, 44(5):3474--3545, 2016, \arxiv{1012.4797}.
  \MR{3551203}

\bibitem[She16b]{shef-burger}
S.~Sheffield.
\newblock Quantum gravity and inventory accumulation.
\newblock {\em Ann. Probab.}, 44(6):3804--3848, 2016, \arxiv{1108.2241}.
  \MR{3572324}

\bibitem[SS13]{ss-contour}
O.~Schramm and S.~Sheffield.
\newblock A contour line of the continuum {G}aussian free field.
\newblock {\em Probab. Theory Related Fields}, 157(1-2):47--80, 2013,
  \arxiv{math/0605337}. \MR{3101840}

\bibitem[SW16]{shef-wang-lqg-coord}
S.~{Sheffield} and M.~{Wang}.
\newblock {Field-measure correspondence in Liouville quantum gravity almost
  surely commutes with all conformal maps simultaneously}.
\newblock {\em ArXiv e-prints}, May 2016, \arxiv{1605.06171}.

\bibitem[Wat93]{watabiki-lqg}
Y.~Watabiki.
\newblock {Analytic study of fractal structure of quantized surface in
  two-dimensional quantum gravity}.
\newblock {\em Progr. Theor. Phys. Suppl.}, (114):1--17, 1993.
\newblock Quantum gravity (Kyoto, 1992).

\end{thebibliography}

\bibliographystyle{hmralphaabbrv}

\end{document}